\newcommand{\showcomments}{yes}
\renewcommand{\showcomments}{no}
\newsavebox{\commentbox}
\newcommand{\visual}{\partial_{vis}}
\newcommand{\propnest}{\sqsubsetneq}
\newcounter{intronum}
\newcounter{claimnum}
\newcounter{ax}
\newtheorem{thm}{Theorem}[section]
\newtheorem{lem}[thm]{Lemma}
\newtheorem{cor}[thm]{Corollary}
\newtheorem{prop}[thm]{Proposition}
\newtheorem{propi}[intronum]{Proposition}
\newtheorem{question}[intronum]{Question}
\newtheorem{defnn}[intronum]{Definition}
\newtheorem{thmn}[intronum]{Theorem}
\theoremstyle{definition}
\newtheorem{defn}[thm]{Definition}
\newtheorem{rem}[thm]{Remark}
\newtheorem{exmp}[thm]{Example}
\newtheorem{notation}[thm]{Notation}
\newtheorem{claim}[claimnum]{Claim}
\newtheorem{claim*}{Claim}
\DeclareMathOperator{\kernel}{ker}
\DeclareMathOperator{\image}{im}
\DeclareMathOperator{\Aut}{Aut}
\DeclareMathOperator{\stabilizer}{Stab}
\DeclareMathOperator{\diam}{diam}
\DeclareMathOperator{\BIG}{Big}
\DeclareMathOperator{\Fix}{Fix}
\newcommand{\neb}{\mathcal N}
\newcommand{\TT}{\mathcal{T}}
\newcommand{\MCG}{\mathcal{MCG}}
\newcommand{\field}[1]{\mathbb{#1}}
\newcommand{\integers}{\ensuremath{\field{Z}}}
\newcommand{\naturals}{\ensuremath{\field{N}}}
\newcommand{\reals}{\ensuremath{\field{R}}}
\newcommand{\Euclidean}{\ensuremath{\field{E}}}
\newcommand{\boundary}{{\ensuremath \partial}}
\newcommand{\interior} [1] {{\ensuremath \text{\rm Int}(#1) }}
\newcommand{\Rmnum}[1]{\mathbf{{\expandafter\@slowromancap\romannumeral #1@}}}
\newcommand{\simp}{\ensuremath{\partial_{_{\triangle}}}}
\newcommand{\contact}[1]{\ensuremath{\mathcal C#1}}
\newcommand{\tup}[1]{\vec{\mathrm{#1}}}
\let\oldmarginpar\marginpar
\renewcommand\marginpar[1]{\-\oldmarginpar[\raggedleft\footnotesize #1]%
{\raggedright\footnotesize #1}}
\newcommand{\ignore}[2]{\big\{\{{#1}\}\big\}_{#2}}
\DeclareMathOperator{\supp}{\mathrm{Supp}}
\newcounter{enumitemp}
\newcommand{\dist}{\textup{\textsf{d}}}
\newcommand{\cuco}[1]{\mathcal{#1}}
\newcommand{\fontact}{\mathcal C}
\newcommand{\gate}{\mathfrak g}
\newcommand{\nest}{\sqsubseteq}
\newcommand{\orth}{\perp}
\newcommand{\north}{\not\perp}
\newcommand{\transverse}{\pitchfork}
\newcommand{\support}{\mathrm{Supp}}
\newcommand{\remote}[2]{\partial_{#1}^{\mathrm{rem}}#2}
\newcommand{\id}{\mathrm{id}}
\newcommand{\Teich}{\mathcal T}
\newcommand{\PML}{\mathbb{P}\mathcal{ML}}
\newcommand{\Active}{\mathfrak A}
\newcommand{\mma}{\mathfrak M}
\long\def\Restate#1#2#3#4{
\medskip\par\noindent
{\bf #1 \ref{#2} #3} {\it #4}\par\medskip }
 \def\notorth{\:\ensuremath{\reflectbox{\rotatebox[origin=c]{90}{$\nvdash$}}}}
\begin{document}
\title[Boundaries of HHS]{Boundaries and automorphisms of hierarchically hyperbolic spaces}

\author[M.G. Durham]{Matthew G. Durham}
\address{U. Michigan, Ann Arbor, Michigan, USA}
\email{durhamma@umich.edu}
\thanks{\flushleft {Durham was supported by the National Science Foundation under Grant Number NSF 1045119.}}

\author[M.F. Hagen]{Mark F. Hagen}
\address{DPMMS, University of Cambridge, Cambridge, UK}
\email{markfhagen@gmail.com}
\thanks{\flushleft {Hagen was supported by NSF Grant Number 1045119 and by the EPSRC grant of Henry Wilton.}}

\author[A. Sisto]{Alessandro Sisto}
\address{ETH, Z\"{u}rich, Switzerland}
\email{sisto@math.ethz.ch}
\thanks{\flushleft{Sisto was supported by the Swiss National Science Foundation project 144373.}}

\setcounter{tocdepth}{1}

\maketitle

\begin{abstract}
Hierarchically hyperbolic spaces provide a common framework for studying mapping class groups of finite type surfaces, Teichm\"uller space, right-angled Artin groups, and many other cubical groups.  Given such a space $\cuco X$, we build a bordification of $\cuco X$ compatible with its hierarchically hyperbolic structure.  If $\cuco X$ is proper, e.g. a hierarchically hyperbolic group such as the mapping class group, we get a compactification of $\cuco X$; we also prove that our construction generalizes the Gromov boundary of a hyperbolic space.  In our first main set of applications, we introduce a notion of geometrical finiteness for hierarchically hyperbolic subgroups of hierarchically hyperbolic groups in terms of boundary embeddings.  As primary examples of geometrical finiteness, we prove that the natural inclusions of finitely generated Veech groups and the Leininger-Reid combination subgroups extend to continuous embeddings of their Gromov boundaries into the boundary of the mapping class group, both of which fail to happen with the Thurston compactification of Teichm\"uller space.  Our second main set of applications are dynamical and structural, built upon our classification of automorphisms of hierarchically hyperbolic spaces and analysis of how the various types of automorphisms act on the boundary.  We prove a generalization of the Handel-Mosher ``omnibus subgroup theorem'' for mapping class groups to all hierarchically hyperbolic groups, obtain a new proof of the Caprace-Sageev rank-rigidity theorem for many CAT(0) cube complexes, and identify the boundary of a hierarchically hyperbolic group as its Poisson boundary; these results rely on a theorem detecting \emph{irreducible axial} elements of a group acting on a hierarchically hyperbolic space (which generalize pseudo-Anosov elements of the mapping class group and rank-one isometries of a cube complex not virtually stabilizing a hyperplane).
%We discuss how our results work in the fundamental examples of hierarchically hyperbolic spaces, i.e. mapping class groups, Teichm\"uller space, and well-behaved CAT(0) cube complexes.
\end{abstract}

\tableofcontents

\section*{Introduction}\label{sec:intro}
The class of hierarchically hyperbolic spaces (HHS) was introduced in~\cite{BehrstockHagenSisto:HHS_I}, and given a streamlined definition in~\cite{BehrstockHagenSisto:HHS_II}, to provide a common framework for studying cubical groups and mapping class groups of surfaces.  The definition was motivated by the observation that, under natural hypotheses, a CAT(0) cube complex is equipped with a collection of projections to hyperbolic spaces obeying rules reminiscent of the hierarchical structure of mapping class groups and projections to curve graphs introduced by Masur and Minsky in~\cite{MasurMinsky:I,MasurMinsky:II}.  The class of HHS includes the aforementioned spaces (mapping class groups and many CAT(0) cube complexes, including all universal covers of compact special cube complexes), along with Gromov-hyperbolic spaces, Teichm\"uller space with any of the usual metrics, and many others; see~\cite{BehrstockHagenSisto:HHS_I,BehrstockHagenSisto:HHS_II,BehrstockHagenSisto:HHS_III} for an account of the current scope of the theory. 

Much of the utility of HHS comes from the fact that many features of Gromov-hyperbolic spaces have natural generalizations in the HHS world.  Since one of the most useful objects associated to a hyperbolic space is its Gromov boundary, we provide here a generalization of the Gromov boundary to hierarchically hyperbolic spaces.  The boundary of a hierarchically hyperbolic space is inspired by various boundaries associated to the salient examples of HHS, e.g. the simplicial boundary of a CAT(0) cube complex and the Thurston compactification of Teichm\"uller space, projective measured lamination space $\PML(S)$.  

Just as the Gromov boundary does for hyperbolic spaces and groups, the HHS boundary provides considerable information about the geometry of an HHS and the dynamics of its automorphisms; our aim in this paper is to explore some of these properties.

\subsection*{Introduction to HHS}\label{subsec:intro_HHS}
We first briefly and softly recall the HHS theory.  A \emph{hierarchically hyperbolic space} is a pair $(\cuco X,\mathfrak S)$ equipped with some additional data: $\cuco X$ is a quasigeodesic metric space and $\mathfrak S$ is an index set equipped with a partial order $\nest$, called \emph{nesting}, with a unique maximal element $S$.  There is also an \emph{orthogonality} relation on $\mathfrak S$; when $\mathfrak S$ is the set of essential subsurfaces of a surface $S$, up to isotopy, orthogonality is just disjointness.  We often call elements of $\mathfrak S$ \emph{domains}.

Each $U\in\mathfrak S$ is equipped with a uniformly hyperbolic space $\fontact U$ and a coarse map $\pi_U:\cuco X\to\fontact U$.  There are also \emph{relative projections} $\rho^U_V$, which are coarse maps $\fontact U\to\fontact V$ defined unless $U,V$ are orthogonal.  In the case where $\cuco X$ is the marking complex of the surface $S$ and $\mathfrak S$ is the set of subsurfaces of $S$, then the associated hyperbolic spaces are the curve graphs of these subsurfaces and the projections are subsurface projections.  We impose other rules reminiscent of the hierarchical structure of the mapping class group; see Definition~\ref{defn:space_with_distance_formula}. 

The \emph{distance formula} is crucial: for any $x,y\in\cuco X$, the distance $\dist_{\cuco X}(x,y)$ differs, up to bounded multiplicative and additive error, from the sum of the distances $\dist_{\fontact U}(\pi_U(x),\pi_U(y))$ as $U\in\mathfrak S$ varies over those domains where that distance exceeds some predefined threshold~\cite{BehrstockHagenSisto:HHS_II}.

Just as quasiconvexity is vital to the study of hyperbolic spaces, \emph{hierarchical quasiconvexity} is central in the study of HHS.  Roughly, $\cuco Y\subseteq\cuco X$ is hierarchically quasiconvex if $\pi_U(\cuco Y)$ is uniformly quasiconvex for each $U\in\mathfrak S$, and any point in $\cuco X$ projecting under $\pi_U$ close to $\pi_U(\cuco Y)$ for each $U$ must lie close (in $\cuco X$) to $\cuco Y$.  The fundamental example of a hierarchically quasiconvex subspaces is the \emph{standard product region} $P_U$ associated to each $U\in\mathfrak S$.  Roughly, the subspace $P_U$ consists of those points $x\in\cuco X$ where $\pi_V(x)$ is close to $\rho^U_V$ for any $V\in\mathfrak S$ that is not orthogonal to, or nested in, $V$.  The factor of $P_U$ obtained by fixing, in addition, the projections to domains orthogonal to $U$ (and allowing movement in domains nested in $U$) is denoted $F_U$, and the other factor is $E_U$.  A familiar example here is the region of Teichm\"uller space with the Teichm\"uller metric where the boundary curves of some subsurface $U$ are short: Minsky \cite{Min96} proved that these so-called thin parts are quasiisometric to products of the Teichm\"uller spaces of the complementary subsurfaces, one of which is $U$. 

\subsubsection*{What's needed from~\cite{BehrstockHagenSisto:HHS_I,BehrstockHagenSisto:HHS_II}}
The paper~\cite{BehrstockHagenSisto:HHS_II} is the main foundational paper in the theory of HHS.  In the current paper, we use most of the background material developed in~\cite{BehrstockHagenSisto:HHS_II}, with the notable exception of the combination theorems.  In particular, we use the main definition of HHS (which is equivalent to, but much simpler than, the original definition from~\cite{BehrstockHagenSisto:HHS_II}), the realization theorem, the distance formula, and the existence of hierarchy paths.  The fact that mapping class groups are HHG, which is crucial for our applications to Veech and Leininger-Reid subgroups in Section~\ref{sec:extending} could be deduced from~\cite{MasurMinsky:I,MasurMinsky:II,BKMM:consistency,Behrstock:asymptotic}, but is also given a streamlined proof in~\cite[Section~11]{BehrstockHagenSisto:HHS_II}.  

From~\cite{BehrstockHagenSisto:HHS_I}, we need the acylindricity result (Theorem~14.3) and, for the purposes of Section~\ref{sec:cube_complex}, the HHS structure on CAT(0) cube complexes.  We note that the acylindricity result from~\cite{BehrstockHagenSisto:HHS_I} is independent of the other HHS results in that paper.

Finally, the recent paper~\cite{BehrstockHagenSisto:HHS_III} is completely independent of this one.\footnote{The picture at \url{http://www.wescac.net/HHS_infographic.pdf} shows the current state of the theory, indicating the main concepts and results and their interdependencies.}

\subsection*{The boundary}\label{subsec:boundary}
Consider an HHS $(\cuco X,\mathfrak S)$.  Since any two points of $\cuco X$ are joined by a \emph{hierarchy path} -- a uniform quasigeodesic projecting to a uniform unparametrized quasigeodesic in $\fontact U$ for each $U\in\mathfrak S$ (see~\cite{BehrstockHagenSisto:HHS_II}) -- a natural approach to constructing a boundary is to imitate the construction of the Gromov boundary, or the visual boundary of a CAT(0) space: boundary points would be asymptotic classes of ``hierarchy rays'' emanating from a fixed basepoint, and one might imagine topologizing this set by defining two boundary points to be close if the corresponding rays stay close ``for a long time''.  

The boundary construction is motivated by this intuition.  Given a hierarchy ray $\gamma:\naturals\to\cuco X$, one first observes that the set of $U\in\mathfrak S$ for which $\pi_U\circ\gamma$ is unbounded is a pairwise-orthogonal collection -- $\gamma$ either spends a bounded amount of time in each standard product region, or $\gamma$ wanders permanently into the (coarse) intersection of several standard product regions.  Accordingly, the underlying set of the boundary $\boundary(\cuco X,\mathfrak S)$ is the set of formal linear combinations $p=\sum_{U\in\mathfrak U}a_Up_U$, where $\mathfrak U\subset\mathfrak S$ (the \emph{support} of $p$) is a pairwise-orthogonal set, each $p_U$ is a point in the Gromov boundary of $\fontact U$, each $a_U\in(0,1]$, and $\sum_Ua_U=1$.  

Regarding each $\boundary\fontact U$ as a discrete set, the above construction yields a (highly disconnected, locally infinite) simplicial complex.  The ``rank-one hierarchy rays'' -- i.e. the points of $\boundary\fontact S$ -- correspond to isolated $0$--simplices, while the standard product regions contribute boundary subcomplexes isomorphic to simplicial joins.  This complex is a kind of ``Tits boundary'' for $(\cuco X,\mathfrak S)$.  The actual boundary we define is related to this complex in much the same way that the visual boundary of a CAT(0) space is related to the Tits boundary; we define the boundary $\boundary(\cuco X,\mathfrak S)$ by imposing a coarser topology, described in Section~\ref{sec:boundary_definition}.  (When the context is clear, we denote $\boundary(\cuco X,\mathfrak S)$ by $\boundary\cuco X$, being mindful that this space depends, as far as we know, on the particular HHS structure $\mathfrak S$.)  

The resulting space $\overline{\cuco X}=\cuco X\cup\boundary{\cuco X}$ is Hausdorff and separable; $\boundary\cuco X$ is a closed subset and $\cuco X$ is dense (Proposition~\ref{prop:properties}).  Moreover, the Gromov boundary $\boundary\fontact U$ embeds in $\boundary (\cuco X,\mathfrak S)$, in the obvious way, for each $U\in\mathfrak S$, by Theorem~\ref{thm:hyperbolic}.  Crucially:

\Restate{Theorem}{thm:cpt}{(Compactness)}{Let $(\cuco X,\mathfrak S)$ be a hierarchically hyperbolic space with $\cuco X$ proper.  Then $\overline{\cuco X}$ is compact.}

The definition of $\boundary(\cuco X,\mathfrak S)$ is given strictly in terms of $\mathfrak S$ and the accompanying hyperbolic spaces and projections; the standing assumption that $(\cuco X,\mathfrak S)$ is \emph{normalized} -- each $\pi_U$ is coarsely surjective -- connects the boundary to the space $\cuco X$ by ensuring that $\cuco X$ is dense in $\overline{\cuco X}$.  Even so, it is not clear whether the homeomorphism type of $\boundary(\cuco X,\mathfrak S)$ depends on the particular choice of HHS structure:

\begin{question}\label{question:boundary_invariance}
Let $(\cuco X,\mathfrak S)$ be a hierarchically hyperbolic space and let $(\cuco X,\mathfrak S')$ be a different hierarchically hyperbolic structure on the same space.  Does the identity $\cuco X\to\cuco X$ extend to a map $\cuco X\cup\boundary(\cuco X,\mathfrak S)\to\cuco X\cup\boundary(\cuco X,\mathfrak S')$ which restricts to a homeomorphism of boundaries?
\end{question}

A positive answer to Question~\ref{question:boundary_invariance} would stand in contrast to the situation for CAT(0) spaces.  For example, the right-angled Artin group $A$, presented by a path of length $3$, famously has the property that the universal cover $\widetilde X$ of the Salvetti complex can be endowed with different CAT(0) metrics (obtained by perturbing angles in the $2$--cells) with non-homeomorphic visual boundaries~\cite{CrokeKleiner}.  On the other hand, $\widetilde X$ admits a hierarchically hyperbolic structure $(\widetilde X,\mathfrak S)$ coming from the cubical structure of $\widetilde X$ (with no dependence on the CAT(0) metric).  Perturbing the CAT(0) metric within its quasiisometry type does not change the HHS structure (and hence the HHS boundary), so the HHS boundary is in a sense more ``canonical'' than the visual boundary in this example (and indeed for all CAT(0) cube complexes with \emph{factor systems}, which we discuss in more detail below).

\subsection*{Automorphisms and their actions on the boundary}\label{subsec:autos_intro}
An \emph{automorphism} of $(\cuco X,\mathfrak S)$ is a bijection $g:\mathfrak S\to\mathfrak S$ along with an isometry $\fontact U\to\fontact g(U)$ for each $U\in\mathfrak S$ which satisfy certain compatibility conditions.  The distance formula ensures that automorphisms induce uniform quasi-isometries of $\cuco X$, so the group $\Aut(\mathfrak S)$ of automorphisms uniformly quasi-acts by (uniform) quasi-isometries on $\cuco X$.  The (quasi-)action of $\Aut(\mathfrak S)$ on $\cuco X$ extends to an action on $\overline{\cuco X}$ restricting to an action by homeomorphisms on $\boundary\cuco X$ (Corollary~\ref{cor:auto extend}).

In one of the main cases of interest, $\cuco X$ is a Cayley graph of a finitely-generated group $G$, and the action of $G$ on itself by left multiplication corresponds to an action on $(G,\mathfrak S)$ by HHS automorphisms.  In this situation, if the action on $\mathfrak S$ is cofinite, then $(G,\mathfrak S)$ is a \emph{hierarchically hyperbolic group structure}; if a group $G$ admits a hierarchically hyperbolic group structure, then $G$ is a \emph{hierarchically hyperbolic group}.  The archetypal hierarchically hyperbolic group is the mapping class group of a connected, oriented surface of finite type~\cite[Section 11]{BehrstockHagenSisto:HHS_II}.  Other examples include many cubical groups~\cite{BehrstockHagenSisto:HHS_I}, many graphs of hierarchically hyperbolic groups~\cite{BehrstockHagenSisto:HHS_II}, and certain quotients of hierarchically hyperbolic groups~\cite{BehrstockHagenSisto:HHS_III}.  If $(G,\mathfrak S)$ is a hierarchically hyperbolic group, then the isometric action of $G$ on itself by left multiplication extends to an action by homeomorphisms on $\overline{G}$ (Corollary~\ref{cor:hhg_boundary}).  We describe in detail below our results regarding the dynamics and structure of groups of automorphisms.

\subsection*{Embeddings of subspace boundaries and geometrical finiteness}\label{subsec:embeddings between boundaries}

A desirable property of a boundary is that inclusions of subspaces that are ``convex'' in an appropriate sense induce embeddings of boundaries with closed images.  In Section~\ref{sec:extending}, we show that hierarchically quasiconvex subspaces of $\cuco X$, which admit their own natural HHS structures~\cite{BehrstockHagenSisto:HHS_II}, have this property: if $\cuco Y\subset\cuco X$ is hierarchically quasiconvex, then $\cuco Y$ has a limit set in $\boundary\cuco X$ which is homeomorphic to $\boundary\cuco Y$ with the HHS structure inherited from $\cuco X$.  In fact, Theorem~\ref{thm:extension to boundary maps} provides more, by giving natural conditions on maps between HHS ensuring that they extend continuously to the HHS boundary.  This motivates the following definition:

\begin{defnn}[Geometrical finiteness]\label{defn:gf}
We say a hierarchically hyperbolic subgroup $H$ of a hierarchically hyperbolic group $G$ is \emph{geometrically finite} if the natural inclusion $\iota:H \hookrightarrow G$ extends continuously to an $H$-equivariant embedding $\partial \iota:\partial H \hookrightarrow \partial G$.   
\end{defnn}

In what follows, we will be interested in developing this notion and establishing examples in the context of the mapping class group of a finite type surface.

\subsection*{Comparison of the mapping class group boundary with $\PML(S)$}
The archetypal hierarchically hyperbolic group is the mapping class group $\MCG(S)$ of a connected, oriented surface $S$ of finite type.  The hierarchically hyperbolic structure is provided by results of~\cite{MasurMinsky:I,MasurMinsky:II,BKMM:consistency,Aougab:uniform,HenselPrzytyckiWebb:unicorn,Bowditch:uniform,PrzytyckiSisto:universe,ClayRafiSchleimer:uniform,Behrstock:asymptotic,Mangahas:UU,Webb:BGI} and is discussed in detail in Section~11 of~\cite{BehrstockHagenSisto:HHS_II}.  Roughly, $\mathfrak S$ is the set of essential subsurfaces of $S$, up to isotopy, $\fontact U$ is the curve graph of $U$ for each $U\in\mathfrak S$, and projections are usual subsurface projections.

Traditionally, $\MCG(S)$ has been studied via its action on Teichm\"uller space $\TT(S)$ with its Thurston compactification by $\PML(S)$.  This approach has been fruitful especially when considering subgroups of $\MCG(S)$ defined via flat or hyperbolic geometry.  Nonetheless, the $\MCG(S)$ action on $\TT(S)$ is not cocompact and the orbits of many subgroups (in fact, any with Dehn twists) are distorted in $\TT(S)$, which make $\TT(S)$ imperfect for studying the coarse geometry of $\MCG(S)$ and its subgroups.

The situation is further complicated when one attempts to extend the $\MCG(S)$ action on $\TT(S)$ to its various boundaries.  Teichm\"uller geodesics are unique and thus geodesic rays based at a point form a natural visual compactification of $\TT(S)$, but Kerckhoff \cite{Ker80} proved that it is basepoint dependent and thus the $\MCG(S)$ action fails to extend continuously.  While Thurston \cite{Thur88} defined a compactification via $\PML(S)$ to which the $\MCG(S)$ action does extend continuously, Thurston's compactification is defined via hyperbolic geometry and the Teichm\"uller metric is defined via flat geometry, which leads to an incoherence between the internal geometry and its asymptotics in $\PML(S)$ \cite{Masur:twoboundaries, Len, LLR, CMW, brock2016limit}.

The boundary $\boundary(\MCG(S),\mathfrak S)$ provides the first compactification of $\MCG(S)$ so that the action of $\MCG(S)$ on itself by left multiplication extends to a continuous action on the boundary with the dynamical properties we discuss below (see also Section~\ref{subsubsec:auts_and_HHG}).  While many of these dynamical properties were originally proven via the $\MCG(S)$-action on $\TT(S)$ with its Thurston compactification, many of the pathologies described above vanish in our construction, as we discuss presently.  %We believe that $\boundary\MCG(S)$ will be a useful tool.% in various applications.

\subsection*{On geometrically finite subgroups of $\MCG(S)$}
Problem~5 of~\cite{Hamenstadt:problems} and Section 6 of~\cite{Mosher:problems} in Farb's book \cite{farb2006problems} regard the development of a notion of geometrical finiteness for subgroups of $\MCG(S)$.  Mosher suggests a definition that requires an external proper hyperbolic space $X$ on which the candidate subgroup acts with a collection of cusp subgroups in some appropriate sense; geometric finiteness would then require that $X$ and $\partial X$ embed quasiisometrically in $\TT(S)$ and continuously in $\PML(S)$, respectively.  Masur's theorem makes it unreasonable to expect a simultaneous continuous embedding $X \cup \partial X \to \TT(S) \cup \PML(S)$.

We will argue that replacing $\TT(S) \cup \PML(S)$ with $\MCG(S) \cup \boundary \MCG(S)$ as in Definition \ref{defn:gf} generates a robust theory of geometrical finiteness.  In particular, we prove:

\begin{thmn}\label{thm:gf}
Suppose that $H<\MCG(S)$ is one of the following:
\begin{enumerate}
\item The standard embedding of $\MCG(Y)$ for some proper subsurface $Y \subset S$;
\item Convex cocompact in the sense of \cite{FarbMosher};
\item A finitely generated Veech group;
\item A Leininger-Reid combination subgroup \cite{leininger2006combination}.
\end{enumerate}
Then $H$ is a geometrically finite subgroup of $\MCG(S)$.
\end{thmn}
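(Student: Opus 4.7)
The plan is to verify, for each of the four classes of subgroups, that the inclusion $\iota\colon H\hookrightarrow \MCG(S)$ satisfies the hypotheses of the boundary extension criterion (Theorem~\ref{thm:extension to boundary maps}), and thereby extends to a continuous, $H$-equivariant embedding $\partial\iota\colon\partial H\hookrightarrow \partial\MCG(S)$. Equivariance will be automatic from equivariance of $\iota$, so the core work is to exhibit, in each case, an HHG structure on $H$ and then check that the inclusion respects projections and supports of boundary points in a controlled way.

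Cases (1) and (2) I would handle via hierarchical quasiconvexity. The standard embedding $\MCG(Y)\hookrightarrow\MCG(S)$ of a proper subsurface mapping class group has image hierarchically quasiconvex in the HHG structure on $\MCG(S)$, with image domains the subsurfaces of $S$ nested in $Y$; the intrinsic HHG structure on $\MCG(Y)$ from~\cite[Section~11]{BehrstockHagenSisto:HHS_II} agrees with the one inherited from $\MCG(S)$. A Farb--Mosher convex cocompact subgroup is word-hyperbolic and its orbit map quasi-isometrically embeds in $\mathcal{C}(S)$ with uniformly bounded projection to every proper subsurface; this makes $H$ hierarchically quasiconvex, with intrinsic HHG structure having a single top-level hyperbolic domain. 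Both cases then reduce to the statement, recorded just before Definition~\ref{defn:gf}, that a hierarchically quasiconvex subspace has boundary homeomorphic to its limit set in the ambient boundary.

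Cases (3) and (4) are genuinely harder because Veech groups and Leininger--Reid combination subgroups contain multi-twists, so they are \emph{not} hierarchically quasiconvex in $\MCG(S)$: their projections to annular curve graphs on the multi-twist curves are unbounded. I would equip $H$ with an HHG structure arising from its relative hyperbolicity: finitely generated Veech groups are hyperbolic relative to stabilizers of parabolic (cylinder) directions, and Leininger--Reid combination subgroups are hyperbolic relative to amalgams of such stabilizers, with virtually abelian peripherals generated by multi-twists. Since virtually abelian groups admit an HHG structure, the resulting $\mathfrak{S}_H$ has a top-level hyperbolic domain (the coned-off Cayley graph) together with peripheral factors and their sub-axes corresponding to individual Dehn twists. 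Along $\iota$, each peripheral axis of $H$ matches an annular subsurface domain of $\MCG(S)$, while the main compatibility input is that the top-level hyperbolic domain of $H$ embeds quasi-isometrically into $\mathcal{C}(S)$: for Veech this is the Kent--Leininger--Rafi theorem that the stabilized Teichm\"uller disk projects to a quasiconvex subset of $\mathcal{C}(S)$, and for Leininger--Reid combinations the analogous statement is built into~\cite{leininger2006combination}.

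With these matchings in place, Theorem~\ref{thm:extension to boundary maps} applies: points of $\partial H$ supported on the top-level hyperbolic domain map injectively into $\partial\mathcal{C}(S)\subset\partial\MCG(S)$, while points supported on peripheral annular domains map injectively into the corresponding annular domain boundaries. The main obstacle I anticipate is ruling out collapsing across $\iota$ in cases (3) and (4)---i.e., verifying that the relatively hyperbolic structure of $H$ is compatible enough with the HHG structure of $\MCG(S)$ to keep distinct boundary support data distinct after extension. This should reduce to two geometric facts: distinct peripheral cosets of $H$ involve disjoint multicurves in $S$ and hence go to infinity in distinct annular domains of $\MCG(S)$; and pseudo-Anosov elements of $H$ act loxodromically on $\mathcal{C}(S)$ with fixed points disjoint from the supports of any multi-twist boundary points, ensuring separation of the two strata of $\partial H$ in $\partial\MCG(S)$.
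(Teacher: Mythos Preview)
Your handling of cases (1) and (2) matches the paper's approach: both are hierarchically quasiconvex, and Proposition~\ref{prop:hierarchically_quasiconvex_set} (together with Theorem~\ref{thm:hyperbolic} for the convex cocompact case) gives the boundary embedding directly.

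For cases (3) and (4), however, there is a real gap in your domain matching. A parabolic subgroup of a Veech group is cyclic, generated by a single multitwist $g=T_{\alpha_1}^{k_1}\cdots T_{\alpha_{n_g}}^{k_{n_g}}$; the individual Dehn twists $T_{\alpha_i}$ are \emph{not} in $H$, so there are no ``sub-axes corresponding to individual Dehn twists'' in any HHS structure on $H$. The peripheral contributes one line $L_g$, not $n_g$ orthogonal ones. On the $\MCG(S)$ side, though, this single line has unbounded projection to each of the $n_g$ pairwise-orthogonal annular domains $\alpha_1,\ldots,\alpha_{n_g}$. So the map on index sets you need is genuinely one-to-many: $U_g\mapsto\{\alpha_1,\ldots,\alpha_{n_g}\}$. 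This is outside the scope of an ordinary hieromorphism, and is exactly why the paper introduces \emph{slanted hieromorphisms} (Definition~\ref{defn:slanted_hieromorphism}): $\pi(f)$ takes values in pairwise-orthogonal subsets of $\mathfrak S$, and $\rho(f,U_g)$ lands in the product $\prod_i\mathcal C\alpha_i$.

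Even after setting up the slanted hieromorphism, you still need the \emph{extensible} condition (Definition~\ref{defn:extensible}), in particular that the composition $L_g\to\prod_i\mathcal C\alpha_i\to\mathcal C\alpha_i$ is a coarse similarity with uniformly controlled constants. This is what makes the limiting ratios $a_{\alpha_i}/a_{\alpha_j}$ well-defined and hence makes $\bar f$ well-defined and injective on boundary points supported on peripherals. The paper extracts this from geometric finiteness of the Veech group (finitely many conjugacy classes of multitwists bounds the $k_i$); your proposal does not address it. For Leininger--Reid combinations the same issues arise, together with additional work (Lemmas~\ref{lem:proj_on_complement} and Claim~\ref{claim:concat_close_to_geod}) to establish quasiconvexity in $\mathcal C S$ and uniform boundedness of projections to non-annular subsurfaces; this is not built into~\cite{leininger2006combination} but is proved in the paper.
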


Hence geometrical finiteness generalizes convex cocompactness for subgroups of $\MCG(S)$ to a broader class of groups.   Theorem \ref{thm:gf}(a) is proven in Theorem \ref{thm:subsurface boundary embedding} and Theorem \ref{thm:gf}(b) is Theorem \ref{thm:convex cocompact}.  We discuss presently the Veech and Leininger-Reid examples in more detail.

%Given a subgroup $H < \MCG(S)$ with some natural geometric boundary, e.g. a hierarchically hyperbolic subgroup $H$ with its HHS boundary, it's natural to ask whether its boundary embeds into a compactum on which $\MCG(S)$ naturally acts.  Given the issues with both the internal geometry of $\TT(S)$ and its Thurston compactification for representing the coarse geometry of $\MCG(S)$, we will be interested in the following question:
%
%\begin{question}\label{question:mcg subgroup embed}
%When does the natural inclusion $H \hookrightarrow \MCG(S)$ extend continuously and $H$-equivariantly to an embedding $\partial H \hookrightarrow \partial \MCG(S)$?
%\end{question}
%
%In Section \ref{sec:extending}, we give some general conditions on $H$ for an affirmative answer (see Theorem \ref{thm:extension to boundary maps}).  In Theorem \ref{thm:convex cocompact}, we prove that Question \ref{question:mcg subgroup embed} holds for subsurface mapping class groups and the so-called convex cocompact subgroups of $\MCG(S)$, but neither is surprising, especially since the analogous question holds for $\TT(S)$ and $\PML(S)$.  In what follows, we discuss some more complicated examples.

\subsubsection*{Veech and Leininger-Reid combinations subgroups} 
For Mosher (see Problem~6.1 of \cite{Mosher:problems}), the main test cases for a definition of geometrical finiteness for subgroups of mapping class groups are finitely generated Veech groups and the Leininger-Reid subgroups.  It is worth noting that while the former are explicitly defined via flat geometry and the latter somewhat less so, the aforementioned coherence pathologies between the Teichm\"uller geometry and the Thurston compactification give an obstruction to considering embeddings of natural boundaries associated to them into $\PML(S)$.  We prove that this obstruction disappears with $\partial \MCG(S)$.  We now briefly give some background.

Given a holomorphic quadratic differential $q$ on $S$, there is an associated copy of $\mathbb{H}^2$ called a Teichm\"uller disk, $TD(q)$, which is a convex subset of $\TT(S)$.  The stabilizer of $TD(q)$ in $\MCG(S)$ is $\mathrm{Aff}(q)$, those elements with a representative which act by affine homemorphisms with respect to the flat metric determined by $q$.  A Veech group $V$ is a subgroup of $\mathrm{Aff}(q)$ which acts properly on $TD(q)$; we consider only finitely-generated Veech groups. The visual boundary of $TD(q)$ is naturally identified by $\PML(q)$ which admits a natural embedding in $\PML(S)$ that parametrizes the limit set of $V$ in $\PML(S)$ \cite{KentLein:subgroupgeom}, but a theorem of Masur \cite{Masur:twoboundaries} implies that this embedding does not give an everywhere continuous extension $TD(q) \cup \PML(q) \hookrightarrow \TT(S) \cup \PML(S)$.

In ~\cite{leininger2006combination}, Leininger-Reid construct subgroups of $\MCG(S)$ which are combinations of Veech groups; some are surface groups in which all but one conjugacy class is pseudo-Anosov.  The \emph{boundary} of such a surface subgroup is its limit set in $\partial \mathbb{H}^2$.  Problem 3.3 of \cite{Reid:problems} asks if there is a continuous, equivariant embedding of this boundary into $\PML(S)$.

While we do not answer this question directly, we do prove something strictly stronger for $\partial \MCG(S)$:

\Restate{Theorem}{thm:leininger_reid_limit_set}{}
{Let $H< \MCG(S)$ be either a finitely generated Veech or Leininger-Reid subgroup as above.  Then the inclusion $H \hookrightarrow \MCG(S)$ extends to a continuous $H$-equivariant embedding $\boundary H \hookrightarrow \boundary \MCG(S)$ with closed image.  In particular, $H$ is a geometrically finite subgroup of $\MCG(S)$.}

\subsubsection*{Other candidates for geometrical finiteness}
Perhaps the next best candidates for geometrically finite subgroups of $\MCG(S)$ are the various right-angled Artin groups constructed by Clay-Leininger-Mangahas \cite{CLM} and Koberda \cite{Kob}.  These subgroups are HHGs and the former are even known to be quasiisometrically embedded in $\MCG(S)$. 

\begin{question}
Are the Clay-Leininger-Mangahas and Koberda right-angled Artin subgroups of $\MCG(S)$ geometrically finite?  Hierarchically quasiconvex?\footnote{Since we initially posted this paper, Mousley answered this question negatively in~\cite{Mousley:nomap}.}
\end{question}

\subsubsection*{The HHS boundary of Teichm\"uller space and $\PML(S)$}

 Slight modifications of the above hierarchical structures endow the Teichm\"uller space $\Teich(S)$, with either the Teichm\"uller or Weil-Petersson metrics, with an HHS structure, as explained in~\cite{BehrstockHagenSisto:HHS_I,BehrstockHagenSisto:HHS_II} using results of~\cite{Brock:pants,Durham:augmented,EskinMasurRafi:large_scale_rank}; see also~\cite{Bowditch:large_scale_rank,Bowditch:wp} for closely-related results. 

\begin{question}\label{question:teichmuller_boundary}
How is the HHS boundary $\boundary\Teich(S)$ of $\Teich(S)$, with the Teichm\"uller metric and the above HHS structure, related to the projective measured lamination space $\PML(S)$?
\end{question}

In fact, there is a natural map $\PML(S)\to\boundary\Teich(S)$ which collapses certain simplices of measures on given laminations to points, while being injective on the set of uniquely ergodic laminations, whose image in $\boundary\Teich(S)$ can be identified with a subset of $\boundary\fontact S\subset\boundary\Teich(S)$.  A promising strategy is to attempt to use this map, along with a result of Edwards~\cite{Edwards,Daverman:book}, to prove that $\boundary\Teich(S)$ is homeomorphic to $\PML(S)$, i.e. to $\mathbb S^{2\xi(S)-1}$.  The missing ingredient is a positive answer to:

\begin{question}\label{question:disjoint_discs}
Does $\boundary\Teich(S)$ have the \emph{disjoint discs property}?
\end{question}

A metric space $M$ has the disjoint disks property if any two maps $D^2\to M$ admit arbitrarily small perturbations with disjoint image; the above question makes sense since it is not hard to see, using Proposition~\ref{prop:properties}, that $\boundary\Teich(S)$ is metrizable.  The difficulty here involves nonuniquely ergodic laminations, which cause a similar problem to the extensions discussed above related to the Leininger-Reid subgroups.

Another question, subject to much recent study, is about the limit sets of Teichm\"uller geodesics in Thurston's compactification.  The analogous question in our setting is:

\begin{question}\label{question:teich limit}
What are the limit sets of Teichm\"uller geodesics in $\partial \TT(S)$?
\end{question}

There are now several constructions of geodesics with limits sets that are bigger than a point \cite{Len, LLR, CMW, brock2016limit}, but these constructions fundamentally depend on the fact that filling minimal laminations can admit simplices of measures, which collapse in $\partial \TT(S)$.  The geodesics constructed in \cite{LLR, CMW, brock2016limit} will have unique limits $\partial \TT(S)$ as their asymptotics with respect to $\partial \TT(S)$ are determined by their asymptotics in the curve graph $\fontact S$.  On the other hand, the situation becomes more opaque for Teichm\"uller geodesics with vertical laminations with multiple components.  Using work of Rafi \cite{Rafi:hypinteich}, one can determine that the coefficients $a_Y$ of the components $Y \subset S$ supporting the potential limits in $\partial \TT(S)$ are determined by limits of ratios of the rates of divergence in the various subsurface curve graphs $\fontact Y$.  However, it seems unlikely that these limits of ratios always exist, suggesting that such geodesics need not have unique limits in $\partial \TT(S)$.

\subsection*{Dynamical and structural results}

Our second main collection of applications of the boundary are about the dynamics of the action on the boundary and the structure of subgroups.  In Section~\ref{subsubsec:auts_and_HHG}, we study automorphisms of hierarchically hyperbolic spaces:

\subsubsection*{Classification of automorphisms}\label{subsubsec:classification_intro}
Given $f\in\Aut(\mathfrak S)$, the set $\BIG(f)$ of $U\in\mathfrak S$ for which $\langle f\rangle\cdot x$ (for some basepoint $x\in\cuco X$) projects to an unbounded set in $\fontact U$ is a possibly empty finite set of pairwise-orthogonal domains preserved by the action of $\langle f\rangle$ on $\mathfrak S$.  We classify $f$ according to the nature of $\BIG(f)$.  First, if $\BIG(f)=\emptyset$, then $f$ has bounded orbits in each $\fontact U$ and hence has bounded orbits in $\cuco X$, by Proposition~\ref{prop:elliptic}; in this case, $f$ is \emph{elliptic}. Second, if $\langle f\rangle\cdot x$ projects to a quasi-line in $\fontact U$ for some $U\in\BIG(f)$, then $\langle f\rangle\cdot x$ is a quasi-line in $\cuco X$, by Proposition~\ref{prop:axial}, and $f$ is \emph{axial}.  Otherwise, $f$ is \emph{distorted}.

If $\BIG(f)=\{S\}$, then $f$ is \emph{irreducible}, and $f$ is \emph{reducible} otherwise.  Perhaps the most important class of HHS automorphisms are irreducible axial automorphisms.  In the mapping class group, these are the pseudo-Anosov elements; in a hierarchically hyperbolic cube complex, these are the rank-one elements that do not virtually preserve hyperplanes~\cite{BehrstockHagenSisto:HHS_I,Hagen:boundary}.  In the case where $(G,\mathfrak S)$ is a hierarchically hyperbolic group, each irreducible axial element is Morse -- this follows from Theorem~\ref{thm:acyl_morse} -- but the converse does not hold.  The question of when irreducible axial elements exist is of major interest later.

\subsubsection*{Dynamics and fixed points}\label{subsubsec:dynamics_and_fixed_points_intro}
In Section~\ref{subsec:dynamics}, we study the dynamics of $f\in\Aut(\mathfrak S)$ on $\boundary\cuco X$.  First, we show that irreducible axial automorphisms act as expected:

\Restate{Proposition}{prop:irreducible axial}{(North-south dynamics)}{If $g \in \Aut(\mathfrak S)$ is irreducible axial, then $g$ has exactly two fixed points $\lambda_+, \lambda_- \in \partial \cuco X$.  Moreover, for any boundary neighborhoods $\lambda_+ \in U_{+}$ and $\lambda_- \in U_{-}$, there exists an $N>0$ such that $g^N(\partial \cuco X - U_-) \subset U_+$.}

In Proposition~\ref{prop:irreducible distorted} and Proposition~\ref{prop:irreducible distorted dynamics}, we show that if $f$ is irreducible distorted, then $f$ fixes a unique point $p\in\boundary\cuco X$, which is an ``attracting fixed point''.  We also prove analogues of these results for reducible automorphisms (Propositions~\ref{prop:reducible big set} and~\ref{prop:exterior}).

We then study hierarchically hyperbolic groups.  First, we rule out distortion:

\Restate{Theorem}{thm:HHGs have no distorteds}{(Coarse semisimplicity)}{If $(G, \mathfrak S)$ is a hierarchically hyperbolic group, then each $g \in G$ is either elliptic or axial; in fact $g$ is undistorted in each element of $\BIG(g)$.}

In the event that $G$ contains irreducible axial elements, we have:

\Restate{Theorem}{thm:dense orbit}{(Topological transitivity)}{Let $(G,\mathfrak S)$ be hierarchically hyperbolic with an irreducible axial element and let $G$ be nonelementary.  Then any $G$--orbit in $\boundary G$ is dense.}

Below, we will describe when $(G,\mathfrak S)$ has an irreducible axial element.

\subsection*{Uses of the boundary}\label{subsec:main_applications_intro}
We use the boundary, and actions thereon, in numerous ways.

\subsubsection*{Finding and exploiting irreducible axials}\label{subsubsec:r_r_intro}
In Section~\ref{sec:rank_rigidity}, we study irreducible axial elements of groups of automorphisms of hierarchically hyperbolic spaces.  The setting is an HHS $(\cuco X,\mathfrak S)$ with $\cuco X$ proper and $\mathfrak S$ countable, and we consider a countable subgroup $G\leq\Aut(\mathfrak S)$.  This holds, for example, when $\cuco X=G$ is an HHG.  The main technical statement is:

\medskip\par\noindent
{\bf Propositions \ref{prop:alternative_HHG_version},\ref{prop:alternative_non_parabolic_version} (Finding irreducible axials)} \emph{Suppose that either $G$ acts properly and coboundedly on $\cuco X$ and cofinitely on $\mathfrak S$, or $G$ acts with unbounded orbits in $\cuco X$ and no fixed point in $\boundary\fontact S$.  Then either $G$ contains an irreducible axial element, or there exists $U\in\mathfrak S-\{U\}$ which is fixed by a finite-index subgroup of $G$.}\par\medskip

These two propositions are proved in tandem.  The strategy is to consider probability measures on $G$ and corresponding $G$--stationary measures on $\boundary\cuco X$; the main lemma, Lemma~\ref{lem:fixed_point} shows that, unless $G$ has a finite orbit in $\boundary\fontact S$ or $\mathfrak S-\{S\}$, such a measure must be supported on $\boundary\fontact S\subset\boundary\cuco X$.  In particular, if $\fontact S$ is bounded, then there must be a finite orbit in $\mathfrak S-\{S\}$. We emphasize that, for the above proposition and all of its applications, compactness of the HHS boundary (i.e. Theorem~\ref{thm:cpt}) is absolutely vital.

Using the above propositions, we prove:

\Restate{Theorem}{thm:tits}{(HHG Tits alternative)}{Let $(G, \mathfrak S)$ be an HHG and let $H\leq G$.  Then $H$ either contains a nonabelian free group or is virtually abelian.}

By analyzing supports of global fixed points in the boundary of an HHS, we then prove:

\Restate{Theorem}{thm:ost hhg}{(Omnibus Subgroup Theorem)}{Let $(G,\mathfrak S)$ be a hierarchically hyperbolic group and let $H\leq G$.  Then there exists an element $g \in H$ with $\Active(H)= \BIG(g)$.  Moreover, for any $g' \in H$ and each $U \in \BIG(g')$, there exists $V \in \BIG(g)$ with $U \nest V$.}

Here, $\Active(H)$ is the set of domains $U$ on which $H$ has unbounded projection. The theorem we actually prove is more general than the above, but the version stated here is sufficient to imply the Omnibus Subgroup Theorem for mapping class groups, due to Handel-Mosher~\cite{HandelMosher:omnibus}, which they proved as an umbrella theorem for several subgroup structure theorems, including the Tits alternative; see also \cite{Mangahas:SR} for further discussion.

We also obtain a coarse/HHS version of the rank-rigidity conjecture for CAT(0) spaces:

\medskip\par\noindent
{\bf Theorems \ref{thm:rank_rigidity_non_parabolic},\ref{thm:rank_rigidity_HHG} (Coarse rank-rigidity)} \emph{Let $(\cuco X,\mathfrak S)$ be an HHS with $\cuco X$ unbounded and proper and $\mathfrak S$ countable.  Let $G\leq\Aut(\mathfrak S)$ be a countable subgroup and suppose that one of the following holds:
\begin{enumerate}
 \item $G$ acts essentially on $\cuco X$ with no fixed point in $\boundary\cuco X$;
 \item $G$ acts properly and coboundedly on $\cuco X$ and cofinitely on $\mathfrak S$.
\end{enumerate}
Then either $(\cuco X,\mathfrak S)$ is a \emph{product HHS with unbounded factors} or there exists an axial element $g\in G$ such that $\BIG(g)$ consists of a single domain $W$ such that $\fontact U$ is bounded if $U\orth W$.}\par\medskip

Such an element $g$ is a \emph{rank-one automorphism}; all of its quasigeodesic axes of any fixed quality lie in some neighborhood of one another (of radius depending on the quality).  The HHS is a \emph{product with unbounded factors} if there exists $U\in\mathfrak S$ such that $\cuco X$ coarsely coincides with the standard product region $P_U$, and each of $E_U,F_U$ is unbounded.  

In particular, if $\cuco X$ is any of the cube complexes shown in~\cite{BehrstockHagenSisto:HHS_I} to be hierarchically hyperbolic (i.e. those admitting ``factor-systems''), then our methods allow us to recover the Caprace-Sageev rank-rigidity theorem from~\cite{CapraceSageev:rank_rigidity} for $\cuco X$: 

\Restate{Corollary}{cor:rr_cubes}{(Rank-rigidity for many cube complexes)}{Let $\cuco X$ be a CAT(0) cube complex with a factor-system.  Let $G$ act on $\cuco X$ and suppose that one of the following holds:
\begin{enumerate}
 \item $\cuco X$ is unbounded and $G$ acts on $\cuco X$ properly and cocompactly;
 \item $G$ acts on $\cuco X$ with no fixed point in $\cuco X\cup\simp\cuco X$.
\end{enumerate}
Then $\cuco X$ contains a $G$--invariant convex subcomplex $\cuco Y$ such that either $G$ contains a rank-one isometry of $\cuco Y$ or $\cuco Y=\cuco A\times\cuco B$, where $\cuco A$ and $\cuco B$ are unbounded convex subcomplexes. }

It is difficult to construct cube complexes without factor-systems that satisfy the remaining hypotheses of this theorem.  At least in the cocompact case, we believe that our proof works without explicitly hypothesizing the existence of a factor system -- see Question~A of~\cite{BehrstockHagenSisto:HHS_II}, which asks whether the presence of a geometric group action on a cube complex guarantees that a factor system exists (see Remark~\ref{rem:factor_system}).\footnote{After we initially posted this paper, Hagen and Susse showed that every CAT(0) cube complex with a geometric group action admits a factor system and is thus hierarchically hyperbolic~\cite{HagenSusse}.}

\subsection*{Other applications, examples, and questions}\label{subsubsec:other_applications_intro}

\subsubsection*{The HHS boundary in the cubical case}\label{subsec:cubical_intro}
If $\cuco X$ is a CAT(0) cube complex with a factor-system $\mathfrak F$ (here $\mathfrak F$ more properly denotes the set of parallelism classes of elements of the factor system), then the resulting hierarchically hyperbolic structure (which is fundamentally derived from the hyperplanes of $\cuco X$ and how they interact) has a boundary which is, perhaps unsurprisingly, closely related to the \emph{simplicial boundary} $\simp\cuco X$ introduced in~\cite{Hagen:boundary} (which is derived from how certain infinite families of hyperplanes interact).  Specifically:

\Restate{Theorem}{thm:simplicial_HHS}{(Simplicial and HHS boundaries)}{Let $\cuco X$ be a CAT(0) cube complex with a factor system $\mathfrak F$, and let $(\cuco X,\mathfrak F)$ be the associated hierarchically hyperbolic structure.  There is a topology $\mathcal T$ on the simplicial boundary $\simp\cuco X$ so that:
\begin{enumerate}
 \item There is a homeomorphism $b:(\simp\cuco X,\mathcal T)\to\boundary(\cuco X,\mathfrak F)$,
 \item for each component $C$ of the simplicial complex $\simp\cuco X$, the inclusion $C\hookrightarrow(\simp\cuco X,\mathcal T)$ is an embedding.
\end{enumerate}
In particular, if $\mathfrak F,\mathfrak F'$ are factor systems on $\cuco X$, then $\boundary(\cuco X,\mathfrak F)$ is homeomorphic to $\boundary(\cuco X,\mathfrak F')$. }

This theorem highlights the relationship between the question of when factor systems exist, and when $\cuco X$ is \emph{visible} in the sense that every simplex of the simplicial boundary corresponds to a geodesic ray in $\cuco X$; this is discussed in Remark~\ref{rem:FS_vis}.

\subsubsection*{Detecting splittings and cubulations from the boundary}\label{subsubsec:detecting_splittings_intro}
It is not difficult to show, from the definitions and Stallings' theorem on ends of groups~\cite{Stallings:ends}, that if $(G,\mathfrak S)$ is a hierarchically hyperbolic group, then $\boundary(G,\mathfrak S)$ is disconnected if and only if $G$ splits over a finite subgroup.

\begin{question}\label{question:JSJ}
Can the JSJ splitting of $G$ over slender subgroups (see~\cite{FujiwaraPapasoglu,DunwoodySageev,RipsSela}) be detected by examining separating spheres in $\boundary(G,\mathfrak S)$, as is the case for hyperbolic groups and splittings over two-ended subgroups~\cite{Bowditch:jsj}?
\end{question}

One can also consider producing actions of hierarchically hyperbolic groups on CAT(0) cube complexes other than trees.  As usual, this divides into two separate issues, namely detecting a profusion of codimension--$1$ subgroups and then choosing a finite collection sufficient to produce an action on a cube complex with good finiteness properties.  It appears as though $\boundary(G,\mathfrak S)$ can be used to produce a proper action on a cube complex from a sufficiently rich collection of hierarchically quasiconvex codimension--$1$ subgroups by a method exactly analogous to that used to cubulate various hyperbolic groups in~\cite{BergeronWise}.  The main difference is that $G$ does not act as a uniform convergence group on $\boundary(G,\mathfrak S)$; one must replace the space of triples of distinct boundary points by the space of triples $(p,q,r)\in\boundary G$ such that any two of $p,q,r$ are \emph{antipodal}, i.e. joined by a bi-infinite hierarchy path.  

\begin{question}\label{question:codim_1}
Let $(G,\mathfrak S)$ be a hierarchically hyperbolic group.  Give conditions on $G$ ensuring that for any antipodal $p,q\in\boundary G$, there exists a hierarchically quasiconvex codimension--$1$ subgroup $H$ so that $p,q$ are in distinct components of $\boundary gH$ for some $g\in G$.
\end{question}

We have not included a detailed discussion of the above ``boundary cubulation for HHG'' technique in the present paper since there are not yet any applications; these could be provided by an answer to Question~\ref{question:codim_1}.

\subsubsection*{Poisson boundaries and $C^*$--simplicity}\label{subsubsec:c_star_intro}
In Section~\ref{subsubsec:poisson}, we show that the boundary of an HHG is a topological model for the Poisson boundary:

\Restate{Theorem}{thm:poisson}{(Poisson boundary)}{Let $(G, \mathfrak S)$ be an HHG with $\diam \fontact S  = \infty$, $\mu$ be a nonelementary probability measure on $G$ with finite entropy and finite first logarithmic moment, and $\nu$ the resulting $\mu$-stationary measure on $\partial G$.  Then $(\partial G, \nu)$ is the Poisson boundary for $(G, \mu)$.}

In fact, $\boundary \fontact S$ is a model for the Poisson boundary~\cite{BehrstockHagenSisto:HHS_I}, but $\boundary(G,\mathfrak S)$ has the advantage of being compact, while in general $\boundary\fontact S$ is not compact.  The space $\boundary G$ is a \emph{$G$--boundary}, i.e. a compactum on which $G$ acts minimally and proximally.  Moreover:

\begin{propi}
The action of $G$ on $\boundary G$ is topologically free, i.e. for each $g\in G-\{1\}$, the set of $p\in\boundary\cuco X$ with $gp\neq p$ is dense in $\boundary\cuco X$.
\end{propi}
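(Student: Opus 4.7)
The plan is to suppose, for contradiction, that some $g\in G-\{1\}$ fixes a nonempty open set $W\subseteq\partial G$ pointwise, and to amplify this using the north-south dynamics of an irreducible axial element to force a conjugate of $g$ to act trivially on all of $\partial G$, before deriving a final contradiction.

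First I would invoke Section~\ref{sec:rank_rigidity} (Propositions~\ref{prop:alternative_HHG_version} and~\ref{prop:alternative_non_parabolic_version}) to produce an irreducible axial element $h\in G$; this is possible because $(G,\mathfrak S)$ is a nonelementary HHG with $\diam\fontact S=\infty$. By topological transitivity (Theorem~\ref{thm:dense orbit}), every $G$-orbit in $\partial G$ is dense, so after replacing $h$ by a conjugate I may assume $\lambda_+(h)\in W$. Shrinking $W$, I can also arrange $\lambda_-(h)\notin\overline W$, and pick an open neighborhood $U_-\ni\lambda_-(h)$ with $\overline{U_-}\cap\overline W=\emptyset$.

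The conjugation step is then short. By the north-south dynamics of Proposition~\ref{prop:irreducible axial}, there exists $N>0$ with $h^N(\partial G\setminus U_-)\subseteq W$. For every $p\in\partial G\setminus U_-$ this gives $g(h^Np)=h^Np$, equivalently $(h^{-N}gh^N)p=p$. Hence $g':=h^{-N}gh^N\in G-\{1\}$ fixes the dense open subset $\partial G\setminus U_-$ pointwise. Since $\mathrm{Fix}(g')$ is closed in $\partial G$ (which is Hausdorff by Proposition~\ref{prop:properties}), $g'$ must fix all of $\partial G$.

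The main obstacle is ruling out such a $g'$; this amounts to showing that $G$ acts faithfully on $\partial G$. Using the topological embedding $\partial\fontact U\hookrightarrow\partial G$ (Theorem~\ref{thm:hyperbolic}) together with the support structure of boundary points, $g'$ would have to preserve each $U\in\mathfrak S$ and fix $\partial\fontact U$ pointwise for every $U$. A standard fact for isometries of $\delta$-hyperbolic spaces then forces $g'$ to have bounded displacement on every unbounded $\fontact U$, and the distance formula forces $g'$ to have bounded orbits in $G$, hence finite order. To eliminate this residual torsion I would invoke acylindricity of the $G$-action on $\fontact S$ (Theorem~14.3 of \cite{BehrstockHagenSisto:HHS_I}): choosing two independent irreducible axials $k_1,k_2\in G$ (whose existence follows from nonelementarity), $g'$ lies in $E(k_1)\cap E(k_2)$, which is finite by acylindricity; a further acylindricity argument on the other unbounded $\fontact U$ combined with the distance formula then reduces this residual finite subgroup to the trivial group, yielding $g'=1$ and the desired contradiction.
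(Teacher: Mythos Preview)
Your amplification step via north-south dynamics is a genuinely different route from the paper's, and the first two paragraphs work: conjugating by a high power of an irreducible axial whose attractor lies in $W$ does produce $g'=h^{-N}gh^N\in G-\{1\}$ fixing all of $\partial G$ pointwise.

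The gap is in the final step. You correctly place $g'$ in $E(k_1)\cap E(k_2)$, which is finite (indeed, this is the maximal finite normal subgroup of $G$ in the acylindrically hyperbolic setting). But your assertion that ``a further acylindricity argument on the other unbounded $\fontact U$ combined with the distance formula then reduces this residual finite subgroup to the trivial group'' is not justified and is false in general. If $G$ has a nontrivial finite normal subgroup $N$---take for instance $G=\integers/2\times F_2$ with the hyperbolic HHG structure, so $\partial G=\partial F_2$---then every element of $N$ acts trivially on $\partial G$, and topological freeness fails outright. Acylindricity on the various $\fontact U$ cannot eliminate $N$: elements of $N$ already have bounded orbits on every $\fontact U$, so no acylindricity inequality bites.

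The paper's proof reaches the same obstruction by a shorter path. Instead of amplifying, it uses the density of $\partial\fontact S$ in $\partial G$ (Proposition~\ref{prop:stable dense}) to see that a $g$ fixing an open set must fix infinitely many points of $\partial\fontact S$. The automorphism classification then immediately rules out the axial cases: an irreducible axial fixes exactly two points of $\partial\fontact S$, and a reducible axial fixes none, by Lemma~\ref{lem:bounded big papa}. The remaining elliptic (finite-order) case is dispatched ``by hypothesis''---namely the assumption, made explicit only in the sentence following the proof, that finite-order elements have finite fixed-point set in $\partial\fontact S$. Your argument requires this (or an equivalent) hypothesis as well; without it the proposition as literally stated is not true.
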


\begin{proof}
Let $g\in G-\{1\}$, let $q\in\boundary G$, and let $U$ be a neighborhood of $q$.  Suppose for a contradiction that $g$ fixes $U$ pointwise.  By Proposition~\ref{prop:alternative_HHG_version}, $G$ contains an irreducible axial element, so by Proposition~\ref{prop:stable dense}, $\boundary\fontact S$ is dense in $\boundary G$, whence, since $G$ is non-elementary, $g$ fixes infinitely many distinct points of $\boundary\fontact S$.  If $g$ is reducible axial, then Lemma~\ref{lem:bounded big papa} yields a contradiction, since $g$ cannot fix any point in $\boundary\fontact S$ by the lemma.  If $g$ is irreducible axial, then $g$ fixes exactly two points in $\boundary\fontact S$, again a contradiction.  Otherwise, $g$ is elliptic and hence has finite order and we are done by hypothesis. 
\end{proof}

By a result of Kalantar-Kennedy~\cite[Theorem~1.5]{KalantarKennedy}, the above proposition gives a new proof that a nonelementary HHG $G$ with $\boundary\fontact S$ unbounded is $C^*$--simple (i.e. the reduced $C^*$--algebra of $G$ is simple) provided finite-order elements have finite fixed point set in $\boundary\fontact S$.  However, $G$ is known to be $C^*$--simple under these circumstances, since $G$ is acylindrically hyperbolic~\cite{BehrstockHagenSisto:HHS_I} and has no finite normal subgroup~\cite{DGO}.

In light of the HHG structure on cubulated groups discussed above, Theorem~\ref{thm:poisson} should be compared to the results of~\cite{NevoSageev:Poisson}, in which Nevo-Sageev construct the Poisson boundary for a cubical group using the Roller boundary of the cube complex.

\subsection*{Outline of this paper}\label{subsec:structure}
In Section~\ref{sec:background}, we review hierarchically hyperbolic spaces.  In Section~\ref{sec:boundary_definition}, we define the HHS boundary.  Section~\ref{sec:compactness} is devoted to the proof that proper HHS have compact boundaries, and in Section~\ref{sec:gromov_boundary}, we show that the HHS boundary of a hyperbolic HHS is homeomorphic to the Gromov boundary.  In Section~\ref{sec:extending}, we discuss continuous extensions of maps between HHS to the boundary, and consider this phenomenon in the context of Veech and Leininger-Reid subgroups of the mapping class group.  Automorphisms of hierarchically hyperbolic structures induce homeomorphisms of the boundary; in Section~\ref{subsubsec:auts_and_HHG}, we classify automorphisms and study fixed sets and dynamics of the actions of automorphisms on the boundary.  In particular, in Section~\ref{sec:semisimple}, we show that cyclic subgroups of hierarchically hyperbolic groups are undistorted.  Section~\ref{sec:essential} is a brief technical discussion of essential HHS and actions, supporting Section~\ref{sec:rank_rigidity}, in which we prove the coarse rank rigidity theorem and some of its consequences.  In Section~\ref{sec:cube_complex}, we consider CAT(0) cube complexes with HHS structures coming from~\cite{BehrstockHagenSisto:HHS_I}, relating the HHS boundary to the \emph{simplicial boundary} from~\cite{Hagen:boundary}.

\subsection*{Acknowledgments}\label{subsec:acknowledgment}
M.F.H. is grateful to Dan Guralnik and Alessandra Iozzi for a discussion clarifying the situation now described in Remark~\ref{rem:defn_subtlety} and to Dave Futer and Brian Rushton for a discussion about some alternative topologies on the boundary of a cube complex related to the one discussed in Section~\ref{sec:cube_complex}.  M.G.D. is grateful to Henry Wilton and Cambridge University for their hospitality during a visit in the fall of 2015.  We are grateful to Jason Behrstock for several helpful discussions and for assistance with the HHS infographic mentioned above. We are grateful to Bob Daverman for a useful \url{mathoverflow.net} comment on the shrinkable decompositions of manifolds, which has implications for the structure of the HHS boundary of Teichm\"{u}ller space.  We thank Robert Tang for a useful suggestion related to Section~\ref{sec:extending}.  We also thank the organizers of the \textit{Young Geometric Group Theory IV} meeting, where initial discussions about this project took place.  We are also very grateful to Sarah Mousley, Jacob Russell, and the anonymous referee for numerous helpful comments and corrections.

\section{Background}\label{sec:background}
\subsection{Hierarchically hyperbolic spaces}\label{subsec:definition}
We begin by recalling the definition of a hierarchically hyperbolic space, introduced in~\cite{BehrstockHagenSisto:HHS_I} and axiomatized in a more efficient fashion in~\cite{BehrstockHagenSisto:HHS_II} as follows.  
We begin by defining a hierarchically hyperbolic space.  We 
will work in the context of a \emph{quasigeodesic space},  $\cuco X$, 
i.e., a 
metric space where any two
points can be connected by a uniform-quality quasigeodesic.

\begin{defn}[Hierarchically hyperbolic space]\label{defn:space_with_distance_formula}
The $q$--quasigeodesic space  $(\cuco X,\dist_{\cuco X})$ is a \emph{hierarchically hyperbolic space} if there exists $\delta\geq0$, an index set $\mathfrak S$, whose elements we call \emph{domains}, and a set $\{\fontact W:W\in\mathfrak S\}$ of $\delta$--hyperbolic spaces $(\fontact U,\dist_U)$,  such that the following conditions are satisfied:\begin{enumerate}
\item\textbf{(Projections.)}\label{item:dfs_curve_complexes} There is a set $\{\pi_W:\cuco X\rightarrow2^{\fontact W}\mid W\in\mathfrak S\}$ of \emph{projections} sending points in $\cuco X$ to sets of diameter bounded by some $\xi\geq0$ in the various $\fontact W\in\mathfrak S$.  Moreover, there exists $K$ so that each $\pi_W$ is $(K,K)$--coarsely Lipschitz.

 \item \textbf{(Nesting.)} \label{item:dfs_nesting} $\mathfrak S$ is
 equipped with a partial order $\nest$, and either $\mathfrak
 S=\emptyset$ or $\mathfrak S$ contains a unique $\nest$--maximal
 element; when $V\nest W$, we say $V$ is \emph{nested} in $W$.  We
 require that $W\nest W$ for all $W\in\mathfrak S$.  For each
 $W\in\mathfrak S$, we denote by $\mathfrak S_W$ the set of
 $V\in\mathfrak S$ such that $V\nest W$.  Moreover, for all $V,W\in\mathfrak S$
 with $V\propnest W$ there is a specified subset
 $\rho^V_W\subset\fontact W$ with $\diam_{\fontact W}(\rho^V_W)\leq\xi$.
 There is also a \emph{projection} $\rho^W_V\colon \fontact
 W\rightarrow 2^{\fontact V}$.  (The notation is
 justified by viewing $\rho^V_W$ as a coarsely constant map $\fontact
 V\rightarrow 2^{\fontact W}$.)
 
 \item \textbf{(Orthogonality.)} 
 \label{item:dfs_orthogonal} $\mathfrak S$ has a symmetric and anti-reflexive relation called \emph{orthogonality}: we write $V\orth W$ when $V,W$ are orthogonal.  Also, whenever $V\nest W$ and $W\orth U$, we require that $V\orth U$.  We require that for each $T\in\mathfrak S$ and each $U\in\mathfrak S_T$ for which $\{V\in\mathfrak S_T:V\orth U\}\neq\emptyset$, there exists $W\in \mathfrak S_T-\{T\}$, so that whenever $V\orth U$ and $V\nest T$, we have $V\nest W$.  Finally, if $V\orth W$, then $V,W$ are not $\nest$--comparable.
 
 \item \textbf{(Transversality and consistency.)}
 \label{item:dfs_transversal} If $V,W\in\mathfrak S$ are not
 orthogonal and neither is nested in the other, then we say $V,W$ are
 \emph{transverse}, denoted $V\transverse W$.  There exists
 $\kappa_0\geq 0$ such that if $V\transverse W$, then there are
  sets $\rho^V_W\subseteq\fontact W$ and
 $\rho^W_V\subseteq\fontact V$ each of diameter at most $\xi$ and 
 satisfying: $$\min\left\{\dist_{
 W}(\pi_W(x),\rho^V_W),\dist_{
 V}(\pi_V(x),\rho^W_V)\right\}\leq\kappa_0$$ for all $x\in\cuco X$.
 
 For $V,W\in\mathfrak S$ satisfying $V\nest W$ and for all
 $x\in\cuco X$: $$\min\left\{\dist_{
 W}(\pi_W(x),\rho^V_W),\diam_{\fontact
 V}(\pi_V(x)\cup\rho^W_V(\pi_W(x)))\right\}\leq\kappa_0.$$ 
 
 \noindent The preceding two inequalities are the \emph{consistency inequalities} for points in $\cuco X$.  Finally, if $U\nest V$, then $\dist_W(\rho^U_W,\rho^V_W)\leq\kappa_0$ whenever $W\in\mathfrak S$ satisfies either $V\propnest W$ or $V\transverse W$ and $W\north U$.
 
 \item \textbf{(Finite complexity.)} \label{item:dfs_complexity} There exists $n\geq0$, the \emph{complexity} of $\cuco X$ (with respect to $\mathfrak S$), so that any set of pairwise--$\nest$--comparable elements has cardinality at most $n$.
  
 \item \textbf{(Large links.)} \label{item:dfs_large_link_lemma} There
exist $\lambda\geq1$ and $E\geq\max\{\xi,\kappa_0\}$ such that the following holds.
Let $W\in\mathfrak S$ and let $x,x'\in\cuco X$.  Let
$N=\lambda\dist_{_W}(\pi_W(x),\pi_W(x'))+\lambda$.  Then there exists $\{T_i\}_{i=1,\dots,\lfloor
N\rfloor}\subseteq\mathfrak S_W-\{W\}$ such that for all $T\in\mathfrak
S_W-\{W\}$, either $T\in\mathfrak S_{T_i}$ for some $i$, or $\dist_{
T}(\pi_T(x),\pi_T(x'))<E$.  Also, $\dist_{
W}(\pi_W(x),\rho^{T_i}_W)\leq N$ for each $i$. 
 
 \item \textbf{(Bounded geodesic image.)} \label{item:dfs:bounded_geodesic_image} For all $W\in\mathfrak S$, all $V\in\mathfrak S_W-\{W\}$, and all geodesics $\gamma$ of $\fontact W$, either $\diam_{\fontact V}(\rho^W_V(\gamma))\leq E$ or $\gamma\cap\neb_E(\rho^V_W)\neq\emptyset$. 
 
 \item \textbf{(Partial Realization.)} \label{item:dfs_partial_realization} There exists a constant $\alpha$ with the following property. Let $\{V_j\}$ be a family of pairwise orthogonal elements of $\mathfrak S$, and let $p_j\in \pi_{V_j}(\cuco X)\subseteq \fontact V_j$. Then there exists $x\in \cuco X$ so that:
 \begin{itemize}
 \item $\dist_{V_j}(x,p_j)\leq \alpha$ for all $j$,
 \item for each $j$ and 
 each $V\in\mathfrak S$ with $V_j\nest V$, we have 
 $\dist_{V}(x,\rho^{V_j}_V)\leq\alpha$, and
 \item if $W\transverse V_j$ for some $j$, then $\dist_W(x,\rho^{V_j}_W)\leq\alpha$.
 \end{itemize}

\item\textbf{(Uniqueness.)} For each $\kappa\geq 0$, there exists
$\theta_u=\theta_u(\kappa)$ such that if $x,y\in\cuco X$ and
$\dist(x,y)\geq\theta_u$, then there exists $V\in\mathfrak S$ such
that $\dist_V(x,y)\geq \kappa$.\label{item:dfs_uniqueness}
\end{enumerate}
We often refer to $\mathfrak S$, together with the nesting
and orthogonality relations, the projections, and the hierarchy paths,
as a \emph{hierarchically hyperbolic structure} for the space $\cuco
X$.  
\end{defn}

\begin{notation}\label{notation:suppress_pi}
Given $U\in\mathfrak S$, we often suppress the projection
map $\pi_U$ when writing distances in $\fontact U$: given $x,y\in\cuco X$ and
$p\in\fontact U$  we write
$\dist_U(x,y)$ for $\dist_U(\pi_U(x),\pi_U(y))$ and $\dist_U(x,p)$ for
$\dist_U(\pi_U(x),p)$. To measure distance between a 
pair of sets, we take the infimal distance 
between the two sets. 
Given $A\subset \cuco X$ and $U\in\mathfrak S$ 
we let $\pi_{U}(A)$ denote $\cup_{a\in A}\pi_{U}(a)$.
\end{notation}

\begin{rem}[Summary of constants]\label{rem:constants}
Each hierarchically hyperbolic space $(\cuco X,\mathfrak S)$ is
associated with a collection of constants often, as above, denoted 
$\delta,\xi,n,\kappa_0,E,\theta_u,K$, where:
\begin{enumerate}
 \item $\fontact U$ is $\delta$--hyperbolic for each $U\in\mathfrak S$,
 \item each $\pi_U$ has image of diameter at most $\xi$ and each $\pi_U$ is $(K,K)$--coarsely Lipschitz, and each $\rho^U_V$ has (image of) diameter at most $\xi$,
 \item for each $x\in\cuco X$, the tuple $(\pi_U(x))_{U\in\mathfrak S}$ is $\kappa_0$--consistent,
 \item $E$ is the constant from the bounded geodesic image axiom.
 \end{enumerate}
Whenever working in a fixed hierarchically hyperbolic space, we use the above notation freely.  We can, and shall, assume that $E\geq q,E\geq\delta,E\geq\xi,E\geq\kappa_0,E\geq K$, and $E\geq\alpha$.
\end{rem}

\begin{lem}[``Finite dimension'']\label{lem:pairwise_orthogonal}
Let $(\cuco X,\mathfrak S)$ be a hierarchically hyperbolic space of complexity $n$ and let $U_1,\ldots,U_k\in\mathfrak S$ be pairwise-orthogonal.  Then $k\leq n$.
\end{lem}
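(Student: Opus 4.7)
The plan is to convert the pairwise-orthogonal collection $U_1,\ldots,U_k$ into a strictly descending $\nest$-chain of the same length, and then invoke the finite complexity axiom. The key tool is the ``orthogonal container'' clause of the orthogonality axiom (item~\eqref{item:dfs_orthogonal} of Definition~\ref{defn:space_with_distance_formula}): whenever $U\nest T$ admits some $V\nest T$ with $V\orth U$, there is a proper subdomain $W\propnest T$ in which \emph{every} element of $\mathfrak S_T$ orthogonal to $U$ is nested.

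Concretely, I would define $T_0 := S$, the $\nest$-maximal element, and inductively construct domains $T_1,\ldots,T_{k-1}$ satisfying the invariant
\[
T_0 \propnest T_1 \propnest \cdots \propnest T_{i} \quad\text{and}\quad U_{i+1},U_{i+2},\ldots,U_k \nest T_i.
\]
The base case $i=0$ is immediate since $S$ is $\nest$-maximal, so every $U_j\nest T_0$. For the inductive step, suppose $T_{i-1}$ has been constructed with $U_i,U_{i+1},\ldots,U_k\nest T_{i-1}$, and assume $i\le k-1$. Then $U_i\in\mathfrak S_{T_{i-1}}$, and the set $\{V\in\mathfrak S_{T_{i-1}}:V\orth U_i\}$ is nonempty because it contains $U_{i+1}$ (using $U_{i+1}\nest T_{i-1}$ and $U_{i+1}\orth U_i$). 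The orthogonal container clause then produces some $T_i\in\mathfrak S_{T_{i-1}}-\{T_{i-1}\}$, so $T_i\propnest T_{i-1}$, with the property that every $V\nest T_{i-1}$ orthogonal to $U_i$ satisfies $V\nest T_i$; in particular $U_{i+1},\ldots,U_k\nest T_i$, maintaining the invariant.

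Running this construction for $i=1,\ldots,k-1$ yields a strictly descending chain $T_0\propnest T_1\propnest\cdots\propnest T_{k-1}$ of $k$ pairwise $\nest$-comparable elements of $\mathfrak S$. The finite complexity axiom (item~\eqref{item:dfs_complexity}) immediately gives $k\le n$.

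The only potential subtlety — and what I would flag as the point to check carefully — is the verification that the hypothesis of the orthogonal container clause holds at each step, i.e.\ that there really is some element of $\mathfrak S_{T_{i-1}}$ orthogonal to $U_i$; this is exactly why the induction stops at $i=k-1$ and why we need the pairwise-orthogonality (as opposed to mere orthogonality to a single domain) of the original collection. Once this bookkeeping is in place the argument is entirely formal, reducing pairwise-orthogonality bounds to the already-postulated bound on $\nest$-chains.
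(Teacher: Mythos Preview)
Your argument is correct and is essentially identical to the paper's proof: both use the orthogonal container clause of Definition~\ref{defn:space_with_distance_formula}.\eqref{item:dfs_orthogonal} to convert the pairwise-orthogonal set into a proper $\nest$-chain of length $k$, and then invoke finite complexity. One notational slip: your displayed invariant and final chain should read $T_{k-1}\propnest\cdots\propnest T_1\propnest T_0$ (as your own text ``$T_i\propnest T_{i-1}$'' makes clear), not $T_0\propnest T_1\propnest\cdots\propnest T_{k-1}$.
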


\begin{proof}
Definition~\ref{defn:space_with_distance_formula}.\eqref{item:dfs_orthogonal} provides $W_1\in\mathfrak S$, not $\nest$--maximal, so that $U_2,\ldots,U_k\nest W_1$.  Using Definition~\ref{defn:space_with_distance_formula} inductively yields a sequence $W_{k-1}\propnest W_{k-2}\propnest\ldots\propnest W_1\nest S$, with $S$ $\nest$--maximal, so that $U_{i-1},\ldots,U_k\nest W_i$ for $1\leq i\leq k-1$.  Hence $k\leq n$ by Definition~\ref{defn:space_with_distance_formula}.\eqref{item:dfs_complexity}.
\end{proof}

The next lemma is a simple consequence of the axioms and also appears in~\cite{BehrstockHagenSisto:HHS_III}:

\begin{lem}\label{lem:orthogonal_close}
Let $U,V,W\in\mathfrak S$ satisfy $U\orth V$, and $U,V\notorth W$, and $W\not\nest U,V$.  Then $\dist_W(\rho^U_W,\rho^V_W)\leq 2E$.
\end{lem}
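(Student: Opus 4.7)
The plan is to use the partial realization axiom (Definition~\ref{defn:space_with_distance_formula}.\eqref{item:dfs_partial_realization}) applied to the pair $\{U,V\}$, which we may do because $U\orth V$. First I would observe that the hypotheses actually pin down the relationship of each of $U,V$ to $W$: since $U\not\orth W$ and $W\not\nest U$, and since $U=W$ would force $V\orth W$ (contradicting $V\not\orth W$), we must have either $U\propnest W$ or $U\transverse W$. By the same reasoning, either $V\propnest W$ or $V\transverse W$.

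Next, I would apply the partial realization axiom to the pairwise orthogonal family $\{U,V\}$ together with any choice of points $p_U\in\pi_U(\cuco X)$ and $p_V\in\pi_V(\cuco X)$. This produces a point $x\in\cuco X$ such that, for any $Y\in\mathfrak S$ with $U\propnest Y$ or $U\transverse Y$, we have $\dist_Y(x,\rho^U_Y)\leq\alpha\leq E$, and similarly with $U$ replaced by $V$. Taking $Y=W$ in each case (which is legitimate by the previous paragraph), this yields
\[
\dist_W(\pi_W(x),\rho^U_W)\leq E\qquad\text{and}\qquad \dist_W(\pi_W(x),\rho^V_W)\leq E.
\]

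Finally, applying the triangle inequality in the hyperbolic space $\fontact W$ gives $\dist_W(\rho^U_W,\rho^V_W)\leq 2E$, as required. There is no real obstacle here: the only subtlety is verifying at the outset that the hypotheses rule out $U\nest W$ being an equality and rule out $W\propnest U$ or $W\propnest V$, so that the relevant clause of partial realization (the nested case $U\propnest W$ or the transverse case $U\transverse W$) genuinely applies with $W$ in the role of the ambient domain.
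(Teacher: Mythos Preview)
Your proof is correct and follows essentially the same approach as the paper: both observe that the hypotheses force $U\propnest W$ or $U\transverse W$ (and likewise for $V$), apply partial realization to the orthogonal pair $\{U,V\}$ to obtain a point $x$ with $\dist_W(x,\rho^U_W),\dist_W(x,\rho^V_W)\leq E$, and conclude by the triangle inequality.
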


\begin{proof}
Our assumptions imply that $U\propnest W$ or $U\transverse W$, and the same is true for $V$.  Applying partial realization yields a point $x\in\cuco X$ so that $\dist_T(x,\rho^U_T),\dist_T(x,\rho^V_T)\leq E$ whenever $T\not\nest U,V$ and $T\notorth U,V$.  The claim follows from the triangle inequality.
\end{proof}

\begin{defn}
For $D\geq 1$, a path $\gamma$ in $\cuco X$ is a \emph{$D$--hierarchy path} if
 \begin{enumerate}
  \item $\gamma$ is a $(D,D)$-quasi-geodesic,
  \item for each $W\in\mathfrak S$, $\pi_W\circ\gamma$ is an unparametrized $(D,D)$--quasi-geodesic.
An unbounded hierarchy path $[0,\infty)\to\cuco X$ is a \emph{hierarchy ray}.
\end{enumerate}
\end{defn}

The following theorems are proved in~\cite{BehrstockHagenSisto:HHS_II}:

\begin{thm}[Realization theorem]\label{thm:realization}
Let $(\cuco X,\mathfrak S)$ be hierarchically hyperbolic.  Then for each $\kappa$ there exists $\theta_e,\theta_u$ such that the following holds. Let $\tup
 b\in\prod_{W\in\mathfrak S}2^{\fontact W}$ have each coordinate 
 correspond to a subset of $\fontact W$ of diameter at most $\kappa$; for each $W$, let $b_W$ denote the $\fontact
 W$--coordinate of $\tup b$. Suppose that whenever $V\transverse W$ we have
 $$\min\left\{\dist_{ W}(b_W,\rho^V_W),\dist_{ V}(b_V,\rho^W_V)\right\}\leq\kappa$$
 and whenever $V\nest W$ we have
 $$\min\left\{\dist_{ W}(b_W,\rho^V_W),\diam_{\fontact V}(b_V\cup\rho^W_V(b_W))\right\}\leq\kappa.$$
 Then the set of all $x\in \cuco X$ so that $\dist_{ W}(b_W,\pi_W(x))\leq\theta_e$ for all $\fontact W\in\mathfrak S$ is non-empty and has diameter at most $\theta_u$.
\end{thm}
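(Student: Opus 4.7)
My plan is to prove the two assertions separately. The diameter bound follows almost directly from the uniqueness axiom, while existence of a realizing point requires induction on the complexity $n$ of $(\cuco X,\mathfrak S)$.

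For the diameter bound, suppose $x,y\in\cuco X$ both satisfy $\dist_W(\pi_W(\cdot),b_W)\leq\theta_e$ for every $W\in\mathfrak S$. Since each $b_W$ has diameter at most $\kappa$, the triangle inequality yields $\dist_W(x,y)\leq 2\theta_e+\kappa$ for every $W$. Applying Definition~\ref{defn:space_with_distance_formula}\eqref{item:dfs_uniqueness} with input $2\theta_e+\kappa$ supplies the desired $\theta_u$ from the uniqueness axiom.

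For existence, I would induct on $n$. The base case is $n=1$, so $\mathfrak S=\{S\}$, and partial realization applied to the singleton $\{S\}$ with target $b_S$ produces $x$ with $\dist_S(x,b_S)\leq\alpha$; hence $\theta_e=\alpha$ suffices. For the inductive step, the strategy is to extract from $\mathfrak S$ a maximal pairwise-orthogonal family $\{V_1,\ldots,V_k\}$ of \emph{active} domains (informally, those $V\neq S$ where $b_V$ deviates significantly from the projection of some fixed basepoint of $\cuco X$); Lemma~\ref{lem:pairwise_orthogonal} ensures $k\leq n$. Apply partial realization to $\{V_j\}$ with targets $b_{V_j}$ to obtain a candidate $x\in\cuco X$ satisfying $\dist_{V_j}(x,b_{V_j})\leq\alpha$ and $\dist_W(x,\rho^{V_j}_W)\leq\alpha$ whenever $V_j\propnest W$ or $V_j\transverse W$. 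For each $V_j$, restrict $\tup b$ to $\mathfrak S_{V_j}$, which has strictly smaller complexity; the restricted tuple is still $\kappa$-consistent, so the inductive hypothesis produces a point $y_j$ realizing it. Combine $x$ and the $y_j$'s into a single point by reinvoking partial realization on the pairwise-orthogonal family $\{V_j\}$, this time with targets $\pi_{V_j}(y_j)$, so the output agrees with each $y_j$ on its respective sub-collection $\mathfrak S_{V_j}$.

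The main obstacle is checking that the assembled point realizes $b_W$ for \emph{every} $W\in\mathfrak S$, not only for $W$ nested in some $V_j$. The remaining domains split into cases: (i) $W=S$ is handled at the top level by partial realization; (ii) $W\transverse V_j$ for some $j$, where the transverse consistency inequality either forces $b_W$ close to $\rho^{V_j}_W$ (which is $\alpha$-close to $\pi_W$ of the assembled point by partial realization), or forces $b_{V_j}$ close to $\rho^W_{V_j}$, which is ruled out when $V_j$ belongs to the maximal active family; (iii) $V_j\propnest W$, where a dual nesting consistency argument applies, aided by Lemma~\ref{lem:orthogonal_close} when several $V_j$'s interact with the same $W$. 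Verifying that these cases combine into a uniform $\theta_e=\theta_e(\kappa)$, with constants controlled only by $n$ and the HHS data, is the technical core; finite complexity (Definition~\ref{defn:space_with_distance_formula}\eqref{item:dfs_complexity}) bounds the recursion depth and prevents these constants from blowing up under iteration.
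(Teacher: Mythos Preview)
The paper does not contain a proof of this theorem; immediately before the statement it says ``The following theorems are proved in~\cite{BehrstockHagenSisto:HHS_II}'', and Theorem~\ref{thm:realization} is simply quoted as background. So there is no proof here to compare against.

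That said, your sketch has a genuine gap worth flagging. The diameter bound via uniqueness is fine, and induction on complexity is the right framework. The problem is your ``combining'' step: you write that reinvoking partial realization on $\{V_j\}$ with targets $\pi_{V_j}(y_j)$ produces a point that ``agrees with each $y_j$ on its respective sub-collection $\mathfrak S_{V_j}$''. But the partial realization axiom (Definition~\ref{defn:space_with_distance_formula}\eqref{item:dfs_partial_realization}) only controls the projection of the output to each $V_j$ itself, and to domains $W$ with $V_j\propnest W$ or $V_j\transverse W$. It says nothing about domains $W\propnest V_j$. So there is no mechanism in your argument forcing the assembled point to be close to $b_W$ for $W$ properly nested in some $V_j$; the inductive information carried by $y_j$ is simply lost when you pass back through partial realization.

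The actual proof in~\cite{BehrstockHagenSisto:HHS_II} handles this differently, making essential use of the large links axiom (Definition~\ref{defn:space_with_distance_formula}\eqref{item:dfs_large_link_lemma}) to control which nested domains can have large deviation, rather than trying to glue separately-realized pieces via partial realization. Your case analysis in (ii) and (iii) is reasonable for domains \emph{not} nested in any $V_j$, but the nested case is where the real work lies, and partial realization alone is not enough to bridge it.
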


\begin{thm}[Existence of Hierarchy Paths]\label{thm:monotone_hierarchy_paths}
Let $(\cuco X,\mathfrak S)$ be hierarchically hyperbolic. Then there exists $D_0$ so that any $x,y\in\cuco X$ are joined by a $D_0$-hierarchy path. 
\end{thm}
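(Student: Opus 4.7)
I would prove this by induction on the complexity $n$ of $(\cuco X,\mathfrak S)$. The base case $n=1$ is easy: with $\mathfrak S=\{S\}$, the uniqueness axiom together with the coarse Lipschitz property of $\pi_S$ forces $\pi_S\colon\cuco X\to\fontact S$ to be a quasi-isometry onto its (coarsely dense) image. Any $\delta$--hyperbolic geodesic in $\fontact S$ between $\pi_S(x)$ and $\pi_S(y)$ can then be sampled and lifted point-by-point via partial realization (applied to the singleton family $\{S\}$) to a uniform-quality quasigeodesic in $\cuco X$, which is vacuously a hierarchy path since $S$ is the only domain.

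For the inductive step I would pick a geodesic $\sigma\colon\{0,1,\ldots,N\}\to\fontact S$ from $\pi_S(x)$ to $\pi_S(y)$ and construct a discrete ``hierarchy sequence'' $x_0,\ldots,x_N\in\cuco X$ as follows. For each $i$, define a tuple $\tup b^i$ by $b^i_S=\sigma(i)$ and, for $U\propnest S$, by $b^i_U=\pi_U(x)$ if $\sigma$ has not yet entered the $E$-neighborhood of $\rho^U_S$ by step $i$ and $b^i_U=\pi_U(y)$ otherwise; for $U$ deeper in the $\nest$-order one propagates the same rule down through the partial order (using partial realization to extend to orthogonal families). The key technical point is to verify that $\tup b^i$ is $\kappa$-consistent for a uniform $\kappa$: same-type coordinates (all from $x$ or all from $y$) inherit consistency from the honest tuples $\pi(x)$ or $\pi(y)$, while a mixed pair $b^i_U=\pi_U(x)$, $b^i_V=\pi_V(y)$ is controlled by bounded geodesic image, since for any witness domain $W$ one of $\rho^U_W,\rho^V_W$ must lie near $\sigma(i)$. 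Applying the Realization Theorem produces $x_i\in\cuco X$ with $\pi_U(x_i)$ uniformly close to $b^i_U$ for every $U$; by construction $x_0$ is near $x$ and $x_N$ near $y$.

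To upgrade this sequence to a genuine hierarchy path I would connect consecutive $x_i,x_{i+1}$ by a short sub-path and concatenate. The crucial observation is that $\pi_U(x_i)$ and $\pi_U(x_{i+1})$ differ significantly only when $\rho^U_S$ sits within $E$ of $\sigma(i)$; the Large Links axiom applied to $(x_i,x_{i+1})$, whose $\fontact S$-distance is $O(1)$, therefore confines the collection of such $U$ to a uniformly bounded union $\bigcup_{j=1}^{M}\mathfrak S_{T_j}$ with each $T_j\propnest S$. Each $(P_{T_j},\mathfrak S_{T_j})$ inherits an HHS structure of complexity strictly less than $n$, so by induction hierarchy paths of uniform quality exist inside each $\mathfrak S_{T_j}$; one chains them by introducing intermediate realizations that update the $T_j$-coordinates one family at a time, using partial realization to interleave the updates inside $\cuco X$.

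The main obstacle is the bookkeeping: keeping $\kappa$ uniform across all $i$ for the tuples $\tup b^i$, and propagating the inductive constants so that the final concatenation is a $D_0$-hierarchy path with a single $D_0$ depending only on the constants of $(\cuco X,\mathfrak S)$. The unparametrized-quasigeodesic property in each $\fontact U$ reduces to coarse monotonicity of $(\pi_U(x_i))_i$ along a geodesic of $\fontact U$, which follows from bounded geodesic image together with the fact that $b^i_U$ transitions from $\pi_U(x)$ to $\pi_U(y)$ exactly once, at the step where $\sigma$ passes $\rho^U_S$; but cleanly combining this with the inductively produced sub-paths, without constants blowing up in $N$ or $M$, is where the real care is required.
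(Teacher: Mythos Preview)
This theorem is not proved in the present paper: it is stated as background material and attributed to \cite{BehrstockHagenSisto:HHS_II} (see the sentence ``The following theorems are proved in~\cite{BehrstockHagenSisto:HHS_II}'' immediately preceding Theorems~\ref{thm:realization}, \ref{thm:monotone_hierarchy_paths}, and~\ref{thm:distance_formula}). There is therefore no proof in this paper to compare your proposal against.

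That said, your outline is broadly in the spirit of the argument in \cite{BehrstockHagenSisto:HHS_II}: induct on complexity, run along a geodesic in $\fontact S$, use Large Links to control the domains where progress must be made between consecutive steps, and handle those domains inductively. A few cautions if you pursue this. First, your definition of the tuple $\tup b^i$ is underspecified: the rule ``switch $b^i_U$ from $\pi_U(x)$ to $\pi_U(y)$ when $\sigma$ passes $\rho^U_S$'' only makes sense for $U\propnest S$, and ``propagate the same rule down through the partial order'' hides real work---consistency between $b^i_U$ and $b^i_V$ when $U\transverse V$ with both properly nested in $S$ does not follow from the transition rule you wrote, since $\rho^U_S$ and $\rho^V_S$ need not be near each other on $\sigma$. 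Second, invoking the HHS structure on $P_{T_j}$ for the inductive step is delicate: the construction of standard product regions in this paper uses realization, and you must check you are not implicitly using the distance formula (which in \cite{BehrstockHagenSisto:HHS_II} is proved \emph{after} hierarchy paths). The actual argument in \cite{BehrstockHagenSisto:HHS_II} organizes the induction and the interpolation between $x_i$ and $x_{i+1}$ more carefully to avoid these issues.
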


\begin{thm}[Distance Formula]\label{thm:distance_formula}
 Let $(X,\mathfrak S)$ be hierarchically hyperbolic. Then there exists $s_0\geq\xi$ such that for all $s\geq s_0$ there exist
 constants $K,C$ such that for all $x,y\in\cuco X$,
 $$\dist_{\cuco X}(x,y)\asymp_{(K,C)}\sum_{W\in\mathfrak S}\ignore{\dist_{ W}(\pi_W(x),\pi_W(y))}{s}.$$
\end{thm}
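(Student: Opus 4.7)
My plan is to establish the two inequalities implicit in the $\asymp_{(K,C)}$ notation separately, each by induction on the complexity $n$ of $(\cuco X,\mathfrak S)$; the base case, in which $\mathfrak S$ has a single element and $\cuco X$ has bounded diameter, is trivial.

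For the inequality $\sum_W \ignore{\dist_W(x,y)}{s} \leq K \dist_{\cuco X}(x,y) + C$, I fix a $D_0$-hierarchy path $\gamma$ from $x$ to $y$ (Theorem~\ref{thm:monotone_hierarchy_paths}), whose length $|\gamma|$ is comparable to $\dist_{\cuco X}(x,y)$. The top-level contribution $\ignore{\dist_S(x,y)}{s}$ is at most $K\dist_{\cuco X}(x,y)+K$ because $\pi_S$ is $(K,K)$--coarsely Lipschitz. For the remaining $W\propnest S$ with $\dist_W(x,y)\geq s\geq E$, the large links axiom (\ref{item:dfs_large_link_lemma}) produces $T_1,\dots,T_{\lfloor N\rfloor}\propnest S$ with $N\leq\lambda\dist_S(x,y)+\lambda$ such that every such $W$ satisfies $W\nest T_i$ for some $i$. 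The bounded geodesic image axiom (\ref{item:dfs:bounded_geodesic_image}), applied to $T_i\nest S$, forces the geodesic $\eta_S$ in $\fontact S$ from $\pi_S(x)$ to $\pi_S(y)$ to pass within $E$ of each $\rho^{T_i}_S$; since $\pi_S\circ\gamma$ tracks $\eta_S$ as an unparametrized quasi-geodesic, $\gamma$ decomposes into subarcs $\gamma_i$ with each $\gamma_i$ running near the standard product region $P_{T_i}$. Applying the inductive hypothesis to each $\gamma_i$ inside $(P_{T_i},\mathfrak S_{T_i})$---which has complexity strictly less than $n$---and telescoping via $\sum_i|\gamma_i|\lesssim|\gamma|$ closes this direction.

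For the reverse inequality $\dist_{\cuco X}(x,y)\leq K\sum_W\ignore{\dist_W(x,y)}{s}+C$, I build a path from $x$ to $y$ whose length is controlled by the right-hand side. Following a geodesic $\eta_S$ in $\fontact S$ from $\pi_S(x)$ to $\pi_S(y)$, I use the realization theorem (Theorem~\ref{thm:realization}) together with partial realization (\ref{item:dfs_partial_realization}) to lift a coarse discretization of $\eta_S$ to a sequence of points in $\cuco X$, each step having $\cuco X$-cost bounded by its $\fontact S$-cost plus the constant $\alpha$. At each landmark $\rho^{T_i}_S$ along $\eta_S$ corresponding to some $T_i$ with $\ignore{\dist_{T_i}(x,y)}{s}>0$, I detour into $P_{T_i}$ and recurse on $(P_{T_i},\mathfrak S_{T_i})$ to correct all projections to subdomains nested in $T_i$. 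The cost along $\eta_S$ is $O(\ignore{\dist_S(x,y)}{s})$, and each excursion costs at most $K'\sum_{W\nest T_i}\ignore{\dist_W(x,y)}{s}+C'$ by induction; summing yields the bound.

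The hard part will be the multiplicative bookkeeping across the induction: naively, recursion multiplies constants by $N\asymp\dist_S(x,y)$, which is itself linear in $\dist_{\cuco X}(x,y)$, threatening a polynomial rather than linear bound. The crux is to prove, combining bounded geodesic image with consistency, that the excursions of $\gamma$ into neighborhoods of distinct product regions $P_{T_i}$ have essentially disjoint footprints in $\cuco X$, so that the inductive contributions telescope cleanly. Once this is established, the constants $K$ and $C$ depend only on $s$ and on the HHS structure constants listed in Remark~\ref{rem:constants}, and not on $\dist_{\cuco X}(x,y)$.
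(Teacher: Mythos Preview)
The paper does not prove this theorem: Theorem~\ref{thm:distance_formula} is stated as background, introduced by the sentence ``The following theorems are proved in~\cite{BehrstockHagenSisto:HHS_II}'' (just before Theorem~\ref{thm:realization}). There is therefore no proof in this paper to compare your proposal against.

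For what it is worth, your outline is broadly in the spirit of the argument in~\cite{BehrstockHagenSisto:HHS_II}, which also inducts on complexity and uses large links together with bounded geodesic image to organize the contributing domains along a hierarchy path, passing to product regions of lower complexity. Two remarks on your sketch itself. First, your base case is misstated: when $\mathfrak S$ consists of a single element $S$, the space $\cuco X$ need not have bounded diameter; rather one must show $\pi_S$ is a quasi-isometry, and the uniqueness axiom~(\ref{item:dfs_uniqueness}) alone only gives a proper (not a priori linear) lower bound, so even the base case requires work. Second, the step you flag as ``the hard part'' is indeed where the substance lies: one must show that the subpaths $\gamma_i$ are, after entering and exiting $P_{T_i}$, genuine hierarchy paths in the lower-complexity structure $(F_{T_i}\times E_{T_i},\mathfrak S_{T_i}\cup\mathfrak S_{T_i}^\perp)$, and that the passage between $\cuco X$-distance and $P_{T_i}$-distance costs only an additive error. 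Your sketch names the right tools (consistency, bounded geodesic image) but does not yet indicate how the inductive hypothesis is invoked on the $\gamma_i$ without circularity, since the $\gamma_i$ live in $\cuco X$ rather than in $F_{T_i}$.
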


\noindent The notation $\ignore{A}{B}$ denotes the quantity which is $A$ if $A\geq B$ and $0$ otherwise.

\subsection{Hieromorphisms, automorphisms, and hierarchically hyperbolic groups}\label{subsec:hier_aut_HHG}
Morphisms in the category of hierarchically hyperbolic spaces were defined in~\cite{BehrstockHagenSisto:HHS_II}, along with the related notion of a hierarchically hyperbolic group; we recall these definitions here.

\begin{defn}[Hieromorphism]\label{defn:hieromorphism}
Let $(\cuco X,\mathfrak S)$ and $(\cuco X',\mathfrak S')$ be hierarchically hyperbolic structures on the spaces $\cuco X,\cuco X'$ respectively. A \emph{hieromorphism} $(f,\pi(f),\{\rho(f,U)\colon U\rightarrow\pi(f)(U)\mid U\in\mathfrak S\})\colon(\cuco X,\mathfrak S)\rightarrow(\cuco X',\mathfrak S')$ consists of a map $f:\cuco X\rightarrow\cuco X'$, a map $\pi(f):\mathfrak S\rightarrow\mathfrak S'$ preserving nesting, transversality, and orthogonality, and a set $\{\rho(f,U)\colon U\rightarrow\pi(f)(U)\mid U\in\mathfrak S\}$ of quasiisometric embeddings with uniform constants such that the following two diagrams coarsely commute for all nonorthogonal $U,V\in\mathfrak S$:
\begin{center}
$
\begin{diagram}
\node{\cuco X}\arrow[3]{e,t}{f}\arrow{s,r}{\pi_U}\node{}\node{}\node{\cuco X'}\arrow{s,r}{\pi_{\pi(f)(U)}}\\
\node{\fontact U}\arrow[3]{e,t}{\rho(f,U)}\node{}\node{}\node{\fontact \pi(f)(U)}
\end{diagram}
$
\end{center}
and
\begin{center}
$
\begin{diagram}
\node{\fontact U}\arrow[3]{e,t}{\rho(f,U)}\arrow{s,r}{\rho^U_V}\node{}\node{}\node{\fontact\pi(f)(U)}\arrow{s,r}{\rho^{\pi(f)(U)}_{\pi(f)(V)}}\\
\node{\fontact V}\arrow[3]{e,t}{\rho(f,V)}\node{}\node{}\node{\fontact \pi(f)(V)}
\end{diagram}
$
\end{center}
where $\rho^U_V\colon\fontact U\to\fontact V$ is the map from Definition~\ref{defn:space_with_distance_formula}.
\end{defn}

\begin{defn}[Automorphism of an HHS, automorphism group]\label{defn:automorphism}
A hieromorphism $f: (\cuco X, \mathfrak S) \rightarrow (\cuco X, \mathfrak S)$ is an \emph{automorphism} if $\pi(f): \mathfrak S \rightarrow \mathfrak S$ is a bijection and $\rho(f,U): \fontact U \rightarrow \fontact \pi(f)(U)$ is an isometry for each $U \in \mathfrak S$.  When the context is clear, we will continue to use $f$ to denote $f$, $\pi(f)$, and $\rho(f,U)$.  

Observe that if $f,f'$ are automorphisms of $(\cuco X,\mathfrak S)$, then $f\circ f':\cuco X\to\cuco X$ is also an automorphism: compose the maps $\mathfrak S\to\mathfrak S$, and compose isometries of the hyperbolic spaces in the obvious way.  Declare automorphisms $f,f'$ \emph{equivalent} if $\pi(f)=\pi(f')$ and $\rho(f,U)=\rho(f',U)$ for all $U\in\mathfrak S$.  Note that $f,f':\cuco X\to\cuco X$ uniformly coarsely coincide in this case.  

Denote by $\Aut(\mathfrak S)$ the set of equivalence classes of automorphisms, so $\Aut(\mathfrak S)$ is a group with the obvious multiplication. If $[f]\in\Aut(\mathfrak S)$, then $[f]^{-1}$ is represented by the quasi-inverse of $f$ associated to $\pi(f)^{-1}$ and $\{\rho(f,U)^{-1}:U\in\mathfrak S\}$.

Observe that $\Aut(\mathfrak S)$ quasi-acts on $\cuco X$ by uniform quasi-isometries.  We will sometimes abuse language and refer to individual automorphisms as elements of $\Aut(\mathfrak S)$, and refer to the ``action'' of $\Aut(\mathfrak S)$ on $\cuco X$.  By an \emph{action} of a group $G$ on $(\cuco X,\mathfrak S)$, we mean a homomorphism $G\to\Aut(\mathfrak S)$.  ``Coarse'' properties of an action, like properness and coboundedness, make sense in this context. 
\end{defn}

\begin{defn}[Equivariant]\label{defn:equivariant_hieromorphism}
Let $f:(\cuco X,\mathfrak S)\to(\cuco X',\mathfrak S')$ be a hieromorphism, $G,G'\leq\Aut(\mathfrak S),\Aut(\mathfrak S')$, and $\phi:G\to G'$ a homomorphism.  Then $f$ is \emph{$\phi$--equivariant} if\\
\begin{center}
\begin{minipage}[t]{0.2\textwidth}
$
\begin{diagram}
\node{\mathfrak S}\arrow{e,t}{f}\arrow{s,l}{g}\node{\mathfrak S'}\arrow{s,r}{\phi(g)}\\
\node{\mathfrak S}\arrow{e,t}{f}\node{\mathfrak S'}
\end{diagram}
$
\end{minipage}
and\ \ \ \ \ \ \ 
\begin{minipage}[t]{0.2\textwidth}
$
\begin{diagram}
\node{\fontact U}\arrow{e,t}{f}\arrow{s,l}{g}\node{\fontact f(U)}\arrow{s,r}{\phi(g)}\\
\node{\fontact gU}\arrow{e,t}{f}\node{\fontact \phi(g)f(U)}
\end{diagram}
$
\end{minipage}
\end{center}

\noindent (coarsely) commute for all $g\in G$ and $U\in\mathfrak S$.  This implies that $\phi(g)f(x)\asymp f(gx)$ for all $x\in\cuco X$ and $g\in G$.  If $\phi$ is an isomorphism and $f$ is $\phi$--equivariant, then $f$ is \emph{$G$--equivariant}.
\end{defn}

\begin{defn}[Hierarchically hyperbolic group]\label{defn:HHG}
A finitely generated group $G$ is \emph{hierarchically hyperbolic} if there exists a hierarchically hyperbolic space $(\cuco X,\mathfrak S)$ such that $G\leq\Aut(\mathfrak S)$, the action on $\cuco X$ is proper and cobounded, and $G$ acts on $\mathfrak S$ with finitely many orbits.  In this case we can assume $\cuco X=G$ (with any fixed word-metric) and that the action $G\to\Aut(\mathfrak S)$ sends each $g\in G$ to an automorphism whose underlying map $G\to G$ is left multiplication by $g$. In this case, we say that $(G,\mathfrak S)$ is \emph{hierarchically hyperbolic}. 
\end{defn}

\subsection{Standard product regions}\label{sec:product_regions}
The notion of a standard product region in a hierarchically hyperbolic space, introduced in~\cite{BehrstockHagenSisto:HHS_II}, plays an important role in several places, so we recall the definition here. Let $(\cuco X,\mathfrak S)$ be a hierarchically hyperbolic space and let $U\in\mathfrak S$.  Let $\mathfrak S_U$ be the set of $V\in\mathfrak S$ with $V\nest U$ (in particular, $U\in\mathfrak S_U$ is the unique $\nest$--maximal element).  Let $\mathfrak S_U^\orth$ be the set of $V\in\mathfrak S$ such that $V\orth U$, together with some $\nest$--minimal $A\in\mathfrak S$ such that all such $V\nest A$.

Fix $\kappa\geq\kappa_0$ and let $F_U$ be the space of $\kappa$--consistent tuples in $\prod_{V\in\mathfrak S_U}2^{\fontact V}$ whose coordinates are diameter--$\leq\xi$ sets.  Similarly, let $E_U$ be the set of $\kappa$--consistent tuples in $\prod_{V\in\mathfrak S_U^\orth-\{A\}}2^{\fontact V}$ whose coordinates are diameter--$\leq\xi$ sets.  In fact, $(F_U,\mathfrak S_U)$ and $(E_U,\mathfrak S_U^\orth)$ are hierarchically hyperbolic spaces (the hyperbolic space associated to $A$ is $\image_A(E_U)$), and there are hieromorphisms (see~\cite{BehrstockHagenSisto:HHS_II} or Definition~\ref{defn:hieromorphism}), inducing quasiisometric embeddings, $F_U,E_U\to\cuco X$, extending to a coarsely-defined map $F_U\times E_U\to\cuco X$ whose image is hierarchically quasiconvex in the sense of~\cite{BehrstockHagenSisto:HHS_II} (or see below).  Specifically, each tuple $\tup b\in F_U$ is sent to the tuple that coincides with $\tup b$ on $\mathfrak S_U$, and has coordinate $\rho^U_V$ for all $V\in\mathfrak S-\{U\}$ such that $V\transverse U$ or $U\nest V$, and is fixed at some base element of $E_U$ on $\mathfrak S_U^\orth-\{A\}$.  The map $E_U\to\cuco X$ is defined analogously.  The spaces $F_U,E_U$ are the \emph{standard nesting factor} and the \emph{standard orthogonality factor}, respectively, associated to $U$.  The maps are the \emph{standard hieromorphisms} associated to $U$, and the image $P_U$ of $F_U\times E_U$ is a \emph{standard product region}.  Where it will not cause confusion, we sometimes denote by $E_U,F_U$ the images of the corresponding standard hieromorphisms.

\begin{rem}[Automorphisms of product regions]\label{rem:aut_product}
Let $(\cuco X,\mathfrak S)$ be a hierarchically hyperbolic space and let $U\in\mathfrak S$.  Recall that $(F_U,\mathfrak S_U)$ is a hierarchically hyperbolic space, where the hyperbolic spaces and projections implicit in the hierarchically hyperbolic structure are exactly those inherited from $\mathfrak S$.  Recall that $(E_U,\mathfrak S_U^\orth)$ is a hierarchically hyperbolic space, where $\fontact V$ is as in $(\cuco X,\mathfrak S)$ except when $V=A$ is the $\nest$--maximal element.  The hieromorphism $(E_U,\mathfrak S_U^\orth)\to(\cuco X,\mathfrak S)$ is determined by the choice of $A\in\mathfrak S$ that is $\nest$--minimal among all those containing each $V$ with $V\orth U$, which we take as the $\nest$--maximal element of $\mathfrak S_U^\orth$.

Let $\mathcal A_U$ be the group of automorphisms $g$ of $\mathfrak S$ such that $g\cdot U=U$.  Then there are \emph{restriction homomorphisms} $\theta_U,\theta_U^\orth:\mathcal A_U\to\Aut(\mathfrak S_U),\Aut(\mathfrak S_U^\orth)$ defined as follows.  Given $g\in\mathcal A_U$, let $\theta_U(g)$ act like $g$ on $\mathfrak S_U$ and like $g$ on each $\fontact V$ with $V\nest U$.  

Define $\theta^\orth$ analogously to give an automorphism of $\mathfrak S^\orth_U-\{A\}$ restricting the action of $g$ on $\mathfrak S$, and fixing $A$.  When defining $g:\image_A(E_U)\to\image_A(E_U)$, we draw attention to two cases, which it will be important to distinguish in Section~\ref{sec:rank_rigidity}:
\begin{itemize}
 \item There exist infinitely many $A_i\in\mathfrak S$ that are $\nest$--minimal with the property that $V\nest A_i$ whenever $V\orth U$.  The minimality assumption implies that these $A_i$ are pairwise non-nested, so, using Lemma~\ref{lem:pairwise_orthogonal} and the consistency axiom, we see that $\pi_{A_i}(E_U)$ has diameter bounded independently of $A_i$ (in fact, just in terms of $E$); thus, when building the HHS $(E_U,\mathfrak S_U^\orth)$, we can take the hyperbolic space $\image_A(E_U)$ associated to the maximal element $A$ to be a single point, and define $g:\image_A(E_U)\to\image_A(E_U)$ in the obvious way.  This conclusion holds, more generally, if there are two transverse $\nest$--minimal ``containers'' $A_i,A_j$ for the domains orthogonal to $U$.
 \item The set $\{A_i\}$ of domains that are $\nest$--minimal with the property that $V\nest A_i$ whenever $V\orth U$ is a pairwise-orthogonal set.  In this case, there are at most $n$ such $A_i$, where $n$ is the complexity, by Lemma~\ref{lem:pairwise_orthogonal}.  Again, we choose $A\in\{A_i\}$ arbitrarily and define the HHS structure on $(E_U,\mathfrak S_U^\orth)$ using $A$ as the $\nest$--maximal element, with associated hyperbolic space $\image_A(E_U)$.  Now, if there exists $h\in\Aut(\mathfrak S)$ so that $hA=A_i$ for some $i$, then $\image_{A_i}(E_U)$ is uniformly quasi-isometric to $\image_A(E_U)$.  In particular, $g:\image_A(E_U)\to\image_A(E_U)$ can be defined so that the restriction homomorphism $\theta_U^\orth$ makes sense.
\end{itemize}

Note that, if $f\in\mathcal A_U$ and $x\in P_U\subset\cuco X$, then $\dist_{F_U\times E_U}(\theta_U(f)(r_U(x)),r_U(f(x)))$ is uniformly bounded, where $r_U\colon P_U\cong_{q.i.}F_U\times E_U\to F_U$ is coarse projection to the first factor, and a similar statement holds for $\theta^\orth_U$ and projection to $E_U$.

Finally, recall that the standard product region $P_U$ is defined to be the image of $F_U\times E_U$ under the product of the hieromorphisms $(F_U,\mathfrak S_U),(E_U,\mathfrak S_U^\orth)\to(\cuco X,\mathfrak S)$.  This map is coarsely defined, but it is convenient to fix maps $F_U\times E_U\to\cuco X$ (realizing those hieromorphisms) so that $P_{gU}=gP_U$ for all $U\in\mathfrak S$ and $g\in\Aut(\mathfrak S)$.  Similarly, the image of $F_{gU}$ coincides with $gF_U$, etc.  The set $\{P_U:U\in\mathfrak S\}$ is  $\Aut(\mathfrak S)$--invariant.
\end{rem}

\subsection{Normalized hierarchically hyperbolic spaces and hierarchical quasiconvexity}\label{subsec:normalized}
Hierarchically hyperbolic spaces, in the sense of Definition~\ref{defn:space_with_distance_formula}, need not coarsely surject to the associated hyperbolic spaces, but in almost all cases of interest, they do.  Accordingly:

\begin{defn}[Normalized HHS]\label{defn:normalized_HHS}
The HHS $(\cuco X,\mathfrak S)$ is \emph{normalized} if there exists $C$ such that for all $U\in\mathfrak S$, we have $\fontact U=\neb_{\fontact U}(\pi_U(\cuco X))$.
\end{defn}

\begin{prop}\label{prop:normalizing}
Let $(\cuco X,\mathfrak S)$ be a hierarchically hyperbolic space.  Then $\cuco X$ admits a normalized hierarchically hyperbolic structure $(\cuco X,\mathfrak S')$ with a hieromorphism $f\colon(\cuco X,\mathfrak S')\to(\cuco X,\mathfrak S)$, where $f\colon\cuco X\to\cuco X$ is the identity and $f\colon\mathfrak S'\to\mathfrak S$ is a bijection.  Moreover, if $G\leq\Aut(\mathfrak S)$, then there is a monomorphism $G\to\Aut(\mathfrak S')$ making $f$ equivariant.
\end{prop}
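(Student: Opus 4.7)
The plan is to replace each hyperbolic space $\fontact U$ by a hyperbolic subspace $\fontact' U \subseteq \fontact U$ onto which $\pi_U$ coarsely surjects, while keeping the underlying set $\cuco X$, the index set $\mathfrak S$, and its nesting, orthogonality, and transversality relations unchanged. Concretely, for each $U \in \mathfrak S$, let $\fontact' U$ be the union of all $\fontact U$-geodesics with endpoints in $\pi_U(\cuco X)$, equipped with the induced path metric; since $\fontact U$ is $\delta$-hyperbolic, $\fontact' U$ is uniformly quasi-convex in $\fontact U$ and hence uniformly hyperbolic, and $\pi_U(\cuco X)$ is $\delta$-dense in $\fontact' U$. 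Take $\mathfrak S' := \mathfrak S$ with all the same relations, and let the bijection $f \colon \mathfrak S' \to \mathfrak S$ be the identity. Define $\pi'_U := \pi_U$, regarded now as mapping into $\fontact' U$ (legal because $\pi_U(\cuco X) \subseteq \fontact' U$). For $V \propnest W$ or $V \transverse W$, define $\rho'^V_W \subseteq \fontact' W$ by closest-point projecting $\rho^V_W$ into $\fontact' W$; for $V \propnest W$ define $\rho'^W_V \colon \fontact' W \to 2^{\fontact' V}$ by restricting $\rho^W_V$ to $\fontact' W \subseteq \fontact W$ and then closest-point projecting into $\fontact' V$.

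The hieromorphism $f \colon (\cuco X, \mathfrak S') \to (\cuco X, \mathfrak S)$ is realized by the identity on $\cuco X$, the identity on $\mathfrak S$, and the quasi-isometric inclusions $\rho(f, U) := \iota_U \colon \fontact' U \hookrightarrow \fontact U$. The two diagrams of Definition~\ref{defn:hieromorphism} commute coarsely by construction, as the $\rho'$-data were chosen at bounded Hausdorff distance from the $\rho$-data. Verification of the HHS axioms for $(\cuco X, \mathfrak S')$ reduces to those for $(\cuco X, \mathfrak S)$: for $x, y \in \cuco X$, the $\fontact U$-geodesic between $\pi_U(x)$ and $\pi_U(y)$ lies entirely in $\fontact' U$, so $\dist_{\fontact' U}(\pi'_U(x), \pi'_U(y)) = \dist_{\fontact U}(\pi_U(x), \pi_U(y))$ exactly, and each $\rho'$ agrees with its unprimed counterpart up to uniform additive error. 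Hence consistency, large links, partial realization, and the uniqueness axiom all transfer directly. The axiom requiring slightly more care is bounded geodesic image: a geodesic $\gamma'$ of $\fontact' W$ is only a uniform quasi-geodesic of $\fontact W$, but by $\delta$-hyperbolicity of $\fontact W$ it stays within bounded Hausdorff distance of an $\fontact W$-geodesic $\gamma$ with the same endpoints, so one applies BGI in $(\cuco X, \mathfrak S)$ to $\gamma$ and translates the conclusion back to $\gamma'$ with a slight loss in the constant $E$.

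For the equivariance statement, let $g \in G \leq \Aut(\mathfrak S)$. The isometry $g \colon \fontact U \to \fontact gU$ sends geodesics to geodesics and carries $\pi_U(\cuco X)$ to a set at bounded Hausdorff distance from $\pi_{gU}(\cuco X)$; after replacing the underlying $\cuco X$-map of $g$ by an equivalent one (in its $\Aut(\mathfrak S)$-class) whose composition with each $\pi_U$ agrees on the nose with $\rho(g,U)\circ\pi_U$, the restriction $g|_{\fontact' U} \colon \fontact' U \to \fontact' gU$ is an honest isometry, yielding the desired homomorphism $G \to \Aut(\mathfrak S')$. Injectivity is automatic, since a representative trivial on $(\cuco X, \mathfrak S')$ is trivial on $(\cuco X, \mathfrak S)$, and $f$ is equivariant by construction. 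The main anticipated obstacle is not any single axiom in isolation but the simultaneous bookkeeping: the inclusions $\iota_U$ introduce bounded errors that accumulate in the verifications of consistency, large links, and especially bounded geodesic image, and the equivariance argument requires a careful choice of representative on $\cuco X$ within the $\Aut$-equivalence class.
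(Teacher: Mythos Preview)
Your approach is essentially the paper's: keep $\mathfrak S$ and its relations, replace each $\fontact U$ by a space uniformly quasi-isometric to the quasiconvex subset $\pi_U(\cuco X)$, and push the $\rho$--data through closest-point projection. Your choice of the geodesic hull with the induced path metric is a concrete instance of what the paper leaves abstract, and your observation that distances between points of $\pi_U(\cuco X)$ are then preserved \emph{exactly} is a pleasant simplification for the axiom checks.

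The equivariance step, however, has a genuine gap. You propose to replace the underlying $\cuco X$--map of $g$ by an equivalent representative so that the first hieromorphism square commutes on the nose, and then restrict. But the equivalence relation on $\Aut(\mathfrak S)$ (Definition~\ref{defn:automorphism}) pins down the isometries $\rho(g,U)$ and only lets the $\cuco X$--map vary; so adjusting the $\cuco X$--map cannot change where $\rho(g,U)$ carries $\pi_U(\cuco X)$. Moreover there is in general no map $f'\colon\cuco X\to\cuco X$ with $\pi_{gU}\circ f'=\rho(g,U)\circ\pi_U$ exactly---realization only yields coarse agreement---and even if there were, you would still need $f'(\cuco X)=\cuco X$ to conclude $\rho(g,U)(\pi_U(\cuco X))=\pi_{gU}(\cuco X)$. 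As it stands, $\rho(g,U)$ sends $\fontact'U$ to a set at bounded Hausdorff distance from $\fontact'gU$, not onto it, so restriction does not produce the required isometry $\fontact'U\to\fontact'gU$.

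The repair is to build the normalized spaces $G$--equivariantly from the outset. Fix orbit representatives $U_\alpha$ and, for $U$ in the orbit of $U_\alpha$, take $\fontact'U$ to be the hull of $\bigcup_{h\colon hU_\alpha=U}\rho(h,U_\alpha)\bigl(\pi_{U_\alpha}(\cuco X)\bigr)$. Uniform coarse commutativity of the hieromorphism diagrams keeps each summand in a uniform neighborhood of $\pi_U(\cuco X)$, so $\fontact'U$ is still at bounded Hausdorff distance from $\pi_U(\cuco X)$; and now $\rho(g,U)$ carries $\fontact'U$ \emph{onto} $\fontact'gU$ (using $\rho(g,U)\circ\rho(h,U_\alpha)=\rho(gh,U_\alpha)$), so restriction is an honest isometry and the monomorphism $G\to\Aut(\mathfrak S')$ goes through.
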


\begin{proof}
Let $\mathfrak S'=\mathfrak S$, and retain the same nesting, orthogonality, and transversality relations.  For each $U\in\mathfrak S'$, the associated hyperbolic space $\fontact_{norm}U$ is chosen to be uniformly quasiisometric to the uniformly quasiconvex subset $\pi_U(\cuco X)$ of $\fontact U$.  The projection $\pi_U:\cuco X\to\fontact_{norm}U$ is, up to composition with a uniform quasiisometry, unchanged (and therefore continues to be coarsely Lipschitz).  Let $p_U:\fontact U\to\fontact_{norm}U$ be the composition of the coarse closest-point projection $\fontact U\to\pi_U(\cuco X)$, composed with the uniform quasiisometry $\pi_U(\cuco X)\to\fontact_{norm}U$.  Then for all $U,V$ with $U\transverse V$ or $U\nest V$, define the relative projection $\fontact_{norm}U\to\fontact_{norm}V$ to be the composition of $p_U\circ\rho^U_V:\pi_U(\cuco X)\to\fontact_{norm} V$ with the quasiisometry $\fontact_{norm}U\to\pi_U(\cuco X)$.  The remaining assertions are a matter of checking definitions.
\end{proof}

Recall from~\cite{BehrstockHagenSisto:HHS_II} that the subspace $\cuco Y$ of $(\cuco X,\mathfrak S)$ is \emph{hierarchically quasiconvex} if there exists $k_0\geq0$ such that $\pi_U(\cuco Y)$ is $k_0$--quasiconvex in $\fontact U$ for all $U\in\mathfrak S$ and, if for all $\kappa\geq\kappa_0$, each $\kappa$--consistent tuple $\tup b\in\prod_{U\in\mathfrak S}\fontact U$ with $U$--coordinate in $\pi_U(\cuco Y)$ for all $U$ has the property that any associated realization point $x\in\cuco X$ lies at distance from $\cuco Y$ depending only on $\kappa$.  

In the interest of staying in the class of normalized hierarchically hyperbolic spaces, we will always work with a normalized hierarchically hyperbolic structure on $\cuco Y$, namely the one provided by Proposition~\ref{prop:normalizing}.  Moreover, we will  (abusively) eschew the notation $\fontact_{norm}U$ and use the same notation for $\pi_U(\cuco Y)$ and its thickening; in other words, we will regard $\pi_U(\cuco Y)$ as a genuine (uniformly) hyperbolic geodesic space.

Finally, we recall the following notion from~\cite[Definition 5.3, Lemma 5.4]{BehrstockHagenSisto:HHS_II}.  Let $\cuco Y\subset\cuco X$ be a hierarchically quasiconvex subspace.  Then there is a coarsely Lipschitz map $\gate_{\cuco Y}:\cuco X\to\cuco Y$ (the coarse Lipschitz constants depend only on the constants from Definition~\ref{defn:space_with_distance_formula} and the constants implicit in the definition of hierarchical quasiconvexity) with the following property: for each $U\in\mathfrak S$ and $x\in\cuco X$, the projection $\pi_U(\gate_{\cuco Y}(x))$ uniformly coarsely coincides with the coarse closest-point projection of $\pi_U(x)$ to the quasiconvex subspace $\pi_U(\cuco Y)$.  The map $\gate_{\cuco Y}$ is the \emph{gate map} associated to $\cuco Y$.

\section{Definition of the boundary }\label{sec:boundary_definition}
Fix a hierarchically hyperbolic space $(\cuco X,\mathfrak S)$.  For each $S\in\mathfrak S$, denote by $\boundary\fontact S$ the Gromov boundary, i.e. the space of equivalence classes of sequences $(x_n\in\fontact S)$, where $(x_n)$ and $(y_n)$ are equivalent if for some (hence any) fixed basepoint $x\in\fontact S$, we have $(x_n,y_n)_x\to\infty$.  In particular, $\boundary\fontact S$ need not be compact if $\fontact S$ is not proper. The topology is as usual.

\begin{rem}[Extending the Gromov product]\label{rem:gromov_product}
For $U\in\mathfrak S$, any $p,q\in\fontact U\cup\boundary\fontact U$ are joined to
$u\in\fontact U$ by $(1,20\delta)$--quasigeodesics, enabling extension of the Gromov product to
$\boundary\fontact U$.
\end{rem}

\subsection{Supports and boundary points}\label{subsec:supports_boundary_points}
We first define $\partial \cuco X=\partial (\cuco X,\mathfrak S)$ as a set.  

\begin{defn}[Support set, boundary point]\label{defn:boundary_point}
A \emph{support set} $\overline S\subset\mathfrak S$ is a set with $S_i\orth S_j$ for all $S_i,S_j\in\overline S$.  Given a support set $\overline S$, a \emph{boundary point} with \emph{support} $\overline S$ is a formal sum $p=\sum_{S\in\overline S}a^p_Sp_S$, where each $p_S\in\boundary\fontact S$, and $a^p_S>0$, and $\sum_{S\in\overline S}a^p_S=1$.  Such sums are necessarily finite, by Lemma~\ref{lem:pairwise_orthogonal}.  We denote the support $\overline S$ of $p$ by $\support(p)$.
\end{defn}

\begin{defn}[Boundary]\label{defn:hhs_boundary}
The \emph{boundary} $\boundary(\cuco X,\mathfrak S)$ of $(\cuco X,\mathfrak S)$ is the set of boundary points.
\end{defn}

\begin{notation}\label{notation:boundary}
When the specific HHS structure is clear, we write $\boundary\cuco X$ to mean $\boundary(\cuco X,\mathfrak S)$.
\end{notation}

\subsection{Topologizing $\boundary\cuco X$}\label{subsec:topology_of_boundary}
We topologize $\boundary\cuco X$ using the visual topologies on the Gromov boundaries of elements of $\{\fontact S:S\in\mathfrak S\}$.  The main challenge is to incorporate these topologies into a coherent topology on the whole boundary, allowing boundary points supported on nonorthogonal domains to interact.  This requires some preliminary definitions.

\begin{defn}[Remote point]\label{defn:remote_point}
Let $\overline S\subset\mathfrak S$ be a support set.  A point $p\in\boundary\cuco X$ is \emph{remote (with respect to $\overline S$, or with respect to some $q\in\boundary\cuco X$ with support $\overline S$)} if:
\begin{enumerate}
 \item $\support(p)\cap\overline S=\emptyset$, and
 \item for all $S\in\overline S$, there exists $T\in\support(p)$ so that $S$ and $T$ are \textbf{not} orthogonal.
\end{enumerate}
Denote by $\remote{\overline S}{\cuco X}$ the set of all remote points with respect to $\overline S$.
\end{defn}

For each $S\in\mathfrak S$, let $\mathcal B(\fontact S)$ be the set of all bounded sets in $\fontact S$.  If $\overline S\subset\mathfrak S$ is a support set, we denote by $\overline S^\orth$ the set of all $U\in\mathfrak S$ such that $U\orth S$ for all $S\in\overline S$.  

\begin{defn}[Boundary projection]\label{defn:boundary_projection}
Let $\overline S\subset\mathfrak S$ be a support set.  For each $q\in\remote{\overline S}{\cuco X}$, let $\overline S_q$ be the union of $\overline S$ and the set of domains $T\in\overline S^\orth$ such that $T$ is not orthogonal to $W_T$ for some $W_T\in\support(q)$.  Define a \emph{boundary projection} $\partial\pi_{\overline{S}}(q)\in\prod_{S\in\overline S_q}\fontact S$ as follows.  Let $q=\sum_{T\in\overline T}a^p_Tq_T$ be a remote point with respect to $\overline S$.  For each $S\in\overline S_q$, let $T_S\in\support(q)$ be chosen so that $S$ and $T_S$ are not orthogonal.  Define the $S$--coordinate $\left(\partial\pi_{\overline{S}}(q)\right)_S$ of $\partial\pi_{\overline S}(q)$ as follows:
\begin{enumerate}
 \item If $T_S\nest S$ or $T_S\transverse S$, then $\left(\partial\pi_{\overline{S}}(q)\right)_S=\rho^{T_S}_{S}$;
 \item otherwise, $S\nest T_S$.  Choose a $(1,20\delta)$--quasigeodesic ray $\gamma$ in $\fontact T_S$ joining $\rho^S_{T_S}$ to $q_{T_S}$.  By the bounded geodesic image axiom, there exists $x\in\gamma$ such that $\rho^{T_S}_S$ is coarsely constant on the subray of $\gamma$ beginning at $x$.  Let $\left(\partial\pi_{\overline{S}}(q)\right)_S=\rho^{T_S}_{S}(x)$.
\end{enumerate}

\end{defn}

\begin{lem}\label{lem:well_defined}
The map $\boundary\pi_{\overline S}$ is coarsely independent of the choice of $\{T_S\}_{S\in\overline S}$.
\end{lem}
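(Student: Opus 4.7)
The plan is a case analysis on two valid choices $T_S, T'_S \in \support(q)$ for a fixed $S \in \overline S_q$. Both are non-orthogonal to $S$ by the definition of $\overline S_q$, and since $\support(q)$ is a support set, $T_S \orth T'_S$. I want to show the $S$-coordinate of $\partial\pi_{\overline S}(q)$ produced by the two choices agrees up to error depending only on the HHS constants.

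First, I would dispose of case (2) of Definition~\ref{defn:boundary_projection}, the one involving bounded geodesic image: suppose $S \propnest T_S$. Then by the orthogonality axiom (Definition~\ref{defn:space_with_distance_formula}.\eqref{item:dfs_orthogonal}), from $S \nest T_S$ and $T_S \orth T'_S$ I obtain $S \orth T'_S$. This contradicts the assumption that $T'_S$ is non-orthogonal to $S$, unless $T'_S = T_S$. Hence if any valid choice lands in case (2), it is forced to be the unique valid choice, and there is nothing to check. In particular, case (2) and case (1) cannot coexist among valid choices, and two valid choices cannot simultaneously satisfy $S \propnest T_S$ and $S \propnest T'_S$.

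It remains to handle the situation where neither $T_S$ nor $T'_S$ satisfies $S \propnest \cdot$, so both fall under case (1), giving coordinates $\rho^{T_S}_S$ and $\rho^{T'_S}_S$. I would then invoke Lemma~\ref{lem:orthogonal_close} with $U = T_S$, $V = T'_S$, $W = S$. The hypotheses are satisfied: $T_S \orth T'_S$; both are non-orthogonal to $S$; and $S \not\nest T_S, T'_S$ because $S \propnest T_S$ is excluded by the case, while $S = T_S$ is impossible (case (1) requires $T_S \propnest S$ or $T_S \transverse S$, both of which exclude equality), and likewise for $T'_S$. The lemma then yields $\dist_S(\rho^{T_S}_S, \rho^{T'_S}_S) \leq 2E$, completing the proof.

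The only mild obstacle is the bookkeeping to rule out pathological mixtures of cases (1) and (2), but as observed above, the orthogonality axiom collapses case (2) into a single forced choice, so there is no genuine difficulty. The bound $2E$ depends only on the ambient HHS structure, as required for ``coarsely independent.''
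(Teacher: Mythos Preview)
Your proof is correct and follows essentially the same approach as the paper. The only cosmetic difference is that for case~(1) you invoke Lemma~\ref{lem:orthogonal_close} (whose proof is exactly the partial realization argument), whereas the paper re-runs that partial realization step inline; the treatment of case~(2) via the orthogonality axiom is identical.
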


\begin{proof}
Suppose that $T_S,T'_S\in\overline T$ are chosen so that $T_S,T'_S$ are not orthogonal to $S$ and suppose that $S\not\nest T_S,T_S'$.  In other words, either $T_S\nest S$ or $T_S\transverse S$ and the same is true for $T'_S$.  By partial realization (Definition~\ref{defn:space_with_distance_formula}.\eqref{item:dfs_partial_realization}), there therefore exists $y\in\cuco X$ so that $\dist_S(\rho^{T_S}_S,y),\dist_S(\rho^{T'_S}_S,y)\leq E$, whence $\rho^{T_S}_S$ and $\rho^{T'_S}_S$ coarsely coincide.  If $S\nest T_S$, then $S\orth T'_S$ since $T_S\orth T_S'$; this contradicts the defining property of $T'_S$.  Hence, in all allowable situations, $\rho^{T_S}_S$ coarsely coincides with $\rho^{T'_S}_S$; the claim follows. 
\end{proof}

Fix a basepoint $x_0\in \cuco X$.  We are now ready to define a neighborhood basis for each $p=\sum_{S\in\overline S}a^p_Sp_S$, where $p_S\in\fontact S$ for all $S\in\support(p)=\overline S$.  For each $S\in\mathfrak S$, choose a cone-topology neighborhood $U_S$ of $p_S$ in $\fontact S\cup\boundary\fontact S$, and choose $\epsilon>0$.  For convenience, given $q\in\boundary\cuco X$, we let $a^q_T=0$ when $T\in\mathfrak S-\support(q)$.

We define the basic set $\neb_{\{U_S\},\epsilon}(p)$ as the union of a \emph{remote part}, a \emph{non-remote part}, and an \emph{interior part}, as follows:

\begin{defn}[Remote part]\label{defn:remote_part}
The \emph{remote part} is: 

$$\neb_{\{U_S\},\epsilon}^{rem}(p)=$$

$$\left\{q\in\remote{\overline S}{\cuco X}\Big |\forall S\in\overline S, (\partial\pi_{\overline{S}}(q))_S\in U_{S},\ \mathrm{and}\ \forall S\in\overline S_q, S'\in\overline S,\,\left|\frac{\dist_{S}(x_0,(\partial\pi_{\overline{S}}(q))_S)}{\dist_{S'}(x_0,(\partial\pi_{\overline{S}}(q))_{S'})}-\frac{a_S^p}{
a_{S'}^p}\right|<\epsilon\ \mathrm{and}\ \sum_{T\in \overline{S}^{\orth}} a^q_T<\epsilon\right\}.$$

\end{defn}

\begin{defn}[Non-remote part]\label{defn:non_remote_part}
Given $p,q\in\boundary\cuco X$, let $A=\support(p)\cap\support(q)$.  The \emph{non-remote part} is:
$$\neb^{non}_{\{U_{S}\},\epsilon}(p)=\left\{q=\sum_{T}a^q_Tq_T\in\partial\cuco X-\remote{\overline{S}}{\cuco X}\Big|\sum_{V\in\support(q)-A}a^q_V<\epsilon,\,\forall T\in A: |a^q_T-a^p_T|<\epsilon, q_T\in U_T\right\}.$$
\end{defn}

\begin{defn}[Interior part]\label{defn:interior_part}
The \emph{interior part} is:

$$\neb^{int}_{\{U_S\},\epsilon}(p)=\left\{x\in\cuco X\Big|\ \forall S,S'\in\overline S, \forall T\in \overline{S}^{\orth}:\pi_{S}(x)\in U_S, \left|\frac{a_S}{a_{S'}} - \frac{\dist_{S}(x_0,x)}{\dist_{S'}(x_0,x)}\right|< \epsilon, \frac{\dist_{T}(x_0,x)}{\dist_{S}(x_0,x)}< \epsilon\right\}.$$
\end{defn}

\begin{defn}[Topology on $\cuco X\cup\boundary\cuco X$]\label{defn:boundary_topology}
For each $p\in\boundary\cuco X$, with $\support(p)=\overline S$, and $\{U_S:S\in\overline S\},\epsilon>0$ as above, let: $$\neb_{\{U_S\},\epsilon}(p)=\neb_{\{U_S\},\epsilon}^{rem}(p)\cup \neb^{non}_{\{U_{S}\},\epsilon}(p)\cup\neb^{int}_{\{U_S\},\epsilon}(p).$$  We declare the set of all such $\neb_{\{U_S\},\epsilon}(p)$ to form a neighborhood basis at $p$.  Also, we include in the topology on $\cuco X\cup \partial \cuco X$ the open sets in $\cuco X$. This topology does not depend on $x_0$.  
\end{defn}

\begin{rem}\label{rem:not_open}
The $\neb_{\{U_S\},\epsilon}(p)$ need not be open; a priori, they may have empty interior!
\end{rem}

The following is an obvious consequence of the definitions:

\begin{prop} \label{prop:babies continuously embed}
For all $U \in \mathfrak S$, the inclusion $\partial \fontact U \hookrightarrow \partial \cuco X$ is an embedding.
\end{prop}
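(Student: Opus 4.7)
The plan is to verify directly that the inclusion $\iota\colon \partial \fontact U \hookrightarrow \partial \cuco X$, sending $p_U$ to the boundary point $p = 1 \cdot p_U$ supported on $\{U\}$, is a continuous injection that is open onto its image. Injectivity is immediate, since distinct points of $\partial \fontact U$ give boundary points of $\partial \cuco X$ with the common support $\{U\}$ but differing $U$-coordinates.

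The key observation powering both continuity and openness is the following: if $q = 1 \cdot q_U$ lies in $\iota(\partial \fontact U)$, then $\support(q) = \{U\}$ meets the support $\{U\}$ of $p = 1 \cdot p_U$, so by Definition~\ref{defn:remote_point} the point $q$ is \emph{not} remote with respect to $\{U\}$. Therefore $q$ can only appear in the non-remote part of a basic neighborhood $\neb_{\{V\},\epsilon}(p)$, and of course not in the interior part since $q \notin \cuco X$. Unpacking Definition~\ref{defn:non_remote_part} with $A = \support(p) \cap \support(q) = \{U\}$: the sum $\sum_{W \in \support(q) - A} a_W^q$ is empty, the coefficient comparison $|a_U^q - a_U^p| = |1-1| = 0$ is automatic, and the only remaining condition is $q_U \in V$. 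Thus $q = 1 \cdot q_U$ lies in $\neb_{\{V\},\epsilon}(p)$ if and only if $q_U \in V$.

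From here, the argument is immediate. For continuity at $p_U$, any basic neighborhood $\neb_{\{V\},\epsilon}(p)$ of $\iota(p_U)$ pulls back under $\iota$ to $V \cap \partial \fontact U$, a cone-topology neighborhood of $p_U$ in $\partial \fontact U$. For openness onto the image, given a cone-topology neighborhood $V' \subset \partial \fontact U$ of $p_U$, choose an open set $V \subset \fontact U \cup \partial \fontact U$ with $V \cap \partial \fontact U = V'$ (possible because the cone topology on $\fontact U \cup \partial \fontact U$ restricts to the usual topology on $\partial \fontact U$); then $\neb_{\{V\},\epsilon}(p) \cap \iota(\partial \fontact U) = \iota(V')$ for any $\epsilon > 0$.

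The only ``obstacle'' is bookkeeping: one must verify that singleton-support boundary points over $U$ never fall into the remote or interior parts of any basic neighborhood of $p$, and that the auxiliary conditions in the non-remote part involving coefficient ratios and supports disjoint from $A$ collapse vacuously for such $q$. No quasigeodesic estimates, compactness, or bounded-geodesic-image arguments enter, since the definitions simplify entirely when restricted to the image of $\iota$.
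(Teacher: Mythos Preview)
Your proof is correct and is precisely the direct definition-chase that the paper has in mind; the paper itself dispenses with the argument entirely, calling the proposition ``an obvious consequence of the definitions.'' Your explicit unpacking of why points of $\iota(\partial\fontact U)$ land only in the non-remote part of any basic neighborhood, and why the coefficient conditions there are vacuous, is exactly the content behind that assertion.
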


Proposition~\ref{prop:properties} gives basic properties of $\boundary\cuco X$; first we need a definition and some lemmas.

\begin{defn}[Basically Hausdorff]\label{defn:basically_hausdorff}
Let $\cuco H$ be a topological space and let $\mathcal B$ be a neighborhood basis.  Then $(\cuco H,\mathcal B)$ is \emph{basically Hausdorff} if for all distinct $h,h'\in\cuco H$, there exist disjoint $B,B'\in\mathcal B$ with $h\in B,h'\in B'$.
\end{defn}

\begin{lem}\label{lem:boundary_basically_Hausdorff}
Let $(\cuco X,\mathfrak S)$ be hierarchically hyperbolic and let $\overline{\cuco{X}}=\cuco X\cup \partial (\cuco X,\mathfrak S)$.  Then, equipped with the neighborhood basis declared above, $\overline{\cuco X}$ is basically Hausdorff.
\end{lem}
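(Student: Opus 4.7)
I would argue by case analysis on whether each of $p,q\in\overline{\cuco X}$ lies in the interior $\cuco X$ or on $\partial\cuco X$.  If both lie in $\cuco X$ the claim is immediate from the fact that $\cuco X$ is a quasigeodesic metric space and the basic neighborhoods at interior points include metric balls.  If $x\in\cuco X$ and $p=\sum_{S\in\overline S}a^p_S p_S\in\partial\cuco X$, pick any $S_0\in\overline S$.  Then $\pi_{S_0}(x)$ has bounded diameter in $\fontact S_0$ while $p_{S_0}\in\partial\fontact S_0$, so by Hausdorffness of the visual topology on $\fontact S_0\cup\partial\fontact S_0$ I can choose disjoint visual neighborhoods $V$ of $\pi_{S_0}(x)$ and $U_{S_0}$ of $p_{S_0}$.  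Shrinking a metric ball at $x$ so its $\pi_{S_0}$-image lies in $V$ (possible since $\pi_{S_0}$ is coarsely Lipschitz), and pairing $U_{S_0}$ with arbitrary neighborhoods at the other coordinates to form $\neb_{\{U_S\},\epsilon}(p)$, one checks that any common point would be forced to project into $U_{S_0}\cap V=\emptyset$ through either the interior, remote, or non-remote clause.

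For the core case $p,q\in\partial\cuco X$ with supports $\overline S,\overline T$, I first treat $\overline S=\overline T$.  If some $p_{S_0}\ne q_{S_0}$, pick disjoint visual neighborhoods $U_{S_0},V_{S_0}$ of $p_{S_0},q_{S_0}$ and arbitrary neighborhoods at the other coordinates; then any $r$ in both $\neb(p)$ and $\neb(q)$ must, in each of the three clauses, produce a point at the $S_0$-coordinate (a boundary projection, a direct boundary value, or $\pi_{S_0}(r)$) that lies in $U_{S_0}\cap V_{S_0}=\emptyset$.  In the non-remote clause the needed intersection $S_0\in A=\overline S\cap\support(r)$ is forced for small $\epsilon$ via the mass condition $\sum_{V\in\support(r)-A}a^r_V<\epsilon$ together with $a^p_{S_0}>0$ and the bound $|\overline S|\leq n$ from Lemma~\ref{lem:pairwise_orthogonal}.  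If $p_S=q_S$ for all $S$ but $a^p\ne a^q$, some ratio $a^p_S/a^p_{S'}\ne a^q_S/a^q_{S'}$ differs, and the ratio conditions (remote, interior) and direct coefficient conditions (non-remote) yield contradictions once $\epsilon$ is smaller than the relevant gaps.

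When $\overline S\ne\overline T$, pick $S_0\in\overline S\setminus\overline T$ (swapping $p,q$ if needed).  If some $T_0\in\overline T$ is not orthogonal to $S_0$, then for $r$ remote relative to $\overline T$, the $T_0$-boundary projection $(\partial\pi_{\overline T}(r))_{T_0}$ is computed from $\rho^V_{T_0}$ for some $V\in\support(r)$ non-orthogonal to $T_0$ via Definition~\ref{defn:boundary_projection}; choosing $V_{T_0}$ a small visual neighborhood of $q_{T_0}$ that avoids the bounded set $\rho^{S_0}_{T_0}$ (and keeping the mass/ratio constraints at $S_0$ near $a^p_{S_0}>0$ on the $p$-side) separates $p$ from $q$.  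If instead $S_0\orth T$ for every $T\in\overline T$, then $S_0\in\overline T^\orth$ and the mass condition $\sum_{V\in\overline T^\orth}a^r_V<\epsilon$ appearing in the remote part of $\neb(q)$, combined with $a^r_{S_0}\approx a^p_{S_0}>0$ forced on the $p$-side (non-remote) or the analogous mass condition $\sum_{V\in\overline S^\orth}a^r_V<\epsilon$ (remote), gives the incompatibility after fixing $\epsilon<a^p_{S_0}/(n+1)$.  The main obstacle is precisely this last subcase: when $\overline S\cap\overline T=\emptyset$ but $\overline S\cup\overline T$ is still pairwise orthogonal, $p$ and $q$ can be approximated by points $r$ whose support also lies in $\overline S^\orth\cap\overline T^\orth$-adjacent configurations, so one must carefully combine the mass, ratio, and visual-topology constraints, exploiting the bound $|\support(r)|\leq n$ and partial realization, to rule out every possible configuration of $\support(r)$ for a hypothetical common $r$.
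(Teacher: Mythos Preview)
Your case structure --- organised by the relationship between $\support(p)$ and $\support(q)$ --- is different from the paper's, which instead fixes the neighbourhoods first and then case-splits on whether a hypothetical common point $x$ is interior, $p$-remote/non-remote, and $q$-remote/non-remote.  The equal-support case and the interior/non-remote portions of your argument are essentially fine and match what the paper does in spirit.

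The genuine gap is in your treatment of the case $\overline S\neq\overline T$ when the putative common point $r$ is \emph{remote} with respect to both $p$ and $q$.  You write that choosing $V_{T_0}$ to avoid $\rho^{S_0}_{T_0}$ will work, but $(\partial\pi_{\overline T}(r))_{T_0}$ is computed from some $W\in\support(r)$ that is non-orthogonal to $T_0$, and there is no a priori reason this coarse point equals $\rho^{S_0}_{T_0}$: the domain $S_0$ lies in $\support(p)$, not $\support(r)$.  Likewise $(\partial\pi_{\overline S}(r))_{S_0}$ is computed from a (possibly different) $W'\in\support(r)$.  So you have two coarse points, one in $\fontact S_0$ and one in $\fontact T_0$, each built from $\support(r)$, and you need to know that they satisfy a consistency-type inequality with respect to one another in order to exploit the non-orthogonality of $S_0$ and $T_0$.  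That link does not come for free from the definitions.

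The paper supplies exactly this missing link via its Claim~\ref{claim:2}: using partial realization, one produces a single point $y\in P_{W_p}\cap P_{W_q}\subset\cuco X$ (for suitable $W_p,W_q\in\support(r)$) whose honest projections $\pi_{S_0}(y)$ and $\pi_{T_0}(y)$ coarsely coincide with the respective boundary projections of $r$.  Then the genuine consistency inequalities for $y$ give the contradiction you are after, and the same $y$ also handles the mixed remote/non-remote subcase.  Your final paragraph gestures at partial realization, but only in the fully-orthogonal subcase; you need it earlier, precisely to convert boundary projections of a remote $r$ into projections of an interior point so that consistency applies.
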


\begin{proof}

Let $p,q\in\overline{\cuco X}$ be distinct.  The statement is obvious when $p$ or $q$ is in $\cuco X$, so assume that $p,q\in\boundary\cuco X$.  Fix a basepoint $x_0\in\cuco X$.

For each $U\in\support(p)$, choose a neighborhood $Y^p_U$ of $p$ in $\fontact U\cup\boundary\fontact U$ that does not contain $(\boundary\pi_{\support(p)}(q))_U$, provided it is defined.  For each $T\in\support(q)$, choose a neighborhood $Y^q_T$ of $q$ in $\fontact T\cup\boundary\fontact T$ that does not intersect $\neb_{1000E+\omega}(\{\pi_T(x_0)\})$ and, when it is defined, $\neb_{1000E+\omega}((\boundary\pi_{\support(q)}(p))_T)$, where $\omega\geq0$ is to be determined; also choose $Y_T^q$ so that $Y_T^p\cap Y_T^q=\emptyset$ when $T\in\support(p)\cap\support(q)$, unless $p_T=q_T$, in which case we choose $Y_T^p=Y_T^q$.  Fix $\epsilon>0$, to be determined.  Let $\neb(p)=\neb_{\{Y^p_U\},\epsilon}(p)$ and $\neb(q)=\neb_{\{Y^q_V\},\epsilon}(q)$.

Finally, for any $w,v \in \partial \cuco X$, let $\support(w)_v = \support(w) \cup (\support(w)^{\orth} - \support(v)^{\orth})$.

We need an auxiliary claim:

\begin{claim}\label{claim:2}
Let $x,p,q\in\boundary\cuco X$.  Suppose there exist $W_p,W_q\in\support(x)$ and $U\in\support(p)_x,V\in\support(q)_x$ so that $W_p\notorth U$ and $W_p\neq U$, and $W_q\notorth V$ and $W_q\neq V$.  Then there exists $y\in P_{W_p}\cap P_{W_q}\subset\cuco X$ such that $(\boundary\pi_{\support(p)}(x))_U$ $100E$--coarsely coincides with $\pi_U(y)$, and $(\boundary\pi_{\support(q)}(x))_V$ $100E$--coarsely coincides with $\pi_V(y)$.
\end{claim}

($P_{W_p}$ is the standard product region associated to $W_p$, defined in Section~1.3.)

\renewcommand{\qedsymbol}{$\blacksquare$}
\begin{proof}[Proof of Claim~\ref{claim:2}]
If $W_p\transverse U$ or $W_p\nest U$, and $W_q\transverse V$ or $W_q\nest V$, then any $y\in P_{W_p}\cap P_{W_q}$ suffices.  If $U\propnest W_p$, use partial realization to see that, given a $(1,20\delta)$--quasigeodesic ray $\gamma$ in $\contact W_p$ with endpoint $x_{W_p}$, we can choose a sequence $(y_n)$ in $P_{W_p}\cap P_{W_q}$ projecting uniformly close to an unbounded sequence in $\gamma$.  This provides the desired $y$.
\end{proof}
\renewcommand{\qedsymbol}{$\Box$}

Suppose that $x\in\neb(p)\cap\neb(q)$.  We consider the following cases:
\begin{enumerate}
 
 \item $x\in\boundary\cuco X$ is $p$--remote and $q$--remote. First of all, notice that by definition of remote, for any $U\in\supp(p)$ there exists $W_p$ as in Claim \ref{claim:2}, and similarly for $V\in\support(q)$. We now consider the following subcases.
 \begin{enumerate}[(a)]
 \item There exists $U\in \supp(p)\cap \support(q)$ with $p_U\neq q_U$. Then we would have that $(\partial \pi_{\supp(p)}(x))_U$ is contained in both $Y^p_U$ and $Y^q_U$, which are disjoint, a contradiction.\label{item:a}
 
 %From now on, we assume that for every $U\in \supp(p)\cap \support(q)$ we have $p_U=q_U$.
 
 \item There exists $U\in \supp(p)\cap \support(q)$ with $p_U = q_U$ but $a^p_U\neq a^q_U$. Let $\mathcal U=\supp(p)\cap \support(q)$. For each $V\in\mathcal U$ we have that the ratio $\frac{d_V(x_0,(\partial \pi_{\supp(p)}(x))_V)}{d_U(x_0,(\partial \pi_{\supp(p)}(x))_U)}$ is $\epsilon$--close to both $a^p_V/a^p_U$ and $a^q_V/a^q_U$. Hence, if there exists $V\in\mathcal U$ so that $a^p_V/a^p_U\neq a^q_V/a^q_U$, we can choose $\epsilon$ small enough to give a contradiction. Otherwise, since the coefficients sum to $1$, the supports of $p$ and $q$ do not coincide, and we deal with this in the next subcases.\label{item:b}
 
 \item Up to swapping $p$ and $q$, there exists $V\in\supp(q)-\supp(p)$, and there exists $U\in\support(p)$ not orthogonal to $V$.  If $U\transverse V$, then by our choice of $\neb(p),\neb(q)$, we have $\dist_U(y,\rho^V_U)>E,\dist_V(y,\rho^U_V)>E$ for $y$ as in Claim \ref{claim:2}, contradicting consistency.  If $U\propnest V$ or $V\propnest U$, then we reach a similar contradiction of consistency.
 
 \item Now assume that the previous case does not apply and, up to swapping $p$ and $q$, there exists $V \in \left(\supp(q)-\supp(p)\right) \cap \supp(p)^\perp$.  Suppose we also have $\support(p)\subseteq\support(q)\cup\support(q)^\orth$ but $\support(p)\cap\support(q)^\orth\neq\emptyset$, since otherwise either~\eqref{item:a} or~\eqref{item:b} holds.  Let $U\in\support(p)-\support(q)$.  By remoteness of $x$, $U\in\support(q)^\orth-\support(x)^\orth$, so $U\in\support(q)_x$.  Hence the definition of $q$--remoteness gives
 
 $$\left|\frac{\dist_U(x_0,(\boundary\pi_{\support(q)}(x))_U)}{\dist_V(x_0,(\boundary\pi_{\support(q)}(x))_V)}-\frac{a^q_U}{a^q_V}\right|<\epsilon.$$
 
 Similarly, we have $V\in\support(p)_x$, so the definition of $p$--remoteness gives:
 
 $$\left|\frac{d_V(x_0,(\partial \pi_{\supp(p)}(x))_V)}{d_U(x_0,(\partial \pi_{\supp(p)}(x))_U)}-\frac{a^p_V}{a^p_U}\right|<\epsilon.$$
 
 Now, since $V\not\in\support(p),U\not\in\support(q)$, we have $a^p_V=a^q_U=0$, so, we may take $y$ to be the point in $\cuco X$ provided by Claim~\ref{claim:2}, and hence we have $\frac{\dist_{V}(y,x_0)}{\dist_U(y,x_0)}<2\epsilon$ and $\frac{\dist_{U}(y,x_0)}{\dist_V(y,x_0)}<2\epsilon$, provided $\omega$ in Claim \ref{claim:2} was chosen sufficiently large in terms of $\epsilon$ and $E$.  This is a contradiction.

 \end{enumerate}

%  , such that $p_T \neq q_T$ for any $T \in \support(p) \cap \support(q)$:  By definition, there exist $U\in\support(p),V\in\support(q)$ and $W_p,W_q\in\support(x)$ as in Claim~\ref{claim:2}; let $y\in P_{W_p}\cap P_{W_q}$ be a point provided by that claim.  First, if $U=V$, then we have a contradiction since the associated neighborhoods of $\contact U\cup\boundary\contact U$ and $\contact V\cup\boundary\contact V$ were chosen to be disjoint.  If $U\transverse V$, then by our choice of $\neb(p),\neb(q)$, we have $\dist_U(y,\rho^V_U)>E,\dist_V(y,\rho^U_V)>E$, contradicting consistency.  If $U\propnest V$, then a similar argument yields a contradiction with consistency.  If $U\orth V$, then the definition of the remote part of $\neb(p)$ requires that $$\frac{\dist_{V}(y,x_0)}{\dist_U(y,x_0)}<2\epsilon;$$ on the other hand, the definition of the remote part of $\neb(q)$ requires that $$\frac{\dist_{U}(y,x_0)}{\dist_V(y,x_0)}<2\epsilon,$$ a contradiction.
 
 \item $x\in\cuco X$: In this case, $x$ can play the role of $y$ in the above arguments.

 \item $x\in\boundary\cuco X$ is $p$--non-remote and $q$--non-remote: In this case, first choose $\epsilon\in(0,1/2)$ smaller than $|a^p_W-a^q_W|/10$ for each $W\in\support(p)\cap\support(q)$.  The definition of the non-remote part now ensures that $x$ cannot exist.
 
 \item $x\in\boundary\cuco X$ is $p$--remote and $q$--non-remote: In this case, there exists $U\in\support(p),V\in\support(q)$ and $W_p,W_q\in\support(x)$ so that $W_p$ is distinct from and non-orthogonal to $U$ while $W_p=V$ or $W_p\orth V$.  If for each such $W_q$ we have $W_q\in\support(q)^\orth$, then by choosing $\epsilon<1$, we have that $\sum_{T\in\support(x)}a^x_T<1$, a contradiction.  Thus we may take $W_q=V\in\support(q)$.  
 
 Now, choose $y\in P_{W_p}$ so that $(\boundary\pi_{\support(p)}(x))_U$ $100E$--coarsely coincides with $\pi_U(y)$.  If $U=W_q$, then our choice of $\neb(p),\neb(q)$ ensures that $x$ cannot lie in both.  Suppose that $U\transverse W_q$.  Then $\pi_U(y),\rho^{W_q}_U,\rho^{W_p}_U$ all $10E$--coarsely coincide and lie at distance $50E$ from the required neighborhood of $p_U$, so $x\not\in\neb(p)$.  When $U\propnest W_q$ or $W_q\propnest U$, a similar argument shows that $x\not\in\neb(p)\cap\neb(q)$.
 
 Hence it remains to consider the case where $W_q\orth U$.  By definition, $|a^x_{W_q}-a^q_{W_q}|<\epsilon$.  On the other hand, we can assume $W_q\in\supp(p)^\orth$, for otherwise we could re-choose $U$ and $W_q$ to be in one of the above cases.  Thus, by definition, $a^x_{W_q}<\epsilon$.  This yields a contradiction provided we choose, say, $\epsilon\in\bigcap_{T\in\support(q)}\left(0,\frac{a^q_{T}}{10}\right)$.
 
\end{enumerate}
Hence our choice of $\neb(p),\neb(q)$ ensures $\neb(p)\cap\neb(q)=\emptyset$, as required.
\end{proof}

\begin{lem}\label{lem:hausdorff}
$\overline{\cuco X}$ is Hausdorff.
\end{lem}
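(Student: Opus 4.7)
The plan is to upgrade the basic Hausdorff conclusion of Lemma~\ref{lem:boundary_basically_Hausdorff} to full Hausdorff separation by showing that every basic neighborhood $\neb_{\{U_S\},\epsilon}(p)$ contains an open neighborhood of $p$ in the topology generated by Definition~\ref{defn:boundary_topology}. Once this ``interior point'' property is in hand, the disjoint basic neighborhoods produced by Lemma~\ref{lem:boundary_basically_Hausdorff} contain disjoint \emph{open} sets separating $p$ and $q$, giving the Hausdorff property.

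The key technical task is to verify the usual third neighborhood-basis axiom: for every basic neighborhood $\neb(p)=\neb_{\{U_S\},\epsilon}(p)$, there is a basic neighborhood $\neb'(p)\subset\neb(p)$ such that every $r\in\neb'(p)$ admits a basic neighborhood of its own contained in $\neb(p)$. The natural candidate is $\neb'(p):=\neb_{\{V_S\},\epsilon/3}(p)$, where each $V_S$ is a cone-topology neighborhood of $p_S$ whose closure in $\fontact S\cup\boundary\fontact S$ lies inside $U_S$. One then verifies the containment case by case, according to the type of $r$. (i) If $r\in\cuco X$, the distance formula together with the coarse Lipschitz property of each $\pi_U$ supplies a genuine metric ball around $r$ in $\cuco X$ whose projections stay in $U_S$ and obey the required ratio bounds. (ii) If $r\in\boundary\cuco X$ is non-remote with respect to $\support(p)$, then $\support(r)$ shares enough domains with $\support(p)$ that we can pick cone-neighborhoods of each $r_T$ inside $V_T$ for $T\in\support(r)\cap\support(p)$ and coefficients close to $a^r_T$; the resulting basic neighborhood of $r$ lies inside $\neb(p)$ by the triangle inequality on the coefficient bounds. (iii) If $r$ is $p$-remote, we invoke coarse continuity of $\boundary\pi_{\support(p)}$: by shrinking the cone-neighborhood of each $r_T$ sufficiently (exactly as in the proof of Lemma~\ref{lem:well_defined}, using bounded geodesic image and partial realization), the images of all points in a basic neighborhood of $r$ remain inside $U_S$ and their ratios remain within $\epsilon$ of $a^p_S/a^p_{S'}$.

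The main obstacle is case (iii): a basic neighborhood of a remote point $r$ can contain further points whose support interacts nontrivially with both $\support(p)$ and $\support(r)$, and the ratio inequality and cone-neighborhood conditions must be verified uniformly for all of them. This reduces to a coarse continuity statement for $\boundary\pi_{\support(p)}$ on neighborhoods of $r\in\boundary\cuco X$, proved by an analysis paralleling the four-case argument of Lemma~\ref{lem:boundary_basically_Hausdorff} and leaning on the bounded geodesic image axiom, partial realization, and Lemma~\ref{lem:orthogonal_close}. With this continuity statement in place, every basic neighborhood of $p$ contains an open neighborhood of $p$, and Lemma~\ref{lem:boundary_basically_Hausdorff} immediately gives the Hausdorff conclusion.
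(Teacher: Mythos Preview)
Your overall reduction matches the paper's: both invoke Lemma~\ref{lem:boundary_basically_Hausdorff} and then argue that each basic set $\neb_{\{U_S\},\epsilon}(p)$ contains an open neighborhood of $p$. The divergence is in how this last fact is established.

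You attack it directly, by attempting to verify the third neighborhood-basis axiom: find a smaller basic set $\neb'(p)$ so that every $r\in\neb'(p)$ has a basic neighborhood of its own inside $\neb(p)$. This is the natural approach, and cases~(i) and~(ii) are routine. But case~(iii) is not merely ``the main obstacle''---it is essentially the whole proof, and your sketch does not close it. If $r$ is $p$-remote, a basic neighborhood of $r$ is defined via $\support(r)$, and points $q$ in that neighborhood can be $r$-remote, $r$-non-remote, or interior; for each such $q$ you must verify membership in $\neb(p)$, which is a condition phrased in terms of $\support(p)$. Relating the $\support(r)$-data controlling $q$ to the $\support(p)$-data required by $\neb(p)$ is a genuine two-level case analysis (type of $q$ relative to $r$, crossed with type of $q$ relative to $p$), and the appeal to a ``coarse continuity statement for $\boundary\pi_{\support(p)}$'' does not by itself organize or discharge these cases. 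Saying it parallels Lemma~\ref{lem:boundary_basically_Hausdorff} is plausible but is an assertion, not an argument.

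The paper sidesteps all of this with a sequential trick. It introduces the auxiliary topology $\mma$ in which a set is closed exactly when it contains all limits of \emph{basically convergent} sequences (those eventually inside every basic set about the limit). In $\mma$, the statement ``$p$ lies in the interior of $\neb$'' is immediate: otherwise a sequence in the complement would basically converge to $p$, forcing it eventually into $\neb$. The paper then checks that $\mma$ coincides with the original topology, using that each boundary point has a countable cofinal system of basic sets (the support is finite and each $\fontact S\cup\boundary\fontact S$ is first-countable). This avoids the nested case analysis entirely: no point $r\neq p$ ever needs to be examined.

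In short, your strategy is sound and would succeed if case~(iii) were carried out in full, but as written that case is a gap. The paper's argument is both shorter and complete; if you want to pursue the direct route, you will need to actually execute the double case analysis, not defer it.
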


\begin{proof}
In light of Lemma~\ref{lem:boundary_basically_Hausdorff}, it suffices to show that for all $p\in\boundary\cuco X$, with $p=\sum_{T\in\support(p)}a_Tp_T$, all $\epsilon>0$, and all collections $\{U_T:T\in\support(p)\}$ with each $U_T$ a neighborhood of $p_T$ in $\fontact T\cup\boundary\fontact T$, the corresponding basic set $\neb_{\{U_T\},\epsilon}(p)$ has nonempty interior.

\textbf{The topology of basic convergence:}  Given a sequence $\{p_n\}$ with each $p_n\in\overline{\cuco X}$, we say that $p_n$ \emph{basically converges} to $p\in\boundary\cuco X$ if for all $\epsilon>0$ and all choices of $\{U_T\}$ as above, we have $p_n\in\neb_{\{U_T\},\epsilon}(p)$ for all but finitely many $n\in\naturals$.  Similarly, $\{p_n\}$ \emph{basically converges} to $p\in\cuco X$ if, for all $\epsilon>0$, we have $p_n\in\neb_{\epsilon}(p)$ for all sufficiently large $n$.

Define a topology on $\overline{\cuco X}$ as follows: the set $A\subset\overline{\cuco X}$ is declared to be closed if $a\in A$ whenever there is a sequence $\{a_n\}$ so that $a_n\in A$ for all $n$ and $a_n$ basically converges to $a$.  Denote by $\mma$ the space $\overline{\cuco X}$ endowed with this topology.

\textbf{Nonempty interior of basic sets:}  Let $\neb=\neb_{\{U_T\},\epsilon}(p)$ be a basic set as above.  We claim that $p\in\interior{\neb}$.  Otherwise, there exists a sequence $\{p_n\}$ in $\overline{\cuco X}-\neb$ that basically converges to $p$.  This is a contradiction since basic convergence to $p$ needs $\{p_n\}$ to enter $\neb$.  

\textbf{Equivalence of the topologies:}  To complete the proof that basic sets in $\overline{\cuco X}$ have nonempty interior (with respect to the original topology), and thereby complete the proof of the lemma, it suffices to show that $\overline{\cuco X}$ is homeomorphic to $\mma$.

Now, a set $A\subseteq\overline{\cuco X}$ is closed in $\overline{\cuco X}$ (i.e. has open complement) if and only if, for each $p\in\overline{\cuco X}-A$, we can choose $\epsilon>0$ and neighborhoods $\{U_T:T\in\support(p)\}$ so that $\neb_{\{U_T\},\epsilon}(p)$ is disjoint from $A$.  But this is equivalent to the following: for all basically convergent $\{a_n\}$ with each $a_n\in A$, the (basic) limit $a$ lies in $A$.  This is in turn equivalent to the assertion that $A$ is closed in $\mma$.
\end{proof}

\begin{prop}\label{prop:properties}
Let $(\cuco X,\mathfrak S)$ be hierarchically hyperbolic, and let $\overline{\cuco{X}}=\cuco X\cup \partial (\cuco X,\mathfrak S)$.
\begin{enumerate}
 \item \label{item:hausdorff}$\overline{\cuco X}$ is Hausdorff and, if $\cuco X$ is separable (e.g. if it is proper), then $\overline{\cuco X}$ is separable.
 \item \label{item:closed}$\partial \cuco X$ is closed in $\overline{\cuco X}$,
 \item \label{item:dense}$\cuco X$ is dense in $\overline{\cuco X}$.
\end{enumerate}
\end{prop}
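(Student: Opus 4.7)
The plan is as follows. Hausdorffness is exactly Lemma~\ref{lem:hausdorff}, and the openness of $\cuco X$ in $\overline{\cuco X}$ is immediate from the final clause of Definition~\ref{defn:boundary_topology}, which gives assertion (2). If $\cuco X$ is separable, any countable dense subset $D \subset \cuco X$ will be dense in $\overline{\cuco X}$ once we prove assertion (3), since every basic neighborhood of a boundary point must then meet the open set $\cuco X$ and therefore contain a point of $D$. So the heart of the argument is assertion (3).

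To prove (3), I would fix $p = \sum_{S \in \overline S} a^p_S p_S \in \partial \cuco X$ together with a basic neighborhood $\neb_{\{U_S\},\epsilon}(p)$, and construct a point $x \in \neb^{int}_{\{U_S\},\epsilon}(p) \subset \cuco X$ by applying the Realization Theorem~\ref{thm:realization} to a suitable consistent tuple $\tup b = (b_U)_{U \in \mathfrak S}$. For each $S \in \overline S$, choose a $(1,20\delta)$--quasigeodesic ray $\gamma_S$ in $\fontact S$ from $\pi_S(x_0)$ toward $p_S$, and for a large parameter $L$ (to be determined) set $b_S = \gamma_S(L a^p_S)$; then $b_S$ lies in $U_S$ once $L$ is large. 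For $T \in \overline S^\orth$, set $b_T = \pi_T(x_0)$. For every remaining $W \in \mathfrak S$, pick some $S_W \in \overline S$ not orthogonal to $W$ (such exists by definition of $\overline S^\orth$) and take $b_W$ to be $\rho^{S_W}_W$ if $W \transverse S_W$ or $S_W \propnest W$, and $b_W = \rho^{S_W}_W(b_{S_W})$ if $W \propnest S_W$.

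The main obstacle is verifying that $\tup b$ is $\kappa$--consistent for a uniform $\kappa$. For transverse $V, W$ both in the ``auxiliary'' third category, the strategy is to apply partial realization (Definition~\ref{defn:space_with_distance_formula}.\eqref{item:dfs_partial_realization}) to the pair $\{S_V, S_W\} \subseteq \overline S$, which is pairwise orthogonal, to obtain a witness $z \in \cuco X$ whose projections uniformly coarsely agree with $b_V$ and $b_W$; consistency of $\tup b$ at $(V,W)$ then reduces to consistency of $(\pi_V(z), \pi_W(z))$, which holds automatically. The ambiguity arising when several elements of $\overline S$ are available for $S_W$ is controlled by Lemma~\ref{lem:orthogonal_close}, and the nesting cases, together with pairs involving $\overline S$ or $\overline S^\orth$, are handled similarly but more easily, using that coordinates pulled from $x_0$ are consistent with everything. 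Once consistency is established, Theorem~\ref{thm:realization} yields $x \in \cuco X$ with $\pi_U(x)$ uniformly close to $b_U$ for all $U$; Theorem~\ref{thm:distance_formula} then gives $\dist_S(x_0, x) = L a^p_S + O(1)$ for $S \in \overline S$ and $\dist_T(x_0, x) = O(1)$ for $T \in \overline S^\orth$, so for $L$ sufficiently large $x$ satisfies all the defining inequalities of $\neb^{int}_{\{U_S\},\epsilon}(p)$, completing the proof.
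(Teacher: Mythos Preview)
Your treatment of assertions~\eqref{item:hausdorff} and~\eqref{item:closed}, and the reduction of separability to~\eqref{item:dense}, is fine and matches the paper. The difference is in~\eqref{item:dense}: the paper does \emph{not} build a consistent tuple and invoke Realization. Instead it works inductively on $|\support(p)|$, using the standard product regions: for $d=1$ it picks $x_1^t\in F_{S_1}\times E_{S_1}$ with $\pi_{S_1}(x_1^t)\approx\gamma_1(a^p_{S_1}t)$ and $\pi_T(x_1^t)\approx\pi_T(x_0)$ for $T\orth S_1$; for $d\ge 2$ it takes $x_{d-1}^t\in E_{S_d}$ from the inductive step and then uses the embedding $F_{S_d}\times E_{S_d}\to\cuco X$ to choose $x_d^t$ with the correct $S_d$--coordinate as well. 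Since these are genuine points of $\cuco X$, no consistency check is ever needed.

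Your direct approach via Realization is natural, but the consistency verification you sketch has a real gap. Take $V,W$ both in the ``third category'' with $V,W\propnest S$ for the \emph{same} $S\in\overline S$ (this is forced: if $V\propnest S$ then $V\orth S'$ for every other $S'\in\overline S$, so $S_V=S$ is the only possible choice, and likewise $S_W=S$). Your strategy is to apply partial realization to $\{S_V,S_W\}=\{S\}$: this produces $z\in\cuco X$ with $\pi_S(z)\approx b_S$ and $\pi_U(z)\approx\rho^S_U$ whenever $S\propnest U$ or $S\transverse U$, but it says \emph{nothing} about $\pi_V(z)$ or $\pi_W(z)$ when $V,W\propnest S$. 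So you cannot reduce consistency of $(b_V,b_W)=(\rho^S_V(b_S),\rho^S_W(b_S))$ to consistency of $z$. In general there is no axiom guaranteeing that the downward projections $\rho^S_V(p),\rho^S_W(p)$ of an arbitrary $p\in\fontact S$ form a consistent pair; one only knows this when $p$ is far from both $\rho^V_S$ and $\rho^W_S$ (via bounded geodesic image and a witness $z$), and as $V,W$ range over all domains nested in $S$ you cannot arrange this uniformly for a single choice of $L$. The paper's product-region argument is precisely what circumvents this: working inside $F_S$ (and inductively inside $E_S$) produces honest points whose projections to \emph{all} $V\nest S$ are automatically consistent.
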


\begin{proof}
The ``Hausdorff'' part of Assertion~\eqref{item:hausdorff} follows from Lemma~\ref{lem:hausdorff}.  Separability of $\overline{\cuco X}$ follows from density of the metric space $\cuco X$ in $\overline{\cuco X}$, i.e. part~\eqref{item:dense}.  Assertion~\eqref{item:closed} is obvious: no bounded neighborhood of an interior point contains a boundary point, so no sequence of boundary points converges to an interior point.  

It remains to prove assertion~\eqref{item:dense}.  Pick a neighborhood $\neb_{\{U_S\},\epsilon}(p)$ of $p=\sum_{S \in \support(p)} a^p_Sp_S\in\partial \cuco X$, with $p_S\in\partial \fontact S$ for $S \in \support(p)$.  For each $S_i \in \support(p) = \{S_1, \dots, S_d\}$, fix a uniform quasigeodesic ray $\gamma_i$ in $\fontact S$ from $\pi_{S}(x_0)$ to $p_S$.
 
 First, suppose that $d=1$.  Then for each $t$, there exists $x_1^t$ such that $\pi_{S_1}(x_1^t)$ coarsely coincides with $\gamma_1(a^p_{S_1}\cdot t)$ and, in view of the quasiisometric embedding $F_{S_1}\times E_{S_1}\rightarrow\cuco X$  described in Subsection \ref{sec:product_regions}, the point $x_1^t$ can be chosen so that $\pi_T(x^t_1)$ coarsely equals $\pi_T(x_0)$ for each $T\orth S_1$.   (Here we have used that $(\cuco X,\mathfrak S)$ is normalized.) 
 
Now suppose $d\geq 2$.  By induction, for all $t$, there exists $x_{d-1}^t\in E_{S_d}$ such that for all $i\leq d-1$, the projection $\pi_{S_i}(x_{d-1}^t)$ coarsely coincides with $\gamma_i(a^p_{S_i}\cdot t)$, and also $\pi_{T}(x_{d-1}^t)$ coarsely coincides with $\pi_T(x_0)$ for each $T$ orthogonal to each $S_i$. In view of the quasiisometric embedding $F_{S_d}\times E_{S_d}\rightarrow\cuco X$, there exists a point $x_{d}^t$ so that $\gate_{E_{S_d}}(x_d^t)$ coarsely coincides with $x_{d-1}^t$ and $\pi_{S_d}(x_d^t)$ coarsely coincides with $\gamma_d(a^p_{S_d}\cdot t)$.  (Here, $\gate_{E_{S_d}}$ is the gate map defined at the end of Section~\ref{sec:background}.)  For each sufficiently large $t$, the point $x_d^t$ lies in $\neb_{\{U_{S_i}\},\epsilon}(p)$, as required.
\end{proof}

\begin{rem}\label{rem:alternate_topology}
By regarding each $\boundary\fontact U$, with $U\in\mathfrak S$, as a discrete set, we can endow $\boundary(\cuco X,\mathfrak S)$ with an alternate topology as a simplicial complex, as follows.  For each $U\in\mathfrak S$ and each $p\in\boundary\fontact U$, we have a $0$--simplex, and the $0$--simplices $p_0,\ldots,p_k\in\boundary\fontact U_0,\ldots,\boundary\fontact U_k$ span a $k$--simplex if $U_i\orth U_j$ for $0\leq i<j\leq k$.  There is an obvious bijection from the resulting simplicial complex to $\boundary(\cuco X,\mathfrak S)$, which is an embedding on each simplex.
\end{rem}

\section{Compactness for proper HHS}\label{sec:compactness}

In this section, we will prove that proper HHS have compact HHS boundaries.
\subsection{Preliminary lemmas}

\begin{defn}\label{defn:level}
 Let $(X,\mathfrak S)$ be hierarchically hyperbolic. The \emph{level} $\ell_U$ of $U\in\mathfrak S$ is defined inductively as follows. If $U$ is $\nest$-minimal, then $\ell_U = 1$.  We inductively define $\ell_U = k+1$ if $k$ is the maximal integer such that there exists $V\nest U$ with $\ell_V=k$ and $V\neq U$.
\end{defn}

The following is a slightly modified version of Lemma~2.5 in~\cite{BehrstockHagenSisto:HHS_II}.  

\begin{lem}\label{lem:nest_progress}
 Let $(X,\mathfrak S)$ be hierarchically hyperbolic. Then there exists $N$ with the following property. Let $x,y\in\cuco X$ and let $\{S_i\}_{i=1,\dots,N}\subseteq \mathfrak S$ be so that $\dist_{\fontact S_i}(x,y)\geq 50E$ for each $i=1, \dots, N$.  Then there exists $S\in\mathfrak S$ and $i$ so that $S_i\propnest S$ and $\dist_{\fontact S}(x,y)\geq 100E$.  Moreover, for each $T \in \mathfrak S$ such that each $S_i \nest T$, we can choose $S \nest T$.
\end{lem}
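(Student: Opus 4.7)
The plan is to induct on the level of $T$, using the large links axiom at each step to reduce to a smaller domain; the main (non-``moreover'') statement then follows by applying the ``moreover'' part with $T$ the $\nest$-maximal element of $\mathfrak S$. I will treat the $S_i$ as distinct (the indexing set is given as a subset of $\mathfrak S$) and, by peeling off at most one index with $S_i = T$, reduce the inductive claim to the situation where $S_i \propnest T$ for all $i$.

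The inductive step begins by applying the large links axiom (Definition~\ref{defn:space_with_distance_formula}.\eqref{item:dfs_large_link_lemma}) to $x$, $y$, and $T$: it produces at most $m := \lfloor \lambda \dist_T(x,y) + \lambda \rfloor$ domains $U_1,\dots,U_m \in \mathfrak S_T - \{T\}$ such that every $U \propnest T$ with $\dist_U(x,y) \geq E$ is nested in some $U_k$. Since $\dist_{S_i}(x,y) \geq 50E \geq E$ for each $i$, each $S_i$ is nested in some $U_{k(i)}$.

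The case split is then as follows. If $\dist_T(x,y) \geq 100E$, take $S := T$; any $S_i \propnest T = S$ witnesses the conclusion. Otherwise $\dist_T(x,y) < 100E$, so $m \leq 100\lambda E + \lambda$, a constant depending only on the HHS data. Setting $N_\ell := m(N_{\ell-1}+1) + 1$, the pigeonhole principle forces some $U_{k_0}$ to contain more than $N_{\ell-1}$ of the distinct $S_i$; since at most one of them can equal $U_{k_0}$, at least $N_{\ell-1}$ are properly nested in $U_{k_0}$. As $U_{k_0} \propnest T$ has level at most $\ell - 1$, the induction hypothesis applied with $T$ replaced by $U_{k_0}$ yields $S \nest U_{k_0} \nest T$ with $S_i \propnest S$ for some $i$ and $\dist_S(x,y) \geq 100E$. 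Taking $N := N_n$ where $n$ is the complexity of $(\cuco X,\mathfrak S)$ completes the proof.

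The main piece of bookkeeping is the ``$+1$'' in the pigeonhole count, which absorbs the at-most-one $S_i$ that could equal the chosen $U_{k_0}$; beyond that, the argument is a straightforward cascade of large-links applications, terminating as soon as some intermediate domain realizes projection gap at least $100E$. In particular, I expect the base case $\ell = 2$ to terminate immediately: the $U_k$ produced by large links are all of level $1$, hence minimal, so each $S_i \propnest T$ must equal some $U_k$, and having more than $m$ distinct $S_i$ directly contradicts $\dist_T(x,y) < 100E$, forcing the $\dist_T(x,y) \geq 100E$ branch. This observation also explains why the gap between the hypothesis threshold $50E$ and the conclusion threshold $100E$ is enough: the upgrade happens exactly once, at the level where the inductive cascade halts.
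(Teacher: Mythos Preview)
Your proof is correct and follows essentially the same approach as the paper's: induct on the level of a domain containing all the $S_i$, apply large links when the top-level projection is below $100E$, and pigeonhole into one of the boundedly many $U_k$. The only cosmetic differences are that you fold the ``moreover'' clause directly into the induction (the paper proves the main statement first and then remarks that the same argument works inside $\mathfrak S_T$), and you track the ``$+1$'' for a possible $S_i = U_{k_0}$ explicitly, which the paper leaves implicit; one small notational slip is writing $N_\ell := m(N_{\ell-1}+1)+1$ with the variable $m$ rather than the constant bound $m_0 := \lfloor 100\lambda E + \lambda\rfloor$, but you clearly intend the latter.
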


\begin{proof}
 The proof is by induction on the level $k$ of a $\nest$-minimal $S\in\mathfrak S$ into which each $S_i$ is nested. The base case $k=1$ is empty.
 
Suppose that the statement holds for a given $N=N(k)$ when the level of $S$ as above is at most $k$. Suppose instead that $|\{S_i\}|\geq N(k+1)$ (where $N(k+1)$ is a constant much larger than $N(k)$ that will be determined shortly) and there exists a $\nest$-minimal $S\in\mathfrak S$ of level $k+1$ into which each $S_i$ is nested.  There are two cases.

If $\dist_{\fontact S}(x,y)\geq 100E$, then we are done. If not, then the large link axiom (Definition~\ref{defn:space_with_distance_formula}.\eqref{item:dfs_large_link_lemma}) implies that there exists $K=K(100E)$ and $T_1,\dots,T_K$, each properly nested into $S$ (and hence of level less than $k+1$), so that any $S_i$ is nested into some $T_j$. In particular, if $N(k+1)\geq KN(k)$, there exists $j$ so that at least $N(k)$ elements of $\{S_i\}$ are nested into $T_j$. By the induction hypothesis, we are done.

Note that the proof still works replacing $\mathfrak{S}$ with $\mathfrak{S}_T$ when each $S_i \nest T$.  In this case, we can take $S \nest T$ and the $T_i$ produced by the large link axiom will also have $T_i \nest S \nest T$ for each $i$, as required for the second statement.
\end{proof}

\begin{lem}\label{lem:unbounded_ray}
 Let $(X,\mathfrak S)$ be hierarchically hyperbolic. Then for every hierarchy ray $\gamma$ there exists $S\in\mathfrak S$ so that $\pi_S(\gamma)$ is unbounded. Moreover, if $T\in\mathfrak S$ has the property that $\{\diam_{\fontact T'}(\gamma):T'\nest T\}$ is unbounded, then there exists $S\nest T$ so that $\pi_S(\gamma)$ is unbounded.
\end{lem}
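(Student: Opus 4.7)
The first statement reduces to the second. Indeed, if every $\diam_{\fontact T'}(\gamma)$ is finite, then either they are uniformly bounded, in which case iterated application of the large links axiom at every level bounds the number of domains $W$ with $\dist_W(\gamma(0),\gamma(t)) \geq E$ by a constant independent of $t$, and the distance formula (Theorem~\ref{thm:distance_formula}) then contradicts the unboundedness of the quasigeodesic $\gamma$; or else the supremum is infinite, in which case the second statement applies with $T$ the $\nest$--maximal domain. I will prove the second statement by strong induction on the complexity of $T$, i.e.\ the length of the longest $\nest$--chain in $\mathfrak S_T$.

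The base case (complexity $1$) is immediate since $T$ is the only element of $\mathfrak S_T$. For the inductive step, I may assume $M_T := \diam_{\fontact T}(\gamma) < \infty$ (else set $S = T$) and pick $T_i \propnest T$ with $M_{T_i} := \diam_{\fontact T_i}(\gamma) \to \infty$ and all $M_{T_i}$ finite (else set $S = T_i$). The goal is to produce a single common proper container $V \propnest T$ with infinitely many $T_i$ satisfying $T_i \nest V$. Since $V$ has strictly smaller complexity than $T$ and inherits the unbounded-projection hypothesis from the subsequence $T_i \nest V$, the inductive hypothesis applied to $V$ then yields $S \nest V \nest T$ with $\pi_S(\gamma)$ unbounded.

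To produce $V$: because each $\pi_{T_i} \circ \gamma$ is an unparametrized $(D,D)$--quasigeodesic whose image has diameter $M_{T_i}$, for any $n$ I can choose a time $t_n$ with $\dist_{T_i}(\gamma(0), \gamma(t_n)) \geq E$ simultaneously for $i = 1, \dots, n$ (using coarse monotonicity of unparametrized quasigeodesics to keep earlier $T_i$ active as $n$ grows). The large links axiom at $T$ with endpoints $\gamma(0), \gamma(t_n)$ then produces at most $\lfloor \lambda M_T + \lambda \rfloor$ proper sub-containers of $T$ into which each of $T_1, \dots, T_n$ is nested. The main obstacle I expect is extracting a single $V$ that works for infinitely many $T_i$ from these $n$--dependent large-links outputs. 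A clean approach is to apply Ramsey's theorem to the pairwise relations among the $T_i$: finite complexity and Lemma~\ref{lem:pairwise_orthogonal} rule out infinite pairwise-nested and pairwise-orthogonal subsequences, leaving the pairwise-transverse case. In that case, consistency applied at $\gamma(t)$ together with bounded geodesic image (Definition~\ref{defn:space_with_distance_formula}(\ref{item:dfs:bounded_geodesic_image})) confines the projections $\rho^{T_i}_T$ to a bounded region of $\fontact T$, so that pigeonhole on the finitely many large-links containers produced at a fixed time $t_n$ forces a single $V \propnest T$ to contain a subsequence of the $T_i$, closing the induction.
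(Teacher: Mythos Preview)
The reduction and the inductive setup are fine, and you correctly identify the crux: extracting a \emph{single} proper container $V\propnest T$ into which infinitely many of the $T_i$ nest. But your proposed resolution does not work. At a fixed time $t_n$, pigeonhole among the at most $N=\lfloor\lambda M_T+\lambda\rfloor$ large-links containers yields one container $V_n$ holding at least $n/N$ of $T_1,\ldots,T_n$; however, $V_n$ may genuinely change with $n$, and nothing you have written prevents this. Confining the points $\rho^{T_i}_T$ (and hence also $\rho^{V_n}_T$, via the last clause of Definition~\ref{defn:space_with_distance_formula}.\eqref{item:dfs_transversal}) to a bounded region of $\fontact T$ does not help: there can be infinitely many distinct domains whose $\rho$--points land in any given bounded set. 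The pairwise transversality obtained from Ramsey plays no role in this step either. Since applying the inductive hypothesis to $V$ requires $\{\diam_{T'}(\gamma):T'\nest V\}$ to be unbounded---hence infinitely many $T_i$ nested in one fixed $V$---a growing finite subcollection inside a moving target does not close the induction.

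The paper sidesteps this by a different organization. Rather than inducting on complexity, it passes directly to an $S$ that is $\nest$--\emph{minimal} among domains containing infinitely many members of the ``eventually large'' set $\mathcal S=\{U:\dist_U(\gamma(0),\gamma(t))\geq 50E\text{ for all large }t\}$. Minimality guarantees that every proper subdomain of $S$ contains only finitely many members of $\mathcal S$, and this is exactly what makes an iterative argument terminate. The engine you are missing is Lemma~\ref{lem:nest_progress}: once sufficiently many level--$k$ members of $\mathcal S$ are trapped in one large-links container, that lemma produces a new element of $\mathcal S$ of strictly higher level nested in $S$; minimality of $S$ then lets one discard the finitely many level--$k$ elements it absorbed and repeat, manufacturing infinitely many higher-level elements of $\mathcal S$ inside $S$ and contradicting the maximality of $k$. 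If you wish to rescue the induction-on-complexity route, you will need some version of this promotion step to convert ``many $T_i$ in a container'' into ``large projection to the container itself.''
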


\begin{proof}
The proof of the ``moreover'' part is a minor variation; we prove the first assertion and indicate parenthetically how to adapt the proof. 

%%%%%%%%%%%%%%%%%%%%%%%%%%%%%%%%%%%%%%%%%%%%%%%%%%% OLD %%%%%%%%%%%%%%%%%%%%%%%%%%%%%%%%%%%%%%%%%%%%%%%%%%%%%%%%%%%%%%%%%%%%%%%%%%%%%%
%  By the distance formula (Theorem \ref{thm:distance_formula}) and the fact that $\gamma$ is a quasi-geodesic, there exists $\ell$ such that for each positive integer $i$, there exists $S'_i\in\mathfrak S$ so that $\dist_{\fontact S'_i}(\gamma(i\ell),\gamma((i+
% 1)\ell)\geq 100 E$. (For the purposes of the ``moreover'' part, we choose $S'_i$ nested into $T$.) Since $\gamma$ is a hierarchy path, it makes coarsely monotonic progress in $\fontact U$ for each $U \in \mathfrak S$, and thus for each $t\geq0$ we have
%  $$\dist_{\fontact U}(\gamma(0),\gamma(t))\geq 50E \cdot |\{i\leq \lfloor t/\ell \rfloor: S'_i=U\}|.$$
%%%%%%%%%%%%%%%%%%%%%%%%%%%%%%%%%%%%%%%%%%%%%%%%%%%%%%%%%%%%%%%%%%%%%%%%%%%%%%%%%%%%%%%%%%%%%%%%%%%%%%%%%%%%%%%%%%%%%%%%%%%%%%%%%%%%%%
 
 %%%%%%%%%%%%%%%%%%%%%%%%%%%%%%%%%%%%%%%%%%%%%%%%%% NEW %%%%%%%%%%%%%%%%%%%%%%%%%%%%%%%%%%%%%%%%%%%%%%%%%%%%%%%%%%%%%%%%%%%%%%%%%%%%%%
 By the distance formula (Theorem \ref{thm:distance_formula}) and the fact that $\gamma$ is a quasi-geodesic, there exists an increasing sequence $\{n_i\}$ of natural numbers such that for each positive integer $i$, there exists $S'_i\in\mathfrak S$ so that $\dist_{\fontact S'_i}(\gamma(n_i),\gamma(n_{i+1}))\geq 100 E$.  (For the purposes of the ``moreover'' part, we choose $S'_i$ nested into $T$.)  Since $\gamma$ is a hierarchy path, it makes coarsely monotonic progress in $\fontact U$ for each $U \in \mathfrak S$, and thus for each $t\geq0$ we have
 $$\dist_{\fontact U}(\gamma(0),\gamma(t))\geq 50E \cdot |\{i: n_i\leq t, S'_i=U\}|.$$
%%%%%%%%%%%%%%%%%%%%%%%%%%%%%%%%%%%%%%%%%%%%%%%%%%%%%%%%%%%%%%%%%%%%%%%%%%%%%%%%%%%%%%%%%%%%%%%%%%%%%%%%%%%%%%%%%%%%%%%%%%%%%%%%%%%%%%

 Let $\mathcal S\subset \mathfrak S$ be the collection of domains in which $\gamma$ makes significant progress; that is, $\mathcal S$ is the set of all $S\in\mathfrak S$ so that there exists $t_S\geq 0$ so that for any $t\geq t_S$ we have $\dist_{\fontact S}(\gamma(0),\gamma(t))\geq 50 E$. (In the proof of the ``moreover'' part, we further require that $S$ is nested into $T$.) If $|\mathcal S|<\infty$, then we are done by the above inequality, so assume $|\mathcal{S}|=\infty$.
 
 Let $S\in \mathfrak S$ be $\nest$-minimal with the property that there are infinitely many $S'\in\mathcal S$ nested into $S$. (In the proof of the ``moreover'' part, $S$ is nested into $T$.) Suppose for a contradiction that $\diam_S(\pi_{S}(\gamma))=D<\infty$.
 
 Denote by $\mathcal S^j$ the set of all level-$j$ elements of $\mathcal S$ nested into $S$, and let $k$ be maximal with the property that $\mathcal S^k$ is infinite.  Note that this assumption and finite complexity imply that $\bigcup_{k'>k} \mathcal S^{k'}$ is finite.  To derive a contradiction, we will use the large link axiom and Lemma \ref{lem:nest_progress} to construct an infinite sequence of distinct $S_i \in \bigcup_{k'>k} \mathcal S^{k'}$.
 
 By the large link axiom (Definition~\ref{defn:space_with_distance_formula}.\eqref{item:dfs_large_link_lemma}), there exists $K=K(D)$ so that, for any $t$, there exist $T^t_1,\dots,T^t_K$ properly nested into $S$, such that if $X\in\mathcal S$ has $X \nest S$ and $t_X\leq t$, then $X \nest T^t_j$ for some $j$. If we take $t_0$ large enough, we can apply Lemma \ref{lem:nest_progress} to a sufficiently large subset of $\mathcal S^k$, all of whose elements are nested into some $T^{t_0}_j$, and we get some $S_0$ of level $k_0>k$, so that $\dist_{\fontact S_0}(\gamma(0),\gamma(t))\geq 100 E$ for $t\geq t_0$.  Note that Lemma \ref{lem:nest_progress} allows us to take $S_0 \nest T^{t_0}_j$, so that $S_0 \nest S$ and thus $S_0\in \mathcal S^{k_0}$. By minimality of $S$, there are finitely many elements of $\mathcal S^k$ nested into $S_0$. We can now choose $t_1>t_0$ and apply Lemma \ref{lem:nest_progress} to a sufficiently large subset of $\mathcal S^k$ all whose elements are nested into some $T^{t_1}_j$ but not nested into $S_0$, and get another element $S_1\in\mathcal S^{k_1}$, for some $k_1>k$, which is properly nested into $S$. We can then proceed inductively and construct infinitely many distinct elements $S_i \nest S$ of level greater than $k$, giving us our contradiction.\end{proof}

\subsection{Compactness}
We are ready to prove:

\begin{thm}\label{thm:cpt}
 Let $(\cuco X,\mathfrak S)$ be hierarchically hyperbolic, and let $\overline{\cuco{X}}=\cuco X\cup \partial (\cuco X,\mathfrak S)$. If $\cuco X$ is proper, then $\overline{\cuco{X}}$ is compact.
\end{thm}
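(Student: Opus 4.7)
The plan is to reduce to sequential compactness and then construct, for any unbounded sequence in $\cuco X$, a boundary limit via a hierarchy-path extraction. First, after normalizing $(\cuco X,\mathfrak S)$ via Proposition~\ref{prop:normalizing}, separability of $\cuco X$ (from properness) transfers to each $\fontact U$, so that rational $\epsilon$ together with countable cone-topology bases at each $p_U$ give $\overline{\cuco X}$ a countable neighborhood basis at every point. Combined with Hausdorffness and density of $\cuco X$ (Proposition~\ref{prop:properties}), this reduces the theorem to extracting a convergent subsequence from any sequence $(x_n)\subset\cuco X$. Bounded sequences are handled immediately by properness of $\cuco X$, so assume $\dist_{\cuco X}(x_0,x_n)\to\infty$ for a fixed basepoint $x_0$.

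I would then choose $D_0$--hierarchy paths $\gamma_n\colon[0,T_n]\to\cuco X$ from $x_0$ to $x_n$ (Theorem~\ref{thm:monotone_hierarchy_paths}), reparametrize them continuously so they are uniformly Lipschitz, and apply Arzel\`a-Ascoli (via properness of $\cuco X$) to pass to a subsequence with $\gamma_n\to\gamma$ uniformly on compact subsets of $[0,\infty)$, where $\gamma$ is a $D_0$--hierarchy ray. Let $\mathfrak U=\{U\in\mathfrak S:\pi_U\circ\gamma\ \text{is unbounded}\}$, which is nonempty by Lemma~\ref{lem:unbounded_ray}. A consistency and bounded-geodesic-image argument (parallel to the one in the proof of Lemma~\ref{lem:boundary_basically_Hausdorff}) forces any two distinct elements of $\mathfrak U$ to be orthogonal, so $\mathfrak U$ is finite by Lemma~\ref{lem:pairwise_orthogonal}, and each $\pi_U\circ\gamma$ converges to a unique $p_U\in\boundary\fontact U$. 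After a further finite-diagonal extraction, the ratios $\dist_U(x_0,x_n)/\dist_V(x_0,x_n)$ converge for every pair $U,V\in\mathfrak U$; normalizing gives coefficients $a_U\ge 0$ with $\sum a_U=1$, and discarding those with $a_U=0$ produces the candidate limit $p=\sum_{U\in\support(p)}a_Up_U\in\boundary(\cuco X,\mathfrak S)$.

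The remaining task is to verify $x_n\to p$ in $\overline{\cuco X}$, i.e.\ that $x_n$ eventually lies in every interior basic neighborhood $\neb^{int}_{\{U_S\},\epsilon}(p)$. This breaks into three checks: (i) cone-convergence $\pi_U(x_n)\to p_U$ for each $U\in\support(p)$; (ii) ratio convergence to $a_U/a_V$ (built in by construction); and (iii) $\dist_T(x_0,x_n)=o(\dist_U(x_0,x_n))$ for every $T$ orthogonal to all of $\support(p)$. I expect (i) to be the main obstacle, because $\fontact U$ need not be proper and Arzel\`a-Ascoli on $\cuco X$ gives only compact-convergence of the $\gamma_n$, not cone-convergence of the endpoints $\pi_U(x_n)$. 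The decisive input is that $\pi_U\circ\gamma_n$ is a uniform $(D_0,D_0)$--unparametrized quasigeodesic in $\fontact U$ from $\pi_U(x_0)$ to $\pi_U(x_n)$ whose initial segments (by $\gamma_n\to\gamma$ and coarse Lipschitzness of $\pi_U$) track longer and longer initial segments of $\pi_U\circ\gamma$, which head arbitrarily far toward $p_U$; the Morse lemma in the $\delta$--hyperbolic space $\fontact U$ then forces $\pi_U(x_n)$ into any prescribed cone-neighborhood of $p_U$ for $n$ large. For (iii), the analogous Morse and consistency analysis combined with Lemma~\ref{lem:orthogonal_close} shows that $\pi_T\circ\gamma_n$ cannot escape the bounded set $\pi_T\circ\gamma([0,\infty))$ fast enough to keep up with the divergence in $\mathfrak U$, so $\dist_T(x_0,x_n)$ grows strictly more slowly than each $\dist_U(x_0,x_n)$.
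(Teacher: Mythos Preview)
Your proposal has two genuine gaps, one in the interior case and one in the reduction step.

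\textbf{The support set $\mathfrak U$ can be strictly too small.} You take the support of the candidate limit $p$ to be the set $\mathfrak U$ of domains where the Arzel\`a--Ascoli limit ray $\gamma$ has unbounded projection. But $\gamma$ only records what happens on compact initial segments of the $\gamma_n$, not near the endpoints $x_n$; domains orthogonal to everything in $\mathfrak U$ can still carry projections of $x_n$ that grow at the \emph{same} rate. Concretely, take $\cuco X=\reals^2$ with its standard product HHS structure ($\mathfrak S=\{S,U,V\}$ with $U\orth V$ and $\fontact S$ bounded), set $x_n=(n,n)$, and choose L-shaped hierarchy paths going first in the $U$-direction and then in the $V$-direction. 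The limit ray is the positive $U$-axis, so $\mathfrak U=\{U\}$ and your candidate is $p=p_U\in\boundary\fontact U$. But $V\in\mathfrak U^\orth$ and $\dist_V(x_0,x_n)/\dist_U(x_0,x_n)=1$, so your condition (iii) fails and $x_n\not\to p_U$; the actual limit is $\tfrac12 p_U+\tfrac12 p_V$. Your appeal to ``$\pi_T\circ\gamma_n$ cannot escape the bounded set $\pi_T\circ\gamma([0,\infty))$'' is exactly what breaks: $\pi_V\circ\gamma$ is bounded, yet $\pi_V(x_n)\to\infty$. The paper handles this by splitting into cases according to whether $\sup\{\dist_T(x_0,x_n):T\in\overline T^\orth\}$ stays bounded; when it does not, one passes to the gates $y_n=\gate_{E_{\overline T}}(x_n)$ and applies induction on complexity to the strictly smaller HHS $(E_{\overline T},\mathfrak S_{\overline T}^\orth)$ to pick up the missing support domains, then combines the coefficients.

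\textbf{The reduction to interior sequences is not justified.} First countability, Hausdorffness, and density of $\cuco X$ do not, by themselves, let you deduce sequential compactness of $\overline{\cuco X}$ from subconvergence of interior sequences: picking $x_n\in\cuco X$ in a small basic neighborhood of $z_n\in\boundary\cuco X$ gives no control in the other direction, because the basic set is centered at $z_n$, not at $x_n$, and you have not produced a metric or uniform structure making ``$x_n$ is $1/n$-close to $z_n$'' symmetric. The paper does \emph{not} take this shortcut. Instead, given a boundary sequence $(z_n)$, it manufactures an interior sequence $(x_n)$ lying in $\prod_{S\in\support(z_n)}F_S$ and satisfying a tailored list of properties (labelled (1)--(7) in the proof) that force the remote and non-remote parts of any basic neighborhood of the interior limit $p$ to eventually contain $z_n$. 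That verification --- especially controlling $(\boundary\pi_{\support(p)}(z_n))_T$ in terms of $\pi_T(x_n)$ via consistency and bounded geodesic image --- is the most technical part of the argument and cannot be replaced by soft point-set topology.
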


\begin{proof}
It suffices to show that $\overline{\cuco{X}}$ is sequentially compact since it is separable by Proposition \ref{prop:properties}.  We will first show that any internal sequence $\{x_n\} \subset \cuco X$ subconverges to some point in $\overline{\cuco X}$.  Then we will show this suffices for the theorem.

\textbf{Internal sequences subconverge:}  Let $\{x_n\} \subset \cuco X$ be a sequence of interior points.  For each $n$, let $\gamma_n$ be a uniformly Lipschitz hierarchy path between $x_0$ and $x_n$, whose existence is guaranteed by Theorem~\ref{thm:monotone_hierarchy_paths}.  Since $\cuco X$ is proper, either the sequence $x_n$ subconverges to an interior point and we are done, or we can assume that the sequence of hierarchy paths $\gamma_n$ converges to a hierarchy ray, $\gamma_{\infty}$.  

Lemma \ref{lem:unbounded_ray} implies there exists $T\in\mathfrak S$ such that $\pi_T\circ\gamma_\infty$ is unbounded.  The collection $\{T_i\}^k_{i=1}$ for which this is true must be a collection of pairwise-orthogonal elements by the consistency inequalities (Definition~\ref{defn:space_with_distance_formula}.\eqref{item:dfs_transversal}). For each $T_i$, the quasigeodesic ray $\pi_{T_i}\circ\gamma_{\infty} \subset \fontact T_i$ represents a point $p_{T_i}\in\partial \fontact T_i$.  Set $\overline{T} = \{T_i\}^k_{i=1}$.

We now consider two cases depending on the behavior of the sequence $\{x_n\}$ in $\overline{T}^{\orth}$.  First, suppose $\liminf_n \sup\{\dist_{\fontact T}(x_0,x_n): T\in \overline{T}^{\orth}\}<\infty$.  Up to passing to a further subsequence of $\{x_n\}$, we have well-defined limits for $1\leq i,j\leq k$
$$r_{i,j}=\lim_n \frac{\dist_{\fontact T_i}(x_0,x_n)}{\dist_{\fontact T_j}(x_0,x_n)}\in [0,\infty],$$
which determine coefficients $\{a^p_i\in[0,1]\}$ such that $a^p_i/a^p_j=r_{i,j}$ and $\sum a^p_i=1$. It is straightforward to check that $\{x_n\}$ eventually lies in the interior part of any $\neb_{\{U_{T_i}\},\epsilon}(p)$, implying that $\{x_n\}$ subconverges to $p=\sum_{T \in \overline{T}} a^{p}_{T}p_{T}$.

Now suppose that, up to passing to a subsequence, $\liminf_n \sup\{\dist_{\fontact T}(x_0,x_n): T \in \overline{T}^{\orth}\}=\infty$. Consider the sequence $\{y_n\}=\{\gate_{E_{\overline{T}}}(x_n)\}$ of gates in the orthogonal complement of $\overline{T}$.

Since $\left(E_{\overline{T}}, \mathfrak{S}_{\overline{T}^{\orth}}\right)$ is an HHS with complexity strictly less than that of $(\cuco X, \mathfrak S)$, by induction on the complexity of $(\cuco X,\mathfrak S)$, the sequence $\{y_n\}$ subconverges to $q \in \partial \cuco X$, where $\support(q) = \{T_i\}^{k'}_{i=k+1}$ and $T_i \orth T_j$ whenever $i\leq k < j$.  Since $\left(E_{\overline{T}}, \mathfrak{S}_{\overline{T}^{\orth}}\right) \subset (\cuco X, \mathfrak S)$ is hierarchically quasiconvex, we can take $q \in \partial E_{\overline{T}}$.  For each $j>k$, let $q_{T_j} \in \partial \fontact T_j$, so that $q$ is a linear combination of the $q_{T_j}$.  As before, up to passing to a further subsequence, for any $1\leq i,j \leq k'$, we can define
$$r_{i,j}=\lim_n \frac{\dist_{\fontact T_i}(x_0,x_n)}{\dist_{\fontact T_j}(x_0,x_n)}\in [0,\infty],$$
which determine coefficients $\{a^p_{T_i}\}_{i=1}^k \cup \{a^q_{T_j}\}_{j=k+1}^{k'}$ such that
\begin{itemize}
\item $a^r_{T_i}/a^{r'}_{T_j}=r_{i,j}$, when $r, r' \in \{p,q\}$ and $a^r_{T_i},a^{r'}_{T_j}$ are defined, and
\item $\sum^{k}_{i=1} a^p_{T_i} + \sum_{j=k+1}^{k'} a^q_{T_j}=1$.
\end{itemize}
If some $a_{T_i}^{r} = 0$ for $r \in \{p,q\}$, we disregard $T_i$. We now claim that $\{x_n\}$ (sub)converges to 
$$p=\sum_{i=1}^k a^{p}_{T_i}p_{T_i} + \sum_{i=k+1}^{k'} a^{q}_{T_i} q_{T_i}.$$

Pick a neighborhood $\neb_{\{U_{T_i}\},\epsilon}(p)$ of $p$.  For large enough $n$, $x_n \in \neb_{\{U_{T_i}\},\epsilon}(p)$ because:
\begin{itemize}
 \item $\pi_{T_i}(x_n)\in U_{T_i}$ for $i\leq k$ since $\left(\pi_{T_i}(x_n)|p_{T_i}\right)_{\pi_{T_i}(x_0)}\rightarrow \infty$,
 \item $\pi_{T_i}(x_n)\in U_{T_i}$ for $i>k$ since $\pi_{T_i}(x_n)$ coarsely equals $\pi_{T_i}(y_n)$ and $y_n\to q$,
 \item $\left|\frac{a^r_{T_j}}{a^{r'}_{T_i}} - \frac{\dist_{T_j}(x_0,x_n)}{\dist_{T_i}(x_0,x_n)}\right|< \epsilon$ by definition,  when $r, r' \in \{p,q\}$ and $a^r_{T_i},a^{r'}_{T_j}$ are defined, and 
 \item $\frac{\dist_{T}(x_0,x_n)}{\dist_{T_i}(x_0,x_n)}< \epsilon$ for $T \in \left(\{T_i\}_{i=1}^{k'}\right)^{\orth}$ and any $1\leq i \leq k'$, as we now show. 
\end{itemize}
Let $T \in \left(\{T_i\}_{i=1}^{k'}\right)^{\orth}$ and choose $i$ so that $a^r_{T_i}\neq 0$, for $r \in \{p,q\}$.  Observe that 
$$\frac{\dist_{T}(x_0,x_n)}{\dist_{T_i}(x_0,x_n)}=\frac{\dist_{T}(x_0,x_n)}{\dist_{T_{k+1}}(x_0,x_n)} \cdot \frac{\dist_{T_{k+1}}(x_0,x_n)}{\dist_{T_i}(x_0,x_n)}.$$
The first term on the right-hand side can be made arbitrarily small by increasing $n$ since $\dist_{T}(x_0,x_n)$ (resp. $\dist_{T_{k+1}}(x_0,x_n)$) coarsely coincides with $\dist_{T}(x_0,y_n)$ (resp. $\dist_{T_{k+1}}(x_0,y_n)$) and 
$\{y_n\}$ converges to $q$.  % thus implying $d_T(x_0, x_n)$ is bounded. 
Since the second term converges to $r_{k+1,i}<\infty$, this proves the claim and completes the internal sequence case.

\textbf{Reduction to the internal sequence case:}  Recall the definition of the boundary projection, Definition \ref{defn:remote_part}).  By passing to a subsequence if necessary, it suffices to consider any boundary sequence $\{z_n\} \subset \partial {\cuco X}$, where $z_n = \sum_{S \in \support(z_n)} a^{z_n}_{S} p^n_{S}$ for each $n$.  

We first find $\{x_n\}\subset\cuco X$ with the properties \eqref{item:unbounded}--\eqref{internal condition 7} below, and then verify that $\{z_n\}$ subconverges to the limit of $\{x_n\}$:
\begin{enumerate}
 \item \label{item:unbounded} $\dist_{\cuco X}(x_0,x_n) \geq n$,
 \item \label{item:follows_z_n_in_support} $\left(\pi_{S}(x_n)|p^n_S\right)_{\pi_{S}(x_0)}\geq n$ for each $S \in \support(z_n)$ (we remind the reader that the notation $(\bullet|\bullet)_{_\bullet}$ denotes the Gromov product with respect to the subscripted basepoint),
 \item \label{item:correct_slope} $\left|\frac{a^n_S}{a^n_{S'}} - \frac{\dist_{S}(x_0,x_n)}{\dist_{S'}(x_0,x_n)}\right|< 1/n$ for any distinct $S, S' \in \support(z_n)$,
 \item For any $T \in (\support(z_n))^{\orth}$ and $S \in \support(z_n)$, we have $\frac{\dist_{T}(x_0,x_n)}{\dist_{S}(x_0,x_n)}< 1/n$. \label{item:correct_slope_x}
 \item For all $n$ and $S^n \in \support(z_n)$, if $T \pitchfork S^n$ or $S^n \nest T$, then $\dist_T(\rho_T^{S^n}, x_n) < K$, for some uniform $K>0$.  Moreover, for all such $T$, we have $\dist_{T}(x_0,x_n)\leq\dist_{ S^n}(x_0,x_n)$. \label{internal condition 5}
 \item $\{x_n\}$ converges to $p=\sum_{T \in \support(p)} a^p_T p_T\in\boundary\cuco X$ with the following property: if there are infinitely many $n$ for which $z_n\in\boundary^{rem}\cuco X$ (with respect to $\support(p)$), then there are infinitely many remote $z_n$ such that the following holds for some fixed $T\in\support(p)$: there exists $S^n_T\in\support(z_n)$ so that $S^n_T\transverse T$ or $S^n_T\propnest T$, or $T\propnest S_T^n$ but $\dist_{S_T^n}(\rho^T_{S^n_T},x_0)\leq 100K'E$, for some constant $K'\geq 1$ depending on $\{z_n\}$ and $p$ but not on $n$.  Moreover, for all such $T$, we have $\dist_{\fontact T}(x_0,x_n)\leq\dist_{\fontact S^n}(x_0,x_n)$.\label{internal condition 6}
 \item \label{internal condition 7}$\{x_n\}$ converges to $p=\sum_{T \in \support(p)} a^p_T p_T\in\boundary\cuco X$ with the following property: if there are infinitely many $n$ for which $z_n\in\boundary^{rem}\cuco X$ (with respect to $\support(p)$), then there are infinitely many remote $z_n$ such that $\dist_T((\boundary\pi_{\support(p)}(z_n))_T,x_n)\leq K''$ for some $K''$ independent of $n$ and all $T\in\support(p)_{z_n}$.  Moreover, for all such $T$, we have $\dist_{\fontact T}(x_0,x_n)\leq\dist_{\fontact S^n}(x_0,x_n)$.
\end{enumerate}

To see that such an internal sequence exists, choose a sequence $\{x_n\}$ so that $x_n\in P$ for all $n$, where:
$$P=\image\left(\prod_{S\in\support(z_n)}F_S\to\cuco X\right);$$
the sequence $\{x_n\}$ satisfies \eqref{item:unbounded}-\eqref{item:correct_slope_x} (which can be done since they are component-wise conditions); and $$\min_{S\in\support(z_n)}\dist_{\cuco X}(\gate_{F_S}(x_n),x_0)/\dist_{\cuco X}(\gate_{F_S}(x_0),x_0)\to\infty$$ as $n\to\infty$.  Here we fix, for each $n$, a basepoint $(p_S)_{S\in\support(z_n)}$ and let $F_S=F_S\times\{(P_{S'})_{S'\neq S}\}$.

(Recall from~\cite[Remark 5.12]{BehrstockHagenSisto:HHS_II} that, whenever $U_1,\ldots,U_k\in\mathfrak S$ are pairwise orthogonal, we have a standard quasi-isometric embedding $\prod_{i=1}^k F_{U_i}\to\cuco X$ whose image is hierarchically quasiconvex and which is, for each $i\leq k$, the restriction of the usual map $F_{U_i}\times E_{U_i}\to\cuco X$.)

We can verify condition \eqref{internal condition 5} by examining the product regions $\prod_{S\in\support(z_n)}F_{S}\to\cuco X$.  Let $T\transverse S^n$ or $S^n\propnest T$ for $S^n\in\support(z_n)$.  Since $x_n$ coarsely lies in $\prod_{S\in\support(z_n)}F_{S}$, it follows that $\diam_T(\rho^{S_n}_T\cup\pi_T(F_{S^n})) \asymp 1$ and $\dist_T(\pi_T(F_{S^n}), x_n) \asymp 1$.  We thus have, for some uniform $C$,  $$\dist_T(x_0,x_n)\leq C\dist_{\cuco X}\left(x_0,\prod_{S\in\support(z_n)}F_S\right)+C.$$ For sufficiently large $n$, our choice of $\{x_n\}$ ensures that $\dist_{S_n}(x_0,x_n)\geq C\dist\left(x_0,\prod_{S\in\support(z_n)}F_S\right)+C$, verifying the ``moreover'' part of assertion~\eqref{internal condition 5}.  

Let $\{x_n\}$ satisfy \eqref{item:unbounded}--\eqref{internal condition 5}.  We now prove that there is a subsequence of $\{x_n\}$ satisfying \eqref{internal condition 6}.

By replacing $\{x_n\}$ with a subsequence (and replacing $\{z_n\}$ with the corresponding subsequence of $\{z_n\}$), we can apply the proof that internal sequences subsequentially converge to conclude $\{x_n\}$ converges to $p=\sum_{T \in \support(p)} a^p_T p_T\in\boundary\cuco X$.

Consider the set $\mathbb G$ of $n\in\naturals$ so that $z_n$ is remote with respect to $p$.  If $\mathbb G$ is finite, then~\eqref{internal condition 6} holds vacuously.  Otherwise, by replacing $\mathbb G$ with an infinite subset, we find $T\in\support(p)$ so that for all $n\in\mathbb G$, there exists $S^n\in\support(z_n)$ with either $T\transverse S^n$ or $S^n\propnest T$ or $T\propnest S^n$. 

First consider the case where $\{S^n:n\in\mathbb G\}$ is infinite.  By passing to a subsequence if necessary, and then applying finite complexity, Lemma~\ref{lem:pairwise_orthogonal}, and Ramsey's theorem, we can assume that $S^n\transverse S^m$ when $n\neq m$.  Let $\mathbb G_T\subseteq\naturals$ be the set of $n\in\mathbb G$ such that $T\propnest S^n$.  Then for all $m,n\in \mathbb G_T$, we have $\dist_{S^m}(\rho^T_{S^m},\rho^{S^n}_{S^m})\leq E$ by the consistency inequalities.  Hence, again by the consistency inequalities and the triangle inequality, we have $\dist_{S^n}(\rho^T_{S^n},x_0)\leq 2E$ for all but at most one element of $\mathbb G_T$.  Indeed, if $\dist_{S^n}(\rho^T_{S^n},x_0)>2E$, then $\dist_{S^n}(\rho^{S^m}_{S^n},x_0)>E$ for any $m\in \mathbb G_T-\{n\}$, so by consistency $\dist_{S^m}(\rho^{S^n}_{S^m},x_0)\leq E$; the claim follows from the triangle inequality since $\dist_{S^m}(\rho^T_{S^m},\rho^{S^n}_{S^m})\leq E$.  Hence, by replacing $\{z_n\}$ with a subsequence, for all $T\in\support(p)$ with $T\propnest S^n$, we have $\dist_{S^n}(\rho^T_{S^n},x_0)\leq 100K'E$.  Letting $S^n_T=S^n$ for $n\in\mathbb G$, this establishes assertion~\eqref{internal condition 6} when $\{S^n:n\in\mathbb G\}$ is infinite.

When $\{S^n:n\in\mathbb G\}$ is finite, we can assume that $S^n=S^m$ for all $m,n$ by passing to a subsequence.  Hence, there exists $S\in\mathfrak S$ so that for all $n\in\mathbb G$, and all $U\in\support(z_n)$, either $U=S$ or $U\orth T$.  Fix $T$ and $S$ as above, and replace $(z_n)$ with a subsequence so that for each $n\in\mathbb G$, we have $S\in\support(z_n)$.  Then, for each $n\in\mathbb G$, set $S^n_T=S$ and observe that either $S\nest T,S\transverse T$, or $T\nest S$. In the latter case, take $K'=\dist_S(\rho^T_S,x_0)$, which depends on $p$ and $\{z_n\}$ but not on $n$.  This completes the proof of~\eqref{internal condition 6}.

We now deduce condition~\eqref{internal condition 7} from \eqref{item:unbounded}--\eqref{internal condition 6}.  Assume $\mathbb G$ is infinite, so that by~\eqref{internal condition 6}, there exists $T'\in\support(p)$ so that, after replacing $\mathbb G$ with an infinite subset if necessary, we have for each $n\in\mathbb G$ some $S^n_{T'}\in\support(z_n)$ so that $\dist_{S^n_{T'}}(\rho^T_{S^n_{T'}},x_0)\leq100K'E$.  Let $T\in\support(p)_{z_n}$.  First suppose that $T\propnest S^n_{T'}$.  Then since $T\orth T'$ or $T=T'$, Lemma~\ref{lem:orthogonal_close} implies that $\dist_{S^n_{T'}}(\rho^{T}_{S^n_{T'}},x_0)\leq200K'E$.  It follows from~\eqref{item:follows_z_n_in_support} that $(\pi_{S^n_{T'}}(x_n)|p^n_{S^n_{T'}})_{\rho^{T}_{S^n_{T'}}}\to\infty$ as $n\to\infty$ so that, by discarding finitely many $n$ and applying the bounded geodesic image axiom, we have $\dist_{T}((\boundary\pi_{\support(p)}(z_n))_{T},x_n)\leq E$ for all $n\in\mathbb G$.  In the remaining cases, where $T\transverse S^n_{T'}$ or $S^n_{T'}\propnest T$, then we reach the same conclusion, using~\eqref{internal condition 5} instead of~\eqref{internal condition 6}.  This completes the proof of condition \eqref{internal condition 7}.

\emph{Subconvergence of $\{z_n\}$:}  Fix a neighborhood $\neb=\neb_{\{U_S\},\epsilon}(p)$ of $p$; we must check that for infinitely many values of $n$, we have $z_n\in\neb$.  For each $n$, either $z_n\in\boundary^{rem}\cuco X$ (recall that this means that $\support(z_n)\cap\support(p)=\emptyset$ and for all $T\in\support(p)$, there exists $S\in\support(z_n)$ with $T\notorth S$) or $z_n\in\boundary\cuco X-\boundary^{rem}\cuco X$ (so that either $\support(z_n)\cap\support(p)\neq\emptyset$ or there exists $T\in\support(p)$ with $T\orth S$ for all $S\in\support(z_n)$).

\emph{The non-remote case:}  We will consider the non-remote case first.  Recall that $z_n = \sum_{S \in \support(z_n)} a^{z_n}_S p^n_S$.  We must check the following conditions:

\begin{enumerate}[(a)]
\item For each $S \in \support(p) \cap \support(z_n)$, and infinitely many $n$, we have $p_S^n \in U_S$.  \label{nonremote condition 1}
\item For each $S \in \support(p) \cap \support(z_n)$ and infinitely many $n$, we have $a_S^n \rightarrow a_S^p$.  \label{nonremote condition 2}
\item The sum $\sum_{T \in \support(p) - \support(z_n)} a_T^p < K'\epsilon$ for infinitely many $n$ and some uniform $K'$. \label{nonremote condition 3}

\end{enumerate}

Up to passing to a subsequence, \eqref{nonremote condition 1} follows from \eqref{item:follows_z_n_in_support} and the fact that $x_n \rightarrow p$.

For \eqref{nonremote condition 2}, we have three cases.  If $\support(p) \cap \support(z_n) = \emptyset$, then this holds vacuously.  If $\support(p) \cap \support(z_n)$ has multiple elements, then this follows from \eqref{item:correct_slope} and the fact that $x_n \rightarrow p$.  If $\support(p) \cap \support(z_n) = \{S\}$, then this follows from \eqref{item:correct_slope} and \eqref{nonremote condition 3}, proved momentarily.

To see \eqref{nonremote condition 3}, first observe that $\support(p) - \support(z_n) \subset \left(\support(z_n)\right)^{\orth}$ by non-remoteness.  Let $T \in \support(p) - \support(z_n)$ and $S \in \support(p) \cap \support(z_n)$; note that such an $S \in \support(p) \cap \support(z_n)$ exists, otherwise one of $x_n \rightarrow p$ or \eqref{item:correct_slope_x} is contradicted.  By definition of $x_n \rightarrow p$,
$$\left| \frac{a^p_T}{a^p_S} - \frac{\dist_T(x_0, x_n)}{\dist_S(x_0, x_n)}\right| < \epsilon.$$

It follows from \eqref{item:correct_slope_x} that $\frac{\dist_T(x_0, x_n)}{\dist_S(x_0, x_n)}< \frac{1}{n}$.  Since each $a^p_S \leq 1$, it follows that 
$$ \sum_{T\in \support(p) - \support(z_n)} a_T^p < \xi(\cuco X) \left(\epsilon + \frac{1}{n}\right)\leq2\xi(\cuco X)\epsilon,$$
completing the proof of \eqref{nonremote condition 3} and thus the non-remote case.

\emph{The remote case:} We must check the following conditions:

\begin{enumerate}[(i)]
 \item For any $T\in\support(p)$, and infinitely many $n$, we have $\left(\partial \pi_{\support(p)}(z_n)\right)_T  \in U_T$.\label{gen con 1}
 \item For infinitely many $n$ and any $T\in\support(p)_{z_n},T'\in\support(p)$, we have $$\left|\frac{\dist_{T}(x_0,(\partial\pi_{\support(p)}(z_n))_T)}{\dist_{T'}(x_0,(\partial\pi_{\support(p)}(z_n))_{T'})}-\frac{a_T^p}{a_{T'}^p}\right|<\epsilon.$$ \label{gen con 2}
 \item \label{gen con 3} We have $\sum_{T\in\support(p)^\orth\cap\support(z_n)}a^{z_n}_T<K\epsilon$ for some uniform $K$.
\end{enumerate}

For any $T \in \support(p)$ and each $n$, choose $S^n_T \in \support(z_n)$ so that $T$ and $S^n_T$ are not orthogonal.  If $\mathbb G$ is infinite, then we may pass to a subsequence so that $S_T^n$ and $T$ are always non-orthogonal: that is, $T\propnest S_T^n$, or $T\transverse S_T^n$, or $S_T^n\propnest T$.

We now show that assertion~\eqref{gen con 1} holds for infinitely many $n$; the proof divides into three cases according to the above possibilities, which influence the definition of $(\partial\pi_{\support(p)}(z_n))_T$.

First, if $S_T^n\transverse T$, then $(\partial\pi_{\support(p)}(z_n))_T=\rho^{S_T^n}_T$.  In this case, \eqref{gen con 1} follows immediately from conditions \eqref{item:follows_z_n_in_support} and \eqref{internal condition 5} in the definition of $\{x_n\}$.  The same is true if $S^n_T\propnest T$.  If $T \nest S^n_T$, then \eqref{gen con 1} follows from \eqref{item:follows_z_n_in_support}, \eqref{internal condition 7}, and the triangle inequality.

Assertion \eqref{gen con 2}, in the case when $T,T'\in\support(p)$, follows from \eqref{internal condition 7}. In fact, since $\{x_n\}$ converges to $p$, we have
$$\left|\frac{\dist_T(x_0,x_n)}{\dist_{T'}(x_0,x_n)}-\frac{a_T^p}{a_{T'}^p}\right|\to 0,\ \ \ (*)$$
and $\dist_T(x_0,x_n)\to\infty$, $\dist_{T'}(x_0,x_n)\to\infty$. By \eqref{internal condition 7}, we have that $\dist_T(x_0,x_n)$ coarsely coincides with $\dist_{T}(x_0,(\partial\pi_{\support(p)}(z_n))_T)$, and similarly for $T'$. Hence, $(*)$ implies that the ratio in Assertion \eqref{gen con 2} satisfies the required inequality. If $T\in \support(p)_{z_n}-\support(p)$, then we have to verify $\left|\frac{\dist_{T}(x_0,(\partial\pi_{\support(p)}(z_n))_T)}{\dist_{T'}(x_0,(\partial\pi_{\support(p)}(z_n))_{T'})}\right|\to 0$. We still know $(*)$ (with $\frac{a_T^p}{a_{T'}^p}$ replaced by $0$) and $\dist_{T'}(x_0,x_n)\to\infty$. If $\dist_T(x_0,x_n)$ does not diverge, we are done. If it does, we can approximate $\dist_{T}(x_0,(\partial\pi_{\support(p)}(z_n))_T)$ by $\dist_T(x_0,x_n)$ and we can conclude as above.  

It remains to verify assertion~\eqref{gen con 3}.  For each $n$, let $T^n\in(\support(p))^\orth\cap\support(z_n)$ and choose $S^n\in\support(z_n)-(\support(p))^\orth$.  Fix $P\in\support(p)$ so that, after passing to a subsequence, $P$ is not orthogonal to any of the $S^n$.  By either~\eqref{internal condition 5} or~\eqref{internal condition 7}, we have $\dist_{\fontact S^n}(x_0,x_n)/\dist_{\fontact P}(x_0,x_n)\leq 1$, while $\dist_{\fontact P}(x_0,x_n)/\dist_{\fontact T^n}(x_0,x_n)<\epsilon$ since $x_n\to p$.  Hence $a_{T^n}^{z_n}/a_{S^n}^{z_n}\leq\epsilon+\frac{1}{n}$, by~\eqref{item:correct_slope}, and the desired inequality follows since the number of terms in the sum is bounded by $\xi(\cuco X)$,  as in the non-remote case.  This completes the proof that $\{z_n\}$ subconverges to $p$, and thus completes the proof that $\boundary\cuco X$ is compact.
\end{proof}

\section{The HHS boundary of a Gromov hyperbolic space}\label{sec:gromov_boundary}
In this section, we prove that the HHS boundary of a hyperbolic space is its Gromov boundary, regardless of the chosen HHS structure.

\begin{lem}\label{lem:hyp no product}
Let $(\cuco X, \mathfrak S)$ be hierarchically hyperbolic.  If $\cuco X$ is hyperbolic, then there exists $C>0$ such that if $U, V \in \mathfrak S$ and $U \orth V$, then either $\diam \fontact U < C$ or $\diam \fontact V < C$.
\end{lem}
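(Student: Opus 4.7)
The plan is to show that if both $\fontact U$ and $\fontact V$ have diameters exceeding some uniform threshold, then orthogonality of $U$ and $V$ forces $\cuco X$ to contain a large quasi-isometrically embedded rectangle, contradicting hyperbolicity.

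First, I would use Proposition~\ref{prop:normalizing} to pass to a normalized hierarchically hyperbolic structure, so that $\pi_U(\cuco X)$ is coarsely dense in $\fontact U$ for every $U\in\mathfrak S$, without changing whether $\cuco X$ is hyperbolic. Now suppose $U\orth V$ with $\diam\fontact U,\diam\fontact V\geq L$ for some large $L$ to be determined in terms of $\delta,E,\alpha$ and the hyperbolicity constant of $\cuco X$. Choose points $x_1,y_1\in\pi_U(\cuco X)$ with $\dist_U(x_1,y_1)\geq L/2$, and $x_2,y_2\in\pi_V(\cuco X)$ with $\dist_V(x_2,y_2)\geq L/2$.

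Next, for each of the four pairs $(x_i,x_j)\in\{x_1,y_1\}\times\{x_2,y_2\}$, apply the partial realization axiom (Definition~\ref{defn:space_with_distance_formula}.\eqref{item:dfs_partial_realization}) to the pairwise orthogonal family $\{U,V\}$ and the chosen coordinates to obtain a point $z_{ij}\in\cuco X$ with $\dist_U(z_{ij},x_i)\leq\alpha$, $\dist_V(z_{ij},x_j)\leq\alpha$, and $\dist_W(z_{ij},\rho^U_W)\leq\alpha$ for every $W$ with $U\propnest W$ or $W\transverse U$, and symmetrically for $V$. Now apply the distance formula (Theorem~\ref{thm:distance_formula}): for any two of these four realization points, the only domains contributing substantially to their $\cuco X$-distance are those nested in $U$ or in $V$ (on all other domains their projections coarsely coincide, by partial realization, bounded geodesic image, and the consistency inequalities applied to $\rho^U_W,\rho^V_W$). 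Consequently, the $\cuco X$-distance between $z_{ij}$ and $z_{kl}$ is coarsely $\dist_U(x_i,x_k)+\dist_V(x_j,x_l)$, so the four points $z_{11},z_{12},z_{21},z_{22}$ form a $(K,C)$-quasi-isometrically embedded rectangle of side length comparable to $L$, with constants independent of $L$.

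Finally I invoke hyperbolicity of $\cuco X$: a $\delta$-hyperbolic geodesic space cannot contain such quasi-isometric rectangles with arbitrarily large sides and fixed QI constants, since the four corners would form a quadrilateral whose diagonals are forced to fellow-travel to within $O(\delta)$, forcing one side length to be bounded by a function of $\delta$ and the QI constants. Taking $L$ larger than this bound yields a contradiction. Thus there is a uniform $C$, depending only on $\delta,E,\alpha$ and the hyperbolicity constant of $\cuco X$, such that $U\orth V$ forces $\min(\diam\fontact U,\diam\fontact V)<C$. The main obstacle is the careful bookkeeping in the distance-formula step to confirm that the four partial-realization points really do form a coarse rectangle, i.e.\ that projections to all domains outside $\mathfrak S_U\cup\mathfrak S_V$ agree up to uniform error across the four corners; this is where the precise statement of partial realization together with Lemma~\ref{lem:orthogonal_close} is needed.
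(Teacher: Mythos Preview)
Your strategy is the same as the paper's: use orthogonality of $U$ and $V$ to produce a product-like configuration in $\cuco X$ and then invoke hyperbolicity. The paper does this in one line by citing the quasi-isometric embedding $F_U\times F_V\hookrightarrow\cuco X$ of standard product regions (established in~\cite{BehrstockHagenSisto:HHS_II} via the Realization Theorem); hyperbolicity then uniformly bounds one factor, and since $\pi_U:F_U\to\fontact U$ is coarsely Lipschitz and coarsely surjective, this bounds $\diam\fontact U$ or $\diam\fontact V$.

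Your hands-on reconstruction via partial realization has a real gap, not merely a bookkeeping issue. Partial realization applied to the family $\{U,V\}$ controls $\pi_W(z_{ij})$ only when $W$ properly contains or is transverse to one of $U,V$; it says nothing about domains $W$ with $W\orth U$ and $W\orth V$, nor about domains $W\propnest U$ or $W\propnest V$ directly. For $W$ orthogonal to both $U$ and $V$, Lemma~\ref{lem:orthogonal_close} does not apply (its hypothesis is precisely that $W$ is \emph{not} orthogonal to $U,V$), so the four projections $\pi_W(z_{ij})$ may differ arbitrarily. Consequently you have only lower bounds on the six pairwise $\cuco X$-distances, not upper bounds, and four points with large pairwise distances need not violate the $\delta$-hyperbolic four-point inequality (four leaves of a tree give a counterexample). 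The missing upper bounds are exactly what the full product-region embedding supplies: fixing a basepoint in $E_U\supseteq F_V$ pins down all projections to the orthogonal complement. So the right fix is to invoke (or reprove) that embedding, which is what the paper does.
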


\begin{proof}
Recall from~\cite{BehrstockHagenSisto:HHS_II} that if $U \orth V$, then there exists a quasiisometric embedding $F_{U} \times F_{V} \hookrightarrow \cuco X$.  Hyperbolicity uniformly bounds the diameter of one of the factors.
\end{proof}

\begin{lem}\label{lem:hyp unique ray}
Let $(\cuco X, \mathfrak S)$ be hierarchically hyperbolic and $\cuco X$ hyperbolic.  If $\gamma: [0,\infty) \rightarrow \cuco X$ is a hierarchy ray with $\gamma(0)=x_0$, then there exists a unique $U \in \mathfrak S$ with $\pi_U \circ \gamma: [0,\infty) \rightarrow \fontact U$ a parametrized quasigeodesic ray.  In particular, $\diam_{\fontact V}(\gamma) < \infty$ for all $V \in \mathfrak S$ with $V \neq U$.
\end{lem}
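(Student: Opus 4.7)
My plan is to establish first the existence and uniqueness of the domain $U$, and then to prove that the inherited parametrization on $\pi_U \circ \gamma$ is quasi-geodesic.

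Existence is immediate from Lemma~\ref{lem:unbounded_ray}, which produces some $U \in \mathfrak S$ with $\pi_U \circ \gamma$ unbounded. For uniqueness, let $\mathcal U = \{V \in \mathfrak S : \pi_V \circ \gamma \text{ is unbounded}\}$. The first step is to show $\mathcal U$ is pairwise orthogonal. If $V, W \in \mathcal U$ were transverse, then the consistency inequality of Definition~\ref{defn:space_with_distance_formula}(4) forces, at each $\gamma(t)$, one of $\pi_V\gamma(t)$ or $\pi_W\gamma(t)$ to lie within $\kappa_0$ of the bounded set $\rho^W_V$ or $\rho^V_W$ respectively; since both projections are unparametrized quasi-geodesics escaping to infinity in their hyperbolic spaces, for $t$ sufficiently large both are arbitrarily far from these bounded targets, a contradiction. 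The nested case $V \propnest W$ is handled analogously using bounded geodesic image (Definition~\ref{defn:space_with_distance_formula}(7)). By Lemma~\ref{lem:hyp no product}, in a pairwise-orthogonal collection at most one $\fontact V$ has infinite diameter, and unbounded $\pi_V\gamma$ clearly requires unbounded $\fontact V$; hence $|\mathcal U|=1$.

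For the parametrization claim, the upper bound $\dist_U(\pi_U\gamma(s), \pi_U\gamma(t)) \leq K|t-s| + K$ is immediate from $\pi_U$ being coarsely Lipschitz. For the lower bound, I would argue by contradiction: suppose there exist $s_n < t_n$ with $|t_n - s_n| \to \infty$ but $\dist_U(\gamma(s_n), \gamma(t_n)) = o(|t_n - s_n|)$. Since $\gamma$ is a $(D,D)$-quasi-geodesic in $\cuco X$, we have $\dist_{\cuco X}(\gamma(s_n), \gamma(t_n)) \asymp |t_n - s_n|$, so Theorem~\ref{thm:distance_formula} forces $\sum_{V \neq U}[\dist_V(\gamma(s_n),\gamma(t_n))]_{s_0}$ to grow linearly in $|t_n-s_n|$. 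Each $V \neq U$ contributes at most $\diam_V(\pi_V\gamma) < \infty$ by uniqueness. Applying Lemma~\ref{lem:nest_progress} iteratively (using the moreover clause to climb the nesting tree among the domains $V \neq U$ with $\dist_V \geq 50E$) yields, by finite complexity, a chain terminating at some domain $T$ with $\dist_T(\gamma(s_n), \gamma(t_n)) \geq 100E$ for each $n$; combined with the large link axiom to count containers at each level, this forces $T$ to have $\pi_T\gamma$ unbounded, contradicting uniqueness of $\mathcal U = \{U\}$ whenever $T \neq U$, or contradicting the assumption $\dist_U(\gamma(s_n), \gamma(t_n)) = o(|t_n - s_n|)$ together with the large link bound when $T = U$. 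Once the parametrization is proved, the ``in particular'' clause is just the content of uniqueness established in the second paragraph.

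The main obstacle is the last step: cleanly organizing the iterative application of Lemma~\ref{lem:nest_progress} and the large link axiom so as to derive a contradiction from the hypothesized sublinear growth of $\dist_U$ along $\gamma$. The case analysis depending on whether $U$ coincides with the $\nest$-maximal element of $\mathfrak S$ requires particular care, since it dictates whether the contradiction arises from Lemma~\ref{lem:hyp no product} (ruling out two orthogonal unbounded $\fontact$'s) or directly from the uniqueness of the unbounded projection.
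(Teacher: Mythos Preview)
Your existence and uniqueness arguments are correct and match the paper's approach: both invoke Lemma~\ref{lem:unbounded_ray} for existence, and both combine consistency (for the transverse and nested cases) with Lemma~\ref{lem:hyp no product} (for the orthogonal case) to see that at most one domain has unbounded projection. The paper's proof, in fact, stops there---it only establishes the ``in particular'' clause, which is all that Theorem~\ref{thm:hyperbolic} actually uses.

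Your attempt to go further and prove the parametrization claim via Lemma~\ref{lem:nest_progress} has the gap you yourself flag. Applying Lemma~\ref{lem:nest_progress} to $\gamma(s_n),\gamma(t_n)$ produces a domain $T_n$ with $\dist_{T_n}(\gamma(s_n),\gamma(t_n))\geq 100E$, but $T_n$ may depend on $n$, and $100E$ is a fixed constant that does not grow with $n$; nothing in your sketch forces a single $T\neq U$ with $\pi_T\gamma$ unbounded, so the contradiction never closes, and the case split on whether $U$ is $\nest$--maximal does not repair this. If you do need the full parametrization, the cleaner route is to upgrade the uniqueness argument to a \emph{uniform} bound $\diam_V(\pi_V\gamma)\leq B$ for all $V\neq U$ (the orthogonal case is already uniform by Lemma~\ref{lem:hyp no product}; the remaining cases need a short argument combining consistency at $t=0$ with the fact that $\pi_V\gamma$ is an unparametrized quasigeodesic starting at $\pi_V(x_0)$ and eventually lying near $\rho^U_V$), after which taking the distance-formula threshold above $B$ leaves only the $U$--term and yields $|t-s|\asymp\dist_U(\gamma(s),\gamma(t))$ directly.
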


\begin{proof}
By Lemma \ref{lem:unbounded_ray}, there exists $U \in \fontact S$ such that $\diam_{\fontact U}(\gamma)$ is unbounded.  Let $V \in \mathfrak S$ be such that $V \neq U$; by Lemma \ref{lem:hyp no product}, there are three cases: $V \nest U$, $U \nest V$, and $V \pitchfork U$.

Let $t_M \in [0,\infty)$ be such that $\dist_{\fontact U}(\gamma(0),\gamma(t)) > E^2$ for $t\geq t_M$.  If $U \nest V$, then by the consistency inequality, $\dist_V(\gamma(t), \rho^V_U(\gamma(0))) < E$ for all $t > t_M$.  If $V \nest U$, then $\dist_{\fontact V}(\gamma(t), \rho^U_V) < E$ for all $t > t_M$.  Similarly, if $U \pitchfork V$, then $\dist_{\fontact V}(\gamma(t), \rho^V_U) < E$ for all $t > t_M$ by the transverse case of the consistency inequality.  Thus, in each case, $\diam_{\fontact V}(\gamma) < \infty$.
\end{proof}

\begin{thm}\label{thm:hyperbolic}
 Let $(\cuco X,\mathfrak S)$ be hierarchically hyperbolic and suppose that $\cuco X$ is hyperbolic. Let $\overline{\cuco X}^{Gr}=\cuco X\cup \partial^{Gr}\cuco X$, where $\partial^{Gr}\cuco X$ is the Gromov boundary of $\cuco X$, and let $\
\overline{\cuco X}=\cuco X\cup \partial\cuco X$. Then the identity map $\cuco X\to \cuco X$ extends uniquely to a homeomorphism $\overline{\cuco X}^{Gr}\to\overline{\cuco X}$.
\end{thm}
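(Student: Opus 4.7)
The plan is to build a bijection $\Psi\colon\partial(\cuco X,\mathfrak S)\to\partial^{Gr}\cuco X$ extending the identity on $\cuco X$ and then verify it is a homeomorphism. First, since orthogonal domains cannot both have unbounded associated hyperbolic spaces (Lemma~\ref{lem:hyp no product}), every support set in $\partial(\cuco X,\mathfrak S)$ is a singleton, so as a set $\partial(\cuco X,\mathfrak S)=\bigsqcup_{U\in\mathfrak S}\partial\fontact U$. Given $p=p_U\in\partial\fontact U$, the density argument in Proposition~\ref{prop:properties}\eqref{item:dense} (partial realization inside $P_U$) produces a hierarchy ray $\gamma$ from $x_0$ with $\pi_U(\gamma)\to p_U$; by Lemma~\ref{lem:hyp unique ray} every other projection of $\gamma$ is bounded, and since $\cuco X$ is hyperbolic, $\gamma$ is a quasigeodesic ray with a well-defined Gromov limit, which I take to be $\Psi(p_U)$.

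Well-definedness and injectivity both reduce to the distance formula: two hierarchy rays whose $\pi_U$-projections tend to the same point of $\partial\fontact U$ and which are bounded in every other $\fontact V$ must fellow-travel in $\cuco X$, yielding equal Gromov limits; two rays associated to distinct boundary points either have distinct $\partial\fontact U$-limits for the common (unique) support, or differ unboundedly on some $\fontact W$, and the distance formula turns either situation into linear divergence in $\cuco X$. For surjectivity, take $x_n\to q\in\partial^{Gr}\cuco X$ and hierarchy paths $\gamma_n\colon x_0\to x_n$. The set $\mathfrak U=\{W:\dist_W(x_0,x_n)\text{ is unbounded subsequentially}\}$ is nonempty by the distance formula. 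Its $\nest$-minimal elements are pairwise incomparable; orthogonality is ruled out by Lemma~\ref{lem:hyp no product} and transversality by the transverse consistency inequality (both $\dist_{U_i}(x_n,\rho^{U_j}_{U_i})$ would diverge), so finite complexity supplies a unique $\nest$-minimum $U\in\mathfrak U$. Then $\pi_U(\gamma_n)$ is a family of uniform unparametrized quasigeodesics of diverging length in $\fontact U$ that fellow-travel (because $\pi_U$ is coarsely Lipschitz and $\cuco X$ is hyperbolic), so they share a Gromov limit $p_U\in\partial\fontact U$. The hierarchy ray realizing $\Psi(p_U)$ lies in $P_U$ and shadows $\gamma_n$ on an initial segment of length $\asymp\dist_U(x_0,x_n)\to\infty$, whence $\Psi(p_U)=q$.

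Continuity in one direction: if $q_n\to q$ in $\partial^{Gr}$, set $p_n=\Psi^{-1}(q_n)\in\partial\fontact V_n$ and $p=\Psi^{-1}(q)\in\partial\fontact U$, and suppose $V_n\neq U$ infinitely often. In each of the subcases $V_n\transverse U$, $V_n\propnest U$, and $U\propnest V_n$, the hierarchy ray for $p_n$ lies in $P_{V_n}$ whose $\pi_U$-projection is coarsely the bounded set $\rho^{V_n}_U$ (using bounded geodesic image and Lemma~\ref{lem:hyp unique ray} in the last subcase), while the ray for $p$ diverges in $\fontact U$; the distance formula then forbids the two rays from fellow-travelling in $\cuco X$, contradicting $q_n\to q$. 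Hence $V_n=U$ eventually, and $p_n\to p$ in $\partial\fontact U$, and therefore in $\partial\cuco X$ by Proposition~\ref{prop:babies continuously embed}, follows from coarse Lipschitzness of $\pi_U$ applied to the fellow-travelling quasigeodesic representatives. Continuity of $\Psi$ is dual: one unpacks the HHS neighborhood basis at $p_U$, passes through hierarchy rays realizing $p_U$ and nearby boundary points, and uses Lemma~\ref{lem:hyp unique ray} to read off Gromov convergence in $\cuco X$.

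The hardest step will be the unique-$\nest$-minimality argument: the orthogonal and transverse cases are short, but the nested case, where both $\pi_{U_1}(x_n)$ and $\pi_{U_2}(x_n)$ can genuinely diverge for $U_1\propnest U_2$, requires a careful combination of bounded geodesic image, consistency, and the product-region picture to show that it is the $\nest$-minimum that truly records the Gromov limit. Matching the two neighborhood bases in the continuity step is the other subtle point, since the topology on $\partial(\cuco X,\mathfrak S)$ at remote points is specified through boundary projections whose behavior under Gromov limits must be controlled; both subtleties are ultimately handled by Lemma~\ref{lem:hyp unique ray} together with the distance formula.
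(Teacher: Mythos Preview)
Your approach is essentially correct but goes in the opposite direction from the paper: you build $\Psi\colon\partial(\cuco X,\mathfrak S)\to\partial^{Gr}\cuco X$, while the paper defines $\phi^{Gr}\colon\partial^{Gr}\cuco X\to\partial(\cuco X,\mathfrak S)$ by taking a geodesic ray to $q\in\partial^{Gr}\cuco X$, approximating it by hierarchy paths, passing to a limiting hierarchy ray $\gamma$, and then reading off the unique $U\in\mathfrak S$ with $\pi_U(\gamma)$ unbounded via Lemma~\ref{lem:hyp unique ray}. This choice of direction makes two of your steps harder than they need to be.

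First, your well-definedness of $\Psi$ is more delicate than you indicate. You claim that two hierarchy rays with $\pi_U$-limits equal to $p_U$ and bounded projection to every other $\fontact V$ must fellow-travel in $\cuco X$ ``by the distance formula''. But the bounds on $\diam_V(\gamma)$ coming from Lemma~\ref{lem:hyp unique ray} are not uniform in $V$, so the non-$U$ terms in the distance formula are not a priori controlled. In the paper's direction this issue disappears: the limiting hierarchy ray $\gamma$ is unique because the approximating hierarchy paths all fellow-travel the geodesic ray $\gamma_p$ by hyperbolicity of $\cuco X$, so well-definedness of $\phi^{Gr}$ is immediate. Second, your surjectivity argument via the set $\mathfrak U$ is doing unnecessary work, and the step ruling out transverse pairs among $\nest$-minimal elements has a gap: ``subsequentially unbounded'' allows the two projections to diverge on disjoint subsequences, so consistency does not directly give a contradiction without first using $x_n\to q$ in $\partial^{Gr}\cuco X$ to force coherence. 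The paper avoids this entirely: surjectivity of $\phi^{Gr}$ follows from the realization theorem in one line.

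For continuity you argue sequentially, which is legitimate since the proof of Lemma~\ref{lem:hausdorff} shows $\overline{\cuco X}$ is a sequential space, but you should say so. The paper instead writes down the basic sets explicitly in this degenerate (single-support) situation and matches them directly against the standard Gromov-boundary neighborhoods $\mathbf U(q,r')$; this is more concrete and sidesteps any appeal to sequentiality. Overall your outline can be made to work, but the paper's direction is the cleaner one precisely because hyperbolicity of $\cuco X$ hands you a canonical hierarchy ray for each Gromov boundary point.
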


\begin{proof}
Lemma \ref{lem:hyp no product} gives $\partial \cuco X = \coprod_{U \in \mathfrak S} \partial \fontact U$ and Lemma \ref{lem:hyp unique ray} gives $|\support(p)|=1$ for all $p\in\partial \cuco X$.

Fix $x_0 \in \cuco X$ and let $p \in \partial^{Gr} \cuco X$.  Let $\gamma_p:[0,1) \rightarrow \overline{\cuco X}^{Gr}$ be a geodesic from $x_0$ to $p$.  For any $n \in \naturals$, let $\gamma_n:[0,n) \rightarrow \cuco X$ be a hierarchy path between $x_0$ and $\gamma_p(n)$.  Since $\cuco X$ is hyperbolic, each $\gamma_n$ uniformly fellow-travels $\gamma_p$ and thus $\gamma = \lim_n \gamma_n$ is a hierarchy ray from $x_0$ to $p$.  The ray $\gamma$ is independent of the choice of $(\gamma_n)$ and is thus uniquely determined by $p$. By Lemma \ref{lem:hyp unique ray}, there exists a unique $U \in \mathfrak S$ such that $\diam_{\fontact U}(\gamma)$ is an unbounded quasigeodesic ray.  By hyperbolicity of $\fontact U$, there exists $q\in \partial \fontact U$ such that $\pi_{\fontact U}(\gamma)$ limits to $q$.

The above discussion yields a well-defined map $\phi^{Gr}: \partial^{Gr} \cuco X \rightarrow \partial \cuco X$ given by $\phi^{Gr}(p) = q$.  Define $\phi:\overline{\cuco X}^{Gr} \rightarrow \overline{\cuco X}$ by $\phi|_{\cuco X} = id_{\cuco X}$ and $\phi|_{\overline{\cuco X}^{Gr}} = \phi^{Gr}$.  We claim that $\phi$ is a homeomorphism.

\textbf{Bijectivity:}  The map $\phi$ is clearly bijective on $\cuco X$.  Let $p,q\in\boundary^{Gr}\cuco X$ and suppose that $p\neq q$.  Then there exist geodesic rays $\gamma_p, \gamma_q:[0,\infty] \rightarrow \cuco X$ with $[\gamma_p] = p$, $[\gamma_q]=q$, and $\gamma_p(0)=\gamma_q(0) = x_0$.  Since $p \neq q$, hyperbolicity of $\cuco X$ implies that $\dist_{\cuco X}(\gamma_p(t), \gamma_q(t)) \rightarrow \infty$.

By Lemma \ref{lem:hyp unique ray}, $\gamma_p$ and $\gamma_q$ have unique domains $U_p$ and $U_q$, respectively, to which they have unbounded projections.  If $U_p \neq U_q$, we are done.  Otherwise, $U_p = U_q=U$, and Lemma \ref{lem:hyp unique ray}, the distance formula, and the triangle inequality imply that $d_U(\gamma_p(t), \gamma_q(t)) \rightarrow \infty$, whence $\phi(p) \neq \phi(q)$, by definition.  Thus $\phi$ is injective; surjectivity of $\phi$ follows from Theorem~\ref{thm:realization}.

\textbf{Basic sets in $\overline{\cuco X}$:}  For convenience, we describe basic sets $\mathcal N(p)$, for $p\in\boundary(\cuco X,\mathfrak S)$, in our current simple situation.  Observe that $\support(p)$ consists of a single $S\in\mathfrak S$, while $\remote{\support(p)}{\cuco X}$ consists of those $q\in\boundary(\cuco X,\mathfrak S)$ with $\support(q)=\{T\}$ with $T\neq S$.  It is automatic that $T$ is not orthogonal to $S$: if $T\orth S$, then Lemma~\ref{lem:hyp no product} implies only one of $\fontact S$ or $\fontact T$ can be unbounded and thus have nonempty Gromov boundary.  It follows that $\support(q)\cap(\support(p))^\orth=\emptyset$.

Choosing $\epsilon>0$ and $p\in\mathcal U_S\subset\fontact S\cup\boundary\fontact S$, a remote neighborhood of $p$ in $\overline{\cuco X}$ is:

$$\neb_{\mathcal U_S,\epsilon}^{rem}(p)=\left\{q\in\bigsqcup_{S\neq T}\boundary\fontact T\Big |\rho^T_S\in \mathcal U_{S}\right\}.$$

\noindent Meanwhile, the nonremote part of the boundary is just $\boundary\contact S$, so

$$\neb^{non}_{\mathcal U_S,\epsilon}(p)=\mathcal U_S.$$

\noindent Finally, the interior part is:

$$\neb^{int}_{\mathcal U_S,\epsilon}(p)=\left\{x\in\cuco X\Big|\pi_{S}(x)\in U_S, \frac{\dist_{T}(x_0,x)}{\dist_{S}(x_0,x)}< \epsilon\,\,\forall T\orth S\right\}.$$

The above descriptions will be useful in proving that $\phi$ is a homeomorphism.

\textbf{Continuity of $\phi,\phi^{-1}$:}  Choose $p\in\boundary(\cuco X,\mathfrak S)$, supported on $S\in\mathfrak S$, a neighborhood $\mathcal U_S$ of $p\in\boundary\fontact S$, and $\epsilon>0$.  We may assume that
$$\mathcal U_S=\left\{y\in\fontact S\cup\boundary\fontact S\Big|\exists (p_n):p_n\to p,\,\liminf_n\left(y\mid \pi_S(p_n)\right)_{\pi_S(x_0)}>r\right\}$$
for some $r\geq0$.  Choose $q\in\boundary^{Gr}\cuco X$ so that $\phi(q)=p$.  For each $r'\geq 0$, let $$\mathbf U(q,r')=\left\{y\in\cuco X\cup\boundary^{Gr}\cuco X\Big|(y|q)_{x_0}\geq r'\right\}.$$  Recall that sets of this type yield a neighborhood basis in $\overline{\cuco X}^{Gr}$.

We exhibit $r'\geq0$, depending on $p,r,\epsilon$ and the distance formula constants, such that 
$$\phi\left(\mathbf U(q,r')\right)\subseteq \neb_{\mathcal U_S,\epsilon}(p).$$  

Indeed, if $y\in\mathbf U(q,r')\cap\boundary^{Gr}\cuco X$, and $r'$ is sufficiently large, then any geodesic ray or segment representing $[\pi_S\circ\gamma_y]$ has an initial segment of length at least $r$ lying $2\delta$-close to the corresponding segment for $p$.  This implies that $\phi(y)\in\mathcal U_S$, which is exactly the non-remote part of $\neb_{\mathcal U_S,\epsilon}(p)$ (regardless of the choice of $\epsilon$). If $y\in\mathbf U(q,r')$ is an interior point, and $r'$ is sufficiently large, then similarly $\pi_S(x)\in\mathcal U_S$.

If $T\orth S$, then, by Lemma~\ref{lem:hyp no product}, there exists a uniform $C>0$ such that $\dist_T(x_0,y)\leq C$.  Moreover, choosing $r'$ sufficiently large compared to $r,C,$ and the constants in the distance formula, we have $\dist_S(x_0,y)\geq C/\epsilon$.  Hence either $y$ is interior or $y \in \partial \fontact S$, and so $$\phi(\mathbf U(q,r'))\subseteq \neb^{non}_{\mathcal U_S,\epsilon}(p)\cup\neb^{int}_{\mathcal U_S,\epsilon}(p).$$

Continuity follows easily: Given an open set $\mathcal O\subseteq\overline{\cuco X}$, let $q\in\phi^{-1}(\mathcal O)$.  Then, since $\mathcal O$ is open, it contains a neighborhood $\mathcal N$ of $\phi(q)$.  The preceding discussion shows that $q$ lies in some neighborhood $\mathbf U$ which in turn lies in $\phi^{-1}(\mathcal N)\subset\phi^{-1}(\mathcal O)$, so $\phi^{-1}(\mathcal O)$ is open.  Continuity of $\phi^{-1}$ is proved similarly.
\end{proof}

\section{Extending hieromorphisms to the boundary}\label{sec:extending}

Hieromorphisms need not extend continuously to the boundary, but under additional hypotheses on the quasi-isometries implicit in the hieromorphism, such extensions do exist.  However, the class of hieromorphisms that extend continuously to the boundary is contained in a larger class of maps with this property, and, given the examples we study later in this section, it is in our interest to focus on this larger class of maps.

\begin{defn}[Slanted hieromorphism]\label{defn:slanted_hieromorphism}
Let $(\cuco X,\mathfrak S),(\cuco X',\mathfrak S')$ be hierarchically hyperbolic spaces.  A \emph{slanted hieromorphism} $f:(\cuco X,\mathfrak S)\to(\cuco X',\mathfrak S')$ consists of:
\begin{enumerate}
 \item a map $f:\cuco X\to\cuco X'$;
 \item a map $\pi(f):\mathfrak S\to 2^{\mathfrak S'}$ such that $\pi(f)(U)$ is a collection of pairwise-orthogonal elements of $\mathfrak S'$ for each $U\in\mathfrak S$;
 \item for each $U\in\mathfrak S$, a map $\rho(f,U):\fontact U\to\prod_{V\in \pi(f)(U)}\fontact V$
\end{enumerate}
such that:
\begin{enumerate}[(I)]
 
 \item if $U,V\in\mathfrak S$ satisfy $U\propnest V$, then for each $W'\in\pi(f)(V)$, there exists $W\in\pi(f)(U)$ with $W\propnest W'$, and for every $W\in\pi(f)(U)$ there exists (a unique) $W'\in\pi(f)(V)$ with $W\propnest W'$;
 \item if $U,V\in\mathfrak S$ satisfy $U\orth V$, then $W\orth W'$ for all distinct $W\in \pi(f)(U),W'\in \pi(f)(V)$;
 \item if $U,V\in\mathfrak S$ satisfy $U\transverse V$, then for all $W\in \pi(f)(U)$, there exists $W'\in \pi(f)(V)$ with $W\transverse W'$ and vice versa;
 \item each $\rho(f,U)$ is a (uniform) quasiisometric embedding (where $\Pi_{W\in\pi(f)(U)}\fontact W$);
 \item for all $U\in\mathfrak S$, the following diagram (uniformly) coarsely commutes:
 
\begin{center}
$
\begin{diagram}
\node{\cuco X}\arrow[3]{e,t}{f}\arrow{s,r}{\pi_U}\node{}\node{}\node{\cuco X'}\arrow{s,r}{\Pi_{W\in\pi(f)(U)}\pi_W}\\
\node{\fontact U}\arrow[3]{e,t}{\rho(f,U)}\node{}\node{}\node{\Pi_{W\in\pi(f)(U)}\fontact W}
\end{diagram}
$
\end{center}

\item if $U,V\in\mathfrak S$ satisfy $U\propnest V$ or $U\transverse V$, then

\begin{center}
$
\begin{diagram}
\node{\fontact U}\arrow[3]{e,t}{\rho(f,U)}\arrow{s,r}{\rho^U_V}\node{}\node{}\node{\Pi_{W\in\pi(f)(U)}\fontact W}\arrow{s,r}{g}\\
\node{\fontact V}\arrow[3]{e,t}{\rho(f,V)}\node{}\node{}\node{\Pi_{W'\in\pi(f)(V)}\fontact W'}
\end{diagram}
$
\end{center}
uniformly coarsely commutes, where $g$ is a coarsely constant map so that: if $U\propnest V$, then for each $W'\in\pi(f)(V)$, the $W'$--coordinate of $g$ is $\rho^W_{W'}$ for some (hence any, by Lemma~\ref{lem:orthogonal_close}) $W\in\pi(f)(U)$ with $W\propnest W'$, and if $U\transverse V$, then for each $W'\in\pi(f)(V)$, the $W'$--coordinate of $g$ is $\rho^W_{W'}$ for some (hence any) $W\in\pi(f)(U)$ with $W\transverse W'$;
\item if $V\propnest U$, then 

\begin{center}
$
\begin{diagram}
\node{\fontact U}\arrow[3]{e,t}{\rho(f,U)}\arrow{s,r}{\rho^U_V}\node{}\node{}\node{\Pi_{W\in\pi(f)(U)}\fontact W}\arrow{s,r}{h}\\
\node{\fontact V}\arrow[3]{e,t}{\rho(f,V)}\node{}\node{}\node{\Pi_{W'\in\pi(f)(V)}\fontact W'}
\end{diagram}
$
\end{center}
uniformly coarsely commutes, where the map $h$ is defined as follows: given $(x_{W'})_{W'\in\pi(f)(U)}$, for each $W\in\pi(f)(V)$, the $W$--coordinate of $h((x_{W'}))$ is $\rho^{W''}_W(x_{W''})$, where $W''$ is the unique element of $\pi(f)(U)$ with $W\propnest W''$.
\end{enumerate}

\end{defn}

\begin{rem}[Hieromorphisms are slanted hieromorphisms]\label{rem:hier_is_slant_hier}
Any hieromorphism $f$ is a slanted hieromorphism in which $|\pi(f)(U)|=1$ for all $U\in\mathfrak S$.
\end{rem}

\begin{rem}\label{rem:more_general}
There is presumably a still more general version of Definition~\ref{defn:slanted_hieromorphism} encompassing morphisms $f:(\cuco X,\mathfrak S)\to(\cuco X',\mathfrak S')$ where $f:\cuco X\to\cuco X'$ is a map, $f:2^{\mathfrak S}\to2^{\mathfrak S'}$ sends pairwise-orthogonal sets to pairwise-orthogonal sets, and $f$ sends appropriate products of hyperbolic spaces to products of hyperbolic spaces.  Simple examples like rotation in $\Euclidean^2$ require such a definition in order to be regarded as maps of hierarchically hyperbolic spaces.
\end{rem}

\begin{defn}[Coarse similarity]\label{defn:coarse_similarity}
Let $M,M'$ be metric spaces.  Then $f\colon M\to M'$ is a \emph{$(\lambda,\epsilon)$--coarse similarity} if there exist $\lambda>0,\epsilon\geq0$ such that for all $p,q\in M$, $$\lambda\dist_{M}(p,q)-\epsilon\leq\dist_{M'}(f(p),f(q))\leq\lambda\dist_{M}(p,q)+\epsilon.$$
\end{defn}

\begin{defn}[Extensible slanted hieromorphism]\label{defn:extensible}
Let $f:(\cuco X,\mathfrak S)\to(\cuco X',\mathfrak S')$ be a slanted hieromorphism.  Then $f$ is \emph{extensible} if there exist  $0<\lambda_1\leq\lambda_2$ and $K<\infty$ such that:
\begin{enumerate}
 \item $\pi(f):\mathfrak S\to2^{\mathfrak S'}$ is injective;
 \item for all $V\in\mathfrak S'$, either there is $U\in\mathfrak S$ with $V\in\pi(f)(U)$ or $\diam_{\fontact V}(\pi_V(f(\cuco X)))\leq K$;
 \item for all $U\in\mathfrak S$ and $W\in\pi(f)(U)$, the composition $$\fontact U\stackrel{\rho(f,U)}{\longrightarrow}\prod_{V\in\pi(f)(U)}\fontact V\to \fontact W$$ is a $(\lambda,\lambda')$--coarse similarity, where the second map is the canonical projection and $\lambda\in[\lambda_1,\lambda_2]$ ($\lambda$ can depend on $U,V$) and $\lambda'\geq0$.\label{item:similarity}
\end{enumerate}

\end{defn}

\begin{thm}[Extending slanted hieromorphisms to the boundary]\label{thm:extension to boundary maps}
Let $(\cuco X,\mathfrak S)$ and $(\cuco X',\mathfrak S')$ be hierarchically hyperbolic structures on the spaces $\cuco X,\cuco X'$ respectively.  Suppose that $f\colon(\cuco X,\mathfrak S)\rightarrow(\cuco X',\mathfrak S')$ is an extensible slanted hieromorphism.  Then there is a map $\bar f\colon\overline{\cuco X}\to\overline{\cuco X'}$ such that
\begin{enumerate}
 \item $\bar f|_{\cuco X}=f$;\label{item:extension}
 \item $\bar f|_{\boundary\cuco X}$ is injective;\label{item:boundary_injective}
 \item for all $f(p)\in\boundary\cuco X'$ and basic neighborhoods $f(p)\in\mathcal N$ of $\overline{\cuco X}'$, the set $\bar f^{-1}(\mathcal N)$ contains a basic neighborhood of $p\in\overline{\cuco X}$, i.e., $\bar{f}$ is continuous at each point in $\partial \cuco X$;\label{item:mostly_continuous}
\end{enumerate}
In particular, if $\cuco X$ is proper, then $\bar f|_{\boundary\cuco X}$ is an embedding with closed image and, if $f$ is an embedding, then $\bar f:\overline{\cuco X}\to\overline{\cuco X'}$ is an embedding whose image is closed.
\end{thm}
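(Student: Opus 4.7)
The plan is to define $\bar f$ on $\boundary \cuco X$, verify the three numbered properties, and then deduce the ``in particular'' clause from compactness.

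First I would define $\bar f$ on a boundary point $p=\sum_{U\in\overline{S}}a_U^p p_U$ as follows. By axiom (II) of Definition~\ref{defn:slanted_hieromorphism}, the union $\overline{S}'=\bigsqcup_{U\in\overline{S}}\pi(f)(U)$ is a pairwise-orthogonal subset of $\mathfrak S'$ (its elements are pairwise orthogonal both within a single $\pi(f)(U)$ and across distinct $U,U'\in\overline{S}$). For each $U\in\overline{S}$ and each $W\in\pi(f)(U)$, the composition $\fontact U\to\prod_{V\in\pi(f)(U)}\fontact V\to\fontact W$ is a $(\lambda_{U,W},\epsilon_{U,W})$--coarse similarity with $\lambda_{U,W}\in[\lambda_1,\lambda_2]$, by extensibility. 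Such a coarse similarity sends $(1,20\delta)$--quasigeodesic rays to uniform quasigeodesic rays and satisfies $(\phi(x)|\phi(y))_{\phi(z)}\asymp\lambda_{U,W}(x|y)_z$, hence induces a well-defined continuous injection $\partial\fontact U\hookrightarrow\partial\fontact W$ sending $p_U$ to a point $p'_W$. Setting $Z_p=\sum_{U\in\overline{S}}\sum_{W\in\pi(f)(U)}\lambda_{U,W}a_U^p$, define
$$\bar f(p)=\sum_{W\in\overline{S}'}b_Wp'_W,\qquad b_W=\frac{\lambda_{U,W}a_U^p}{Z_p}\quad(W\in\pi(f)(U)).$$
The choice of coefficients is dictated by the fact that if $x_n\to p$ in $\overline{\cuco X}$, then $\dist_W(x_0',f(x_n))$ is coarsely $\lambda_{U,W}\dist_U(x_0,x_n)$ by the coarse similarity and the coarse commutativity of the projection diagram, so these normalized ratios are the correct limit coefficients.

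For injectivity of $\bar f|_{\partial\cuco X}$, if $p\neq q$ then either $\support(p)\neq\support(q)$, or their supports agree and some $p_U\neq q_U$, or all $p_U=q_U$ but the coefficient vectors $(a_U^p),(a_U^q)$ differ. The first case is handled by injectivity of $\pi(f)$, the second by injectivity of the induced boundary maps of the coarse similarities, and the third by the observation that the linear map $(a_U^p)\mapsto(b_W)$ (where $b_W=\lambda_{U,W}a_U^p/Z_p$) is injective on the normalized simplex since $\lambda_{U,W}\geq\lambda_1>0$.

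The main technical content is continuity at each $p\in\partial\cuco X$. Given a basic neighborhood $\mathcal N_{\{U'_W\},\epsilon'}(\bar f(p))$ of $\bar f(p)$, I would produce a basic neighborhood $\mathcal N_{\{U_S\},\epsilon}(p)$ of $p$ whose image lies inside it by checking each of the three parts (interior, non-remote, remote) separately. For interior points $x$ near $p$, the coarse commutativity of the projection diagram together with the coarse similarity shows $\pi_W(f(x))\in U'_W$ and transforms the ratios of distances $\dist_S(x_0,x)/\dist_{S'}(x_0,x)$ into corresponding ratios $\dist_W(x_0',f(x))/\dist_{W'}(x_0',f(x))$ scaled by $\lambda_{U,W}/\lambda_{U',W'}$, which by our choice of $b_W$ matches the target coefficient ratio $b_W/b_{W'}$ up to a controllable error. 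For remote and non-remote boundary points near $p$, the key ingredient is that the diagrams involving $\rho^U_V$ in Definition~\ref{defn:slanted_hieromorphism}(VI),(VII) ensure the boundary projections $\partial\pi_{\overline{S}'}$ at the level of $(\cuco X',\mathfrak S')$ correspond (via coarse similarity) to $\partial\pi_{\overline{S}}$ at the level of $(\cuco X,\mathfrak S)$, up to uniform error. The subtle point is that a point $q$ remote with respect to $\support(p)$ in $\cuco X$ could have image in $\cuco X'$ that is non-remote with respect to $\support(\bar f(p))$ when some domain in $\support(q)$ maps via $\pi(f)$ into $\overline{S}'$; these must be traced through the cases using injectivity of $\pi(f)$ together with the bound $K$ on projections of $f(\cuco X)$ to domains outside the image of $\pi(f)$. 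The hardest step will be controlling the coefficient constraints in the remote/non-remote definitions simultaneously with the Gromov-boundary convergence constraints, but the linear scaling by the $\lambda_{U,W}$ combined with the explicit normalization by $Z_p$ is designed precisely to make this possible.

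For the ``in particular'' clause, I would invoke Theorem~\ref{thm:cpt}: if $\cuco X$ is proper then $\overline{\cuco X}$, and hence its closed subset $\partial\cuco X$, is compact. Since $\overline{\cuco X'}$ is Hausdorff by Proposition~\ref{prop:properties}, the continuous injection $\bar f|_{\partial\cuco X}$ is a closed embedding. If additionally $f:\cuco X\to\cuco X'$ is an embedding, then $\bar f$ is continuous on all of $\overline{\cuco X}$ (combining the continuity of $f$ on the interior with the continuity at boundary points already established, and using that no sequence in $\cuco X$ can converge to distinct limits in $\overline{\cuco X'}$), and injectivity on both $\cuco X$ and $\partial\cuco X$ together with the fact that $\cuco X$ is open in $\overline{\cuco X}$ and $\bar f(\partial\cuco X)\subseteq\partial\cuco X'$ yields injectivity of $\bar f$ globally. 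Then $\bar f$ is a continuous injection from a compact space to a Hausdorff space, so it is an embedding with closed image.
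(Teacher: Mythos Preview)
Your outline is sound, and your formula for $\bar f(p)$ with coefficients $b_W=\lambda_{U,W}a_U^p/Z_p$ is in fact the natural one: the paper first normalizes within each $\pi(f)(U)$ and then weights by $a_U^p$, giving the $W$--coefficient $a_U^p\,\lambda_{U,W}\big/\sum_{W'\in\pi(f)(U)}\lambda_{U,W'}$. The two formulas disagree whenever the per--$U$ totals $\sum_{W\in\pi(f)(U)}\lambda_{U,W}$ vary with $U\in\support(p)$, and yours is the one that makes the interior continuity check go through, since $\dist_W(x_0',f(x_n))\sim\lambda_{U,W}\dist_U(x_0,x_n)$ forces the limiting ratios to be $\lambda_{U,W}a_U^p/\lambda_{U',W'}a_{U'}^p$.

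The substantive difference is in the continuity argument. You propose a direct verification that each of the interior, non-remote, and remote parts of a small basic neighborhood of $p$ maps into the given basic neighborhood of $\bar f(p)$. The paper instead proves continuity only along \emph{interior} sequences (if $x_n\in\cuco X$ and $x_n\to p$ then $f(x_n)\to\bar f(p)$), and then handles an arbitrary sequence $z_k\to p$ by approximating each $z_k$ with an interior sequence satisfying conditions (1)--(7) from the proof of Theorem~\ref{thm:cpt} and running a diagonal argument. This completely avoids tracking boundary projections $\partial\pi$ through the slanted hieromorphism, at the cost of importing the compactness machinery.

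Your direct route is workable, but the remote case you sketch is genuinely incomplete. Beyond the remote/non-remote transition you flag, you must also handle domains $T\in\support(\bar f(p))_{\bar f(q)}\setminus\support(\bar f(p))$ lying outside $\image(\pi(f))$ (this is where Definition~\ref{defn:extensible}(2) enters), and you must verify that the diagrams (VI),(VII) of Definition~\ref{defn:slanted_hieromorphism} really do transport $(\partial\pi_{\support(p)}(q))_U$ to $(\partial\pi_{\support(\bar f(p))}(\bar f(q)))_W$ up to uniform error in each of the three sub-cases of Definition~\ref{defn:boundary_projection}. None of this is deep, but it is a substantial amount of casework that the paper's reduction entirely circumvents. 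The injectivity argument and the ``in particular'' clause are handled correctly and match the paper.
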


\begin{proof}
For  convenience, when the domains of the various maps are understood, we shall denote each map $f:\cuco X\to\cuco X',\pi(f):\mathfrak S\to2^{\mathfrak S'}$, and $\rho(f,U):\fontact U\to\Pi_{W\in\pi(f)(U)}\fontact W$ by $f$.

\textbf{Boundary maps on hyperbolic domains:}  Let $U\in\mathfrak S$.  To each sequence $(x_n)$ in $\fontact U$, associate the sequence $(f(x_n))_n$ in $\Pi_{W\in\fontact\pi(f)(U)}\fontact W$.  For each $W\in\pi(f)(U)$, let $w_n(W)\in\fontact W$ be the $W$--coordinate of $f(x_n)$.  Fix a basepoint $x\in\fontact U$ and $p_W = \pi_W(\rho(f,U)(x))\in\fontact W$ for each $W\in\pi(f)(U)$.    

Suppose that $(x_n)_n$ represents a point in $\boundary\fontact U$, i.e. $(x_i|x_j)_x\to\infty$ as $i,j\to\infty$.  Since $\rho(f,U)$ is a uniform quasiisometric embedding, we have for each $W\in\pi(f)(U)$ that $(w_i(W)|w_j(W))_{p_W}\to\infty$ as $i,j\to\infty$.  Hence $w_i(W)$ converges to a point $p(W)\in\boundary\fontact W$.

For each $W\in\pi(f)(U)$, choose $\alpha_W\in(0,1]$ so that $$\frac{\alpha_W}{\alpha_{W'}}=\lim_n\frac{\dist_W(p_W,w_n(W))}{\dist_{W'}(p_{W'},w_n(W'))}$$ for all $W,W' \in \pi(f)(W)$, which exists because of the coarse similarity assumption.  Then define $p\in\star_{W\in\fontact\pi(f)(U)}\partial \fontact W$ to be the linear combination $\sum_{W \in \pi(f)(U)}\alpha_Wp_W$.  The assignment $\bar f_U((x_n))=p$ thus provides a map $\bar f_U:\fontact U\cup\boundary \fontact U\to\Pi_{W\in\fontact\pi(f)(U)}\fontact W\cup\star_{W\in\fontact\pi(f)(U)}\fontact W$ extending the map $\rho(f,U)$.  

For any $U\in\mathfrak S$, the map $\bar f_U$ defined above is injective since the composition of $f$ with any of the canonical projections $\Pi_{W\in\pi(f)(U)}\fontact W\to \fontact W$ is a uniform quasiisometric embedding, and quasiisometric embeddings coarsely preserve Gromov products.  

\textbf{Definition of $\bar f$:}  Let $p\in\boundary\cuco X$, so that $p=\sum_{U\in\support(p)}\beta_Up_U$, where $p_U\in\boundary\fontact U$ for each $U$, each $\beta_U\in(0,1]$, and $\sum_U\beta_U=1$.  For each $U\in\support(p)$, we defined $\bar f_U(p_U)=\sum_{W\in\pi(f)(U)}\alpha^U_Wq_W$ above, where $q_W\in\boundary\fontact W$ and $\sum_W\alpha^U_W=1$.  Let $$\bar f(p)=\sum_{U\in\support(p)}\sum_{W\in\pi(f)(U)}\beta_U\alpha^U_W \cdot q_W,$$
which is a point in $\boundary\cuco X'$ since $\sum_U\sum_W\beta_U\alpha^U_W=1$ and since $\bigcup_{U\in\support(p)}\pi(f)(U)$ is a pairwise-orthogonal set by Definition~\ref{defn:slanted_hieromorphism} since $f$ is a slanted hieromorphism.

\textbf{Injectivity of $\bar f|_{\boundary\cuco X}$:}  Injectivity of $\bar f|_{\boundary\cuco X}$ follows from injectivity of $\bar f_U$ on each $\boundary\fontact U,U\in\mathfrak S$ together with injectivity of $\pi(f)$ and the fact that each $\bar f_U:\fontact U\to\Pi_{W\in\pi(f)(U)}\fontact W$ is ``fully supported'' in the sense that each $\alpha^U_W>0$.

\textbf{Continuity at boundary points:}  First consider $p\in\boundary\cuco X$.  By Proposition~\ref{prop:properties}, there exists $(x_n)$ in $\cuco X$ such that $x_n\to p$ as $n\to\infty$.  We check that $f(x_n)$ converges to $\bar f(p)$.

Fix a basepoint $x\in\cuco X$, so that $p=\sum_{U\in\support(p)}a_Up_U$ where $\sum_Ua_U=1$, each $a_U>0$, and for all $U,U'\in\support(p)$, $$\left|\frac{\dist_U(x,x_n)}{\dist_{U'}(x,x_n)}-\frac{a_U}{a_{U'}}\right|\to0 \text{ and } \frac{\dist_V(x,x_n)}{\dist_U(x,x_n)}\to0$$ whenever $U\in\support(p)$ and $V\in\support(p)^\orth$, and finally $\pi_U(x_n)\to p_U$ for all $U\in\support(p)$.

Consider the sequence $(w_n)=(f(x_n))$.  For each $U\in\support(p)$ and $W\in\pi(f)(U)$, let $c_W:\Pi_{V\in\pi(f)(U)}\overline {\fontact V} \to\overline{\fontact W}$ be the canonical projection.  By hypothesis, for each such $W$ we have $|\dist_W(f(x),w_n)-\lambda_W\dist_U(x,x_n)|\leq\lambda'_W$, where $\lambda_W\in[\lambda_1,\lambda_2]$ and $\lambda'_W\geq0$.  Hence for each $U\in\support(p)$ and $W\in\pi(f)(U)$, we have that $\pi_W(w_n)=c_W\circ\bar f(\pi_U(x_n))\to c_W\circ\bar f(p_U)$ and $\bar f(\pi_U(x_n))\to\sum_{W\in\pi(f)(U)}\beta_U\alpha_Wc_W\cdot \bar f(p_U)$ as required.  Moreover, if $V\in\mathfrak S'$ does not belong to $\pi(f)$, then $\dist_V(f(x),w_n)$ is uniformly bounded by Definition~\ref{defn:extensible}(2).

Finally, if $V\in\mathfrak S-\support(p)$, then $\dist_V(x,x_n)$ is dominated by $\dist_U(x,x_n)$ for any $U\in\support(p)$. Hence, for such $V$, we have that  $\dist_W(f(x),f(x_n))$ is dominated by $\dist_Z(f(x),f(x_n))$ whenever $W\in\pi(f)(V)$ and $Z\in\pi(f)(U)$ for some $U\in\support(p)$, since each $\rho(f,U)$ is a uniform quasiisometric embedding.  Thus $f(x_n)$ converges to $\bar f(p)$.

More generally, given any sequence $(z_k)$ in $\overline{\cuco X}$ converging to $p\in\boundary X$, we can use the ideas in the proof of Theorem \ref{thm:cpt} to build a sequence of internal sequences $(x_{k,i})$, so that $\lim_i x_{k,i} = z_k$ for each $k$.  Namely, for each $k$, we can take a sequence $(x_{k,i}) \rightarrow z_k$ (if $z_k \in \cuco X$, then we choose $x_{k,i}= z_k$ to be constant), and then we choose $N_k>0$ large enough so that if $n>N_k$, then the sequence $(x_{k,n})$ will satisfy conditions \eqref{item:unbounded}--\eqref{internal condition 7} from the proof of Theorem \ref{thm:cpt}.  This will force that $\lim_i x_{k,i} =  z_k$, and then since $\lim_k z_k =  p$, the above conditions will force $\lim_k x_{k,n} = p$. 

Now since $\lim_n x_{k,n}= p$ and $\lim_i x_{k,i}= z_k$, the internal case above implies $\lim_n \bar f(x_{k,n})= \bar f(p)$ and $\lim_i \bar f(x_{k,i}) = z_k$.  Together, these imply that $\lim_k\bar f(z_k)= \bar f(p)$.  Thus $\bar f$ is continuous at boundary points.

\textbf{When $\cuco X$ is proper:} Assertion~\eqref{item:mostly_continuous} combines with Theorem~\ref{thm:cpt} and Proposition~\ref{prop:properties}.\eqref{item:hausdorff} to imply that $\bar f$ is an embedding; compactness of $\boundary\cuco X$ implies that its image is closed. If in addition, $f$ is an embedding, then $\bar f:\overline{\cuco X}\to\overline{\cuco X'}$ is an embedding, since assertion~\eqref{item:mostly_continuous} again combines with Proposition~\ref{prop:properties}.\eqref{item:hausdorff} and Theorem~\ref{thm:cpt} to imply that $\bar f$ is a continuous injection from a compact space to a Hausdorff space.
\end{proof}

\begin{rem}\label{rem:relax_extensible}
Theorem~\ref{thm:extension to boundary maps} holds under slightly more general conditions: condition~\eqref{item:similarity} of Definition~\ref{defn:extensible} need only be imposed on $U\in\mathfrak S$ in cases where either there exists $V\in\mathfrak S$ with $U\orth V$ or $|\pi(f)(U)|>1$ or both.  For any $U$ with empty orthogonal complement and for which $\pi(f)(U)=\{V\}$ for some $V\in\mathfrak S'$, it suffices to require that $\rho(f,U):\fontact U\to\fontact V$ is a uniform quasiisometric embedding.
\end{rem}

\subsection{Limit sets of hierarchically quasiconvex sets}\label{subsubsec:hier_quasiconvex}
Let $(\cuco X,\mathfrak S)$ be a proper hierarchically hyperbolic space and let $\cuco Y\subseteq\cuco X$ be hierarchically quasiconvex.  Let $\Lambda\cuco Y$ be the set of boundary points $p=\sum_{U\in\support(p)}a_Up_U \in \partial \cuco X$ such that for all $U\in\support(p)$, there is a sequence $p_U^n\in\pi_U(\cuco Y)$ converging to $p_U$.  

\begin{prop}[Hierarchically quasiconvex subspaces have limit sets]\label{prop:hierarchically_quasiconvex_set}
$\cuco Y\cup\Lambda\cuco Y$ is a closed subset of $\overline{\cuco X}$, and $\cuco Y$ is dense in $\cuco Y\cup\Lambda\cuco Y$.  Hence $\cuco Y$ has an HHS structure so that $\cuco Y\cup\Lambda\cuco Y=\overline{\cuco Y}$.   
\end{prop}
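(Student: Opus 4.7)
The plan is to apply Theorem~\ref{thm:extension to boundary maps} to the inclusion $\iota: \cuco Y \hookrightarrow \cuco X$. By hierarchical quasiconvexity, together with the normalization procedure of Section~\ref{subsec:normalized}, $\cuco Y$ inherits a normalized HHS structure $(\cuco Y, \mathfrak S)$: the hyperbolic space associated to $U \in \mathfrak S$ is the uniformly quasiconvex subspace $\pi_U(\cuco Y) \subset \fontact U$ (with its intrinsic path metric), while nesting, orthogonality, projections, and relative projections are inherited from $(\cuco X, \mathfrak S)$. The inclusion $\iota$ then determines a hieromorphism, and hence by Remark~\ref{rem:hier_is_slant_hier} a slanted hieromorphism, with $\pi(\iota)(U) = \{U\}$ for every $U$ and with $\rho(\iota, U)$ equal to the inclusion $\pi_U(\cuco Y) \hookrightarrow \fontact U$.

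First I would verify that $\iota$ is extensible. The map $\pi(\iota)$ is the identity on $\mathfrak S$ and hence injective, and every $V \in \mathfrak S$ lies in $\pi(\iota)(V)$, so condition~(2) of Definition~\ref{defn:extensible} is automatic. For the coarse-similarity condition~\eqref{item:similarity}, note that since $\pi_U(\cuco Y)$ is uniformly quasiconvex in the $\delta$--hyperbolic space $\fontact U$, the intrinsic path metric on $\pi_U(\cuco Y)$ agrees with the subspace metric up to a uniform additive constant $C$; thus each $\rho(\iota, U)$ is a uniform $(1, C)$--quasi-isometric embedding, which qualifies as a $(1, C)$--coarse similarity with $\lambda_1 = \lambda_2 = 1$. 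Theorem~\ref{thm:extension to boundary maps} then yields a continuous map $\bar\iota: \overline{\cuco Y} \to \overline{\cuco X}$ extending $\iota$ and injective on $\partial\cuco Y$. Replacing $\cuco Y$ by its closure in $\cuco X$ if necessary (this does not change the projections up to bounded Hausdorff distance, and hence does not change $\Lambda\cuco Y$), we may assume $\cuco Y$ is proper, so Theorem~\ref{thm:cpt} gives compactness of $\overline{\cuco Y}$, making $\bar\iota$ a topological embedding with closed image.

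Next I would identify $\bar\iota(\overline{\cuco Y}) = \cuco Y \cup \Lambda\cuco Y$. On interior points this is tautological. For a boundary point $p = \sum_{U \in \support(p)} a_U p_U \in \partial\cuco Y$, each $p_U \in \partial \pi_U(\cuco Y)$ is represented by a sequence in $\pi_U(\cuco Y)$ which, under the $(1,C)$--quasi-isometric inclusion into $\fontact U$, limits to the $U$--coordinate of $\bar\iota(p)$ in $\partial\fontact U$; hence $\bar\iota(p) \in \Lambda\cuco Y$ with the same coefficients. Conversely, given $q = \sum_V a_V q_V \in \Lambda\cuco Y$, each $q_V$ is by definition the limit in $\fontact V$ of a sequence in $\pi_V(\cuco Y)$, and since the inclusion coarsely preserves Gromov products, this sequence defines a point of $\partial\pi_V(\cuco Y)$; the $V$'s remain pairwise orthogonal in $\mathfrak S$ and hence in $(\cuco Y, \mathfrak S)$, so these points assemble to some $p \in \partial\cuco Y$ with $\bar\iota(p) = q$. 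Closedness of $\cuco Y \cup \Lambda\cuco Y$ in $\overline{\cuco X}$ is then immediate, and density of $\cuco Y$ in $\cuco Y \cup \Lambda\cuco Y$ follows by transporting Proposition~\ref{prop:properties}\eqref{item:dense} through the embedding $\bar\iota$.

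The only real obstacle is verifying the coarse-similarity requirement in Definition~\ref{defn:extensible}; however, this reduces to the standard observation that the inclusion of a uniformly quasiconvex subspace into a $\delta$--hyperbolic space is a $(1,C)$--quasi-isometric embedding in its induced path metric, not merely a $(K,C)$--quasi-isometric embedding. Every other step is a direct bookkeeping check from the definitions of the HHS structure on $\cuco Y$, the boundary $\partial\cuco Y$, and the set $\Lambda\cuco Y$.
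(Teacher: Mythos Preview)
Your approach is correct but takes a genuinely different route from the paper. The paper's proof is literally one sentence: ``This is a definition chase and an application of Proposition~\ref{prop:properties}.'' That is, one verifies directly from the definition of $\Lambda\cuco Y$ and of the topology on $\overline{\cuco X}$ that $\cuco Y\cup\Lambda\cuco Y$ is closed, invokes Proposition~\ref{prop:properties}\eqref{item:dense} (density of the interior) to get that $\cuco Y$ is dense, and then observes that the underlying set of $\boundary(\cuco Y,\mathfrak S)$ is, by construction of the inherited structure, exactly $\Lambda\cuco Y$.

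You instead route everything through Theorem~\ref{thm:extension to boundary maps}, verifying that the inclusion is an extensible slanted hieromorphism and using compactness of $\overline{\cuco Y}$ to promote the continuous injection to a closed embedding. This is more work but buys more: you get that $\Lambda\cuco Y$ with the subspace topology from $\overline{\cuco X}$ is \emph{homeomorphic} to $\boundary\cuco Y$ with its intrinsic HHS topology, not merely equal as a set. The paper is explicitly more cautious here: Remark~\ref{rem:PARANOIA}, immediately following the proposition, invokes Theorem~\ref{thm:extension to boundary maps} for this topological identification only under the extra hypothesis that each $\pi_U|_{\cuco Y}$ is either surjective or uniformly bounded. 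Your key observation---that the inclusion of a uniformly quasiconvex subset of a $\delta$--hyperbolic space is a $(1,C)$--coarse similarity for its induced path metric, so Definition~\ref{defn:extensible}\eqref{item:similarity} holds with $\lambda_1=\lambda_2=1$ regardless---appears to remove the need for that hypothesis. So your argument, if anything, slightly sharpens what the paper records.
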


\begin{proof}
This is a definition chase and an application of Proposition~\ref{prop:properties}.
\end{proof}

\begin{rem}\label{rem:PARANOIA}
When $\pi_U|_{\cuco Y}$ is either surjective or uniformly bounded for each $U$, Theorem~\ref{thm:extension to boundary maps}, together with the HHS structure on $\cuco Y$ inherited from $\cuco X$, implies that $\Lambda\cuco Y$ is homeomorphic to the HHS boundary $\boundary\cuco Y$.  This holds in particular for the main examples of hierarchically quasiconvex subspaces that we use, namely product regions:
\end{rem}

\begin{rem}[Boundaries of standard product regions]\label{rem:boundaries_of_standard_products}
Let $U\in\mathfrak S$, and recall from Section~\ref{sec:product_regions} that there is a quasiisometric embedding $F_U\times E_U\to\cuco X$ coming from the standard hieromorphisms.  By definition, $\boundary F_U$ consists of exactly those $\sum_{V}a_Vp_V\in\boundary\cuco X$ where the support set $\{V\}$ consists entirely of elements of $\mathfrak S_U$, while $\boundary E_U$ consists of linear combinations of the same form, but with each $V\in\mathfrak S_U^\orth$.  In particular, under the map $F_U\times E_U\to\cuco X$, we see that the images of $\boundary (F_U\times\{e_1\}),\boundary (F_U\times\{e_2\})\to\boundary\cuco X$ are identical.  Moreover, the subspace $\boundary F_U\subset\boundary\cuco X$ is closed.  Finally, $\boundary P_U\subset\cuco X$ is a closed subset homeomorphic to $\boundary F_U\star\boundary E_U$, where $\star$ denotes the spherical join.
\end{rem}

\subsection{Geometrically finite subgroups of mapping class groups}

In this subsection, we will show that certain interesting subgroups of mapping class groups have a well-defined limit set in the boundary.  Before doing so, we give a quick sketch of relevant facts about mapping class groups and Teichm\"uller spaces. For more details about the HHG structure of the mapping class group, the reader is referred to \cite[Section~11]{BehrstockHagenSisto:HHS_II}.

Fix a finite type surface $S$. The mapping class group $\MCG(S)$ of $S$ acts properly and cocompactly on the marking graph $\mathcal M(S)$ of $S$ \cite{MasurMinsky:II}. The vertices of the marking graph, called markings, are isotopy classes of certain collections of curves on $S$ (pants decomposition together with certain transverse curves). $\MCG(S)$ and $\mathcal M(S)$ are quasiisometric via the orbit map, and we will identify $\MCG(S)$ with an orbit in $\mathcal M(S)$ from now on. The mapping class group can be given a hierarchically hyperbolic structure by considering the collection $\mathfrak S$ of all its (isotopy classes of essential) subsurfaces and associating to each $Y\in\mathfrak S$ its curve graph $\fontact Y$, a graph whose vertices are isotopy classes of essential simple closed curves on $Y$, except when $Y$ is an annulus (a case that will be more subtle to deal with later, and which we will hence explain in more detail here). When $Y$ is an annulus, $\fontact Y$ has vertices the isotopy classes of arcs connecting the two boundary components, and two such vertices are adjacent if they can be represented by disjoint arcs. The maps $\pi_Y:\MCG(S)\to 2^{\fontact Y}$ are called subsurface projections and, when $Y$ is not an annulus, they are defined more or less by intersecting the curves in the marking with $Y$. When $Y$ is an annulus $\pi_Y$ is defined in the following way. Let $\hat Y$ be the annular cover of $S$ where the core of the annulus lifts to a simple closed curve. There is a natural compactification $\overline Y$ of $\hat Y$ which is a closed annulus, and that can be identified with $Y$. Given a marking $m$, lift to $\hat Y$ all the curves in $m$, except possibly the (only) one which is isotopic to the core of $Y$. Each such lift can be compactified to an arc in $\overline Y$, and we can finally define $\pi_Y(m)$ to be the collection of all such arcs that connect distinct boundary components of $\overline Y$.

We now comment briefly on Teichm\"uller space $\TT(S)$ endowed with the Teichm\"uller metric. A point on Teichm\"uller space corresponds to a hyperbolic metric on $S$, and we can hence consider the systole map $\mathrm{Sys}:\TT(S)\to 2^{\fontact S}$ that maps points in Teichm\"uller space to the shortest curves in the corresponding hyperbolic metric.  The set of systoles is non-empty and pairwise disjoint, thus giving a bounded subset of $\fontact S$.

\subsubsection{Subsurface mapping class groups}

For any nonpants subsurface $Y \subset S$ there is a natural embedding $\iota_Y:\MCG(Y) \hookrightarrow \MCG(S)$ which takes any mapping class $f_Y\in \MCG(Y)$ to a mapping class $f \in \MCG(S)$ so that $f|_Y \equiv Y$ and $f|_{S \setminus Y} \equiv id_{S \setminus Y}$; if $Y$ is an annulus, we take $\MCG(Y)$ to be the cyclic subgroup generated by the Dehn (half) twist about the core of $Y$.

We can also see this map in terms of markings: For each component $X \subset S \setminus Y$ (including annuli with core curves in $\partial Y$), fix a marking $\mu_X \in \mathcal M(X)$; if $X$ is an annulus, then $\mu_X \in \fontact X$.  Define a map $\iota_Y: \mathcal M(Y) \rightarrow \mathcal M(S)$ by 
$$\iota_Y(\mu_Y) = \mu_Y \sqcup \coprod_{\alpha \in \partial Y} \alpha \sqcup \coprod_{X \in S \setminus Y} \mu_X$$
for any marking $\mu_Y \in \mathcal M(Y)$.

The map $\iota_Y$ extends to a hieromorphism in the obvious way and it follows from the distance formula that it is a quasiisometric embedding.  Moreover, since $\diam_Z (\iota_Y(\mathcal M(Y)))$ is uniformly bounded for each $Z \in \mathfrak S \setminus \mathfrak S_Y$ and $\iota_Y$ is surjective for each $W \in \mathfrak S_Y$, it is easy to see that $\iota_Y(\mathcal M(Y))$ is a hierarchically quasiconvex subspace of $\mathcal M(S)$.  Hence we have by Proposition \ref{prop:hierarchically_quasiconvex_set}:

\begin{thm}\label{thm:subsurface boundary embedding}
For any nonpants subsurface $Y \subset S$, the natural inclusion $\iota_Y: \MCG(Y) \hookrightarrow \MCG(S)$ equivariantly extends to a continuous embedding $\partial \iota_Y: \partial \MCG(Y)\hookrightarrow \partial \MCG(S)$.
\end{thm}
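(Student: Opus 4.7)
The plan is to recognize $\iota_Y$ as an extensible hieromorphism in the sense of Definition~\ref{defn:extensible} and then apply Theorem~\ref{thm:extension to boundary maps} directly. First I would identify the HHS structure on $\MCG(Y)$ with the restriction to $\mathfrak S_Y$ of the HHS structure on $\MCG(S)$: the (isotopy classes of) essential subsurfaces of $Y$ are canonically the elements of $\mathfrak S_Y$, and for each $U\in\mathfrak S_Y$ the hyperbolic space $\fontact U$ and the projection $\pi_U$ are intrinsic to $U$ --- in the non-annular case this is clear, and in the annular case the relevant annular cover depends only on the core curve, so the curve graph and subsurface projection agree whether computed inside $Y$ or inside $S$. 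Under this identification $\iota_Y$ becomes a (non-slanted) hieromorphism with $\pi(\iota_Y)\colon\mathfrak S_Y\hookrightarrow\mathfrak S$ the inclusion and each $\rho(\iota_Y,U)\colon\fontact U\to\fontact U$ the identity.

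Next I would verify extensibility. The map $\pi(\iota_Y)$ is injective, each $\rho(\iota_Y,U)$ is a $(1,0)$--coarse similarity, and for every $V\in\mathfrak S\setminus\mathfrak S_Y$ the projection $\pi_V\circ\iota_Y$ has uniformly bounded image --- exactly the two facts highlighted in the paragraph preceding the theorem statement (the fixed complementary markings $\mu_X$ and boundary curves $\alpha\in\partial Y$ account for the annular domains lying outside $\mathfrak S_Y$). Since $\MCG(Y)$ is proper and $\iota_Y$ is an embedding, Theorem~\ref{thm:extension to boundary maps} yields a continuous embedding $\bar\iota_Y\colon\overline{\MCG(Y)}\to\overline{\MCG(S)}$ with closed image, and restricting to $\partial\MCG(Y)$ gives the desired $\partial\iota_Y$. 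Equivariance is automatic from the construction: on a boundary point $\sum_{U\in\overline S}a_Up_U$ with each $U\in\mathfrak S_Y$, the formula for $\bar\iota_Y$ is the identity on supports and Gromov-boundary coordinates, so any $g\in\MCG(Y)$ acts identically on both sides under the canonical action on $\mathfrak S_Y$ and each $\fontact U$.

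As a cross-check, one could instead observe directly that the paragraph preceding the theorem establishes hierarchical quasiconvexity of $\iota_Y(\mathcal M(Y))$ in $\mathcal M(S)$, so Proposition~\ref{prop:hierarchically_quasiconvex_set} endows $\iota_Y(\mathcal M(Y))\cup\Lambda\iota_Y(\mathcal M(Y))$ with the structure of an HHS closure inside $\overline{\MCG(S)}$; the coarse surjectivity of $\pi_W|_{\iota_Y(\mathcal M(Y))}$ for $W\in\mathfrak S_Y$ and coarse constancy otherwise then identify this closure with $\overline{\MCG(Y)}$ via Remark~\ref{rem:PARANOIA}.

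I do not expect a serious obstacle here: the real work has been done in Theorem~\ref{thm:extension to boundary maps} and Proposition~\ref{prop:hierarchically_quasiconvex_set}. The only point that merits care is the bookkeeping for annular domains, both to match the two HHS structures on $\mathfrak S_Y$ and to ensure that annuli with cores in $\partial Y$ or in $S\setminus Y$ genuinely have uniformly bounded $\iota_Y$-projection; this is where choosing fixed $\mu_X$ for each complementary component (rather than letting them float) is essential.
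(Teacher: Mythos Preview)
Your proposal is correct. Your cross-check paragraph is precisely the paper's argument: the text preceding the theorem establishes that $\iota_Y(\mathcal M(Y))$ is hierarchically quasiconvex with each $\pi_U|_{\iota_Y(\mathcal M(Y))}$ either coarsely surjective or uniformly bounded, and the paper then simply invokes Proposition~\ref{prop:hierarchically_quasiconvex_set} (with Remark~\ref{rem:PARANOIA} implicitly identifying $\Lambda\iota_Y(\mathcal M(Y))$ with $\partial\MCG(Y)$). Your primary route --- verifying the extensibility conditions of Definition~\ref{defn:extensible} and applying Theorem~\ref{thm:extension to boundary maps} directly --- is a valid and slightly more streamlined packaging of the same ingredients, since Remark~\ref{rem:PARANOIA} itself appeals to Theorem~\ref{thm:extension to boundary maps}; the two approaches differ only in whether one passes through the limit-set formalism or goes straight to the extension theorem.
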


\subsection{Convex cocompactness subgroups}
%
%Following \cite{DT15}, if $f: \mathcal X\rightarrow \mathcal Y$ is a quasiisometric embedding of proper geodesic metric spaces, we say $\mathcal X$ is a \emph{stable} subspace of $\mathcal Y$ if any geodesic in $\mathcal Y$ with endpoints on $f(\mathcal X)$ is uniformly Morse; we say that $f: \mathcal X \rightarrow \mathcal Y$ is a \emph{stable embedding}.  Note that stable quasigeodesics are Morse, so stability is a generalization of Morse.  Moreover, stable subspaces are always hyperbolic (Lemma 3.3 in \cite{DT15}).
%
%The following is an easy consequence of the definitions and Proposition \ref{prop:hierarchically_quasiconvex_set}:
%
%\begin{thm}\label{thm:stable embedding}
%Let $(\cuco Y, \mathfrak S)$ be a proper HHS.  If $f:\mathcal X \rightarrow \mathcal Y$ is a stable embedding, then $f$ extends to a continuous embedding $\partial f:\partial_{Gr} \cuco X \rightarrow \partial \cuco Y$.  Moreover, if $\cuco X$ and $\cuco Y$ are groups, the embedding is $\cuco X$-equivariant.
%\end{thm}
%
%\begin{proof}
%By Proposition \ref{prop:hierarchically_quasiconvex_set}, it suffices to prove that $f(\cuco X)$ is a hierarchically quasiconvex subspace.  For this, 
%\end{proof}
%
%In \cite{DT15}, the first author and Taylor prove that the stable subgroups of the mapping class group of a finite type surface a precisely convex cocompact subgroups; see Theorem \ref{thm:convex cocompact} below.

Convex cocompact subgroups of mapping class groups are a much-studied class of hyperbolic subgroups of mapping class groups, mainly because they are precisely the class of subgroups of $\MCG(S)$ whose corresponding surface subgroup extensions are hyperbolic.  Importantly, they satisfy several strong equivalent characterizations, which we state in the following theorem-definition with parts due variously to Farb-Mosher \cite{FarbMosher}, Hamenst\"adt \cite{HamCC}, Kent-Leininger \cite{KentLein}, and the first author with Taylor \cite{DT15}: 

\begin{thm}\label{thm:convex cocompact}
A subgroup $H < \MCG(S)$ is convex cocompact if it satisfies any of the following equivalent conditions:
\begin{enumerate}
\item Any orbit of $H$ in $\TT(S)$ is quasiconvex;
\item Any orbit of $H$ in $\fontact S$ is quasiisometrically embedded;
\item Any orbit of $H$ in $\mathcal M(S)$ is quasiisometrically embedded and has uniformly bounded subsurface projections;
\item $H$ is a stable subgroup of $\MCG(S)$;
\item The corresponding extension $\Gamma_H$ of $\pi_1(S)$ is Gromov hyperbolic.
\end{enumerate}
\end{thm}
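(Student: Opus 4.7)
The plan is to establish the equivalence of the five conditions by citing the relevant results from the literature and organizing them into a cycle of implications, since each equivalence has been essentially established in prior work. The overall strategy is to run around the cycle (1) $\Leftrightarrow$ (5), (1) $\Leftrightarrow$ (2), (2) $\Leftrightarrow$ (3), and finally (4) $\Leftrightarrow$ the others, invoking Farb--Mosher, Hamenst\"adt, Kent--Leininger, and Durham--Taylor respectively. First I would record (1) $\Leftrightarrow$ (5), which is the original characterization of \cite{FarbMosher}: convex cocompactness is defined there as quasiconvexity of an $H$-orbit in $\TT(S)$, and that paper proves this is equivalent to hyperbolicity of the associated surface-group extension $\Gamma_H$.

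Next I would establish (1) $\Leftrightarrow$ (2). For (1) $\Rightarrow$ (2), I would use that the systole map $\mathrm{Sys}:\TT(S) \to 2^{\fontact S}$ is coarsely Lipschitz (see \cite{MasurMinsky:I}), so a quasiconvex orbit in $\TT(S)$ pushes forward to a quasiconvex orbit in $\fontact S$; combined with properness of the $H$-action on its $\TT(S)$-orbit, one concludes that the orbit map $H \to \fontact S$ is a quasiisometric embedding. The converse (2) $\Rightarrow$ (1) is due independently to Hamenst\"adt \cite{HamCC} and Kent--Leininger \cite{KentLein}; one produces from the qi-embedded orbit in $\fontact S$ a quasiconvex $H$-invariant subset of $\TT(S)$, using thin-parts geometry and projection estimates.

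The equivalence (2) $\Leftrightarrow$ (3) is essentially immediate from the HHG structure on $\MCG(S)$ described in Section 1 and the distance formula (Theorem \ref{thm:distance_formula}). If an orbit in $\mathcal M(S)$ is qie with uniformly bounded projections to all proper subsurfaces, then the sum over $\mathfrak S$ in the distance formula collapses (for pairs in the orbit) to just the $\fontact S$ term up to additive error, giving (2). Conversely, assume (2) and suppose that for some proper $Y \subset S$, $\{\dist_Y(1,h) : h \in H\}$ is unbounded. By the bounded geodesic image axiom (Definition \ref{defn:space_with_distance_formula}.\eqref{item:dfs:bounded_geodesic_image}) and Behrstock's inequality, any hierarchy path from $1$ to a group element $h$ with very large $Y$-projection must fellow-travel $\rho^Y_S$ in $\fontact S$ for a long time, so along an unbounded sequence $(h_n)$ with $\dist_Y(1,h_n)\to\infty$ one finds $\dist_{\fontact S}(1,h_n)/\dist_{\MCG(S)}(1,h_n)\to 0$, contradicting (2). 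Hence (2) implies (3).

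Finally, (4) $\Leftrightarrow$ the others is Durham--Taylor \cite{DT15}, where it is proved that a finitely generated subgroup of $\MCG(S)$ is stable if and only if it is convex cocompact; the argument goes through (3), since stability is characterized by uniform contraction of projections and the Morse property along quasi-geodesics. The main obstacle in presenting this theorem is not mathematical novelty but bookkeeping: ensuring that the various authors' definitions of convex cocompactness, qi-embedded orbit, and stability match up with uniform constants, and that the HHG perspective on $\MCG(S)$ is consistent with the subsurface-projection formulations used in \cite{KentLein,HamCC,DT15}.
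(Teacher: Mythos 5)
The paper treats this as a theorem-definition and gives no proof: it simply attributes the equivalences to Farb--Mosher, Hamenst\"adt, Kent--Leininger, and Durham--Taylor, exactly the sources you invoke, so your organize-and-cite strategy matches the paper's presentation precisely.

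One caveat worth flagging: your sketch of (2) $\Rightarrow$ (3) is not correct as written. Bounded geodesic image only guarantees that a geodesic in $\fontact S$ from $\pi_S(1)$ to $\pi_S(h)$ passes within $E$ of the uniformly bounded set $\rho^Y_S$; it does not make the geodesic ``fellow-travel'' $\rho^Y_S$ for a long time (that set has diameter at most $\xi$). Moreover, $\dist_Y(1,h_n)\to\infty$ does not by itself force $\dist_{\fontact S}(1,h_n)/\dist_{\MCG(S)}(1,h_n)\to 0$: both quantities can grow linearly together, e.g.\ if the orbit moves comparably far in $\fontact S$ and in $\fontact Y$ simultaneously. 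The actual argument that a qi-embedded orbit in $\fontact S$ forces uniformly bounded subsurface projections (Kent--Leininger, Behrstock--Kleiner--Minsky--Mosher, Durham--Taylor) is more delicate; it exploits the Morse/stability property of the orbit and analyzes suitable translates or ratios $h_m^{-1}h_n$ rather than a single sequence from the identity. Since the paper offers no proof and defers entirely to the cited literature, your sketch's gap is moot for matching the paper, but you should not present that particular chain of inequalities as a complete argument.
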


The following is a  corollary of Proposition \ref{prop:hierarchically_quasiconvex_set} and Theorems \ref{thm:hyperbolic} and \ref{thm:convex cocompact}:

\begin{cor}
If $H< \MCG(S)$ is a convex cocompact subgroup of $\MCG(S)$, then the inclusion map $H \hookrightarrow \MCG(S)$ $H$-equivariantly extends to a continuous embedding $\partial_{Gr} H \hookrightarrow \partial \MCG(S)$.
\end{cor}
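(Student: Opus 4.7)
The plan is to combine Proposition~\ref{prop:hierarchically_quasiconvex_set} with Theorem~\ref{thm:hyperbolic}, using the equivalent characterizations of convex cocompactness in Theorem~\ref{thm:convex cocompact}, as suggested in the parenthetical preceding the statement. I identify $H$ with an orbit in $\mathcal M(S) \simeq \MCG(S)$, so that $H$ sits inside $\MCG(S)$ as a (quasiisometrically embedded) subspace.

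The main step is to show that $H \subset \MCG(S)$ is hierarchically quasiconvex. For the $\nest$-maximal domain $S$, the projection $\pi_S(H)$ is the image of the quasiisometric embedding $H \to \fontact S$ furnished by Theorem~\ref{thm:convex cocompact}(2), and is therefore uniformly quasiconvex in the hyperbolic space $\fontact S$. For every proper subsurface $U \subsetneq S$, the projection $\pi_U(H)$ is uniformly bounded by Theorem~\ref{thm:convex cocompact}(3), hence trivially quasiconvex. For the realization criterion, given a $\kappa$-consistent tuple whose $U$-coordinate is close to $\pi_U(H)$ for each $U$, pick $h \in H$ with $\pi_S(h)$ close to the $S$-coordinate, using quasiconvexity of $\pi_S(H) \subset \fontact S$. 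Taking the threshold in the distance formula (Theorem~\ref{thm:distance_formula}) larger than the uniform bound on $\diam \pi_U(H)$ for $U \neq S$, all non-$S$ terms drop out, and the distance formula then controls $d_{\MCG(S)}(x, h)$ for any realization point $x$, giving the required uniform bound.

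By Proposition~\ref{prop:hierarchically_quasiconvex_set}, $H$ inherits an HHS structure and $\overline{H} = H \cup \Lambda H$ is closed in $\overline{\MCG(S)}$. Since the inclusion $H \hookrightarrow \MCG(S)$ is a quasiisometric embedding, the subspace metric on $H$ is quasiisometric to its word metric, and $H$ is Gromov hyperbolic by Theorem~\ref{thm:convex cocompact}(5). Thus $H$, as an HHS, is a hyperbolic HHS, and Theorem~\ref{thm:hyperbolic} identifies its HHS boundary $\partial H$ with the Gromov boundary $\partial_{Gr} H$. The composition $\partial_{Gr} H \cong \partial H = \Lambda H \hookrightarrow \partial \MCG(S)$ is then the desired continuous embedding with closed image. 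The $H$-equivariance is automatic, since $\Lambda H$ is constructed from the $H$-action on $\overline{\MCG(S)}$ in a natural way.

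The only step requiring real care is the realization criterion for hierarchical quasiconvexity, which hinges on exploiting the uniform boundedness of $\pi_U(H)$ for proper subsurfaces $U$ to reduce the distance formula to a single controlled contribution in $\fontact S$, where quasiconvexity of $\pi_S(H)$ can be applied. Once that is done, the conclusion is an assembly of Proposition~\ref{prop:hierarchically_quasiconvex_set}, Theorem~\ref{thm:hyperbolic}, and Theorem~\ref{thm:convex cocompact}.
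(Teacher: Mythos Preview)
Your proposal is correct and follows essentially the same approach as the paper: use properties (2) and (3) of Theorem~\ref{thm:convex cocompact} to verify hierarchical quasiconvexity, then apply Proposition~\ref{prop:hierarchically_quasiconvex_set} and Theorem~\ref{thm:hyperbolic}. The paper's proof is a terse three sentences; you have simply filled in the details the paper leaves implicit, particularly the realization half of hierarchical quasiconvexity.

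One minor correction: you cite Theorem~\ref{thm:convex cocompact}(5) for hyperbolicity of $H$, but that item asserts hyperbolicity of the extension $\Gamma_H$, not of $H$ itself. Hyperbolicity of $H$ follows instead from item (2): since $H$ quasiisometrically embeds in the hyperbolic space $\fontact S$, its image is quasiconvex (by the Morse lemma), so $H$ is quasiisometric to a quasiconvex subset of a hyperbolic space and hence hyperbolic.
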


\begin{proof}
It follows immediately from properties (2) and (3) of Theorem \ref{thm:convex cocompact} that $H$ is a hierarchically quasiconvex subgroup of $\MCG(S)$.  Since $H$ is hyperbolic, Theorem \ref{thm:hyperbolic} implies that the boundary of the induced HHS structure on $H$ inside of $\MCG(S)$ is homeomorphic to $\partial_{Gr} H$.  The result then follows from Proposition \ref{prop:hierarchically_quasiconvex_set}.  
\end{proof}

In the rest of the section, we will consider finitely generated Veech subgroups and the Leininger-Reid combination subgroups of $\MCG(S)$, which are generally not hierarchically quasiconvex.  Recall that for both classes of groups, their actions on $\TT(S)$ do not extend continuously everywhere to embeddings of their boundaries into $\PML(S)$.  The main goal of the remainder of this section is to prove that such an extension does exist for both classes of groups into $\partial \MCG(S)$.

\subsubsection{Veech subgroups} \label{subsec:veech}
The construction of Veech and Leininger-Reid subgroups involves holomorphic quadratic differentials. We will not work with them directly, so we do not need to define them, but we will rather work with the $q$--metric associated to a holomorphic quadratic differential $q$ on the surface $S$. This is a singular flat metric on $S$ which is locally isometric to $\mathbb R^2$ except at finitely many points called \emph{singularities}.

Given a holomorphic quadratic differential $q$ on $S$, there exists a convex subset $TD(q) \subset \TT(S)$ with $TD(q) \cong \mathbb{H}^2$ called a \emph{Teichm\"uller disk}.  Let $\mathrm{Aff}^+(q)$ denote the affine group of $q$.  Following ~\cite{leininger2006combination}, we call any subgroup $G(q) \leq \mathrm{Aff}^+(q) \leq \MCG(S)$, with $G(q)$ acting properly on $TD(q)$, a \emph{Veech subgroup}, except that we will also ask that $G(q)$ be finitely generated.  Veech subgroups have the property that every element of $G(q)$ is either pseudo-Anosov or a multitwist about some annular decomposition $A$ of $q$~\cite{Veech}, where this annular decomposition comes from a finite measured foliation with only closed leaves naturally associated to $q$.

Consider the Veech subgroup $G = G(q) \leq \MCG(S)$.  Let $\cuco X_G$ be the orbit of $G$ of a fixed marking $\mu$ in the marking graph $\mathcal M(S)$. Given a multitwist $g \in G$ with annular decomposition $A_g=\{\alpha_1,\ldots,\alpha_{n_g}\}$, let $$\pi_g:\cuco X_G\rightarrow \prod_{1 \leq i \leq n_g} \fontact \alpha_i$$ be given by $\pi_g(\nu) = (\pi_{\alpha_1}(\nu), \dots, \pi_{\alpha_{n_g}}(\nu))$ for $\nu \in \cuco X_G$.   If $g = T^{k_1}_{\alpha_1} \cdots T^{k_{n_g}}_{\alpha_{n_g}}$, let $$L_g  = \langle g \rangle \cdot \pi_g  (\mu)  \subset  \prod_{1 \leq i \leq n_g} \fontact \alpha_i.$$  Note that $L_g\cong \reals$, and in fact $L_g$ is the projection of the $g$-orbit of $\mu$ and thus coarsely the line in $\reals^{n_g}$ with slope $(k_1, \dots k_{n_g})$, where we identify the origin of $\reals^{n_g}$ with the projection of $\mu$.  For each $L_g$, let $\pi_{L_g}:\prod_{1 \leq i \leq n_g} \fontact \alpha_i \rightarrow L_g$ be the standard projection onto $L_g$, considered as a subspace of $\reals^{n_g}$ identified as above.

We now define an HHS structure $(G, \mathfrak{S}_G)$ on $G$ as follows:

\begin{enumerate}
\item[(Domains)]: $S$ is the unique nest-maximal domain in $\mathfrak{S}_G$, and for every primitive multitwist $g \in G$ with corresponding annular decomposition $A_g = \{\alpha_{g,1}, \dots \alpha_{g, n_g}\}$, we include a domain $U_g \in \mathfrak{S}_G$.
\item[(The spaces)]: To $S$, we associate $\pi_S(G\cdot\mu)\subset\fontact S$ and to each $U_g$, we set $\fontact U_g = L_g$ and declare $U_g \nest S$ for each $g$; moreover, we specify that $U_g \transverse U_{g'}$ for each primitive $g \neq g'$.
\item[(Projections)]: $\pi_S: \cuco X_G \rightarrow \fontact S$ is the standard projection; for each $U_g$, we define $\pi_{U_g}: \cuco X_G \rightarrow L_g$ by $\pi_{U_g}(\nu) = \pi_{L_g}(\pi_{g}(\nu))$ for each $\nu \in \cuco X_G$.
\item[(Relative projections)]: Given $U, V \in \mathfrak S_G$, we define $\rho^U_V: \fontact U \rightarrow \fontact V$ by:
\begin{enumerate}
\item[($U \nest V$)]: In this case $V = S$ and $U = U_g$ for some primitive $g$, then $\rho^V_U = \pi_{L_g} \circ \pi_g$.
\item[($U \transverse V$)]: If $U=U_g$ and $V = U_{g'}$, then 

$$\rho^{U_g}_{U_{g'}} = \pi_{U_{g'}}(\langle g \rangle \cdot \mu).$$
\end{enumerate}
\end{enumerate}

\begin{lem}\label{lem:Veech is an HHS}
If $G$ is finitely generated, then $(G, \mathfrak{S}_G)$ is an HHS structure on $G$, and $G < \Aut(\mathfrak{S}_G)$.
\end{lem}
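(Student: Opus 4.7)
The plan is to verify the nine axioms of Definition~\ref{defn:space_with_distance_formula} for $(G,\mathfrak S_G)$, taking advantage of the combinatorial simplicity of the structure: the nesting poset has only two levels (with $S$ maximum and each $U_g$ minimal), there is no orthogonality relation, and any two distinct minimal domains are declared transverse. Consequently the orthogonality and partial realization axioms are vacuous, the complexity axiom holds with $n=2$, and the uniqueness axiom reduces to a distance formula for $G$. The projections are coarsely Lipschitz: $\pi_S$ because it is the restriction of the Masur--Minsky projection $\mathcal M(S)\to \fontact S$, and each $\pi_{U_g}=\pi_{L_g}\circ \pi_g$ because it is a Lipschitz composition of the (coarsely Lipschitz) annular projections to the $\fontact \alpha_{g,i}$ with the nearest-point map $\prod_i \fontact \alpha_{g,i}\to L_g$. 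Before verifying the remaining axioms, I would augment the structure with the missing data $\rho^{U_g}_S\subset \fontact S$, taking this to be the (uniformly bounded) image of the multicurve $A_g$ in $\fontact S$.

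Next I would verify the consistency and bounded geodesic image axioms using two classical facts about Veech groups: (i) since $G$ is finitely generated, the primitive multitwists in $G$ are organized, up to conjugacy, into finitely many $G$-orbits (the cusps of $TD(q)/G$), and for $g\neq g'$ primitive with distinct fixed points on $\partial TD(q)$, the multicurves $A_g$ and $A_{g'}$ fill $S$, hence each $\alpha_{g,i}$ is transverse in $S$ to each $\alpha_{g',j}$; (ii) every element of $G$ is either pseudo-Anosov or a multitwist supported on a single $A_g$. For consistency in the transverse case $U_g\transverse U_{g'}$, if $\nu\in \cuco X_G$ has $\dist_{L_g}(\nu,\rho^{U_{g'}}_{U_g})$ large, then Behrstock's inequality in $\mathcal M(S)$ applied to the transverse pair $\alpha_{g,i}\transverse \alpha_{g',j}$ bounds $\dist_{L_{g'}}(\nu,\rho^{U_g}_{U_{g'}})$. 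The nesting-consistency and bounded-geodesic-image axioms for $U_g\nest S$ follow by applying the Masur--Minsky bounded geodesic image theorem to each $\alpha_{g,i}$ and then projecting to $L_g$.

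The remaining axioms are large links and uniqueness. For large links, I would apply the Masur--Minsky large links lemma in $\MCG(S)$ to extract subsurfaces $Y_1,\dots,Y_N$ witnessing large subsurface projections between $\nu_1,\nu_2\in \cuco X_G$, and then argue that any $Y_k$ contributing a large projection along the $G$-orbit must be an annulus $\alpha_{g,i}$ appearing in $A_g$ for some primitive multitwist $g\in G$: since $\nu_1,\nu_2\in G\cdot\mu$, a subsurface projection only becomes large when a segment of the word joining $g_1$ to $g_2$ stabilizes (a conjugate of) $Y_k$, and the Veech dichotomy forces such a stabilizer to be a multitwist. Grouping all annuli in a fixed $A_g$ yields a single $L_g$-contribution, since the multitwist acts by unit translation on $\prod_i \fontact \alpha_{g,i}$ in the direction of $L_g$. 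Uniqueness and the accompanying distance formula then fall out of the same analysis of $\MCG(S)$-projections supported on $\cuco X_G$.

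The main obstacle is this ``distance-formula-for-Veech'' step: making precise and proving that the only subsurfaces that accrue large projection between pairs of points in $\cuco X_G$ are the annuli within multitwist decompositions of primitive elements of $G$, and that these contributions assemble into the $L_g$-contributions rather than into independent annular contributions. Once this is established, $G$-equivariance is immediate: $G$ acts on itself by left multiplication, permutes $\mathfrak S_G$ by fixing $S$ and conjugating primitive multitwists (so $h\cdot U_g = U_{hgh^{-1}}$), and acts isometrically on $\fontact S$ and on each $L_g$ via the conjugation identification $h\cdot L_g = L_{hgh^{-1}}$; all compatibility squares in Definition~\ref{defn:automorphism} then commute by construction.
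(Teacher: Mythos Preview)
Your overall strategy---inheriting the axioms from the ambient $(\MCG(S),\mathfrak S)$ structure---matches the paper's, but there are two genuine gaps.

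First, you never address hyperbolicity of the space associated to $S$. In the structure $(G,\mathfrak S_G)$, this space is $\pi_S(G\cdot\mu)\subset\fontact S$, not $\fontact S$ itself, and one must prove it is (intrinsically) hyperbolic. The paper does this via Lemma~\ref{lem:veech_quasiconvex}: finite generation of $G$ implies the Fuchsian action on $TD(q)$ is geometrically finite, whence $\pi_S(G\cdot\mu)$ is quasiconvex in $\fontact S$ and hence hyperbolic. This is the \emph{only} place finite generation enters the paper's argument; your proposal invokes finite generation only to get finitely many cusp classes, which is a consequence but does not by itself yield hyperbolicity of $\pi_S(G\cdot\mu)$.

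Second, your argument that only annuli in some $A_g$ accrue large subsurface projections between points of $\cuco X_G$ is not valid as stated. The claim that ``a subsurface projection only becomes large when a segment of the word joining $g_1$ to $g_2$ stabilizes (a conjugate of) $Y_k$'' is not a correct principle: large subsurface projections between two markings are a geometric phenomenon and need not be witnessed by any stabilizing group element. The paper proves the required bound directly (Lemma~\ref{lem:veech_bounded}) by a flat-geometric argument: for non-annular $U$ one pulls $\partial U$ tight to its $q$-metric spine and shows that base curves of $\mu$, chosen to traverse each saddle connection at most once, have uniformly bounded projection to $U$ under any affine map; the annular case not in any $A_g$ is handled by lifting saddle connections to the annular cover. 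Once Lemma~\ref{lem:veech_bounded} is in hand, uniqueness, large links, consistency, and bounded geodesic image follow from their counterparts in $(\MCG(S),\mathfrak S)$ essentially as you outline.

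A minor point: partial realization is not vacuous even without orthogonality---one must still realize a single coordinate $p\in L_g$ by some $x\in\cuco X_G$ satisfying the nesting and transversality conditions. It is, however, immediate from the construction (take $x=g^k\cdot\mu$ for suitable $k$), which is what the paper means by ``holds by construction''.
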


\begin{proof}
We need to prove that $(G, \mathfrak{S}_G)$ satisfies the axioms; since it clearly satisfies projections, nesting, orthogonality, and finite complexity, it suffices to prove it satisfies the consistency, large link, bounded geodesic image, partial realization, and uniqueness axioms.  Hyperbolicity of the associated spaces uses Lemma~\ref{lem:veech_quasiconvex} (the only part for which we need finite generation of $G$).

There is no nontrivial orthogonality, so partial realization holds by construction.  Bounded geodesic image holds by the bounded geodesic image axiom in $(\MCG(S),\mathfrak S)$ and the definition of $\rho^S_{U_g}$.  The consistency and large link axioms hold for a similar reason.  Uniqueness follows from uniqueness in $(\MCG(S),\mathfrak S)$ together with Lemma~\ref{lem:veech_bounded}.   
\end{proof}

\begin{lem}\label{lem:veech_quasiconvex}
The projection $\pi_S(G \cdot \mu)$ is quasiconvex in $\fontact S$.
\end{lem}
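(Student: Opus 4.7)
The plan is to exploit the fact that a finitely generated Veech group $G$ acts geometrically finitely on $TD(q) \cong \mathbb{H}^2$, with the maximal multitwist subgroups of $G$ serving as the parabolic subgroups of this action. Accordingly, $G$ is hyperbolic relative to a finite collection of conjugacy classes of maximal multitwist subgroups $\{H_1,\dots,H_k\}$, one for each cusp of $TD(q)/G$. I would begin by observing that each $H_i$ has uniformly bounded orbit in $\fontact S$: powers of a multitwist fix each component curve $\alpha_j$ of the corresponding annular decomposition, so $\pi_S(H_i \cdot \mu)$ lies within bounded Hausdorff distance of $\{\alpha_1,\dots,\alpha_{n_i}\} \subset \fontact S$.

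Next, I would build a cusped Cayley graph $\hat{G}$ by attaching a combinatorial horoball to each coset of each $H_i$ (in the sense of Groves-Manning); this yields a Gromov hyperbolic space on which $G$ acts properly, and which is quasi-isometric to the result of coning off the horoballs in $TD(q)$. Extend the orbit map to a map $\Phi\colon \hat{G} \to \fontact S$ by sending each combinatorial horoball into the uniformly bounded set $\pi_S(H_i \cdot \mu)$ for the appropriate $i$.

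The main step is to show that $\Phi$ is a quasi-isometric embedding. The upper bound on $d_{\fontact S}(\Phi(x),\Phi(y))$ is immediate from coarse Lipschitzness of $\pi_S$ together with the bounded multitwist orbits. For the lower bound, one uses that pseudo-Anosov elements of $G$ act loxodromically on $\fontact S$, combined with the fact, due to Masur-Minsky, that Teichm\"uller geodesics in $TD(q)$ project to unparametrized quasi-geodesics in $\fontact S$ under the systole map. The key point is that thick-part excursions along a Teichm\"uller geodesic in $TD(q)$ (which correspond to traversals of the hyperbolic part of $\hat{G}$) produce uniform progress in $\fontact S$, while the only way a Teichm\"uller geodesic can fail to advance in $\fontact S$ is through thin-part excursions, which correspond precisely to entering horoballs of $\hat{G}$. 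Once $\Phi$ is a quasi-isometric embedding of the hyperbolic space $\hat{G}$ into the hyperbolic space $\fontact S$, its image is quasi-convex. Since $\pi_S(G \cdot \mu) = \Phi(G)$ differs from $\Phi(\hat{G})$ by bounded Hausdorff distance (each horoball maps within bounded distance of an actual orbit point), we conclude that $\pi_S(G \cdot \mu)$ is quasi-convex in $\fontact S$.

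The main obstacle will be making the lower bound on $\Phi$ precise at the transitions between thick parts of $TD(q)$ and horoball neighborhoods. This requires coordinating Rafi's thick-thin decomposition of Teichm\"uller geodesics with bounded geodesic image: one must verify that any geodesic in $\fontact S$ joining two orbit points admits a coarse factorization matching the alternation of loxodromic segments and horoball excursions in $\hat{G}$, so that the length in $\fontact S$ controls from below both the number of hyperbolic-direction edges in $\hat{G}$ and, via the bounded-projection property of multitwists, the depth of any horoball excursion.
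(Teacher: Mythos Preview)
Your strategy has a genuine gap at the step claiming $\Phi:\hat G\to\fontact S$ is a quasi-isometric embedding. The Groves--Manning combinatorial horoballs have infinite diameter: the vertical ray $\{(x_0,n):n\geq 0\}$ inside the horoball over a coset $gH_i$ is a geodesic ray in $\hat G$. Since you send the entire horoball into the uniformly bounded set $\pi_S(gH_i\cdot\mu)$, the map $\Phi$ collapses sets of arbitrarily large diameter to bounded sets and therefore cannot be a quasi-isometric embedding. No amount of thick--thin bookkeeping in $\fontact S$ will recover the vertical depth coordinate in the horoball, because that coordinate is invisible to $\fontact S$. (There is also a smaller confusion: the cusped space $\hat G$ is quasi-isometric to the convex core $C_G\subset TD(q)$ with its horodisks included, not to the space obtained by \emph{coning off} the horoballs; the latter is the electrified space, which is a different object.)

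The paper avoids any quasi-isometric embedding claim. One works directly with the convex core $C_G\subseteq TD(q)$: since $C_G$ is a convex subset of Teichm\"uller space and Teichm\"uller geodesics project to uniform unparametrized quasi-geodesics in $\fontact S$ by Masur--Minsky, the image $\pi_S(C_G)$ is automatically quasiconvex in $\fontact S$. It then only remains to check that $\pi_S(C_G)$ and $\pi_S(G\cdot\mu)$ coarsely coincide. This holds because $C_G$ decomposes into a cocompact part (whose projection coarsely equals the orbit) together with a $G$--equivariant family of horodisks, and each horodisk has uniformly bounded systole image since the curves of the relevant annular decomposition are short throughout it. If you wish to salvage your outline, the right fix is to replace the cusped space by the \emph{coned-off} Cayley graph, where each peripheral coset has bounded diameter; but at that point you are essentially reproducing the paper's argument through an unnecessary intermediary.
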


\begin{proof}
Consider the action of $G$ on the corresponding Teichm\"uller disk $TD(q)$. Since the action is proper, this makes $G$ a finitely generated Fuchsian group. Hence, $G$ is geometrically finite \cite{Marden:Fuchsian_geom_finite}, so that it acts with cofinite volume on a convex subspace $C_G\subseteq TD(q)$. Consider now the image of $C_G$ and $TD(q)$ in $\fontact S$. Since geodesics in $\TT(S)$ map to quasi-geodesics in $\fontact S$ \cite{MasurMinsky:I} and $C_G$ is a convex subspace of $\TT(S)$, it follows that $\pi_S(C_G)$ is quasiconvex in $\fontact S$.

Now, it is not hard to see that $\pi_S(C_G)$ coarsely coincides with $\pi_S(G \cdot \mu)$. In fact, $C_G$ contains a $G$--equivariant collection of horodisks so that the action on the complement $C'_G$ is cocompact, and cocompactness implies that $\pi_S(G \cdot \mu)$ coarsely coincides with the image in $\fontact S$ of $C'_G$. Moreover, each horodisk is stabilized by a multitwist, and the corresponding curves are short in all hyperbolic metrics corresponding to points in the horodisk. This implies that the whole horodisk maps to a uniformly bounded subset of $\fontact S$ under the systole map, namely a neighborhood of the aforementioned curves. To sum up, the projection of the Teichm\"uller disk to $\fontact S$ is quasiconvex and coarsely coincides with the projection of $C'_G$, which in turn coarsely coincides with the projection of $G \cdot \mu$, and we are done. 
\end{proof}

\begin{lem}\label{lem:veech_bounded}
There exists $V>0$ such that for any $U \in \mathfrak S-\{S\}$, either $\diam_{U}(\pi_U(G \cdot \mu)) \leq V$ or $U = \alpha_{i} \in A_g$ for some annular decomposition $A_g$.  In the latter case, $\pi_{U}$ is (uniformly) coarsely surjective.
\end{lem}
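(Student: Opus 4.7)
The plan is to show that unbounded subsurface projections of $G\cdot\mu$ force $\partial U$ to sit inside an annular decomposition, and that this in turn forces $U$ itself to be annular.

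\medskip

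First, I would use the hyperbolic-space side of the HHS structure to locate $\partial U$. Suppose $\diam_U(\pi_U(G\cdot\mu))$ exceeds some constant much larger than $E$; then there exist $g_1,g_2\in G$ with $\dist_U(g_1\mu,g_2\mu)\gg E$. By the bounded geodesic image axiom (Definition~\ref{defn:space_with_distance_formula}\eqref{item:dfs:bounded_geodesic_image}), any geodesic in $\fontact S$ from $\pi_S(g_1\mu)$ to $\pi_S(g_2\mu)$ passes within $E$ of $\rho^U_S$, which coarsely represents $\partial U$. By Lemma~\ref{lem:veech_quasiconvex}, $\pi_S(G\cdot\mu)$ is quasiconvex in $\fontact S$, hence $\partial U$ lies uniformly close in $\fontact S$ to $\pi_S(G\cdot\mu)$.

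\medskip

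Second, I would use the geometry of the $G$-action on $TD(q)$ to classify the possible $\partial U$. Recall from the proof of Lemma~\ref{lem:veech_quasiconvex} that $G$ is a finitely generated Fuchsian group acting geometrically finitely on a convex $C_G\subseteq TD(q)$, with a $G$-equivariant family of horoballs stabilized by the primitive multitwists and with cocompact quotient on the complement $C'_G$. Since $\pi_S(C'_G)$ is bounded modulo $G$ and since $\partial U$ lies within bounded curve-graph distance of $\pi_S(G\cdot\mu)$ at arbitrarily deep points (by applying $G$-translates), $\partial U$ must lie close to the image of some horoball $H_g$ stabilized by a primitive multitwist $g\in G$. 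By Rafi's short curve theorem comparing flat and hyperbolic geometry along Teichm\"uller geodesics, the only curves whose hyperbolic length tends to $0$ deep inside $H_g$ are the components of $A_g$; everything else stays uniformly bounded and hence uniformly bounded away from the cusp region in $\fontact S$. After replacing $U$ by an appropriate $G$-translate, we conclude $\partial U\subseteq A_g$ for some primitive multitwist $g\in G$.

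\medskip

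Third, I would rule out the non-annular case. Assume $\partial U\subseteq A_g$ and $U$ is not an annulus. Since the foliation of $q$ in the direction of $g$ is completely periodic by the Veech dichotomy, $S\smallsetminus A_g$ is a disjoint union of flat cylinders, so $U$ is a union of closed cylinders glued along curves in $A_g$. In particular, every curve of $A_g$ is either contained in $\partial U$ or disjoint from $U$. Let $\tau_g$ be the primitive multitwist about $A_g$; then $\tau_g$ restricts to a product of Dehn twists about curves that are either boundary components of $U$ (which fix $\pi_U$) or disjoint from $U$ (which also fix $\pi_U$, because $U$ is non-annular). Thus $\pi_U(\tau_g^k\mu)$ is uniformly close to $\pi_U(\mu)$ for all $k$. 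Since $G$ acts cocompactly on $C'_G$ and each element $h\in G$ can be written as a bounded word in a finite generating set together with cusp excursions $\tau_{g_i}^{k_i}$ (for various primitive multitwists $g_i$), the same analysis applied to each $g_i$ shows that every horoball excursion contributes zero net change to $\pi_U$, and the generators change $\pi_U$ by a bounded amount. Hence $\diam_U(\pi_U(G\cdot\mu))$ is uniformly bounded, contradicting our assumption.

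\medskip

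Finally, if $U$ is the annulus with core $\alpha\in A_g$, then the primitive multitwist $\tau_g$ twists around $\alpha$ by a fixed nonzero integer $\lambda\neq 0$, so $\pi_U(\tau_g^k\mu)$ lies within a uniform constant of $\pi_U(\mu)+\lambda k$ in the annular curve graph $\fontact U\cong\integers$. This orbit is $|\lambda|$-dense in $\fontact U$, yielding uniform coarse surjectivity of $\pi_U|_{G\cdot\mu}$. The main technical obstacle is the second step: extracting from the quasiconvexity of $\pi_S(G\cdot\mu)$ and the Fuchsian structure on $C_G$ the precise statement that only curves in annular decompositions can become short, which is where one must import Rafi-type comparisons between hyperbolic and flat geometry on $TD(q)$.
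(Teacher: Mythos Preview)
Your approach has a genuine gap in Step~2 and a flawed premise in Step~3. In Step~2 you show that $\rho^U_S=\partial U$ lies within bounded $\fontact S$--distance of the curves in some annular decomposition $A_g$; you then assert $\partial U\subseteq A_g$. But bounded distance in $\fontact S$ is an extremely weak condition (distance~$1$ means disjoint), and there are infinitely many subsurfaces $U$ with $\partial U$ close to any given multicurve. No amount of $G$--translating fixes this: you have only located $\partial U$ coarsely, not identified it. Rafi's short-curve theorem tells you which curves are \emph{short} deep in the horoball, not which curves can be boundaries of subsurfaces with large projection.

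Even granting $\partial U\subseteq A_g$, Step~3 breaks. You claim ``every curve of $A_g$ is either contained in $\partial U$ or disjoint from $U$,'' but this omits the case where some $\alpha_i\in A_g$ is an \emph{essential non-peripheral} curve in $U$; then the twist about $\alpha_i$ acts nontrivially on $\fontact U$ and your conclusion that $\tau_g$ fixes $\pi_U$ fails. (Also, $A_g$ consists of the \emph{core curves} of the cylinder decomposition, so $S\smallsetminus A_g$ is a union of half-cylinders glued along saddle connections, not closed cylinders.)

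The paper takes a completely different, more direct route using only the flat geometry of $q$: it pulls $\partial U$ tight to a spine $\Delta$ made of saddle connections, then argues separately in three cases. For non-annular $U$, a degree--$3$ vertex of $\Delta$ forces any curve traversing each saddle connection at most once to be disjoint from some arc $\delta_e$, and since $\mathrm{Aff}^+(q)$ preserves singularities and saddle connections, this bounds $\dist_{\mathcal{AC}(U)}(\alpha,g\cdot\alpha)$ uniformly. For annular $U\notin\bigcup A_g$, the spine contains a singularity with cone angle $>\pi$ on both sides, and one constructs explicit arcs in the annular cover that every projected curve meets at most once. The argument never touches Teichm\"uller geodesics or Rafi's theorem; it is entirely a flat-geometry computation with the $q$--metric.
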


\begin{proof}
Let $U\propnest S$ be a subsurface and let $\Delta\subset U$ be its spine, which is obtained by puling tight $\partial U$ with respect to the $q$-metric, so that vertices of $\Delta$ are singular points and edges are saddle connections (i.e. geodesics connecting singularities and intersecting the singular set only at the endpoints).  There exists a natural retraction $r:U\to\Delta$ and for each edge $e$ of $\Delta$, let $\delta_e=r^{-1}(m_e)$, where $m_e$ is the midpoint of $e$.  Each $\delta_e$ is either a curve or an arc in $(U,\boundary U)$.  We now divide into three cases.

\textbf{$U$ is non-annular:}  In this case, $\Delta$ has a degree--$3$ vertex $v$.  Suppose that $\mu$ has a base curve $\alpha$ that traverses each saddle connection in $\Delta$ at most once.  Then $v$ has some incident edge $e$ so that $\delta_e$ is disjoint from $\alpha$.  Now, for any $g\in\mathrm{Aff}^+(q)$, we have that $g\cdot \Delta$ is the spine of $g\cdot U$, with vertices that are singular points and edges saddle connections.  In particular, $g\cdot \alpha$ is a curve using each saddle connection of $\Delta$ at most once, so $\dist_{\mathcal{AC}U}(\alpha,g\cdot \alpha)\leq 3$, where $\mathcal{AC}U$ denotes the arc-and-curve graph of $U$.  Since there is a 2-Lipschitz retraction $\mathcal{AC}(U)\to\fontact U$ \cite{MasurMinsky:II}[Lemma 2.2], it follows that $\diam_U(G\cdot\mu)$ is uniformly bounded.

Since $G(q)$ preserves the set of all singularities, saddle connections, and geodesic representatives of curves, we are done provided we choose the marking $\mu$ in such a way that each of its base curves traverses each saddle connection at most once.

\textbf{$U\in A_g$ for some $g$:}  Let $g\in G(q)$ be a multitwist about curves $\alpha_1,\ldots,\alpha_n$, so that $g=\prod_{i=1}^nT_{\alpha_i}^{k_i}$, where $k_i\in\integers-\{0\}$.  Hence $\pi_U$ is $k_i$--surjective (where $U=\alpha_i$).  Indeed, $\pi_U(g\cdot\mu)=\pi_U(T_{\alpha_i}^{k_i}\cdot\mu)$, and the $k_i$ are uniformly bounded since the action of $G(q)$ on the corresponding Teichm\"uller disc is geometrically finite, and thus there are finitely many conjugacy classes of multitwists in $G(q)$; see the proof of Lemma~\ref{lem:veech_quasiconvex}. 

\textbf{$U$ an annulus and $U \notin A_g$ for any $g$:}  The spine $\Delta$ of $U$ contains at least one singularity, and the angle at the singularity is greater than $\pi$ on both sides. Let $\widehat U$ be the annular cover of $S$ corresponding to $U$. The lift $\widehat \Delta$ of $\Delta$ disconnects $\widehat U$ into two connected components, and we will refer to the closure of each such connected component as a \emph{side} of $\widehat \Delta$. Consider a singularity along $\widehat \Delta$ and a saddle connection entering the singularity. Then, for any side of $\widehat \Delta$ there exists a unique geodesic ray emanating from the given singularity, forming an angle of $\pi$ with the given saddle connection and contained in the given side of $\widehat\Delta$. We let $\{\alpha_i\}$ be the open arcs in $\widehat U$ that can be formed by concatenating two such rays lying in opposite sides of $\widehat \Delta$. It is readily seen that any two $\alpha_i$ have intersection number at most $1$. The bound on the diameter of the projection onto $\fontact U$ now follows from the fact that any arc in the subsurface projection onto $\fontact U$ of some curve in $S$ can be represented either by a geodesic transverse to a saddle connection in $\widehat\Delta$, which is easily seen to be disjoint from some $\alpha_i$, or a geodesic containing one of the singularities, which is easily seen to intersect an appropriate $\alpha_i$ containing that singularity at most once.
\end{proof}

\begin{lem}\label{lem:veech_slanted_h}
There exists a $G$--equivariant extensible slanted hieromorphism $(G,\mathfrak S_G)\to(\MCG(S),\mathfrak S)$.
\end{lem}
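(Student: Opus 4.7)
The plan is to exhibit the natural orbit map $f\colon G\to\MCG(S)$, $h\mapsto h\mu$, as an extensible $G$--equivariant slanted hieromorphism $(G,\mathfrak{S}_G)\to(\MCG(S),\mathfrak{S})$. The accompanying data is the evident one: set $\pi(f)(S)=\{S\}$ and $\pi(f)(U_g)=A_g$, which is pairwise orthogonal in $\mathfrak{S}$ since the annuli of $A_g$ are pairwise disjoint; take $\rho(f,S)\colon\pi_S(G\cdot\mu)\hookrightarrow\fontact S$ and $\rho(f,U_g)\colon L_g\hookrightarrow\prod_i\fontact{\alpha_{g,i}}$ to be the inclusions coming from the very definition of $\mathfrak{S}_G$.

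Axioms (I)--(IV), (VI), and (VII) of Definition~\ref{defn:slanted_hieromorphism} are mostly formal from the construction. Nesting preservation is built in; orthogonality preservation is vacuous in $\mathfrak{S}_G$. Transversality preservation uses that distinct primitive multitwists in the Veech group correspond to distinct cusps of the geometrically finite Fuchsian action of $G$ on $TD(q)$ and hence to annular decompositions in different rational $q$--directions, so curves in different decompositions must intersect essentially; in particular, for each $\alpha_{g,i}$ there is some $\alpha_{g',j}$ transverse to it in $\mathfrak{S}$. The quasi-isometric embedding axiom is Lemma~\ref{lem:veech_quasiconvex} for $\rho(f,S)$, while $\rho(f,U_g)$ is a quasi-isometric embedding because $L_g$ is a line with slope vector $(k_1,\dots,k_{n_g})$ having nonzero integer entries inside the quasi-flat $\prod_i\fontact{\alpha_{g,i}}$. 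The relative projection diagrams (VI)--(VII) collapse to comparisons of coarsely constant maps; these are made to match by the definitions of $\rho^{U_g}_S$, $\rho^S_{U_g}=\pi_{L_g}\circ\pi_g$, and $\rho^{U_g}_{U_{g'}}=\pi_{U_{g'}}(\langle g\rangle\cdot\mu)$ together with bounded geodesic image and partial realization in $\mathfrak{S}$ (for $U_g\propnest S$, for instance, both sides coarsely equal the projection of the multitwist curve system to $\fontact S$).

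The one substantive step is axiom (V) for $U=U_g$, which requires showing that for every $x\in G$ the tuple $\pi_g(x\mu)\in\prod_i\fontact{\alpha_{g,i}}$ lies within uniformly bounded distance of the line $L_g$. The hard part will be exactly this bound. The key input is that $\langle g\rangle$ is the stabilizer of a cusp for the geometrically finite Fuchsian action of $G$ on $TD(q)$, and by construction $\langle g\rangle$ acts on $\prod_i\fontact{\alpha_{g,i}}$ by translations along $L_g$. Writing $x=g^k y$ with $g^k$ a closest power of $g$ to $x$ in the relative Cayley graph of $G$ (with the multitwist subgroups as peripherals), the coset representative $y$ ranges over a set of uniformly bounded diameter in the relative metric, so Lemma~\ref{lem:veech_bounded} and the distance formula force $\pi_g(y\mu)$ to be uniformly bounded. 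Since translation by $g^k$ preserves $L_g$ setwise and preserves the distance-to-$L_g$ function, $\pi_g(x\mu)=g^k\cdot\pi_g(y\mu)$ remains uniformly close to $L_g$, as required.

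Once the slanted hieromorphism axioms are established, extensibility (Definition~\ref{defn:extensible}) is straightforward: $\pi(f)$ is injective because distinct primitive multitwists have distinct annular decompositions; the second clause is exactly Lemma~\ref{lem:veech_bounded}; and for the coarse similarity clause, the proof of Lemma~\ref{lem:veech_quasiconvex} shows there are only finitely many conjugacy classes of primitive multitwists in $G$, uniformly bounding the exponents $k_i$ away from zero and infinity, so each composition $L_g\hookrightarrow\prod_j\fontact{\alpha_{g,j}}\to\fontact{\alpha_{g,i}}$ is a coarse similarity with slope $k_i/\lVert\mathbf{k}\rVert$ lying in a uniform interval $[\lambda_1,\lambda_2]$. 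Finally, $G$--equivariance is immediate: $h\in G$ acts on $\mathfrak{S}_G$ by $h\cdot U_g=U_{hgh^{-1}}$ and on $\mathfrak{S}$ by conjugation, and $h\cdot A_g=A_{hgh^{-1}}$, so the assignments $\pi(f)$ and $\rho(f,-)$ intertwine with the $G$--actions on both sides.
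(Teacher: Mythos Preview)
Your construction and the extensibility check coincide with the paper's: the same $\pi(f)(S)=\{S\}$, $\pi(f)(U_g)=A_g$, the inclusions for $\rho(f,-)$, and the same argument via finitely many conjugacy classes of primitive multitwists to get uniform coarse-similarity constants $[\lambda_1,\lambda_2]$. The paper's proof is terse and does not isolate axiom~(V) for $U_g$; you go further by singling it out as the one substantive point, which is a fair diagnosis.

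However, your argument for~(V) has a gap. You correctly reduce (V) for $U_g$ to the assertion that $\pi_g(x\mu)$ lies uniformly close to $L_g$ for every $x\in G$, but the decomposition $x=g^ky$ with $g^k$ ``a closest power of $g$ to $x$ in the relative Cayley graph'' does not yield $y$ ranging over a set of bounded relative diameter: in the coned-off graph all powers $g^k$ are at mutual distance $\leq 1$, so ``closest'' carries no information there, while in the word metric the distance from $x$ to $\langle g\rangle$ is unbounded, so $|y|$ is unbounded either way. Even granting a bound on $|y|_{rel}$, Lemma~\ref{lem:veech_bounded} controls projections to domains \emph{outside} every $A_{g'}$; it says nothing about $\pi_{\alpha_i}(y\mu)$ for $\alpha_i\in A_g$, so the conclusion ``$\pi_g(y\mu)$ is uniformly bounded'' does not follow from that lemma plus the distance formula.

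A correct argument uses the flat geometry of $TD(q)$ rather than abstract relative hyperbolicity. Placing the cusp $\xi_g$ at $\infty$ in $\mathbb H^2$, the primitive multitwist $g$ is the horizontal shear $z\mapsto z+t_0$, and for $h\in G$ the annular twist $\pi_{\alpha_i}(h\mu)$ coarsely depends only on the horocyclic (real-part) coordinate of $h\cdot p_0$, linearly with slope proportional to the modulus $m_i$ of the $i$th horizontal cylinder of $q$. Since the exponents satisfy $k_i=t_0m_i$, all coordinates of $\pi_g(h\mu)$ are governed by a single parameter in the same ratios as $(k_1,\dots,k_{n_g})$, forcing $\pi_g(h\mu)$ to lie on $L_g$ up to bounded error. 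This is what makes both axiom~(V) here and the uniqueness axiom in Lemma~\ref{lem:Veech is an HHS} go through.
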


\begin{proof}
At the level of spaces, the map $G\to\MCG(S)$ is the inclusion.  Define $\pi(f):\mathfrak S_G\to 2^{\mathfrak S}$ as follows: let $\pi(f)(S)=\{S\}$, and for each primitive multitwist $g$, let $\pi(f)(U_g)=A_g$, where $A_g$ is the set of pairwise-disjoint annuli corresponding to the multicurve supporting $g$.  This is $G$--equivariant since $hA_g=A_{hgh^{-1}}$ for each multitwist $g$ and each $h\in G$.

The map $\rho(f,S):\fontact S\to\fontact S$ is the identity.  For each primitive multitwist $g=T^{k_1}_{\alpha_1} \cdots T^{k_{n_g}}_{\alpha_{n_g}}$, the map $\rho(f,U):L_g\to\prod_i\fontact \alpha_i$ was specified above.  Observe that the composition of this map with any of the canonical projections to $\fontact\alpha_i$ is a coarse similarity with multiplicative constants determined by $\{k_1,\ldots,k_{n_g}\}$.  These constants are uniformly bounded since there are finitely many conjugacy classes of multitwists in $G(q)$.
\end{proof}

Combining Lemma~\ref{lem:veech_slanted_h} and Theorem~\ref{thm:extension to boundary maps}, Remark~\ref{rem:relax_extensible}, and Theorem \ref{thm:hyperbolic} yields:

\begin{cor}\label{cor:veech_limit_set}
For any Veech subgroup $G<\MCG(S)$, the inclusion $G\to\MCG(S)$ extends continuously to an equivariant embedding $\boundary_{Gr} G\to\boundary\MCG(S)$ with closed image.
\end{cor}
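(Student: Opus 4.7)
The plan is to assemble Lemma \ref{lem:veech_slanted_h}, Theorem \ref{thm:extension to boundary maps}, and Theorem \ref{thm:hyperbolic}. First I would verify that $G$ itself is a Gromov-hyperbolic group. Since $G$ is a finitely generated Veech group, it acts properly on $TD(q) \cong \mathbb{H}^2$, making it a finitely generated Fuchsian group. Every such group is virtually a closed surface group or virtually free---depending on whether the quotient orbifold is compact or not---so in particular $G$ is hyperbolic and, equipped with its word metric, becomes a proper geodesic hyperbolic space.

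Second, I would invoke Theorem \ref{thm:extension to boundary maps} on the $G$--equivariant extensible slanted hieromorphism $f\colon(G,\mathfrak S_G)\to(\MCG(S),\mathfrak S)$ provided by Lemma \ref{lem:veech_slanted_h}. The underlying map of $f$ is the inclusion $G \hookrightarrow \MCG(S)$, which is injective, and $G$ is proper; Remark \ref{rem:relax_extensible} covers the domains $U_g \in \mathfrak S_G$ with empty orthogonal complement, for which the coarse-similarity condition collapses to the quasi-isometric-embedding condition already verified in Lemma \ref{lem:veech_slanted_h}. The theorem thus yields a continuous $G$--equivariant embedding $\overline f\colon\overline G \to \overline{\MCG(S)}$ with closed image, restricting to an injection $\boundary(G,\mathfrak S_G) \hookrightarrow \boundary\MCG(S)$.

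Finally, since $G$ is a proper hyperbolic geodesic space, Theorem \ref{thm:hyperbolic} supplies a canonical homeomorphism $\boundary_{Gr} G \to \boundary(G,\mathfrak S_G)$ extending the identity on $G$, and this homeomorphism is automatically $G$--equivariant by uniqueness. Composing with $\overline f$ on the boundary delivers the desired continuous $G$--equivariant embedding $\boundary_{Gr} G \to \boundary\MCG(S)$; its image is closed in $\boundary\MCG(S)$ because $\boundary_{Gr} G$ is compact and $\boundary\MCG(S)$ is Hausdorff by Proposition \ref{prop:properties}. The only step requiring real input beyond Theorem \ref{thm:extension to boundary maps} is the construction of the HHS structure and slanted hieromorphism carried out in Lemma \ref{lem:Veech is an HHS} and Lemma \ref{lem:veech_slanted_h}; granted those, the corollary is essentially formal, and the principal obstacle---verifying the coarse-similarity hypothesis of extensibility---has already been handled using the fact that finite generation and geometric finiteness of the action on $TD(q)$ produce only finitely many conjugacy classes of multitwists in $G$.
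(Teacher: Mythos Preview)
Your proof is correct and follows exactly the same route as the paper, which simply cites Lemma~\ref{lem:veech_slanted_h}, Theorem~\ref{thm:extension to boundary maps}, Remark~\ref{rem:relax_extensible}, and Theorem~\ref{thm:hyperbolic}. One small slip: Remark~\ref{rem:relax_extensible} is actually invoked for the domain $S$ (where $\pi(f)(S)=\{S\}$ is a singleton and the inclusion $\pi_S(G\cdot\mu)\hookrightarrow\fontact S$ is a quasi-isometric embedding via quasiconvexity but not obviously a coarse similarity), whereas for the $U_g$ with $|A_g|>1$ the remark does \emph{not} apply and the full coarse-similarity condition is genuinely needed---and is verified directly in Lemma~\ref{lem:veech_slanted_h}.
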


\begin{rem}\label{rem:non_hqc_veech}
Corollary~\ref{cor:veech_limit_set} does not follow from Proposition~\ref{prop:hierarchically_quasiconvex_set} because the Veech subgroup $G$ is not hierarchically quasiconvex in $\MCG(S)$ whenever it contains a multitwist supported on a multicurve with more than one component; indeed, in this case there are realization points in $\MCG(S)$ whose images in each curve graph lie in the image of $G$, but which are arbitrarily far from $G$.
\end{rem}

\subsubsection{Leininger-Reid surface subgroups}
We now turn to the Leininger-Reid surface subgroups constructed in~\cite[Theorem~6.1]{leininger2006combination}.  Again, we show that these are non-hierarchically quasiconvex subgroups of $\MCG(S)$ that nonetheless have well-defined limit sets in $\boundary\MCG(S)$.  The setup is as follows: 
\begin{enumerate}
 \item Let $q_1,\ldots,q_n$ be holomorphic quadratic differentials, with $A_0\in\fontact S$ the core of the annular decomposition of each $q_i$ such that each complementary component has negative Euler characteristic;
 \item Suppose $G_0=G_0(q_i)$ for all $i\leq n$;
 \item Suppose $h\in\MCG(S)$ centralizes $G_0$ and is pure and pseudo-Anosov on all components of $S-A_0$.
\end{enumerate}
Then, for $$H=G(q_1)*_{G_0}h^{k_2}G(q_2)h^{-k_2}*_{G_0}\cdots *_{G_0}h^{k_n}G(q_n)h^{-k_n},$$ the map $H\to\MCG(S)$ is an embedding, whenever $N=\min\{|k_i-k_j|:i,j\in\{1,\dots,n\}, i\neq j\}$ (where we set $k_1=0$) is large enough.  Moreover, every element of $\image(H\to\MCG(S))$ (which we denote by $H$) is either pseudo-Anosov or conjugate into an elliptic or parabolic subgroup of some $h^{k_i}G(q_i)h^{-k_i}$.  In particular, the $G(q_i)$ can be chosen so that $H$ fails to be hierarchically quasiconvex for the reason explained in Remark~\ref{rem:non_hqc_veech}.

In the remainder of this section, we prove:

\begin{thm}\label{thm:leininger_reid_limit_set}
The inclusion $H\to\MCG(S)$ extends continuously to an equivariant embedding $\boundary H\to\boundary\MCG(S)$ with closed image.
\end{thm}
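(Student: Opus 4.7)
The plan is to mimic the Veech-subgroup argument (Corollary~\ref{cor:veech_limit_set}) by equipping $H$ with an HHS structure and constructing an extensible slanted hieromorphism $(H,\mathfrak S_H)\to(\MCG(S),\mathfrak S)$, then invoking Theorem~\ref{thm:extension to boundary maps}. Since $H$ is a hyperbolic surface group, Theorem~\ref{thm:hyperbolic} identifies the HHS boundary $\boundary H$ with the Gromov boundary $\boundary_{Gr}H\cong S^1$, so the theorem reduces to producing such a hieromorphism whose extension to the boundary is injective. The approach is necessary because $H$ is not hierarchically quasiconvex in $\MCG(S)$ for the same reason as Veech subgroups (Remark~\ref{rem:non_hqc_veech}): multitwists in $H$ supported on multicurves give rise to faraway realization points.

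First I would define $(H,\mathfrak S_H)$ in exact analogy with Section~\ref{subsec:veech}. Fix an orbit $\cuco X_H=H\cdot\mu\subset\mathcal M(S)$ and take $S$ as the $\nest$--maximal domain with $\fontact S:=\pi_S(\cuco X_H)$; introduce a pairwise transverse domain $U_g$ with $\fontact U_g=L_g$ for each $H$--orbit of primitive multitwists $g\in H$. The set of such orbits is finite, since every element of $H$ is either pseudo-Anosov or conjugate into a parabolic subgroup of some $h^{k_i}G(q_i)h^{-k_i}$, and each finitely generated Veech factor is Fuchsian with finitely many conjugacy classes of primitive multitwists. The projections $\pi_{U_g}$ and relative projections $\rho^S_{U_g},\rho^{U_g}_{U_{g'}}$ are defined verbatim as in the Veech construction.

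To check that $(H,\mathfrak S_H)$ is actually an HHS (the analogue of Lemma~\ref{lem:Veech is an HHS}), the two nontrivial ingredients are (a) quasiconvexity of $\pi_S(\cuco X_H)\subset\fontact S$, the analogue of Lemma~\ref{lem:veech_quasiconvex}, and (b) uniform boundedness of $\pi_U(\cuco X_H)$ for every $U\in\mathfrak S(\MCG(S))$ that is neither $S$ nor an annulus appearing in the annular decomposition of some primitive multitwist of $H$, together with uniform coarse surjectivity onto the latter annuli, the analogue of Lemma~\ref{lem:veech_bounded}. For (a), I would appeal to Leininger--Reid's combination theorem: their construction exhibits $H$ as acting on $\fontact S$ so that the peripheral (multitwist) subgroups have bounded projection and the electrification of $H$ along them is quasiisometric to $\pi_S(\cuco X_H)$, which together with hyperbolicity of $H$ yields the desired quasiconvexity. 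Assertion (b) reduces by a Bass--Serre/hierarchy-path argument to the corresponding Veech statement for each vertex factor, using the bounded geodesic image axiom to propagate bounded projections across the amalgamations over $G_0$.

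With $(H,\mathfrak S_H)$ in hand, I would define the slanted hieromorphism $f\colon(H,\mathfrak S_H)\to(\MCG(S),\mathfrak S)$ exactly as in Lemma~\ref{lem:veech_slanted_h}: $f\colon H\hookrightarrow\MCG(S)$ is the inclusion, $\pi(f)(S)=\{S\}$, $\pi(f)(U_g)=A_g$, and the maps $\rho(f,U_g)$ are the canonical embeddings of the twist lines into $\prod_{\alpha\in A_g}\fontact\alpha$. Injectivity of $\pi(f)$ follows because distinct $H$--orbits of primitive multitwists have distinct annular decompositions up to the $H$--action. Extensibility follows from the finiteness of $H$--orbits of primitive multitwists, which bounds twist-exponent ratios uniformly and yields the coarse similarity condition in Definition~\ref{defn:extensible}. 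Applying Theorem~\ref{thm:extension to boundary maps} together with Remark~\ref{rem:relax_extensible}, Theorem~\ref{thm:hyperbolic}, and the properness of $H$ (with Theorem~\ref{thm:cpt}) then yields the continuous $H$--equivariant embedding $\boundary H\hookrightarrow\boundary\MCG(S)$ with closed image. The main obstacle is extracting (a) from Leininger--Reid, whose arguments are phrased in terms of the Thurston compactification of $\TT(S)$ rather than the curve graph; translating their fellow-traveling statements into coarse control on $\fontact S$ is where the real work lies.
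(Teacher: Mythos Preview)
Your proposal is correct and follows essentially the same route as the paper: build an HHS structure $(H,\mathfrak S_H)$ with $S$ maximal and one transverse domain $U_g$ per primitive multitwist, verify the two nontrivial ingredients (quasiconvexity of $\pi_S(H\cdot\mu)$ and uniform boundedness of all other subsurface projections), construct the extensible slanted hieromorphism as in Lemma~\ref{lem:veech_slanted_h}, and apply Theorem~\ref{thm:extension to boundary maps} with Remark~\ref{rem:relax_extensible} and Theorem~\ref{thm:hyperbolic}. This is exactly what Proposition~\ref{prop:leininger_reid_slanted} packages.

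Two points worth sharpening. First, the paper's mechanism for (a) and (b) is more hands-on than ``electrification'' or a generic ``Bass--Serre/hierarchy-path'' reduction: writing $g\in H$ in normal form $g_1h^{m_1}\cdots g_kh^{m_k}$, it controls projections to the complementary subsurfaces $\mathcal Y=\{$components of $S-A_0\}$ inductively (Lemma~\ref{lem:proj_on_complement}, using Leininger--Reid's Lemma~4.1 via Lemma~\ref{lem:translates_overlap}), then shows the concatenation of Veech-piece geodesics in $\fontact S$ fellow-travels $[\pi_S(1),\pi_S(g)]$ (Claim~\ref{claim:concat_close_to_geod}), which simultaneously yields quasiconvexity and localizes any large projection to a single syllable, reducing (b) to Lemma~\ref{lem:veech_bounded}. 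Your sketch points at the right ingredients but underestimates the bookkeeping: the key technical device is the partial order $\preceq$ on transverse domains and the $\mathcal Y$--projections, not bounded geodesic image alone. Second, a small correction: $H$ is a hyperbolic group (an amalgam of Fuchsian groups over a parabolic) but need not be a closed surface group, so $\boundary_{Gr}H$ need not be $S^1$; this does not affect the argument.
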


\begin{proof}
This follows from Theorem~\ref{thm:extension to boundary maps}, Remark~\ref{rem:relax_extensible}, and Proposition~\ref{prop:leininger_reid_slanted} below.
\end{proof}

Our goal is now to state and prove Proposition \ref{prop:leininger_reid_slanted}, which says that the inclusion of $H$ into $\MCG(S)$ is a slanted hieromorphism. We need control over various projections, which we achieve in the following preliminary lemmas.

\begin{lem}\label{lem:conj_are_quasiconvex}
 There exists a constant $Q$ so that, for any $i$ and any $k$, $\pi_S(h^{k}G(q_i)h^{-k})$ is $Q$--quasiconvex.
\end{lem}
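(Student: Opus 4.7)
The plan is to argue that the conjugate $h^{k}G(q_i)h^{-k}$ is itself a finitely generated Veech subgroup, namely the one associated to the quadratic differential $h^k\cdot q_i$, whose Teichm\"uller disk is $h^k\cdot TD(q_i)$. We then run the same argument as in Lemma~\ref{lem:veech_quasiconvex}, being careful to check that all constants that appear are independent of $i$ and $k$.

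First I would observe that, by~\cite{MasurMinsky:I}, Teichm\"uller geodesics project to unparametrized quasigeodesics in $\fontact S$ with constants depending only on $S$. Since any two points of a Teichm\"uller disk are joined by a Teichm\"uller geodesic segment and any Teichm\"uller disk is $\CAT(-1)$, it follows that $\pi_S(h^k\cdot TD(q_i))$ is uniformly quasiconvex in $\fontact S$, with a constant $Q_0=Q_0(S)$ independent of both $i$ and $k$. Next, let $C_{G(q_i)}\subset TD(q_i)$ denote the convex core of the finitely generated Fuchsian group $G(q_i)$; the convex core of $h^k G(q_i) h^{-k}$ acting on $h^k\cdot TD(q_i)$ is exactly $h^k\cdot C_{G(q_i)}$. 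Since convex subsets of $TD(q_i)\cong\mathbb{H}^2$ project to uniformly quasiconvex subsets of $\fontact S$ (again by Masur--Minsky), $\pi_S(h^k\cdot C_{G(q_i)})$ is quasiconvex with a constant depending only on $S$ and the quality of the quasigeodesic image.

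The main obstacle is passing from quasiconvexity of $\pi_S(h^k\cdot C_{G(q_i)})$ to quasiconvexity of $\pi_S(h^kG(q_i)h^{-k}\cdot\mu)$ with a constant independent of $k$. Following the proof of Lemma~\ref{lem:veech_quasiconvex}, $C_{G(q_i)}$ is the union of a set $C'_{G(q_i)}$, on which $G(q_i)$ acts cocompactly, and a $G(q_i)$-equivariant family of horodisks, each stabilized by a parabolic (i.e., multitwist) subgroup. On each such horodisk, the curves in the corresponding multitwist's support are simultaneously short, so the systole map sends the whole horodisk to a set of uniformly bounded $\fontact S$-diameter, namely a neighborhood of that multicurve. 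After applying $h^k$, this translates the horodisks, their stabilizing multicurves, and their bounded projection neighborhoods simultaneously, but the $\fontact S$-diameter bound is preserved because $\MCG(S)$ acts on $\fontact S$ by isometries. Since $G(q_i)$ has only finitely many conjugacy classes of parabolic subgroups and there are only finitely many indices $i$, we obtain a single diameter bound $V$ working uniformly in $i$ and $k$.

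Combining these steps, $\pi_S(h^k G(q_i)h^{-k}\cdot\mu)$ lies within uniform Hausdorff distance of $\pi_S(h^k\cdot C'_{G(q_i)})$ (since $G(q_i)$ acts cocompactly on $C'_{G(q_i)}$ and $h^k$ is an isometry), which in turn lies within $V$ of $\pi_S(h^k\cdot C_{G(q_i)})$. Since the latter is $Q_0$-quasiconvex, absorbing the additive error yields a $Q$-quasiconvexity constant depending only on $S$, $V$, and $Q_0$, none of which depend on $i$ or $k$, as required.
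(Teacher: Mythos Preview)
Your approach is considerably more elaborate than the paper's, and it contains a gap at exactly the point where the paper's key observation is needed.

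The paper's proof is essentially two lines: Lemma~\ref{lem:veech_quasiconvex} gives that each $\pi_S(G(q_i)\cdot\mu)$ is quasiconvex (finitely many $i$, so one constant works); and since $h$ preserves the multicurve $A_0$, the set $\{\pi_S(h^k\cdot\mu)\}_{k\in\mathbb Z}$ is bounded in $\fontact S$, say with diameter $D$. Writing $\pi_S(h^kG(q_i)h^{-k}\cdot\mu)=h^k\cdot\pi_S(G(q_i)\cdot h^{-k}\mu)$ and noting that $\pi_S(G(q_i)\cdot h^{-k}\mu)$ is at Hausdorff distance at most $D$ from $\pi_S(G(q_i)\cdot\mu)$ (as $G(q_i)$ acts by isometries on $\fontact S$), the conjugated orbit is an isometric translate of a set within $D$ of a uniformly quasiconvex set. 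Done.

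Your argument instead re-runs the whole proof of Lemma~\ref{lem:veech_quasiconvex} for the conjugated Veech group acting on $h^k\cdot TD(q_i)$. Most of your uniformity checks are fine, but the final paragraph has a gap. You assert that $\pi_S(h^kG(q_i)h^{-k}\cdot\mu)$ lies within \emph{uniform} Hausdorff distance of $\pi_S(h^k\cdot C'_{G(q_i)})$, justified by ``$G(q_i)$ acts cocompactly on $C'_{G(q_i)}$ and $h^k$ is an isometry''. That reasoning only shows the orbit $h^kG(q_i)h^{-k}\cdot(h^k\mu)$ projects uniformly close to $\pi_S(h^k\cdot C'_{G(q_i)})$: cocompactness plus isometry transports the basepoint along with everything else. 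But your basepoint is the \emph{fixed} marking $\mu$, not the moving $h^k\mu$. Passing between the two orbits with a bound independent of $k$ requires $\dist_S(\mu,h^k\mu)$ to be uniformly bounded, i.e.\ that $h$ has bounded orbit in $\fontact S$. You never invoke this, and without it the uniformity in $k$ is not established.

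Once you add that observation, your argument becomes correct but redundant: the bounded-orbit fact together with Lemma~\ref{lem:veech_quasiconvex} already proves the lemma directly, without revisiting Teichm\"uller disks, convex cores, or horodisks.
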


\begin{proof}
Apply quasiconvexity of the $\pi_S(G(q_i))$ and boundedness of $\{\pi_S(1,h^k)\}_{k\in\mathbb Z}$.
\end{proof}

Denote by $\mathcal Y$ the set of connected components in $S$ of the complement of the annuli in the annular decomposition of the multitwists in $G_0$.

\begin{lem}\label{lem:proj_1_and_y0}
 There exists $K$ so that for any $Y$ transverse to some $Y_0\in\mathcal Y$ we have $\dist_Y(\rho^{Y_0}_Y,1)\leq K$.
\end{lem}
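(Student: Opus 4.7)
The strategy is to exploit the freedom to choose the base marking $\mu$ representing the identity element $1\in\MCG(S)$. Recall that $\partial Y_0$, for any $Y_0\in\mathcal Y$, is a submulticurve of $A_0$ (the common annular decomposition of the $q_i$), since by definition the elements of $\mathcal Y$ are the components of $S\setminus A_0$ with negative Euler characteristic (together possibly with the annular components, which are handled in the same way). The plan is to choose $\mu$ so that the base pants decomposition of $\mu$ contains the multicurve $A_0$. This is possible since $A_0$ is a (disjoint) multicurve, so it extends to a pants decomposition of $S$, which can then be promoted to a full marking $\mu$ in the marking graph $\mathcal M(S)$.

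With $\mu$ chosen in this way, for every $Y_0\in\mathcal Y$ the multicurve $\partial Y_0\subset A_0$ is contained in the base of $\mu$. The first step is to recall that, by the usual definition of the relative projections in the HHS structure on $\MCG(S)$ (see Section~11 of~\cite{BehrstockHagenSisto:HHS_II}), whenever $Y\transverse Y_0$ the coarse point $\rho^{Y_0}_Y$ coarsely coincides with the subsurface projection $\pi_Y(\partial Y_0)$ (and this includes the annular case, where the projection is computed in the annular cover of~$Y$). The second step is to note that if $\alpha$ is a curve appearing among the base curves of $\mu$, then $\pi_Y(\alpha)$ lies, up to a uniform error depending only on $S$, inside $\pi_Y(\mu)=\pi_Y(1)$; this is immediate from the definition of the projection $\pi_Y$ from the marking graph.

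Combining these two observations, for any $Y$ transverse to $Y_0$ we obtain $\dist_Y(\rho^{Y_0}_Y,\pi_Y(\mu))\leq K$ for a uniform constant $K$ depending only on the HHS structure on $\MCG(S)$ and the (fixed, finite) choice of $\mu$. Since $\pi_Y(\mu)=\pi_Y(1)$, this is exactly the desired bound $\dist_Y(\rho^{Y_0}_Y,1)\leq K$.

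The only minor subtlety is the annular case: when $Y$ is an annulus whose core curve meets $\partial Y_0$ essentially, we must check that the lift of a base curve $\alpha\subset\partial Y_0$ of $\mu$ to the annular cover $\widehat Y$ gives an arc joining the two boundary components of the compactification $\overline Y$, and that this arc realizes both $\pi_Y(\alpha)$ and $\rho^{Y_0}_Y$ up to bounded error. This is standard from the definition of the annular projection reviewed at the start of Section~\ref{subsec:veech}, and presents no real obstacle. No deep property of $G_0$, $h$, or the Leininger--Reid construction is needed; the lemma is purely a consequence of the compatibility of the marking $\mu$ with the multicurve $A_0$.
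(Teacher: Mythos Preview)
Your argument is correct but differs from the paper's. You make a specific choice of base marking $\mu$ containing $A_0$ in its pants decomposition, then observe that $\rho^{Y_0}_Y\asymp\pi_Y(\partial Y_0)\subset\pi_Y(\mu)$ directly from the definition of subsurface projection. The paper instead argues axiomatically: since $Y\transverse Y_0$, the coarse point $\rho^{Y_0}_Y$ agrees with $\pi_Y(P_{Y_0})$ by the definition of standard product regions; since $\pi_Y$ is uniformly coarsely Lipschitz and there are only finitely many $Y_0\in\mathcal Y$, the quantity $\dist_Y(\pi_Y(1),\pi_Y(P_{Y_0}))$ is bounded independently of $Y$ and $Y_0$. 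The paper's approach works for \emph{any} choice of $\mu$, which is convenient since $\mu$ was already constrained in the Veech analysis (base curves traversing each saddle connection at most once); your approach requires checking that the additional constraint $A_0\subset\mathrm{base}(\mu)$ is compatible with those earlier choices, which it is, but this needs a sentence. Your version is more concrete and transparent for readers thinking in terms of curves and markings; the paper's is shorter and highlights that nothing beyond the HHS axioms and $|\mathcal Y|<\infty$ is used.
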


\begin{proof}
 This is because $\rho^{Y_0}_Y$ coarsely coincides with $\pi_Y(P_{Y_0})$, and the fact that $\pi_Y$ is coarsely Lipschitz (note that there are finitely many $Y_0$).
\end{proof}

\begin{lem}\label{lem:translates_overlap}
 For each $g\in G(q_i)- G_0$ for some $i$ and each $Y\in\mathcal Y$, there exists $Y'\in\mathcal Y$ so that $g\cdot Y'$ is transverse to $Y$.
\end{lem}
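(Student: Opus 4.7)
The proof is by contradiction. Fix $g \in G(q_i) - G_0$ and $Y \in \mathcal{Y}$, and suppose that no $Y' \in \mathcal{Y}$ satisfies $gY' \transverse Y$. The first step is to establish that $gA_0 \neq A_0$ as multicurves. The Veech group $G(q_i)$ acts as a Fuchsian group on its Teichm\"uller disk $TD(q_i) \cong \mathbb{H}^2$, and under this action $G_0$ is precisely the cyclic stabilizer of the parabolic fixed point at infinity corresponding to the horizontal direction of $q_i$. Any element preserving $A_0$ setwise preserves the horizontal direction, hence lies in this cyclic stabilizer $G_0$. So $gA_0 = A_0$ would force $g \in G_0$, contradicting our hypothesis.

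The next step is to show that $A_0 \cap gA_0 = \emptyset$ as sets of isotopy classes of simple closed curves. Each $\alpha \in A_0$ is the core of a horizontal cylinder of $q_i$, so its unique flat-geodesic representative lies in the horizontal direction. If $\alpha$ were also in $gA_0$, the same curve would admit a flat-geodesic representative in direction $g\theta \neq \theta$ as well, contradicting uniqueness of the flat-geodesic direction.

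I then analyze the partition of $S$ induced by $\{gY' : Y' \in \mathcal{Y}\}$ together with $gA_0$. The non-transversality assumption forces each $gY'$ to be properly nested in $Y$, equal to $Y$, to properly contain $Y$, or to be disjoint from $Y$. Combinatorial bookkeeping reduces this to two scenarios: either (a) every $gY'$ is contained in $Y$ or disjoint from $Y$, so that $Y$ is tiled by the contained pieces with interior separating curves lying in $Y \cap gA_0$, forcing $\partial Y \subseteq gA_0$; or (b) some $gY' \supseteq Y$, which (since $gY' = Y$ would force $\partial Y = \partial(gY') \subseteq A_0 \cap gA_0 = \emptyset$) must in fact satisfy $gY' \supsetneq Y$ strictly, with $\partial Y$ contained in the interior of $gY'$ and hence disjoint from $gA_0$. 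Scenario (a) is ruled out by the previous step, since $\partial Y \subseteq A_0 \cap gA_0 = \emptyset$ is impossible.

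The main obstacle is eliminating scenario (b). Here the strategy is to exploit the flat geometry of $q_i$: the subsurface $gY'$ is a minimal component for the $g\theta$-foliation, while $Y$ is a minimal component for the horizontal foliation, and the horizontal cylinders with cores $\partial Y$ lie in the interior of $gY'$. Applying $g^{-1}$ symmetrically yields $Y' \supsetneq g^{-1}Y$ strictly, and since $g$ preserves flat area on $q_i$ one has $\mathrm{area}(Y') = \mathrm{area}(gY') > \mathrm{area}(Y)$. Iterating this observation, or applying it to an element of $\mathcal Y$ of maximal complexity (which cannot be strictly contained in any $g\theta$-minimal component inside the same surface), yields the desired contradiction; alternatively one can observe that the horizontal cylinders in the interior of the $g\theta$-minimal component $gY'$ force the $g\theta$-foliation restricted to these cylinders to be minimal, which imposes an irrationality condition on moduli that fails generically in the Veech-group setting.
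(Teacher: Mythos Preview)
The paper does not prove this lemma at all; it simply cites it as a restatement of \cite[Lemma~4.1]{leininger2006combination}. Your attempt to give a self-contained argument is reasonable in spirit, and Steps~1--2 together with scenario~(a) are essentially correct. The gap is in scenario~(b).

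In scenario~(b) you have $gY' \supsetneq Y$ strictly for some $Y' \in \mathcal Y$, and you need a contradiction. Your proposed arguments do not work. The area inequality $\mathrm{area}(Y') > \mathrm{area}(Y)$ is not a contradiction, because the lemma fixes $Y$ arbitrarily---you cannot assume $Y$ has maximal area or complexity. The ``iteration'' idea fails for the same reason: the contradiction hypothesis concerns only the given $Y$, so you cannot re-apply it with $Y$ replaced by $Y'$. Finally, calling $gY'$ a ``minimal component of the $g\theta$-foliation'' misreads the setup: the direction $g\theta$ is a parabolic direction for the Veech group, so the $g\theta$-foliation is completely periodic (a cylinder decomposition), and $gY'$ is a complementary component of the multicurve $gA_0$, not a minimal component of any foliation. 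The irrationality remark is therefore not applicable.

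The clean fix is to strengthen Step~2 from disjointness of isotopy classes to geometric intersection: for any $\alpha \in A_0$, the curve $\alpha$ is a horizontal closed geodesic on $(S,q_i)$, hence transverse to the completely periodic $g\theta$-foliation, hence crosses the core $\beta$ of some $g\theta$-cylinder; that is, $i(\alpha,\beta)>0$ for some $\beta \in gA_0$. Now take $\alpha \in \partial Y$ and such a $\beta \in \partial(gY')$. Since $\partial Y$ and $\partial(gY')$ intersect essentially, the subsurfaces $Y$ and $gY'$ can be neither disjoint nor nested, so they are transverse. This single observation replaces the entire (a)/(b) case split.
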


\begin{proof}
This is a restatement of \cite[Lemma 4.1]{leininger2006combination}.
\end{proof}

\begin{lem}\label{lem:proj_on_complement}
 There exists $C, M$ with the following property. For any $g=g_1h^{m_1}\dots g_kh^{m_k}$ with $g_i\in G(q_{j(i)})- G_0$ and $|m_i|\geq M$ for each $i\leq k$, we have $\dist_{Y_0}(1,g)\leq C$ for each $Y_0\in\mathcal Y$.
\end{lem}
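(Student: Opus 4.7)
The plan is to prove the lemma by induction on $k$, using Lemma~\ref{lem:translates_overlap}, the Behrstock (consistency) inequality, and bounded geodesic image. Set $p_i = g_1 h^{m_1} \cdots g_i h^{m_i}$, so $p_0 = 1$ and $p_k = g$. Rather than bounding $\dist_{Y_0}(1, p_i)$ directly (which would give accumulating error in triangle-inequality estimates since $h$ is pseudo-Anosov on each element of $\mathcal Y$), I plan to use a strengthened inductive hypothesis: that $\pi_{Y_0}(p_i)$ lies uniformly close to $\rho^{Z_i}_{Y_0}$ for some subsurface $Z_i$ of the form $p_i \cdot Y'$ with $Y' \in \mathcal Y$, with $Z_i$ transverse to $Y_0$. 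By Lemma~\ref{lem:proj_1_and_y0}, such a $\rho^{Z_i}_{Y_0}$ lies within $K$ of $\pi_{Y_0}(1)$, yielding the desired uniform bound $C$ independent of $k$.

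The engine for the inductive step is the Behrstock inequality applied to the transverse pair $(g_{i+1} Y'_{i+1}, Y_0)$, where $Y'_{i+1} \in \mathcal Y$ is supplied by Lemma~\ref{lem:translates_overlap} applied to the element $g_{i+1} \in G(q_{j(i+1)}) - G_0$ and the subsurface $Y_0$. Since $h$ is pseudo-Anosov on $Y'_{i+1}$, for $|m_{i+1}| \ge M$ with $M$ large the projection $\dist_{Y'_{i+1}}(1, h^{m_{i+1}})$ can be made as large as needed; translating by $g_{i+1}$, this becomes a large projection of $g_{i+1} h^{m_{i+1}}$ to the subsurface $g_{i+1} Y'_{i+1}$, which is transverse to $Y_0$. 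Consistency then forces $\pi_{Y_0}(g_{i+1} h^{m_{i+1}})$ to lie within $\kappa_0$ of $\rho^{g_{i+1} Y'_{i+1}}_{Y_0}$, which is within $K$ of $\pi_{Y_0}(1)$ by Lemma~\ref{lem:proj_1_and_y0}. A further application of bounded geodesic image along the hierarchy path from $p_i$ to $p_{i+1}$ --- with $M$ chosen so that $\pi_{Y'_{i+1}}(h^{m_{i+1}})$ exceeds the BGI threshold $E$ --- allows us to translate this into the statement that $\pi_{Y_0}(p_{i+1})$ is close to $\rho^{p_i g_{i+1} Y'_{i+1}}_{Y_0}$, i.e., to take $Z_{i+1} = p_i g_{i+1} Y'_{i+1} = p_{i+1} \cdot (h^{-m_{i+1}} Y'_{i+1}) = p_{i+1} Y'_{i+1}$ (using that $h$ preserves $Y'_{i+1}$), and this subsurface is transverse to $Y_0$, closing the induction.

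The main obstacle will be the careful bookkeeping: ensuring that the branch of the Behrstock inequality that bounds the projection to $Y_0$ (rather than to the transverse subsurface) is the one that actually holds at each stage. This forces $M$ to be chosen uniformly large --- depending on $\kappa_0$, $E$, $K$, and the (finitely many, hence uniformly bounded below) translation lengths of $h$ on the curve graphs $\fontact Y'$ for $Y' \in \mathcal Y$ --- so that $\dist_{Y'}(1, h^{m_i})$ always exceeds the threshold at which consistency forces the desired branch. A secondary subtlety is that the inductive hypothesis depends not only on $Y_0$ but on a witness subsurface $Z_i$, so the two sides of the argument at step $i \to i+1$ must match up; this is handled by using $h$'s preservation of $\mathcal Y$ to identify $p_{i+1} Y'_{i+1}$ with $p_i g_{i+1} Y'_{i+1}$, so that the new witness is of the required form.
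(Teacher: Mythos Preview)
There is a genuine gap in the inductive step. Your strengthened hypothesis requires a witness $Z_i = p_i Y'$ with $Z_i \transverse Y_0$, and for the step you take $Z_{i+1} = p_i g_{i+1} Y'_{i+1}$ and assert it is transverse to $Y_0$. But Lemma~\ref{lem:translates_overlap} only gives $g_{i+1} Y'_{i+1} \transverse Y_0$; left-multiplying by the accumulated prefix $p_i$ sends this to $p_i g_{i+1} Y'_{i+1} \transverse p_i Y_0$, which says nothing about transversality with $Y_0$ itself. Equivalently, you would need $g_{i+1} Y'_{i+1} \transverse p_i^{-1} Y_0$, and $p_i^{-1} Y_0$ is not an element of $\mathcal Y$, so none of the available lemmas apply. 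The same obstruction blocks the consistency step: to force the correct branch of Behrstock for $p_{i+1}$ you need a bound on $\dist_{Z_{i+1}}(p_i, \rho^{Y_0}_{Z_{i+1}}) = \dist_{g_{i+1} Y'_{i+1}}(1, \rho^{p_i^{-1} Y_0}_{g_{i+1} Y'_{i+1}})$, which again involves $p_i^{-1} Y_0 \notin \mathcal Y$. Your appeal to bounded geodesic image does not rescue this, since BGI concerns nested pairs, and here the subsurfaces in play are transverse.

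The paper sidesteps this by inducting in the opposite direction: it strips off the \emph{first} syllable $g_1 h^{m_1}$ and applies the inductive hypothesis to the tail $g_2 h^{m_2} \cdots g_k h^{m_k}$, projected to $Y' \in \mathcal Y$. Since $h$ fixes $Y'$, equivariance gives $\dist_Y(g_1 h^{m_1}, g) \leq C$ where $Y = g_1 Y'$; combined with Lemma~\ref{lem:veech_bounded} this makes $\dist_Y(1, g)$ large. Crucially, $Y = g_1 Y'$ is transverse to $Y_0$ directly by Lemma~\ref{lem:translates_overlap} (no prefix $p_i$ in sight), so Lemma~\ref{lem:proj_1_and_y0} bounds $\dist_Y(\rho^{Y_0}_Y, 1)$ and consistency then pins $\pi_{Y_0}(g)$ near $\rho^Y_{Y_0}$, which is itself near $\pi_{Y_0}(1)$. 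The point is that by peeling from the left and applying induction to the suffix in a genuine element $Y' \in \mathcal Y$, the transversality needed is always of the form $g_1 Y' \transverse Y_0$ with $Y', Y_0 \in \mathcal Y$, exactly what Lemma~\ref{lem:translates_overlap} provides.
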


\begin{proof}
Let $K$ be as in Lemma \ref{lem:proj_1_and_y0}. Proceed by induction on $k$, for $C$ to be determined. If $k=0$, there is nothing to prove.

 Suppose $k\geq 1$. Fix $Y_0\in\mathcal Y$ and let $Y=g_1 Y'$ with $Y'\in\mathcal Y$ chosen via Lemma \ref{lem:translates_overlap} so that $Y'\transverse Y_0$. By induction, $\dist_{Y}(g_1h^{m_1},g)= \dist_{Y'}(1,g_2h^{m_2}\dots g_kh^{m_k})\leq C$, since $hY=Y$ for any $Y\in\mathcal Y$ by hypothesis so that $g_1h^{m_1}\cdot Y'=g_1\cdot Y'=Y$.
 
 By Lemma \ref{lem:veech_bounded}, $\dist_{Y}(1,g_1)$ is uniformly bounded by some $V$. Hence $\dist_{Y}(1,g)\geq \dist_{Y}(g_1,g_1h^{m_1})-C-V=\dist_{Y'}(1,h^{m_1})-C-V$. If $|m_1|$ is large enough, then this quantity is larger than $K+10E$. Since $Y_0\transverse Y$, consistency implies that we have $\dist_{Y_0}(\rho^{Y}_{Y_0},g)\leq E$. Also,
 $$\dist_{Y_0}(\rho^{Y}_{Y_0},1) \leq \dist_{Y_0}(\rho^{Y}_{Y_0},g_1)+V=\dist_{g_1^{-1}Y_0}(\rho^{Y'}_{g_1^{-1}Y_0},1)+V\leq V+K,$$
 hence $\dist_{Y_0}(1,g)\leq 2E+V+K$. Thus we set $C=2E+V+K$, which determines $M$.
\end{proof}

\begin{prop}\label{prop:leininger_reid_slanted}
The subgroup $H\leq\MCG(S)$ admits a hierarchically hyperbolic space structure $(H,\mathfrak S_H)$ so that there is an extensible slanted hieromorphism $(H,\mathfrak S_H)\to(\MCG(S),\mathfrak S)$ induced by the inclusion $H\hookrightarrow\MCG(S)$. 
\end{prop}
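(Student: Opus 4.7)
The plan is to imitate the Veech HHS structure of Lemma~\ref{lem:Veech is an HHS}, with one domain for each primitive multitwist in $H$ and a nest-maximal domain whose hyperbolic space is a Bestvina--Bromberg--Fujiwara (BBF) quasi-tree encoding the Bass--Serre decomposition of $H$. First I would verify that each primitive multitwist $g\in H$ lies in a unique conjugate $h^{k_i}G(q_i)h^{-k_i}$ and that $g\mapsto A_g$ is injective on primitive multitwists: the unique primitive multitwist in $G_0$ is supported on $A_0$, every other primitive multitwist in $h^{k_i}G(q_i)h^{-k_i}$ has support transverse to $A_0$, and Leininger--Reid genericity of the $k_i$ rules out accidental coincidences of supports across distinct factors.

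Set $\mathfrak S_H=\{S_H\}\cup\{U_g : g\text{ a primitive multitwist of }H\}$, declare $U_g\nest S_H$ for each $g$, $U_g\transverse U_{g'}$ for $g\neq g'$, and no orthogonality. Let $\fontact U_g = L_g\subset\prod_{\alpha\in A_g}\fontact\alpha$ as in Section~\ref{subsec:veech}, and build $\fontact S_H$ by applying the BBF construction to the family $\{g\cdot\pi_S(h^{k_i}G(q_i)h^{-k_i}\cdot\mu)\}_{g\in H,\,i}$ of uniformly quasiconvex subspaces of $\fontact S$: Lemma~\ref{lem:conj_are_quasiconvex} supplies quasiconvexity, Lemma~\ref{lem:translates_overlap} controls which translates cross one another, and Lemma~\ref{lem:proj_on_complement} verifies the bounded-projection BBF axiom. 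Define $\pi_{S_H}\co H\to\fontact S_H$ via the BBF embedding and $\pi_{U_g}\co H\to L_g$ exactly as in the Veech case, with relative projections $\rho^{S_H}_{U_g}$, $\rho^{U_g}_{S_H}$, $\rho^{U_g}_{U_{g'}}$ defined by the obvious analogues. The HHS axioms then follow mostly as in Lemma~\ref{lem:Veech is an HHS}: within a single Veech factor the argument is literally the same, and across factors the quasi-tree structure of $\fontact S_H$ together with Lemma~\ref{lem:proj_on_complement} supplies the consistency and bounded geodesic image inequalities, while uniqueness follows from uniqueness in $(\MCG(S),\mathfrak S)$ combined again with Lemma~\ref{lem:proj_on_complement} to match the $\mathfrak S$--distance formula to the $\mathfrak S_H$--distance formula on pairs of points in $H$.

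For the slanted hieromorphism, take $f\co H\hookrightarrow\MCG(S)$ to be the inclusion, set $\pi(f)(S_H)=\{S\}$ and $\pi(f)(U_g)=A_g$ (a pairwise-orthogonal set by definition of a multitwist), let $\rho(f,S_H)\co\fontact S_H\to\fontact S$ be the BBF quasi-isometric embedding, and let $\rho(f,U_g)\co L_g\to\prod_{\alpha\in A_g}\fontact\alpha$ be the defining inclusion (a coarse similarity in each factor with constants controlled by the finitely many Veech conjugacy classes of multitwists in the $G(q_i)$). The coarse-commutativity diagrams (I)--(VII) of Definition~\ref{defn:slanted_hieromorphism} follow by unwinding the definitions and invoking the consistency axioms in $(\MCG(S),\mathfrak S)$. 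Extensibility condition (1) (injectivity of $\pi(f)$) is the opening observation. Condition (2) asks that $\pi_V(H)$ is uniformly bounded for each $V\in\mathfrak S\setminus\pi(f)(\mathfrak S_H)$: this is precisely Lemma~\ref{lem:proj_on_complement} (handling crossings between factors) combined with Lemma~\ref{lem:veech_bounded} (bounding projections inside each factor). Condition (3) is immediate from the Veech-factor analysis.

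The hardest step is the construction of $\fontact S_H$ as a BBF quasi-tree and the verification that it yields a coherent HHS structure, not merely a piecewise collection of Veech pieces. Concretely, checking consistency and bounded geodesic image between $S_H$ and a $U_g$ whose Veech factor is far along the Bass--Serre tree from the base factor requires that the $\fontact S$--distance formula for pairs in $H$ truly splits along the edges of the Bass--Serre tree, which is exactly the content of Lemma~\ref{lem:proj_on_complement}; choosing the constant $M$ (the lower bound on $|k_i-k_j|$) large enough for the BBF axioms and the consistency inequalities across factors to hold simultaneously is the delicate point.
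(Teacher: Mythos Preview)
Your overall architecture (one domain $U_g$ per primitive multitwist, one nest-maximal domain, no orthogonality) matches the paper, but the construction of the top-level hyperbolic space diverges in a way that creates a gap.

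The paper does \emph{not} build $\fontact S_H$ as a BBF quasi-tree. It takes $\fontact S_H=\pi_S(H\cdot\mu)\subset\fontact S$ and proves directly that this is quasiconvex: writing $g=g_1\cdots g_k$ with $g_l$ in the successive Veech factors, the concatenation of $\fontact S$--geodesics $[\pi_S(g_1\cdots g_{l-1}),\pi_S(g_1\cdots g_l)]$ is shown to lie at uniformly bounded Hausdorff distance from $[\pi_S(1),\pi_S(g)]$ (Claim~\ref{claim:concat_close_to_geod}). The argument uses bounded geodesic image for the translates $h_l\cdot Y$ of the $A_0$--complementary subsurfaces, together with Lemma~\ref{lem:proj_on_complement}, to locate each concatenation vertex near the geodesic. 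Once quasiconvexity is known, $\rho(f,S_H)$ is literally the inclusion and no extra machinery is needed.

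Your BBF route has a genuine gap at the step ``let $\rho(f,S_H)\co\fontact S_H\to\fontact S$ be the BBF quasi-isometric embedding.'' The BBF construction gives quasi-isometric embeddings of the pieces \emph{into} the quasi-tree, not a quasi-isometric embedding of the quasi-tree into the ambient hyperbolic space; the latter is exactly the statement that the union of the pieces is quasiconvex and tree-graded in $\fontact S$, i.e.\ the content of the paper's Claim~\ref{claim:concat_close_to_geod}. So you would have to prove that claim anyway, at which point the BBF layer buys nothing. Relatedly, Lemma~\ref{lem:proj_on_complement} bounds projections to the \emph{subsurfaces} $Y_0\in\mathcal Y$, not nearest-point projections between Veech pieces inside $\fontact S$; it is not by itself a verification of the BBF projection axioms for your proposed family.

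Finally, your treatment of extensibility condition~(2) is too quick. Lemma~\ref{lem:proj_on_complement} only covers the finitely many $Y_0\in\mathcal Y$, and Lemma~\ref{lem:veech_bounded} covers subsurfaces seen by a single Veech factor. For a general $U\in\mathfrak S$ with $\dist_U(1,g)$ large, the paper uses the partial order $\preceq$ on domains transverse to the $h_l\cdot Y$ (from \cite[Proposition~2.8]{BehrstockHagenSisto:HHS_II}), together with the ``moreover'' part of Claim~\ref{claim:concat_close_to_geod}, to sandwich $U$ between a predecessor and successor among the $h_l\cdot Y$ and then reduce to Lemma~\ref{lem:veech_bounded}. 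That reduction is the substantive part of the bounded-projection verification and is missing from your sketch.
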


\begin{proof}
We follow a very similar procedure to that used for individual Veech subgroups.  In particular, $\mathfrak S_H$ is defined exactly as $\mathfrak S_G$ was, except that there is now a domain $U_g$ for each primitive multitwist in $H$.  To verify that this yields an HHS structure, we must check that:
\begin{enumerate}
 \item $\pi_S(H)$ is quasiconvex.  \label{item:L_R_small_1}
 \item $\pi_{U}(H)$ is uniformly bounded unless $U\in A_g$ for some $g\in H$.\label{item:L_R_small_2}
\end{enumerate}

Once the properties above are proven, arguing exactly as in the proof of Lemma~\ref{lem:Veech is an HHS} and Lemma~\ref{lem:veech_slanted_h} yields the desired slanted hieromorphism and completes the proof.

We now set conventions and notations that we use throughout the proof. When some $g=g_1\dots g_k\in H$ with $g_i\in h^{k_{j(i)}}G(q_{j(i)})h^{-k_{j(i)}}- G_0$ is any fixed element of $H$, we denote $p_l=\pi_S(g_1\dots g_l)$ (with $p_0=\pi_S(1)$), and let $\gamma_l$ be a geodesic in $\fontact S$ from $p_{l-1}$ to $p_l$, so that the concatenation of the $\gamma_l$ is a path from $\pi_S(1)$ to $\pi_S(g)$. Furthermore, notice that we can write $g=h^{m_0}g'_1h^{m_1}\dots g'_kh^{m_k}$ for some $g'_i\in G(q_{j(i)})- G_0$ (more specifically, $g'_i=h^{-k_{j(i)}}g_ih^{k_{j(i)}}$), and that $|m_l|$ for $l<k$ is bounded below by $N$ (recall that this is the minimal value of $|k_i-k_j|$ for $i\neq j$). We set $h_l=h^{m_0}g'_1h^{m_1}\dots g'_l$.

In the following claim, we study geodesics connecting $\pi_S(1)$ to $\pi_S(g)$ for arbitrary $g\in G$. The claim easily implies that geodesics from $\pi_S(1)$ to $\pi_S(g)$ stay close to $\pi_S(H)$ for any $g\in H$ because each $\gamma_l$ is contained in a coset of some $h^{k_{j(i)}}G(q_{j(i)})h^{-k_{j(i)}}$ and such cosets are uniformly quasiconvex byLemma \ref{lem:conj_are_quasiconvex}. Hence, the claim proves that $\pi_S(H)$ is quasiconvex, which is item \ref{item:L_R_small_1} above. 

\begin{claim}\label{claim:concat_close_to_geod}
 There exists a constant $R$ with the following property. For any $g\in H$, the Hausdorff distance between $\bigcup_l \gamma_l$ and $[\pi_S(1),\pi_S(g)]$ is bounded by $R$, where $[\pi_S(1),\pi_S(g)]$ is any geodesic in $\fontact S$ from $\pi_S(1)$ to $\pi_S(g)$.  Moreover, for any $Y\in\mathcal Y$ we have that $\dist_{h_lY}(1,h_l),\dist_{h_lY}(g,h_lh^{m_l})\leq C$.
\end{claim}

\renewcommand{\qedsymbol}{$\blacksquare$}
\begin{proof}
We first show $\bigcup_l \gamma_l$ is uniformly close to $[\pi_S(1),\pi_S(g)]$.

It suffices to show that the endpoints of all $\gamma_l$ lie within controlled distance of $[\pi_S(1),\pi_S(g)]$. Any such endpoint $x$ coarsely coincides with both $\pi_S(h_l)$ and $\pi_S(h_lh^{m_l})$, for some $l$ (since $\{\pi_S(h^m)\}_{m\in\mathbb Z}$ is a bounded set). Pick any $Y\in \mathcal Y$, and set $Z=h_l \cdot Y$. By Lemma \ref{lem:proj_on_complement} we have $\dist_Z(h_lh^{m_l},g)\leq C$ and $\dist_Z(1,h_l)\leq C$. Hence, if $m_l$ is large enough, we get $\dist_Z(1,g)\geq \dist_Y(1,h^{m_l})-2C\geq 100E$. Notice that by bounded geodesic image $\rho^Z_S$ needs to be within $10E$ of geodesics from $\pi_S(h_l)$ and $\pi_S(h_lh^{m_l})$, which both coarsely coincide with the endpoint $x$ we are interested in. If geodesics from $\pi_S(1)$ to $\pi_S(g)$ did not pass close to $x$ we could then conclude that they do not pass close to $\rho^Z_S$, which would imply by bounded geodesic image that $\dist_Z(1,g)\leq 5E$. But this is not the case, and hence we get a bound on the distance from $x$ to $[\pi_S(1),\pi_S(g)]$, as required.

Let us now prove that points on $[\pi_S(1),\pi_S(g)]$ are close to $\bigcup_l \gamma_l$. Suppose by contradiction that there exists $x\in[\pi_S(1),\pi_S(g)]$ with $\dist_S(x,\bigcup_l \gamma_l)\geq 2C+1$. Let $x_1,x_2\in [\pi_S(1),\pi_S(g)]$ lie on distinct sides of $x$ (in the natural order of $[\pi_S(1),\pi_S(g)]$), with $x_1$ closer to $\pi_S(1)$ than $x$, and satisfy $\dist_S(x_i,x)=C+1$. Then any $y\in \bigcup \gamma_l$ lies in $\neb_C([\pi_S(1),x_1])\cup \neb_C([x_2,\pi_S(g)])$. However, the two neighborhoods are disjoint and the connected set $\bigcup \gamma_l$ contains points in both, a contradiction.
\end{proof}

\renewcommand{\qedsymbol}{$\Box$}

Let us now take $U\in\mathfrak S-\{S\}$ and $g\in H$ with $\dist_U(1,g)\geq 100E$. We need to show that either $U$ belongs to some $A_{g'}$ or $\dist_U(1,g)$ is bounded independently of $U,g$.

We proved in the claim that, for any $Y\in\mathcal Y$, the projections of $1$ and $g$ on $h_l\cdot Y$ coarsely coincide with the projections of $h_l$ and $h_lh^{m_l}$ respectively, and hence that $d_{h_l\cdot Y}(1,g)>100E$ if $|m_l|\geq N$ is large enough. Since $m_l$ can take finitely many values, we therefore get the desired bound whenever $U$ is of the form $h_l\cdot Y$. We now assume that $U$ is neither belongs to some $A_{g'}$ nor it is of the form $h_l\cdot Y$. Hence, for any $l$ there exists $Y$ so that $h_l\cdot Y \transverse U$ overlap, and hence are comparable in the partial order $\preceq$; see Proposition 2.8 of ~\cite{BehrstockHagenSisto:HHS_II}.

Another fact about $\preceq$ is that whenever $Y,Y'\in \mathcal Y$ and $l$ are so that $h_l\cdot Y \transverse h_{l+1}\cdot Y'$, we have $h_l\cdot Y\preceq h_{l+1}\cdot Y'$, again provided $|m_l|\geq N$ is large enough. In fact, $\rho^{h_{l+1}Y'}_{h_lY}=h_{l+1}\rho^Y_{h_{l+1}^{-1}h_lY'}$ coarsely coincides with $\pi_{h_l\cdot Y}(h_{l+1})$ (Lemma \ref{lem:proj_1_and_y0}), which in turn coarsely coincides with $\pi_{h_l\cdot Y}(h_lh^{m_l})$ by Lemma \ref{lem:veech_bounded} since $h_{l+1}=h_lh^{m_l}g'_{l+1}$. Finally, $\pi_{h_l\cdot Y}(h_lh^{m_l})$ coarsely coincides with $\pi_{h_l\cdot Y}(g)$ by what we said above.

By looking at a predecessor and a successor of $U$, we then see that the projections of $1,g$ onto $U$ coarsely coincide with those of $h_l\cdot Y,h_{l+1}\cdot Y'$ for some $l$ and $Y,Y'$. But these latter projections coarsely coincide with those of $h_l$ and $h_lh^{m_l}g'_{l+1}$. The projections of $h_l$ and $h_lh^{m_l}$ are uniformly close by boundedness of $m_l$, while the projections of $h_lh^{m_l}$ and $h_lh^{m_l}g'_{l+1}$ are uniformly close by Lemma \ref{lem:veech_bounded}. This concludes the proof.
\end{proof}

\section{Automorphisms of HHS and their actions on the boundary}\label{subsubsec:auts_and_HHG}
The most important special case of an extensible hieromorphism is an automorphism of $(\cuco X,\mathfrak S)$.  For any automorphism $f:(\cuco X, \mathfrak S) \rightarrow (\cuco X, \mathfrak S)$, each isometry $f: \fontact U \rightarrow \fontact (f(U))$ extends to a homeomorphism $\hat{f}:\partial \fontact U \rightarrow \partial \fontact (f(U))$, yielding an application of Theorem \ref{thm:extension to boundary maps}:

\begin{cor}[Extensions of automorphisms to the boundary] \label{cor:auto extend}
Any $f\in\Aut(\mathfrak S)$ extends to a bijection $\overline{\cuco X}\to\overline{\cuco X}$ which restricts to a homeomorphism on $\boundary\cuco X$.
\end{cor}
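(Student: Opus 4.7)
The plan is to apply Theorem~\ref{thm:extension to boundary maps} directly to $f$ and to $f^{-1}$. First I would check that an automorphism $f \in \Aut(\mathfrak S)$, regarded as a slanted hieromorphism via Remark~\ref{rem:hier_is_slant_hier}, satisfies the conditions of Definition~\ref{defn:extensible}: the map $\pi(f) : \mathfrak S \to \mathfrak S$ is a bijection, so condition (1) holds and condition (2) holds trivially (every $V \in \mathfrak S$ equals $\pi(f)(\pi(f)^{-1}(V))$), and each $\rho(f, U) : \fontact U \to \fontact f(U)$ is an isometry, hence a $(1,0)$--coarse similarity, giving condition (3) with $\lambda_1 = \lambda_2 = 1$. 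Theorem~\ref{thm:extension to boundary maps} therefore produces an extension $\bar f : \overline{\cuco X} \to \overline{\cuco X}$ restricting to $f$ on $\cuco X$ that is injective on $\boundary\cuco X$ and continuous at each point of $\boundary\cuco X$.

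Applying the same argument to $f^{-1} \in \Aut(\mathfrak S)$ produces an extension $\overline{f^{-1}} : \overline{\cuco X} \to \overline{\cuco X}$. Tracing through the construction of the boundary extension in the proof of Theorem~\ref{thm:extension to boundary maps}, for an automorphism the formula simplifies considerably: since $|\pi(f)(U)| = 1$ and $\rho(f,U)$ is an isometry, the induced boundary map is simply $\bar f\bigl(\sum_{U \in \support(p)} a_U p_U\bigr) = \sum_{U \in \support(p)} a_U\, \hat f(p_U)$, where $\hat f : \boundary\fontact U \to \boundary\fontact f(U)$ is the homeomorphism induced by the isometry $\rho(f,U)$. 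The coefficients $a_U$ are unchanged because the coarse similarity constants are all $1$. From this explicit description, $\overline{f^{-1}}$ acts by $p \mapsto \sum a_U\, \widehat{f^{-1}}(p_U)$, and so $\overline{f^{-1}} \circ \bar f$ and $\bar f \circ \overline{f^{-1}}$ restrict to the identity on $\boundary\cuco X$; together with bijectivity of $f$ on $\cuco X$, this gives that $\bar f : \overline{\cuco X} \to \overline{\cuco X}$ is a bijection with inverse $\overline{f^{-1}}$.

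Continuity of $\bar f|_{\boundary\cuco X}$ follows from part~\eqref{item:mostly_continuous} of Theorem~\ref{thm:extension to boundary maps}, and the analogous application to $f^{-1}$ gives continuity of $\overline{f^{-1}}|_{\boundary\cuco X} = (\bar f|_{\boundary\cuco X})^{-1}$. Hence the restriction of $\bar f$ to $\boundary\cuco X$ is a homeomorphism. The potential obstacle here is that Theorem~\ref{thm:extension to boundary maps}\eqref{item:mostly_continuous} only asserts continuity at boundary points (not at interior points, unless $\cuco X$ is proper, in which case one invokes the final clause of the theorem); but this is precisely the statement needed for the restriction to $\boundary\cuco X$, so no further argument is required for the corollary as stated.
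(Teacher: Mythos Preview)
Your proof is correct and takes essentially the same approach as the paper: both define the extension by the formula $\bar f\bigl(\sum a_U p_U\bigr) = \sum a_U\, \hat f(p_U)$ and invoke Theorem~\ref{thm:extension to boundary maps} for continuity on the boundary, applying the same reasoning to $f^{-1}$ to obtain the inverse. The paper writes down the explicit extension first and then cites the theorem for continuity, whereas you first verify extensibility and invoke the theorem, then recover the formula; but the content is the same.
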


\begin{proof}
Let $f:(\cuco X, \mathfrak S) \rightarrow (\cuco X, \mathfrak S)$ be an automorphism.  Let $p \in \partial \cuco X$, with $p = \sum_{i=1}^n a_{T_i}^p p_{T_i}$, where the $T_i$ are pairwise orthogonal and $p_{T_i} \in \partial \fontact T_i$.  Define a map $\hat{f}: \partial \cuco X \rightarrow \partial \cuco X$ by 
$$\hat{f}(p) = \sum_{i=1}^n a_{T_i}^p \hat{f}(p_{T_i}),$$
where $\hat{f}:\partial \fontact T_i \rightarrow \partial \fontact (f(T_i))$ is induced by $f:\fontact T_i\to\fontact T_i$.  Let $\overline{f}: \overline{\cuco X} \rightarrow \overline{\cuco X}$ be the extension of $f$ which is $\hat{f}$ on $\partial \cuco X$; extend $f^{-1}$ similarly.  Since $f$ is an automorphism, $\overline{f}$ is clearly a bijection.  Continuity of $\overline f,\overline f^{-1}$ on the boundary follows from Theorem \ref{thm:extension to boundary maps}.
\end{proof}

When $(G,\mathfrak S)$ is a hierarchically hyperbolic group, $\boundary G$ is defined.  In general, if $\cuco X,\cuco X'$ are hierarchically hyperbolic with respect to the same collection $\mathfrak S$, then there is a quasiisometry $\cuco X\to\cuco X'$ extending to the identity on the boundary.  Indeed, the definition of $\boundary\cuco X$ depends only on $\mathfrak S$ and the attendant hyperbolic spaces.
  
\begin{cor}\label{cor:hhg_boundary}
Let $(G,\mathfrak S)$ be a hierarchically hyperbolic group.  Then the action of $G$ on itself by left multiplication extends to an action of $G$ on $\overline G$ by homeomorphisms.
\end{cor}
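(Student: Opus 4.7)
The plan is to invoke Corollary~\ref{cor:auto extend}, which already handles the case of a single automorphism, and then verify that the assignment $g\mapsto\bar f_g$ assembles into a genuine group action by homeomorphisms.

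First I would unpack Definition~\ref{defn:HHG}: a hierarchically hyperbolic group comes with an injective homomorphism $G\hookrightarrow\Aut(\mathfrak S)$ such that the automorphism $f_g$ associated to $g$ has underlying map $G\to G$ given by left multiplication by $g$. Corollary~\ref{cor:auto extend} then produces, for each such $g$, a bijection $\bar f_g\colon\overline G\to\overline G$ extending the isometry $f_g\colon G\to G$ and restricting to a homeomorphism on $\boundary G$.

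Next I would upgrade $\bar f_g$ from a homeomorphism of $\boundary G$ to a homeomorphism of the whole space $\overline G$. Since $G$ is an open subspace of $\overline G$, it suffices to check continuity of $\bar f_g$ at interior points and at boundary points separately. At a boundary point $p$, continuity comes from Theorem~\ref{thm:extension to boundary maps}(3): the preimage of any basic neighborhood of $\bar f_g(p)$ contains a basic neighborhood of $p$, and these neighborhoods can consist of both interior and boundary points. At an interior point $x\in G$, continuity is immediate since $\bar f_g|_G$ is just left multiplication, an isometry of the word metric. Because $(\bar f_g)^{-1}=\bar f_{g^{-1}}$ by construction and is continuous for the same reason, each $\bar f_g$ is a homeomorphism of $\overline G$.

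The final step --- and the only one where anything could conceivably go wrong --- is to verify $\bar f_{gh}=\bar f_g\circ\bar f_h$. On $G$ this is immediate from the group-action property of left multiplication. On $\boundary G$, it is a definition chase: a boundary point $p=\sum_U a_U p_U$ with each $p_U\in\boundary\fontact U$ is sent by $\bar f_g$ to $\sum_U a_U \hat f_g(p_U)\in\boundary\fontact{f_g(U)}$, where $\hat f_g$ denotes the boundary homeomorphism induced by the isometry $f_g\colon\fontact U\to\fontact{f_g(U)}$. Since $G\to\Aut(\mathfrak S)$ is a homomorphism, the isometries compose correctly on each $\fontact U$; functoriality of the Gromov boundary construction with respect to isometries then yields $\widehat{f_{gh}}=\hat f_g\circ\hat f_h$ on each $\boundary\fontact U$, which in turn propagates to $\bar f_{gh}=\bar f_g\circ\bar f_h$ on $\boundary G$. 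I do not anticipate any real obstacle; the argument is essentially a short bookkeeping check on top of the substantive content already contained in Corollary~\ref{cor:auto extend} and Theorem~\ref{thm:extension to boundary maps}.
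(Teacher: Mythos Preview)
Your proposal is correct and follows the approach the paper implicitly intends: the paper states this corollary without proof, as an immediate consequence of Corollary~\ref{cor:auto extend} together with the homomorphism $G\to\Aut(\mathfrak S)$ from Definition~\ref{defn:HHG}. Your write-up simply makes explicit the routine checks the paper omits---upgrading the bijection of $\overline G$ to a homeomorphism (continuity at interior points being trivial since left multiplication is an isometry, and at boundary points coming from Theorem~\ref{thm:extension to boundary maps}), and verifying $\bar f_{gh}=\bar f_g\circ\bar f_h$ by functoriality of the Gromov boundary under the isometries $\fontact U\to\fontact{gU}$.
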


Section~\ref{subsec:classification} is devoted to automorphisms, whose fixed points in $\partial \cuco X$ we study in Section~\ref{subsec:dynamics}.

\subsection{Classification of HHS automorphisms}\label{subsec:classification}
In this subsection, we will classify HHS automorphisms by their actions on $\mathfrak S$.  Let $g \in \Aut(\mathfrak S)$ and fix a basepoint $X \in \cuco X$.   Set $$\BIG(g) = \{U \in \mathfrak S|\diam_{\fontact U}(\langle g \rangle \cdot X) \text{ is unbounded}\}.$$  Observe that $g \cdot U \in \BIG(g)$ if $U \in \BIG(g)$, since $g\colon\fontact U\to\fontact (gU)$ is an isometry.

\begin{lem}\label{lem:Big is fixed}
There exists $M=M(\mathfrak S)>0$ so that for all $g\in\Aut(\mathfrak S)$ and $U\in\BIG(g)$, we have $g^M\cdot U=U$.  
\end{lem}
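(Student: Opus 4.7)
The plan is to show that the $\langle g\rangle$-orbit of any $U\in\BIG(g)$ in $\mathfrak S$ has cardinality uniformly bounded in the complexity $n$ of $\mathfrak S$. Once established, $\langle g\rangle$ acts by permutations on a finite set of size at most $n$, so every orbit length divides some integer $\leq n!$, and we may take $M=n!$.

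\emph{Step 1: $\langle g\rangle$ preserves $\BIG(g)$.} Since $g\in\Aut(\mathfrak S)$ induces an isometry $g\colon\fontact W\to\fontact(gW)$ with $\pi_{gW}\circ g\asymp g\circ\pi_W$ on $\cuco X$, the diameter of $\pi_W(\langle g\rangle\cdot X)$ in $\fontact W$ agrees with the diameter of $\pi_{gW}(\langle g\rangle\cdot X)$ in $\fontact(gW)$ up to a uniform additive error. Hence $W\in\BIG(g)$ iff $gW\in\BIG(g)$, so in particular the orbit $\{g^nU\}_{n\in\mathbb Z}$ lies inside $\BIG(g)$.

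\emph{Step 2: $\BIG(g)$ is pairwise orthogonal, so $|\BIG(g)|\leq n$ by Lemma~\ref{lem:pairwise_orthogonal}.} Suppose for contradiction $U,V\in\BIG(g)$ are non-orthogonal. In the transverse case $U\transverse V$, the consistency axiom (Definition~\ref{defn:space_with_distance_formula}.\eqref{item:dfs_transversal}) applied at each $g^kX$ gives a partition $\mathbb Z=A\cup B$, where $A=\{k:\dist_U(g^kX,\rho^V_U)\leq\kappa_0\}$ and $B=\{k:\dist_V(g^kX,\rho^U_V)\leq\kappa_0\}$. On $A$ the projection $\pi_U$ stays bounded, and on $B$ the projection $\pi_V$ stays bounded. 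After passing to a power $g^r$ that fixes both $U$ and $V$ setwise (such $r$ exists by a finite-complexity argument: strict nesting chains along the orbit cannot exceed length $n$, and the remaining cases are handled by pigeonholing the relations $R(n)\in\{=,\nest,\succnest,\orth,\transverse\}$ between $U$ and $g^nU$ for $n=1,\dots,n+1$), the sets $A,B$ (reindexed along the $g^r$-orbit) become shift-invariant up to bounded error, since $g^r$ coarsely fixes $\rho^V_U$ and $\rho^U_V$. Shift-invariance in $\mathbb Z$ forces one of $A,B$ to be cofinite, contradicting the unboundedness of the corresponding projection. The nested case $U\propnest V$ is analogous via the nesting half of the consistency axiom.

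\emph{Step 3: Conclusion.} Combining Steps 1 and 2, $\langle g\rangle$ acts by permutations on the finite set $\BIG(g)$ of cardinality at most $n$, so the orbit of $U$ has length at most $n$ and $g^{n!}\cdot U=U$. Setting $M=n!$ completes the proof. The main technical obstacle is the pairwise orthogonality step in Step 2, specifically the extraction of a common power of $g$ fixing both $U$ and $V$ setwise and the shift-invariance argument for the consistency partition; once those are in place the rest reduces to a routine orbit-length bound for a cyclic group acting on a finite set.
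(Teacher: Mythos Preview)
Your overall plan (Step~1 plus a finite-orbit bound via Lemma~\ref{lem:pairwise_orthogonal}) is the right shape, but Step~2 contains a genuine circularity. You aim to show that \emph{all} of $\BIG(g)$ is pairwise orthogonal, and in the middle of that argument you write ``after passing to a power $g^r$ that fixes both $U$ and $V$ setwise''. The justification you give for the existence of such an $r$ --- pigeonholing the relation type $R(n)$ between $U$ and $g^nU$ for $n=1,\dots,n+1$ --- does not actually produce a period: finding $n_1<n_2$ with $R(n_1)=R(n_2)=\transverse$ does not force $g^{n_2-n_1}U=U$. What you are invoking is precisely the conclusion of the lemma you are trying to prove. (Indeed, in the paper the statement ``$\BIG(g)$ is pairwise orthogonal'' is a \emph{later} lemma, Lemma~\ref{lem:Big is orth}, whose proof begins by applying Lemma~\ref{lem:Big is fixed}.) The shift-invariance argument that follows is also only a sketch, and depends on already having $g^rU=U$ and $g^rV=V$.

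The paper sidesteps this entirely by proving something weaker but sufficient: rather than showing all of $\BIG(g)$ is pairwise orthogonal, it shows only that the \emph{single orbit} $\langle g\rangle\cdot U$ is pairwise orthogonal. For this one compares $U$ with $g^nU$ directly: proper nesting $g^nU\propnest U$ is ruled out because iterating gives an arbitrarily long $\propnest$-chain, contradicting finite complexity; transversality $g^nU\transverse U$ is ruled out by using unboundedness of $\pi_U(\langle g\rangle\cdot X)$ together with the isometry $\fontact U\to\fontact(g^nU)$ to produce two points violating consistency between $U$ and $g^nU$. Only $g^nU=U$ or $g^nU\orth U$ remain, so the orbit is a pairwise-orthogonal set and hence has size $\leq n$ by Lemma~\ref{lem:pairwise_orthogonal}. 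Note that no preliminary passage to a stabilizing power is needed. Your Step~1 and Step~3 are fine; replacing Step~2 with this orbit-only argument fixes the proof.
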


\begin{proof}
Consider the orbit $\langle g\rangle\cdot U$ in $\mathfrak S$.  

If there exists $n\geq1$ so that $g^n\cdot U\propnest U$, then $g^{kn}\cdot U\propnest g^{(k-1)n}\cdot U\propnest\cdots\propnest g^n\cdot U\propnest U$ for all $k\geq1$, so we either contradict finite complexity (if $\langle g\rangle\cdot U$ is infinite) or the fact that $\nest$ is a partial order (if $\langle g\rangle\cdot U$ is finite).  Hence $g^n\cdot U\not\propnest U$ unless $n=0$.  Similarly, $U\not\propnest g^nU$ unless $n=0$.

Next, consider the case where $U\in\BIG(g)$ and $g^n\cdot U\transverse U$ for some $n\geq1$.  Then since $U\in\BIG(g)$, we can choose arbitrarily large $m\in\naturals$ so that $\dist_U(X,g^m\cdot X)>T=100E+\dist_U(g^{-1}\cdot X,X)+f(m)$, where $f:\naturals\to\naturals$ is increasing.  Hence $\dist_{g^nU}(g^{m+1}\cdot X,g\cdot X)>T$, since $g:\contact U\to \contact gU$ is an isometry.    The triangle inequality shows that $\dist_{g^nU}(g^M\cdot X,X)> T-2\dist_{gU}(X,g^n\cdot X)=100E+f(m)$.  By considering at least two such values of $m$, we see that consistency is contradicted (specifically, we contradict Lemma~2.3 of~\cite{BehrstockHagenSisto:HHS_II}).

It follows that if $U\in\BIG(g)$, then, for all $n\in\integers$, either $g^n\cdot U=U$ or $g^n\cdot U\orth U$.  Hence $\langle g\rangle\cdot U$ is a pairwise-orthogonal collection.  Hence there exists a global $M$, depending only on the complexity and Lemma~\ref{lem:pairwise_orthogonal}, so that $g^M\cdot U=U$ for each $U\in\BIG(g)$, establishing the first assertion.
\end{proof}

\begin{prop}\label{prop:elliptic}
The automorphism $g\in\Aut(\mathfrak S)$ is elliptic if and only if $\BIG(g)=\emptyset$.
\end{prop}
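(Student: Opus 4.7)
The forward direction is immediate from the coarse Lipschitz axiom. If $g$ is elliptic, then $\langle g\rangle\cdot X$ has finite diameter in $\cuco X$, and since each projection $\pi_U\colon\cuco X\to\fontact U$ is $(K,K)$--coarsely Lipschitz by Definition~\ref{defn:space_with_distance_formula}.\eqref{item:dfs_curve_complexes}, the image $\pi_U(\langle g\rangle\cdot X)$ has finite diameter in $\fontact U$, so $U\notin\BIG(g)$. Hence $\BIG(g)=\emptyset$.

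For the backward direction, the strategy is to reduce to a \emph{uniform} bound on projections via the distance formula. Concretely: if one can exhibit $R>0$ such that $\dist_U(X,g^nX)\leq R$ for all $U\in\mathfrak S$ and $n\in\integers$, then choosing threshold $s=R+1$ in Theorem~\ref{thm:distance_formula} makes every summand zero, giving $\dist_{\cuco X}(X,g^nX)\leq C$ for a constant $C$ depending only on $R$ and $s_0$, whence $g$ is elliptic. It therefore suffices to prove that $\BIG(g)=\emptyset$ implies the existence of such a uniform $R$.

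Suppose for contradiction that no such $R$ exists, so there are sequences $U_j\in\mathfrak S$ and $n_j\in\integers$ with $\dist_{U_j}(X,g^{n_j}X)\to\infty$. The plan is to exploit the equivariance identity $\dist_{gV}(gx,gy)=\dist_V(x,y)$ coming from the isometry $g\colon\fontact V\to\fontact(gV)$, and split on the $\langle g\rangle$--orbit structure of $\{U_j\}$ in $\mathfrak S$. After passing to a subsequence, if the $U_j$ lie in a single orbit, say $U_j=g^{k_j}U$, then $\dist_U(g^{-k_j}X,g^{n_j-k_j}X)\to\infty$; by the argument in Lemma~\ref{lem:Big is fixed} the $\langle g\rangle$--orbit of $U$ is then either finite (forcing $\{k_j\}$ into a finite set modulo its period, so that the triangle inequality in $\fontact U$ produces $m_j$ with $\dist_U(X,g^{m_j}X)\to\infty$, i.e., $U\in\BIG(g)$, contradiction) or, as shown in that lemma, consists of pairwise orthogonal or transverse elements.

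The main obstacle is the remaining case where the $U_j$ range over infinitely many $\langle g\rangle$--orbits. Here I would combine three ingredients. First, by the nesting part of the proof of Lemma~\ref{lem:Big is fixed} (which does \emph{not} require $U\in\BIG(g)$), the orbit $\{g^mU_j\}$ consists of pairwise non-$\nest$--comparable elements; then Lemma~\ref{lem:pairwise_orthogonal} bounds any pairwise-orthogonal sub-collection by the complexity $n$, so infiniteness of the orbit forces some pair $g^pU_j\transverse U_j$. Second, the equivariance identity transfers the large projection $\dist_{U_j}(X,g^{n_j}X)$ to an equally large projection on $g^pU_j$, and the transversality consistency inequality in Definition~\ref{defn:space_with_distance_formula}.\eqref{item:dfs_transversal} then forces control on $\rho^{U_j}_{g^pU_j}$ and $\rho^{g^pU_j}_{U_j}$ incompatible with sufficiently large, coarsely independent projections---essentially the contradiction extracted in Lemma~\ref{lem:Big is fixed} but applied along the sequence. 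Third, to make this work one must ensure that \emph{arbitrarily many} shifted orbit elements witness the same large projection phenomenon simultaneously, which again contradicts the pairwise-orthogonal bound of Lemma~\ref{lem:pairwise_orthogonal} combined with the consistency variant in Lemma~\ref{lem:orthogonal_close}. The delicate point is the bookkeeping needed to pass from ``unbounded projection along a sequence $(U_j,n_j)$'' to ``unbounded projection onto a fixed domain,'' so that the $\BIG(g)=\emptyset$ hypothesis is directly contradicted.
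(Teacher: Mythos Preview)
Your forward direction and the reduction of the backward direction to a uniform bound via the distance formula are correct and match the paper. The gap is in how you extract a contradiction from the failure of a uniform bound.

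Your case split on orbit structure does not close. In the ``single orbit, finite orbit'' sub-case your argument is fine and does give $U\in\BIG(g)$. But in the ``single orbit, infinite orbit'' sub-case you invoke the transversality contradiction from Lemma~\ref{lem:Big is fixed}; that argument genuinely uses $U\in\BIG(g)$ (it needs \emph{two} arbitrarily large values of $\dist_U(X,g^mX)$ to contradict consistency), whereas you only have one large projection $\dist_{U_j}(X,g^{n_j}X)$ per $j$. Your ``infinitely many orbits'' case is, as you yourself note, only a sketch: the consistency inequality $\min\{\dist_W(x,\rho^V_W),\dist_V(x,\rho^W_V)\}\leq\kappa_0$ does not by itself contradict large projections on both $V$ and $W$, and you have not supplied the promised ``bookkeeping'' that would convert a sequence of large projections on varying domains into a large projection on a fixed one.

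The paper's proof supplies the missing idea: a \emph{level argument}. One first observes that there is a level $\ell<\chi$ and a constant $R$ so that $\diam_U(\pi_U(\langle g\rangle\cdot X))\leq R$ whenever $U$ has level $>\ell$, while projections at level $\ell$ are unbounded. Then one uses two facts: (i) a Ramsey argument showing that for any $U$ with infinite $\langle g\rangle$--orbit, sufficiently many translates $U,gU,\ldots,g^PU$ contain an arbitrarily large pairwise-transverse subcollection (this uses only non-$\nest$--comparability of orbit elements and the bound on pairwise-orthogonal sets); and (ii) the ``passing up'' Lemma~\ref{lem:nest_progress} (in the slightly sharper form of \cite[Lemma~1.8]{BehrstockHagenSisto:HHS_III}), which says that many domains with projections $>E$ force a strictly higher-level domain with projection $>R$. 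Combining these, a level--$\ell$ domain $U$ with very large projection produces, via its transverse translates, a domain $T$ of level $>\ell$ with $\dist_T(X,g^nX)>R$, contradicting the choice of $\ell$. This level induction is the key mechanism your proposal is missing.
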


\begin{proof}
If $\langle g\rangle\cdot X$ is bounded, then $\BIG(g)=\emptyset$ since projections are coarsely Lipschitz.

Conversely, suppose that $\BIG(g)=\emptyset$.  We will show that there exists $D=D(g)$ so that $\diam_V(\pi_V(\langle g\rangle\cdot X))\leq D$ for all $V\in\mathfrak S$.  From this and the distance formula (Theorem~\ref{thm:distance_formula}), it follows that $g$ is elliptic.  Hence suppose that no such $D$ exists.

We need two facts:
\begin{enumerate}[(I)]
 \item For each $N\geq0$, there exists $P=P(N,\mathfrak S)$ so that for all $U\in\mathfrak S$ and $h\in\Aut(\mathfrak S)$, either some positive power of $h$ fixes $U$ or $\{U,g\cdot U,\ldots,g^P\cdot U\}$ contains a set of $N$ pairwise-transverse elements.  Indeed, as in the proof of Lemma~\ref{lem:Big is fixed}, for any $p$, the elements of $\{U,g\cdot U,\ldots,g^{p-1}\cdot U\}$ are pairwise $\nest$--incomparable, and any pairwise-orthogonal subset has cardinality bounded by the complexity $\chi$ of $\mathfrak S$.  Hence, if $p$ exceeds the Ramsey number $\mathrm{Ram}(\chi+1,N)$, we have by Ramsey's theorem that $\{U,g\cdot U,\ldots,g^{p-1}\cdot U\}$ contains a set of $N$ pairwise-transverse elements, so we can take $P=\mathrm{Ram}(\chi+1,N)-1$.\label{item:ramsey} 
 \item For each $C\geq0$ there exists $Q\in\naturals$ with the following property.  Let $x,y\in\cuco X$ and suppose $\{V_i\}_{i\in I}$ satisfies $\dist_{V_i}(x,y)>E$ for all $i$, and that $|I|\geq Q$.  Then there exists $V\in\mathfrak S$ so that $V_i\propnest V$ for some $i\in I$, and $\dist_V(x,y)>C$.  This is a slight strengthening of Lemma~\ref{lem:nest_progress}; this exact statement is~\cite[Lemma~1.8]{BehrstockHagenSisto:HHS_III}. \label{item:passing}
\end{enumerate}

Recall that $\chi$ denotes the complexity -- i.e. the maximum level -- in $\mathfrak S$, so that $S$ is the unique element of level $\chi$.  Since $\BIG(g)=\emptyset$ but there are arbitrarily large projections, by assumption, there exists a level $\ell<\chi$ and a constant $R<\infty$ so that:
\begin{itemize}
 \item $\diam_U(\pi_U(\langle g\rangle\cdot X))\leq R$ when $U$ has level greater than $\ell$;
 \item for each $D<\infty$, there exists $U\in\mathfrak S$, of level $\ell$, with $\diam_U(\pi_U(\langle g\rangle\cdot X))>D$.
\end{itemize}

Let $U\in\mathfrak S$ be chosen so that $\dist_U(X,g^n\cdot U)>\ddot R$, where $\ddot R$ is a constant to be determined.  We can and shall assume that our $U$ has been chosen at level $\ell$, and we emphasize that such a $U$ can be chosen so as to make $\ddot R$ arbitrarily large.

%Since the projections are $(K,K)$--coarsely Lipschitz, we have $n\geq \ddot R/10K$.  
Let $Q=Q(R)$ be the constant provided by setting $C=R$ in fact~\eqref{item:passing} and let $P=\mathrm{Ram}(\chi+1,Q)$.  Fact~\eqref{item:ramsey} provides $U_1,\ldots,U_Q\in\{U,g\cdot U,\ldots,g^P\cdot U\}$ so that $U_i\transverse U_j$ when $i\neq j$.  Now, for $1\leq j\leq Q$, we have $\dist_{U_j}(X,g^n\cdot X)\geq\ddot R-100KEQ$.  So, provided $\ddot R$ --- which can be chosen \emph{independently} of $R$ and hence of $Q$ --- satisfies $\ddot R>100KEQ+10E$, fact~\eqref{item:passing} provides $T\in\mathfrak S$ so that $U_j\propnest T$ for some $j$ and so that $\dist_T(X,g^n\cdot X)>R$.  Now, since $U_j$ is a translate of $U$ and $\Aut(\mathfrak S)$ preserves the levels, the level of $U_j$ is $\ell$, and hence $T$ has level strictly greater than $\ell$, which is a contradiction since $\dist_T(X,g^n\cdot X)>R$.
\end{proof}

\begin{rem}\label{rem:back_to_the_future}
In the case where $\cuco X$ is proper, there is a quick proof of Proposition~\ref{prop:elliptic} relying on the more powerful tools from Section~\ref{sec:rank_rigidity}.
\end{rem}

\begin{lem}\label{lem:not_big}
Let $g\in\Aut(\mathfrak S)$.  Then there exists $D=D(g,E)$ so that $\diam_U(\pi_U(\langle g\rangle\cdot X))\leq D$ for all $U\in\mathfrak S-\BIG(g)$.  
\end{lem}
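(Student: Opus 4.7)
The plan is to adapt the argument in the proof of Proposition~\ref{prop:elliptic}, adding one extra step to rule out the possibility that the ``higher-level'' domain produced there lies in $\BIG(g)$.

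First I would apply Lemma~\ref{lem:Big is fixed} to replace $g$ by $g^M$, with $M=M(\mathfrak S)$, so that $g$ fixes every element of $\BIG(g)$ pointwise; since $\diam_U(\pi_U(\langle g\rangle X))$ and $\diam_U(\pi_U(\langle g^M\rangle X))$ differ by at most a factor depending on $g$ and $\mathfrak S$ (via the coarse Lipschitz property of $\pi_U$ applied to the intermediate points $g^iX$, $0<i<M$), this reduction is harmless. If $\BIG(g)=\emptyset$, then Proposition~\ref{prop:elliptic} immediately gives that $\langle g\rangle X$ is bounded in $\cuco X$, and the coarse Lipschitz property of each $\pi_U$ yields the desired uniform bound. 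Assume henceforth that $\BIG(g)\neq\emptyset$; recall from the introduction that $\BIG(g)$ is a finite, pairwise-orthogonal set.

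Suppose for contradiction that $\diam_U(\pi_U(\langle g\rangle X))$ is unbounded over $U\in\mathfrak S-\BIG(g)$. By pigeonhole over levels, there is a level $\ell<\chi$ and a constant $R<\infty$ so that $\diam_U\leq R$ for every $U\notin\BIG(g)$ of level strictly greater than $\ell$, while for every $D$ there is some $U\notin\BIG(g)$ of level $\ell$ with $\diam_U>D$. Pick such a $U$ and $n$ with $\dist_U(X,g^nX)>\ddot R$, where $\ddot R$ is to be chosen arbitrarily large. Following the proof of Proposition~\ref{prop:elliptic} verbatim, I apply facts~(I) and~(II) from there: either some low power $g^{n_U}$ fixes $U$ with $n_U$ bounded by a function of $\mathfrak S$, or I obtain $Q=Q(R)$ pairwise-transverse translates $U_1,\ldots,U_Q\in\{U,gU,\ldots,g^PU\}$ with $\dist_{U_j}(X,g^nX)\geq\ddot R-O_g(Q)$; fact~(II) (or equivalently \cite[Lemma~1.8]{BehrstockHagenSisto:HHS_III}) then produces a $T$ of level $>\ell$ with some $U_j\propnest T$ and $\dist_T(X,g^nX)>R$. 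The fixed-orbit subcase is handled directly: then $g^{n_U}$ acts isometrically on the hyperbolic space $\fontact U$ with bounded orbit (as $U\notin\BIG(g)$), and standard arguments in $\delta$-hyperbolic spaces bound the orbit diameter in terms of $\dist_U(X,g^{n_U}X)\leq Kn_U\dist_{\cuco X}(X,gX)+K$ and $\delta$, uniformly in $U$ because $n_U$ is bounded by the Ramsey threshold.

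The new difficulty, relative to Proposition~\ref{prop:elliptic}, is that $T$ may lie in $\BIG(g)$ at level $>\ell$, in which case the bound $R$ does not constrain $\dist_T(X,g^nX)$. Since $\BIG(g)$ is finite, after passing to a subsequence as $\ddot R\to\infty$ I may take $T=W\in\BIG(g)$ constant; using $gW=W$, I may further arrange $U_j\propnest W$ for every $j$. The plan here is to invoke bounded geodesic image (Definition~\ref{defn:space_with_distance_formula}\eqref{item:dfs:bounded_geodesic_image}) together with the consistency inequality for the nested pair $U_j\propnest W$: if the $\fontact W$-geodesic $\gamma$ from $\pi_W(X)$ to $\pi_W(g^nX)$ stays $E$-away from $\rho^{U_j}_W$, then $\diam_{\fontact{U_j}}(\rho^W_{U_j}(\gamma))\leq E$; and since $\dist_W(X,g^nX)>R\gg E$, at most one endpoint of $\gamma$ lies $E$-close to $\rho^{U_j}_W$, so by consistency at the far endpoint $\dist_{U_j}(X,g^nX)\leq O(E)$, contradicting $\dist_{U_j}(X,g^nX)\geq\ddot R-O_g(Q)$ for $\ddot R$ large. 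Thus $\gamma$ must pass within $E$ of every $\rho^{U_j}_W$; and a counting argument, using that the $\rho^{U_j}_W$ are coarsely distinct (applying partial realization and the consistency inequality to pairs $U_j\transverse U_k$, analogous to Lemma~\ref{lem:orthogonal_close}), forces $\dist_W(X,g^nX)\geq\Omega(Q)$. Balancing this against the a priori bound $\dist_W(X,g^nX)\leq K\dist_{\cuco X}(X,gX)|n|+K$ together with $|n|\leq O_g(\ddot R)$ (from $\dist_U(X,g^nX)\leq Kc_g|n|+K$) yields a contradiction once $\ddot R$ is chosen large enough relative to $R$, $Q$, and $g$. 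The main obstacle is making this last counting step rigorous, i.e. verifying that pairwise-transverse $U_j\propnest W$ produce coarsely distinct points $\rho^{U_j}_W$ in $\fontact W$ and that $\gamma$ must traverse them ``in order''; I expect this to follow from the BBF-style partial order on domains with large projection between a fixed pair of points, together with the large-link axiom restricted to $\fontact W$.
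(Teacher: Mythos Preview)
Your strategy differs substantially from the paper's, and as written it has genuine gaps.

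The paper does not try to rerun the Proposition~\ref{prop:elliptic} argument.  Instead it splits into two cases.  If $S\in\BIG(g)$, then every $U\neq S$ satisfies $U\propnest S$, and $g$ is loxodromic or parabolic on $\fontact S$; in either case one shows that only a bounded number $N=N(g)$ of orbit points $\pi_S(g^kX)$ lie in the $100E$--neighborhood of $\rho^U_S$, after which consistency and bounded geodesic image give $\diam_U\leq E+\max_{0\leq k,k'\leq N}K\dist_{\cuco X}(g^kX,g^{k'}X)+K$, uniformly in $U$.  If $S\notin\BIG(g)$, the paper invokes \cite[Proposition~2.4]{BehrstockHagenSisto:HHS_III}: with $\mathfrak T$ the downward--closed $g$--invariant set of domains nested in some element of $\BIG(g)$, one obtains an HHS $(\widehat{\cuco X}_{\mathfrak T},\mathfrak S-\mathfrak T)$ on which $g$ has empty big-set and is therefore elliptic by Proposition~\ref{prop:elliptic}; this bounds $\diam_U$ for all $U\notin\mathfrak T$, while $U\in\mathfrak T$ (i.e.\ $U\nest U_i$ for some $U_i\in\BIG(g)$) is handled by induction on complexity via the factor $(F_{U_i},\mathfrak S_{U_i})$.

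There are two concrete problems with your argument.  First, the ``fixed-orbit subcase'' does not go through: the claim that an elliptic isometry $h$ of a $\delta$--hyperbolic space has orbit diameter bounded in terms of $\dist(X,hX)$ and $\delta$ is false.  A rotation of $\mathbb H^2$ by a small angle $\theta$ about a centre $c$ moves a point $X$ at distance $r\approx\ln(1/\theta)$ from $c$ by a uniformly bounded amount, while the orbit of $X$ has diameter $2r$, which tends to infinity as $\theta\to0$.  So knowing $\dist_U(X,g^{n_U}X)$ is uniformly bounded gives no uniform control on $\diam_U(\langle g^{n_U}\rangle\cdot\pi_U(X))$.  Second, your final balancing step has the inequality reversed: from $\dist_U(X,g^nX)>\ddot R$ together with $\dist_U(X,g^nX)\leq Kc_g|n|+K$ you obtain the \emph{lower} bound $|n|\geq(\ddot R-K)/(Kc_g)$, not an upper bound.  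Consequently there is nothing to balance against $\dist_W(X,g^nX)\geq\Omega(Q)$, which is perfectly compatible with large $|n|$ since $W\in\BIG(g)$.  The counting step you flag as the remaining obstacle is likewise genuinely problematic: pairwise-transverse $U_j\propnest W$ need not have $\rho^{U_j}_W$ far apart in $\fontact W$, so the geodesic passing $E$--close to each of them gives no useful lower bound on its length.
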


\begin{proof}
Let $\BIG(g)=\{U_i\}_{i\in I}$.  Note that it suffices to prove the lemma for some positive power of $g$, so by Lemma~\ref{lem:Big is fixed}, we may assume that $g\cdot U_i=U_i$ for all $i\in I$. 

If $\BIG(g)=\emptyset$, then $g$ is elliptic by Proposition~\ref{prop:elliptic}, from which the lemma follows immediately: for each $V\in\mathfrak S$, we have $\diam_V(\pi_V(\langle g\rangle\cdot X))\leq K\diam_{\cuco X}(\langle g\rangle\cdot X)$, which is bounded independently of $V$.

Next, suppose that $\BIG(g)\neq\emptyset$ and $S\not\in\BIG(g)$ (as usual, $S\in\mathfrak S$ is the unique $\nest$--maximal element).  Then, for each $i\in I$, the element $U_i$ is maximal in an HHS $(F_{U_i},\mathfrak S_{U_i})$ admitting a $g$--equivariant hieromorphism to $(\cuco X,\mathfrak S)$.  Since $U_i\neq S$, the complexity of $(F_{U_i},\mathfrak S_{U_i})$ is strictly lower than that of $(\cuco X,\mathfrak S)$, so it follows by induction that $\diam_V(\pi_V(\langle g\rangle\cdot X))$ is bounded independently of $V$ when $V\nest U_i$.  Indeed, in the base case, when the complexity is $1$, $\cuco X$ is itself a hyperbolic space and the lemma follows from the usual elliptic/parabolic/loxodromic classification of isometries of hyperbolic spaces~\cite{Gromov:essay}.

Now, let $\mathfrak T$ be the set of all $U\in\mathfrak S$ so that $U\nest U_i$ for some $i\in I$.  Observe that $\mathfrak T$ is $g$--invariant and downward-closed under nesting.  Then Proposition~2.4 of~\cite{BehrstockHagenSisto:HHS_III} provides an HHS $(\widehat{\cuco X}_{\mathfrak T},\mathfrak S-\mathfrak T)$ with the same associated nesting and orthogonality relations, hyperbolic spaces, and projections.  Since $\mathfrak T$ was $g$--invariant, $g$ descends to an automorphism of $(\widehat{\cuco X}_{\mathfrak T},\mathfrak S-\mathfrak T)$ so that the action of $g$ on $\mathfrak S-\mathfrak T$ is the restriction of the original action on $\mathfrak S$ and, for each $V\in\mathfrak S-\mathfrak T$, the isometry $\contact V\to\contact gV$ is the original one.  Now $g$ has $\BIG(g)= \emptyset$ with respect to $(\widehat{\cuco X}_{\mathfrak T},\mathfrak S-\mathfrak T)$ and hence we are done by the proof of Proposition \ref{prop:elliptic}.

The preceding two analyses prove the lemma except in the case where $S\in\BIG(g)$.  Hence, suppose $S\in\BIG(g)$, so that $g$ acts either loxodromically or parabolically on $\contact S$.  In this case, we cannot induct on complexity, so we argue directly using consistency, bounded geodesic image, and simple properties of isometries of hyperbolic spaces.

If $U\in\mathfrak S-\{S\}$, then $U\propnest S$, and $\rho^U_S\subset\contact S$ is a well-defined diameter--$\leq E$ subset.

First suppose that $g$ acts loxodromically on $\contact S$.  Then there exists $N=N(g)$ so that $\leq N$ elements of $\pi_S(\langle g\rangle\cdot X)$ lie in the $100E$--neighborhood of $\rho^U_S$.  Let $\{g^i\cdot X\}_{i=n}^{n'}$ be the points in $\langle g\rangle\cdot X \subset \cuco X$ projecting into $\neb^S_{100E}(\rho^U_S) \subset \fontact S$, so that $n'-n\leq N$.  Then for all $i,j\in\integers$, consistency and bounded geodesic image imply that 
\begin{eqnarray*}
\dist_U(g^i\cdot X,g^j\cdot X)&\leq& E+\max_{n\leq k,k\leq n'}\dist_S(g^k\cdot X,g^{k'}\cdot X)\\
&\leq&E+\max_{0\leq k,k'\leq N}K\dist_{\cuco X}(g^k\cdot X,g^{k'}\cdot X)+K,
\end{eqnarray*}
which is independent of $U$ (here $K$ is the coarse Lispchitz constant from Definition~\ref{defn:space_with_distance_formula}).

Next, suppose that $g$ acts parabolically on $\contact S$.  By definition, $\langle g\rangle\cdot X$ has a unique limit point in the Gromov boundary of $\contact S$, so there is an increasing function $f:\naturals\to\naturals$ so that $(g^n\cdot\pi_S(X)|g^m\cdot\pi_S(X))_{\pi_S(X)}>f(k)$ whenever $\min\{|m|,|n|\}\geq k$.  In particular, there exists $k$, independent of $U$, so that no $\contact S$--geodesic from $\pi_S(g^n\cdot X)$ to $\pi_S(g^m\cdot X)$ passes $100E$--close to $\rho^U_S$ provided $|m|\geq k$ and $|n|\geq k$.  We now argue exactly as in the loxodromic case to bound $\diam_U(\pi_U(\langle g\rangle\cdot X))$ independently of $U$.  This completes the proof.
\end{proof}

\begin{lem}\label{lem:Big is orth}
For any distinct $U, V \in \BIG(g)$, we have $U \orth V$.
\end{lem}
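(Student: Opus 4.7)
The plan is first to apply Lemma~\ref{lem:Big is fixed} and pass to a power of $g$ so that $gU=U$ and $gV=V$ (the hypothesis $U,V\in\BIG(g)$ transfers to this power, since an isometry of a hyperbolic space with unbounded orbit has powers with unbounded orbit, and $\pi_U,\pi_V$ are coarsely equivariant via the hieromorphism axiom). Suppose for contradiction that $U$ and $V$ are not orthogonal; since they are distinct, they must be either transverse or (up to swapping roles) $\nest$--comparable. I plan to rule out each case using the consistency and bounded geodesic image axioms, together with the observation that the isometries $g|_{\fontact U}$ and $g|_{\fontact V}$ coarsely preserve the bounded relative projection sets $\rho^V_U\subset\fontact U$ and $\rho^U_V\subset\fontact V$ (with fixed Hausdorff error coming from the coarse commutativity of the hieromorphism $g$).

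In the transverse case, the Behrstock inequality from Definition~\ref{defn:space_with_distance_formula}.\eqref{item:dfs_transversal} implies that for each $x\in\cuco X$, either $\dist_V(\pi_V(x),\rho^U_V)\leq\kappa_0$ or $\dist_U(\pi_U(x),\rho^V_U)\leq\kappa_0$. Since $V\in\BIG(g)$ and $\rho^U_V$ is a bounded set, there are arbitrarily large integers $n$ with $\dist_V(\pi_V(g^n X),\rho^U_V)>\kappa_0$, and for each such $n$ consistency forces $\pi_U(g^n X)\in\neb_{\kappa_0}(\rho^V_U)$. Symmetrically, $U\in\BIG(g)$ yields infinitely many $n$ with $\pi_V(g^n X)$ within $\kappa_0$ of $\rho^U_V$. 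One then combines these two Behrstock ``zones'', together with the fact that $g$ coarsely fixes the bounded sets $\rho^U_V$ and $\rho^V_U$, to propagate the bounded projection to the whole orbit and derive the required contradiction with either $U\in\BIG(g)$ or $V\in\BIG(g)$.

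The nested case $U\propnest V$ is handled using the nested version of consistency from Definition~\ref{defn:space_with_distance_formula}.\eqref{item:dfs_transversal}: $\pi_U(x)$ coarsely equals $\rho^V_U(\pi_V(x))$ whenever $\pi_V(x)$ is far from $\rho^U_V$. For large $|n|$, this governs $\pi_U(g^n X)$ in terms of $\rho^V_U(\pi_V(g^n X))$. I then plan to use bounded geodesic image (Definition~\ref{defn:space_with_distance_formula}.\eqref{item:dfs:bounded_geodesic_image}) to bound $\diam_U(\rho^V_U(\gamma))$ along an $\fontact V$--geodesic $\gamma$ joining $\pi_V(g^n X)$ and $\pi_V(g^m X)$ for $n,m$ large of the same sign: provided the orbit $\{\pi_V(g^kX)\}$ in $\fontact V$ escapes every bounded neighborhood of $\rho^U_V$ along a fixed direction (using that $g|_{\fontact V}$ is loxodromic or parabolic, by the classification of isometries of a hyperbolic space), such a $\gamma$ avoids a neighborhood of $\rho^U_V$, forcing $\pi_U(g^n X)$ and $\pi_U(g^m X)$ to be uniformly close. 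Applying this on both ``ends'' of the orbit bounds $\diam_U(\pi_U(\langle g\rangle X))$, contradicting $U\in\BIG(g)$.

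The main obstacle will be the propagation step in the transverse case, together with the claim in the nested case that the $g$-orbit in $\fontact V$ escapes bounded neighborhoods of $\rho^U_V$ uniformly along each end. Both rely on the isometric action of $g$ on $\fontact U$ and $\fontact V$ combined with the uniform coarse preservation of the bounded sets $\rho^V_U,\rho^U_V$. The delicate point is controlling the coarse equivariance between $\pi_W(g^n X)$ and $(g|_{\fontact W})^n\pi_W(X)$ well enough to deduce that the induced isometry on each of $\fontact U,\fontact V$ genuinely has unbounded orbit (so that the loxodromic/parabolic dichotomy applies), after which the consistency and bounded geodesic image axioms carry the argument to its contradiction.
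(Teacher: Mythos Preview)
Your approach is correct but substantially more elaborate than necessary, and you already have the key observation in hand without exploiting it. You note that, after passing to a power with $gU=U$ and $gV=V$, the isometry $g|_{\fontact V}$ coarsely preserves the bounded set $\rho^U_V$ (whenever $U\propnest V$ or $U\transverse V$). That alone finishes the argument: an isometry of any metric space with one bounded orbit has \emph{all} orbits bounded, since $\dist(y,g^ny)\leq 2\dist(y,\rho^U_V)+\diam(\langle g\rangle\cdot\rho^U_V)$. Hence $\langle g\rangle$--orbits in $\fontact V$ are bounded, contradicting $V\in\BIG(g)$. The remaining case $V\propnest U$ is symmetric. This is exactly the paper's proof, and it bypasses the Behrstock inequality, bounded geodesic image, and the loxodromic/parabolic dichotomy entirely.

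Your transverse-case plan (Behrstock zones plus propagation) and nested-case plan (escape along ends plus BGI) would both go through, but they reprove special consequences of the elementary ``one bounded orbit implies all bounded orbits'' fact by harder means. In particular, the ``propagation'' you allude to in the transverse case is precisely this fact, so the Behrstock analysis preceding it is redundant; and in the nested case you are working to bound orbits in $\fontact U$ via BGI when the coarse fixed point $\rho^U_V\subset\fontact V$ already bounds orbits in $\fontact V$ directly.
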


\begin{proof}
Lemma~\ref{lem:Big is fixed} shows that by passing to a uniformly bounded power, if necessary (which does not affect the big-set), we can assume that $gU=U$ and $gV=V$.  Hence $g$ acts as an isometry of both of the (not necessarily proper) hyperbolic spaces $\contact U,\contact V$.  Since $U,V\in\BIG(g)$, the isometry $g$ cannot be elliptic on either $\contact U$ or $\contact V$.  Hence, by e.g.~\cite[Section 8.1]{Gromov:essay}, $g$ is either parabolic or loxodromic on $\contact U$ and $\contact V$.

If $U\propnest V$ or $U\transverse V$, then $\rho^U_V$ is a uniformly bounded subset of $\contact V$, and, since $g^n\cdot\rho^U_V\asymp\rho^{g^nU}_{g^nV}=\rho^U_V$ for all $n\in\integers$, we have that $\langle g\rangle$--orbits in $\contact V$ are bounded, contradicting that $U\in\BIG(g)$.  
\end{proof}

\begin{defn}[Elliptic] \label{defn:elliptic}
An automorphism $g \in \Aut(\mathfrak S)$ is \emph{elliptic} if some (hence any) orbit of $\langle g\rangle$ in $\cuco X$ is bounded.
\end{defn}

\begin{defn}[Axial] \label{defn:axial}
An automorphism $g \in \Aut(\mathfrak S)$ is \emph{axial} if some (hence any) orbit of $\langle g\rangle$ in $\cuco X$ is quasiisometrically embedded.
\end{defn}

\begin{defn}[Distorted] \label{defn:distorted}
An element $g \in \Aut(\mathfrak S)$ is \emph{distorted} if it is not elliptic or axial.
\end{defn}

\begin{exmp}[Distorted automorphisms in familiar examples]\label{exmp:distorted}
Let $S$ be a surface of finite type and $\alpha$ a simple closed curve.  In $\MCG(S)$, the subgroup $\langle \tau_{\alpha} \rangle$ generated by the Dehn twist about $\alpha$ is quasiisometrically embedded \cite{FarbLubMinsky}, but in $(\TT(S), d_T)$, the orbit of $\tau_{\alpha}$ is distorted.  In fact, $\MCG(S)$ has no distorted automorphisms, as is the case for cube complexes with factor systems, since cubical automorphisms are combinatorially semisimple \cite{Haglund:semisimple}.  In Theorem \ref{thm:HHGs have no distorteds} below, we prove that HHGs have no distorted elements.  A simple example of an HHS with a distorted automorphism is obtained by gluing a combinatorial horoball to $\integers$; this encapsulates the difference between the HHS structures of $\MCG(S)$ and $(\TT(S),d_T)$, where annular curve graphs are replaced by horoballs over annular curve graphs.
\end{exmp}

\begin{prop}\label{prop:axial}
The automorphism $g\in\Aut(\mathfrak S)$ is axial if and only if there exists $U\in\BIG(g)$ such that $n\to g^n \cdot\pi_U(X)$ is a quasiisometric embedding $\integers\to\fontact U$ for any $X \in \cuco X$.
\end{prop}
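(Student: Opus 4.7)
The plan is to prove the two directions separately. The easier reverse direction propagates the hypothesis from $\fontact U$ back to $\cuco X$ using coarse Lipschitzness of $\pi_U$ and the fact that $\Aut(\mathfrak S)$ acts by uniform quasi-isometries. The forward direction uses the distance formula together with the finiteness and orthogonality of $\BIG(g)$ (Lemmas \ref{lem:Big is fixed}, \ref{lem:Big is orth}, \ref{lem:not_big}) to isolate a single domain on which $g$ must act loxodromically rather than parabolically.

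For the reverse direction, suppose $U\in\BIG(g)$ and $n\mapsto g^n\cdot\pi_U(X)$ is a $(\lambda,\lambda)$--quasi-isometric embedding $\integers\to\fontact U$. Since $\pi_U$ is $(K,K)$--coarsely Lipschitz (Definition~\ref{defn:space_with_distance_formula}.(1)), for all $n,m\in\integers$ we have
$$|n-m|\leq \lambda\,\dist_U(\pi_U(g^nX),\pi_U(g^mX))+\lambda \leq \lambda K\,\dist_{\cuco X}(g^nX,g^mX)+\lambda K+\lambda,$$
giving the lower bound $\dist_{\cuco X}(g^nX,g^mX)\gtrsim |n-m|$. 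The matching upper bound $\dist_{\cuco X}(g^nX,g^mX)\lesssim|n-m|$ follows by iterating the triangle inequality using that $\Aut(\mathfrak S)$ acts by \emph{uniform} quasi-isometries on $\cuco X$, so $\dist_{\cuco X}(g^iX,g^{i+1}X)$ is bounded independently of $i$. Thus $n\mapsto g^n X$ is a quasi-isometric embedding, i.e. $g$ is axial.

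For the forward direction, assume $g$ is axial. Then $g$ is not elliptic, so $\BIG(g)\neq\emptyset$ by Proposition~\ref{prop:elliptic}. By Lemma~\ref{lem:Big is orth}, $\BIG(g)=\{U_1,\dots,U_k\}$ is a pairwise-orthogonal set, hence $k$ is at most the complexity by Lemma~\ref{lem:pairwise_orthogonal}. By Lemma~\ref{lem:Big is fixed}, replacing $g$ by a uniformly bounded power (which affects neither the axiality hypothesis nor the conclusion, only their implicit constants), we may assume $gU_i=U_i$ for all $i$, so that $g$ acts as an isometry on each $\fontact U_i$. Since $U_i\in\BIG(g)$, each such action is either parabolic or loxodromic. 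Applying the distance formula (Theorem~\ref{thm:distance_formula}) with a sufficiently large threshold $s$, together with Lemma~\ref{lem:not_big} (which bounds the contribution of domains outside $\BIG(g)$ by a constant $D=D(g,E)$), yields for all $n$:
$$|n|\;\asymp\;\dist_{\cuco X}(X,g^nX)\;\asymp\;\sum_{V\in\mathfrak S}\ignore{\dist_V(X,g^nX)}{s}\;\lesssim\;\sum_{i=1}^{k}\dist_{U_i}(X,g^nX)+O(1).$$

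The main obstacle is that this inequality only bounds the \emph{sum} of the projections from below, not any single one, so I must rule out the possibility that every $\langle g\rangle$-orbit in $\fontact U_i$ is sublinear. This is where the parabolic/loxodromic trichotomy for isometries of (non-proper) hyperbolic spaces intervenes: a parabolic isometry of $\fontact U_i$ has stable translation length zero, hence $\dist_{U_i}(X,g^nX)=o(|n|)$. If $g$ acted parabolically on every $\fontact U_i$, finiteness of $k$ would give $\sum_{i=1}^{k}\dist_{U_i}(X,g^nX)=o(|n|)$, contradicting the display above. Therefore $g$ is loxodromic on some $\fontact U_i$, meaning $n\mapsto g^n\cdot\pi_{U_i}(X)$ is a quasi-isometric embedding of $\integers$ into $\fontact U_i$, as required. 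Independence of basepoint $X$ is automatic since $\pi_{U_i}$ changes by a bounded amount under changes of $X$.
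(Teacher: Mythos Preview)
Your proof is correct and follows essentially the same approach as the paper's: both directions hinge on the finiteness of $\BIG(g)$ (via Lemmas~\ref{lem:Big is orth} and~\ref{lem:pairwise_orthogonal}), the uniform bound on projections to domains outside $\BIG(g)$ (Lemma~\ref{lem:not_big}), and the distance formula. Your forward direction is in fact more explicit than the paper's, which simply asserts that the distance formula ``implies that there exists at least one $U\in\BIG(g)$ such that $g$ acts axially on $\fontact U$''; you spell out the underlying reason by invoking the parabolic/loxodromic dichotomy and the fact that parabolic orbits are $o(|n|)$. For the reverse direction you use only the coarse Lipschitz property of $\pi_U$ rather than the full distance formula, which is a slight simplification.
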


\begin{proof}
Suppose that there exists $U\in\BIG(g)$ so that $n\to g^n\cdot\pi_U(X)$ is a quasi-isometric embedding.  Then the distance formula (Theorem~\ref{thm:distance_formula}) yields a lower bound on $\dist_{\cuco X}(g^m\cdot X,g^n\cdot X)$ which is (at least) linear in $|m-n|$, i.e. $g$ is axial.

Conversely, suppose that $g$ is axial.  Lemma \ref{lem:Big is orth} bounds the number of $U\in\BIG(g)$ by the complexity of $\mathfrak S$.  Lemma~\ref{lem:not_big} ensures that $\diam_V(\pi_V(\langle g\rangle\cdot X))$ is bounded independently of $V$ for $V\not\in\BIG(g)$.  Since $g$ acts axially on $\cuco X$, the distance formula (Theorem~\ref{thm:distance_formula}) now implies that there exists at least one $U \in \BIG(g)$ such that $g$ acts axially on $\fontact U$.
\end{proof}

The next proposition is an immediate consequence of Propositions \ref{prop:elliptic} and \ref{prop:axial}:

\begin{prop}\label{prop:distorted}
The automorphism $g\in\Aut(\mathfrak S)$ is distorted if and only if there exists $U\in\BIG(g)$ such that $\langle g\rangle\cdot\pi_U(X)$ is unbounded, but, for all $U\in\BIG(g)$, we have $$\dist_{\fontact U}(X,g^n \cdot X)=o(n).$$
\end{prop}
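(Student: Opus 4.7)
The plan is to derive Proposition~\ref{prop:distorted} directly from Propositions~\ref{prop:elliptic} and~\ref{prop:axial}, together with the classical trichotomy for isometries of $\delta$--hyperbolic spaces. The statement is essentially a bookkeeping exercise once one knows that the failure of an axial orbit in a $\BIG$--domain forces sublinear growth of that orbit.

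First, I would unpack the definition: $g$ is distorted exactly when it is neither elliptic nor axial. By Proposition~\ref{prop:elliptic}, ``not elliptic'' is equivalent to $\BIG(g)\neq\emptyset$, and for any $U\in\BIG(g)$ the orbit $\langle g\rangle\cdot\pi_U(X)$ is unbounded by the very definition of $\BIG(g)$. This already supplies the ``there exists $U\in\BIG(g)$ with $\langle g\rangle\cdot\pi_U(X)$ unbounded'' clause. By Proposition~\ref{prop:axial}, ``not axial'' is equivalent to the statement that for every $U\in\BIG(g)$ the orbit map $n\mapsto g^n\cdot\pi_U(X)$ fails to be a quasi-isometric embedding $\integers\to\fontact U$.

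Next, I would translate ``fails to be a quasi-isometric embedding'' into the sublinear estimate $\dist_{\fontact U}(X,g^n\cdot X)=o(n)$. By Lemma~\ref{lem:Big is fixed} there is a uniform $M=M(\mathfrak S)$ with $g^M\cdot U=U$, so $g^M$ acts by isometries on the $\delta$--hyperbolic space $\fontact U$. A standard fact (subadditivity plus $\delta$--thinness) gives that the stable translation length
\[
\tau_U(g^M)=\lim_{n\to\infty}\frac{\dist_{\fontact U}(X,g^{Mn}\cdot X)}{n}
\]
exists, equals $\inf_n \dist_{\fontact U}(X,g^{Mn}\cdot X)/n$, and is positive if and only if the orbit of $g^M$ on $\fontact U$ is a quasi-isometric embedding. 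Since the orbit is unbounded (because $U\in\BIG(g)$) but not quasi-isometrically embedded, we are in the parabolic case $\tau_U(g^M)=0$, which is exactly $\dist_{\fontact U}(X,g^{Mn}\cdot X)=o(n)$. Applying the triangle inequality with the fixed bound $\max_{0\le k<M}\dist_{\fontact U}(X,g^k\cdot X)$ transfers the estimate from $g^M$ to $g$.

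For the converse I would run the same chain backwards: if $\BIG(g)\ne\emptyset$ then Proposition~\ref{prop:elliptic} rules out elliptic, and if in addition $\dist_{\fontact U}(X,g^n\cdot X)=o(n)$ for every $U\in\BIG(g)$, then no orbit map $n\mapsto g^n\cdot\pi_U(X)$ can be a quasi-isometric embedding for $U\in\BIG(g)$, so Proposition~\ref{prop:axial} rules out axial; thus $g$ is distorted. The only non-formal ingredient is the hyperbolic translation-length fact invoked in the previous paragraph, which is standard and applies to non-proper hyperbolic spaces as well, so I anticipate no real obstacle beyond citing or briefly recording it.
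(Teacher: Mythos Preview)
Your proposal is correct and follows exactly the approach the paper indicates: the paper's proof consists solely of the remark that the proposition is ``an immediate consequence of Propositions~\ref{prop:elliptic} and~\ref{prop:axial}.'' You have supplied the one detail the paper leaves implicit---that for an isometry of a (not necessarily proper) $\delta$--hyperbolic space, unbounded orbit plus failure of quasi-isometric embedding forces stable translation length zero and hence $o(n)$ growth---and your reduction to $g^M$ via Lemma~\ref{lem:Big is fixed} and transfer back by the triangle inequality is the right way to handle the fact that $g$ need not fix $U$.
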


\begin{defn}[Reducible]\label{defn:reducible}
The automorphism $g\in\Aut(\mathfrak S)$ is \emph{irreducible} if $\BIG(g)=\{S\}$, where $S\in\mathfrak S$ is the unique $\nest$--maximal element.  Otherwise, $S\not\in\BIG(g)$ and $g$ is \emph{reducible}.  
\end{defn}

Finally, we have the following strong characterization of irreducible axials:

\begin{thm}\label{thm:acyl_morse}
Let $G\leq\Aut(\mathfrak S)$ act properly and coboundedly on the
hierarchically hyperbolic space $(\cuco X,\mathfrak S)$.  Suppose that
$g\in G$ is irreducible axial.  Then $g$ is Morse.
\end{thm}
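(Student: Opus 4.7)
The plan is to realize $g$ as a loxodromic element for an acylindrical action on a hyperbolic space, and then invoke the standard ``acylindrical loxodromics are Morse'' package from the theory of acylindrically hyperbolic groups.

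First, since $g$ is irreducible axial, by definition $\BIG(g)=\{S\}$, where $S\in\mathfrak S$ is the $\nest$--maximal element. Applying Proposition~\ref{prop:axial} with $U=S$ shows that $n\mapsto g^n\cdot\pi_S(X)$ is a quasi-isometric embedding $\integers\to\fontact S$, so $g$ acts loxodromically on the $\delta$--hyperbolic space $\fontact S$. Next, I will invoke Theorem~14.3 of~\cite{BehrstockHagenSisto:HHS_I}, which the introduction flags as one of the external inputs we rely on, to conclude that the induced action of $G$ on $\fontact S$ is acylindrical. Combining these two observations, $g$ is a loxodromic element for an acylindrical action of $G$ on a hyperbolic space.

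At this stage I would cite the now-standard fact that, whenever a group $G$ acts acylindrically on a hyperbolic space, every loxodromic element is Morse in $G$ with respect to any word metric coming from a finite generating set; this is essentially due to Dahmani--Guirardel--Osin and Sisto (and follows, for example, from hyperbolic embeddedness of the elementary closure of a loxodromic plus Sisto's results on contracting elements). To transport this conclusion back to $\cuco X$, I would use that $G\curvearrowright\cuco X$ is proper and cobounded: the Schwarz--Milnor lemma (in its quasi-action form, since elements of $\Aut(\mathfrak S)$ only quasi-act) then implies that $G$ is finitely generated and that any orbit map $G\to\cuco X$ is a $G$--equivariant quasi-isometry. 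Since the Morse property of a cyclic subgroup is invariant under such equivariant quasi-isometries, any $\langle g\rangle$--orbit in $\cuco X$ is a Morse quasi-geodesic, which is exactly the assertion that $g$ is Morse.

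The main obstacle is really just pinning down the precise black-box: some versions of ``acylindrical loxodromic $\Rightarrow$ Morse'' in the literature also invoke non-elementarity of the ambient action or finite generation of $G$, but both are automatic here (the first from loxodromicity of $g$ itself, the second from the Schwarz--Milnor lemma applied to $G\curvearrowright\cuco X$). Should one wish to avoid the citation, a self-contained HHS-intrinsic route would combine Lemma~\ref{lem:not_big} (which uniformly bounds $\diam_V(\pi_V(\langle g\rangle\cdot X))$ for every $V\neq S$) with bounded geodesic image, consistency, and the distance formula (Theorem~\ref{thm:distance_formula}) to show that any quasi-geodesic with endpoints on the orbit must have uniformly bounded projection to every $V\neq S$ and hence uniformly bounded Hausdorff distance to the orbit; the delicate case in that direct argument is handling those $V$ whose $\rho^V_S$ happens to lie close to $\pi_S(\langle g\rangle\cdot X)$, which is precisely where acylindricity of $G\curvearrowright\fontact S$ must be fed in.
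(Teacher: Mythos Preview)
Your proposal is correct and follows essentially the same route as the paper: establish that $G$ acts acylindrically on $\fontact S$ (the paper cites Corollary~14.4 of~\cite{BehrstockHagenSisto:HHS_I} rather than Theorem~14.3, but these amount to the same input here), observe that $g$ is loxodromic on $\fontact S$, and then invoke Sisto's results to conclude Morseness. The only cosmetic difference is that the paper pins down the exact black box, citing Proposition~3.8 and Lemma~2.9 of~\cite{Sisto:contracting_random} (weakly contracting $\Rightarrow$ Morse) rather than the broader DGO/hyperbolically-embedded framework you gesture at; your Schwarz--Milnor step and the alternative intrinsic sketch are reasonable elaborations but not needed for the paper's one-line argument.
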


\begin{proof}
By~\cite[Corollary~14.4]{BehrstockHagenSisto:HHS_I}, $G$ acts
acylindrically on $\fontact S$, where $S$ is $\nest$--maximal
in $\mathfrak S$, while $g$ acts hyperbolically on
$\fontact S$.  By~\cite[Proposition~3.8]{Sisto:contracting_random}, $g$ is \emph{weakly
contracting} for the \emph{path system} consisting of all geodesics in
$\fontact S$, so $g$ is Morse, by~\cite[Lemma~2.9]{Sisto:contracting_random}.
\end{proof}

\begin{rem}[Reducible Morse elements]\label{rem:morse_reducible}
The converse of Theorem~\ref{thm:acyl_morse} does not hold, as can be seen be examining a Morse element of an appropriately-chosen right-angled Artin group whose support does not include all generators.
\end{rem}

\subsection{Dynamics of action on the boundary}\label{subsec:dynamics}
In the remainder of this section, we impose the standing assumption that $\cuco X$ is proper.  We will analyze the action of an infinite-order automorphism $g$ on $\boundary(\cuco X,\mathfrak S)$, according to whether $g$ is irreducible or reducible and according to whether $g$ is axial or distorted.  

\subsubsection{Irreducible automorphisms}\label{subsubsec:irreducible}  

\begin{lem}\label{lem:irreducible limit support}
Let the irreducible $g \in \Aut(\mathfrak S)$ fix some $\lambda \in \partial \cuco X$.  Then $\supp(\lambda) = \{S\}$.
\end{lem}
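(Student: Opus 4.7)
The plan is to assume $\mathrm{Supp}(\lambda) \neq \{S\}$ and derive a contradiction from the irreducibility hypothesis. First I would note that $S$ cannot coexist in the support with any other domain: for every $V \in \mathfrak{S}$ we have $V \nest S$, so the orthogonality axiom (which forbids $\orth$ between $\nest$-comparable domains) excludes $S \orth V$. Thus $\mathrm{Supp}(\lambda)$ is either $\{S\}$ (in which case we are done) or is entirely contained in $\mathfrak{S} - \{S\}$, and I would proceed under the latter assumption.

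Next, since $g$ fixes $\lambda$, it permutes the finite set $\mathrm{Supp}(\lambda)$ (matching both the boundary components in each $\partial\fontact U$ and the coefficients $a_U$). Finiteness of $\mathrm{Supp}(\lambda)$, bounded by the complexity via Lemma~\ref{lem:pairwise_orthogonal}, then yields $M \geq 1$ so that $g^M$ fixes every element of $\mathrm{Supp}(\lambda)$ setwise. Fix any $T \in \mathrm{Supp}(\lambda)$, and note that $g^M S = S$ automatically (as $S$ is the unique $\nest$-maximum). Since $T \propnest S$, the relative projection $\rho^T_S$ is a well-defined subset of $\fontact S$ of diameter at most $\xi \leq E$.

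The crux is equivariance of $\rho^T_S$ under $g^M$. Because $g^M$ is an automorphism with $g^M T = T$ and $g^M S = S$, the relative-projection diagram of Definition~\ref{defn:hieromorphism} forces $g^M(\rho^T_S)$ and $\rho^{g^M T}_{g^M S} = \rho^T_S$ to coarsely coincide, with constant controlled by $E$ alone; the same holds for every power $g^{Mk}$ since it too is an automorphism fixing $T$ and $S$. Consequently $g^{Mk}(\rho^T_S) \subseteq \mathcal{N}_{E}(\rho^T_S)$ for all $k \geq 1$, giving a bounded subset of $\fontact S$ whose $g^M$--orbit is contained in a uniform bounded neighborhood of itself. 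On the other hand, irreducibility says $S \in \BIG(g)$, so $g$ (hence $g^M$) acts on $\fontact S$ as a non-elliptic isometry of a $\delta$--hyperbolic space; such an isometry has unbounded orbit on every point (the translates of any two points stay a fixed distance apart by isometry, so boundedness is orbit-independent), contradicting the preceding bounded orbit.

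The main technical point I expect to need to justify carefully is the uniform coarse equivariance $g^{Mk}(\rho^T_S) \subseteq \mathcal{N}_{E}(\rho^T_S)$ with a constant independent of $k$. This follows cleanly once one observes that each $g^{Mk}$ is itself an automorphism satisfying the hypotheses of Definition~\ref{defn:automorphism}, so the coarse-commutation constant can be taken to depend only on the ambient HHS constants rather than on the specific iterate; otherwise the compounding of errors over $k$ iterations would spoil the boundedness argument.
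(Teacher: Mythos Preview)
Your proof is correct and follows essentially the same approach as the paper's: both arguments use that an element $T\in\supp(\lambda)-\{S\}$ would force the coarse point $\rho^T_S$ to have bounded $\langle g\rangle$--orbit in $\fontact S$, contradicting irreducibility. Your version is more careful than the paper's in two respects --- you explicitly pass to a power $g^M$ fixing the support pointwise (the paper's phrase ``$U$ could not be fixed by $g$'' leaves this implicit), and you correctly flag the uniformity of the coarse equivariance $g^{Mk}(\rho^T_S)\asymp\rho^T_S$ as the key technical point (the paper uses this freely, e.g.\ in Lemma~\ref{lem:Big is orth}, under the standing convention that automorphisms intertwine the relative projections up to the fixed HHS constant $E$).
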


\begin{proof}
Suppose $U \in \supp(\lambda)-\{S\}$.  Since $g$ is irreducible, its orbit in $\fontact S$ is unbounded.  In particular, this means that the orbit of $\rho^U_S$ is unbounded.  By definition, $g \cdot \rho^U_S \asymp \rho^{g\cdot U}_S$ and thus $U$ could not be fixed by $g$, completing the proof.
\end{proof}

\begin{prop}[Irreducible axials act with north-south dynamics] \label{prop:irreducible axial}
If $g \in \Aut(\mathfrak S)$ is irreducible axial, then $g$ has exactly two fixed points $\lambda_+, \lambda_- \in \partial \cuco X$.  Moreover, for any boundary neighborhoods $\lambda_+ \in U_{+}$ and $\lambda_- \in U_{-}$, there exists an $N>0$ such that $g^N(\partial \cuco X - U_-) \subset U_+$.
\end{prop}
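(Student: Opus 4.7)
The plan is to leverage the fact that irreducibility forces $\BIG(g) = \{S\}$, where $S$ is the $\nest$-maximal element, and then exploit the simple structure of the HHS boundary in the $S$-direction. First, Proposition~\ref{prop:axial} implies $g$ acts axially on $\fontact S$; by hyperbolicity of $\fontact S$, $g$ is loxodromic and has exactly two fixed points $\lambda_{\pm} \in \partial \fontact S$, with the standard north-south dynamics on $\fontact S \cup \partial \fontact S$. Via the embedding of Proposition~\ref{prop:babies continuously embed}, these give two fixed points in $\partial \cuco X$. For uniqueness, any $g$-fixed point $\lambda \in \partial \cuco X$ has $\support(\lambda) = \{S\}$ by Lemma~\ref{lem:irreducible limit support}, hence lies in $\partial \fontact S$, so equals $\lambda_+$ or $\lambda_-$.

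For the dynamical statement, the key observation is that $S$ is $\nest$-maximal and therefore orthogonal to no element of $\mathfrak S$. So for any $q \in \partial \cuco X$, either $\support(q) = \{S\}$ (i.e.\ $q \in \partial \fontact S$) or $S \notin \support(q)$, in which case $q$ is automatically remote with respect to $\overline S = \{S\}$. This dichotomy considerably simplifies the basic neighborhoods of $\lambda_{\pm}$: given $V_\pm^S$ a cone-topology neighborhood of $\lambda_\pm$ in $\fontact S \cup \partial \fontact S$, the basic neighborhood $U_\pm = \neb_{\{V_\pm^S\},\epsilon}(\lambda_\pm)$ reduces to a single condition on a single element of $\fontact S \cup \partial \fontact S$, because the auxiliary ratio and sum conditions become trivial (since $\{S\}_q = \{S\}$ for remote $q$ and $\{S\}^\orth = \emptyset$). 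Explicitly: $q \in U_\pm$ iff either $\support(q) = \{S\}$ and $q_S \in V_\pm^S$, or $q$ is remote and $(\partial\pi_{\{S\}}(q))_S = \rho^{T_S}_S \in V_\pm^S$ for some (any) $T_S \in \support(q)$.

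Now the proof proceeds by translating north-south dynamics in $\fontact S$ to the boundary. Given $U_\pm$, choose a slightly shrunken $V_+^{S,\sharp} \subset V_+^S$ whose $10E$-neighborhood still lies in $V_+^S$, and apply north-south dynamics on $\fontact S$ to obtain $N$ with $g^N(\fontact S \cup \partial \fontact S - V_-^S) \subset V_+^{S,\sharp}$. For $q \in \partial \cuco X - U_-$ with $\support(q) = \{S\}$, we have $q_S \notin V_-^S$, so $g^N q_S \in V_+^{S,\sharp} \subset V_+^S$ and thus $g^N q \in U_+$. For remote $q \in \partial \cuco X - U_-$, we have $\rho^{T_S}_S \notin V_-^S$, so $g^N \rho^{T_S}_S \in V_+^{S,\sharp}$; since $g$ is an automorphism, $\rho^{g^N T_S}_S$ coarsely coincides with $g^N \rho^{T_S}_S$, and the chosen buffer ensures $\rho^{g^N T_S}_S \in V_+^S$, giving $g^N q \in U_+$.

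The main obstacle is purely bookkeeping: ensuring the coarse ambiguity in the identity $\rho^{gT}_S \asymp g \cdot \rho^T_S$, and the fact that $(\partial\pi_{\{S\}}(\cdot))_S$ is coarsely (rather than exactly) defined, are absorbed by the shrinking from $V_+^S$ to $V_+^{S,\sharp}$. Once this shrinking is carried out uniformly in the choice of $T_S$, and once the triviality of the ratio/sum conditions in the maximal-domain case is verified directly from the definitions, the argument reduces to hyperbolic north-south dynamics applied to a single hyperbolic space $\fontact S$.
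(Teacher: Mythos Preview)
Your approach is correct and genuinely different from the paper's.  The paper establishes existence of $\lambda_\pm$ via compactness of $\overline{\cuco X}$ (Theorem~\ref{thm:cpt}), proves uniqueness via a coarse-median argument, and obtains the uniform $N$ by a compactness/pigeonhole argument on the first-entry-time function $f:\boundary\cuco X-U_-\to\naturals$ after showing that $g^n(p)$ cannot converge to $\lambda_-$ (using hierarchy rays and medians in $\fontact S$).  By contrast, you exploit directly that $S$ is $\nest$--maximal, hence orthogonal to nothing: this forces $\{S\}^\orth=\emptyset$ and $\{S\}_q=\{S\}$, so the ratio and sum conditions in the remote/non-remote parts of a basic neighbourhood of $\lambda_\pm$ are vacuous, and membership reduces to a single condition on a single point of $\fontact S\cup\boundary\fontact S$ (namely $q_S$ or $\rho^{T_S}_S$).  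This collapses the problem to north-south dynamics of the loxodromic $g$ on the bordification $\fontact S\cup\boundary\fontact S$, with a uniform buffer absorbing the coarse ambiguity in $\rho^{gT}_S\asymp g\cdot\rho^T_S$.  Your route is more elementary and avoids Theorem~\ref{thm:cpt} entirely; the paper's route, while heavier, is closer in spirit to the reducible analysis in Propositions~\ref{prop:reducible big set}--\ref{prop:exterior}, where no such collapse is available.

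One point to make explicit: the north-south statement you invoke is on all of $\fontact S\cup\boundary\fontact S$, not merely on $\boundary\fontact S$.  This does hold for a loxodromic on any (not necessarily proper) $\delta$--hyperbolic space---for $y\notin V_-^S$ one controls the Gromov product $(g^Ny\mid\lambda_+)_{\pi_S(x_0)}$ via the projection of $y$ to the quasi-axis---but since Remark~\ref{rem:gromov_product} and the paper's standing hypotheses allow non-proper $\fontact S$, a sentence justifying this (or a reference) would strengthen the write-up.
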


\begin{proof}
Let $g \in \Aut(\mathfrak S)$ be irreducible axial.  For the rest of the proof, fix a basepoint $X \in \cuco X$.

\textbf{Existence of $\lambda_+, \lambda_- \in \partial \cuco X$:}  For any $n$, let $X_n = g^n \cdot X$.  We will show that $(X_n)$ converges to some point in $\partial \cuco X$; a similar argument will show that $(X_{-n})$ converges to some other point, and then we will prove they are distinct.
By compactness (Theorem \ref{thm:cpt}), there exists a subsequence $(X_{n_k}) \subset (X_n)$ which converges to some point $\lambda_+ \in \partial \cuco X$.  By irreducibility of $g$, we must have that $\lambda_+ \in \partial \fontact S \subset \partial \cuco X$.  By irreducibility and the definition of convergence, we have that $\pi_{\fontact S}(X_{n_k}) \rightarrow \lambda_+ \in \partial \fontact S$.  Axiality of $g$ then implies that, for any other subsequence $(X_{n_l}) \subset (X_n)$, the Gromov product $(X_{n_k}, X_{n_l})_X \rightarrow \infty$ in $\fontact S$ as $k, l \rightarrow \infty$.  This implies that $\pi_{\fontact S}(X_n) \rightarrow \lambda_+ \in \partial \fontact S$, which implies that $X_n \rightarrow \lambda_+ \in \partial \cuco X$.

Similarly, we define $X_{-n}\rightarrow \lambda_- \in \partial \cuco X$.  Observe that $\left(\pi_{\fontact S}(X_n), \pi_{\fontact S}(X_{-n})\right)_{\pi_{\fontact S}(X)}$ is uniformly bounded by Proposition \ref{prop:axial}, implying that $\lambda_+ \neq \lambda_-$.  Since $g$ stabilizes the orbit, it obviously fixes $\lambda_+$ and $\lambda_-$.  Note that $\lambda_+, \lambda_-$ are independent of our choice of $X \in \cuco X$.

\textbf{Uniqueness of $\lambda_+, \lambda_- \in \partial \cuco X$:} By Lemma \ref{lem:irreducible limit support}, any point $\lambda \in \partial \cuco X$ fixed by $g$ has $\supp(\lambda) = S$.  If $g$ fixes three points in $\partial \cuco X$, then it fixes three points in $\partial \fontact S$.  As such, $g$ coarsely fixes the coarse median of those points, producing a bounded orbit, a contradiction.

\textbf{North-south dynamics on $\partial \cuco X$:} Fix boundary neighborhoods $\lambda_+ \in U_+$ and $\lambda_- \in U_-$ with $U_+ \cap U_- = \emptyset$.  

\setcounter{claimnum}{0}
\begin{claim}\label{claim:north-south}
For any $p \in \partial \cuco X-\{\lambda_-\}$, $(g^n(p))$ does not converge to $\lambda_-$.
\end{claim}

\renewcommand{\qedsymbol}{$\blacksquare$}

\begin{proof}[Proof of Claim \ref{claim:north-south}]
If $\supp(p) \neq \{S\}$, then $(g^n(p))$ cannot converge to a point in $\partial \cuco X$ supported on $S$, as $g$ does not alter the coefficients of the pieces of $p$ supported on proper subdomains. In particular, since $\supp(\lambda_-)=\{S\}$, as shown above, $(g^n(p))$ cannot converge to $\lambda_-$.  Thus we may assume that $\supp(p) = \{S\}$.

Let $[X, p]$ be a hierarchy ray in $\overline{\cuco X}$.  Since $\supp(p)= \{S\}$, $[X,p]$ projects to a $D$-quasigeodesic, $[X, p]_S \subset \overline{\fontact S}$.  Let $[X, \lambda_-]$ be the orbit $(g^{-n}(X))$, which is a quasigeodesic with quality depending on $g$.

Consider $m\in \fontact S$, the coarse median of $(\lambda_-, p, X)$.  By hyperbolicity, there exist points $Y \in [X,p]_S$, $Z \in [X, \lambda_-]$ sufficiently far out along $[X, p]_S$ and $[X, \lambda_-]$ such that any geodesic between $Y, Z$, $[Y,Z]$, comes uniformly close to $m$, independent of $Y$ and $Z$; in particular, the coarse median of $(X, Y, Z)$ is uniformly close to $m$.  Moreover, there is a uniform constant $\delta'>0$ (depending on $D$, $g$, and the hyperbolicity constant, $\delta>0$) such that each of $[Y,Z], [X,Y],$ and $[X, Z]$ is $\delta'$-close to $m$. 

Let $m_{Y,Z} \in [Y,Z]$ and $m_{X,Z} \in [X, Z]$ be points $\delta'$-close to $m$.  Then there exists a uniform $\delta''>0$ such that $[m_{Y,Z}, Z]$ and $[m_{X,Z}, Z]$ must $\delta''$-fellow travel.  By axiality, there exists $N>0$ such that, for all $n>N$, $g^n(m_{X,Z})$ is between $X$ and $g^n(X)$ along the quasigeodesic axis of $g$ in $\fontact S$.  This implies that the coarse median of $(X, g^n(Y), g^n(Z))$ is uniformly close to $X$.  Thus $(g^n(p), \lambda_-)_{X}$ is uniformly bounded and $(g^n(p))$ cannot converge to $\lambda_-$ in $\partial \fontact S$ and thus not in $\partial \cuco X$ as well.\end{proof}
\renewcommand{\qedsymbol}{$\Box$}

Since the limit of $(g^n(p))$ is a fixed point, uniqueness of $\lambda_-, \lambda_+$ and Claim \ref{claim:north-south} imply that $g^n(p) \rightarrow \lambda_+$ for any $p \in \partial \cuco X - \{\lambda_-\}$.

Now consider the function $f: \partial \cuco X - U_- \rightarrow \naturals$, where $f(p)$ is the least power $N_p$ such that $g^{N_p}(p) \in U_+$.  Since $\lambda_+$ and $\lambda_-$ are the unique fixed points of $g$, such a power exists, otherwise the sequence $(g^n(p)) \subset \partial \cuco X$ would subconverge to another fixed point.  Since $\partial \cuco X$ is compact (Theorem \ref{thm:cpt}) the function $f$ attains a maximum, $N_f$.  By definition, $g^{N_f}(\partial \cuco X - U_-) \subset U_+$, completing the proof.
\end{proof}

We now treat the irreducible distorted case:

\begin{prop}[Irreducible distorteds act parabolically] \label{prop:irreducible distorted}
If $g \in \Aut(\mathfrak S)$ is irreducible distorted, then $g$ has exactly one fixed point $\lambda_g \in \partial \cuco X$, and  $g^n \cdot X, g^{-n} \cdot X \rightarrow \lambda_g$ for any $X \in \overline{\cuco X}$.
\end{prop}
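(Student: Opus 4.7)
The plan is to reduce the problem to the fact that $g$ acts parabolically on the maximal hyperbolic space $\fontact S$, and then transfer the resulting dynamics up to $\overline{\cuco X}$ via the boundary topology. First I will argue that irreducibility forces $\BIG(g)=\{S\}$ for the $\nest$-maximal $S\in\mathfrak S$, and Lemma~\ref{lem:not_big} bounds $\diam_U\pi_U(\langle g\rangle\cdot X)$ for all $U\neq S$. Since $g$ is distorted, it is neither elliptic (Proposition~\ref{prop:elliptic}) nor axial (Proposition~\ref{prop:axial}); together with $S\in\BIG(g)$, this forces $g$ to act parabolically on $\fontact S$. The classical classification of isometries of Gromov-hyperbolic spaces then produces a unique point $\lambda_g\in\partial\fontact S$ which is $g$-fixed and is the common forward and backward boundary limit of every $g$-orbit in $\fontact S\cup\partial\fontact S$. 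Via Proposition~\ref{prop:babies continuously embed} I view $\lambda_g\in\partial\cuco X$ with $\support(\lambda_g)=\{S\}$, and I get uniqueness of the fixed point in $\partial\cuco X$ immediately from Lemma~\ref{lem:irreducible limit support}, which forces any $g$-fixed $\mu\in\partial\cuco X$ to have $\support(\mu)=\{S\}$ and hence to lie in $\partial\fontact S$, where $\lambda_g$ is the only fixed point.

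For the convergence statement, I will combine compactness of $\overline{\cuco X}$ (Theorem~\ref{thm:cpt}) with continuity of the boundary extension of $g$ (Corollary~\ref{cor:auto extend}) to reduce to showing that every subsequential limit of $(g^nX)$ is $\lambda_g$. I will then split on the location of $X$. For $X\in\cuco X$ with $g^{n_k}X\to p$, coarse Lipschitzness of $\pi_S$ rules out $p\in\cuco X$ because $\pi_S(g^nX)$ escapes to $\lambda_g\in\partial\fontact S$; the definition of convergence to $p\in\partial\cuco X$ combined with Lemma~\ref{lem:not_big} forces $S\in\support(p)$, whence $\support(p)=\{S\}$ because $S$ is $\nest$-maximal and thus orthogonal to no domain; finally the $S$-coordinate of $p$ equals $\lambda_g$ by uniqueness of the parabolic limit. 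For $X\in\partial\fontact S\subset\partial\cuco X$, the conclusion is immediate from parabolicity and Proposition~\ref{prop:babies continuously embed}. For $X\in\partial\cuco X$ with $\support(X)\neq\{S\}$, the point $X$ is remote with respect to $\{S\}$, and for any $T\in\support(X)$ (which satisfies $T\propnest S$) the boundary projection $\bigl(\partial\pi_{\{S\}}(g^nX)\bigr)_S$ coarsely equals $\rho^{g^nT}_S=g^n\cdot\rho^T_S$ by $\Aut(\mathfrak S)$-equivariance; this sequence converges to $\lambda_g\in\partial\fontact S$ by parabolicity, and since $\{S\}^\orth=\emptyset$ this single-coordinate condition suffices to place $g^nX$ in every remote neighborhood of $\lambda_g$. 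The argument for $g^{-n}X$ is identical, and the two limits coincide because a parabolic isometry shares its forward and backward boundary fixed point.

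The main obstacle I anticipate is the careful topological bookkeeping in the remote case for $X\in\partial\cuco X$: basic neighborhoods of $\lambda_g$ split into interior, non-remote, and remote parts, each with several conditions (coordinate convergence, ratio conditions, and the orthogonal-complement mass bound). The simplification that drives everything is that $\support(\lambda_g)=\{S\}$ consists of a single $\nest$-maximal domain with empty orthogonal complement, so the ratio conditions collapse and the $\sum_{T\in\overline S^\orth}a^q_T<\epsilon$ condition becomes vacuous, reducing everything to the single assertion $\bigl(\partial\pi_{\{S\}}(g^nX)\bigr)_S\to\lambda_g$ in $\partial\fontact S$ — exactly what parabolicity and $\Aut(\mathfrak S)$-equivariance of the relative projections provide.
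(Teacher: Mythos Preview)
Your proof is correct and follows essentially the same route as the paper: both establish that $g$ acts parabolically on $\fontact S$, obtain the unique fixed point $\lambda_g\in\boundary\fontact S\subset\boundary\cuco X$, and invoke Lemma~\ref{lem:irreducible limit support} for uniqueness in $\boundary\cuco X$. For the convergence $g^{\pm n}X\to\lambda_g$, the paper argues in one line via compactness (Theorem~\ref{thm:cpt}) that any subsequential limit is a fixed point and hence equals $\lambda_g$; your explicit case analysis (interior, $\boundary\fontact S$, remote), exploiting that $\{S\}^\orth=\emptyset$ collapses the basic-neighborhood conditions to the single requirement $\rho^{g^nT}_S\to\lambda_g$, supplies a more detailed justification of that same step.
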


\begin{proof}
Let $S\in\mathfrak S$ be the unique $\nest$--maximal element, so that $gS=S$ and $g:\contact S\to\contact S$ is an isometry.  By the definition of irreducibility, $\BIG(g)=\{S\}$, so $g$ has unbounded orbits in the $\delta$--hyperbolic space $\contact S$.  We now apply the classification of isometries of hyperbolic spaces, as summarised in~\cite[Section~3]{CCMR}, emphasizing that these results do \emph{not} rely on properness of the space in question.

First, by Proposition~3.2 of~\cite{CCMR} and the fact that $\langle g\rangle\cdot\pi_X(X)$ (which coarsely coincides with $\pi_S(\langle g\rangle\cdot X)$) is distorted -- i.e. not quasiconvex -- in $\contact S$, we have that the action of $\langle g\rangle$ on $\contact S$ is \emph{not} lineal or focal.  By Lemma~3.3, the action of $\langle g\rangle$ on $\contact S$ is not of \emph{general type}.  Hence the action is \emph{horocyclic}, i.e. the limit set of $\langle g\rangle$ on $\boundary\contact S$ consists of \textbf{exactly one point} $\lambda_g$ with $g\lambda_g=\lambda_g$.  Moreover, Proposition~3.1 of~\cite{CCMR} implies that every $\lambda\neq\lambda_g$ in $\boundary\contact S$ has infinite $\langle g\rangle$--orbit.  We also denote by $\lambda_g$ the image of this limit point under the usual ($\Aut(\mathfrak S)$--equivariant) embedding $\boundary\contact S\to\boundary\cuco X$.  We thus have a fixed point $\lambda_g\in\boundary\cuco X$ for $g$.  Now, suppose that $\lambda\in\boundary\cuco X$ is fixed by $g$.  By Lemma~\ref{lem:irreducible limit support}, $\lambda\in\boundary\contact S\subset\boundary\cuco X$.  If $\lambda\neq\lambda_g$, then (as a point of $\boundary\contact S$), $\lambda$ cannot be fixed by $g$, so $\lambda_g$ is the unique fixed point in $\boundary\cuco X$.

Finally, if $p \in \partial \cuco X - \lambda_g$, then $g^n \cdot p \rightarrow \lambda_g$, for it subconverges to some point by compactness of $\overline{\cuco X}$ (Theorem \ref{thm:cpt}), which is fixed by $g$ and thus must be $\lambda_g$ by uniqueness.
\end{proof}

\begin{prop}\label{prop:irreducible distorted dynamics}
Let $g \in \Aut(\mathfrak S)$ be irreducible distorted and fix $\lambda_g \in \partial \cuco X$.  For any neighborhood $U\subset\boundary\cuco X$ of $\lambda_g$, there exists $N>0$ such that if $p \in \partial \cuco X - U$, then $g^N \cdot p \in U$.
\end{prop}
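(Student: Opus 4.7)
The plan is to argue by contradiction, combining the compactness of $\overline{\cuco X}$ from Theorem~\ref{thm:cpt} with the pointwise convergence $g^{\pm n}\cdot p\to\lambda_g$ established in Proposition~\ref{prop:irreducible distorted}. Suppose the conclusion fails for some open $U\ni\lambda_g$; then there exist $n_k\to\infty$ and $p_k\in\partial\cuco X-U$ with $q_k:=g^{n_k}\cdot p_k\notin U$. Extracting subsequences in the closed (and hence compact) set $\partial\cuco X-U$, we may assume $p_k\to p_\infty$ and $q_k\to q_\infty$, both different from $\lambda_g$.

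Since $g^{-1}$ is also irreducible distorted with the same unique fixed point $\lambda_g$, Proposition~\ref{prop:irreducible distorted} gives $g^{-m}\cdot q_\infty\to\lambda_g$ as $m\to\infty$. If the pointwise convergence $g^{-n}\cdot q\to\lambda_g$ could be upgraded to a \emph{local uniform} statement near $q_\infty$---namely, that there exist a neighborhood $W$ of $q_\infty$ and some $N_0$ with $g^{-n}(W)\subset U$ for all $n\geq N_0$---then for large $k$ we would have $q_k\in W$ and $n_k\geq N_0$, yielding $p_k=g^{-n_k}\cdot q_k\in U$, contradicting $p_k\notin U$.

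Proving this local uniform statement is the main obstacle. The key simplification is that by irreducibility, Lemma~\ref{lem:irreducible limit support} forces $\supp(\lambda_g)=\{S\}$, and since $S$ is $\nest$--maximal nothing is orthogonal to $S$, so the ratio-type conditions of Definition~\ref{defn:boundary_topology} defining basic neighborhoods of $\lambda_g$ become vacuous. Thus a basic neighborhood of $\lambda_g$ in $\partial\cuco X$ is essentially controlled by a cone-topology neighborhood $U_S\subset\fontact S\cup\partial\fontact S$ of $\lambda_g$. The $\fontact S$--coordinate of $g^{-n}\cdot q$ is either $g^{-n}\cdot q_S$ (if $S\in\supp(q)$) or $g^{-n}\cdot\rho^{T_q}_S$ (if $q$ is remote, with $T_q\in\supp(q)$ non-orthogonal to $S$), using that $gS=S$ and that $g$ acts by isometries on $\fontact S$.

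The local uniformity will then follow by combining two ingredients: (i)~parabolic dynamics of $g$ on the $\delta$--hyperbolic space $\fontact S$, which gives uniform convergence of $g^{-n}$ to $\lambda_g$ on bounded subsets of $\fontact S$ and on compact subsets of $\partial\fontact S-\{\lambda_g\}$; and (ii)~the fact, immediate from Definition~\ref{defn:boundary_topology} applied at $q_\infty$, that points $q$ in a sufficiently small neighborhood $W$ of $q_\infty$ have $\fontact S$--coordinates (computed via $\partial\pi_{\{S\}}$) lying in a common bounded set of $\fontact S\cup\partial\fontact S-\{\lambda_g\}$. Care is needed to treat the remote, non-remote, and interior cases of Definitions~\ref{defn:remote_part}--\ref{defn:interior_part} uniformly for $q\in W$, and to account for the fact that the choice of $T_q$ in $\partial\pi_{\{S\}}(q)$ may vary with $q$; however, Lemma~\ref{lem:well_defined} guarantees this choice is coarsely immaterial, keeping the relevant coordinates in a bounded region as $q$ varies in $W$.
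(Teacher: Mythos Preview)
Your contradiction setup and use of compactness are fine, but the route you take from there is considerably harder than necessary, and the ``local uniform'' step you identify as the main obstacle is not clearly achievable with the ingredients you cite.

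The paper's argument avoids the uniform-in-$n$ problem entirely by exploiting continuity of a \emph{single fixed} power of $g$. Define $F:\partial\cuco X\to\naturals$ by $F(p)=\min\{n:g^n\cdot p\in U\}$; this is well-defined by Proposition~\ref{prop:irreducible distorted}. If $F$ is unbounded, take $p_i$ with $F(p_i)\to\infty$; by compactness (Theorem~\ref{thm:cpt}) pass to $p_i\to\mu$. Set $N_\mu=F(\mu)$ and choose an open $V$ with $g^{N_\mu}\cdot\mu\in V\subset U$. Since $g^{N_\mu}$ is a homeomorphism of $\partial\cuco X$ (Corollary~\ref{cor:auto extend}), $g^{N_\mu}\cdot p_i\to g^{N_\mu}\cdot\mu$, so eventually $g^{N_\mu}\cdot p_i\in V\subset U$, forcing $F(p_i)\le N_\mu$---a contradiction. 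The point is that working with the \emph{minimal} entry time lets you derive a contradiction from continuity of one power, without ever needing convergence that is uniform over varying $n$.

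Your approach, by contrast, requires showing that $g^{-n}(W)\subset U$ for \emph{all} $n\ge N_0$. The sketch you give for this relies on (i) uniform parabolic dynamics on $\fontact S$ over bounded subsets of $\fontact S$ and compact subsets of $\partial\fontact S-\{\lambda_g\}$, and (ii) the $S$--coordinates of points in $W$ lying in such a set. But $\fontact S$ need not be proper even when $\cuco X$ is, so cone-neighborhoods in $\partial\fontact S$ need not have compact closure, and the uniform convergence in (i) over the relevant set is not a standard fact you can simply invoke. Moreover, verifying (ii) across the remote/non-remote cases of Definitions~\ref{defn:remote_part}--\ref{defn:non_remote_part}, with supports of nearby $q$ varying, is a genuine case analysis that your proposal only gestures at. None of this is needed once you use the first-entry-time trick.
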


\begin{proof}
Fix a neighborhood $\lambda_g \in U \subset \partial \cuco X$ and let $p \in \partial \cuco X - U$.  Let $F: \overline{\cuco X} \rightarrow \naturals$ be the map which takes each $p \in \overline{\cuco X}$ to the minimal $n \in \naturals$ such that $g^n \cdot p \in U$; note that $F$ is defined by Proposition \ref{prop:irreducible distorted}.  We prove that $F$ is bounded.

Assume not; then there exists a sequence $(p_i) \subset \partial \cuco X$ such that $F(p_i) = n_i \rightarrow \infty$ as $i \rightarrow \infty$.  By compactness of $\overline{\cuco X}$, the sequence $(p_i)$ accumulates on some point $\mu \in \partial \cuco X$.  If $N_{\mu} = F(\mu)$, then $g^{N_{\mu}}\cdot \mu \in U$.  Choose an open neighborhood $g^{N_{\mu}} \cdot \mu \in V \subset U$.

By passing to a subsequence if necessary, we may assume $p_i \rightarrow \mu$ and continuity of the action of $g$ on $\partial \cuco X$ implies that $g^{N_{\mu}} \cdot p_i \rightarrow g^{N_{\mu}} \cdot \mu$.  In particular, this implies that the sequence $(g^{N_{\mu}} \cdot p_i)$ eventually lies in $V \subset U$, a contradiction.
\end{proof}

\subsubsection{Reducible automorphisms}  We now turn to non-elliptic reducible automorphisms.  As before, we assume $\cuco X$ is proper, $g\in\Aut(\mathfrak S)$ has infinite order and is thus axial or distorted, and $\BIG(g)\neq\emptyset$ denotes the set of (pairwise--orthogonal) $U\in\mathfrak S$ where $\diam_{\fontact U}(\langle g\rangle\cdot X)=\infty$.

If $g$ is reducible, then $\BIG(g)=\{A_i\}\sqcup\{B_j\}$, where $g$ acts axially on $\fontact A_i$ and distortedly on $\fontact B_j$ for all $i,j$ and $A_i,B_j\neq S$ for all $i,j$.  Proposition~\ref{prop:axial} implies that $g$ is axial if and only if $\{A_i\}\neq\emptyset$; otherwise $g$ is distorted.

We must be careful with nontrivial finite orbits in $\mathfrak S$.  To that end, recall that by Lemma~\ref{lem:Big is fixed} there exists $M=M(\mathfrak S)>0$ such that $g^M$ fixes $\BIG(g)$ pointwise.  The proof of that lemma shows that $g^M$ in fact fixes $\{A_i\}$ and $\{B_i\}$ pointwise, since we cannot have $g\cdot A_i=B_j$ for any $i,j$.  Let $h=g^M$, and note that $\BIG(h)=\BIG(g)$.  Note that we can choose $M$ so that \emph{any} pairwise-orthogonal subset of $\mathfrak S$ stabilized by $h$ is fixed by $h$ pointwise.

\begin{lem}\label{lem:lower_fixed_points}
Let $V\in\mathfrak S$ and suppose that $V\nest U$ or $V\transverse U$, for some $U\in \BIG(g)$.  Suppose also that $p\in\boundary \cuco X$ is fixed by $g$.  Then $V\not\in\support(p)$.
\end{lem}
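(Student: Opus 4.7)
The plan is to assume $V \in \support(p)$ and derive a contradiction by exhibiting an incompatibility between the unbounded $\fontact U$--orbit of $\langle g\rangle$ guaranteed by $U \in \BIG(g)$ and the fact that the relative position $V \propnest U$ or $V \transverse U$ forces any sequence converging to $p$ in the ``right'' way to have bounded $\fontact U$--projection. The argument will parallel the proof of Lemma \ref{lem:Big is orth}, where exactly this kind of contradiction was used to rule out non-orthogonal elements of $\BIG(g)$; the novelty here is to extract the same rigidity from the support of a fixed \emph{boundary} point rather than from $\BIG(g)$ itself.

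First, after passing to a uniformly bounded power (which by Lemma \ref{lem:Big is fixed} and finiteness of $\support(p)$ leaves $\BIG(g)$ unchanged and fixes $U$, each $W \in \support(p)$, and, because $g\cdot p = p$ forces the coordinates to match under any permutation of $\support(p)$, each $p_W \in \partial\fontact W$ as well), one may assume $g$ acts as the identity on $\{U\} \cup \support(p)$ and on each $p_W$.

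Next, I will use the density construction in the proof of Proposition \ref{prop:properties} to produce an approximating sequence $x_n \to p$ that lies in the image of the standard quasi-isometric embedding $\prod_{W \in \support(p)} F_W \to \cuco X$ (cf.~Remark 5.12 of \cite{BehrstockHagenSisto:HHS_II}). For our specific $U$, the hypothesis $V \propnest U$ or $V \transverse U$ combined with the orthogonality of $\support(p)$ forces $U$ to be non-orthogonal to $V = W_1 \in \support(p)$, not nested in any other $W_j \in \support(p)$ (by the orthogonality axiom), and non-nested-above by any $W_j$ either. Hence the product region structure yields $\pi_U(x_n) \asymp_E \rho^V_U$, and in particular $\dist_U(X,x_n) \leq C$ for a constant $C$ independent of $n$.

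Because $g$ fixes each $W \in \support(p)$, it preserves each $F_W$ up to uniformly bounded distance, so the translates $g^k x_n$ coarsely remain in the same product region and $\pi_U(g^k x_n) \asymp_E \rho^V_U$ too. By the triangle inequality in $\fontact U$,
$$\dist_U(x_n,g^k x_n) \leq 2C + \diam_{\fontact U}(\rho^V_U) \leq 2C + E.$$
On the other hand, since $gU=U$, $g$ acts by isometries on $\fontact U$, so
$$\dist_U(x_n,g^k x_n) \geq \dist_U(X,g^k X) - 2\dist_U(X,x_n) \geq \dist_U(X,g^k X) - 2C.$$
Since $U \in \BIG(g)$, the quantity $\dist_U(X,g^k X)$ is unbounded in $k$, producing the required contradiction.

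The main (minor) obstacle is Step 2: one needs to know that the approximating sequence can be chosen inside the product region, rather than an arbitrary sequence converging to $p$, because for a general sequence converging to $p$ the $\fontact U$--projection need not remain bounded; this is why it is essential to invoke the explicit construction from Proposition \ref{prop:properties} rather than only the abstract density statement. A secondary subtlety, which is hidden by working inside the product region, is that in the nesting case $V \propnest U$ the consistency inequality has a more delicate form than in the transverse case (the ``second alternative'' $\diam_{\fontact V}(\pi_V(x_n)\cup\rho^U_V(\pi_U(x_n)))\leq\kappa_0$ does not immediately give $\pi_U(x_n)$ bounded); placing $x_n$ in $\prod F_W$ circumvents this by forcing the first alternative of consistency to hold.
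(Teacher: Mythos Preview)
Your proof is correct, and at its core it is the same argument as the paper's: once you pass to a power $h=g^M$ fixing both $U$ and every element of $\support(p)$, the set $\rho^V_U\subset\fontact U$ is coarsely $h$--invariant (since $h\cdot\rho^V_U\asymp\rho^{hV}_{hU}=\rho^V_U$), and this contradicts $U\in\BIG(h)$ because any $h$--orbit in $\fontact U$ is unbounded.

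The paper does exactly this in three lines, working entirely inside $\fontact U$ with the coarse point $\rho^V_U$. Your route --- choosing a sequence $x_n\to p$ inside $\prod_{W\in\support(p)}F_W$, noting $\pi_U(x_n)\asymp\rho^V_U$, and then comparing $\pi_U(x_n)$ with $\pi_U(g^kx_n)$ --- recovers the same coarse fixed point but through an unnecessary detour. In particular: the convergence $x_n\to p$ is never used (any single point of the product region would do), and in fact no interior point is needed at all, since $\rho^V_U$ already lives in $\fontact U$. The ``obstacles'' you flag at the end (needing the explicit density construction, handling the nested branch of the consistency inequality) are artifacts of this detour and do not arise in the direct argument.
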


\begin{proof}
By hypothesis, $h\cdot p=p$.  Observe that $\langle h\rangle \cdot\rho^V_U$ is unbounded.  Since $U \in \BIG(g)$, we have that $h\cdot \rho^V_U=\rho^{h\cdot V}_U$ and $h\cdot U$ is infinite, implying $U\not\in\support(p)$, as required.
\end{proof}

We denote by $\mathbb S^k$ a $k$--sphere and by $\mathbb D^k$ a $k$--ball.  Given spaces $X,Y$, we denote by $X\star Y$ their join.  For each $i,j$, let $F_i=F_{A_i},F'_j=F_{B_j}$ be the standard factors associated to $A_i,B_j$, so that there is a quasiconvex hieromorphism $\prod_{i}F_i\times \prod_jF'_j\to\cuco X$, inducing an embedding $\bigstar_{i}\boundary F_i\to\bigstar_j\boundary F'_j\to\boundary\cuco X$ whose image is a closed $g$--invariant subset which we denote $\mathfrak E(g)$.  (Note: The image of $\prod_iF_i\times\prod_jF'_j$ need not be $g$--invariant, but since $g$ stabilizes each standard product region $F'_j \times E_{B_j}$, the subspaces $gF_i,F_i$ are parallel, and thus have the same boundary.)

For each $i$, the action of $h = g^M$ on $P_{F_i} \cong F_i \times E_{A_i}$ induces an action of $h$ on $F_i$ by applying the restriction homomorphism $\theta_{A_i}: \mathrm{Stab}_{\Aut(\mathfrak S)}(A_i) \rightarrow \Aut(\mathfrak S_{A_i})$.  For each $A_i$, let $h_i$ be the image of $h$ under this homomorphism, and let $h_j$ be the image of $h$ under the corresponding restriction homomorphism for $B_j$.   
 
The following proposition says that, up to taking a power, a reducible automorphism can be decomposed into irreducible automorphisms on subdomains:

\begin{prop}\label{prop:reducible big set}
If $g$ is non-elliptic reducible and $h=g^M$, then the following hold:
\begin{enumerate}
 \item For each $i$, $h_i$ is an irreducible axial automorphism of $F_i$ which fixes a unique pair of points $\lambda_{i,+},\lambda_{i,-}\in\boundary\fontact A_i$ and acts with north-south dynamics on $\boundary\fontact A_i$;
 \item For each $j$, $h_j$ is an irreducible distorted automorphism of $F'_j$ and fixes a unique point $\lambda_{h_j}\in\boundary\fontact B_j$.
\end{enumerate}
Hence, $g$ stabilizes (and $h$ fixes pointwise) a nonempty subspace $S(g)\star C(g)\subseteq\boundary\cuco X$, where $S(g)=\emptyset$ or $S(g)\cong \mathbb S^{|\{A_i\}|-1}$ and $C(g)=\emptyset$ or $C(g)\cong\mathbb D^{|\{B_j\}|}$.  Moreover, for all $n>0$, $g^n$ does not fix any point in $\mathfrak E(g)-S(g)\star C(g)$.
\end{prop}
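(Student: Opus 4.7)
The plan is to reduce the analysis of the reducible $g$ to the previously-established irreducible cases by restricting to each axial/distorted factor. Since $h = g^M$ fixes every element of $\BIG(g) = \{A_i\} \sqcup \{B_j\}$, Remark~\ref{rem:aut_product} provides restriction automorphisms $h_i = \theta_{A_i}(h) \in \Aut(\mathfrak S_{A_i})$ and $h_j = \theta_{B_j}(h) \in \Aut(\mathfrak S_{B_j})$, whose induced maps on each $\fontact V$ (for $V \nest A_i$ or $V \nest B_j$) coincide with those of $h$. The first step is to show $\BIG(h_i) = \{A_i\}$ inside $(F_i, \mathfrak S_{A_i})$: any $V \in \BIG(h_i)$ would, via the standard hieromorphism, lie in $\BIG(h) = \BIG(g)$, but by Lemma~\ref{lem:Big is orth} every element of $\BIG(g)\setminus\{A_i\}$ is orthogonal to $A_i$ and hence not nested in $A_i$ (Definition~\ref{defn:space_with_distance_formula}.\eqref{item:dfs_orthogonal}). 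The analogous statement holds for $h_j$ in $(F'_j, \mathfrak S_{B_j})$. Since $h_i$ acts axially on $\fontact A_i$ by construction and $h_j$ acts distortedly on $\fontact B_j$, Propositions~\ref{prop:irreducible axial} and~\ref{prop:irreducible distorted} (applied to the HHS $F_i$ and $F'_j$, which are proper as closed subspaces of the proper HHS $\cuco X$) yield the fixed points $\lambda_{i,\pm} \in \boundary \fontact A_i$ with north-south dynamics on $\boundary F_i$, and the unique fixed point $\lambda_{h_j} \in \boundary \fontact B_j$.

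Next, I would assemble these fixed points into the asserted subspace. Pairwise orthogonality of $\BIG(g)$ guarantees that every formal sum $\sum_i a_i \lambda_{i,\epsilon_i} + \sum_j b_j \lambda_{h_j}$ with $\epsilon_i \in \{+,-\}$ and nonnegative weights summing to $1$ is a legitimate boundary point of $\cuco X$, and by definition the set of such sums is the topological join $S(g) \star C(g)$, where $S(g) = \bigstar_i \{\lambda_{i,+}, \lambda_{i,-}\}$ is a join of $|\{A_i\}|$ copies of $S^0$, hence an $(|\{A_i\}|{-}1)$-sphere, and $C(g) = \bigstar_j \{\lambda_{h_j}\}$ is a join of points, hence a simplex. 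This subspace is pointwise-fixed by $h$ because $h$ fixes each generator and preserves each coefficient, and it is $g$-stable because $g$ separately permutes $\{A_i\}$ and $\{B_j\}$ (the axial/distorted dichotomy being preserved by any automorphism), sending each $\lambda_{i,\pm}$ to some $\lambda_{gi,\pm}$ and each $\lambda_{h_j}$ to $\lambda_{h_{gj}}$.

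Finally, for the rigidity clause, I would take $p \in \mathfrak E(g)$ fixed by some $g^n$, write $p = \sum_i a_i p_i + \sum_j b_j q_j$ with $p_i \in \boundary F_i$ and $q_j \in \boundary F'_j$, and pass to a power $g^{nN}$ that is simultaneously a power of $h$ (say $h^K$) and fixes every element of $\BIG(g)$ individually (such $N$ exists because $\BIG(g)$ is finite). Each $p_i$ is then fixed by $h_i^K$, which is still irreducible axial with fixed set $\{\lambda_{i,\pm}\}$, while each $q_j$ is fixed by the irreducible distorted $h_j^K$ with unique fixed point $\lambda_{h_j}$; hence $p \in S(g) \star C(g)$. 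The main delicacy is Step~1: verifying that $\BIG(h_i)$ computed inside $(F_i, \mathfrak S_{A_i})$ coincides with the restriction of $\BIG(h)$ to $\mathfrak S_{A_i}$, which relies on the fact that the standard hieromorphism $(F_i, \mathfrak S_{A_i}) \to (\cuco X, \mathfrak S)$ from Section~\ref{sec:product_regions} preserves projections to $\fontact V$ for $V \nest A_i$ and is a uniform quasiisometric embedding on each factor.
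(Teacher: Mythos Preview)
Your proposal is correct and follows essentially the same approach as the paper's proof: restrict to each factor $F_i$ or $F'_j$, observe that the restricted automorphism is irreducible (axial or distorted), apply Propositions~\ref{prop:irreducible axial} and~\ref{prop:irreducible distorted}, assemble the resulting fixed points into the join $S(g)\star C(g)$, and deduce the rigidity clause by passing to a suitable power of $g$ and invoking uniqueness of the fixed points of $h_i^K$ and $h_j^K$. Your argument is in fact somewhat more detailed than the paper's --- for instance, you spell out why $\BIG(h_i)=\{A_i\}$ (the paper simply says ``irreducibly by construction'') and you are more explicit about the power-taking in the final step --- but there is no substantive difference in strategy.
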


\begin{proof}
For each $i$, $h_i$ acts on $\fontact A_i$ axially by the assumption on $g$ and irreducibly by construction.  Hence, Proposition \ref{prop:irreducible axial} implies that $h_i$ fixes two points $\lambda_{i,+}, \lambda_{i,-} \in \partial \fontact A_i$ and acts with north-south dynamics on $\partial \fontact A_i$.  Similarly, for each $j$, $h_j$ acts on $\fontact B_j$ distortedly by assumption and irreducibly by construction.  Proposition \ref{prop:irreducible distorted} then implies that $h_j$ fixes a unique point $\lambda_{h_j} \in \partial \fontact B_j$.

If $\{A_i\} \neq \emptyset$, then each $A_i$ contributes a pair of points $\lambda_{i,+}, \lambda_{i, -} \in \partial \fontact A_i$ fixed by $h$, which we can think of as a copy of $\mathbb{S}^0$, namely $\mathbb{S}_i^0$.  Moreover, $h$ clearly fixes the join of these spheres, $\bigstar_i \mathbb{S}_i^0 \cong \mathbb{S}^{|\{A_i\}| - 1} = S(g)$, as required.

Similarly, if $\{B_i\} \neq \emptyset$, then each $B_j$ contributes a point $\lambda_{h_j} \in \partial \fontact B_j$ fixed by $h$, and $h$ fixes the join of these points, $\bigstar_j \lambda_{h_j} \cong \mathbb{D}^{|\{B_j\}|} = C(g)$, as required.

Since $h$ fixes these $S(g)$ and $C(g)$, $h$ clearly fixes $S(g) \star C(g)$.  Now, if $g^n$ fixes a point $\lambda \in \mathfrak E(g)$, then $h^n = (g^n)^M$ fixes $\lambda$.  If $\lambda = \sum_i a_i p_i + \sum_j b_j q_j$, where $p_i \in \partial F_i$ and $q_i \in \partial F'_j$, then the uniqueness of the $\lambda_{i, +}, \lambda_{i, -}, \lambda_{h_j}$ implies that, for $a_i \neq 0, b_j \neq 0$, we must have $p_i = \lambda_{i, +}$ or $p_i = \lambda_{i,-}$ and $q_j = \lambda_{h_j}$.
\end{proof}

\begin{rem}
Set $\mathrm{Comp}(g) = \{p \in \partial \cuco X | \supp(p) \subset \{A_i, B_j\}^{\orth}_{i,j}\}$ and let $\mathrm{Fix}(h) \subset \partial \cuco X$ be the set of fixed points of $h$.  It is not difficult to show that $\mathrm{Fix}(h) \subseteq S(g) \star C(g) \star \mathrm{Comp}(g)$, but proper containment can happen.
\end{rem}

\begin{lem}\label{lem:bounded big papa}
Let $U \in \BIG(g)$ and $U \nest V$.  For all $p \in \partial \cuco X$ such that $g^n(p) = p$ for some $n>0$, we have $V \notin \supp(p)$.
\end{lem}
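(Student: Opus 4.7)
My plan is to argue by contradiction, derive that a power of $g$ coarsely fixes an interior point of $\fontact U$, and then contradict $U\in\BIG(g)$ via the classification of isometries of hyperbolic spaces.

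Assume $V\in\supp(p)$. Since the elements of $\supp(p)$ are pairwise orthogonal (Definition~\ref{defn:boundary_point}) and $U\nest V$ prevents $U\orth V$, we get $U\notin\supp(p)$; in particular $U\neq V$, so $U\propnest V$ (the degenerate case $U=V$ does not arise in the applications of this lemma, since there $V$ is $\nest$--maximal and $g$ is assumed reducible). The hypothesis $g^np=p$ and the finiteness of $\supp(p)$ (Lemma~\ref{lem:pairwise_orthogonal}), combined with Lemma~\ref{lem:Big is fixed} applied to $U$ and $V$, lets me pass to a positive power $h=g^{m}$ that simultaneously fixes $U$, fixes $V$, fixes every element of $\supp(p)$, and acts on each $\fontact W$ ($W\in\{U,V\}\cup\supp(p)$) as an isometry preserving the coordinate $p_W$. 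Since $\BIG(h)=\BIG(g)$, the action of $h$ on $\fontact U$ still has unbounded orbits.

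Next, I will extract a boundary projection as in Definition~\ref{defn:boundary_projection}. The point $p$ is remote with respect to $\{U\}$: $U\notin\supp(p)$, and the element $V\in\supp(p)$ is non-orthogonal to $U$ since $U\propnest V$. Because $U\propnest V$, case~(2) of Definition~\ref{defn:boundary_projection} applies: choose a $(1,20\delta)$--quasigeodesic ray $\gamma$ in $\fontact V$ from $\rho^U_V$ to $p_V$, pick $x\in\gamma$ past which $\rho^V_U$ is coarsely constant (bounded geodesic image, Definition~\ref{defn:space_with_distance_formula}.\eqref{item:dfs:bounded_geodesic_image}), and set $q=\left(\boundary\pi_{\{U\}}(p)\right)_U=\rho^V_U(x)\in\fontact U$. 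Because $h$ is an automorphism fixing $U$ and $V$, the relative projections $\rho^U_V,\rho^V_U$ are $h$--equivariant up to uniformly bounded error, and $h\cdot p_V=p_V$; hence $h\gamma$ is another admissible ray defining $q$. By the coarse well-definedness of the boundary projection (Lemma~\ref{lem:well_defined}) $h\cdot q$ coarsely coincides with $q$, so the $\langle h\rangle$--orbit of the interior point $q\in\fontact U$ is bounded.

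Finally, this bounded orbit contradicts $U\in\BIG(h)$. Since $h$ acts on the $\delta$--hyperbolic space $\fontact U$ with unbounded orbits, its action is either axial or parabolic; by the standard classification of isometries of hyperbolic spaces (see e.g.\ Section~8.1 of~\cite{Gromov:essay}), every non-elliptic isometry has unbounded orbit on every interior point. Hence no such $p$ can exist, and $V\notin\supp(p)$. The main point where care is needed is the second paragraph: one must check that although $q$ depends on the auxiliary choices of $\gamma$ and of the cutoff point $x$, these choices are $h$--equivariant up to the controlled constants that appear in Lemma~\ref{lem:well_defined}, so applying $h$ does not change $q$ by more than a bounded amount.
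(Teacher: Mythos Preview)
Your approach is different from the paper's and is essentially correct, but there are two points that need attention.

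First, Lemma~\ref{lem:well_defined} does not say what you need. That lemma only asserts that $\boundary\pi_{\overline S}$ is coarsely independent of the choice of $T_S\in\supp(q)$; in your situation $T_U=V$ is forced anyway (anything in $\supp(p)$ orthogonal to $V$ is orthogonal to $U$). What you actually need is that in case~(2) of Definition~\ref{defn:boundary_projection} the value $\rho^V_U(x)$ is determined, up to a uniform constant, by the triple $(U,V,p_V)$ alone. This is true and follows directly from bounded geodesic image and hyperbolicity: two $(1,20\delta)$--quasigeodesic rays to $p_V$ eventually fellow-travel, and far from $\rho^U_V$ a short geodesic between them has $\rho^V_U$--image of diameter at most $E$, so the eventual values agree. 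You should state and use this fact rather than citing Lemma~\ref{lem:well_defined}.

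Second, the sentence ``$h\cdot q$ coarsely coincides with $q$, so the $\langle h\rangle$--orbit \ldots\ is bounded'' is a non sequitur as written: $\dist_U(hq,q)\leq C$ alone would only give linear growth. What makes your argument work is that the \emph{same} structural constant bounds $\dist_U(h^kq,q)$ for every $k$, since each $h^k$ fixes $U$, $V$, and $p_V$, and the boundary projection depends only on this data. Say this explicitly.

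For comparison, the paper bypasses the boundary-projection machinery entirely: it takes a sequence $X_k\to p$, uses bounded geodesic image to force any hierarchy path from $X_k$ to $h^n X_k$ to pass near $\rho^U_V$ in $\fontact V$, and concludes that the Gromov product $(X_k\mid h^n X_k)_{\rho^U_V}$ stays bounded, so $h^n$ cannot fix $p_V$. Your route is more conceptual (the fixed point $p$ produces a coarse fixed point in $\fontact U$), while the paper's is more hands-on; both ultimately rest on the same bounded geodesic image computation.
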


\begin{proof}
It suffices to prove the lemma for $h= g^M$.  Suppose for a contradiction that $V \in \supp(p)$.  Since $U \in \BIG(h)$, $\diam_V(\langle h \rangle \cdot \rho^U_V)$ is uniformly bounded.  Take any sequence $X_k \rightarrow p$ in $\overline{\cuco X}$; note that this implies $X_k \rightarrow p_V$ in $\overline{\fontact V}$.   Thus, there exists $K>0$ such that $d_V(X_k, \rho^U_V) > 100 E$ if $k\geq K$.

Since $h$ is unbounded on $\fontact U$, there exists $N>0$ depending only on $K$ such that $d_U(X_k, h^n(X_k))> 100E$ if $n \geq N$ and $k \geq K$.  If $\gamma$ is a hierarchy path between $X_k$ and $h^n(X_k)$ in $\cuco X$, then the bounded geodesic image axiom (Definition~\ref{defn:space_with_distance_formula}.\eqref{item:dfs:bounded_geodesic_image}) implies that $\pi_V(\gamma) \cap N_E(\rho^U_V) \neq \emptyset$.  In particular, this implies that $d_V(X_k, h^n(X_k)) > 100E$.  Thus, for any $n>N$, we have that $(X_k,h^n(X_k))_{\rho^U_V}$ is uniformly bounded as $k \rightarrow \infty$, which implies that no power of $h$ could fix $p$, a contradiction.
\end{proof}

\begin{prop}\label{prop:exterior}
Let $p \in \partial \cuco X$ be such that $g^M(p)= p$ for some $M>0$.  Then 
$$p \in S(g) \star C(g) \star \left(\bigcap_i \partial E_{A_i} \cap \bigcap_j \partial E_{B_j}\right).$$
\end{prop}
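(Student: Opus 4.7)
The plan is to combine Lemmas \ref{lem:lower_fixed_points} and \ref{lem:bounded big papa} to pin down the possible domains in $\supp(p)$, and then to use Proposition \ref{prop:reducible big set} to identify the possible coordinate values at each such domain. By replacing $M$ with a sufficient positive multiple (which does not affect the hypothesis $g^M(p)=p$), I may assume that $h=g^M$ fixes $\BIG(g)$ pointwise and also fixes $\supp(p)$ pointwise; this is permissible because $\supp(p)$ is finite by Lemma \ref{lem:pairwise_orthogonal}, and because once the support is pointwise-fixed, the equality $h\cdot p = p$ forces each coordinate $p_V$ to be fixed by $h$ in $\boundary\fontact V$.

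First, I would partition $\supp(p) = A \sqcup B$, with $A = \supp(p)\cap \BIG(g)$ and $B = \supp(p)\setminus \BIG(g)$, and show $B \subseteq \bigcap_{U\in\BIG(g)}\mathfrak S_U^\orth$. For any $V\in\supp(p)$ and $U\in\BIG(g)$, the five mutually exclusive possibilities are $V=U$, $V\orth U$, $V\propnest U$, $U\propnest V$, and $V\transverse U$. Lemma \ref{lem:lower_fixed_points} rules out $V\propnest U$ and $V\transverse U$, and Lemma \ref{lem:bounded big papa} rules out $U\propnest V$. Hence either $V=U\in\BIG(g)$ (so $V\in A$) or $V\orth U$; since $V\in B$ excludes the first possibility for every $U\in\BIG(g)$, the claim follows.

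Second, I would use Proposition \ref{prop:reducible big set} to identify $p_V$ for $V\in A$. Because $h$ fixes $V\in\BIG(g)$, the isometry $h$ of $\fontact V$ fixes $p_V\in \boundary\fontact V$. If $V=A_i$, the north-south dynamics assertion of Proposition \ref{prop:reducible big set}(1) implies that $h_i$ has exactly the two fixed points $\lambda_{i,+},\lambda_{i,-}$, so $p_{A_i}\in\{\lambda_{i,+},\lambda_{i,-}\}$. If $V=B_j$, Proposition \ref{prop:reducible big set}(2) gives the unique fixed point $\lambda_{h_j}$, so $p_{B_j}=\lambda_{h_j}$. Thus the contribution of $A$ to $p$ is built entirely out of the vertices defining $S(g)$ and $C(g)$.

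Finally, I would assemble the decomposition. Writing $\alpha=\sum_{A_i\in A}a^p_{A_i}$, $\beta=\sum_{B_j\in A}a^p_{B_j}$, and $\gamma=\sum_{V\in B}a^p_V$, and renormalizing each partial sum, we obtain elements $s\in S(g)$, $c\in C(g)$, and $x\in \bigcap_i\boundary E_{A_i}\cap\bigcap_j\boundary E_{B_j}$ (this last membership is because each $V\in B$ is orthogonal to every element of $\BIG(g)$, so $\supp(x)\subseteq\bigcap_{U\in\BIG(g)}\mathfrak S_U^\orth$), with $p=\alpha s+\beta c+\gamma x$. If any of $\alpha,\beta,\gamma$ vanish, $p$ simply lies on a face of the join. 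Since the three support sets are pairwise orthogonal, this linear combination is a genuine point of the spherical join $S(g)\star C(g)\star\left(\bigcap_i\boundary E_{A_i}\cap\bigcap_j\boundary E_{B_j}\right)$. The only subtle point is the initial power-increasing step used to promote ``$h$ permutes $B$ preserving coefficients'' to ``$h$ fixes $B$ pointwise'', but this is routine given finiteness of $\supp(p)$.
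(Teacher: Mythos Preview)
Your proof is correct and follows essentially the same approach as the paper's. The paper's own proof is extremely terse: it simply invokes Lemmas~\ref{lem:lower_fixed_points} and~\ref{lem:bounded big papa} to constrain $\supp(p)$, then cites Proposition~\ref{prop:reducible big set} and $g$--invariance of $\BIG(g)$ to finish; your write-up unpacks exactly these steps, including the explicit identification of $p_V$ for $V\in\BIG(g)$ via the fixed-point classifications and the join decomposition at the end.
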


\begin{proof}
Lemmas \ref{lem:lower_fixed_points} and \ref{lem:bounded big papa} imply 
$$\supp(p) \subset \bigcup_{i,j} \left({\mathfrak S}_{A_i} \cup {\mathfrak S}_{B_j} \cup \left( \{A_i\}^{\orth} \cap \{B_j\}^{\orth}\right)\right),$$
which, together with Proposition \ref{prop:reducible big set} and $g$--invariance of $\BIG(g)$, gives the claim.
\end{proof}

\subsection{Dynamics on boundaries of HHG}  Fix a hierarchically hyperbolic group $(G,\mathfrak S)$.

\begin{defn}[Stable boundary points]
A point $p \in \partial G$ is a \emph{stable boundary point} if $p$ is a fixed point of some irreducible axial element of $\Aut(\mathfrak S)$.
\end{defn}

The next lemma states that irreducible axials have cobounded orbits.

\begin{lem}\label{lem:stable hp are cobounded}
Let $g \in G$ be an irreducible axial.  Then given any $X \in \cuco X$, there exists $N>0$ such that $\diam_{\fontact U}(\langle g \rangle \cdot X) < N$ for any $U \in \mathfrak S - \{S\}$.
\end{lem}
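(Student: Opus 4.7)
The plan is to observe that this lemma is essentially an immediate consequence of Lemma \ref{lem:not_big} combined with the irreducibility of $g$. Since $g$ is irreducible, by Definition~\ref{defn:reducible} we have $\BIG(g) = \{S\}$, where $S$ is the $\nest$--maximal element of $\mathfrak S$. In particular, every $U \in \mathfrak S - \{S\}$ satisfies $U \notin \BIG(g)$.

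Lemma~\ref{lem:not_big} then supplies a constant $D = D(g, E)$ so that $\diam_{\fontact U}(\pi_U(\langle g \rangle \cdot X)) \leq D$ for every $U \in \mathfrak S - \BIG(g) = \mathfrak S - \{S\}$. Setting $N = D + 1$ (or any constant strictly larger than $D$) gives the desired uniform bound. Since projections $\pi_U$ are $(K,K)$--coarsely Lipschitz and $X$ is a single point, the diameter of $\langle g\rangle \cdot X$ in $\fontact U$ coarsely coincides with the diameter of its image, so the bound transfers from the projected orbit to $\langle g\rangle \cdot X$ itself (adjusting $N$ by an additive constant depending only on $K$ and $\xi$).

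There is no real obstacle here; the main point of the lemma is simply to record this uniform-in-$U$ bound, which will be needed later in dynamical arguments (such as the density statements around Proposition~\ref{prop:stable dense}). All the work has already been done in establishing Lemma~\ref{lem:not_big}, whose proof handles the three cases ($\BIG(g) = \emptyset$, $\BIG(g) \neq \emptyset$ with $S \notin \BIG(g)$, and $S \in \BIG(g)$) by induction on complexity, by passing to a factored HHS via \cite{BehrstockHagenSisto:HHS_III}, and by invoking the classification of isometries of hyperbolic spaces together with the bounded geodesic image axiom, respectively. In the present situation only the third case is relevant, since $\BIG(g) = \{S\}$.
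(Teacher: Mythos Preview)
Your proof is correct but takes a different route from the paper. You simply observe that the lemma is an immediate consequence of Lemma~\ref{lem:not_big}: since $g$ is irreducible, $\BIG(g)=\{S\}$, so $\mathfrak S-\{S\}=\mathfrak S-\BIG(g)$, and Lemma~\ref{lem:not_big} gives the uniform bound directly. This is the cleanest argument. The paper instead gives a self-contained direct proof: assuming the conclusion fails, it produces domains $U_n$ with $\diam_{U_n}(\langle g\rangle\cdot X)\geq n$, uses bounded geodesic image to locate each $\rho^{U_n}_S$ near the $\langle g\rangle$--quasi-axis in $\fontact S$, translates by a suitable power $g^{-k_n}$ to bring the resulting $\rho^{U'_n}_S$ uniformly close to the fixed segment $[\pi_S(X),\pi_S(g\cdot X)]$, and then contradicts the distance formula, since $\dist_{U'_n}(X,g\cdot X)\to\infty$ while $\dist_{\cuco X}(X,g\cdot X)$ is fixed. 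Your approach is shorter and avoids duplicating work already carried out in Lemma~\ref{lem:not_big}; the paper's argument is independent of that lemma and showcases the $g$--equivariant translation trick, which is a useful technique elsewhere. One minor remark: your closing comments about transferring the bound from $\pi_U(\langle g\rangle\cdot X)$ to $\langle g\rangle\cdot X$ are unnecessary, since by Notation~\ref{notation:suppress_pi} the expression $\diam_{\fontact U}(\langle g\rangle\cdot X)$ already denotes $\diam_{\fontact U}(\pi_U(\langle g\rangle\cdot X))$.
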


\begin{proof}
If not, then there exists a sequence of domains $U_n \in \mathfrak S$ such that $\diam_{\fontact U_n} (\langle g \rangle \cdot x) \geq n$ for each $n$.  Since $g$ is irreducible axial, $\langle g \rangle \cdot X$ projects to a uniform quasigeodesic in $\fontact S$.

By the bounded geodesic image axiom and hyperbolicity of $\fontact S$, for each $n> 100E$, there exists a sequence $(k_n) \subset \integers$ such that $\rho^{U_n}_S \in \neb_E([g^{k_n} \cdot X, g^{k_n + 1} \cdot X]) \subset \fontact S$, where $[g^{k_n} \cdot X, g^{k_n + 1} \cdot X]$ is any geodesic between $g^{k_n} \cdot X$ and $g^{k_n+1} \cdot X$ in $\fontact S$.  Moreover, since $\langle g \rangle \cdot X$ is a uniform quasigeodesic in $\fontact S$, it follows that $d_{U_n}(g^{k_n} \cdot X, g^{k_n + 1} \cdot X) \asymp \diam_{U_n}(\langle g \rangle \cdot X) \geq n$.

It follows that there exists a sequence of domains $U'_n = g^{-k_n} \cdot U_n \in \mathfrak S$ with $\rho^{U'_n}_S \in \neb_E([X, g \cdot X])$ and $d_{U'_n}(X, g \cdot X) \asymp \diam_{U'_n}(\langle g \rangle \cdot X) \geq n$, which is impossible by the distance formula.  This completes the proof.
\end{proof}

\begin{prop}\label{prop:stable dense}
If $G$ has an irreducible axial element, then the set of stable boundary points is dense in $\partial G$.
\end{prop}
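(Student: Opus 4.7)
The plan is to fix $p\in\partial G$ and split on whether $S\in\supp(p)$, where $S\in\mathfrak S$ denotes the $\nest$-maximal domain; since $S$ is orthogonal to no other element of $\mathfrak S$, these two cases exhaust the support possibilities. Throughout, let $g_0\in G$ be the given irreducible axial element, with attracting fixed point $\lambda_+\in\partial\fontact S$ supplied by Proposition~\ref{prop:irreducible axial} and Lemma~\ref{lem:irreducible limit support}.

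For the case $\supp(p)=\{S\}$, i.e. $p\in\partial\fontact S$, the plan is to pass to the action of $G$ on $\fontact S$. This action is acylindrical by Theorem~14.3 of~\cite{BehrstockHagenSisto:HHS_I} and cobounded because $\pi_S\colon G\to\fontact S$ is coarsely $G$-equivariant and coarsely surjective; since $g_0$ is loxodromic on $\fontact S$, Osin's classification of acylindrical actions on hyperbolic spaces places this action in either the lineal or the general-type regime. In the lineal case, coboundedness forces $\partial\fontact S=\{\lambda_+,\lambda_-\}$, which are stable boundary points. In the general-type case, attracting fixed points of loxodromics of $G$ on $\fontact S$ are dense in $\partial\fontact S$ by the usual double-density argument. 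The key observation that turns loxodromics into irreducible axials is that any $h\in G$ with $S\in\BIG(h)$ satisfies $\BIG(h)=\{S\}$: by Lemma~\ref{lem:Big is orth}, $\BIG(h)$ is pairwise orthogonal, but $S$ is orthogonal to no domain, so $\BIG(h)$ cannot contain $S$ together with anything else.

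For the case $S\notin\supp(p)$, the plan is to approximate $p$ by conjugates of $\lambda_+$. Using density of $G$ in $\overline G$ (Proposition~\ref{prop:properties}), choose $x_n\in G$ with $x_n\to p$, and set $h_n=x_ng_0x_n^{-1}$; each $h_n$ is irreducible axial, so its attracting fixed point $q_n:=x_n\lambda_+\in\partial\fontact S$ is a stable boundary point, and the goal is to verify $q_n\to p$ in $\overline G$. Checking this against the basic neighborhood $\neb_{\{U_{S'}\},\epsilon}(p)$: since $\supp(q_n)=\{S\}$ is disjoint from $\supp(p)$ and $S$ is non-orthogonal to every $U\in\supp(p)$, the point $q_n$ lies in the remote part and the orthogonal-complement-sum condition holds trivially. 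It remains to verify that $(\partial\pi_{\supp(p)}(q_n))_{S'}$ lands in $U_{S'}$ and that the ratio inequalities hold. For this I would approximate $q_n$ by the orbit points $x_n g_0^k\cdot 1$ as $k\to\infty$, and use Lemma~\ref{lem:stable hp are cobounded} applied to $g_0$ and translated by $x_n$ to get a uniform bound $N$ on $\diam_{\fontact U}(\{x_n g_0^k\cdot 1\}_{k\geq 0})$ for every $U\neq S$. Combined with the bounded geodesic image axiom and Definition~\ref{defn:boundary_projection}, this identifies $(\partial\pi_{\supp(p)}(q_n))_{S'}$ up to an $O(N)$ error with $\pi_{S'}(x_n)$ for each $S'\in\supp(p)\cup\supp(p)^\orth$; the interior-neighborhood conditions for $x_n\to p$ then immediately deliver both the $U_{S'}$-containment and the ratio inequalities.

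The hard part will be the bookkeeping in Case 2: translating between the abstract definition of $\partial\pi_{\supp(p)}(q_n)$ via $\fontact S$-quasigeodesics from $\rho^{S'}_S$ and the concrete description via hierarchy paths in $G$ joining $1$ to $x_n$ to $q_n$. The link between the two is bounded geodesic image: because the hierarchy ray from $1$ to $p$ has unbounded $\pi_{S'}$-projection for each $S'\in\supp(p)$, its $\fontact S$-projection must pass close to $\rho^{S'}_S$, so $\pi_S(x_n)$ lies uniformly close to (and on the far side of) $\rho^{S'}_S$ in $\fontact S$ for $n$ large. Once this is set up, the chosen quasigeodesic from $\rho^{S'}_S$ to $q_n$ can be taken to pass through $\pi_S(x_n)$ and the rest of the verification reduces to the already-known convergence $\pi_{S'}(x_n)\to p_{S'}$ and the ratio control furnished by the interior-neighborhood description of $x_n\to p$.
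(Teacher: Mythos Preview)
Your proposal is correct, and your Case~2 is essentially the paper's argument spelled out in detail: the paper picks a cobounded hierarchy ray $[X,\lambda]$ to a stable point $\lambda$, translates it by $g_n$ with $g_nX\to p$, and asserts (tersely) that therefore $g_n\lambda\to p$. Your verification of the remote-neighborhood conditions via Lemma~\ref{lem:stable hp are cobounded} and bounded geodesic image is exactly the content hidden behind the paper's ``and thus''.

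The genuine difference is your Case~1. The paper treats all $p$ uniformly with the single translation argument, but when $\supp(p)=\{S\}$ the point $g_n\lambda$ is \emph{non-remote} with respect to $p$, and the required condition is $(g_n\lambda)_S\in U_S$ in $\overline{\fontact S}$; coboundedness of the hierarchy ray says nothing about the $\fontact S$-direction, and $\pi_S(g_nX)\to p_S$ does not by itself force $g_n\lambda_+\to p_S$ (think of $G=\integers$ acting on $\fontact S=\reals$). You avoid this by passing to the acylindrical action on $\fontact S$ and invoking double-density of loxodromic fixed points; your observation that $S\in\BIG(h)$ forces $\BIG(h)=\{S\}$ via Lemma~\ref{lem:Big is orth} cleanly promotes $\fontact S$-loxodromics to irreducible axials. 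This is a different, self-contained route for that case, and arguably more robust than the paper's compressed treatment. What you lose is uniformity of the argument; what you gain is that every step is transparently justified.
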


\begin{proof}
Let $p \in \partial G$ be any point and let $\lambda \in \partial G$ be a stable boundary point for some irreducible axial $g \in G$.  Choose $X \in \cuco X$ and let $\gamma_n = [X, g^n \cdot X]$ be a $D$-hierarchy path between $X$ and $g^n \cdot X$.  Let $\gamma = [X, \lambda]$ be the limiting $D$-hierarchy ray as $n\rightarrow \infty$.  Since $\gamma_n \rightarrow \gamma$ uniformly on compact sets and $\langle g\rangle \cdot X$ is uniformly cobounded by Lemma \ref{lem:stable hp are cobounded}, it follows that $\gamma$ is uniformly cobounded.

By coboundedness of the action of $G$ and density of the interior (Proposition \ref{prop:properties}), there exists a sequence $(g_n) \subset G$ and $N>0$ such that $g_n(X) \rightarrow p$ and thus $g_n \cdot \lambda \rightarrow p$.  Since $G$ acts on itself by automorphisms, we have that $g_n \cdot [X, \lambda]$ projects to an infinite quasigeodesic in $\fontact S$, implying that $g_n \cdot \lambda \in \partial \fontact S \subset \partial G$, which completes the proof.\end{proof}

\begin{thm}[Topological transitivity of the $G$-action on $\partial G$]\label{thm:dense orbit}
Let $(G,\mathfrak S)$ be a hierarchically hyperbolic group with $G$ not virtually cyclic and containing an irreducible axial element.  For any $p\in\boundary G$, $G\cdot p$ is dense in $\boundary G$.
\end{thm}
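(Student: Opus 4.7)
The plan is to show, given any $p\in\boundary G$ and any nonempty open $V\subseteq\boundary G$, the existence of $g\in G$ with $g\cdot p\in V$, by combining density of stable boundary points (Proposition~\ref{prop:stable dense}) with the north-south dynamics of irreducible axials (Proposition~\ref{prop:irreducible axial}).

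First, using Proposition~\ref{prop:stable dense}, pick a stable boundary point $\lambda\in V$, fixed by some irreducible axial element $\gamma\in G$ with attracting/repelling pair $\{\lambda_+,\lambda_-\}$, where $\lambda_+=\lambda$. Shrink $V$ if necessary so that it is a neighborhood of $\lambda_+$ disjoint from some neighborhood $W$ of $\lambda_-$. If $p\neq\lambda_-$, then by Proposition~\ref{prop:irreducible axial} one can choose $W$ so that $p\notin W$; the north-south dynamics statement then produces $N$ with $\gamma^N(\boundary G-W)\subseteq V$, and hence $\gamma^Np\in V$ as required.

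The main obstacle is the case $p=\lambda_-$, where $\gamma$ alone cannot move $p$. The remedy is to produce a second irreducible axial $\gamma'\in G$ whose fixed-point pair $\{\mu_+,\mu_-\}$ is disjoint from $\{\lambda_+,\lambda_-\}$. Granting this, since $p=\lambda_-\notin\{\mu_+,\mu_-\}$, applying Proposition~\ref{prop:irreducible axial} to $\gamma'$ yields $N$ with $\gamma'^N p$ arbitrarily close to $\mu_+$, and in particular inside a fixed neighborhood $W_0$ of $\mu_+$ whose closure avoids $\lambda_-$. Then, since $\gamma'^Np\neq\lambda_-$, the earlier case (applied to the point $\gamma'^Np$) gives $M$ with $\gamma^M\gamma'^Np\in V$, and we take $g=\gamma^M\gamma'^N$.

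It remains to justify the existence of $\gamma'$. By Theorem~14.3 of~\cite{BehrstockHagenSisto:HHS_I}, $G$ acts acylindrically on the hyperbolic space $\fontact S$, and $\gamma$ acts loxodromically on $\fontact S$ since $S\in\BIG(\gamma)$. Because $G$ is not virtually cyclic, the setwise stabilizer of $\{\lambda_+,\lambda_-\}$ cannot be all of $G$ (acylindricity of the action on $\fontact S$ forces setwise stabilizers of pairs of boundary points to be virtually cyclic), so the action of $G$ on $\fontact S$ is non-elementary in the sense of Osin's classification of acylindrical actions. Hence $G$ is acylindrically hyperbolic with respect to this action, and standard results (Osin) produce a loxodromic $\gamma'\in G$ whose fixed points on $\boundary\fontact S$ are disjoint from those of $\gamma$. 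Because $\gamma'$ is loxodromic on $\fontact S$, we have $S\in\BIG(\gamma')$; but since nothing is orthogonal to the $\nest$-maximal $S$ (so Lemma~\ref{lem:Big is orth} forces $\BIG(\gamma')=\{S\}$), the element $\gamma'$ is irreducible axial, completing the argument.
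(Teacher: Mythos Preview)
Your proof is correct and follows essentially the same approach as the paper: density of stable boundary points (Proposition~\ref{prop:stable dense}), north-south dynamics for irreducible axials (Proposition~\ref{prop:irreducible axial}), and the acylindricity of the $G$--action on $\fontact S$ together with Osin's theorem to handle the bad case. The only difference is organizational: the paper argues by contradiction (if the conclusion fails, then $p$ is a fixed point of \emph{every} irreducible axial, so no two irreducible axials are independent, contradicting Osin's trichotomy), whereas you directly invoke Osin's theorem to produce a second independent irreducible axial $\gamma'$ and use it to push $p$ off $\lambda_-$ before applying $\gamma$. Your version is arguably cleaner, and in fact the observation that $\gamma'^N p\neq p$ for any $N\neq 0$ (since $p\notin\{\mu_+,\mu_-\}$) means you could even take $N=1$ rather than appealing to the neighborhood $W_0$.
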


\begin{proof}
Let $U\subseteq\boundary G$ be an open set.  By Proposition~\ref{prop:stable dense}, there exists an irreducible axial $g \in G$ with stable boundary points $\lambda_{g, +}, \lambda_{g, -} \in \partial G$, one of which is contained in $U$.  Suppose that $\lambda_{g,+}\in U$ and $\lambda_{g,-}\neq p$.  Then since $\boundary G$ is Hausdorff, it follows from Proposition~\ref{prop:irreducible axial} that some power of $g$ moves $p$ into $U$, as required.  Hence either we are done, or for every irreducible axial $g$ with $\lambda_{g,+}\in U$, we have $\lambda_{g,-}=p$.  

Now, suppose that there exists $q\in\boundary G-U\cup\{p\}$.  Then, by Proposition~\ref{prop:stable dense}, and the fact that $\boundary G$ is Hausdorff, we may argue as above, using Proposition~\ref{prop:irreducible axial}, that some irreducible axial element takes $p$ arbitrarily close to $q$, and thus that some power of $g$ takes a translate of $p$ into $U$, as required, unless $p$ is a stable point for \emph{every} irreducible axial element of $G$.  But then $G$ does not contain two independent irreducible axial elements whence, since $G$ acts acylindrically on $\fontact S$ by~\cite[Theorem 14.3]{BehrstockHagenSisto:HHS_I}, a theorem of Osin (see Theorem~\ref{thm:osin} below) implies that $G$ is virtually cyclic.
\end{proof}

\begin{cor}\label{cor:dense_boundary_curve_graph}
If $(G,\mathfrak S)$ is an HHG with an irreducible axial, then $\boundary\fontact S$ is dense in $\boundary G$.
\end{cor}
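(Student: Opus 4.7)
The plan is to derive this corollary essentially as an immediate consequence of Proposition~\ref{prop:stable dense} together with Lemma~\ref{lem:irreducible limit support}. Recall that a stable boundary point is defined as a fixed point of some irreducible axial element of $G$ (here regarded as an element of $\Aut(\mathfrak S)$ via the HHG structure). Proposition~\ref{prop:stable dense} already says that, under the hypothesis that $G$ contains an irreducible axial element, the set of stable boundary points is dense in $\boundary G$. So the only thing left to verify is that every stable boundary point actually lies in $\boundary\fontact S \subset \boundary G$.

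For this step I would appeal directly to Lemma~\ref{lem:irreducible limit support}: if $g \in G$ is irreducible and fixes some $\lambda \in \boundary G$, then $\support(\lambda) = \{S\}$, which by construction of the boundary means exactly that $\lambda \in \boundary\fontact S$ under the canonical embedding of Proposition~\ref{prop:babies continuously embed}. Applying this to the fixed points of any irreducible axial $g$ shows that every stable boundary point belongs to $\boundary\fontact S$. Combined with Proposition~\ref{prop:stable dense}, this gives that $\boundary\fontact S$ contains a dense subset of $\boundary G$, hence is itself dense. There is no genuine obstacle here; the work has already been done in the two cited results, and this corollary simply packages them. (One does not even need a case split on whether $G$ is virtually cyclic: in that degenerate case, $\boundary G$ reduces to the two fixed points of the irreducible axial, which lie in $\boundary\fontact S$ by Lemma~\ref{lem:irreducible limit support}, so density is automatic.)
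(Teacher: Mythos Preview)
Your proof is correct. The paper states the corollary without proof, immediately following Theorem~\ref{thm:dense orbit}, so the intended argument is likely: pick any point of $\boundary\fontact S$ (e.g.\ a fixed point of an irreducible axial), observe its $G$--orbit stays in $\boundary\fontact S$ since $S$ is $G$--invariant, and apply topological transitivity to conclude this orbit is dense (handling the virtually cyclic case separately, as you note). Your route via Proposition~\ref{prop:stable dense} and Lemma~\ref{lem:irreducible limit support} is equally valid and arguably cleaner, since it sidesteps the virtually cyclic hypothesis of Theorem~\ref{thm:dense orbit} entirely; both arguments ultimately rest on the same underlying fact that stable boundary points lie in $\boundary\fontact S$ and approximate every point of $\boundary G$.
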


\begin{rem}\label{rem:rank_one}
In Section~\ref{sec:rank_rigidity}, we investigate the question of when groups of HHS automorphisms contain irreducible axial elements.  In that section, we consider a more general class, so-called ``rank-one'' elements, of which irreducible axial elements are the main examples.
\end{rem}

\section{Coarse semisimplicity in hierarchically hyperbolic groups}\label{sec:semisimple}

\begin{thm}\label{thm:HHGs have no distorteds}
If $(G, \mathfrak S)$ is a hierarchically hyperbolic group, then each $g \in G$ is either elliptic or axial, and $\pi_U(\langle g\rangle)$ is a quasiisometrically embedded copy of $\integers$ for each $U\in\BIG(g)$.
\end{thm}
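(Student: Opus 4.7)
The plan is to prove, by induction on the complexity of $(G, \mathfrak S)$, the stronger statement that for every non-elliptic $g \in G$ and every $U \in \BIG(g)$, the map $n \mapsto \pi_U(g^n \cdot 1)$ is a quasiisometric embedding $\integers \hookrightarrow \fontact U$. Combined with Lemma \ref{lem:not_big} (which bounds $\diam_V \pi_V(\langle g\rangle)$ for $V \notin \BIG(g)$) and the distance formula (Theorem \ref{thm:distance_formula}), this forces $\langle g \rangle$ to be quasiisometrically embedded in $G$, so that $g$ is axial by Proposition \ref{prop:axial}. If $\BIG(g) = \emptyset$, Proposition \ref{prop:elliptic} gives that $g$ is elliptic and we are done; otherwise, after replacing $g$ by the uniform power $h = g^M$ provided by Lemma \ref{lem:Big is fixed}, we may assume $h$ fixes every $U \in \BIG(g) = \BIG(h)$, so that $h$ acts by isometries on each such $\fontact U$ with unbounded orbit.

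By Lemma \ref{lem:Big is orth} and the maximality of $S$, either $\BIG(h) = \{S\}$ or $S \notin \BIG(h)$. In the first case---which covers both the complexity-one base case and the general irreducible case---Theorem 14.3 of \cite{BehrstockHagenSisto:HHS_I} gives that $G$ acts acylindrically on $\fontact S$, and the standard classification of isometries under acylindrical actions forces the element $h$, which already has unbounded orbit, to be loxodromic on $\fontact S$, yielding the required quasiisometric embedding. In the second case, pick $U \in \BIG(h)$ with $U \propnest S$; then $h$ acts on $(F_U, \mathfrak S_U)$ as an HHS automorphism via the restriction homomorphism $\theta_U$, with $U$ being $\nest$-maximal in $\mathfrak S_U$, with $(F_U, \mathfrak S_U)$ of strictly smaller complexity, and with $U \in \BIG(\theta_U(h))$. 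Equipping $F_U$ with a suitable HHG structure and applying the inductive hypothesis to $\theta_U(h)$ will yield that $\pi_U(\langle \theta_U(h)\rangle)$, and thus $\pi_U(\langle h\rangle)$, is quasiisometrically embedded in $\fontact U$.

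The main obstacle lies in the reducible case, specifically in equipping $F_U$ with an HHG structure so as to invoke the inductive hypothesis: having only an HHS structure on $F_U$ is not enough, since HHS automorphisms can be distorted (witness the horoball example in Example \ref{exmp:distorted}). One must therefore identify a subgroup $H \leq \mathrm{Stab}_G(U)$ acting properly and coboundedly on $F_U$ with finitely many orbits on $\mathfrak S_U$---for instance, by passing to a finite-index subgroup that trivializes the orthogonal part of the action on $E_U$, so that the restriction homomorphism $\theta_U$ identifies $H$ with an HHG structure on $F_U$ of strictly smaller complexity. With this structure in place, the induction closes: the inductive hypothesis gives a quasigeodesic orbit of $\theta_U(h)$ in $\fontact U$, and passing from $h$ back to $g$ is harmless since $h = g^M$ only rescales the orbit by a uniform constant.
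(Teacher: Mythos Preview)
Your overall strategy and the irreducible case are fine and match the paper exactly: pass to the power $h=g^M$ fixing $\BIG(g)$ pointwise, and when $\BIG(h)=\{S\}$ invoke acylindricity of $G$ on $\fontact S$ together with Bowditch's lemma to force loxodromicity.

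The gap is in the reducible step. You correctly identify the obstacle --- one needs a genuine HHG structure on $F_U$, not merely an HHS structure, to run induction --- but you do not actually surmount it. Your proposed fix, ``passing to a finite-index subgroup that trivializes the orthogonal part of the action on $E_U$,'' does not work: when $E_U$ is unbounded the kernel of $\theta_U^\orth$ has no reason to be finite-index in $\stabilizer_G(U)$, and even the full image $\widebar G_U=\theta_U(G_U)$ is not shown to act coboundedly on $F_U$ or cofinitely on $\mathfrak S_U$. Without coboundedness you cannot assert that $(F_U,\mathfrak S_U)$ is an HHG, and the induction does not close. You have essentially restated the difficulty rather than resolved it.

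The paper sidesteps the need for an HHG structure on $F_U$ entirely. Instead of induction on complexity, it proves directly (Lemma~\ref{lem:stabilizes_FU}) that $\widebar G_U$ acts \emph{acylindrically} on $\fontact U$, by rerunning the acylindricity argument of \cite[Theorem~14.3]{BehrstockHagenSisto:HHS_I} inside $(F_U,\mathfrak S_U)$. The only ingredient from the ambient HHG needed for this is a properness-type statement: for each $R$ there is a uniform bound on the number of $g\in\widebar G_U$ with $gB\cap B\neq\emptyset$ for an $R$--ball $B\subset F_U$. This follows from the geometric action of $G$ on itself and the quasi-isometric embedding $F_U\hookrightarrow P_U\hookrightarrow G$, with no coboundedness required. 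Once acylindricity of $\widebar G_U$ on $\fontact U$ is in hand, Bowditch's lemma applied to $h_U=\theta_U(h)$ rules out parabolic behaviour on $\fontact U$ for every $U\in\BIG(g)$, finishing the proof without any induction.
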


\begin{proof}[Proof of Theorem~\ref{thm:HHGs have no distorteds}]
This follows from Lemma~\ref{lem:HHG irreducibles are undistorted} and Lemma~\ref{lem:HHG reducibles are undistorted} below.
\end{proof}

Our main tool here is the following result of Bowditch:

\begin{lem}[Lemma 2.2 of \cite{Bowditch:tight}]\label{lem:bowditch_acyl}
If $G$ acts acylindrically by isometries on a hyperbolic space $M$, then each element of $G$ acts either elliptically or loxodomically on $M$. 
\end{lem}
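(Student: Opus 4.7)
The plan is to prove the contrapositive: assuming $g \in G$ is neither elliptic nor loxodromic on $M$, we produce a violation of acylindricity. By Gromov's classification of isometries of a (possibly non-proper) $\delta$--hyperbolic space, such a $g$ must be \emph{parabolic}: it has a unique fixed point $\xi \in \boundary M$, the bi-infinite orbit $(g^n x)_{n\in\integers}$ accumulates only at $\xi$ (so in particular $(g^n x \mid g^{-n}x)_x \to \infty$), and $t_n := \dist(x, g^n x)$ tends to $\infty$ sublinearly, i.e.\ the translation length $\tau(g) = \lim_n t_n/n$ vanishes. The aim is to exhibit, for a single $\epsilon > 0$ and arbitrarily large $N, R$, a pair of points $y_1, y_2 \in M$ with $\dist(y_1, y_2) \geq R$ and more than $N$ distinct powers $g^k$ simultaneously satisfying $\dist(y_i, g^k y_i) \leq \epsilon$, contradicting the acylindricity constant $N_\epsilon$ once $N > N_\epsilon$.

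For each large $n$, let $\gamma_n$ be a geodesic from $g^{-n} x$ to $g^n x$ and let $y_n \in \gamma_n$ be a closest point to $x$. Thin-triangle estimates give $\dist(x, y_n) \geq (g^n x \mid g^{-n}x)_x - \delta$, which tends to infinity. The key geometric input is: for every $\epsilon > 0$ and every $N \in \naturals$, there is $n$ so large that $\dist(y_n, g^k y_n) \leq \epsilon$ for all $|k| \leq N$. Indeed, $g y_n$ is a closest point of $g\gamma_n$ (a geodesic from $g^{-n+1}x$ to $g^{n+1}x$) to $gx$, and the quadrilateral with vertices $g^{\pm n} x, g^{\pm n + 1}x$ has two short sides of length $t_1 = \dist(x, gx)$ and two long sides given by $\gamma_n$ and $g\gamma_n$; slim-quadrilateral thinness places $y_n$ within $2\delta$ of $g\gamma_n$, and a comparison-of-parameterizations argument identifies the nearest point on $g\gamma_n$ to $y_n$ as lying $O(\delta + t_1)$ from $g y_n$, with the error in fact vanishing as $n \to \infty$ because sublinear growth of $t_n$ forces $\gamma_n$ and $g\gamma_n$ to become asymptotic on an increasingly large central region.

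Granting the key estimate, set $y_1 := y_n$ and $y_2 := g^L y_n$ for a sufficiently large $L$. Since $g$ is an isometry, $\dist(y_2, g^k y_2) = \dist(y_1, g^k y_1) \leq \epsilon$ for $|k| \leq N$; since the orbit $(g^L y_n)_L$ accumulates at $\xi$ and is hence unbounded in $M$, we may choose $L$ so that $\dist(y_1, y_2) \geq R_\epsilon$, where $R_\epsilon$ comes from the acylindricity definition for the chosen $\epsilon$. The $2N+1$ distinct elements $\{g^k : |k| \leq N\}$ then all move both $y_1$ and $y_2$ by at most $\epsilon$; choosing $N$ larger than $N_\epsilon$ contradicts acylindricity.

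The main obstacle is the refined estimate $\dist(y_n, g y_n) \to 0$ (equivalently, its iterated form). A naive thin-quadrilateral bound gives only $O(\delta + t_1)$, which, after iteration to powers $g^k$, would produce only boundedly many powers having $\leq \epsilon$ displacement for a fixed $\epsilon$ -- insufficient to defeat an arbitrary $N_\epsilon$. The improvement exploits the parabolic condition itself: sublinearity $t_n/n \to 0$, combined with the divergence $(g^n x \mid g^{-n}x)_x \to \infty$, forces $\gamma_n$ and $g\gamma_n$ to become nearly coincident on an increasingly long central region -- the geometric manifestation of the absence of a coarse axis -- and this is what turns a bounded displacement of $y_n$ into a vanishingly small one. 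All other ingredients (the Gromov classification, extraction of the fixed boundary point, propagation of small displacement via the isometric $g$-action) are standard.
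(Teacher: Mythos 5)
The paper only cites Bowditch for this lemma, so there is no in-paper proof to compare with; I am therefore evaluating your argument on its own terms. Your overall strategy — reduce to the parabolic case, place a witness point $y_n$ near the midpoint of $[g^{-n}x, g^n x]$, show many powers of $g$ displace $y_n$ by a bounded amount, and then translate $y_n$ far by a large power of $g$ to defeat the acylindricity constants — is sound and is essentially the right skeleton. The gap is entirely in the ``key geometric input.''

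The claim ``for every $\epsilon>0$ and every $N$ there is $n$ so large that $\dist(y_n,g^k y_n)\le\epsilon$ for all $|k|\le N$'' is false as stated: in a graph (e.g.\ the combinatorial horoball over $\integers$, a standard model of a parabolic on a hyperbolic graph), $\dist(y_n,g y_n)$ is a positive integer and cannot tend to $0$. Fortunately you do not need it to tend to $0$ — a bound $\dist(y_n,g^k y_n)\le C(\delta)$ valid for all $|k|\le N$ (with $n$ depending on $N$) is enough to run your final paragraph with the fixed $\epsilon:=C(\delta)$. But this weaker, correct estimate is exactly the hard part and you have not proved it. The slim-quadrilateral bound you invoke gives $\dist(y_n,g^k y_n)=O(\delta+t_k)$ where $t_k=\dist(x,g^k x)$, which grows without bound in $k$ (since $g$ is not elliptic); iterating the $k=1$ case gives $O(k(\delta+t_1))$, which is even worse. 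Your remark that ``sublinear growth of $t_n$ forces $\gamma_n$ and $g\gamma_n$ to become nearly coincident'' is the right intuition but it does not, on its own, improve the parametrization error from $O(t_k)$ to $O(\delta)$: nearness of the sets $\gamma_n$ and $g^k\gamma_n$ does not control the shift between the two arc-length parametrizations, and it is the shift that produces the $t_k$ term.

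What actually closes the gap is a Busemann-type argument that uses the \emph{two-sided} convergence $g^{\pm m}x\to\xi$ essentially. Set $\beta(y)=\lim_m(\dist(y,g^m x)-\dist(x,g^m x))$ (well-defined up to $O(\delta)$). One computes $\beta(g^k y)-\beta(y)=\lim_m(d_{m-k}-d_m)=:c_k$, a quantity independent of $y$, with $c_{k+l}=c_k+c_l$. Using the \emph{other} approach to $\xi$ via $g^{-m}x$ gives $\beta(g^k y)-\beta(y)=-c_k+O(\delta)$, hence $c_k=O(\delta)$ uniformly in $k$. This is the substitute for your ``vanishing error'': it pins the parametrization shift between $\sigma$ and $g^k\sigma$ (or between $\gamma_n$ and $g^k\gamma_n$) to $O(\delta)$ independently of $k$, and only then does $\dist(y_n,g^k y_n)=O(\delta)$ follow for all $|k|\le N$ once $n$ is large. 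Without some argument of this kind — which genuinely uses parabolicity via the coincidence of the forward and backward limit points, not just sublinearity of $t_n$ — the proof is incomplete.
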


Lemma~\ref{lem:bowditch_acyl} and~\cite[Theorem 14.3]{BehrstockHagenSisto:HHS_I} combine to yield:

\begin{lem}\label{lem:HHG irreducibles are undistorted}
If $g \in G$ is irreducible, then $g$ is either elliptic or axial.
\end{lem}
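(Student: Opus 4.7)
The plan is to chain together the two ingredients highlighted immediately before the lemma, using the classification tools already established.

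First, I would invoke the acylindricity result of~\cite{BehrstockHagenSisto:HHS_I} (Theorem~14.3): since $(G,\mathfrak S)$ is a hierarchically hyperbolic group, the action of $G$ on the top hyperbolic space $\fontact S$ is acylindrical. Bowditch's Lemma~\ref{lem:bowditch_acyl} then applies to give the dichotomy: the individual element $g\in G$ acts on $\fontact S$ either elliptically (bounded orbits in $\fontact S$) or loxodromically ($n\mapsto g^n\cdot\pi_S(X)$ is a quasiisometric embedding $\integers\to\fontact S$).

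Next I would rule out the elliptic case using the hypothesis that $g$ is irreducible: by Definition~\ref{defn:reducible}, $\BIG(g)=\{S\}$, so by construction $\diam_{\fontact S}(\langle g\rangle\cdot X)=\infty$, which contradicts elliptic behavior on $\fontact S$. Hence $g$ must act loxodromically on $\fontact S$, and in particular $\pi_S(\langle g\rangle)$ is a quasiisometrically embedded copy of $\integers$ in $\fontact S$.

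Finally, I would apply Proposition~\ref{prop:axial}: the existence of some $U\in\BIG(g)$ (here $U=S$) for which $n\mapsto g^n\cdot\pi_U(X)$ is a quasiisometric embedding is exactly the criterion for $g$ to be axial. This yields the desired conclusion. The statement ``elliptic or axial'' in the lemma is then really the sharper ``axial'' under the irreducibility hypothesis, but written that way to mesh with the parallel statement for reducible elements handled in Lemma~\ref{lem:HHG reducibles are undistorted}. There is no hard step: the only subtlety is making sure one uses the acylindricity of the $G$-action on $\fontact S$ rather than trying to work directly in $G$, since it is on $\fontact S$ that Bowditch's dichotomy is available.
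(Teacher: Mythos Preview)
Your proposal is correct and follows essentially the same approach as the paper, which simply states that Lemma~\ref{lem:bowditch_acyl} and \cite[Theorem~14.3]{BehrstockHagenSisto:HHS_I} combine to yield the result. Your version spells out the details the paper leaves implicit, including the observation that the elliptic case is actually vacuous under the irreducibility hypothesis.
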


Recall that for any reducible $g \in G$, we have $\BIG(g) = \{A_i\} \cup \{B_j\}$, where $g$ acts axially on each $\fontact A_i$ and distortedly on each $\fontact B_j$.  It remains to prove:

\begin{lem} \label{lem:HHG reducibles are undistorted}
If $g \in G$ is reducible, then $\{B_j\} = \emptyset$.
\end{lem}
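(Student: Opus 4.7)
The plan is to argue by contradiction: suppose some distorted direction $B = B_j \in \{B_j\}$ exists, then reduce to the already-handled irreducible case of Lemma~\ref{lem:HHG irreducibles are undistorted} by passing from the HHG $(G, \mathfrak S)$ to the lower-complexity HHS $(F_B, \mathfrak S_B)$ in which $B$ is the $\nest$--maximal element.

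First, by Lemma~\ref{lem:Big is fixed}, I replace $g$ by a uniform power $h = g^M$ that fixes each element of $\BIG(g) = \{A_i\} \cup \{B_j\}$. Since $h$ fixes $B$, the restriction homomorphism $\theta_B$ of Remark~\ref{rem:aut_product} produces an automorphism $h_B := \theta_B(h) \in \Aut(\mathfrak S_B)$ of the HHS $(F_B, \mathfrak S_B)$ which, by construction of $\theta_B$, acts on $\fontact B$ in the same way that $h$ does, namely distortedly. Crucially, because $\BIG(h) = \BIG(g)$ is pairwise orthogonal and contains $B$, every element of $\BIG(h)$ other than $B$ is orthogonal to $B$ and therefore does not lie in $\mathfrak S_B$. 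Hence the big-set of $h_B$ computed in $(F_B, \mathfrak S_B)$ equals $\{B\}$, so $h_B$ is \emph{irreducible} in $(F_B, \mathfrak S_B)$.

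The key step is to promote $(F_B, \mathfrak S_B)$ to an HHG whose group action includes $h_B$, so that Lemma~\ref{lem:HHG irreducibles are undistorted} applies. I use the subgroup $\theta_B(N) \subseteq \Aut(\mathfrak S_B)$, where $N = \mathrm{Stab}_G(B)$ (which contains $h$ after our passage to a power). Cofiniteness of the $\theta_B(N)$--action on $\mathfrak S_B$ follows from the cofinite action of $G$ on $\mathfrak S$ together with preservation of nesting. For the action on $F_B$, properness is inherited from the proper $G$--action on itself via the quasi-isometric embedding $F_B \times E_B \to G$ coming from the standard hieromorphism, and coboundedness follows from cocompactness of $G$ on itself together with the fact that $N$ preserves the standard product region $P_B$. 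Granting this HHG structure, Lemma~\ref{lem:HHG irreducibles are undistorted} forces $h_B$ to be elliptic or axial: elliptic contradicts $B$ being in the big-set of $h_B$, while axial contradicts the distorted action of $h$ on $\fontact B$. This contradiction forces $\{B_j\} = \emptyset$.

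The main obstacle is establishing that $N$ acts coboundedly on $F_B$. The delicacy is that in $G$ the product regions $P_U$ and $P_V$ can come arbitrarily close without having $U = V$ (for instance, when $U \orth V$ they share common points), so one cannot immediately convert coboundedness of the $G$--action on itself into coboundedness of the $N$--action on $P_B$. The argument must exploit that, up to $G$--action, only finitely many types of product regions come uniformly close to $P_B$, which follows from cofiniteness of the $G$--action on $\mathfrak S$ together with the bounded-projection behavior of standard product regions from Section~\ref{sec:product_regions}; one then composes with the coarse projection $P_B \cong_{q.i.} F_B \times E_B \to F_B$ to obtain the cobounded $\theta_B(N)$--action on $F_B$.
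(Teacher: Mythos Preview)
Your approach differs from the paper's and has genuine gaps in the step you yourself flag as the main obstacle.

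The paper does \emph{not} promote $(F_B,\mathfrak S_B)$ to an HHG. Instead, it proves directly (Lemma~\ref{lem:stabilizes_FU}) that $\widebar{G}_U=\theta_U(G_U)$ acts acylindrically on $\fontact U$ for every $U\in\mathfrak S$, by adapting the proof of \cite[Theorem~14.3]{BehrstockHagenSisto:HHS_I} to the HHS $(F_U,\mathfrak S_U)$. That adaptation requires only a weak properness statement (Claim~\ref{claim:acyl_enable}: for each $R$ there is a bound on the number of $\bar g\in\widebar{G}_U$ with $\bar gB\cap B\neq\emptyset$ for an $R$--ball $B\subset F_U$), which genuinely does follow from the geometric $G$--action on itself. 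Given acylindricity, Bowditch's Lemma~\ref{lem:bowditch_acyl} forces $h_U$ to be elliptic or loxodromic on $\fontact U$, and since $U\in\BIG(h)$ it must be loxodromic. No coboundedness of $\widebar{G}_U$ on $F_U$ and no cofiniteness on $\mathfrak S_U$ are needed.

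By contrast, your route requires both, and neither is established. Your cofiniteness argument (``follows from the cofinite action of $G$ on $\mathfrak S$ together with preservation of nesting'') is not valid: cofiniteness of $G$ on $\mathfrak S$ does not imply cofiniteness of the stabilizer $N=\mathrm{Stab}_G(B)$ on $\mathfrak S_B$, since elements of $G$ taking one domain nested in $B$ to another need not fix $B$. Your coboundedness argument is likewise incomplete: that ``only finitely many types of product regions come uniformly close to $P_B$'' does not yield coboundedness of $N$ on $P_B$, because $P_B$ can coarsely meet $P_{gB}$ in a large (even unbounded) set whenever $gB\orth B$ or $gB\transverse B$, and there is no mechanism given to replace $g$ by an element of $N$. (The special case in which this does work is when $N$ has finite index in $G$, as in the proof of Theorem~\ref{thm:rank_rigidity_HHG}; that hypothesis is absent here.) Finally, your properness sketch hides exactly the content of Claim~\ref{claim:acyl_enable}: one must contend with the kernel of $\theta_B$ and the unboundedness of $E_B$, so the quasi-isometric embedding $F_B\times E_B\to G$ alone does not give the conclusion.

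In short, the paper isolates precisely the piece of the HHG structure that is both provable and sufficient---acylindricity on $\fontact U$---and proves it directly, whereas you attempt to recover the full HHG package on $F_B$, which is strictly more than is needed and not justified by the arguments you give.
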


For each $U\in\mathfrak S$, let $G_U=\mathcal A_U\cap G$ be the subgroup of $G$ fixing $U\in\mathfrak S$ and let $\widebar{G}_U=\theta_U(G_U)$, where $\mathcal A_U=\stabilizer_{\Aut(\mathfrak S)}(U)$ and $\theta_U:\mathcal A_U\to\Aut(\mathfrak S_U)$ is the restriction homomorphism.

\begin{lem}\label{lem:stabilizes_FU}
Let $U\in\mathfrak S$.  Then $\widebar{G}_U$ acts acylindrically on $\fontact U$.
\end{lem}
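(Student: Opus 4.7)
The plan is as follows. If $U = S$, then $\widebar{G}_U = G$ acts on $\fontact U = \fontact S$, and the claim is immediate from Theorem~14.3 of~\cite{BehrstockHagenSisto:HHS_I}; hence I assume $U \propnest S$. I would adapt the proof of that theorem to the sub-HHS $(F_U, \mathfrak S_U)$ with its $\widebar{G}_U$-action, viewing $\fontact U$ as the top-level hyperbolic space. The distance formula and finite complexity needed in the proof of Theorem~14.3 pass directly to $(F_U, \mathfrak S_U)$, so the essential task is to recover enough properness to run the argument for the $\widebar{G}_U$-action.

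Fix $\epsilon > 0$; I seek $R, N$ such that for all $x, y \in \fontact U$ with $\dist_U(x, y) \geq R$, the set
\[
\mathcal E = \{\bar g \in \widebar{G}_U : \dist_U(x, \bar g x) \leq \epsilon,\ \dist_U(y, \bar g y) \leq \epsilon\}
\]
has cardinality at most $N$. For each $\bar g \in \mathcal E$, I would choose a lift $g \in G_U$; distinct $\bar g$ give lifts in distinct cosets of $K_U := \ker \theta_U|_{G_U}$. Using Theorem~\ref{thm:realization} and partial realization (Definition~\ref{defn:space_with_distance_formula}.\eqref{item:dfs_partial_realization}), I would select points $p_x, p_y \in F_U \subset P_U \subset \cuco X$ with $\pi_U(p_x) = x$, $\pi_U(p_y) = y$, with $E_U$-coordinates fixed at a basepoint $e_0$, and with coordinates uniformly close to $\rho^U_V$ for every $V$ transverse to or properly containing $U$.

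The main obstacle is that the lift $g$ can have unbounded $\cuco X$-displacement of $p_x, p_y$ owing to its action on domains orthogonal to $U$ (the $E_U$-direction), so properness of the $G$-action on $\cuco X$ cannot be applied to the lifts directly. To circumvent this, I would first establish the auxiliary fact that $\theta_U^\orth(K_U) \leq \Aut(\mathfrak S_U^\orth)$ acts \emph{cocompactly} on $E_U$. This should follow from cocompactness of the $G$-action on $\cuco X$ together with the identification $P_U \simeq F_U \times E_U$ from Remark~\ref{rem:aut_product}, via a partial realization argument producing, for each $e \in E_U$, an element of $K_U$ moving $e_0$ uniformly close to $e$ while fixing $F_U$ coarsely. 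With this in hand, I can modify each lift $g$ within its $K_U$-coset to have bounded $E_U$-displacement of $e_0$; the combined bounded $F_U$- and $E_U$-displacements, together with the controlled transverse/containing coordinates of $p_x, p_y$, then yield bounded $\cuco X$-displacement of both $p_x$ and $p_y$ via the distance formula. Properness of the $G$-action on $\cuco X$ then forces these modified lifts into a finite set of cardinality depending only on $\epsilon$, yielding the desired $N = N(\epsilon)$ and completing the proof along the lines of Theorem~14.3 of~\cite{BehrstockHagenSisto:HHS_I}.
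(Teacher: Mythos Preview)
Your overall strategy is the paper's: pass to the sub-HHS $(F_U,\mathfrak S_U)$, on which $\widebar G_U$ acts with $\fontact U$ as the top-level hyperbolic space, and rerun the proof of Theorem~14.3 of~\cite{BehrstockHagenSisto:HHS_I}. The only missing ingredient is a properness-type statement for the $\widebar G_U$-action on $F_U$, which the paper isolates as a standalone claim: any $R$-ball $B\subset F_U$ meets $\bar g B$ for at most $K(R)$ elements $\bar g\in\widebar G_U$. With that in hand, the Theorem~14.3 argument runs verbatim using the distance formula, large links, bounded geodesic image, and hierarchy paths in $(F_U,\mathfrak S_U)$.

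Where you diverge is in justifying this properness step, and there the argument has a gap. You assert that $\theta_U^\orth(K_U)$ acts cocompactly on $E_U$, to be proved ``via a partial realization argument producing, for each $e\in E_U$, an element of $K_U$ moving $e_0$ uniformly close to $e$.'' But partial realization manufactures \emph{points} of $\cuco X$, not group elements; and cocompactness of $G$ on $\cuco X$ only furnishes some $h\in G$ with $h\cdot(p_0,e_0)$ near $(p_0,e)$ --- nothing places $h$ in $G_U$, let alone in $K_U=\ker\theta_U|_{G_U}$. So the asserted cocompactness of $K_U$ on $E_U$ is not established by your sketch, and without it the lift-modification step does not go through. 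The paper's route sidesteps this intermediate claim: it rewrites the desired bound as a count of cosets $gK_U\subset G_U$ for which $gK_U\cdot(B'\times E_U)=(\bar g B')\times E_U$ meets $B'\times E_U$ inside $\cuco X$, and derives this directly from the geometric $G$-action on itself, without ever asserting that $K_U$ alone acts cocompactly on $E_U$.
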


\begin{proof}[Proof of Lemma \ref{lem:stabilizes_FU}]
By definition, $\widebar{G}_U$ acts by automorphisms on the hierarchically hyperbolic space $(F_U,\mathfrak S_U)$.  We first establish:  

\setcounter{claimnum}{0}

\begin{claim}\label{claim:acyl_enable}
For each $R\geq0$, there exists $K=K(R)$ such that any $R$--ball $B\subseteq F_U$ intersects $gB$ for at most $K$ elements $g\in\widebar{G}_U$.
\end{claim}

\renewcommand{\qedsymbol}{$\blacksquare$}

\begin{proof}[Proof of Claim~\ref{claim:acyl_enable}]
Since the inclusion hieromorphism $(F_U,\mathfrak S_U)\to(G,\mathfrak S)$ is a quasiisometric embedding (with constants independent of $U$), it suffices to bound the number of cosets $g(\kernel\theta_U)$ in $G_U$ so that $g(\kernel\theta_U)\cdot (B'\times E_U)=(\bar gB')\times E_U$ intersects $B'\times E_U$, where $B'$ is a ball in $F_U\subset P_U\subset\cuco X$ of radius depending on $R$ and the quasiisometry constants.  Such a bound exists because $G$ acts on itself geometrically.
\end{proof}

\renewcommand{\qedsymbol}{$\Box$}

We now follow the proof of Theorem~14.3 of~\cite{BehrstockHagenSisto:HHS_I}.  Let $\epsilon>0$ be given and let $R\geq1000\epsilon$.  Consider the set $\mathfrak H$ of $g\in \widebar{G}_U$ so that $\dist_U(x,gx),\dist_U(y,gy)<\epsilon$, where $x,y\in F_U$.  Choose $s_0$ as in the distance formula for $(F_U,\mathfrak S_U)$ and, for each $r\geq0$, consider the set $\mathfrak L(r)$ of $\nest$--maximal $V\in\mathfrak S_U-\{U\}$ so that $\dist_V(x,y)>s_0$ and $|\dist_U(x,\rho^V_U)-\frac{R}{2}|<r\epsilon$.  Arguing exactly as in the proof of Theorem~14.3 of~\cite{BehrstockHagenSisto:HHS_I} yields a uniform bound on $|\mathfrak L(11)|$.  We then divide into two cases.  

First, if $\mathfrak L(10)\neq\emptyset$, then we again argue as in the proof of~\cite[Theorem 14.3]{BehrstockHagenSisto:HHS_I}, reaching the conclusion that, if $V\in\mathfrak L(10)$ and $g\in\mathfrak H$, then $\gate_{P_V}(x)$ coarsely coincides with $g\cdot \gate_{P_V}(x)$, from which it follows from Claim~\ref{claim:acyl_enable} that $\mathfrak H$ has uniformly bounded cardinality.  The argument in~\cite{BehrstockHagenSisto:HHS_I} uses only the $\widebar{G}_U$--equivariance of the gate construction and Definition~\ref{defn:space_with_distance_formula} and thus goes through.

Similarly, if $\mathfrak L(10)=\emptyset$, then the argument in~\cite{BehrstockHagenSisto:HHS_I} uses only the existence of hierarchy paths, large links, bounded geodesic image, the distance formula, and a bound on the cardinalities of stabilizers of balls in $F_U$.  The latter comes from Claim~\ref{claim:acyl_enable}, and thus the argument works verbatim in the present context.
\end{proof}

\begin{proof}[Proof of Lemma~\ref{lem:HHG reducibles are undistorted}]
Let $U \in \BIG(g)$.  Let $M>0$ be as in Lemma \ref{lem:Big is fixed} and set $h = g^M$; note that $h \cdot U = U$, i.e. $h\in\mathcal A_U$. Let $h_U=\theta_U(h)\in \widebar{G}_U$.  By Lemma~\ref{lem:stabilizes_FU}, $\widebar{G}_U$ acts acylindrically on $\fontact U$, so by Lemma~\ref{lem:bowditch_acyl}, $h_U$ is either elliptic or loxodromic on $\fontact U$.  Since $U\in\BIG(h)$, it must be the case that $h_U$ is loxodromic on $\fontact U$.  Since $h$ acts like $h_U$ on $\fontact U$, the claim follows.
\end{proof}

\section{Essential structures, essential actions, and product HHS}\label{sec:essential}

\subsection{Product HHS}\label{subsec:product}
It is shown in~\cite{BehrstockHagenSisto:HHS_II} that, if $\cuco X_0,\cuco X_1$ admit hierarchically hyperbolic structures, then $\cuco X_0\times\cuco X_1$ admits a hierarchically hyperbolic structure making the inclusions $\cuco X_i\to\cuco X_0\times\cuco X_1$ into hieromorphisms with hierarchically quasiconvex image.  Rather than recall the construction, we now give a more streamlined (equivalent) definition.

\begin{defn}\label{defn:product_HHS}
Let $(\cuco X,\mathfrak S)$ be a hierarchically hyperbolic space.  Then $(\cuco X,\mathfrak S)$ is a \emph{product HHS} if there exists $K<\infty$ and $U\in\mathfrak S$ such that for all $V\in\mathfrak S$, either $V\nest U$, or $V\orth U$, or $\diam(\fontact V)\leq K$.  If, in addition, for each $n\in\naturals$ there exist $V,W\in\mathfrak S$ with $V\nest U,W\orth U$ and $\diam(\pi_V(\cuco X)),\diam(\pi_W(\cuco X))>n$, then $(\cuco X,\mathfrak S)$ is a \emph{product region with unbounded factors}.  Observe that $(\cuco X,\mathfrak S)$ is a product HHS if and only if there exists $U\in\mathfrak S$ so that $P_U\to\cuco X$ is coarsely surjective, and that $(\cuco X,\mathfrak S)$ is a product region with unbounded factors if in addition $F_U,E_U$ are both unbounded. 
\end{defn}

\subsection{Essential structures and cores}\label{subsec:essential_core}

\begin{defn}[Essential HH sructures]\label{defn:essential}
Let $(\cuco X,\mathfrak S)$ be an HHS and let $G\leq\Aut(\mathfrak S)$.  Then $(\cuco X,\mathfrak S)$ is \emph{$G$--essential} if, for any $G$--invariant hierarchically quasiconvex $\cuco Y\subset\cuco X$, all of $\cuco X$ is contained in some regular neighborhood of $\cuco Y$. 
\end{defn}

\begin{rem}
Compare Definition~\ref{defn:essential} to the definition of a $G$--essential cube complex from~\cite{CapraceSageev:rank_rigidity}, which requires that the cube complex be the cubical convex hull of a $G$--orbit (but actually requires something stronger).
\end{rem}

\begin{prop}[Essential core] \label{prop:essential core}
Let $(\cuco X, \mathfrak S)$ be an HHS and let $G \leq \Aut(\mathfrak S)$ be a subgroup.  Suppose that one of the following holds:

\begin{enumerate}
\item \label{item:ess_1}$G$ acts properly and cocompactly on $\cuco X$ and with finitely many orbits on $\mathfrak S$, i.e. $(G, \mathfrak S)$ is an HHG;
\item \label{item:ess_2}$G$ acts on $\cuco X$ with unbounded orbits and with no fixed point in $\partial \cuco X$.
\end{enumerate}

Then there exists a $G$-invariant, $G$-essential, hierarchically quasiconvex subspace $\cuco Y \subset \cuco X$ so that whichever of~\eqref{item:ess_1} or~\eqref{item:ess_2} held for $G\curvearrowright \cuco X$ holds for the action of $G$ on $\cuco Y$.
\end{prop}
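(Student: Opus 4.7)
The plan is to construct $\cuco Y$ as a minimal element in a suitable family of $G$-invariant hierarchically quasiconvex subspaces of $\cuco X$, obtained as the coarse limit of a descending chain, with finite complexity of $\mathfrak S$ providing termination and minimality automatically yielding essentiality.

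First I would fix an initial candidate and an ambient family. In case~\eqref{item:ess_1} I take $\cuco Y_0=\cuco X$; in case~\eqref{item:ess_2} I fix $x\in\cuco X$ with $G\cdot x$ unbounded and let $\cuco Y_0$ be the hierarchical hull of $G\cdot x$, which is $G$-invariant and hierarchically quasiconvex (using the standard hull constructions from~\cite{BehrstockHagenSisto:HHS_II}). Let $\mathcal F$ consist of those $G$-invariant hierarchically quasiconvex $\cuco Z\subseteq\cuco Y_0$ for which $G\curvearrowright\cuco Z$ still satisfies the hypothesis in the relevant case: $G$-cobounded with cofinite action on $\mathfrak S_{\cuco Z}$ in~\eqref{item:ess_1}, or unbounded orbits with no fixed point in $\partial\cuco Z$ in~\eqref{item:ess_2}. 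Order $\mathcal F$ by coarse containment (i.e.\ up to finite Hausdorff distance).

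Next I would extract a minimal element. Given a descending chain $(\cuco Y_i)$ in $\mathcal F$, for each $U\in\mathfrak S$ the projections $\pi_U(\cuco Y_i)$ form a nested sequence of uniformly quasiconvex subsets of $\fontact U$; set $B_U$ to be their coarse intersection (uniformly quasiconvex by hyperbolicity) and let $\cuco Y_\infty\subseteq\cuco Y_0$ be a realization set for the tuple $(B_U)$, which exists and is hierarchically quasiconvex by the realization theorem (Theorem~\ref{thm:realization}) together with the definition of hierarchical quasiconvexity and the fact that the consistency inequalities are preserved under intersections of quasiconvex sets respecting the $\rho$-data. Then $\cuco Y_\infty$ is $G$-invariant because each $\cuco Y_i$ is. To see $\cuco Y_\infty\in\mathcal F$: in~\eqref{item:ess_1}, the gate maps $\gate_{\cuco Y_i}\co\cuco X\to\cuco Y_i$ are $G$-equivariantly Lipschitz, and coboundedness passes through; in~\eqref{item:ess_2}, the no-fixed-point hypothesis together with the embedding $\partial\cuco Y_\infty\hookrightarrow\partial\cuco X$ (Proposition~\ref{prop:hierarchically_quasiconvex_set}) prevents $G$-orbits in $\cuco Y_\infty$ from collapsing. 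Termination of the chain uses finite complexity: pass to a level-by-level lexicographic measure counting $G$-orbits of domains $U$ with $\pi_U(\cuco Y_i)$ coarsely unbounded (finite per level in case~\eqref{item:ess_1}, and controlled by the complexity of the hull in case~\eqref{item:ess_2}), which strictly decreases whenever the coarse inclusion is proper.

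Finally, essentiality of the minimal $\cuco Y$ is immediate: any $G$-invariant hierarchically quasiconvex $\cuco Y'\subseteq\cuco Y$ not contained in a bounded neighborhood of some smaller element would lie in $\mathcal F$ and violate minimality. Inheritance of hypotheses is largely formal from the induced HHS structure on $\cuco Y$ described in \cite[Proposition 5.6]{BehrstockHagenSisto:HHS_II}: in case~\eqref{item:ess_1}, $\mathfrak S_{\cuco Y}$ is indexed by a subset of $\mathfrak S$ modulo parallelism, so $G$-cofiniteness on $\mathfrak S$ descends, properness is automatic from $\cuco Y\subseteq\cuco X$, and coboundedness is built into the definition of $\mathcal F$; in case~\eqref{item:ess_2}, unboundedness is preserved by membership in $\mathcal F$, and no fixed point in $\partial\cuco Y$ follows from $\partial\cuco Y\hookrightarrow\partial\cuco X$.

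I expect the main obstacle to be verifying that the coarse intersection $\cuco Y_\infty$ genuinely lies in $\mathcal F$ and is hierarchically quasiconvex — the realization step and the compatibility of coarse intersections with the relative projections $\rho^U_V$ require care, and in case~\eqref{item:ess_2} one must rule out that the intersection degenerates by having all $G$-orbits accumulate on a single boundary point of $\cuco X$, which is precisely where the no-fixed-point-in-$\partial\cuco X$ hypothesis is essential.
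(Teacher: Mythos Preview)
Your approach is genuinely different from the paper's and substantially more elaborate, and it has real gaps.  The paper handles case~\eqref{item:ess_1} in one line by taking $\cuco Y=\cuco X$: coboundedness already forces $\cuco X\subset N_R(\cuco Y')$ for any $G$--invariant $\cuco Y'$ containing a point, so $\cuco X$ is $G$--essential.  For case~\eqref{item:ess_2} the paper directly constructs the minimal object rather than searching for one: fix $x\in\cuco X$, let $H_U\subset\fontact U$ be the union of geodesics between points of $\pi_U(G\cdot x)$, and take $\cuco Y$ to be the set of realization points for the tuple $(H_U)$.  Essentiality then follows because for any $G$--invariant hierarchically quasiconvex $\cuco Y'$ and any $y'\in\cuco Y'$, the orbit $G\cdot y'$ lies in $\cuco Y'$, and $\dist_U(gx,gy')$ is bounded independently of $g$ and $U$, so $H_U$ lies in a uniform neighborhood of $\pi_U(\cuco Y')$.

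The central gap in your argument is the termination step.  Your lexicographic measure ``counting $G$--orbits of domains $U$ with $\pi_U(\cuco Y_i)$ coarsely unbounded'' need not strictly decrease along a properly descending chain: already when $\mathfrak S=\{S\}$ and $\fontact S$ is a tree, one can have an infinite properly nested sequence of $G$--invariant quasiconvex subtrees all with $\pi_S$ unbounded.  So you are forced back to a genuine Zorn argument, and then the burden is on the coarse-intersection step, which is where the difficulty concentrates.  The coarse intersection of the $\pi_U(\cuco Y_i)$ can be empty (nothing rules out the $\cuco Y_i$ escaping to infinity in $\fontact U$), and even when nonempty you have not shown the resulting tuple $(B_U)$ is consistent or that the realization set has unbounded $G$--orbits; your appeal to ``no fixed point in $\partial\cuco X$'' does not by itself prevent the intersection from being bounded.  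The paper sidesteps all of this by observing that the hull of a single orbit is already the coarse minimum of the family, so no descent or Zorn machinery is needed.
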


\begin{proof}
If $(\cuco X, \mathfrak S)$ is an HHG, the claim follows immediately with $\cuco Y = \cuco X$.  In the second case, we will build $\cuco Y \subset \cuco X$ so that $\cuco Y$ is hierarchically quasiconvex and $G$-invariant, with the property that if $\cuco Y' \subset \cuco X$ is hierarchically quasiconvex and $G$-invariant, then there exists an $R>0$ such that $\cuco Y \subset \cuco N_R(\cuco Y')$.  Given such a $\cuco Y$, the fact that $G$ does not fix a point in $\partial\cuco Y$ follows from Proposition \ref{prop:hierarchically_quasiconvex_set} and the hypothesis that $G$ does not fix a point in $\boundary\cuco X$.

To construct $\cuco Y$, for each $U\in\mathfrak S$, let $H_U\subseteq\fontact U$ be the union of all geodesics starting and ending in $\pi_U(G\cdot x)$ for some fixed basepoint $x\in\cuco X$.  A thin quadrilateral argument shows that $H_U$ is uniformly quasiconvex.  Let $\cuco Y$ consist of all realization points $y$ with $\pi_U(y)\in H_U$ for all $U\in\mathfrak S$; this subspace is easily seen to have the required properties.
\end{proof}

Recall that, by hierarchical quasiconvexity, $(\cuco Y,\mathfrak S)$ is normalized: for each $U\in\mathfrak S$, the associated hyperbolic space is uniformly quasiisometric to $\pi_U(\cuco Y)\subseteq\fontact U$.

\section{Coarse rank-rigidity and its consequences}\label{sec:rank_rigidity}
Throughout this section, $(\cuco X,\mathfrak S)$ is a hierarchically hyperbolic space with $\cuco X$ proper and $\mathfrak S$ countable; we always let $S$ denote the $\nest$--maximal element of $\mathfrak S$.  %Since $\Aut(\mathfrak S)$ acts by isometries on $\prod_{U\in\mathfrak S}\fontact U$ (with the $\ell^1$ metric), we endow $\Aut(\mathfrak S)$ with the compact-open topology. 
In this section, we consider countable subgroups $G\leq\Aut(\mathfrak S)$ (so that, by the distance formula, $G$ acts discretely on $\cuco X$).  These standing hypotheses cover the case where $(G,\mathfrak S)$ is an HHG.  We emphasize our standing assumption that all HHS are normalized.

\begin{defn}[Rank-one automorphism]\label{defn:rank_one_automorphism}
The automorphism $g\in\Aut(\mathfrak S)$ is \emph{rank-one (on $(\cuco X,\mathfrak S)$)} if:
\begin{itemize}
 \item $g$ is axial;
 \item $|\BIG(g)|=1$;
 \item if $U\in\mathfrak S$ is orthogonal to the domain in $\BIG(g)$, then $\diam(\pi_U(\cuco X))<\infty$.
\end{itemize}
Irreducible axial elements are rank-one.
\end{defn}

Our first goal is to show that, under the above hypotheses, either $G$ contains an irreducible axial element or the $G$--essential core of $\cuco X$ is a product HHS (not necessarily with unbounded factors).  This is done in Section~\ref{subsec:irred_axial_or_fixed_domain}, using tools from Sections~\ref{subsec:finite_orbits},\ref{subsec:finding_product_structures},\ref{subsec:finding_irreducible_axial}.  In Section~\ref{subsec:actual_rank_rigidity}, we apply results of Section~\ref{subsec:irred_axial_or_fixed_domain}.

\subsection{Irreducible axials or fixed domains}\label{subsec:irred_axial_or_fixed_domain}
We now prove the following two parallel propositions (one covering the non-parabolic case, and one covering the HHG case):

\begin{prop}\label{prop:alternative_non_parabolic_version}
Let $(\cuco X,\mathfrak S)$ be an HHS with $\cuco X$ proper and $\mathfrak S$ countable.  Let the countable group $G\leq\Aut(\mathfrak S)$ act with unbounded orbits in $\cuco X$ and without a global fixed point in $\boundary\fontact S$.  Then either $G$ contains an irreducible axial element, or there exists $U\in\mathfrak S-\{S\}$ so that $|G\cdot U|<\infty$.  Moreover, any $G$--essential hierarchically quasiconvex subspace $\cuco Y\subset\cuco X$ coarsely coincides with the standard product region $P_U\cap\cuco Y$.
\end{prop}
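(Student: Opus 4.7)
The plan is to combine Osin's classification of acylindrical actions on the top-level hyperbolic space $\fontact S$ with a stationary-measure argument that leverages compactness of $\overline{\cuco X}$ (Theorem~\ref{thm:cpt}). By \cite[Theorem~14.3]{BehrstockHagenSisto:HHS_I}, the action $G\curvearrowright\fontact S$ is acylindrical; Osin's trichotomy classifies this action as elliptic, lineal, or of general type. In the lineal and general-type cases $G$ contains an element acting loxodromically on $\fontact S$, which is by definition irreducible axial in $(\cuco X,\mathfrak S)$, so the proposition holds. The substantive case is when $G$ has bounded orbits on $\fontact S$.

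In this case I would invoke the stationary-measure machinery. Fix a non-degenerate probability measure $\mu$ on the countable group $G$; by compactness of $\overline{\cuco X}$ and a standard averaging argument, there exists a $\mu$-stationary probability measure $\nu$ on $\overline{\cuco X}$, and unboundedness of $G$-orbits in $\cuco X$ forces $\nu$ to be supported on $\boundary\cuco X$. The main technical input, Lemma~\ref{lem:fixed_point} (to be proved in tandem with the HHG version of this proposition), asserts that unless $G$ admits a finite orbit in $\boundary\fontact S$ or in $\mathfrak S-\{S\}$, the measure $\nu$ must be supported on $\boundary\fontact S\subseteq\boundary\cuco X$. Support on $\boundary\fontact S$ is incompatible with boundedness of $\pi_S(G\cdot x_0)$, and a $G$-finite orbit in $\boundary\fontact S$ yields, after passing to a finite-index subgroup, a global fixed point in $\boundary\fontact S$ contradicting the hypothesis (the conclusion of the proposition is insensitive to passage to finite-index subgroups). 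Hence $G$ has a finite orbit in $\mathfrak S-\{S\}$, producing the desired $U$.

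For the essential-core statement, pass to the finite-index subgroup $G_0\leq G$ fixing $U$ pointwise; then $G_0$ stabilises the standard product region $P_U$. The intersection $\cuco Y\cap P_U$ is hierarchically quasiconvex and $G_0$-invariant, so applying essentiality of the (slightly thickened) $G_0$-action on $\cuco Y$ to this intersection forces $\cuco Y\subseteq\neb_R(P_U)$ for some uniform $R$, which is exactly the asserted coarse coincidence. The principal obstacle throughout is the proof of Lemma~\ref{lem:fixed_point}: it will require drift estimates for the induced random walks in each hyperbolic space $\fontact U$, together with the consistency and bounded-geodesic-image axioms, to propagate ``escape to infinity'' up the $\nest$-lattice, terminating at $S$ unless a finite orbit is encountered at an intermediate level.
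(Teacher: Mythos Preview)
Your invocation of acylindricity is a genuine gap. Theorem~14.3 of~\cite{BehrstockHagenSisto:HHS_I} establishes that $G$ acts acylindrically on $\fontact S$ only under the hypothesis that $G$ acts properly and coboundedly on $\cuco X$; that hypothesis is precisely what distinguishes Proposition~\ref{prop:alternative_HHG_version} from the present proposition, and it is \emph{not} assumed here. Without it, Osin's trichotomy is unavailable, and your reduction to ``loxodromic or bounded orbits on $\fontact S$'' is unjustified.

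The paper circumvents this as follows. First pass to a $G$--essential hierarchically quasiconvex core $\cuco Y$ via Proposition~\ref{prop:essential core}, and work in the normalized structure $(\cuco Y,\mathfrak S)$. Now use \emph{essentiality} rather than acylindricity: if $G$ had bounded orbits in (the new) $\fontact S$, one builds a proper $G$--invariant hierarchically quasiconvex subspace of $\cuco Y$ (realization points whose $S$--coordinate lies in the bounded $G$--orbit), contradicting essentiality; this is Proposition~\ref{prop:finding_irreducible_axial}. Hence $G$ has unbounded orbits in $\fontact S$, and Gromov's general classification of isometry groups of hyperbolic spaces (no acylindricity needed) gives either a loxodromic or a unique global fixed point in $\boundary\fontact S$; the latter is ruled out by hypothesis. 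In the remaining case $\diam(\pi_S(\cuco Y))<\infty$, so in the normalized structure on $\cuco Y$ one has $\boundary\fontact S=\emptyset$, and \emph{now} Lemma~\ref{lem:fixed_point} forces a finite $G$--orbit in $\mathfrak S-\{S\}$. Note that this last step also repairs a secondary looseness in your argument: your claim that ``support on $\boundary\fontact S$ is incompatible with boundedness of $\pi_S(G\cdot x_0)$'' is not obvious when $\fontact S$ itself is unbounded; the paper avoids this by arranging $\boundary\fontact S=\emptyset$ before invoking the stationary measure.
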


\begin{proof}
By Proposition~\ref{prop:essential core}, there exists a $G$--invariant hierarchically quasiconvex subspace $\cuco Y$, with a hierarchically hyperbolic structure $(\cuco Y,\mathfrak S)$ admitting a $G$--equivariant hieromorphism $(\cuco Y,\mathfrak S)\to(\cuco X,\mathfrak S)$ that is the inclusion on $\cuco Y$ and the identity on $\mathfrak S$, and so that $(\cuco Y,\mathfrak S)$ is $G$--essential.  Moreover, $G$ continues to act without a global fixed point in $\boundary\fontact S$.  Hence, since $\cuco Y$ is proper and $\mathfrak S$ is countable, Proposition~\ref{prop:finding_irreducible_axial} provides an irreducible axial isometry of $(\cuco Y,\mathfrak S)$ (hence of $(\cuco X,\mathfrak S)$) unless $\diam(\pi_S(\cuco Y))<\infty$.  If $\diam(\pi_S(\cuco Y))<\infty$, then Proposition~\ref{prop:finding_product_structures} completes the proof.
\end{proof}

The HHG version requires the following theorem of Osin, which we also use elsewhere:

\begin{thm}[Theorem 1.1 of \cite{Osin:acylindrically_hyperbolic}] \label{thm:osin}
Let $G$ be a group acting acylindrically on a hyperbolic space.  Then exactly one of the following holds:
\begin{enumerate}
\item $G$ has bounded orbits;
\item $G$ is virtually infinite cyclic and contains a loxodromic element;\label{item:cyclic}
\item $G$ contains infinitely many independent loxodromic elements.
\end{enumerate}
\end{thm}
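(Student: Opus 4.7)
My plan is to follow the standard trichotomy route via Bowditch's isometry classification and a ping-pong argument, with acylindricity providing the essential control over stabilizers. The alternative is clearly exclusive, so the content of the theorem is really that one of (1)--(3) always holds.

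First, by Lemma~\ref{lem:bowditch_acyl} (Bowditch), the acylindricity hypothesis rules out parabolic elements, so every $g \in G$ is either elliptic (bounded orbits on the hyperbolic space $X$) or loxodromic (with well-defined fixed points $g^\pm \in \partial X$ and north-south dynamics). The proof then splits according to whether $G$ contains a loxodromic element.

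If $G$ contains no loxodromic element, I would show all orbits of $G$ on $X$ are bounded. Suppose for contradiction there is a basepoint $x$ and a sequence $g_n \in G$ with $d(x, g_n x) \to \infty$. Each $g_n$ is elliptic, hence fixes (coarsely) a point $y_n$. A standard Gromov-product argument shows that, in the absence of loxodromics, an unbounded orbit forces the existence of a limit point $\xi \in \partial X$ toward which the orbit of $x$ accumulates in a ``parabolic'' fashion; however, the acylindricity inequality applied to pairs of points lying deep along geodesics pointing at $\xi$ produces an arbitrarily large set of group elements moving a long segment by a small amount, contradicting acylindricity. This gives case (1).

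Now suppose $g \in G$ is loxodromic. Consider the action of the setwise stabilizer $H = \mathrm{Stab}_G\{g^+, g^-\}$ on the quasi-axis $\Lambda_g$ of $g$. Acylindricity applied to long segments of $\Lambda_g$ bounds the kernel of the action $H \to \mathrm{Isom}(\Lambda_g)$ by a finite group, and the image in $\mathrm{Isom}(\Lambda_g) \cong \mathbb{Z} \rtimes \mathbb{Z}/2$ is discrete; hence $H$ is virtually cyclic. If the $G$-orbit of the unordered pair $\{g^+, g^-\}$ is finite, then $H$ has finite index in $G$, so $G$ itself is virtually cyclic and we are in case (2). Otherwise, choose $h \in G$ with $h\{g^+, g^-\} \cap \{g^+, g^-\} = \emptyset$; then $g' := hgh^{-1}$ is loxodromic with fixed-pair disjoint from that of $g$. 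A standard ping-pong argument using the north-south dynamics of high powers $g^N$ and $(g')^N$ on $\partial X$ produces a nonabelian free subgroup $F = \langle g^N, (g')^N\rangle \leq G$, whose nontrivial cyclically-reduced elements are all loxodromic with pairwise distinct fixed-pairs on $\partial X$, yielding case (3).

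The main obstacle is the case where $G$ has no loxodromic element but (potentially) unbounded orbits: ruling this out is exactly where one has to exploit acylindricity most carefully, rather than just the abstract classification of individual isometries. The ping-pong step and the virtually-cyclic step, while technical, are routine given Bowditch's lemma and the acylindricity-to-bounded-pair-stabilizer implication.
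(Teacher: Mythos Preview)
The paper does not give its own proof of this statement: Theorem~\ref{thm:osin} is quoted verbatim as Theorem~1.1 of Osin's paper and used as a black box, so there is nothing to compare your argument against here.

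That said, your outline is the standard one and is essentially how Osin's proof goes. The ping-pong step and the virtually-cyclic step are fine as written. The one place where your sketch is genuinely incomplete is the ``no loxodromic $\Rightarrow$ bounded orbits'' step: the sentence ``a standard Gromov-product argument shows that \ldots an unbounded orbit forces the existence of a limit point $\xi$'' is doing a lot of work, and the contradiction you describe (acylindricity applied to long segments pointing at $\xi$) needs to be made precise, since you do not yet have a single element moving points far along such a segment by a small amount---you only have a sequence of elliptics. In Osin's argument this is handled by a careful analysis producing either a loxodromic or a bounded orbit directly from acylindricity; you would need to fill in that mechanism rather than gesture at it.
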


\begin{prop}\label{prop:alternative_HHG_version}
Let $(G,\mathfrak S)$ be an HHG.  Then either $G$ contains an irreducible axial element or there exists $U\in\mathfrak S$ such that $|G\cdot U|<\infty$ and $G$ coarsely coincides with $P_U$.
\end{prop}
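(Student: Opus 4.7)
The plan is to combine the acylindricity of the $G$-action on $\fontact S$ with the parallel Proposition~\ref{prop:alternative_non_parabolic_version}. First, by~\cite[Theorem~14.3]{BehrstockHagenSisto:HHS_I}, $G$ acts acylindrically on $\fontact S$, so I would apply Osin's trichotomy (Theorem~\ref{thm:osin}): either $G$ has bounded orbits on $\fontact S$, or $G$ contains some $g$ acting loxodromically on $\fontact S$ (cases (2) and (3) of Theorem~\ref{thm:osin}).

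Suppose first that some $g \in G$ acts loxodromically on $\fontact S$, so $S \in \BIG(g)$. I claim this forces $\BIG(g) = \{S\}$, whence $g$ is irreducible, and then axial by Proposition~\ref{prop:axial}. Indeed, by Lemma~\ref{lem:Big is orth} any other element of $\BIG(g)$ would be orthogonal to $S$; but every $U \in \mathfrak S$ satisfies $U \nest S$, and by Definition~\ref{defn:space_with_distance_formula}.\eqref{item:dfs_orthogonal} orthogonal domains are never $\nest$-comparable. This disposes of Osin's loxodromic cases.

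In the remaining case, $G$ has bounded orbits on $\fontact S$. Since the $G$-action on $\cuco X = G$ is transitive and $\pi_S$ is $G$-equivariant and coarsely surjective (by the normalization assumption), boundedness of a single $G$-orbit forces $\diam \fontact S < \infty$; in particular $\boundary \fontact S = \emptyset$. Assuming $G$ is infinite (otherwise the conclusion is trivial, with any $U \in \mathfrak S - \{S\}$ working), $G$ acts on itself with unbounded orbits and vacuously without a global fixed point in $\boundary \fontact S$, so I would invoke Proposition~\ref{prop:alternative_non_parabolic_version}. Since $\fontact S$ is now bounded, no element can be irreducible axial (by Proposition~\ref{prop:axial}, irreducible axials require an unbounded orbit on $\fontact S$), so that proposition delivers $U \in \mathfrak S - \{S\}$ with $|G \cdot U| < \infty$. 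The ``moreover'' clause applied to $\cuco Y = G$ itself, which is $G$-essential since $G$ acts coboundedly (indeed transitively) on itself, yields that $G$ coarsely coincides with $P_U$, completing the proof.

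The main obstacle here is not really an obstacle but a bookkeeping point: verifying that the hypotheses of Proposition~\ref{prop:alternative_non_parabolic_version} really are satisfied in the bounded-$\fontact S$ case (vacuous fixed-point condition plus unbounded orbits on $G$), and that the ``moreover'' clause can be legitimately applied with $\cuco Y = G$. Everything else reduces to orchestrating Osin's theorem, the incompatibility of nesting and orthogonality, and the already-proved non-parabolic version.
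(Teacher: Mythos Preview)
Your proof is correct and follows essentially the same route as the paper: use acylindricity of the $G$--action on $\fontact S$ together with Osin's theorem to produce an irreducible axial when $\fontact S$ is unbounded, and otherwise reduce to the product-region conclusion. The only cosmetic difference is that in the bounded-$\fontact S$ case the paper invokes Proposition~\ref{prop:finding_product_structures} directly, whereas you go via Proposition~\ref{prop:alternative_non_parabolic_version} (whose own proof calls Proposition~\ref{prop:finding_product_structures}); your version also spells out in more detail why a loxodromic on $\fontact S$ must have $\BIG(g)=\{S\}$, which the paper leaves implicit.
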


\begin{proof}
The $G$--action on $(G,\mathfrak S)$ is essential.  If $\diam(\fontact S)=\infty$, then, since $G$ acts acylindrically on $\fontact S$, as proved in~\cite[Section~14]{BehrstockHagenSisto:HHS_I}, Theorem~\ref{thm:osin} implies that $G$ contains an irreducible axial element.  Hence we can assume that $\diam(\fontact S)<\infty$, and in particular that $G$ has no fixed point in $\boundary\fontact S=\emptyset$.  The claim now follows from Proposition~\ref{prop:finding_product_structures}.
\end{proof}

\subsection{Finding finite orbits in $\mathfrak S$}\label{subsec:finite_orbits}
Let $\mu$ be a probability measure on $G$, whose support generates $G$.  All spaces are equipped with their Borel $\sigma$--algebra, so every subset of  $G$ is measurable, while the measurable subsets of $\overline{\cuco X}$ are determined by Definition~\ref{defn:boundary_topology}.

\begin{lem}[Stationary measure on $\overline{\cuco X}$]\label{lem:stationary_measure_exists}
There exists a $\mu$--stationary probability measure $\nu$ on $\overline{\cuco X}$, i.e. for all $\nu$--measurable $E\subseteq\overline{\cuco X}$, $$\nu(E)=\sum_{g\in G}\mu(g)\nu(g^{-1}E)=\mu*\nu(E).$$
\end{lem}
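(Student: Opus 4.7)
The plan is to deduce this from the compactness of $\overline{\cuco X}$ via a standard Kryloff--Bogoliouboff averaging argument, adapted to the semigroup acting by convolution on probability measures.

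First I would record the structural ingredients. By Theorem~\ref{thm:cpt}, $\overline{\cuco X}$ is compact, and by Proposition~\ref{prop:properties}\eqref{item:hausdorff} it is Hausdorff and separable; since it is compact Hausdorff and second countable (separability together with the neighbourhood bases from Section~\ref{subsec:topology_of_boundary}) it is metrizable by Urysohn. Hence $C(\overline{\cuco X})$ is a separable Banach space under the sup norm, and by the Riesz representation theorem the space $P(\overline{\cuco X})$ of Borel probability measures on $\overline{\cuco X}$, topologised by weak-$*$ convergence, is a compact, convex, metrizable subset of the dual of $C(\overline{\cuco X})$. The action $G\curvearrowright\overline{\cuco X}$ is by homeomorphisms (Corollary~\ref{cor:auto extend}), so each $g\in G$ acts on $P(\overline{\cuco X})$ by the pushforward $\nu\mapsto g_*\nu$, continuously in the weak-$*$ topology.

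Next I would define the convolution operator. Given $\nu\in P(\overline{\cuco X})$, set
\[
\mu*\nu \;=\; \sum_{g\in G}\mu(g)\,g_*\nu,
\]
understood as an integral of the weak-$*$ continuous map $g\mapsto g_*\nu$ against the countable measure $\mu$; since $\mu$ is a probability measure and $g_*\nu$ is a probability measure for every $g$, the sum converges weak-$*$ to an element of $P(\overline{\cuco X})$. The map $T\colon\nu\mapsto\mu*\nu$ is affine, and continuous in the weak-$*$ topology (by dominated convergence applied to each $f\in C(\overline{\cuco X})$, using that $\|f\|_\infty$ is integrable against $\mu$).

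I then produce a fixed point. Pick any $\nu_0\in P(\overline{\cuco X})$, for example $\delta_{x_0}$ for a basepoint $x_0\in\cuco X$, and form the Ces\`aro averages
\[
\nu_n \;=\; \frac{1}{n}\sum_{k=0}^{n-1} T^k \nu_0.
\]
By compactness of $P(\overline{\cuco X})$ there is a subsequence $\nu_{n_j}$ converging weak-$*$ to some $\nu\in P(\overline{\cuco X})$. A direct computation gives $T\nu_n-\nu_n=\tfrac{1}{n}(T^n\nu_0-\nu_0)$, which has total variation at most $2/n$ and therefore converges to $0$ in the weak-$*$ topology. By weak-$*$ continuity of $T$, passing to the limit along $n_j$ yields $T\nu=\nu$, which unpacks to the stated identity $\nu(E)=\sum_{g\in G}\mu(g)\nu(g^{-1}E)$ on Borel sets $E\subseteq\overline{\cuco X}$.

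The main point to be careful about is purely foundational: that the countable sum defining $\mu*\nu$ genuinely defines a Borel probability measure, that the weak-$*$ topology is well-behaved because $\overline{\cuco X}$ is compact metrizable, and that the Borel $\sigma$-algebra generated by the neighbourhood basis of Definition~\ref{defn:boundary_topology} is the Borel $\sigma$-algebra of this compact metrizable topology; none of these are obstacles, just points requiring a sentence each. No use of properness of the $G$-action, acylindricity, or the hierarchical structure is needed beyond compactness of $\overline{\cuco X}$, which is where all the HHS content enters.
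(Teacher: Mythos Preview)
Your proposal is correct and is precisely the standard argument the paper is invoking: the paper's own proof is a one-line citation (``This is a standard fact, relying on compactness of $\overline{\cuco X}$, i.e.\ Theorem~\ref{thm:cpt}. See e.g.~[Furstenberg, Lemma~1.2]''), and you have simply written out the Krylov--Bogoliubov/Ces\`aro averaging that underlies that citation. One small remark: your deduction of second countability from separability is not quite a general fact, but it is inessential here---you can either argue metrizability more carefully from the explicit countable neighbourhood bases (using countability of $\mathfrak S$), or sidestep sequences entirely by taking a weak-$*$ cluster point via nets or invoking Schauder--Tychonoff directly.
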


\begin{proof}
This is a standard fact, relying on compactness of $\overline{\cuco X}$, i.e. Theorem~\ref{thm:cpt}.  See e.g.~\cite[Lemma~1.2]{Furstenberg}.
\end{proof}

\begin{rem}[Sampling $\cuco X$]\label{rem:sampling}
Since our aim in this section is to establish that, after passing if necessary to a $G$--essential core, $G$ contains an irreducible axial element or $\cuco X$ is a product HHS, and these properties are insensitive to modifications of $\cuco X$ within its quasiisometry type, we now ``discretize'' $\cuco X$, for convenience in the proof of Lemma~\ref{lem:fixed_point}.

Let $\cuco D=G\backslash\cuco X$, and let $\bar\dist$ be the quotient pseudometric, so $(\cuco D,\bar\dist)$ is proper since $\cuco X$ is proper.  Hence there exists $\epsilon>0$ and a countable set $\{\bar x_n\}_{n\geq0}$ in $\cuco D$ such that $\neb_{\epsilon}^{\cuco D}(\{\bar x_n\})=\cuco D$.  Thus $\cuco X$ contains a countable, $G$--invariant set $\{x_n\}_{n\geq0}$ for which the inclusion $\{x_n\}\hookrightarrow\cuco X$ is a quasiisometry, and we replace $\cuco X$ with $\{x_n\}$.  We can thus assume that $\cuco X$ is countable.
\end{rem}

\begin{lem}\label{lem:support_measurable}
For each $\mathcal U\subset\mathfrak S$, the set $\{p\in\boundary\cuco X:\support(p)=\mathcal U\}$ is $\nu$--measurable.
\end{lem}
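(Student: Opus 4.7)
The goal is to show $B_{\mathcal U}:=\{p\in\boundary\cuco X:\support(p)=\mathcal U\}$ is Borel in $\overline{\cuco X}$, which is what $\nu$--measurability requires for the $\mu$--stationary Borel measure $\nu$ from Lemma~\ref{lem:stationary_measure_exists}. My plan is first to reduce to a uniform measurability statement for individual domains, and then extract the coefficients $a^p_U$ as Borel functions built from the interior part of the neighborhood basis.

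\textbf{Combinatorial reduction.} By Lemma~\ref{lem:pairwise_orthogonal}, $B_{\mathcal U}=\emptyset$ unless $\mathcal U$ is pairwise orthogonal of size at most the complexity $n$. For any such $\mathcal U$, using the standing assumption that $\mathfrak S$ is countable, I would write
\[
B_{\mathcal U} \;=\; \bigcap_{U\in\mathcal U}A_U\;\cap\;\bigcap_{V\in\mathfrak S\setminus\mathcal U}\bigl(\boundary\cuco X\setminus A_V\bigr),\qquad A_U:=\{p:U\in\support(p)\}.
\]
This is a countable intersection, so it suffices to show each $A_U$ is Borel. Next I would introduce the coefficient function $f_U:\boundary\cuco X\to[0,1]$ with $f_U(p)=a^p_U$ if $U\in\support(p)$ and $f_U(p)=0$ otherwise, observe that $A_U=f_U^{-1}((0,1])=\bigcup_{k\in\naturals}\{f_U\geq 1/k\}$, and reduce to proving that $f_U$ is Borel measurable.

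\textbf{Recovering coefficients from approach sequences.} Here I would exploit Proposition~\ref{prop:properties}: $\cuco X$ is separable (since $\cuco X$ is proper, after passing to a countable net as in Remark~\ref{rem:sampling}) and dense in $\overline{\cuco X}$. Fix a countable dense $D\subseteq\cuco X$. The interior part $\neb^{int}_{\{U_S\},\epsilon}(p)$ in Definition~\ref{defn:interior_part} says precisely that, along any sequence $x_m\in\cuco X$ with $x_m\to p$, the ratios $\dist_S(x_0,x_m)/\dist_{S'}(x_0,x_m)$ converge to $a^p_S/a^p_{S'}$ for $S,S'\in\support(p)$, while $\dist_T(x_0,x_m)/\dist_S(x_0,x_m)\to 0$ for $T\in\support(p)^\orth$. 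Together with the normalization $\sum_{V\in\support(p)}a^p_V=1$, this characterizes each $a^p_U$ as a limit of continuous functions of $x_m\in D$. Concretely, for each $c\in(0,1)\cap\rationals$ I would express $\{f_U>c\}$ as a countable Boolean combination of open conditions of the form ``for every basic neighborhood of $p$, some (equivalently: cofinally many) $x\in D$ in that neighborhood satisfies $\dist_U(x_0,x)\geq c\cdot\sum_{V}\dist_V(x_0,x)-C$'' (where the sum runs over a fixed finite candidate support), which lives in the Borel $\sigma$--algebra generated by the open neighborhood basis.

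\textbf{The main obstacle} is the remote case, where $p_n\to p$ remotely forces $\support(p_n)\cap\support(p)=\emptyset$ but allows $a^{p_n}_U$ to stay positive when $U\in\support(p_n)$ is merely non-orthogonal to $\support(p)$. Naively $f_U$ is therefore neither upper nor lower semicontinuous. The fix I would use is to read $a^p_U$ only through interior-point approximants in $D$ (exploiting density) rather than through boundary-to-boundary convergence: the interior part condition is purely about metric ratios of the continuous projections $\pi_V$, so the resulting characterization of $\{f_U>c\}$ is Borel regardless of how boundary points approach $p$. Verifying the equivalence of this Borel characterization with the definition of $a^p_U$ will be the technical core; it should follow from the interior-part control together with the normalization $\sum_V a^p_V=1$ and the fact, built into Definition~\ref{defn:interior_part}, that the ratio limits uniquely determine the coefficients on $\support(p)$.
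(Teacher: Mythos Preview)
Your combinatorial reduction to the sets $A_U=\{p:U\in\support(p)\}$ is valid, but the heart of your argument---reading off $f_U(p)=a^p_U$ from interior approximants---has a real gap. The interior part condition in Definition~\ref{defn:interior_part} controls $\dist_V(x_0,x)$ only for $V\in\support(p)\cup\support(p)^\orth$; it imposes nothing on $\dist_V(x_0,x)$ for $V$ transverse to (or nest-comparable with) an element of $\support(p)$. So for a fixed $U$ you cannot decide from an arbitrary approximating sequence whether $U\in\support(p)$ without already knowing $\support(p)$. Concretely, your proposed inequality $\dist_U(x_0,x)\geq c\cdot\sum_{V}\dist_V(x_0,x)-C$ has the sum running over ``a fixed finite candidate support'', but the only choice that makes the ratio converge to $a^p_U$ is $\mathcal V=\support(p)$, which is circular. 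Different interior sequences $x_m\to p$ can produce wildly different values of $\dist_U(x_0,x_m)$ when $U\notin\support(p)\cup\support(p)^\orth$, so neither a $\limsup$ nor a $\liminf$ over all approximants gives a clean characterization of $\{f_U>c\}$. You flag this as ``the technical core'' but do not carry it out, and I do not see a non-circular way to do so along these lines.

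The paper sidesteps coefficients entirely. For a pairwise-orthogonal $\mathcal U$ it introduces $\mathcal X_0(\mathcal U)=\{q\in\boundary\cuco X:\text{every }V\in\support(q)\text{ is nested in some }U\in\mathcal U\}$ and proves directly, by a case analysis of remote and non-remote convergence, that $\mathcal X_0(\mathcal U)$ is \emph{closed} in $\overline{\cuco X}$. Then $B_{\mathcal U}=\mathcal X_0(\mathcal U)\setminus\bigcup_{\mathcal V}\mathcal X_0(\mathcal V)$, where $\mathcal V$ ranges over the countably many support sets strictly ``below'' $\mathcal U$, exhibiting $B_{\mathcal U}$ as closed minus $F_\sigma$. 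This avoids any attempt to extract individual coefficients and uses only the nesting/orthogonality combinatorics that the topology actually sees. If you want to rescue your route, the cleanest move is probably to prove this closedness statement first and then observe $A_U=\bigcup_{\mathcal V\ni U}B_{\mathcal V}$, rather than trying to make $f_U$ directly Borel.
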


\begin{proof}
Either $\{p\in\boundary\cuco X:\support(p)=\mathcal U\}=\emptyset$, in which case we're done, or $\mathcal U=\{U_i\}$ is a set of pairwise-orthogonal domains.  Let $\mathcal X_0$ be the set of points $q\in\boundary\cuco X$ so that, for all $V\in\support(q)$, there exists $U\in\mathcal U$ with $V\nest U$.  Note that $\mathcal Y=\{p\in\boundary\cuco X:\support(p)=\mathcal U\}\subseteq\mathcal X_0$.  Let $\mathcal X_1$ be the subset of $\mathcal X_0$ consisting of those $q\in\mathcal X_0$ such that for some $V\in\support(q)$, we have $V\not\in\mathcal U$ (so $V$ is properly nested in some $U\in\mathcal U$ and orthogonal to the remaining elements).  

\textbf{$\mathcal X_0$ is closed in $\overline{\cuco X}$:}  We will check that for any sequence $\{q_n\}$ with each $q_n\in\mathcal X_0$, if $q_n\to q$, then $q\in\mathcal X_0$.  Suppose not, i.e. suppose that there exists $V\in\support(q)$ so that $V\not\nest U$ for all $U\in\mathcal U$.  Consider a basic neighborhood $\neb=\neb_{\epsilon,\{N_T\}}(q)$ of $q$.  There are two cases.

\emph{First case:} This is the case where there exists $U\in\mathcal U$ so that $U\transverse V$ or $U\propnest V$ and, for infinitely many $n$, there exists $W\in\support(q_n)$ so that $W\nest U$ and $W\north V$.  Let $\mathcal I$ be the set of such $n$.

First, suppose that $q_n$ is remote with respect to $q$.  Suppose that the basic neighborhood $\neb$ has been chosen so that $N_V$ does not meet the $10^9E$--neighborhood of $\rho^U_V$.  Then for arbitrarily large $n\in\mathcal I$, the subsets $\rho^U_V,\rho^W_V$ coarsely coincide, and hence $(\boundary\pi_{\support(q)}(q_n))_V=\rho^W_V$ does not lie in $N_V$.  It follows that for arbitrarily large $n\in\mathcal I$, we have $q_n\not\in\neb$, by the definition of the remote part of a basic set.  This is a contradiction.

Second, suppose that $q_n$ is non-remote with respect to $q$, where $n\in\mathcal I$.  Exactly as before, suppose that $N_V$ does not meet the $10^9E$--neighborhood of $\rho^U_V$ (which is still defined by assumption).  We still have that $\rho^W_V$ is defined and coarsely coincides with $\rho^U_V$, for some $W\in\support(q_n)$, by assumption. Hence, again, we have that $(\boundary\pi_{\support(q)}(q_n))_V=\rho^W_V$ does not lie in $N_V$.  From the final condition in the definition of the non-remote part of a basic set, it follows that $q_n\not\in\neb$, which is again a contradiction.

\emph{Second case:} In this case, for all but finitely many $n$, we have $V\orth W$ for all $W\in\support(q_n)$.  
The point $q_n$ is non-remote with respect to $q$.  Indeed, there exists $V\in\support(q)$ which is orthogonal to every element of $\support(q_n)$.  In particular $V\in\support(q)-\support(q_n)\cap\support(q)$.  Now, $\sum_{T\in\support(q_n)-\support(q)}a^{q_n}_T<\epsilon$, so $$\sum_{T\in\support(q_n)\cap\support(q)}a^{q_n}_T>1-\epsilon,$$ while $|a^q_T-a^{q_n}_T|<\epsilon$ whenever $T\in\support(q_n)\cap\support(q)$.  Hence $$\sum_{T\in\support(q)\cap\support(q_n)}a^q_T>1-\epsilon\left(|\support(q_n)\cap\support(q)|\right),$$ which is impossible when $\epsilon$ is sufficiently small compared to $a^q_V$, since $V\not\in\support(q_n)$.  Hence $q_n\not\in\neb$, a contradiction.

\textbf{Conclusion:}  Let $\mathfrak T$ be the set of support sets $\mathcal V\neq\mathcal U$ such that for each $V\in\mathcal V$, there exists $U\in\mathcal U$ with $V\nest U$.  Then $\mathfrak T$ is countable, being a set of finite subsets of the countable set $\mathfrak S$.  Now, $\mathcal X_1$ is the union over all $\mathcal V\in\mathfrak T$ of the set $\mathcal X_0(\mathcal V)$ of $q\in\boundary\cuco X$ such that for each $W\in\support(q)$, there exists $V\in\mathcal V$ with $W\nest V$.  Hence, by the previous part of the proof, $\mathcal X_1$ is a countable union of closed sets.  Thus $\mathcal Y=\mathcal X_0-\mathcal X_1$ is Borel, and hence $\nu$--measurable.  
\end{proof}

\begin{lem}\label{lem:fixed_point}
If $G$ has no finite orbit in $(\mathfrak S-\{S\})\cup\boundary\fontact S$, then $\nu$ is supported on $\boundary\fontact S\subset\overline{\cuco X}$.
\end{lem}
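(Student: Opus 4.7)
The plan is to push the stationary measure $\nu$ forward to a $\mu$-stationary measure on the Gromov compactification $\overline{\fontact S}$ of the hyperbolic space $\fontact S$, and then invoke the standard fact that, for a non-elementary action on a hyperbolic space, every stationary measure on the compactification is concentrated on the Gromov boundary. Throughout, recall that since $S\in\mathfrak S$ is the unique $\nest$-maximal element and $G$ preserves $\nest$, we have $gS=S$ for all $g\in G$, so $G$ acts by isometries on $\fontact S$.

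First, I would construct a Borel map $\Pi\colon\overline{\cuco X}\to\overline{\fontact S}$. On the interior, set $\Pi=\pi_S$. On $\boundary\fontact S\subset\boundary\cuco X$, let $\Pi$ be the identity. For a boundary point $p$ with $\support(p)\neq\{S\}$, every $U\in\support(p)$ is properly nested in $S$, so $\rho^U_S$ is a subset of $\fontact S$ of diameter at most $E$; fixing an enumeration of the countable set $\mathfrak S$, define $\Pi(p)$ to be a chosen point of $\rho^{U_p}_S$, where $U_p$ is the $\nest$-first element of $\support(p)$ in this enumeration. Borel measurability of $\Pi$ follows from Lemma~\ref{lem:support_measurable}, and because $g\cdot\rho^U_S$ coarsely coincides with $\rho^{gU}_S$, the map $\Pi$ is $G$-equivariant up to a uniform coarse error depending only on the HHS constants.

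The hypothesis forces $G\curvearrowright\fontact S$ to be non-elementary. Indeed, if $G$-orbits on $\fontact S$ were bounded then a large-link argument together with the Ramsey-type reasoning used in the proof of Proposition~\ref{prop:elliptic} would produce a non-maximal $U\in\mathfrak S$ with finite $G$-orbit, contradicting the assumption that $G$ has no finite orbit in $\mathfrak S-\{S\}$. If instead the action on $\fontact S$ were unbounded but elementary (lineal, parabolic, or focal in Gromov's classification), then $G$ would admit a finite orbit in $\boundary\fontact S$, again forbidden. Hence the action is non-elementary. Using stationarity of $\nu$ and coarse equivariance of $\Pi$, one shows that $\Pi_*\nu$ is $\mu$-stationary on $\overline{\fontact S}$ up to a bounded error, and a standard limit in the weak-$*$ topology on probability measures produces a genuinely $\mu$-stationary probability measure $\bar\nu$ on $\overline{\fontact S}$ that agrees with $\Pi_*\nu$ on neighborhoods of $\boundary\fontact S$. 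The standard theory of random walks on hyperbolic spaces then implies $\bar\nu$ is supported on $\boundary\fontact S$, so $\bar\nu(\fontact S)=0$. Since $\Pi^{-1}(\fontact S)$ contains all of $\cuco X$ together with every boundary point $p$ satisfying $\support(p)\neq\{S\}$, we conclude that $\nu\bigl(\overline{\cuco X}-\boundary\fontact S\bigr)=0$.

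The principal obstacle is making the pushforward argument rigorous in the presence of only coarse $G$-equivariance of $\Pi$: one must verify that the produced measure $\bar\nu$ is genuinely stationary, not merely an asymptotic limit, and that no mass is lost from $\boundary\fontact S$ in the passage $\Pi_*\nu\rightsquigarrow\bar\nu$. A secondary sanity check is the bounded-$\fontact S$ case: there $\boundary\fontact S=\emptyset$, so the lemma's conclusion would require $\nu\equiv0$, incompatible with being a probability; but the non-elementarity argument shows that in this case the hypothesis of the lemma already fails, so the lemma is vacuously satisfied.
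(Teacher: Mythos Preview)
Your approach is genuinely different from the paper's, and it has two real gaps.

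\textbf{Gap 1 (the one you flag yourself is fatal as stated).} The map $\Pi$ is only coarsely $G$-equivariant off $\boundary\fontact S$, so $\Pi_*\nu$ need not be $\mu$-stationary. You propose to repair this by passing to a weak-$*$ limit $\bar\nu$ ``that agrees with $\Pi_*\nu$ on neighborhoods of $\boundary\fontact S$,'' but there is no mechanism guaranteeing that: Ces\`aro averaging $\mu^{*n}*\Pi_*\nu$ can move mass around inside $\fontact S$ and even push it to different parts of $\boundary\fontact S$. What you actually need is $\Pi_*\nu(\fontact S)=0$, and the deduction from $\bar\nu(\fontact S)=0$ back to $\Pi_*\nu(\fontact S)=0$ is exactly where coarse equivariance bites. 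Nothing in the standard random-walk literature bridges this without genuine equivariance on the relevant set.

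\textbf{Gap 2 (non-elementarity).} Your reduction ``bounded $G$-orbits in $\fontact S$ $\Rightarrow$ finite $G$-orbit in $\mathfrak S-\{S\}$'' is not justified. The Ramsey/large-link argument of Proposition~\ref{prop:elliptic} is tailored to a cyclic group: it uses that $\BIG(g)$ is pairwise orthogonal and that translates $g^i U$ are $\nest$-incomparable. For an arbitrary countable $G$ with bounded orbits in $\fontact S$, the large-link axiom produces, for each pair of orbit points, a finite list of proper subdomains, but these lists vary with the pair and there is no evident $G$-invariant finite collection. Without this, you cannot rule out the elliptic case, so you cannot invoke Maher--Tiozzo, which requires the semigroup generated by $\supp(\mu)$ to contain two independent loxodromics.

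\textbf{What the paper does instead.} The paper avoids both issues by never touching the hyperbolic geometry of $\fontact S$. It defines a genuinely $G$-equivariant measurable map $\mathcal O\colon\overline{\cuco X}\to D$, where $D$ is the \emph{countable discrete} set of finite subsets of $\mathfrak S$: boundary points go to their support set, and interior points are sent equivariantly to translates of a fixed $F\in D-\{\{S\},\emptyset\}$ (using the discretization of $\cuco X$ from Remark~\ref{rem:sampling}). The pushforward $\mathcal O_*\nu$ is then an honestly $\mu$-stationary probability measure on a countable $G$-set, and the elementary fact (Ballmann, Kaimanovich--Masur, Woess) that such a measure is supported on finite orbits does the work. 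Since the hypothesis forces the only finite $G$-orbit in $D$ to be $\{\{S\}\}$, and $\mathcal O^{-1}(\{S\})=\boundary\fontact S$, the conclusion follows. This is both more elementary and more robust than the hyperbolic-boundary route: no random-walk convergence, no non-elementarity hypothesis, no coarse-equivariance headaches.
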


\begin{proof}
Let $D$ be the set of finite subsets of $\mathfrak S$, so that $D$ is countable and $G$ acts on $D$ in the obvious way.  By construction, $\{S\}$ and $\emptyset$ are the only elements of $D$ whose $G$--orbits are finite.  
We first define a map $\mathcal O:\overline{\cuco X}\to D$.  Note that if $\mathfrak S=\{S\}$, then $\boundary\cuco X=\boundary\fontact S$, and the claim follows, so we assume that there exists $U\propnest S$.

\textbf{Defining $\mathcal O$ on boundary points:} For each $p\in\boundary\cuco X$, let $\mathcal O(p)=\supp(p)$.  Observe that this assignment is $G$--equivariant and that $\mathcal O(p)=\{S\}$ if and only if $p\in\boundary\fontact S$.  

\textbf{Defining $\mathcal O$ on interior points:}  Let $\cuco B\subset\cuco X$ contain exactly one point from each $G$--orbit, and choose $F\in D-\{\{S\},\emptyset\}$.  For each $x\in\cuco B$, let $\mathcal O(x)=F$.  Then, for any $x\in\cuco B$ and $g\in G$, let $\mathcal O(gx)=gF$.  Then $\mathcal O$ is $G$--equivariant and, for all $x\in\cuco X$, the nonempty finite set $\mathcal O(x)$ differs from $\{S\}$.  For any $F'\in D$, either $\mathcal O^{-1}(F')=\emptyset$ or $F'=gF$ for some $g\in G$.  Hence, for any subset $D'$ of $D$, we can write $\mathcal O^{-1}(D')=\bigcup_{gF\in D'}g\cuco B$.  It follows that $\mathcal O^{-1}(D')$ is a countable union of translates of $\cuco B$, which is a countable union of closed sets (singletons) by Remark~\ref{rem:sampling}, and thus $\mathcal O^{-1}(D')$ is Borel.

\textbf{Measurability of $\overline{\cuco X}-\boundary\fontact S$:}  Since $\boundary\fontact S=\{p\in\boundary\cuco X:\support(p)=\{S\}\}$, it follows from Lemma~\ref{lem:support_measurable} that $\overline{\cuco X}-\boundary\fontact S$ is measurable.

\textbf{Measurability of $\mathcal O$:}  There is a probability measure $\tilde\nu$ on $D$ given by $\tilde \nu(A)=\nu(\mathcal O^{-1}(A))$, for each $A\subseteq D$.  A set $\mathcal O^{-1}(A)$ decomposes as:
$$\left\{x\in\cuco X:\mathcal O(x)\in A\right\}\cup\left\{p\in\boundary\cuco X:\support(p)\in A\right\}.$$
The set $\left\{p\in\boundary\cuco X:\support(p)\in A\right\}=\bigcup_{\mathcal U\in A}\left\{p:\support(p)=\mathcal U\right\}$, which is $\nu$-measurable by Lemma~\ref{lem:support_measurable}.  Since $A\subseteq D$ is countable, it suffices to show that $\mathcal O^{-1}(F)\cap\cuco X$ is Borel for each $F\in D$, but this was established above. 

\textbf{Conclusion:}  We have that $\mathcal O:\overline{\cuco X}\to D$ is a measurable $G$--equivariant map.   Since $G$ preserves $\boundary\fontact S$, it follows that $\overline{\cuco X}-\boundary\fontact S$ is a $G$--invariant $\nu$--measurable set.

Suppose that $F'\in D$ has the property that $G\cdot F'$ is finite.  Then $G \cdot U$ is a finite $G$--invariant subset of $\mathfrak S$ for each $U\in F'$ and, by our hypothesis that there is no finite $G$--orbit in $\mathfrak S-\{S\}$, we have that $F'=\{S\}$.  Since $\mathcal O(e)\neq\{S\}$ for all $e\in \overline{\cuco X}-\boundary\fontact S$, it follows that $\mathcal O( \overline{\cuco X}-\boundary\fontact S)$ does not contain a finite $G$--orbit.  As shown in e.g~\cite{Ballmann:dirichlet},\cite[Lemma~2.2.2]{KaimanovichMasur:poisson},\cite[Lemma~3.4]{Woess:boundaries}\cite[Lemma~3.3]{Horbez:handelmosher}, we must have $\nu(\overline{\cuco X}-\boundary\fontact S)=0$.
\end{proof}

\begin{cor}\label{cor:finite_curve_graph_stabilized}
If $\diam(\fontact S)<\infty$, then $G$ stabilizes a finite subset of $\mathfrak S-\{S\}$.
\end{cor}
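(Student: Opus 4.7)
The plan is to deduce this corollary as an essentially immediate consequence of Lemma~\ref{lem:fixed_point}, by exploiting the fact that when $\fontact S$ has finite diameter its Gromov boundary is empty, so the conclusion of that lemma is vacuous unless the hypothesis fails.

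First I would observe that $\diam(\fontact S)<\infty$ forces $\boundary\fontact S=\emptyset$, so $(\mathfrak S-\{S\})\cup\boundary\fontact S=\mathfrak S-\{S\}$. Hence a finite $G$--orbit in $(\mathfrak S-\{S\})\cup\boundary\fontact S$ is exactly a finite $G$--orbit of some $U\in\mathfrak S-\{S\}$, and the existence of such a $U$ immediately produces the required finite $G$--invariant subset, namely $G\cdot U\subset\mathfrak S-\{S\}$.

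Next I would argue by contradiction. Assume no $U\in\mathfrak S-\{S\}$ has a finite $G$--orbit. Since $\boundary\fontact S=\emptyset$, this means $G$ has no finite orbit in $(\mathfrak S-\{S\})\cup\boundary\fontact S$, which is precisely the hypothesis of Lemma~\ref{lem:fixed_point}. Applying that lemma to the stationary probability measure $\nu$ on $\overline{\cuco X}$ provided by Lemma~\ref{lem:stationary_measure_exists}, we conclude that $\nu$ is supported on $\boundary\fontact S$. But $\boundary\fontact S=\emptyset$, forcing $\nu(\overline{\cuco X})=0$ and contradicting the fact that $\nu$ is a probability measure. This contradiction completes the argument.

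There is really no substantive obstacle here: all the work has been done in Lemma~\ref{lem:fixed_point}, which in turn rests on compactness of $\overline{\cuco X}$ (Theorem~\ref{thm:cpt}) and the measurability verifications in Lemma~\ref{lem:support_measurable}. The only point worth double-checking is that the standing countability and properness hypotheses of the section (needed to invoke Lemmas~\ref{lem:stationary_measure_exists} and~\ref{lem:fixed_point}) are in force here; they are, since this corollary is stated within the same standing setup.
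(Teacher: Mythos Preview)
Your proof is correct and essentially identical to the paper's: both observe that $\diam(\fontact S)<\infty$ forces $\boundary\fontact S=\emptyset$, and then apply Lemma~\ref{lem:fixed_point} to reach a contradiction with the existence of the stationary probability measure $\nu$. The paper phrases it contrapositively rather than by contradiction, but the argument is the same.
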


\begin{proof}
By hypothesis, $\boundary\fontact S=\emptyset$, so $\nu$ cannot be supported on $\boundary\fontact S$.  Hence $G$ has a finite orbit in $\mathfrak S\cup\boundary\fontact S$ by Lemma~\ref{lem:fixed_point} and thus $G$ must have a finite orbit in $\mathfrak S-\{S\}$.
\end{proof}

\subsection{Finding product structures when $\diam(\fontact S)<\infty$}\label{subsec:finding_product_structures}

\begin{prop}\label{prop:finding_product_structures}
Suppose $G\leq\Aut(\mathfrak S)$ is a countable subgroup, where $\diam(\fontact S)<\infty$.  Then there exists $U\in\mathfrak S-\{S\}$ and a finite-index subgroup $G'$ such that $G'\cdot U=U$ and $\cuco X$ coarsely coincides with $P_U$.  Hence either $(\cuco X,\mathfrak S)$ is a product HHS with unbounded factors or $\cuco X$ coarsely coincides with $F_U$ or $E_U$.
\end{prop}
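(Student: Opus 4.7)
The plan is to descend through a $\nest$-chain of domains by iteratively applying Corollary~\ref{cor:finite_curve_graph_stabilized}. By that corollary, since $\diam(\fontact S) < \infty$, there exists $U_1 \in \mathfrak S - \{S\}$ with $|G \cdot U_1| < \infty$; let $G_1 \leq G$ be the finite-index subgroup fixing $U_1$. Via the restriction homomorphism $\theta_{U_1}$ (Remark~\ref{rem:aut_product}), $G_1$ acts on the HHS $(F_{U_1}, \mathfrak S_{U_1})$, which is proper (as a hierarchically quasiconvex subspace of the proper $\cuco X$) with $U_1$ as its $\nest$-maximal element. If $\diam(\fontact U_1) < \infty$, reapply Corollary~\ref{cor:finite_curve_graph_stabilized} inside this lower-complexity HHS to obtain $U_2 \in \mathfrak S_{U_1} - \{U_1\}$ with finite $G_1$-orbit, and iterate. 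I would further stipulate that each $U_{i+1}$ be chosen $\nest$-maximal among elements of $\mathfrak S_{U_i} - \{U_i\}$ with finite $G_i$-orbit, since this refinement is crucial later. Finite complexity forces termination at some $U := U_k$, with $G' := G_k$ finite-index in $G$ fixing $U$; the halting condition is either $\diam(\fontact U) = \infty$ or $\mathfrak S_U = \{U\}$.

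The heart of the proposition is then to verify $\cuco X \subseteq N_R(P_U)$ for a uniform $R$. By the realization theorem (Theorem~\ref{thm:realization}) and the description of $P_U$ from Section~\ref{sec:product_regions}, this reduces to showing that $\diam_V(\pi_V(\cuco X) \cup \rho^U_V)$ is uniformly bounded for every $V \in \mathfrak S$ satisfying $V \transverse U$ or $U \propnest V$. When $V = U_i$ lies in the chain (with $i < k$), this follows immediately from $\diam(\fontact U_i) < \infty$ coming from the termination condition at earlier stages.

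The main obstacle is controlling $V$ outside the chain, namely $V \transverse U$ or $V$ strictly $\nest$-between $U$ and some $U_{i-1}$ with $V \neq U_i$. Here I expect the argument to leverage the $\nest$-maximality built into the iterative choice: any such $V$ must have infinite $G_{i-1}$-orbit, so the translates $\{gV : g \in G_{i-1}\}$ yield an infinite pairwise $\nest$-incomparable family in $\mathfrak S_{U_{i-1}}$. Combining this equivariance with consistency (Definition~\ref{defn:space_with_distance_formula}.\eqref{item:dfs_transversal}), bounded geodesic image (axiom~\eqref{item:dfs:bounded_geodesic_image}), and a passing-up argument in the spirit of Lemma~\ref{lem:nest_progress}, an unbounded $\pi_V(\cuco X)$ would propagate to an unbounded projection in $\fontact U_{i-1}$, contradicting the finite diameter of that space. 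A parallel argument using consistency in the transverse case handles $V \transverse U$, where transversality plus a translate of $V$ orthogonal or transverse to $U$ forces the projection close to $\rho^U_V$.

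Once $\cuco X \subseteq N_R(P_U)$ is established, the final dichotomy is immediate from Definition~\ref{defn:product_HHS}: examining whether $F_U$ and $E_U$ are both unbounded gives either a product HHS with unbounded factors, or coarse coincidence of $\cuco X$ with whichever of $F_U$ or $E_U$ is unbounded (with the other bounded).
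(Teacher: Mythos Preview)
Your approach has a genuine gap: you never invoke essentiality of the $G$--action, which is the key hypothesis (implicit in the proposition and used explicitly in the paper's proof).  The paper's argument is one line: once Corollary~\ref{cor:finite_curve_graph_stabilized} produces $U\in\mathfrak S-\{S\}$ fixed by a finite-index $G'\leq G$, the subgroup $G'$ coarsely stabilizes the hierarchically quasiconvex set $P_U$, and $G$--essentiality (Definition~\ref{defn:essential}) immediately forces $\cuco X\subseteq\neb_R(P_U)$.  No iteration, no passing-up, no control of individual domains $V$ is needed.

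Your direct attempt to bound $\diam_V(\pi_V(\cuco X))$ for $V\transverse U$ or $U\propnest V$ cannot succeed as written.  The assertion that ``any such $V$ must have infinite $G_{i-1}$--orbit'' is false: $\nest$--maximality of $U_i$ among finite-orbit elements of $\mathfrak S_{U_{i-1}}-\{U_{i-1}\}$ does not preclude the existence of \emph{other} $\nest$--maximal finite-orbit elements $V$ that are transverse or orthogonal to $U_i$.  Even granting infinite orbit, the passing-up step is not justified: Lemma~\ref{lem:nest_progress} requires a single pair $x,y\in\cuco X$ with large projections to many domains simultaneously, whereas ``$\pi_V(\cuco X)$ unbounded'' only gives large projections for varying pairs across the translates of $V$.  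More fundamentally, without essentiality the conclusion is simply false---one can build HHS with $\diam(\fontact S)<\infty$ and a trivial (hence countable) $G$ for which no $P_U$ is coarsely dense (e.g.\ an ``L-shaped'' space with two transverse unbounded domains below a bounded $\fontact S$).  The essentiality hypothesis is doing real work here, and your argument needs to use it.
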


\begin{proof}
By Corollary~\ref{cor:finite_curve_graph_stabilized}, there exists $U\in\mathfrak S-\{S\}$ and a finite-index subgroup $G'\leq G$ so that $G'\cdot U=U$.  Note that $G'$ continues to act essentially on $(\cuco X,\mathfrak S)$, coarsely stabilizing $P_U$.  Since $P_U$ is hierarchically quasiconvex, $\cuco X$ coarsely equals $P_U$ by essentiality.  The last assertion is immediate.
\end{proof}

\subsection{Finding irreducible axial elements when $\diam(\fontact S)=\infty$}\label{subsec:finding_irreducible_axial}

\begin{prop}\label{prop:finding_irreducible_axial}
Let $(\cuco X,\mathfrak S)$ be a hierarchically hyperbolic space.  Let $G\leq\Aut(\mathfrak S)$ act essentially and suppose that $G$ acts on $\cuco X$ with no global fixed point in $\boundary\fontact S$ and that $\fontact S$ is unbounded.  Then $G$ contains an irreducible axial automorphism of $(\cuco X,\mathfrak S)$.
\end{prop}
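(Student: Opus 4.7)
The plan is to find $g \in G$ acting loxodromically on $\fontact S$; the fact that $S$ is $\nest$-maximal will then automatically upgrade this to irreducible axiality. I would proceed in three steps.

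First, I would promote the assumption ``$\fontact S$ is unbounded'' to the stronger statement that some (hence every) $G$-orbit in $\fontact S$ is unbounded. Suppose toward a contradiction that $\pi_S(G\cdot x_0)$ is bounded for a basepoint $x_0\in\cuco X$. Form the hierarchical hull $\cuco Y$ of $G\cdot x_0$, i.e.\ the set of realization points of consistent tuples whose coordinate in each $\fontact U$ is close to the quasiconvex hull of $\pi_U(G\cdot x_0)$. Then $\cuco Y$ is $G$-invariant and hierarchically quasiconvex, and its projection to $\fontact S$ lies in a controlled neighborhood of the quasiconvex hull of the bounded set $\pi_S(G\cdot x_0)$, hence is bounded. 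By essentiality, $\cuco Y$ coarsely equals $\cuco X$; but since $(\cuco X,\mathfrak S)$ is normalized, $\pi_S(\cuco X)$ is coarsely all of $\fontact S$, forcing $\fontact S$ to be bounded, a contradiction.

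Next I would apply the Caprace--Cornulier--Monod--R\'emy classification of group actions on (possibly non-proper) hyperbolic spaces, as invoked in the proof of Proposition~\ref{prop:irreducible distorted}, to the $G$-action on $\fontact S$. The possibilities are elliptic, horocyclic, lineal, focal, and of general type. The first step rules out the elliptic case; the hypothesis that $G$ fixes no point of $\partial\fontact S$ rules out horocyclic and focal (both have a distinguished globally fixed boundary point). In the lineal case $G$ setwise preserves a pair $\{p_+,p_-\}\subset\partial\fontact S$, and the index-at-most-$2$ subgroup $G^+$ fixing each point contains a loxodromic isometry with these as fixed points; such an element lies in $G$. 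In the general-type case $G$ contains independent loxodromics. Either way, there exists $g\in G$ acting loxodromically on $\fontact S$.

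Finally, I would show that any such $g$ is irreducible axial. The map $n\mapsto g^n\cdot\pi_S(x_0)$ is a quasi-isometric embedding of $\integers$ into $\fontact S$, so $S\in\BIG(g)$, and Proposition~\ref{prop:axial} yields that $g$ is axial on $\cuco X$. For irreducibility, Lemma~\ref{lem:Big is orth} says that any two distinct elements of $\BIG(g)$ are orthogonal; but $S$ is $\nest$-maximal, so any other $U\in\mathfrak S$ satisfies $U\propnest S$, and the orthogonality axiom forbids orthogonal domains from being $\nest$-comparable. Hence $\BIG(g)=\{S\}$, and $g$ is irreducible axial.

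The main obstacle is the first step: converting a coarse-geometric condition on $\cuco X$ (essentiality) into a statement about a single hyperbolic projection (unboundedness of $G$-orbits on $\fontact S$). The other two steps are then standard, relying only on the robust classification of hyperbolic actions and the formal consequence of $\nest$-maximality of $S$ that nothing can be orthogonal to it.
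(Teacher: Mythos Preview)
Your proof is correct and follows essentially the same three-step architecture as the paper: (1) bounded $G$-orbits in $\fontact S$ would yield a $G$-invariant hierarchically quasiconvex subspace with bounded $S$-projection, contradicting essentiality plus unboundedness of $\fontact S$; (2) unbounded orbits plus no fixed point in $\boundary\fontact S$ force a loxodromic via the classification of actions on hyperbolic spaces; (3) loxodromic on $\fontact S$ implies irreducible axial. The only noteworthy difference is in the construction for step (1): you take the full hierarchical hull of $G\cdot x_0$ (quasiconvex hull in every $\fontact U$), whereas the paper constrains only the $S$-coordinate to lie in $\pi_S(G\cdot x_0)$ and then checks directly that the resulting realization set is hierarchically quasiconvex by a consistency/bounded-geodesic-image dichotomy on whether $\rho^U_S$ is close to the orbit. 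Both constructions work; yours is arguably more streamlined since hierarchical hulls are a standard tool, while the paper's is more hands-on. Your steps (2) and (3) are more explicit than the paper's one-line conclusion, which simply asserts the dichotomy ``loxodromic or unique fixed point'' and that loxodromic implies irreducible axial without spelling out the CCMR cases or invoking Lemma~\ref{lem:Big is orth}.
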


\begin{proof}
Suppose that every orbit of $G$ in $\fontact S$ is bounded, so that, fixing $x_0\in\cuco X$, there exist $Q,R<\infty$ so that $\diam_S(G\cdot\pi_S(x_0))\leq R$ and $G\cdot\pi_S(x_0)$ is $Q$--quasiconvex.  Consider the set of all $E$--consistent tuples $(b_U)_{U\in\mathfrak S}$ such that $b_S\in G\cdot \pi_S(x_0)$.  Let $\cuco Y$ be the set of realization points in $\cuco X$ corresponding to such tuples, provided by Theorem~\ref{thm:realization}, and note that $G$ acts on $\cuco Y$.  By definition, $\cuco Y$ is hierarchically quasiconvex in $\cuco X$ provided $\pi_U(\cuco Y)$ is uniformly quasiconvex in $\fontact U$ for each $U\in\mathfrak S$, which we now verify. 

If $\tup b$ is such a tuple, with $\dist_S(b_S,\rho^U_S)\leq E$, then consistency puts no constraint on the $U$--coordinate of $\tup b$, i.e. for any such $U$, the map $\pi_U:\cuco Y\to\fontact U$ is uniformly coarsely surjective, and in particular $\pi_U(\cuco Y)$ is uniformly quasiconvex in $\fontact U$.  On the other hand, if $\dist_S(\rho^U_S,G\cdot\pi_S(x_0))>E$, then consistency and bounded geodesic image imply that $\pi_U(\cuco Y)$ is uniformly bounded, and hence uniformly quasiconvex.

The existence of $\cuco Y$ contradicts $G$--essentiality of $\cuco X$.  Hence $G$ has an unbounded orbit in $\fontact S$, so either there exists $g\in G$ acting loxodromically on $\fontact S$, so $g$ is irreducible axial, or there exists a unique fixed point $p\in\boundary\fontact S$, which is impossible.
\end{proof}

\subsection{Coarse rank-rigidity}\label{subsec:actual_rank_rigidity}
Recall that a metric space $\cuco X$ is \emph{wide} if no asymptotic cone of $\cuco X$ has a cut-point.  The following lemma is well-known and elementary:

\begin{lem}\label{lem:wide}
Let $\cuco X$ be a metric space quasiisometric to the product $\cuco X_0\times\cuco X_1$, where each $\cuco X_i$ is unbounded.  Then $\cuco X$ is wide, i.e. no asymptotic cone of $\cuco X$ has a cut-point.
\end{lem}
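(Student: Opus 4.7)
The plan is to reduce the claim to a statement about a single asymptotic cone and then to use the product structure of that cone to exhibit detours around any potential cut-point.

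First, I would use the well-known fact that asymptotic cones are a quasi-isometry invariant up to bi-Lipschitz homeomorphism. Since cut-points are preserved under bi-Lipschitz maps, it suffices to prove the statement for the actual product $\cuco X_0 \times \cuco X_1$ (equipped, say, with the $\ell^1$-product metric). Next, I would invoke the standard fact that asymptotic cones commute with finite products: for any ultrafilter $\omega$, scaling sequence $(s_n)$, and basepoints $((x_0^n,x_1^n))$, there is a canonical bi-Lipschitz identification
\[
\ascone{\cuco X_0 \times \cuco X_1}{(x_0^n,x_1^n)}{(s_n)}{\omega} \;\cong\; \ascone{\cuco X_0}{(x_0^n)}{(s_n)}{\omega} \times \ascone{\cuco X_1}{(x_1^n)}{(s_n)}{\omega},
\]
with the product metric. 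Call these factors $C_0$ and $C_1$, so any asymptotic cone of $\cuco X$ is bi-Lipschitz to $C = C_0 \times C_1$.

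The next observation is that each $C_i$ has more than one point. Indeed, since $\cuco X_i$ is unbounded, one may choose a sequence of points escaping to infinity at linear speed with respect to $(s_n)$; the resulting limit in $C_i$ lies at positive distance from the basepoint. Since $\cuco X$ is a quasigeodesic space (being quasi-isometric to a product of quasigeodesic spaces, which one may assume after a further quasi-isometry), $C_0$ and $C_1$ are geodesic metric spaces.

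Finally, I would argue that a product $C_0 \times C_1$ of two geodesic spaces, each having more than one point, has no cut-point. Fix $p = (p_0,p_1) \in C$ and choose auxiliary points $q_i \in C_i$ with $q_i \neq p_i$. Given any $z = (z_0,z_1) \in C \setminus\{p\}$, at least one coordinate differs from that of $p$; say $z_0 \neq p_0$. Then the concatenation of the geodesic segment $\{z_0\} \times [z_1,q_1]$ with the segment $[z_0,q_0] \times \{q_1\}$ is a path from $z$ to $(q_0,q_1)$ entirely avoiding $p$, since the first segment lies in the fiber over $z_0 \neq p_0$ and the second in the fiber over $q_1 \neq p_1$. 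The symmetric argument handles $z_1 \neq p_1$. Hence $C \setminus \{p\}$ is path-connected for every $p$, so $C$ has no cut-point, and the lemma follows.

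The only potentially delicate point is justifying that asymptotic cones of quasigeodesic products are bi-Lipschitz equivalent to products of asymptotic cones; this is standard but should be cited explicitly. The path-connectivity step is routine once the product decomposition is in hand.
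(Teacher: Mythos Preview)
Your argument is correct. The paper itself does not prove this lemma: it is introduced with the phrase ``well-known and elementary'' and no argument is given, so there is nothing to compare your approach against.

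The only point worth sharpening is your justification that each cone factor $C_i$ is a geodesic space with more than one point. As literally stated, the lemma does not assume the $\cuco X_i$ are quasigeodesic, and without some such hypothesis an asymptotic cone of an unbounded space can collapse to a single point (take for instance $\cuco X_0=\{0\}\cup\{k!:k\ge 1\}\subset\reals$ with a scaling sequence lying between consecutive factorials; then $C_0$ is a point, and if $\cuco X_1=\reals$ the product cone is $\reals$, which has cut-points). Your parenthetical remark that one may pass to quasigeodesic models of the factors is the right fix, and it is justified in the paper's context: the only places the lemma is invoked are Theorems~\ref{thm:rank_rigidity_non_parabolic} and~\ref{thm:rank_rigidity_HHG}, where the factors are $F_U$ and $E_U$, themselves HHS and hence quasigeodesic. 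With that understood, the product decomposition of the cone and the two-segment detour through $(q_0,q_1)$ are entirely standard and correct.
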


We now prove the main theorems of this section.  Much of the work was done in proving Propositions~\ref{prop:alternative_non_parabolic_version} and~\ref{prop:alternative_HHG_version}; the remaining work is largely in sorting out technical issues that arise when attempting to induct on complexity; these issues mainly stem from the fact that, given $U\in\mathfrak S$, the induced HHS structure on $E_U$ does not have a uniquely-determined $\nest$--maximal element.

\begin{thm}[Coarse rank-rigidity for non-parabolic actions]\label{thm:rank_rigidity_non_parabolic}
Let $(\cuco X,\mathfrak S)$ be an HHS with $\cuco X$ proper and $\mathfrak S$ countable.  Let the countable group $G\leq\Aut(\mathfrak S)$ act essentially with unbounded orbits in $\cuco X$ and without a fixed point in $\boundary(\cuco X,\mathfrak S)$.  Then one of the following holds:
\begin{enumerate}
 \item \label{item:rr_non_par_product}$\cuco X$ is a product HHS with unbounded factors; specifically, $\cuco X$ is coarsely equal to $P_U$ for some $U\in\mathfrak S$ with $|GU|<\infty$;
 \item \label{item:rr_non_par_irr_ax}there exists $g\in G$ such that $g$ is rank-one.
\end{enumerate}
If conclusion~\eqref{item:rr_non_par_product} holds, then $\cuco X$ is wide.
\end{thm}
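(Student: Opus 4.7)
The plan is to proceed by induction on the complexity $\chi$ of $(\cuco X,\mathfrak S)$, using Proposition \ref{prop:alternative_non_parabolic_version} as the engine of the argument.

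For the base case $\chi=1$, the set $\mathfrak S$ consists only of the $\nest$-maximal element $S$ and so $\cuco X$ is itself hyperbolic. The alternative produced by Proposition \ref{prop:alternative_non_parabolic_version} cannot involve a $U\in\mathfrak S-\{S\}$, so $G$ must contain an irreducible axial element, which is rank-one by Definition \ref{defn:rank_one_automorphism}, yielding conclusion \eqref{item:rr_non_par_irr_ax}.

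For the inductive step, I apply Proposition \ref{prop:alternative_non_parabolic_version} (whose hypothesis of no fixed point in $\boundary\fontact S$ follows from $\boundary\fontact S\subseteq\boundary\cuco X$ and the absence of a fixed point in the full boundary). In the ``irreducible axial'' alternative, Definition \ref{defn:rank_one_automorphism} directly gives \eqref{item:rr_non_par_irr_ax}. In the other alternative, we obtain $U\in\mathfrak S-\{S\}$ with $|G\cdot U|<\infty$, and essentiality forces $\cuco X$ to coarsely coincide with $P_U\cong F_U\times E_U$. If both $F_U$ and $E_U$ are unbounded, this is conclusion \eqref{item:rr_non_par_product}, and Lemma \ref{lem:wide} gives the final ``wide'' assertion. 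Otherwise, without loss of generality $E_U$ is bounded, so $\cuco X$ coarsely equals $F_U$, and I want to recurse on $(F_U,\mathfrak S_U)$ with the action of the finite-index stabilizer $G'\leq G$ of $U$, which has complexity strictly less than $\chi$.

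The bulk of the work is verifying that the inductive hypotheses transfer. Properness of $F_U$ and countability of $\mathfrak S_U$ are immediate. Unboundedness of orbits follows because $G'$ has finite index in $G$, and the $G'$-action on $F_U$ via the restriction homomorphism $\theta_U$ agrees, up to bounded error, with its action on $\cuco X\sim F_U$ (using that $E_U$ is bounded). Essentiality of $G'$ on $F_U$: given a $G'$-invariant hierarchically quasiconvex $\cuco Z\subseteq F_U$, the set $\cuco Z\times E_U$ is hierarchically quasiconvex in $P_U$; after taking its $G$-orbit and hierarchical convex hull, $G$-essentiality of $\cuco X$ forces this hull to be coarsely all of $\cuco X$, which pulls back to show $\cuco Z$ is coarsely $F_U$. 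The delicate hypothesis, and the main obstacle, is verifying that $G'$ has no fixed point in $\boundary\fontact U\subseteq\boundary(F_U,\mathfrak S_U)$: a $G'$-fixed $p\in\boundary\fontact U$ yields a finite $G$-orbit in $\boundary\cuco X$ supported on the finite set $G\cdot U$, and I plan to rule this out by producing a genuine $G$-fixed boundary point (e.g.\ as a uniform convex combination of the $G$-orbit when the translates of $U$ are pairwise orthogonal, and by descending to a deeper finite-index subgroup fixing each translate of $U$ pointwise otherwise), contradicting the hypothesis of the theorem.

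Finally, I must lift the inductive conclusion back to $(\cuco X,\mathfrak S)$. If induction produces $g\in G'$ rank-one on $(F_U,\mathfrak S_U)$ with $\BIG(g)=\{U'\}$, then I check that $g$ remains rank-one on $(\cuco X,\mathfrak S)$: domains $V\notin\mathfrak S_U$ contribute bounded projections because either $V\orth U$ (so $\pi_V$ sees only the bounded $E_U$) or $V\transverse U$ or $U\propnest V$ (so $\pi_V(F_U)$ coarsely equals $\rho^U_V$ and is bounded); and for $V\orth U'$ in $\mathfrak S$ the same case analysis, combined with the rank-one property in $F_U$ for $V\nest U$, gives the required diameter bound. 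If induction produces a product structure $F_U\sim P_{U'}^{F_U}$ with unbounded factors, then since $E_U$ is bounded, $P_{U'}\sim P_{U'}^{F_U}\sim F_U\sim\cuco X$, the factors $F_{U'}$ and $E_{U'}$ in $\cuco X$ dominate their counterparts in $F_U$ and hence are unbounded, and $|G\cdot U'|\leq[G:G']\cdot|G'\cdot U'|<\infty$, so conclusion \eqref{item:rr_non_par_product} holds and Lemma \ref{lem:wide} again provides the wide conclusion.
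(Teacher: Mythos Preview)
Your overall strategy (induct on complexity via Proposition~\ref{prop:alternative_non_parabolic_version}) is the same as the paper's, but the phrase ``without loss of generality $E_U$ is bounded'' hides a genuine gap. The two factors are not interchangeable: $(F_U,\mathfrak S_U)$ is a canonical HHS with $U$ as its $\nest$--maximal element, and $G'=\stabilizer_G(U)$ acts on it by automorphisms. By contrast, $(E_U,\mathfrak S_U^\orth)$ only becomes an HHS after \emph{choosing} a container $A\in\mathfrak S$ as the $\nest$--maximal element, and $A$ need not be $G'$--invariant (see Remark~\ref{rem:aut_product}). So when $F_U$ is bounded and $E_U$ is unbounded you cannot simply swap roles and ``recurse on $E_U$''; the paper spends a separate case analysis here, distinguishing whether some minimal container $C$ satisfies $C\orth U$ (in which case one shows $G'\cdot C=C$ and inducts on $(E_U,\mathfrak S_C)$) or every such $C$ satisfies $C\transverse U$ (in which case $\pi_C(\cuco X)$ is bounded and one replaces $\fontact C$ by a point to obtain a $G'$--equivariant HHS structure of lower complexity on $E_U$).

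Two further remarks. First, the paper avoids full induction in the $F_U$--unbounded case by choosing $U$ of \emph{minimal level} among domains with finite $G$--orbit; this immediately forces $G'$ to have no finite orbit in $\mathfrak S_U-\{U\}$, so Proposition~\ref{prop:finding_irreducible_axial} applies directly rather than the inductive hypothesis. Second, your sketch for passing ``no fixed point'' from $G$ to $G'$ is incomplete: to make the barycenter $q=\tfrac1k\sum_{g\in G/G'}g\cdot p$ a legitimate boundary point you need the domains in $G\cdot U$ to be pairwise orthogonal. The missing observation is that if $gU\transverse U$ then $\pi_{gU}(\cuco X)\asymp\pi_{gU}(P_U)\asymp\rho^U_{gU}$ is bounded, forcing $\fontact(gU)$ (hence $\fontact U$) to be bounded and $\boundary\fontact U=\emptyset$; combined with the fact that $gU$ and $U$ have the same level, this leaves only $gU\orth U$ or $gU=U$. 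With that in hand your barycenter argument does go through for all of $\boundary F_U$, not merely $\boundary\fontact U$.
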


\begin{proof}
By Proposition~\ref{prop:alternative_non_parabolic_version}, either $G$ contains an irreducible axial element, which is rank-one by definition, so conclusion~\ref{item:rr_non_par_irr_ax} holds, or there is a finite-index subgroup $G'\leq G$ fixing some $U\in\mathfrak S-\{S\}$, so that by essentiality, $\cuco X$ coarsely coincides with the standard product region $P_U$.  This implies that $\cuco X$ is a product HHS.  Choose $U$ of minimal level with this property, i.e. no domain of lower level has a finite $G$--orbit in $\mathfrak S$.

Since $G$ has unbounded orbits in $\cuco X$, at least one of $E_U,F_U$ is unbounded.  If $F_U,E_U$ are both unbounded, then conclusion~\ref{item:rr_non_par_product} holds, and we are done.  The statement about wideness follows from Lemma~\ref{lem:wide}.

If $F_U$ is unbounded and $E_U$ is bounded, then $(F_U,\mathfrak S_U)$ is a HHS with $F_U$ proper and $\mathfrak S_U$ countable, on which $G'$ acts by HHS automorphisms with no fixed point in $\boundary\fontact U$ (for otherwise $G$ would have a fixed point in $\boundary\cuco X$).  By minimality, $G'$ has no finite orbit in $\mathfrak S_U-\{U\}$, so Proposition~\ref{prop:finding_irreducible_axial} provides $g\in G'$ acting as an irreducible axial element of $\Aut(\mathfrak S_U)$.  As an element of $\Aut(\mathfrak S)$, we see that $g$ is rank-one, for otherwise there would be some $V\orth U$ with $\diam(\fontact V)=\infty$, contradicting that $E_U$ is bounded.

Finally, suppose that $E_U$ is unbounded and $F_U$ is bounded.  Let $\mathfrak C$ be a minimal $G'$--invariant set of $\nest$--minimal elements $C$ of $\mathfrak S-\{S\}$ such that $W\nest C$ whenever $W\orth U$.  

Suppose that there exists $C\in\mathfrak C$ with $C\orth U$.  Then $g\cdot C\orth g\cdot U=U$ for all $g\in G'$, so $g\cdot C\nest C$, from which it follows that (passing if necessary to a further finite-index subgroup if necessary) $G'\cdot C=C$.  Then $(E_U,\mathfrak S_C)$ is an HHS satisfying the hypotheses of the theorem, and $G'\leq\Aut(\mathfrak S_C)$ acts without a fixed point in $\boundary E_U$ (since it stabilizes $\boundary E_U\subset\boundary\cuco X$).  In this case, the claim follows by induction on complexity.  Indeed, in the base case, $|\mathfrak S|=1$ and the theorem is obvious.  Otherwise, induction shows that either conclusion~\eqref{item:rr_non_par_product} holds, or there exists $g\in G$ that acts as a rank-one element of $\Aut(\mathfrak S_C)$.  Since $G'$ preserves $P_U$ and $P_U$ coarsely equals $\cuco X$, this implies that $g$ is rank-one on $(\cuco X,\mathfrak S)$, as required.

The definition of $\mathfrak C$, and Definition~\ref{defn:space_with_distance_formula}.\eqref{item:dfs_orthogonal}, imply that $C\not\nest U$ and $U\not\nest C$ for all $C\in\mathfrak C$.  Hence it remains to consider the case where each $C\in\mathfrak C$ satisfies $C\transverse U$; fix such a $C$.  Since $G'$ stabilizes $U$, it coarsely stabilizes the image $\overline P_U$ of $P_U=F_U\times E_U\to\cuco X$.  In other words, for any basepoint $x\in\cuco X$, the orbit $G'\cdot x$ lies in a neighborhood of $\overline P_U$.  Now, since $C\transverse U$, the definition of $P_U$ implies that $\pi_C(gx)$ uniformly coarsely coincides with $\rho^U_C$ for all $g\in G'$, whence $\diam(\pi_C(G'\cdot x))<\infty$ so, by essentiality, $\diam(\pi_C(\cuco X))<\infty$.
% 
% Since $G'$ stabilizes $P_U$, we have that $\pi_C(G'\cdot x)$ coarsely coincides with $\rho^U_C$ for any basepoint $x\in\cuco X$.  Hence $\diam(\pi_C(\cuco X))<\infty$. @@@   

In this case, form a new index set $\mathfrak S_U^\orth$ by appending to the set of domains orthogonal to $U$ a new domain $\mathbf C$.  In $\mathfrak S_U^\orth\cap\mathfrak S$, the associated hyperbolic spaces, projections from $E_U$, and relative projections are defined as in $\mathfrak S$.  The hyperbolic space $\fontact \mathbf C$ is a single point, so the projections $\pi_{\mathbf C}:\cuco X\to\fontact\mathbf C$ and $\rho^V_{\mathbf C}$, for $V\orth U$, are defined in an obvious way.  We thus have an HHS structure $(E_U,\mathfrak S_U^\orth)$ with $G'\leq\Aut(\mathfrak S_U^\orth)$, of complexity less than that of $\mathfrak S$, and we can argue as above by induction.  Observe that, if $g\in\Aut(\mathfrak S_U^\orth)$ is rank-one on $E_U$, then $\BIG(g)$ consists of some element of $\mathfrak S_U^\orth\cap\mathfrak S$, and since $\pi_C(\cuco X)$ is bounded for all $C \in \mathfrak C$, and we can argue as above that $g$ is rank-one on $(\cuco X,\mathfrak S)$.
\end{proof}

\begin{thm}[Coarse rank-rigidity for HHG]\label{thm:rank_rigidity_HHG}
Let $(G,\mathfrak S)$ be an infinite hierarchically hyperbolic group.  Then exactly one of the following holds:
\begin{enumerate}
 \item \label{item:rr_HHG_product}$(G,\mathfrak S)$ is a product HHS with unbounded factors, and $G$ is wide;
 \item \label{item:rr_HHG_irr_ax}$G$ contains a rank-one element, and is thus not wide.
\end{enumerate}
Moreover, conclusion~\eqref{item:rr_HHG_product} holds if and only if $\diam(\fontact S)<\infty$.
\end{thm}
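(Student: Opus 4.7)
The plan is to dichotomize on $\diam\fontact S$, use Proposition \ref{prop:alternative_HHG_version} as the central tool, and induct on the complexity of $\mathfrak S$. The biconditional at the end will be established in tandem with the main dichotomy.

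First, suppose $\diam\fontact S=\infty$. By \cite[Theorem~14.3]{BehrstockHagenSisto:HHS_I}, $G$ acts acylindrically on $\fontact S$; and because $\pi_S\colon G\to\fontact S$ is coarsely Lipschitz and, by normalization, coarsely surjective, the $G$-action on $\fontact S$ has unbounded orbits. Osin's trichotomy (Theorem \ref{thm:osin}) then supplies an element $g\in G$ acting loxodromically on $\fontact S$; such a $g$ has $\BIG(g)=\{S\}$, so it is irreducible axial and in particular rank-one. By Theorem \ref{thm:acyl_morse}, $g$ is Morse, producing a cut-point in some asymptotic cone of $G$, so $G$ is not wide and conclusion (2) holds.

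Next, suppose $\diam\fontact S<\infty$. No element of $G$ can be irreducible axial, since $\BIG(g)=\{S\}$ forces $\fontact S$ to be unbounded; so Proposition \ref{prop:alternative_HHG_version} supplies $U\in\mathfrak S$ with $|G\cdot U|<\infty$ such that $G$ coarsely coincides with $P_U\cong F_U\times E_U$. I would then induct on the complexity of $\mathfrak S$ to refine the choice of $U$ so that \emph{both} $F_U$ and $E_U$ are unbounded. Indeed, if say $E_U$ is bounded, then $G$ coarsely coincides with $F_U$, and the finite-index subgroup $G'=\stabilizer_G(U)$ endows $(F_U,\mathfrak S_U)$ with the structure of an HHG of strictly smaller complexity; the inductive hypothesis supplies a distinguished domain $V$ in $\mathfrak S_U$ whose product decomposition I would lift to $\mathfrak S$, using that for $V\nest U$ the factor $F_V$ is unchanged by passage to $\mathfrak S_U$ and that the bounded $E_U$ absorbs into $E_V$. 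Conclusion (1) then follows, with the wideness statement coming from Lemma \ref{lem:wide}.

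Mutual exclusivity of (1) and (2) is checked as follows, which will also complete the biconditional. Suppose $G\sim P_{U_0}$ with both $F_{U_0},E_{U_0}$ unbounded, and suppose some rank-one $g\in G$ has $\BIG(g)=\{W\}$. Since $\pi_V(G)$ coarsely coincides with $\rho^{U_0}_V$ (and so is bounded) whenever $V\transverse U_0$ or $U_0\propnest V$, while axiality of $g$ on $\fontact W$ forces $\pi_W(G)$ unbounded, $W$ must lie in $\mathfrak S_{U_0}\cup\mathfrak S_{U_0}^\orth$. If $W\nest U_0$, then every $V\in\mathfrak S_{U_0}^\orth$ satisfies $V\orth W$ by Definition \ref{defn:space_with_distance_formula}\eqref{item:dfs_orthogonal}, and the rank-one condition forces $\diam\fontact V<\infty$ for every such $V$, contradicting $E_{U_0}$ unbounded; the case $W\orth U_0$ is symmetric and contradicts $F_{U_0}$ unbounded. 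The implication $(1)\Rightarrow\diam\fontact S<\infty$ is then immediate, since $\pi_S(G)\sim\rho^{U_0}_S$ is bounded and $\mathfrak S$ is normalized.

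The main obstacle I anticipate is the inductive refinement in the second case: a priori, applying the inductive hypothesis to $(F_U,\mathfrak S_U)$ could yield conclusion (2), producing a rank-one element which lifts to a rank-one element of $G$ rather than the desired product decomposition. Resolving this requires dovetailing the induction with the mutual-exclusivity argument above, choosing $U$ in the appeal to Proposition \ref{prop:alternative_HHG_version} so as to minimize, e.g., the complexity of the sub-HHG and ensure that the $\nest$-maximal hyperbolic space that one recurses into has bounded projection; this rules out the irreducible-axial-producing case and forces the inductive step into conclusion (1), enabling the lift to $\mathfrak S$.
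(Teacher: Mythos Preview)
Your overall strategy matches the paper's: apply Proposition~\ref{prop:alternative_HHG_version} and then induct on complexity, following the template of Theorem~\ref{thm:rank_rigidity_non_parabolic}. The paper's proof is terse; it simply observes that when the proposition yields $U$ with $G$ coarsely equal to $P_U$, the finite-index stabilizer $G'$ of $U$ makes $(G',\mathfrak S_U)$ an HHG of strictly lower complexity (proper, cobounded, cofinite on $\mathfrak S_U$), and then defers to the inductive argument in Theorem~\ref{thm:rank_rigidity_non_parabolic}.

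The real gap is in your handling of the case $\diam\fontact S<\infty$. You insist that the induction must terminate in conclusion~(1), and you correctly flag as an ``obstacle'' that applying the inductive hypothesis to $(G',\mathfrak S_U)$ might instead produce a rank-one element. But this is not an obstacle: if the inductive step (with, say, $E_U$ bounded) yields $g$ rank-one on $(F_U,\mathfrak S_U)$, then since every $V\orth U$ has $\diam\pi_V(G)<\infty$, the same $g$ is rank-one on $(G,\mathfrak S)$, and conclusion~(2) holds --- we are done. This is exactly how the induction in Theorem~\ref{thm:rank_rigidity_non_parabolic} is designed to run: it outputs \emph{either} (1) or (2), it does not funnel into one of them based on $\diam\fontact S$. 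Your proposed fix via a minimality trick is both vague and unnecessary.

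This misstep comes from trying to establish the biconditional $\diam\fontact S<\infty\Leftrightarrow(1)$ \emph{inside} the induction. The direction $(1)\Rightarrow\diam\fontact S<\infty$ you handle correctly. The reverse direction is, given mutual exclusivity, equivalent to $(2)\Rightarrow\diam\fontact S=\infty$, and nothing in the inductive argument supplies this; your proposal does not provide it either, and the paper's proof is equally silent on the point. Your orthogonality argument for mutual exclusivity of (1) and (2) is correct and a useful addition, but it does not close this gap in the ``Moreover'' clause.
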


\begin{proof}
By Proposition~\ref{prop:alternative_HHG_version}, either $G$ contains an irreducible axial element, which is rank-one, or there exists $U\in\mathfrak S-\{S\}$ with $G'\cdot U=U$ for some finite-index $G'\leq G$, and $G$ coarsely coincides with $P_U$.  In the latter case, we argue as in the proof of Theorem~\ref{thm:rank_rigidity_non_parabolic}, by induction on complexity, using the following observation: if $V\in\mathfrak S-\{S\}$ and a finite-index subgroup $G'$ fixes $V$, then the action of $G'$ on $F_V$ is proper and cobounded.  Moreover, $G'$ acts with finitely many orbits on $\mathfrak S_V$, so $(G',\mathfrak S_V)$ is an HHG structure on $G'$, enabling induction.
\end{proof}

\subsection{Tits alternative for HHGs}\label{subsec:free_subgroups}

The goal of this subsection is the following theorem:

\begin{thm}[Tits alternative for HHGs]\label{thm:tits}
Let $(G, \mathfrak S)$ be an HHG and let $H\leq G$.  Then $H$ either contains a nonabelian free group or is virtually abelian.
\end{thm}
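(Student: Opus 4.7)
The plan is to induct on the complexity $n$ of $(G,\mathfrak S)$. The base case $n=1$ reduces to the classical Tits alternative for hyperbolic groups, since $G$ is then quasi-isometric to the $\delta$-hyperbolic space $\fontact S$. For the inductive step, fix $H\leq G$ with $(G,\mathfrak S)$ of complexity $n\geq 2$, and note that $H$ inherits an acylindrical action on $\fontact S$ from $G$ by~\cite[Theorem~14.3]{BehrstockHagenSisto:HHS_I}. Applying Osin's trichotomy (Theorem~\ref{thm:osin}) to this action yields three cases.

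If $H$ contains two independent loxodromic elements $g_1,g_2$ on $\fontact S$, then each is an irreducible axial element of $G$, and Proposition~\ref{prop:irreducible axial} supplies disjoint pairs of attracting/repelling fixed points in $\boundary\fontact S\subset\boundary\cuco X$ on which $g_1,g_2$ act with north-south dynamics. Classical ping-pong with suitable small neighborhoods of these four fixed points then produces a nonabelian free subgroup $\langle g_1^N,g_2^N\rangle\leq H$ for all sufficiently large $N$. If $H$ is virtually cyclic and contains a loxodromic on $\fontact S$, then $H$ is already virtually abelian. Otherwise, $H$ has bounded orbits on $\fontact S$, and I would apply Lemma~\ref{lem:fixed_point} to $H$ (which is countable as a subgroup of the finitely generated group $G$): either $H$ has a finite orbit in $(\mathfrak S-\{S\})\cup\boundary\fontact S$, or some $\mu$-stationary measure on $\overline{\cuco X}$ is supported on $\boundary\fontact S$. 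The stationary-measure alternative is incompatible with bounded orbits under acylindricity via a standard Furstenberg-type argument, and the possibility of a finite orbit in $\boundary\fontact S$ with no loxodromics in $H$ can be refined, using the large-link axiom applied to the horocyclic subgroup fixing such a boundary point, to force a finite orbit in $\mathfrak S-\{S\}$ after passing to finite index.

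We thereby reduce to the case where a finite-index subgroup $H'\leq H$ fixes some $U\in\mathfrak S-\{S\}$ and so coarsely preserves $P_U\cong F_U\times E_U$. Using the restriction homomorphisms $\theta_U\colon H'\to\Aut(\mathfrak S_U)$ and $\theta_U^\orth\colon H'\to\Aut(\mathfrak S_U^\orth)$ from Section~\ref{sec:product_regions}, whose combined kernel is finite because the $G$-action on itself is proper and $P_U$ coarsely contains every $H'$-orbit, I would arrange (after a further finite-index passage) that $H'$ embeds with finite kernel into a product of two groups, each acting as a subgroup of an HHG of complexity strictly less than $n$. The inductive hypothesis then delivers the dichotomy on each factor, which lifts back to $H'$ and hence to $H$: a free subgroup in either factor pulls back to one in $H'$, and if both factors are virtually abelian so is $H'$. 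The main obstacle will be this last reduction step, namely verifying that the stabilizer of $U$ in $G$ genuinely produces HHG actions on $F_U$ and $E_U$; this uses that this stabilizer has finite index in $G$ (via HHG cofiniteness combined with our finite-orbit reduction) together with a decomposition of the stabilizer as a product up to finite index, for which Theorem~\ref{thm:rank_rigidity_HHG} applied to $F_U$ and $E_U$ is the natural tool.
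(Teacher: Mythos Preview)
Your overall architecture (Osin's trichotomy plus induction on complexity through product regions) matches the paper's, but there are two genuine gaps.

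\textbf{The bounded-orbits case with a finite orbit in $\boundary\fontact S$.} You propose to handle this by ``the large-link axiom applied to the horocyclic subgroup fixing such a boundary point.'' But you are in the case where $H$ has bounded orbits on $\fontact S$, so there is no horocyclic behaviour; the action is elliptic, not parabolic. The paper deals with this cleanly via Lemma~\ref{lem:bounded_orbits_fixed_point}: if $H$ has bounded orbits on $\fontact S$ \emph{and} fixes a point of $\boundary\fontact S$, then acylindricity forces $|H|<\infty$ directly (a thin-quadrilateral argument shows every $h\in H$ moves two far-apart points a bounded amount). This is the correct replacement for your large-link manoeuvre, and it also renders your ``Furstenberg-type argument'' unnecessary: once you know a finite orbit in $\boundary\fontact S$ forces $H$ finite, Lemma~\ref{lem:fixed_point} already gives everything you need.

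\textbf{The inductive step.} You want to apply the inductive hypothesis to the images of $H'$ in $\Aut(\mathfrak S_U)$ and $\Aut(\mathfrak S_U^\orth)$, and for this you assert that the stabiliser of $U$ in $G$ has finite index, making $F_U$ and $E_U$ into HHGs. But your finite-orbit reduction gives only that $\mathrm{Stab}_H(U)$ has finite index in $H$; it says nothing about $\mathrm{Stab}_G(U)\leq G$. In general $U$ can have infinite $G$-orbit, so $F_U$ and $E_U$ need not carry any HHG structure, and the inductive hypothesis (which is about subgroups of HHGs) does not apply. The paper circumvents this by not demanding an HHG structure on the factors at all: the key technical input is Lemma~\ref{lem:stabilizes_FU}, which shows that the image $\widebar G_U$ of $\mathrm{Stab}_G(U)$ acts acylindrically on $\fontact U$ regardless. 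This is exactly what is needed to reapply Osin's trichotomy at the next level down; the induction then terminates when each factor is a quasi-line, and $H$ virtually acts geometrically on a product of lines, hence on $\reals^n$, giving virtual abelianness. Your appeal to Theorem~\ref{thm:rank_rigidity_HHG} cannot substitute for this, since that theorem also presupposes an HHG structure.
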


Before we proceed with the proof, we need some supporting results:

\begin{prop}\label{prop:irred_axial_schottky}
Let $(G,\mathfrak S)$ be a hierarchically hyperbolic group.  Then any $H\leq G$ containing an irreducible axial element is virtually $\integers$ or contains a nonabelian free group.
\end{prop}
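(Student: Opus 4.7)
The plan is to leverage the fact that $G$ acts acylindrically on $\fontact S$ (where $S$ is the $\nest$--maximal element of $\mathfrak S$), together with Osin's theorem on acylindrical actions (Theorem~\ref{thm:osin}, stated in the excerpt). The key observation is that since $g \in H$ is irreducible axial, by Proposition~\ref{prop:axial} and Definition~\ref{defn:reducible} we have $\BIG(g)=\{S\}$ and the orbit $n\mapsto g^n\cdot\pi_S(X)$ is a quasiisometric embedding $\integers\to\fontact S$, so $g$ acts loxodromically on $\fontact S$.

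First I would invoke~\cite[Theorem~14.3]{BehrstockHagenSisto:HHS_I}, which gives that $G$ acts acylindrically on $\fontact S$. Restricting this action to $H\leq G$ yields an acylindrical action of $H$ on $\fontact S$ containing the loxodromic element $g$. Hence $H$ has unbounded orbits in $\fontact S$, so Theorem~\ref{thm:osin} rules out the bounded-orbits case and leaves exactly two possibilities: either $H$ is virtually infinite cyclic, or $H$ contains infinitely many independent loxodromic elements on $\fontact S$.

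In the first case we are done, since virtually $\integers$ is one of the allowed conclusions. In the second case, I would choose two independent loxodromic elements $h_1,h_2 \in H$, i.e.\ with four distinct fixed points on $\boundary\fontact S$, and apply the standard ping-pong argument on $\fontact S\cup\boundary\fontact S$ to high powers $h_1^N,h_2^N$. This yields a free subgroup of rank two inside $H$, completing the proof.

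The main obstacle, such as it is, is verifying that ping-pong goes through in the non-proper hyperbolic space $\fontact S$; but since both $h_1$ and $h_2$ act with north-south dynamics on $\boundary\fontact S$ (they act loxodromically on a hyperbolic space), the usual ping-pong argument on attracting/repelling neighborhoods in $\boundary\fontact S$ works without any need for properness. All other steps are invocations of results already stated in the excerpt, so no additional technical machinery is required.
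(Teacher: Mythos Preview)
Your proposal is correct and follows essentially the same route as the paper: invoke acylindricity of the $G$--action on $\fontact S$ from~\cite[Theorem~14.3]{BehrstockHagenSisto:HHS_I}, apply Osin's theorem (Theorem~\ref{thm:osin}) to $H$, rule out bounded orbits using the given irreducible axial element, and in the non-virtually-cyclic case run ping-pong on two independent loxodromics. The only cosmetic difference is that the paper phrases ping-pong via north-south dynamics on $\boundary\cuco X$ (Proposition~\ref{prop:irreducible axial}) rather than directly on $\boundary\fontact S$, and also points to Corollary~14.6 of~\cite{BehrstockHagenSisto:HHS_I} as an alternative reference.
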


\begin{proof}
Since $G$ acts on $\fontact S$ acylindrically~\cite{BehrstockHagenSisto:HHS_I}, and hence $H\leq G$ does, Theorem~\ref{thm:osin} implies that either $H$ is virtually cyclic or $H$ contains irreducible axial elements $g,h$ so that $\{h^\pm\}\cap\{g^\pm\}=\emptyset$. Proposition~\ref{prop:irreducible axial} and Proposition~\ref{prop:properties}.\eqref{item:hausdorff} enable an application of the ping-pong lemma, showing that $g^N,h^N$ freely generate a free subgroup $F$, for some $N>0$.  Or, one can apply Corollary~14.6 of~\cite{BehrstockHagenSisto:HHS_I}, which uses Proposition~2.4 of~\cite{Fujiwara:acyl}.  
\end{proof}

\begin{lem}\label{lem:bounded_orbits_fixed_point}
Let $(G,\mathfrak S)$ be an HHG with $S\in\mathfrak S$ $\nest$--maximal.  Suppose that $H\leq G$ has bounded orbits in $\fontact S$ and fixes some $p\in\boundary\fontact S$. Then $|H|<\infty$.
\end{lem}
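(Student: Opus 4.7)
The plan is to combine the acylindricity of the $G$-action on $\fontact S$ (established in~\cite{BehrstockHagenSisto:HHS_I} and used above in the proof of Theorem~\ref{thm:HHGs have no distorteds}) with a fellow-traveling estimate at the fixed boundary point $p$. Since $H \leq G$ acts by isometries on $\fontact S$ and $G$ acts acylindrically, for every $\epsilon > 0$ there are $R = R(\epsilon), N = N(\epsilon)$ with the usual acylindricity conclusion. Fix a basepoint $x_0 \in \fontact S$, and let $D = \diam_S(H \cdot x_0)$, which is finite by hypothesis. The goal is to produce a single point $z \in \fontact S$ with $\dist_S(x_0, z)$ as large as we like, such that $\dist_S(z, h z)$ is uniformly bounded in terms of $D$ and $\delta$ as $h$ ranges over $H$; acylindricity then caps $|H|$.

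The key estimate is the following. Choose a $(1, 20\delta)$-quasigeodesic ray $\gamma \colon [0, \infty) \to \fontact S$ with $\gamma(0) = x_0$ and $\gamma(n) \to p$, using Remark~\ref{rem:gromov_product}, and set $z_n = \gamma(n)$. For $h \in H$, the translated ray $h\gamma$ is a $(1, 20\delta)$-quasigeodesic ray from $h x_0$ to $h p = p$, with $\dist_S(x_0, h x_0) \leq D$. Using the standard Gromov product approximations in a $\delta$-hyperbolic space, together with $h$-invariance of $p$, one gets
\[
(z_n \mid h z_n)_{x_0} \;\geq\; \min\bigl((z_n \mid p)_{x_0},\,(p \mid h z_n)_{x_0}\bigr) - \delta \;\geq\; n - D - C\delta
\]
for a uniform constant $C$, while $\dist_S(x_0, h z_n) \leq \dist_S(x_0, h x_0) + n \leq n + D$. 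Then
\[
\dist_S(z_n, h z_n) \;\leq\; \dist_S(x_0, z_n) + \dist_S(x_0, h z_n) - 2(z_n \mid h z_n)_{x_0} + \delta \;\leq\; 3D + C'\delta,
\]
for a constant $C'$ depending only on $\delta$. So there is $D' = D'(D, \delta)$ with $\dist_S(z_n, h z_n) \leq D'$ for every $h \in H$ and every $n$, with no lower bound needed on $n$ for uniformity across $H$.

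To conclude, set $\epsilon = \max\{D, D'\}$, let $R = R(\epsilon), N = N(\epsilon)$ be the acylindricity constants, and pick $n$ with $\dist_S(x_0, z_n) \geq R$ (which is possible since $\gamma$ is an unbounded quasigeodesic ray). Then every $h \in H$ satisfies simultaneously $\dist_S(x_0, h x_0) \leq D \leq \epsilon$ and $\dist_S(z_n, h z_n) \leq D' \leq \epsilon$, so acylindricity gives $|H| \leq N < \infty$.

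The only non-routine step is the uniform fellow-traveling bound in the second paragraph; the potential obstacle is that $\fontact S$ need not be proper, so genuine geodesic rays to $p$ may not exist. This is dealt with by using $(1, 20\delta)$-quasigeodesic rays as in Remark~\ref{rem:gromov_product}, after which the Gromov product estimates above are standard hyperbolic geometry and do not require properness.
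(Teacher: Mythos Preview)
Your proof is correct and follows essentially the same approach as the paper's: both use acylindricity of the $G$--action on $\fontact S$ together with the observation that a quasigeodesic ray to the fixed point $p$ and its $H$--translates fellow-travel, yielding a pair of far-apart points each moved a bounded amount by every element of $H$. The only cosmetic difference is that the paper packages the fellow-traveling estimate as a ``thin quadrilateral argument'' while you carry out the equivalent Gromov-product computation explicitly.
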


\begin{proof}
By Theorem~14.3 of~\cite{BehrstockHagenSisto:HHS_I}, $G$ acts acylindrically on $\fontact S$, i.e. for each $\epsilon>0$, there exists $R\geq0$ and $N\in\naturals$ so that whenever $s,s'\in\fontact S$ satisfy $\dist_S(s,s')\geq R$, there are at most $N$ elements $g\in G$ for which $\dist_S(s,g\cdot s),\dist_S(s',g\cdot s')\leq\epsilon$.  

Fix $s\in\fontact S$ and let $\epsilon_1$ bound the diameter of the orbit $H\cdot s$.  Let $\gamma$ be a $(1,20\delta)$--quasigeodesic ray with endpoint $p$ and initial point $s$, where $\fontact S$ is $\delta$--hyperbolic.  Then, for all $h\in H$, the ray $h\cdot \gamma$ emanates from $h\cdot s$ and has endpoint $h\cdot p=p$.  This fact, together with a thin quadrilateral argument, shows that there exists $k=k(\delta)$ and $R_0$ such that for all $h\in H$, we have $\dist_S(t,h\cdot t)\leq k\delta$ whenever $t\in\gamma$ satisfies $\dist_S(s,t)\geq R_0$.  Let $\epsilon=\max\{\epsilon_1,k\delta\}$ and let $R,N$ be the associated constants coming from acylindricity.  Then we can choose $t\in\gamma$ so that $\dist_S(s,t)>R$ while $\dist_S(s,h\cdot s),\dist_S(t,h\cdot t)\leq\epsilon$ for all $h\in H$, and hence $|H|\leq N$.
\end{proof}

\begin{proof}[Proof of Theorem \ref{thm:tits}]
Note that $H$ is a countable subgroup of $\Aut(\mathfrak S)$, since $G$ is finitely generated.  %We can assume that $H$ has an unbounded orbit in $G$, since $G$ is finitely generated, for otherwise we are done.  
We divide into cases, according to whether $H$ fixes some $p\in\boundary G$.

\textbf{$H$ fixes $p\in\boundary\fontact S$:}  In this case, by Proposition~\ref{prop:irred_axial_schottky}, $H$ is either virtually cyclic, contains a nonabelian free group, or, by Theorem~\ref{thm:osin}, $H$ has a bounded orbit in $\fontact S$.  Lemma~\ref{lem:bounded_orbits_fixed_point} implies that $H$ is finite in the latter case.

\textbf{$H$ has no fixed boundary point:}  Suppose there is an irreducible axial $g\in H$. Then either $H$ contains a nonabelian free-group or $H$ is virtually $\integers$, by Proposition~\ref{prop:irred_axial_schottky}.

Otherwise, Proposition~\ref{prop:alternative_non_parabolic_version} provides $U\in\mathfrak S-\{S\}$ such that $H\cdot U$ is finite and the $H$--essential core $\cuco Y$ of in $G$ coarsely coincides with $P_U\cap\cuco Y$.  By replacing $H$ with a finite-index subgroup if necessary, we can assume that $H\cdot U=U$.  

Thus we have an $H$--essential product HHS $(\cuco X_0\times\cuco X_1,\mathfrak S^\times)$ with $H\leq\Aut(\mathfrak S^\times)$ acting on $\cuco X_0\times\cuco X_1$.  Here $\mathfrak S^\times$ consists of two disjoint subsets $\mathfrak S_0,\mathfrak S_1$, together with various domains whose associated spaces are uniformly bounded, with the property that $U_0\orth U_1$ for all $U_0\in\mathfrak S_0,U_1\in\mathfrak S_1$ and each $\mathfrak S_i$ gives $\cuco X_i$ an HHS structure (for more on product decompositions, see~\cite{BehrstockHagenSisto:HHS_II}).   Let $H_i\leq H$ be the stabilizer of some (hence any) parallel copy of $\cuco X_i$.

Observe that $H_i\leq\Aut(\mathfrak S_i)$ is an action on an HHS of strictly lower complexity, for $i\in\{0,1\}$, namely $(\cuco X_i,\mathfrak S_i)$.  If $H_i$ contains no irreducible axial element, then $\cuco X_i$ decomposes as a product HHS, by Theorem~\ref{thm:rank_rigidity_non_parabolic}.  Otherwise, applying Lemma~\ref{lem:stabilizes_FU} and Theorem~\ref{thm:osin}, we see that either $H_0$ or $H_1$ (hence $H$) contains a nonabelian free group, or $H_i$ is virtually $\integers$ for $i\in\{0,1\}$. Hence, either $H$ contains a nonabelian free subgroup, or by induction on complexity, we have a product HHS $(\prod_jL_j^i,\mathfrak S_i)$ such that $H_i\leq\Aut(\mathfrak S_i)$ and each $L_j^i\cong_{q.i.}\reals$.  In the latter case, we conclude that $H$ virtually acts geometrically by HHS automorphisms on $(\prod_{ij}L^i_j,\mathfrak S^\times)$.  Hence, for some $n$, a finite-index subgroup of $H$ acts by uniform quasi-isometries on $\reals^n$, so $H$ is virtually abelian.

\textbf{$H$ fixes $p\in\boundary G-\boundary\fontact S$:}  In this case, $H$ has a finite-index subgroup fixing some $U\in\support(p)$ (so $U\propnest S$).  We now argue by induction on complexity as above.
\end{proof}

\subsection{The ``omnibus subgroup theorem''}\label{subsec:omnibus}
Our next result generalizes the Handel-Mosher ``omnibus subgroup theorem'' from ~\cite{HandelMosher:omnibus}.  Theorem \ref{thm:ost hhg} below implies the omnibus subgroup theorem in the case where $\cuco X$ is the mapping class group of a connected, oriented surface of finite type.  In order to state the theorem, we need to restrict the class of HHS we consider, and give some definitions.

\begin{defn}[Hierarchical acylindricity]
Given an HHS $(\cuco X, \mathfrak S)$, we say $G \leq \Aut(\mathfrak S)$ is \emph{hierarchically acylindrical} if, for each $U \in \mathfrak S$, the image of $G\cap\mathcal A_U$ under the restriction homomorphism $\theta_U:\mathcal A_U \rightarrow \Aut(\mathfrak{S}_U)$ acts acylindrically on $\fontact U$.
\end{defn}

Lemma \ref{lem:stabilizes_FU} implies that every group of automorphisms of an HHG is hierarchically acylindrical.  Moreover, hierarchical acylindricity passes to subgroups.  For the rest of this subsection, fix $G \leq \Aut(\mathfrak S)$ to be hierarchically acylindrical.

\begin{defn}[Active domains]\label{defn:active domain}
Let $G \leq \Aut(\mathfrak S)$ be a group of HHS automorphisms.  We say $U \in \mathfrak S$ is an \emph{active domain} for $G$ if $\diam_U(\pi_U(G \cdot x))$ is unbounded for some (hence any) $x \in \cuco X$.  Let $\Active(G)$ be the set of $\nest$-maximal active domains for $G$.  Note that if $G = \langle g \rangle$, then $\Active(G)= \BIG(g)$.
\end{defn}

\begin{thm}[Omnibus Subgroup Theorem]\label{thm:ost hhg}
Let $(\cuco X,\mathfrak S)$ be a hierarchically hyperbolic space with $\mathfrak S$ countable and $\cuco X$ proper.  Let $G \leq \Aut(\mathfrak S)$ be a countable hierarchically acylindrical subgroup.  Then there exists an element $g \in G$ with $\Active(G)= \BIG(g)$.  Moreover, for any $g' \in G$ and each $U \in \BIG(g')$, there exists $V \in \BIG(g)$ with $U \nest V$.
\end{thm}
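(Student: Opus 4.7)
The plan is to proceed by induction on the complexity of $\mathfrak S$, combining Theorem~\ref{thm:osin} applied to the acylindrical actions furnished by hierarchical acylindricity with the product-region dichotomy of Propositions~\ref{prop:alternative_non_parabolic_version} and~\ref{prop:alternative_HHG_version}. The overall goal is to produce a single $g\in G$ that is loxodromic on every hyperbolic space attached to a nest-maximal active domain; then Lemma~\ref{lem:Big is orth} together with nest-maximality forces $\BIG(g)=\Active(G)$.

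For the base case ($\mathfrak S=\{S\}$, so $\cuco X$ is hyperbolic), the acylindrical $G$-action on $\fontact S$ yields either $\Active(G)=\varnothing$ (take $g=1$) or, by Theorem~\ref{thm:osin}, a loxodromic element $g\in G$ with $\BIG(g)=\{S\}=\Active(G)$. In the inductive step, the dichotomy provided by Propositions~\ref{prop:alternative_non_parabolic_version} and~\ref{prop:alternative_HHG_version} gives two cases: either (i) $G$ contains an irreducible axial element $g$, in which case $\BIG(g)=\{S\}$ and nest-maximality forces $\Active(G)=\{S\}$; or (ii) there exists $U\in\mathfrak S-\{S\}$ and a finite-index subgroup $G'\leq G$ that fixes $U$ and coarsely stabilizes $P_U\cong F_U\times E_U$. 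In case (ii), the HHSs $(F_U,\mathfrak S_U)$ and $(E_U,\mathfrak S_U^\orth)$ have strictly lower complexity, and one checks that hierarchical acylindricity transfers to the induced actions $\theta_U(G')\curvearrowright F_U$ and $\theta_U^\orth(G')\curvearrowright E_U$, so that the induction hypothesis applies.

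The main obstacle is the combination step: given elements $g_F,g_E\in G'$ produced by the induction hypothesis on the two factors, produce a single $g\in G'$ whose $\BIG(g)$ in the full HHS $(\cuco X,\mathfrak S)$ captures both $\Active(\theta_U(G'))\subseteq\mathfrak S_U$ and $\Active(\theta_U^\orth(G'))\subseteq\mathfrak S_U^\orth$. The plan is to set $g=g_F^Ng_E^M$ for appropriately large $N,M$, after first arranging --- via a further finite-index reduction of $G'$, or by replacing $g_E$ with an element of $\ker\theta_U\cap G'$ realizing the same image under $\theta_U^\orth$ --- that $g_E$ acts elliptically on each $\fontact V$ with $V\in\Active(\theta_U(G'))$ and symmetrically that $g_F$ acts elliptically on each $\fontact W$ with $W\in\Active(\theta_U^\orth(G'))$. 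On such $\fontact V$, the factor $g_E^M$ is then a bounded perturbation of $g_F^N$ and the standard estimate $\tau(a^Nb)\geq N\tau(a)-O(1)$ forces $g$ to be loxodromic for $N$ large; symmetric reasoning gives loxodromic behavior on each $\fontact W$. Nest-maximality of $V,W$ in $\Active(G')$ together with pairwise orthogonality of $\BIG(g)$ from Lemma~\ref{lem:Big is orth} then yield $\BIG(g)=\Active(\theta_U(G'))\cup\Active(\theta_U^\orth(G'))=\Active(G')=\Active(G)$, the last equality because the finite-index reduction only alters $\Active$ on domains lying in a finite $G$-orbit, each of which is represented in $\Active(G')$.

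Finally, the moreover statement is immediate: any $U\in\BIG(g')$ satisfies $\diam_{\fontact U}(\langle g'\rangle\cdot x)=\infty$, hence is active for $G$, so by nest-maximality is nested in some $V\in\Active(G)=\BIG(g)$. I expect the most delicate point to be the ``arrangement'' substep of the combination --- guaranteeing that $g_E$ can be chosen to act elliptically on the $\fontact V$ active for the $F_U$-factor (and dually for $g_F$) --- since the diagonal restriction $G'\to\theta_U(G')\times\theta_U^\orth(G')$ need not be surjective in a general HHS. This is where hierarchical acylindricity must be leveraged carefully, either to supply a rich enough $\ker\theta_U\cap G'$ or to absorb the cross-terms into the translation-length estimate directly.
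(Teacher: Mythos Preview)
Your overall architecture --- induction on complexity, reduce to a product region $P_U\cong F_U\times E_U$ via the dichotomy, apply the induction hypothesis to each factor, then combine --- is exactly the paper's. You have also correctly isolated the combination step as the crux. However, there are genuine gaps.

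\textbf{The combination step.} Your primary plan is to arrange that $g_E$ acts elliptically on each $\fontact V$ with $V\in\Active(\theta_U(G'))$, either by replacing $g_E$ with something in $\ker\theta_U\cap G'$ or by a further finite-index reduction. There is no reason to expect this works: the kernel may be trivial or too small, and passing to finite index does not change whether an element is loxodromic on a given $\fontact V$. The paper does \emph{not} attempt this. Instead it lifts $\bar h,\bar h^\orth$ to $h,h^\orth\in G$, lets $k=\theta_U^\orth(h)$ and $k^\orth=\theta_U(h^\orth)$ be the cross-images, passes to powers so that all domains in $\BIG(\bar h)\cup\BIG(\bar h^\orth)$ are fixed, and then invokes hierarchical acylindricity on each $\fontact U_i$ and $\fontact V_j$: by Lemma~\ref{lem:bowditch_acyl}, each of $k,k^\orth$ is either elliptic or loxodromic on each such space --- never parabolic. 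This elliptic/loxodromic dichotomy is exactly what makes the combination tractable. One then does a case analysis inside the subgroup $F=\langle h^N,(h^\orth)^{10N}\rangle$ (with $N\gg 0$) to find $g\in F$ loxodromic on every $\fontact U_i$ and every $\fontact V_j$. Your alternative of ``absorbing cross-terms into the translation-length estimate'' is in the right spirit, but the missing ingredient is that acylindricity rules out parabolic cross-behavior; without that, a product of the form $g_F^N g_E^M$ need not be loxodromic on the relevant spaces for any $N,M$.

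\textbf{Smaller gaps.} First, Proposition~\ref{prop:alternative_non_parabolic_version} requires that $G$ fix no point of $\boundary\fontact S$; you must dispose of that case separately (the paper uses acylindricity plus Lemma~\ref{lem:bounded_orbits_fixed_point} to get $|G|<\infty$), and also handle the case where $G$ fixes a point of $\boundary\cuco X-\boundary\fontact S$ (which directly produces a $U$ fixed by a finite-index subgroup). Second, the equality $\Active(G')=\Active(\theta_U(G'))\sqcup\Active(\theta_U^\orth(G'))$ is not free: you must verify that no $V$ with $V\transverse U$ or $U\propnest V$ lies in $\Active(G')$. The paper handles this via Lemma~\ref{lem:fixed to big} (and its proof), which shows that for $V\in\Active(G)$ one has $V=U$ or $V\orth U$ in the fixed-point case, and $V\nest U$ or $V\orth U$ otherwise. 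Finally, the $\nest$-maximal container $W$ in $\mathfrak S_U^\orth$ need not be $G'$-invariant; the paper replaces $\fontact W$ by a point when $W$ is not unique, which you should also address.
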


Before we prove Theorem \ref{thm:ost hhg}, we prove a lemma related to fixed boundary points of $G$.  Throughout, $\xi(\mathfrak S)$ denotes the complexity of $(\cuco X,\mathfrak S)$, i.e. the length of a longest $\nest$--chain.  

\begin{defn}[Fixed-point set]\label{defn:fix}
Given an arbitrary HHS $(\cuco X,\mathfrak S)$ and $G\leq\Aut(\mathfrak S)$, let $\mathrm{Fix}(G)=\{p\in\boundary(\cuco X,\mathfrak S)\mid G\cdot p=p\}$.
\end{defn}

Given $p\in \Fix(G)$, let $G'\leq_{f.i.} G$ be a finite index subgroup of $G$ which fixes each $U\in\support(p)$.  Let $U \in \support(p)$ and suppose that $G$ is hierarchically acylindrical.  Since $G'$ fixes $U$, the restriction homomorphism $\theta_U$ gives a group $G'_U$ which (coarsely) acts on $F_U$ and acts acylindrically on $\fontact U$.  The next lemma relates supports of fixed points to active domains.

\begin{lem}\label{lem:fixed to big}
If $p \in \mathrm{Fix}(G)$, $U \in \support(p)$, and $V \in \Active(G)$, then either $U \orth V$ or $U = V$.  Moreover, in the latter case, there exists $g'_U \in G'_U$ such that $U \in \BIG(g'_U)$ and $\langle g'_U \rangle \leq_{f.i.} G'_U$.
\end{lem}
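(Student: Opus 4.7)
\textbf{Proof proposal for Lemma \ref{lem:fixed to big}.}
The plan is to first set up the acylindrical action on $\fontact U$, use Osin's trichotomy to analyze it, and then rule out the three non-orthogonal, non-equal configurations of $V$ relative to $U$ using consistency and bounded geodesic image. Choose a finite-index subgroup $G'\leq G$ fixing each element of $\support(p)$ pointwise, so that $G'$ fixes the boundary point $p_U\in\boundary\fontact U$. By hierarchical acylindricity, the group $G'_U=\theta_U(G')$ acts acylindrically on $\fontact U$ and inherits the fixed point $p_U$. Applying Theorem \ref{thm:osin} yields three alternatives; the ``infinitely many independent loxodromics'' alternative is incompatible with fixing a single boundary point (two independent loxodromics have four distinct boundary fixed points, and the resulting group contains elements arbitrarily far from any prescribed horofunction), so $G'_U$ is either bounded or virtually cyclic with a loxodromic element.

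For the ``moreover'' clause, suppose $U=V$. Then $U\in\Active(G)$, so $G'_U$ has unbounded orbits in $\fontact U$, which rules out the bounded case and forces $G'_U$ to be virtually cyclic with a loxodromic element $g'_U$. That $\langle g'_U\rangle\leq_{f.i.}G'_U$ is part of this conclusion, and $U\in\BIG(g'_U)$ because $g'_U$ is loxodromic on $\fontact U$.

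For the main statement, assume for contradiction $V\neq U$ and $U\not\orth V$, so exactly one of $V\propnest U$, $U\propnest V$, $V\transverse U$ holds. I would treat the ``transverse or $U$-nested-in-$V$'' cases together: pick a sequence $(x_n)\to p$ in $\cuco X$, so $\pi_U(x_n)\to p_U\in\boundary\fontact U$, and in particular $\dist_U(x_n,\rho^V_U)\to\infty$; consistency then forces $\pi_V(x_n)$ to remain within $\kappa_0$ of $\rho^U_V$. Applying $g\in G'$ (which fixes $U$ and $p_U$) and using $gp=p$, the sequence $(gx_n)$ also converges to $p$, so the same consistency argument pins $\pi_V(gx_n)$ near $\rho^U_V$. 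Combining with the bounded geodesic image axiom applied to hierarchy paths from $x_0$ to $gx_0$ in $\fontact V$ (which must pass close to $\rho^U_V$ whenever $\dist_V(x_0,gx_0)$ is large), one uses that $G'$ has bounded $\fontact U$--displacement on any hierarchy path not traversing a neighborhood of $\rho^U_V$, together with the uniform bound on the number of coset representatives of $G'$ in $G$, to conclude that $\pi_V(G\cdot x_0)$ is bounded, contradicting $V\in\Active(G)$. The remaining case $V\propnest U$ is handled by observing that $\nest$-maximality of $V\in\Active(G)$ forces $U\notin\Active(G)$, so $G'_U$ has bounded orbits in $\fontact U$; one then uses that the $G'_U$-equivariant quasi-isometric embedding $F_U\to\cuco X$ takes $V$ to an active domain for $G'_U$ nested below a bounded-orbit domain, and inducts on complexity via $(F_U,\mathfrak S_U)$, or alternatively derives a direct contradiction between the acylindrical bounded action of $G'_U$ on $\fontact U$, fixing $p_U$, and having an active proper subdomain.

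The main obstacle will be the last step above: carefully controlling the $\fontact V$--projections of the full $G$--orbit (not just $G'$--orbit) using only finite-index information, since active-domain data is a statement about $G$ while the Osin trichotomy applies naturally to the stabilizer $G'_U$. Decomposing $G$ into finitely many $G'$--cosets and bounding each $g_iG'\cdot x$--projection to $\fontact V$ in terms of the $G'$--projection is elementary but needs uniformity that is not automatic; analogously, in Case 2, ensuring that ``$U$ not active'' genuinely prevents $G$ from using $\fontact V$ in ways that $G'_U$ cannot see requires a careful coset analysis. The hierarchical acylindricity hypothesis is what ultimately makes this coset bookkeeping tractable, since it localizes the dynamics to $\fontact U$ where Theorem \ref{thm:osin} applies.
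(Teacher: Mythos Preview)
Your outline has the right ingredients (Osin's trichotomy applied to $G'_U$ on $\fontact U$, then a case analysis on the relation between $U$ and $V$), but both of the cases you flag as difficult have much simpler resolutions than you propose, and the paper's organization of the $V\nest U$ case is cleaner than splitting off $U=V$ in advance.

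For $U\transverse V$ or $U\propnest V$: your sequence--and--consistency argument is unnecessary and, as written, does not actually bound $\pi_V(G\cdot x_0)$ (the ``large enough $n$'' in your consistency step depends on $g$, so you never get a uniform bound on the orbit). The paper instead observes that since $G'U=U$, the subgroup $G'$ coarsely stabilizes the standard product region $P_U$, and by definition $\pi_V(P_U)$ coarsely equals the point $\rho^U_V$ whenever $V\transverse U$ or $U\propnest V$. This immediately gives $\diam_V(G'\cdot x_0)\asymp 1$, and since $[G:G']<\infty$, the $G$--orbit is a finite union of $G'$--coset orbits, each bounded in $\fontact V$. So your ``main obstacle'' dissolves: passing from $G'$ to $G$ is a one-line finite-index observation, not a delicate coset analysis.

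For $V\nest U$: the paper does not pre-split into $V\propnest U$ versus $V=U$. It applies Osin's theorem directly to $G'_U$ acting acylindrically on $\fontact U$ with fixed point $p_U$. Either $G'_U$ has bounded orbits, in which case the key step you are missing is Lemma~\ref{lem:bounded_orbits_fixed_point} (acylindrical $+$ bounded orbits $+$ fixed boundary point $\Rightarrow$ finite group), whence $|G'_U|<\infty$; since $V\nest U$, the $G'$--action on $\fontact V$ factors through $G'_U$, so $\pi_V(G'\cdot x)$ is finite, contradicting $V\in\Active(G)$. Or $G'_U$ is virtually cyclic with a loxodromic $g'_U$, in which case $U$ itself is active, and $\nest$--maximality of $V$ together with $V\nest U$ forces $U=V$; this simultaneously produces the element for the ``moreover'' clause. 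Your proposed induction on complexity is unneeded once you invoke Lemma~\ref{lem:bounded_orbits_fixed_point}, which is exactly the ``direct contradiction'' you allude to but do not name.
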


\begin{proof}
We separately analyze two cases.

\textbf{The case $U\transverse V$ or $U\propnest V$:}  Suppose that $U\transverse V$ or $U\propnest V$, i.e. $\rho^U_V$ is a well-defined coarse point.  Since $G'\cdot U=U$, we have that $G'$ coarsely stabilizes the image of $P_U=F_U\times E_U\to\cuco X$, which we denote $\overline P_U$.  In other words, $G' \cdot x_0$ is uniformly close to $\overline{P}_U$ for all $x_0 \in \overline{P}_U$.

By definition of the standard embedding, if $V \pitchfork U$ or $U \nest V$, then $\pi_V(\overline{P}_U) \asymp \rho^U_V \in \fontact V$ (see Subsection \ref{sec:product_regions}).  Thus for any $x_0 \in \overline{P}_U$ and $V \in \mathfrak S$ with $U \pitchfork V$ or $U \propnest V$, we have
$$\diam_V(G' \cdot x_0)\asymp 1$$
which implies that any orbit of $G'$ projects to a bounded subset of $\fontact V$.  Hence $V\not\in\Active(G)$, a contradiction.  Thus either $V\nest U$ or $V\orth U$.

\textbf{The case $V\nest U$:}  Now suppose $V \nest U$. Since $U \in \support(p)$, it follows that $G'_U$ fixes a point $p_U \in \partial F_U$, where $p_U \in \partial \fontact U$.  Since $G$ is hierarchically acylindrical, $G'_U$ acts acylindrically on $\fontact U$.  By Theorem~\ref{thm:osin} and the fact that $G'_U$ fixes a point in $\partial\fontact U$, one of the following holds:
\begin{enumerate}
 \item \label{item:osin_consequence_1} $G'_U$ has bounded orbits in $\fontact U$;
 \item \label{item:osin_consequence_2} $G'_U$ contains an element $g'_U$ which acts axially on $\fontact U$, and $\langle g'_U \rangle \leq_{f.i.} G'_U$.
\end{enumerate}

If item~\eqref{item:osin_consequence_1} holds, then, since $G'_U$ fixes a point of $\boundary\fontact U$, Lemma~\ref{lem:bounded_orbits_fixed_point} implies that $|G'_U|<\infty$.  In this case, since $V\nest U$, we have $\pi_V(G'\cdot x)=\pi_V(G'_U\cdot x)$ is finite, so $V\not\in\Active(G)$, a contradiction.

If item~\eqref{item:osin_consequence_2} holds, then we have found the desired element $g'_U$.  Moreover, the existence of this element shows that $U$ is nested into some element of $\Active(G)$.  On the other hand, $V\nest U$ and $V\in\Active(G)$, so $U=V$ by maximality of $V$.

Thus the only possibilities are that either $V\orth U$ or $U=V$ and the desired $g'_U$ exists.
\end{proof}

We are now ready for the proof of Theorem \ref{thm:ost hhg}:

\begin{proof}[Proof of Theorem \ref{thm:ost hhg}]
The ``moreover'' part of the statement follows automatically from the first assertion and the definition of $\Active(G)$, for if $g' \in G$ and $U \in \BIG(g')$, then $U$ is an active domain for $G$ and thus $U$ must nest into some domain in $\Active(G)=\BIG(g)$.

We now prove the main part of the statement.  By Proposition~\ref{prop:essential core}, we can assume that $G$ acts essentially on $\cuco X$.  Let $S \in \mathfrak S$ to be the unique $\nest$-maximal domain in $\mathfrak S$.  Note that if $G$ contains an irreducible axial element or has finite order, then we are done.  Moreover, by acylindricity of the action of $G$ on $\fontact S$, either $G$ contains an irreducible axial or has bounded orbits in $\fontact S$ (so $S\not\in\Active(G)$).

In particular, if $G$ fixes a point of $\boundary\fontact S$, then Lemma~\ref{lem:bounded_orbits_fixed_point} implies that $|G|<\infty$, and we are done.  We may therefore assume that $G$ does not fix a point in $\boundary\fontact S$ and $S \notin \Active(G)$.

We now argue by induction on complexity of $\mathfrak S$.  Suppose that $\xi(\mathfrak S) = 1$.  Then either there is an irreducible axial element, and we are done, or $G$ acts with bounded orbits on $\fontact S$, in which case $\Active(G)=\emptyset$ since $\mathfrak S=\{S\}$, and we are done.

Now assume that the statement holds for any group of automorphisms of an HHS that satisfies the hypotheses of the theorem and has complexity less than $\xi(\mathfrak S)$.  

There are two main cases, depending on whether or not $G$ has a fixed point in $\boundary \cuco X$.

First consider the case where $G$ fixes no point of $\boundary \cuco X$.  Proposition~\ref{prop:alternative_non_parabolic_version} implies that either $G$ contains an irreducible axial, in which case we are done, or there exists $U \in \mathfrak S-\{S\}$ such that $|G\cdot U|< \infty$ and $\cuco X$ is coarsely equal to $P_U \subset \cuco X$.  In the latter case, after passing to a finite-index subgroup if necessary, we have $G$ acting by automorphisms on the HHS $(P_U,\mathfrak S)$ (with complexity $\xi(\mathfrak S)$).

The remaining possibility is that $G$ fixes some $p\in\boundary\cuco X-\boundary\fontact S$.  In this case, after passing if necessary to a finite-index subgroup, we again find $U\in\mathfrak S-\{S\}$ with $GU=U$ and $G$ acting by automorphisms on the HHS $(P_U,\mathfrak S)$ (with complexity $\xi(\mathfrak S)$).

In either case, let $P_U=F_U\times E_U$, so that $\mathfrak S$ contains orthogonal subsets $\mathfrak S_U,\mathfrak S_U^\orth$ such that $(F_U,\mathfrak S_U)$ and $(E_U,\mathfrak S_U^\orth)$ are HHSes of complexity at most $\xi(\mathfrak S)-1$.  By replacing $G$ with an index-$2$ subgroup if necessary, we can assume that $G$ stabilizes $\mathfrak S_U$.  Moreover, $G$ stabilizes $\mathfrak S_U^{\orth_o}:=\{V\in\mathfrak S:V\orth U\}$, i.e. $\mathfrak S_U^{\orth_o}$ is obtained from $\mathfrak S_U^\orth$ by removing $W$ if $W\notorth U$, where $W\propnest S$ is the (arbitrarily-chosen) $\nest$-minimal ``container'' domain containing everything orthogonal to $U$.

Recall that $\mathfrak S_U^{\orth}$ consists of all domains $V \in \mathfrak S$ with $V \orth U$ along with a $\nest$-minimal domain $W \in \mathfrak S$ such that $V \nest W$ for all $V \orth U$.  If $W$ is the unique such domain, then $G \cdot W = W$, and thus $G$ admits a natural restriction homomorphism to $\Aut(\mathfrak S_U^{\orth})$.

Otherwise, $W \notin \Active(G)$.  Since $\diam_W(\pi_W(P_U)) \asymp 1$, we may replace $W$ with single point $W^*$ so that $\fontact W^* =  \{*\}$.  From this we obtain a new HHS structure on $(E_U, \mathfrak S_U^{\orth_o})$, where $\mathfrak S_U^{\orth_o} = \mathfrak S_U^{\orth} - W \cup \{W^*\}$, by making the obvious alterations to the projection and domain maps associated to $W$.
  
In either case, let $G_U$ be the image of $G$ under the usual restriction homomorphism $\mathcal A_U\to\Aut(\mathfrak S_U)$.  Let $G_U^\orth$ be the image of $G$ under the restriction map $\psi:\mathcal A_U\to\Aut( \mathfrak S_U^{\orth})$ or, if $W$ is not unique, we take $G_U^{\orth}$ be the image of $\psi:\mathcal A_U \rightarrow \Aut(\mathfrak S_U^{\orth_o})$ defined as follows: for all $g\in\mathcal A_U$, the map $\psi(g)$ acts like $g$ on $\mathfrak S_U^{\orth_o}$ and acts as the identity on $\fontact W^*$.

Hence we have HHS $(F_U,\mathfrak S_U),(E_U,\mathfrak S_U^\orth)$, of complexity at most $\xi(\mathfrak S)-1$, and groups $G_U\leq\Aut(\mathfrak S_U)$ and $G_U^\orth\leq\Aut(\mathfrak S_U^\orth)$ or $\Aut( \mathfrak S_U^{\orth_o})$, that satisfy the hypotheses of the theorem.

We now show that $\Active(G)=\Active(G_U)\sqcup\Active(G_U^\orth)$.  The inclusions $\Active(G_U),\Active(G_U^\orth)\to\Active(G)$ are obvious.  Conversely, suppose that $V\in\Active(G)$.  If $U\in\support(p)$ for some $p\in\Fix(G)$ (as we can assume is the case whenever $\Fix(G)\neq\emptyset$), then Lemma~\ref{lem:fixed to big} implies that $V=U$ or $V\orth U$, i.e. $V\in\mathfrak S_U\sqcup\mathfrak S_U^\orth$ (and, if $V=W$, then $W$ is the unique container and hence $G$--invariant).  Otherwise, the proof of Lemma~\ref{lem:fixed to big} shows that $V\orth U$ or $V\nest U$.  Hence $V\in\Active(G_U)\sqcup\Active(G_U^\orth)$.  

By induction on complexity, either $\Active(G_U)=\emptyset$, or there exists $\bar h\in G_U$ with $\BIG(\bar h)=\Active(G_U)$. Likewise, either $\Active(G_U^\orth)=\emptyset$, or there exists $\bar h^\orth\in G_U^\orth$ with $\BIG(\bar h^\orth)=\Active(G_U^\orth)$.  If $\Active(G_U)=\emptyset$ (repsectively, $\Active(G_U^\orth)=\emptyset$), we take $\bar h=1$ (respectively, $\bar h^\orth=1$).  Since $\Active(G)=\Active(G_U)\sqcup\Active(G_U^\orth)$, we must use $\bar h,\bar h^\orth$ to find $g\in G$ with $\BIG(g)=\Active(G_U)\sqcup\Active(G_U^\orth)$.

Choose $h,h^\orth\in G$ stabilizing $\mathfrak S_U$ and $\mathfrak S_U^\orth$ and mapping to $\bar h\in G_U,\bar h^\orth\in G_U^\orth$, respectively, under the above restriction maps.  Let $k$ be the image of $h$ in $G_U^\orth$ and let $k^\orth$ be the image of $h^\orth$ in $G_U$, so we are considering the action of $h,k^\orth$ on $\mathfrak S_U$ and $h^\orth,k$ on $\mathfrak S_U^\orth$.    

Let $\{U_1,\ldots,U_\ell\}=\BIG(\bar h)\subset\mathfrak S_U$ and let $\{V_1,\ldots,V_k\}=\BIG(\bar h^\orth)\subset\mathfrak S_U^\orth$.  By passing to powers, we can assume that $hU_i=U_i$ and $h^\orth V_j=V_j$ for all $i,j$.  Since the action of $G_U$ on $\mathfrak S_U$ preserves $\Active(G_U)$, and the action of $G_U^\orth$ on $\mathfrak S_U^\orth$ preserves $\Active(G_U^\orth)$, we can, by passing to powers, assume that $k^\orth$ preserves each $U_i$ and $k$ preserves each $V_j$.

Let $N\gg0$ and consider $F=\langle h^N,(h^\orth)^{10N}\rangle \leq G$.  The image of $F$ in $G_U$ is $\bar F=\langle \bar h^N,(k^\orth)^{10N}\rangle$, and the image of $F$ in $G_U^\orth$ is $\bar F^\orth=\langle k^N,(\bar h^\orth)^{10N}\rangle$.  The above discussion shows that $\bar F$ acts acylindrically on each $\contact U_i$ and $\bar F^\orth$ acts acylindrically on each $\contact V_j$.  Examining the various cases that arise according to how $k$ acts on the $\fontact V_i$ and how $k^\orth$ acts on the $\fontact U_i$ shows that, in each case, there exists $g\in F$ whose image in $\bar F$ is loxodromic on each $\contact U_i$ and whose image in $\bar F^\orth$ is loxodromic on each $V_j$.  Hence $\BIG(g)=\Active(G_U)\sqcup\Active(G_U^\orth)$, as required.
\end{proof}

The following is an immediate but useful corollary of Theorem \ref{thm:ost hhg}:
\begin{cor}
If $G \leq \Aut(\mathfrak S)$ is hierarchically acylindrical, then $\Active(G)$ is pairwise orthogonal.
\end{cor}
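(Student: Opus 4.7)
The plan is to deduce the corollary directly from Theorem \ref{thm:ost hhg} combined with Lemma \ref{lem:Big is orth}, so the ``proof'' is essentially a two-line application of already-established machinery rather than a new argument.

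First I would invoke Theorem \ref{thm:ost hhg}: since $G \leq \Aut(\mathfrak S)$ is hierarchically acylindrical (and $\mathfrak S$ is countable, $\cuco X$ is proper, which are the standing hypotheses of the subsection), the theorem produces an element $g \in G$ with the property that $\BIG(g) = \Active(G)$. This is the crucial ingredient: it converts a statement about the whole group $G$ (active domains of the $G$-action) into a statement about a single cyclic subgroup $\langle g \rangle$ (the big set of one element).

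Next I would apply Lemma \ref{lem:Big is orth} to the element $g$ produced above. That lemma states that any two distinct elements $U, V \in \BIG(g)$ must satisfy $U \orth V$. Since $\BIG(g) = \Active(G)$, this immediately yields that any two distinct elements of $\Active(G)$ are orthogonal, which is the conclusion.

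There is really no obstacle here, since both inputs are already proved earlier in the paper; the only ``content'' of the corollary is the observation that pairwise orthogonality of $\BIG(g)$ transports to $\Active(G)$ via the equality supplied by the omnibus subgroup theorem. One small sanity check worth noting in the write-up: the case $|\Active(G)| \leq 1$ is vacuous, and in the nontrivial case Lemma \ref{lem:pairwise_orthogonal} ensures $|\Active(G)|$ is bounded by the complexity of $\mathfrak S$, consistent with the pairwise-orthogonal conclusion.
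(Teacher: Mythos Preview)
Your proposal is correct and matches the paper's approach: the paper simply states that the corollary is immediate from Theorem~\ref{thm:ost hhg}, and you have spelled out the one-line deduction by combining that theorem with Lemma~\ref{lem:Big is orth}.
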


\subsection{Rank-rigidity for some CAT(0) cube complexes}\label{subsec:rr_cube}
We now use Theorems~\ref{thm:rank_rigidity_HHG} and \ref{thm:rank_rigidity_non_parabolic} to reprove the rank-rigidity theorem of Caprace and Sageev~\cite{CapraceSageev:rank_rigidity}, in the case where the cube complex in question contains a \emph{factor system}.  See Section~\ref{sec:cube_complex} for a discussion of the definition, and the definition of the \emph{simplicial boundary} $\simp\cuco X$ of the cube complex $\cuco X$.

\begin{cor}[Rank-rigidity for cube complexes with factor-systems]\label{cor:rr_cubes}
Let $\cuco X$ be an unbounded CAT(0) cube complex with a factor-system $\mathfrak F$.  Let $G$ act on $\cuco X$ and suppose that one of the following holds:
\begin{enumerate}
 \item $G$ acts on $\cuco X$ properly and cocompactly;
 \item $G$ acts on $\cuco X$ with no fixed point in $\cuco X\cup\simp\cuco X$.
\end{enumerate}
Then $\cuco X$ contains a $G$--invariant convex subcomplex $\cuco Y$ such that either $G$ contains a rank-one isometry of $\cuco Y$ or $\cuco Y=\cuco A\times\cuco B$, where $\cuco A$ and $\cuco B$ are unbounded convex subcomplexes. 
\end{cor}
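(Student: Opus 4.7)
The plan is to apply one of Theorem \ref{thm:rank_rigidity_HHG} or Theorem \ref{thm:rank_rigidity_non_parabolic} to the HHS structure $(\cuco X, \mathfrak F)$ provided by the factor system (see~\cite{BehrstockHagenSisto:HHS_I}), and then translate the conclusion back into the cubical setting. In case (1), properness and cocompactness of the action, together with cofiniteness of the $G$-action on parallelism classes of factors, make $(G, \mathfrak F)$ an HHG, so Theorem \ref{thm:rank_rigidity_HHG} applies directly. In case (2), I would invoke Theorem \ref{thm:simplicial_HHS} (proved later in Section~\ref{sec:cube_complex}) to identify $\cuco X \cup \simp \cuco X$ with $\overline{\cuco X} = \cuco X \cup \boundary(\cuco X, \mathfrak F)$; unboundedness of $\cuco X$ together with no global fixed point in $\cuco X$ forces unbounded $G$-orbits, so Theorem \ref{thm:rank_rigidity_non_parabolic} applies.

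To produce the convex subcomplex $\cuco Y$, I would replace $\cuco X$ with its Caprace-Sageev essential core, a $G$-invariant convex subcomplex on which $G$ acts essentially (this is $\cuco X$ itself in case (1)). Applied to this $\cuco Y$, the HHS rank-rigidity result yields one of two alternatives. In the product case, $\cuco Y$ coarsely equals $P_U = F_U \times E_U$ for some $U \in \mathfrak F|_{\cuco Y}$ with both factors unbounded; for a factor system, $F_U$ and $E_U$ are themselves convex subcomplexes and $P_U$ is a convex subcomplex isomorphic as a cube complex to $F_U \times E_U$. Essentiality combined with the coarse equality $\cuco Y \asymp P_U$ should upgrade to $\cuco Y = P_U$, supplying the desired decomposition $\cuco Y = \cuco A \times \cuco B$. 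In the other alternative, some $g \in G$ is rank-one in the HHS sense of Definition~\ref{defn:rank_one_automorphism}; in particular $g$ is irreducible axial in the sub-HHS corresponding to the unique element of $\BIG(g)$, hence Morse by Theorem \ref{thm:acyl_morse}, and standard results on Morse elements in CAT(0) cube complexes (e.g.\ via contact-graph dynamics or the Charney-Sultan characterization) then identify $g$ as a rank-one cubical isometry of $\cuco Y$.

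The main obstacle I expect lies in the two translation steps between HHS and cubical data. The first---upgrading coarse equality $\cuco Y \asymp P_U$ to honest equality of convex subcomplexes---requires combining $G$-essentiality with the observation that for factor systems $P_U$ already has the combinatorial product structure one wants; making this rigorous likely involves iterating the essential core construction inside $P_U$ to eliminate any residual directions not captured by the HHS decomposition. The second---rank-one HHS element to rank-one cubical isometry---is morally a consequence of the Morse property, but one must verify that boundedness of $\pi_U(\cuco Y)$ for all $U \orth W$ (where $\{W\} = \BIG(g)$) correctly translates to the contracting-axis condition in $\cuco Y$ in the CAT(0) sense. Both steps fundamentally rely on the close relationship between the factor system $\mathfrak F$ and the hyperplane combinatorics of $\cuco X$ developed in~\cite{BehrstockHagenSisto:HHS_I}.
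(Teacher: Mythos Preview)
Your overall strategy matches the paper's: pass to the Caprace--Sageev essential core, invoke the HHS rank-rigidity theorems via the factor-system structure (using Theorem~\ref{thm:simplicial_HHS} to translate the no-fixed-point hypothesis in case~(2)), and then translate each alternative back to cubical language. The two translation steps you flag are indeed where the content lies, but your proposed routes for both differ from the paper's and one of them has a genuine gap.

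For the rank-one translation, your plan to go through Theorem~\ref{thm:acyl_morse} does not work in case~(2): that theorem requires $G$ to act properly and coboundedly, which is exactly what you lack there (even after restricting to $F_W$ for $W\in\BIG(g)$, no such hypothesis is available). The paper avoids Morse entirely and instead cites \cite[Proposition~5.1]{Hagen:boundary} together with the definition of a factor system: an HHS-rank-one element acts loxodromically on the factored contact graph of the unique $W\in\BIG(g)$ with all orthogonal projections bounded, and Proposition~5.1 of \cite{Hagen:boundary} converts exactly this contact-graph data into rank-one in the CAT(0) sense, with no properness or coboundedness needed.

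For the product upgrade, ``iterating the essential core'' is not how the paper proceeds, and there is a finite-index subtlety you do not mention: the HHS theorems only give $|G\cdot U|<\infty$, so a priori $P_U$ is only invariant under a finite-index $G'\leq G$. The paper argues directly with hyperplanes: if some hyperplane $H$ crossed $P_U$ but not $gP_U$, then (by essentiality of $\cuco Y$ and the coarse equality $\cuco Y\asymp P_U$) the halfspace of $P_U$ separated from $gP_U$ by $H$ would contain points arbitrarily far from $H$, contradicting that $P_U$ and $gP_U$ lie at finite Hausdorff distance. Hence $P_U$ and $gP_U$ are parallel for every $g\in G$, so $\cuco Y=P_U\times Y$ for some compact $Y$, which is a point by essentiality. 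This simultaneously upgrades the coarse equality to equality and promotes $G'$-invariance to $G$-invariance.
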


We remark that in view of~\cite[Remark 5.3]{Hagen:boundary}, we could have stated the corollary in terms of fixed points in the CAT(0) boundary rather than the simplicial boundary, but we have opted for the latter because of the close relationship between the simplicial and HHS boundaries discussed in Section~\ref{sec:cube_complex}.

\begin{proof}[Proof of Corollary~\ref{cor:rr_cubes}]
First suppose that $G$ acts on $\cuco X$ essentially, in the sense that every halfspace contains points of some $G$--orbit arbitrarily far from the associated hyperplane (in particular, $\cuco X$ does not contain a $G$--invariant proper convex subcomplex).  Recall from~\cite{BehrstockHagenSisto:HHS_I} that $\cuco X$ is equipped with a hierarchically hyperbolic structure $(\cuco X,\mathfrak S)$, where $\mathfrak S$ is the set of \emph{factored contact graphs} of elements of $\mathfrak F$, and that $G\leq\Aut(\mathfrak S)$.  If $G$ acts on $\cuco X$ properly and cocompactly, then $(G,\mathfrak S)$ is an HHG; if $G$ acts on $\cuco X$ with no fixed point in $\simp\cuco X$, then $G$ does not fix a point in $\boundary(\cuco X,\mathfrak S)$, by Theorem~\ref{thm:simplicial_HHS} below.  

Depending on which hypothesis we invoke, one of Theorem~\ref{thm:rank_rigidity_HHG} or Theorem~\ref{thm:rank_rigidity_non_parabolic} implies that either there exists $g\in G$ which is rank-one (in the HHS sense) or there exists $U\in\mathfrak S$ so that $\cuco X$ coarsely coincides with $P_U$, which has unbounded factors, and $G'U=U$ for some finite-index $G'\leq G$.  In the former case, elements that are rank-one in the HHS sense (with respect to this particular HHS structure on $\cuco X$) are rank-one isometries of $\cuco X$ in the usual sense, by~\cite[Proposition~5.1]{Hagen:boundary} and the definition of a factor system~\cite[Section~8]{BehrstockHagenSisto:HHS_I}.  

In the latter case, $P_U=F_U\times E_U$ is a genuine convex product subcomplex with unbounded factors (see~\cite{BehrstockHagenSisto:HHS_I}).  Let $g\in G$ and suppose that $H$ is a hyperplane intersecting $P_U$ but not $gP_U$.  Since $P_U$ is coarsely equal to $\cuco X$ and $\cuco X$ is essential, the halfspace of $P_U$ separated from $gP_U$ by $H$ contains points arbitrarily far from $H$, whence $P_U$ and $gP_U$ cannot lie at finite Hausdorff distance.  This contradicts that $P_U$ is invariant under a finite-index subgroup of $G$.  Hence $P_U$ and $gP_U$ are \emph{parallel} for all $g\in G$, i.e. they are crossed by exactly the same hyperplanes.  Thus $\cuco X=P_U\times Y$ for some compact cube complex $Y$, whence $Y$ is a single point, by essentiality.  It follows that $P_U$ is $G$--invariant, so $\cuco X=P_U$ by essentiality.  Hence $\cuco X$ decomposes as a product with unbounded factors. In general, we first replace $\cuco X$ by its $G$--essential core in either preceding argument, using Proposition~3.5 of~\cite{CapraceSageev:rank_rigidity}.
\end{proof}

\begin{rem}\label{rem:factor_system}
Question~A of~\cite{BehrstockHagenSisto:HHS_II} asks whether the existence of a proper cocompact action of $G$ on the CAT(0) cube complex $\cuco X$ ensures that $\cuco X$ contains a factor system.  By a result in~\cite{BehrstockHagenSisto:HHS_I}, the answer is affirmative provided $\cuco X$ embeds as a convex subcomplex in the universal cover of the Salvetti complex of some right-angled Artin group.  Although it is a strong condition, we believe that such embeddings always exist (although there is in general no algebraic relationship between $G$ and the RAAG).
\end{rem}

\subsubsection{The Poisson boundary of an HHG}\label{subsubsec:poisson}
Results in~\cite{BehrstockHagenSisto:HHS_I} show that, if $G$ is an HHG with $\diam\fontact S=\infty$, then, given a nonelementary probability measure $\mu$ on $G$, the boundary $\boundary \fontact S$ admits a $\mu$--stationary measure making it the Poisson boundary.  As a topological model of the Poisson boundary, $\boundary\fontact S$ is unsatisfactory since it need not be compact.  However:

\begin{thm}[The HHS boundary is the Poisson boundary]\label{thm:poisson}
Let $(G, \mathfrak S)$ be an HHG with $\diam \fontact S  = \infty$, $\mu$ be a nonelementary probability measure on $G$ with finite entropy and finite first logarithmic moment, and $\nu$ the resulting $\mu$-stationary measure on $\partial G$.  Then $(\partial G, \nu)$ is the Poisson boundary for $(G, \mu)$.
\end{thm}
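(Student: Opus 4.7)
The plan is to reduce Theorem~\ref{thm:poisson} to the known fact, proved in~\cite{BehrstockHagenSisto:HHS_I}, that the Gromov boundary $\boundary\fontact S$ of the top-level curve graph, equipped with the hitting measure $\nu_S$ of the $\mu$-random walk projected onto $\fontact S$, is a model of the Poisson boundary of $(G,\mu)$. The key step will be to show that $\nu$ is entirely supported on the image of the $G$-equivariant embedding $\boundary\fontact S\hookrightarrow\boundary G$ of Proposition~\ref{prop:babies continuously embed}; once this is done, $(\boundary G,\nu)$ is measurably $G$-isomorphic to $(\boundary\fontact S,\nu_S)$ and the theorem follows.

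First I would establish the existence of $\nu$ and check it is supported on $\boundary G$. Compactness of $\overline G$ (Theorem~\ref{thm:cpt}) together with Lemma~\ref{lem:stationary_measure_exists} produces a $\mu$-stationary probability measure $\nu$ on $\overline G$; nonelementarity of $\mu$ combined with acylindricity of $G\curvearrowright\fontact S$ (from \cite{BehrstockHagenSisto:HHS_I}) and the finite first logarithmic moment hypothesis forces positive drift of the walk in $\cuco X$ (via Kingman's subadditive ergodic theorem applied to $n\mapsto \dist_{\cuco X}(1,w_n)$ and the distance formula), so no mass accumulates on the countable interior $G\subset\overline G$.

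Next I would prove that $\nu(\boundary G-\boundary\fontact S)=0$, which is precisely the conclusion of Lemma~\ref{lem:fixed_point} applied to $H=\langle\supp\mu\rangle\leq G$, provided $H$ has no finite orbit in $(\mathfrak S-\{S\})\cup\boundary\fontact S$. The hypothesis $\diam\fontact S=\infty$ together with Theorem~\ref{thm:osin} and nonelementarity of $\mu$ furnishes two independent loxodromic elements of $H$ on $\fontact S$, eliminating finite $H$-orbits in $\boundary\fontact S$. A finite $H$-orbit in $\mathfrak S-\{S\}$ would, after passing to a finite-index subgroup of $G$ containing $H_0\leq H$ of finite index and applying Proposition~\ref{prop:alternative_HHG_version} together with $G$-essentiality, force $G$ to coarsely coincide with a proper product region $P_U$ with $|G\cdot U|<\infty$, contradicting $\diam\fontact S=\infty$ (which by the dichotomy of Theorem~\ref{thm:rank_rigidity_HHG} certifies $G$ is not a product HHS and supplies an irreducible axial in $G$).

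Once $\nu$ is concentrated on $\boundary\fontact S$, the Borel $G$-equivariant embedding $\boundary\fontact S\hookrightarrow\boundary G$ identifies $\nu$ with a $\mu$-stationary probability measure on $\boundary\fontact S$. Uniqueness of the hitting measure on the Poisson boundary (a consequence of its maximality as a $\mu$-boundary) forces this measure to coincide with $\nu_S$, so $(\boundary G,\nu)\cong (\boundary\fontact S,\nu_S)$ as measured $G$-spaces, and hence $(\boundary G,\nu)$ is the Poisson boundary of $(G,\mu)$. The main obstacle will be the second step: finite orbits in $\mathfrak S-\{S\}$ for $H$ rather than $G$ must be ruled out carefully, since an HHG can genuinely carry reducible structure orthogonal to an irreducible axial factor; here one exploits that $\mu$ is nonelementary to propagate irreducible axial behavior into $H$ itself, so that the dichotomy of Proposition~\ref{prop:alternative_HHG_version} applies with $G$ replaced by $H$ and the same coarse-product obstruction kicks in.
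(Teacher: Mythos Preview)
Your approach differs from the paper's in a basic way. The paper does not start from an abstractly given stationary measure and prove it concentrates on $\boundary\fontact S$; instead it \emph{constructs} $\nu$ directly as the pushforward of the Maher--Tiozzo stationary measure $\nu'$ on $\boundary\fontact S$ (Theorem~\ref{thm:maher-tiozzo}, using acylindricity of $G\curvearrowright\fontact S$) along the $G$-equivariant embedding $f\colon\boundary\fontact S\hookrightarrow\boundary G$. Once one checks $f(\boundary\fontact S)$ is Borel (Lemma~\ref{lem:support_measurable}), stationarity and maximality of $(\boundary G,\nu)$ follow immediately from the corresponding properties of $(\boundary\fontact S,\nu')$. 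No appeal to Lemma~\ref{lem:fixed_point}, no drift argument, and no uniqueness statement for stationary measures is needed.

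Your route via Lemma~\ref{lem:fixed_point} is in principle workable, but the argument you give for ruling out finite $H$-orbits in $\mathfrak S-\{S\}$ (where $H=\langle\supp\mu\rangle$) has a real gap. You invoke Proposition~\ref{prop:alternative_HHG_version} ``with $G$ replaced by $H$'', but that proposition requires the acting group to be an HHG (proper, cobounded, cofinite on $\mathfrak S$), which $H$ need not be. Your other sentence tries to promote a finite $H$-orbit to a finite $G$-orbit, but there is no such implication. The clean fix is direct: nonelementarity of $\mu$ yields an irreducible axial $g\in H$; if $H$ had a finite orbit $\{U_1,\ldots,U_k\}\subset\mathfrak S-\{S\}$ then some power $g^n$ fixes $U_1$, so $g^n$ coarsely fixes $\rho^{U_1}_S\in\fontact S$, contradicting loxodromicity of $g$ on $\fontact S$. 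With that repair your strategy goes through, though you still need uniqueness of the $\mu$-stationary measure on $\boundary\fontact S$ to identify $\nu|_{\boundary\fontact S}$ with $\nu_S$ in your final step --- a fact you invoke but do not justify, and which the paper's construction sidesteps entirely.
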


We use acylindricity of the action of $G$ on $\fontact S$ and a result of Maher-Tiozzo \cite{maher2014random}:

\begin{thm}[Theorem 1.5 in \cite{maher2014random}]\label{thm:maher-tiozzo}
Let $G$ be a countable group which acts acylindrically on a separable Gromov hyperbolic space $X$.  If $\mu$ is a nonelementary probability measure on $G$ with finite entropy and finite first logarithmic moment with corresponding stationary measure $\nu$, then $(\partial X, \nu)$ is the Poisson boundary for $(G, \mu)$.
\end{thm}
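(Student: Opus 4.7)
The plan is to reduce Theorem~\ref{thm:poisson} to Maher--Tiozzo (Theorem~\ref{thm:maher-tiozzo}) applied to the $G$--action on the $\nest$--maximal hyperbolic space $\fontact S$. By~\cite[Theorem~14.3]{BehrstockHagenSisto:HHS_I}, this action is acylindrical; combined with $\diam \fontact S = \infty$ and nonelementarity of $\mu$, Theorem~\ref{thm:osin} gives that the subgroup generated by $\supp(\mu)$ acts on $\fontact S$ with two independent loxodromic elements. Maher--Tiozzo then yields a $\mu$--stationary measure $\nu_S$ on $\partial \fontact S$ making $(\partial \fontact S, \nu_S)$ the Poisson boundary for $(G,\mu)$. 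The task is to transfer this conclusion from $\partial \fontact S$ to $\partial G$.

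The key step is to show that $\nu$, obtained from Lemma~\ref{lem:stationary_measure_exists} on the compact Hausdorff space $\overline{G}$ (Theorem~\ref{thm:cpt} and Proposition~\ref{prop:properties}), is supported on the closed embedded copy $\partial \fontact S \subset \partial G$ (Proposition~\ref{prop:babies continuously embed}). For this I would apply Lemma~\ref{lem:fixed_point}, whose hypothesis is that $G$ has no finite orbit in $(\mathfrak S - \{S\}) \cup \partial \fontact S$. Having two independent loxodromics on $\fontact S$ immediately rules out finite orbits in $\partial \fontact S$, since a finite orbit would force a finite-index subgroup to fix three or more boundary points, forcing bounded orbits in $\fontact S$ by acylindricity and contradicting loxodromicity. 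For $\mathfrak S - \{S\}$, suppose $U \in \mathfrak S - \{S\}$ had a finite $G$--orbit, and let $G' \leq G$ be a finite-index subgroup fixing $U$. Then for each $g \in G'$, $g\rho^U_S$ coarsely coincides with $\rho^{gU}_S = \rho^U_S$, so $G'$ coarsely fixes the diameter--$\leq \xi$ set $\rho^U_S \subset \fontact S$ and therefore has bounded orbits in $\fontact S$; this contradicts the fact that $G'$, being finite-index in $G$, contains a power of each loxodromic, hence itself contains loxodromics acting on $\fontact S$.

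With $\nu$ supported on $\partial \fontact S$, its restriction is a $\mu$--stationary probability measure on $\partial \fontact S$, which must equal $\nu_S$ by uniqueness of the stationary measure for nonelementary acylindrical actions on Gromov hyperbolic spaces (part of the Maher--Tiozzo package). Hence $(\partial G, \nu)$ and $(\partial \fontact S, \nu_S)$ are isomorphic as $(G,\mu)$--measure spaces, and the former is the Poisson boundary. The main obstacle is the support computation: translating the probabilistic hypothesis of nonelementarity of $\mu$ into the geometric statement (independent loxodromics on $\fontact S$) that simultaneously powers Maher--Tiozzo and excludes finite orbits in $\mathfrak S - \{S\}$ via the $\rho^U_S$ argument. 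The subsequent measure-theoretic identification is then a clean consequence of Lemma~\ref{lem:fixed_point} and uniqueness of $\nu_S$.
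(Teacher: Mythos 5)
The statement labeled here is the cited Maher--Tiozzo theorem, for which the paper gives no proof; you have (reasonably) instead addressed Theorem~\ref{thm:poisson}, which deploys it. Your route to Theorem~\ref{thm:poisson} is genuinely different from the paper's. The paper's proof is a direct pushforward: taking the Maher--Tiozzo measure $\nu'$ on $\partial\fontact S$, it defines $\nu$ on $\partial G$ by $\nu(V) = \nu'\bigl(f^{-1}(V \cap f(\partial\fontact S))\bigr)$, where $f\colon \partial\fontact S \hookrightarrow \partial G$ is the embedding from Proposition~\ref{prop:babies continuously embed}, with Borel image by Lemma~\ref{lem:support_measurable}. Stationarity follows from $G$--equivariance of $f$, and maximality of $(\partial G,\nu)$ as a $\mu$--boundary is inherited from maximality of $(\partial\fontact S,\nu')$. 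By contrast, you work backward: start from an arbitrary $\mu$--stationary measure on the compactum $\overline{G}$ (Lemma~\ref{lem:stationary_measure_exists}), use the fixed-point machinery (Lemma~\ref{lem:fixed_point}) to localize its support to $\partial\fontact S$, and then invoke uniqueness of the hitting measure to identify the restriction with $\nu'$. Both arguments work. The paper's route is shorter and does not need Lemma~\ref{lem:fixed_point} at all, while yours proves the slightly stronger fact that \emph{every} $\mu$--stationary measure on $\overline{G}$ concentrates on $\partial\fontact S$.

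Two loose ends in your argument deserve attention. First, Lemma~\ref{lem:fixed_point} operates under the standing hypothesis of Section~8.2 that $\supp(\mu)$ generates the acting group; Theorem~\ref{thm:poisson} does not assume this, so you must apply the lemma to $H = \langle\supp(\mu)\rangle$ rather than $G$ itself (this is harmless, since the Poisson boundary depends only on $(H,\mu)$ and the lemma applies to any countable subgroup of $\Aut(\mathfrak S)$). Second, your justification that $H$ has no finite orbit in $\partial\fontact S$ (``a finite orbit would force a finite-index subgroup to fix three or more boundary points, forcing bounded orbits'') silently skips orbits of size one or two. A size-two orbit makes a finite-index subgroup lineal, hence $H$ virtually cyclic; a size-one orbit forces a common endpoint for all loxodromics in $H$. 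Each still contradicts the existence of two \emph{independent} loxodromics, which nonelementarity of $\mu$ supplies, but the reasoning should be made explicit for these small cases. Your argument ruling out finite $H$--orbits in $\mathfrak S - \{S\}$ via coarse stabilization of $\rho^U_S$ is correct.
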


\begin{proof}[Proof of Theorem \ref{thm:poisson}]
Let $\mu$ be a nonelementary probability measure on $G$ with finite entropy and finite first logarithmic moment.  Since $G$ acts on $\fontact S$ acylindrically \cite{BehrstockHagenSisto:HHS_I}[Theorem 14.3], Theorem \ref{thm:maher-tiozzo} implies that there exists a $\mu$-stationary measure $\nu'$ on $\partial \fontact S$ such that $(\partial \fontact S, \nu')$ is the Poisson boundary for $(G, \mu)$.

Let $f:\boundary\fontact S \hookrightarrow \partial G$ be the embedding from Proposition \ref{prop:babies continuously embed}.  By Lemma \ref{lem:support_measurable}, $f(\partial \fontact S)$ is Borel, so for any Borel subset $V \subset \partial G$, the set $V \cap f(\partial \fontact S)$ is Borel.  Define a new measure $\nu$ on $\partial G$ by $\nu(V) = \nu'\left(f^{-1}(V \cap f(\partial \fontact S))\right).$

Since $f$ is $G$-equivariant, it follows that $\nu$ is $\mu$-stationary.  By definition, $f(\partial \fontact S)$ has full $\nu$-measure.  Moreover, $(\partial G, \nu)$ is a $\mu$-boundary by measurability of $f$ and it is maximal since $(\partial \fontact S, \nu')$ is maximal.  Thus $(\partial G, \nu)$ models the Poisson boundary for $(G, \mu)$.
\end{proof}

\section{Case study: CAT(0) cube complexes}\label{sec:cube_complex}
Throughout this section, $\cuco X$ is a locally finite CAT(0) cube complex in which each collection of pairwise--intersecting hyperplanes is (not necessarily uniformly) finite. In~\cite{BehrstockHagenSisto:HHS_I}, it is shown that CAT(0) cube complexes can often be given HH structures using certain collections of convex subcomplexes called \emph{factor systems}. We recall the definition in Subsection \ref{subsec:factor_systems}. When $\mathfrak F$ is a factor system for $\cuco X$, denote the resulting HH structure by $(\cuco X,\overline{\mathfrak F})$.

The \emph{simplicial boundary} of $\cuco X$ was introduced in~\cite{Hagen:boundary}; we recall the definition below.  The simplicial boundary and the HH structure are closely related by the following theorem:

\begin{thm}[Simplicial and HHS boundaries]\label{thm:simplicial_HHS}
Let $\cuco X$ be a CAT(0) cube complex with a factor system $\mathfrak F$.  There is a topology $\mathcal T$ on the simplicial boundary $\simp\cuco X$ so that:
\begin{enumerate}
 \item There is a homeomorphism $b:(\simp\cuco X,\mathcal T)\to\boundary(\cuco X,\overline{\mathfrak F})$,
 \item for each component $C$ of the simplicial complex $\simp\cuco X$, the inclusion $C\hookrightarrow(\simp\cuco X,\mathcal T)$ is an embedding.
\end{enumerate}
In particular, if $\mathfrak F,\mathfrak F'$ are factor systems on $\cuco X$, then $\boundary(\cuco X,\overline{\mathfrak F})$ is homeomorphic to $\boundary(\cuco X,\overline{\mathfrak F'})$. 
\end{thm}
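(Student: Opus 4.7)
The plan is to first set up a bijection $b$ between the underlying sets $\simp\cuco X$ and $\boundary(\cuco X,\overline{\mathfrak F})$ in a canonical way, then to pull back the topology from the HHS boundary to define $\mathcal T$, and finally to verify that with this topology each component of $\simp\cuco X$, viewed as a simplicial complex, embeds. The last assertion of the theorem about independence from the choice of factor system is then immediate, since the construction of $\simp\cuco X$ makes no reference to $\mathfrak F$.

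First I would recall from~\cite{Hagen:boundary} that vertices of $\simp\cuco X$ are equivalence classes of \emph{minimal} unidirectional boundary sets (UBSs), and that pairwise-orthogonal collections of minimal UBSs span simplices. The key geometric fact, which appears in~\cite{BehrstockHagenSisto:HHS_I} in the language of factor systems, is: for every minimal UBS $V$ there is a unique $\nest$--minimal factor $F(V)\in\mathfrak F$ (up to parallelism) whose set of hyperplanes contains $V$ up to finite symmetric difference; the corresponding factored contact graph $\fontact F(V)$ is hyperbolic, and the sequence of hyperplanes in $V$, viewed as vertices of $\fontact F(V)$, converges to a well-defined point $p_V\in\boundary\fontact F(V)$. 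Moreover, two minimal UBSs are orthogonal as $0$--simplices of $\simp\cuco X$ precisely when their associated factors are orthogonal in $\overline{\mathfrak F}$, since orthogonality on both sides encodes the fact that the corresponding hyperplane sets pairwise cross. This yields a well-defined assignment $b$ on vertices and, by extending linearly using the barycentric coordinates of a simplex as the coefficients $a_U$ in the HHS boundary formal sum, a bijection $b:\simp\cuco X\to\boundary(\cuco X,\overline{\mathfrak F})$. Bijectivity uses the structure theorem for UBSs: every infinite UBS decomposes uniquely into a join of minimal UBSs with specified ``growth rates'', and those rates match the coefficients $a_U$ read off from a hierarchy ray via the distance formula.

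Next I would define $\mathcal T$ on $\simp\cuco X$ as the pullback under $b$ of the topology on $\boundary(\cuco X,\overline{\mathfrak F})$ from Definition~\ref{defn:boundary_topology}; this immediately makes $b$ a homeomorphism, giving assertion~(1). For assertion~(2), I would fix a component $C$ of $\simp\cuco X$ and show the inclusion $C\hookrightarrow(\simp\cuco X,\mathcal T)$ is an embedding. Concretely, $C$ corresponds to those boundary points whose supports are contained in a common ``compatibility class'' of factors -- a maximal subset of $\overline{\mathfrak F}$ any two of whose elements are either nested or orthogonal. For points $p,q$ lying in the same component, all the domains in $\support(p)\cup\support(q)$ are mutually non-remote, so the neighborhood basis at $p$ restricted to $C$ reduces to the non-remote and interior parts, which in turn match the standard simplicial-complex neighborhoods of $p$ once one translates coefficients and the topology on each $\boundary\fontact U$ into barycentric coordinates and the ``convergence of hyperplane sequences'' topology on $C$.

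The main obstacle will be the verification in the last paragraph, namely matching up the non-remote part of a basic HHS neighborhood with standard simplicial neighborhoods inside a single component of $\simp\cuco X$. The delicate point is that two minimal UBSs can be non-orthogonal (so their vertices lie in distinct components of $\simp\cuco X$) yet still have supports whose associated factors interact via $\rho^\bullet_\bullet$ in a nontrivial way, so convergence in $\mathcal T$ must correctly register these interactions as moving boundary points between components. This is handled by checking, using the bounded geodesic image axiom and Lemma~\ref{lem:orthogonal_close}, that sequences in $C$ converging in $\mathcal T$ to a point of $C$ are exactly those converging simplicially, while convergence across components goes through the remote part of the HHS neighborhoods and hence does not constrain $C\hookrightarrow(\simp\cuco X,\mathcal T)$. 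The independence of $\boundary(\cuco X,\overline{\mathfrak F})$ from the choice of factor system follows because $\simp\cuco X$ and the homeomorphism type of $(\simp\cuco X,\mathcal T)$ are described purely in terms of hyperplane combinatorics.
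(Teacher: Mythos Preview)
Your overall strategy---build a bijection from minimal UBSs to points in boundaries of factored contact graphs, extend affinely over simplices, pull back the topology---is the paper's strategy as well.  But there is a real gap in your orthogonality step, and the paper organizes the argument differently in a way that addresses it.

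You assert that two minimal UBSs span a $1$--simplex of $\simp\cuco X$ precisely when their associated $\nest$--minimal factors $F(V),F(V')$ are orthogonal in $\overline{\mathfrak F}$.  The forward direction is not automatic: the factors you produce live in $\mathfrak F$ and need not themselves be orthogonal even when the underlying hyperplane families pairwise cross.  The paper handles this by first passing to the convex hull $\cuco Y_m$ of a geodesic ray representing a maximal simplex $m$, where $\cuco Y_m=\prod_i\cuco Y_{m_i}$ splits as a genuine cubical product; the restricted factor system on $\cuco Y_m$ then has unique $\nest$--minimal factors $F_i\subset\cuco Y_{m_i}$ with $\boundary\fontact F_i=\{p_i\}$.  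To transfer this back to $\overline{\mathfrak F}$ the paper proves a separate lemma (Lemma~\ref{lem:product_extend}) showing that if $A\times B\hookrightarrow\cuco X$ with $A,B$ unbounded, then there exist $P_A,P_B\in\mathfrak F$ with $A\subseteq P_A$, $B\subseteq P_B$, and $P_A\orth P_B$; this is what lets you replace the na\"ive minimal factors $\widehat F_i$ by orthogonal $\ddot F_i\in\overline{\mathfrak F}$.  Without something like this, your formal sum $\sum a_ip_{V_i}$ need not have pairwise-orthogonal support and hence need not define a point of $\boundary(\cuco X,\overline{\mathfrak F})$.  The paper also needs a companion lemma (Lemma~\ref{lem:discs}) showing that $\fontact(F\cap\cuco Y_m)\to\fontact F$ is a uniform quasi-isometric embedding, so that the single point $p_i$ found in the restricted structure really corresponds to a well-defined point of $\boundary\fontact\ddot F_i$.

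Two smaller points.  First, your description of a component $C$ of $\simp\cuco X$ as a ``compatibility class'' of factors (any two nested or orthogonal) is not how components are characterized, and in any case the paper does not need such a description: it checks the embedding claim one maximal simplex at a time, observing that basic HHS neighborhoods restricted to $b(m)$ are entirely non-remote and hence match simplicial neighborhoods.  Second, a byproduct of the paper's convex-hull analysis is that every face of every maximal simplex of $\simp\cuco X$ is visible when $\cuco X$ has a factor system; this is worth noting since visibility is not known in general and is used implicitly in your sketch when you treat every $0$--simplex as coming from a ray.
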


We prove Theorem~\ref{thm:simplicial_HHS} in Subsection~\ref{subsec:simplicial_vs_HHS_boundary}.

\begin{rem}\label{rem:tits_visual}
Proposition~3.37 of~\cite{Hagen:boundary} relates $\simp\cuco X$ to its Tits boundary $\partial_T\cuco X$.  There is an analogous relationship between the HHS boundary and the visual boundary when the former is defined (i.e. when $\cuco X$ has a factor system).  Specifically, one can show that there is a commutative diagram \begin{center}
$
\begin{diagram}
\node{\simp\cuco X}\arrow{e,t}{I}\arrow{s,l}{b}\node{\partial_T\cuco X}\arrow{s,r}{\id}\\
\node{\boundary(\cuco X,\overline{\mathfrak F})}\arrow[1]{e,t}{J}\node{\visual\cuco X}
\end{diagram}
$
\end{center}
where $b$ is the bijection from Theorem~\ref{thm:simplicial_HHS}, $I$ and $J$ are embeddings, $J$ is $\pi/2$--quasi-surjective, and $\boundary(\cuco X,\overline{\mathfrak F})$ is a deformation retract of $\visual\cuco X$.  The CAT(0) metric on $\cuco X$ is far afield from our present discussion, since the HHS structure depends only on the combinatorics of $\cuco X$ and is insensitive to changes in the CAT(0) metric (unlike the visual boundary~\cite{CrokeKleiner}), so we will not give a detailed proof of the above.  The top part of the diagram comes from~\cite[Proposition 3.37]{Hagen:boundary}; the missing ingredient is to shown that $J$ is an embedding, which is a tedious exercise in the definition of the topology on $\boundary(\cuco X,\overline{\mathfrak F})$.
\end{rem}

\subsection{The simplicial boundary}\label{subsec:simplicial_boundary}
We first recall the necessary definitions from~\cite{Hagen:boundary}.

\begin{defn}[UBS, boundary equivalence, minimal UBS]\label{defn:ubs}
A set $\mathcal U$ of hyperplanes in $\cuco X$ is a \emph{unidirectional boundary set (UBS)} if each of the following holds:
\begin{itemize}
 \item \label{item:infinite} $\mathcal U$ is infinite;
 \item \label{item:inseparable}  if $U,U'\in\mathcal U$ and a hyperplane $V$ separates $U,U'$, then $V\in\mathcal U$;
 \item \label{item:no_ft}  if $U,U',U''\in\mathcal U$ are pairwise disjoint, then one of them separates the other two;
 \item \label{item:unidirectional}  for all hyperplanes $W$, at least one component of $\cuco X-W$ contains at most finitely many elements of $\mathcal U$.
\end{itemize}
Given UBSes $\mathcal U,\mathcal V$, let $\mathcal U\preceq\mathcal V$ if all but finitely many elements of $\mathcal U$ lie in $\mathcal V$.  The UBSes $\mathcal U,\mathcal V$ are \emph{boundary equivalent} if $\mathcal U\preceq\mathcal V$ and $\mathcal V\preceq\mathcal U$, and $\mathcal U$ is \emph{minimal} if $\mathcal U$ and $\mathcal V$ are boundary equivalent for all UBSes $\mathcal V$ with $\mathcal V\preceq\mathcal U$. 
\end{defn}

\begin{rem}
Any infinite set of hyperplanes which is closed under separation contains a minimal UBS~\cite[Lemma~3.7]{Hagen:boundary}.
\end{rem}

Proposition~3.10 of~\cite{Hagen:boundary} shows that each UBS $\mathcal U$ is boundary equivalent to a UBS of the form $\bigsqcup_{i=0}^k\mathcal U_i$, where each $\mathcal U_i$ is a minimal UBS, and this decomposition is unique up to boundary equivalence.  Up to reordering, for $0\leq i< j\leq k$, for all but finitely many $U\in\mathcal U_j$, the hyperplane $U$ intersects all but finitely many elements of $\mathcal U_i$.  In this situation, $\mathcal U_j$ \emph{dominates} $\mathcal U_i$.  The number $k$ is the \emph{dimension} of $\mathcal U$.

\begin{defn}[Simplicial boundary]\label{defn:crossing}
A \emph{$k$--simplex at infinity} is a boundary equivalence class of $k$--dimensional UBSes.  If $v,v'$ are simplices at infinity, represented by boundary sets $\mathcal V,\mathcal V'$, then $\mathcal V\cap\mathcal V'$ is, if infinite, a boundary set representing the simplex $v\cap v'$.  The \emph{simplicial boundary} $\simp\cuco X$ of $\cuco X$ is the simplicial complex with a closed $k$--simplex for each $k$--dimensional simplex at infinity; the simplex $u$ represented by the UBS $\mathcal U$ is a face of the simplex $v$, represented by $\mathcal V$, if $\mathcal U\preceq\mathcal V$.      
\end{defn}

\begin{rem}[Boundaries of convex subcomplexes]\label{rem:subcomplex_boundary}
It is shown in~\cite{Hagen:boundary} that if $\cuco Y\subseteq\cuco X$ is a convex subcomplex, then $\simp\cuco Y\subset\simp\cuco X$ in a natural way: each simplex at infinity in $\simp\cuco Y$ corresponds to a UBS in $\cuco X$ consisting of hyperplanes that intersect $\cuco Y$, and these hyperplanes intersect in $\cuco X$ exactly when they intersect in $\cuco Y$, by convexity. 
\end{rem}

\subsubsection{Visibility}\label{subsubsec:visibility}

\begin{defn}[Visible simplex]\label{defn:visibility}
The simplex $u$ at infinity is \emph{visible} if there exists a combinatorial geodesic ray $\gamma$ in $\cuco X^{(1)}$ such that the set $\mathcal U$ of hyperplanes intersecting $\gamma$ represents the boundary--equivalence class $u$.  Otherwise, the simplex $u$ at infinity is \emph{invisible}.  If every simplex at infinity is visible, then $\cuco X$ is \emph{fully visible}.
\end{defn}

\noindent Theorem~3.19 of~\cite{Hagen:boundary} states that every maximal simplex of $\simp\cuco X$ is visible.  Visibility is also related to a subtlety in the definition of $\simp\cuco X$:  

\begin{rem}[Visibility and proper faces]\label{rem:defn_subtlety}
Let $\bigsqcup_{i=0}^k\mathcal U_i$ be a UBS, with each $\mathcal U_i$ a minimal UBS, numbered so that for $0\leq i<j\leq k$ and all $U\in\mathcal U_j$, we have that $U\cap V\neq\emptyset$ for all but finitely many $V\in\mathcal U_i$. If, up to modifying each $\mathcal U_i$ in its boundary equivalence class, $U\cap V\neq\emptyset$ whenever $U\in\mathcal U_i,V\in\mathcal V_j$, and $i\neq j$, then the simplex $u$ represented by $\bigsqcup_{i=0}^k\mathcal U_i$ is visible. In this case, $\cuco X$ contains an isometrically embedded (on the $1$--skeleton) cubical orthant, the boundary of whose convex hull is $u$.  Conversely, if we know that each $\mathcal U_i$ represents a visible $0$--simplex, then $\bigsqcup_{i\in K}\mathcal U_i$ represents a visible simplex at infinity for any $K\subset\{0,\ldots,k\}$, as is proved in~\cite{Hagen:boundary}.  If this does not occur, then there may be subsets $K\subset\{0,\ldots,k\}$ so that $\bigsqcup_{i\in K}\mathcal U_i$ represents an invisible simplex at infinity, or is not even a UBS (by virtue of failing to satisfy the condition on separation).  In other words, when $\cuco X$ is not fully visible, simplices at infinity may have \emph{proper faces} that are not genuine simplices at infinity represented by UBSes.\end{rem}

A visible simplex $v\subseteq\simp\cuco X$ is \emph{represented} by the combinatorial geodesic ray $\gamma\subseteq\cuco X^{(1)}$ if the UBS of hyperplanes intersecting $\gamma$ represents the boundary equivalence class $v$.

\begin{rem}[Factor systems and visibility]\label{rem:FS_vis}
Conjecture~2.8 of~\cite{BehrstockHagen:thick} states that if $\cuco X$ is a CAT(0) cube complex on which some group acts geometrically, then $\cuco X$ is fully visible.  Also, the proof of Theorem~\ref{thm:simplicial_HHS} shows that, if $\cuco X$ contains a factor system (see Definition~\ref{defn:factor_system}), then every simplex of $\simp\cuco X$ is visible.  This is another reason for interest in Question~A of~\cite{BehrstockHagenSisto:HHS_II}, which asks whether every CAT(0) cube complex on which some group acts geometrically contains a factor system.
\end{rem}

\subsection{Factor systems: hierarchical hyperbolicity of cube complexes}\label{subsec:factor_systems}
\renewcommand{\fontact}{\widehat{\mathcal C}}
We now summarize results from~\cite{BehrstockHagenSisto:HHS_I} yielding hierarchically hyperbolic structures on $\cuco X$.  We refer the reader to Section~2 of~\cite{BehrstockHagenSisto:HHS_I} for discussion of convex subcomplexes and the gate map $\gate_F:\cuco X\rightarrow F$ from $\cuco X$ to any convex subcomplex $F$.  

Recall that each hyperplane $H$ of $\cuco X$ lies in a \emph{carrier}, $\neb(H)$, which is the union of closed cubes intersecting $H$.  For all $H$, there is a cubical isomorphism $\neb(H)\cong H\times[-\frac{1}{2},\frac{1}{2}]$; a subcomplex of $\cuco X$ which is the image under the inclusion $\neb(H)\to\cuco X$ of either of the subcomplexes $H\times\{\frac{1}{2}\}$ or $H\times\{-\frac{1}{2}\}$ is a \emph{combinatorial hyperplane}. We say that two convex subcomplexes $F,F'$ of $\cuco X$ are \emph{parallel} if for any hyperplane $H$ of $\cuco X$, we have $H\cap F\neq \emptyset$ if and only if $H\cap F'\neq \emptyset$. We let $\overline{ \mathfrak F}$ denote a choice of representatives for each parallelism class of elements of $\mathfrak F$.

\begin{defn}\label{defn:factor_system}
A \emph{factor system} $\mathfrak F$ is a set of convex subcomplexes such that:
\begin{enumerate}
 \item\label{item:FS_hyperplane}  Each nontrivial combinatorial hyperplane of $\cuco X$ belongs to $\mathfrak F$, as does each convex subcomplex parallel to a nontrivial combinatorial hyperplane,
 \item\label{item:FS_master} $\cuco X\in\mathfrak F$,
 \item\label{item:projection} there exists $\xi>0$ such that for all $F,F'\in\mathfrak F$, either $\gate_F(F')\in\mathfrak F$ or $\diam(\gate_F(F'))\leq\xi$,
 \item\label{item:multiplicity} there exists $\Delta\geq1$ such that each point in $\cuco X$ belongs to at most $\Delta$ elements of $\mathfrak F$.
\end{enumerate}
We require that elements of $\mathfrak F$ are not single points.  (This condition is only imposed to ensure that nesting and orthogonality are mutually exclusive: if $F$ is a single point and $F'\in\mathfrak F$, then $F\orth F'$ and $F\nest F'$, so we exclude this situation.)
\end{defn}

The \emph{contact graph} $\contact\cuco X$ of $\cuco X$ (see~\cite{Hagen:quasi_arb}) has a vertex for each hyperplane, with two hyperplanes joined by an edge if no third hyperplane separates them.  If $F\subseteq\cuco X$ is a convex subcomplex, then $F$ is a CAT(0) cube complex whose hyperplanes have the form $H\cap F$, where $H$ is a hyperplane of $\cuco X$, and, by convexity of $F$, this yields an embedding $\contact F\hookrightarrow\contact\cuco X$ of $\cuco F$ as a full subgraph.

Given a factor system $\mathfrak F$ on $\cuco X$, we define the \emph{factored contact graph} $\fontact F$ of each $F\in\mathfrak F$ as follows. Begin with $\contact F$.  For each parallelism class of subcomplexes $F'\in\mathfrak F$, parallel to a proper subcomplex of $F$ that is not a single $0$--cube, we have $\contact F'\subsetneq\contact F$, and we cone off $\contact F'$ by adding a vertex $v_{F'}$ to $\contact F$ and joining each vertex of $\contact F'\subset\contact F$ to $v_{F'}$.  The resulting factored contact graph $\fontact F$ is uniformly quasiisometric to a tree~\cite[Proposition~8.24]{BehrstockHagenSisto:HHS_I}.   

Let us now define the maps $\pi_F:\cuco X\to 2^{\fontact F}$. For each $F\in\mathfrak F$, given $x\in\cuco X$, let $\gate_F(x)\in F$ be its gate.  There is a nonempty finite set of hyperplanes $H$ of $F$ that are not separated from $x$ by any other hyperplane; these form a nonempty clique in $\contact F$, to which we send $x$.  We then compose with $2^{\contact F}\hookrightarrow 2^{\fontact F}$ to yield $\pi_F:\cuco X\to 2^{\fontact F}$ sending each point to a clique.  

Let $F\nest F'$ if $F$ is parallel to a subcomplex of $F'$, and $F\orth F'$ if there is a cubical isometric embedding $F\times F'\to\cuco X$ (after possibly varying $F,F'$ in their parallelism classes).  Otherwise, $F,F'$ are transverse.  With these definitions, it is shown in~\cite{BehrstockHagenSisto:HHS_I,BehrstockHagenSisto:HHS_II} that $(\cuco X,\overline{ \mathfrak F})$ is a hierarchically hyperbolic space.

\subsection{Relating the simplicial and HHS boundaries}\label{subsec:simplicial_vs_HHS_boundary}
Fix $\cuco X$ with a factor system $\mathfrak F$; necessarily, $\cuco X$ is uniformly locally finite.

\begin{proof}[Proof of Theorem~\ref{thm:simplicial_HHS}]
We will first exhibit a bijection $b:\simp\cuco X\to\boundary(\cuco X,\overline{\mathfrak F})$.  We then define $\mathcal T=\{b^{-1}(\mathcal O)\}$, where $\mathcal O$ varies over all open sets in $\boundary(\cuco X,\overline{\mathfrak F})$, so as to make $b$ a homeomorphism.  It then suffices to verify that this topology agrees with the simplicial topology on each component of $\simp\cuco X$; the ``in particular'' statement then follows immediately.

\textbf{Reduction to the single-simplex case:}  Let $m$ be a maximal simplex of $\simp\cuco X$.  By the definition of the simplicial boundary, $m$ is a simplex at infinity, i.e. it is represented by some UBS $\mathcal M$.  Moreover, by~\cite[Theorem~3.19]{Hagen:boundary}, we can take $\mathcal M$ to be the set of hyperplanes intersecting some combinatorial geodesic ray $\gamma_m$ emanating from the (fixed) basepoint $x_0$.  Let $\mathcal Y_m$ be the convex hull of $\gamma_m$.  

By~\cite[Lemma~8.4]{BehrstockHagenSisto:HHS_I}, $\mathfrak F_m=\{F\cap\cuco Y_m:F\in\mathfrak F\}$ is a factor system.  (We emphasize that $\mathfrak F_m$ is a set, not a multiset: if $F,F'\in\mathfrak F$ satisfy $F\cap\cuco Y_m=F'\cap\cuco Y_m$, we count this subcomplex once.)  We adopt the following convention: for each $F\cap\cuco Y_m\in\mathfrak F_m$, we assume that $F$ has been chosen so that $F$ is $\nest$--minimal among all $F'\in\mathfrak F$ with $F'\cap\cuco Y_m=F\cap\cuco Y_m$.  (Note that there is a unique such minimal $F$: if $F\cap\cuco Y_m=F'\cap\cuco Y_m$, then $F\cap\cuco Y_m=F\cap F'\cap\cuco Y_m$, and $F\cap F'\nest F,F'$.)

Also, if $F\nest F'$, then $F\cap\cuco Y_m\nest F'\cap\cuco Y_m$, obviously.  Conversely, suppose that $F\cap\cuco Y_m\nest F'\cap\cuco Y_m$.  Let $F''=\gate_{F}(F')$, so $F''\nest F'$ and $F''\nest F$.  Then $F''\cap\cuco Y_m=F\cap\cuco Y_m$, so $F''=F$ by minimality, whence $F\nest F'$.  

If $F\orth F'$, then convexity of $\cuco Y_m$ implies $(F\times\orth F')\cap\cuco Y_m=(F\cap\cuco Y_m)\times(F'\cap\cuco Y_m)$, so $(F\cap\cuco Y_m)\orth( F'\cap\cuco Y_m)$.  Conversely, suppose that $(F\cap\cuco Y_m)\orth(F'\cap\cuco Y_m)$.  For brevity, let $A=F\cap\cuco Y_m$ and $B=F'\cap\cuco Y_m$, so that $\cuco X$ contains $A\times B$.  By Lemma~\ref{lem:product_extend}, there exist $F_A,F_B\in\mathfrak F$ so that $A\subset F_A,B\subset F_B$ and $F_A\orth F_B$.  Let $F_A'=F\cap F_A$ and $F'_B=F'\cap F_B$.  Then $F_A'\cap\cuco Y_m=F\cap\cuco Y_m$ and $F_A'\nest F$, so minimality of $F$ implies $F_A'=F$; similarly $F_B'=F'$.  But since $F_A\orth F_B$ and $F'_A\nest F_A,F'_B\nest F_B$, we have $F\orth F'$.

It follows that there is a hieromorphism $(\cuco Y_m,\overline{\mathfrak F_m})\rightarrow(\cuco X,\overline{\mathfrak F})$ defined as follows: the map $\cuco Y_m\rightarrow\cuco X$ is the inclusion; the map $\mathfrak F_m\to\mathfrak F$ is given by $F\cap\cuco Y_m\mapsto F$ for each $F\cap\cuco Y_m\in\mathfrak F_m$ (where $F$ is $\nest$--minimal in $\mathfrak F$ with the given intersection with $\cuco Y$), and for each $F\cap\cuco Y_m$, the map $\fontact(F\cap\cuco Y_m)\to\fontact F$ is the inclusion on contact graphs and sends cone vertices to cone vertices in the obvious way.  

%(If $F',F''\in\mathfrak F$ satisfy $F\cap\cuco Y_m=F'\cap\cuco Y_m$, then either $F'\cap F''\in\mathfrak F$ yields the appropriate cone-point, or $F'\cap F''$ has bounded diameter, and we can add it to $\mathfrak F$ if necessary without affecting the boundary since the boundary of $\fontact(F'\cap F'')$ is empty in this case.)  

We will see below that $\cuco Y_m=\prod_{i=0}^k\cuco Y_{m_i}$, where each $\cuco Y_{m_i}$ has the property that $\boundary\fontact (F\cap\cuco Y_{m_i})=\emptyset$ for all $F\in\mathfrak F$ except for a unique $\ddot F_i\in\mathfrak F$ for which $\boundary\fontact(\ddot F_i\cap\cuco Y_{m_i})$ consists of a single point $p_i$.  Moreover, $\ddot F_i\orth \ddot F_j$ for $i\neq j$.  Lemma~\ref{lem:discs} shows that for each $F\cap\cuco Y_m$, the map $\fontact(F\cap\cuco Y_m)\hookrightarrow\fontact F$ is a uniform quasiisometric embedding, inducing a boundary map, i.e. $p_i$ may be regarded as a point in $\boundary\fontact \ddot F_i$ for each $i$.  We thus obtain an injective map $b_m:\boundary(\cuco Y_m,\overline{\mathfrak F_m})\to\boundary(\cuco X,\overline{\mathfrak F})$ given by $$b_m\left(\sum_{i=0}^ka_im_i\right)=\sum_{i=0}^ka_ip_i.$$

\textbf{Constructing $b$:}  We will observe below that if $m,m'$ are maximal simplices, then the associated collections $\{p_i\}_{i=0}^k$ and $\{p'_i\}_{i=0}^{k'}$ intersect in a set corresponding precisely to the set of $0$--simplices of $m\cap m'$.  It follows that the maps constructed above are \emph{compatible}, i.e. $b_m|_{_{\cuco Y_{m\cap m'}}}=b_{m'}|_{_{\cuco Y_{m\cap m'}}}$ and that, if $m,m'$ are disjoint maximal simplices of $\simp\cuco X$, then $b_m$ and $b_{m'}$ have disjoint images.  Pasting together the $b_m$ thus yields an injection $b:\simp\cuco X\to\boundary({\cuco X},\overline{\mathfrak F})$.  

\textbf{Surjectivity of $b$:}  Let $\{\ddot F_i\}_{i=1}^k$ be a support set in $\overline{\mathfrak F}$, choose for each $i$ a point $p_i\in\boundary\fontact \ddot F_i$, and let $p=\sum_{i=1}^ka_ip_i$.  For each $i$, let $\sigma_i$ be a geodesic ray in the quasi--tree $\fontact \ddot F_i$ joining $\pi_{\fontact \ddot F_i}(x_0)$ to $p_i$.  Let $\{H_n^i\}$ be a sequence of hyperplanes of $\cuco X$, each crossing $\ddot F_i$, corresponding to vertices of $\sigma_i$, ordered so that $H_n^i$ separates $H_{n+1}^i$ from $x_0$.  Any $P\in\mathfrak F$ that crosses infinitely many of these hyperplanes satisfies $\ddot F_i\nest P$, or else some element of $\mathfrak F$ nested into $\ddot F_i$ would ``kill'' the $p_i$ direction in $\boundary\fontact\ddot F_i$.  Every simplex of $\simp(\prod_{j=0}^k\ddot F_j)\subset\simp\cuco X$ is visible, from which it is easy to check that there is a unique (up to boundary--equivalence) minimal UBS $\mathcal M_i$ containing $\{H_n^i\}$ and representing a $0$--simplex $m_i$ of $\simp\cuco X$ such that $\{m_0,\ldots,m_k\}$ span a simplex $m$.  By definition, $b_m(\sum_ia_im_i)=p$.  

\textbf{Analysis of components:}  To prove that each component $C$ of $\simp\cuco X$, with the simplicial topology, is embedded in $(\simp\cuco X,\mathcal T)$, we must show that $b\circ\id:\simp\cuco X\to\boundary(\cuco X,\overline{\mathfrak F})$ restricts to an embedding on $C$, where $\id:\simp\cuco X\to(\simp\cuco X,\mathcal T)$ is the identity.  Let $m$ be a maximal simplex of $\simp\cuco X$.  Let $p=\sum_ia_ip_i\in b\circ\id(M)$ and let $\neb=\neb_{\{U_i\},\epsilon}(p)\cap\boundary(\cuco Y_m,\overline{\mathfrak F_m})$ be a basic neighborhood of $p$, as defined in Section~\ref{subsec:definition}.  Observe that $\neb$ is completely non-remote, whence it is clear from the definition that $b_m^{-1}(\neb)$ is basic in the simplicial topology on $\simp\cuco Y_m=m$, so $b_m$ is continuous.  It follows that $b\circ\id$ is continuous.  A similar argument shows that the restriction of $b\circ\id$ to $C$ is an open map.  To complete the proof, it now suffices to produce the $F_i$ and analyze their factored contact graphs, which we do in the next several steps.

\textbf{Visibility of faces of $m$:}  Let $m$ be a maximal simplex of $\simp\cuco X$ and observe that $\simp\cuco Y_m$ is exactly the simplex $m$.  We now verify that each face of $m$ is a visible simplex at infinity.  Let $m_0,\ldots,m_k$ be the $0$--simplices of $m$; represent $m_i$ by a minimal UBS $\mathcal M_i$ so that $\mathcal M_j$ dominates $\mathcal M_i$ when $i<j$ and $\mathcal M=\bigsqcup_{i=0}^k\mathcal M_i$.  Recall from Remark~\ref{rem:defn_subtlety} that if $\mathcal M_i$ dominates $\mathcal M_j$ for all $i,j$, then each sub-simplex of $m$ is visible.  

By projecting $\gamma_m$ to a combinatorial hyperplane on the carrier of some element of $\mathcal M_k$, we see that $\mathcal M-\mathcal M_k$ represents a visible codimension--$1$ face $m'$ of $m$, represented by a ray $\gamma_{m'}$. The convex hull $\cuco Y_{m'}\subset\cuco Y_m$ of $\gamma_{m'}$ inherits a factor system from $\cuco Y_m$ as above.  Hence, by induction, for $i<k$, the $0$--simplex represented by $\mathcal M_i$ is visible.  Thus it suffices to show that the $0$--simplex $m_k$ represented by $\mathcal M_k$ is visible.  (In the base case, $m$ is a maximal $0$--simplex, and is visible by maximality.)  Suppose, for a contradiction, that $m_k$ is not visible, so there exists $i<k$ such that $\mathcal M_i$ fails to dominate $\mathcal M_k$.  In particular, $k\geq1$.

The UBS $\mathcal M_k$ contains a sequence $\{M_n\}_{n\geq0}$ of pairwise disjoint hyperplanes such that $M_n$ separates $M_{n\pm1}$ for all $n\geq 1$.  For each $n$, let $M^+_n$ be the combinatorial hyperplane in $\neb(M_n)$ in the same component of $\cuco X-M_n$ as $M_{n+1}$.  For each $n$, let $P_n=\gate_{M_0^+}(M_n^+)$ be the projection of $M^+_n$ on $M_0^+$.  The set of hyperplanes crossed by both $M_0$ and $M_n$ contains all but finitely many elements of $\mathcal M_{i}$; hence each $P_n$ is unbounded and thus belongs to the factor system $\mathfrak F_m$.  Moreover, for all $N\geq0$, the intersection $\bigcap_{n=0}^NP_n\neq\emptyset$.  Hence, since $\mathfrak P_m$ has multiplicity $\Delta<\infty$, it must be the case that there exists $N$ such that $P_n=P_N$ for all $N\geq n$.  Thus, when $n,n'\geq N$, the set of elements of $\mathcal M_j$ crossed by $M_n$ coincides with the set crossed by $M_{n'}$, for all $j\leq k-1$.  Hence each $\mathcal M_j$ dominates $\mathcal M_k$, whence $m_k$ is visible.

\textbf{Structure of $\cuco Y_m$:}  By~\cite[Theorem~3.23]{Hagen:boundary} and visibility of the $m_i$ established above, after moving $x_0$ if necessary, $\cuco Y_m=\prod_{i=0}^k\cuco Y_{m_i}$, where $\cuco Y_{m_i}$ is the convex hull in $\cuco X$ of a combinatorial geodesic ray $\gamma^i$ at the basepoint $x_0$ representing a $0$--simplex $m_i$ of $m$.  Each point of $m=\simp\cuco Y_m$ can be uniquely written as $\sum_{i=0}a_im_i$, where $a_i\geq 0$ and $\sum_{i=1}^ka_i=1$.  

For each $i$, let $\{H^i_n\}_{n\geq0}$ be the set of hyperplanes crossing $\gamma^i$; this is a minimal UBS and is numbered according to the order in which $\gamma^i$ crosses the $H^i_n$.  Thus, if $n>m$, the hyperplane $H_n^i$ does not separate $H_m^i$ from $x_0$ (in fact, either $H_n^i\cap H_m^i\neq\emptyset$ or $H_m^i$ separates $H_n^i$ from $x_0$).  Choose $F_i\in\overline{\mathfrak F_m}$ to be $\nest$--minimal so that all but finitely many $H_n^i$ cross $F_i$.  Observe that $F_i\orth F_j$ for all $i\neq j$, and that $F_i\subseteq\cuco Y_{m_i}$.  

Suppose that $m'$ is some other maximal simplex and $\cuco Y_{m'}=\prod_{i=0}^{k'}\cuco Y_{m_i'}$.  For each $i$, let $F'_i\in\overline{\mathfrak F_{m'}}$ be $\nest$--minimal among those factors crossing all but finitely many of the elements crossing $\cuco Y_{m_i}$.  Suppose that $\boundary \fontact F_i=\boundary\fontact F'_j$ for some $i\leq k,j\leq k'$.  Then the set of hyperplanes crossing $\cuco Y_{m_i}$, which is boundary--equivalent to that crossing $F_i$, is boundary-equivalent to that crossing $F_j'$ and hence that crossing $\cuco Y_{m'_j}$, i.e. $m_i=m'_j$.  

\textbf{Orthogonality:}  Each $F_i$ has the form $F_i=\widehat F_i\cap\cuco Y_m$, where $\widehat F_i\in\overline{\mathfrak F}$.  While orthogonality of elements of $\overline{\mathfrak F}$ implies orthogonality of the corresponding elements of $\overline{\mathfrak F_m}$, the converse need not hold, but we will require that $\widehat F_i\orth\widehat F_j$ for all $i\neq j$, in order to construct points of $\boundary(\cuco X,\overline{\mathfrak F})$.  However, finitely many applications of Lemma~\ref{lem:product_extend} below show that for each $i$, there exists $\ddot F_i\in\overline{\mathfrak F}$ such that $F_i\subseteq\ddot F_i\subseteq\widehat F_i$ and such that $\ddot F_i\orth\ddot F_j$ for all $i\neq j$.  

\textbf{Factored contact graphs in $\overline{\mathfrak F_m}$:}  For any $F\in\overline{\mathfrak F_m}$, we have, by convexity and~\cite[Proposition~2.5]{CapraceSageev:rank_rigidity}, that $F=\prod_{i=0}^k\gate_{\cuco Y_{m_i}}(F)$, whence $\contact F$ decomposes as a join, so $\fontact F$ is obtained from a join by coning off certain subgraphs.  Thus $\fontact F$ is bounded (and $\boundary\fontact F=\emptyset$) unless $F$ is parallel to a subcomplex of some $\cuco Y_{m_i}$.  We claim that $\boundary\fontact F_i$ consists of exactly one point $p_i$ for each $i$, and that, for all other $F\in\overline{\mathfrak F_m}$, we have $\boundary\fontact F=\emptyset$.  

Observe that $\contact F_i$ coarsely coincides with $\contact\cuco Y_i$, the $\{H_n^i\}$ are partially ordered by the order in which $\gamma_i$ crosses them, and that $\contact F_i$ is coarsely equal to a maximal chain in this partial order (i.e. a combinatorial ray $\sigma$ in $\contact F_i$).  By Theorem~2.4 of~\cite{Hagen:boundary}, $\sigma$ is unbounded in $\contact F_i$, since $F_i$ is $\nest$--minimal, and thus determines a point $p_i\in\boundary\contact F_i$.  Moreover, $p_i$ is unique, since $\fontact F_i$ lies in the $1$---neighborhood of $\sigma$ ($\fontact F_i$ is obtained from $\sigma$ by adding edges reflecting intersections of elements of the $\{H_n^i\}$).  

Hence, if $\sigma\subset\fontact F_i$ is unbounded, then $\boundary\fontact F_i=\{p_i\}$.  By $\nest$--minimality of $F_i$, no hyperplane of $F_i$ crosses infinitely many $\{H_n^i\}$, so hyperplanes of $F_i$ are compact.  By minimality of the UBS $\{H_n^i\}$, any element of $\overline{\mathfrak F_m}$ corresponding to a cone-vertex in $\fontact F_i$ crosses finitely many hyperplanes.  It follows that for all $n\geq 0$, there exists $N\geq n$ such that $H_n^i$ and $H_m^i$ cannot be adjacent to the same cone-vertex of $\fontact F_i$ when $m\geq N$.  Hence  $\boundary\fontact F_i=\{p_i\}$. 

We have shown that if $F\in\overline{\mathfrak F_m}$ has unbounded factored contact graph, then $F$ is (up to parallelism) contained in some $\cuco Y_{m_i}$.  If $F$ intersects only finitely many elements of $\{H_i\}$, then $F$ is compact and thus $\fontact F$ is bounded.  If $F$ intersects infinitely many, then it intersects all but finitely many, whence either $F$ is parallel to $F_i$ or $\fontact F$ contains a subgraph, containing all but finitely many hyperplane-vertices, whose vertices are all adjacent to the cone-point corresponding to $\gate_F(F_i)$; thus $\fontact F$ is bounded.  This completes the description of the boundaries of the factored contact graphs of the elements of $\overline{\mathfrak F_m}$.
\end{proof}

\begin{lem}\label{lem:discs}
Let $\mathfrak F$ be a factor system in $\cuco X$, let $\cuco Y\subseteq\cuco X$ be a convex subcomplex, and let $\mathfrak F'$ be the factor system in $\cuco Y$ consisting of all subcomplexes of the form $F'\cap\cuco Y$, where $F'\in\mathfrak F$.  Let $F\cap\cuco Y\in\mathfrak F'$, and suppose that if $F'\in\mathfrak F$ satisfies $F'\cap\cuco Y=F\cap\cuco Y$, then $F\nest F'$.  

Then the following map $\phi:\fontact(F\cap\cuco Y)\to\fontact F$ is a $(3,0)$--quasiisometric embedding: $\phi$ is the inclusion on contact graphs; for each $F'\cap\cuco Y\in\mathfrak F'$ properly nested in $F\cap\cuco Y$ (with $F'$ minimal with this intersection with $\cuco Y$), the cone-point in $\fontact(F\cap\cuco Y)$ corresponding to $F'\cap\cuco Y$ is sent to the cone-point of $\fontact\cuco X$ corresponding to $F'$.
\end{lem}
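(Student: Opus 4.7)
My plan is to prove the two inequalities for a $(3,0)$-quasi-isometric embedding separately. For the upper bound $d_{\fontact F}(\phi(x),\phi(y))\le d_{\fontact(F\cap\cuco Y)}(x,y)$, I would examine the two types of edges of $\fontact(F\cap\cuco Y)$ and check that each maps to an edge of $\fontact F$. For an edge between hyperplane-vertices $H_1,H_2$ in contact in $F\cap\cuco Y$, the extensions $\tilde H_1,\tilde H_2$ are in contact in $F$: if some hyperplane $\tilde K$ of $F$ separated them, then choosing $x_i\in H_i\subset F\cap\cuco Y$ and using convexity of $F\cap\cuco Y$ in $F$, a geodesic from $x_1$ to $x_2$ staying in $F\cap\cuco Y$ would cross $\tilde K$, so $\tilde K$ would restrict to a hyperplane of $F\cap\cuco Y$ separating $H_1$ from $H_2$, contradicting contact. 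For an edge between $H$ and a cone-vertex $v_{F'\cap\cuco Y}$, convexity of $F'\cap\cuco Y$ in $F'$ shows that $\tilde H$ extends to a hyperplane of $F'$, so $\tilde H$ is adjacent to $v_{F'}=\phi(v_{F'\cap\cuco Y})$ in $\fontact F$.

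For the lower bound $d_{\fontact(F\cap\cuco Y)}(x,y)\le 3\,d_{\fontact F}(\phi(x),\phi(y))$, the strategy is to take a geodesic $\gamma$ in $\fontact F$ from $\phi(x)$ to $\phi(y)$ and convert it, vertex by vertex, into a path $\gamma'$ in $\fontact(F\cap\cuco Y)$ of length at most $3|\gamma|$. A hyperplane-vertex $\tilde K$ of $\gamma$ that crosses $\cuco Y$ is replaced by $K=\tilde K\cap\cuco Y$; a hyperplane-vertex $\tilde K$ not crossing $\cuco Y$ is replaced by a cone-vertex in $\fontact(F\cap\cuco Y)$ coming from the element of $\mathfrak F'$ obtained by gate-projecting a combinatorial hyperplane $\tilde K^+\in\mathfrak F$ for $\tilde K$ onto $F\cap\cuco Y$; and a cone-vertex $v_{F''}$ is replaced by $v_{F''\cap\cuco Y}$. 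The minimality hypothesis on $F$ enters precisely here: if $F''\propnest F$ and $F''\cap\cuco Y=F\cap\cuco Y$, then $F''$ would be a factor with the same intersection with $\cuco Y$ as $F$, so minimality would force $F\nest F''$ and hence $F=F''$, contradicting proper nesting; therefore $F''\cap\cuco Y$ is properly nested in $F\cap\cuco Y$ and furnishes a genuine cone-vertex of $\fontact(F\cap\cuco Y)$. A short verification shows that successive replacements produce vertices at uniformly bounded distance in $\fontact(F\cap\cuco Y)$, and this bounded ``jump'' per edge of $\gamma$ is what the multiplicative constant $3$ absorbs.

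The main obstacle will be the second case of the replacement: controlling the gate projection of a hyperplane of $F$ that fails to cross $\cuco Y$. Here I would invoke axiom~(\ref{item:projection}) of the factor system applied to the combinatorial hyperplane $\tilde K^+\in\mathfrak F$ (which belongs to $\mathfrak F$ by axiom~(\ref{item:FS_hyperplane})) to conclude that $\gate_F(\tilde K^+)$ is either in $\mathfrak F$, so that its intersection with $\cuco Y$ lies in $\mathfrak F'$, or has uniformly bounded diameter, in which case the image in $\fontact(F\cap\cuco Y)$ is a bounded-diameter subset and can be represented by a nearby hyperplane-vertex. The same axiom, applied to pairs of successive hyperplanes of $\gamma$, ensures that adjacent gate-projections coarsely coincide or share a common factor, which is what keeps the replacement procedure coarsely Lipschitz. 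The final step is to bookkeep the constants carefully and verify that no single edge of $\gamma$ costs more than three edges in $\gamma'$, yielding the $(3,0)$ constants stated in the lemma.
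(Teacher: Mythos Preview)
Your high-level strategy for the lower bound---take a geodesic in $\fontact F$ between $\phi(x)$ and $\phi(y)$ and convert it to a bounded-length path in $\fontact(F\cap\cuco Y)$---matches the paper's. However, the implementations diverge, and your vertex-by-vertex replacement has a genuine gap.

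The problem is that intermediate vertices of a geodesic in $\fontact F$ need not touch $\cuco Y$ at all. If $v_{F''}$ is a cone-vertex on your geodesic with $F''\cap\cuco Y=\emptyset$, your proposed replacement $v_{F''\cap\cuco Y}$ is undefined. Likewise, for a hyperplane $\tilde K$ of $F$ not crossing $\cuco Y$, you propose using $\gate_F(\tilde K^+)$, but $\tilde K^+\subset F$ already, so this gate is just $\tilde K^+$ itself, and $\tilde K^+\cap\cuco Y$ may well be empty. Your fallback---that bounded-diameter projections can be ``represented by a nearby hyperplane-vertex''---does not come with any mechanism ensuring that the resulting nearby vertices for \emph{consecutive} bad vertices are themselves close in $\fontact(F\cap\cuco Y)$. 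The phrase ``a short verification shows that successive replacements produce vertices at uniformly bounded distance'' is precisely where the content of the lemma lives, and you have not supplied it.

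The paper avoids this case analysis entirely. It realizes the $\fontact F$-geodesic $v_0,\ldots,v_n$ by a concatenation of cube-complex geodesics $\gamma_1\cdots\gamma_n$ inside the carriers/subcomplexes $H_i$, with endpoints in $\cuco Y$, and builds a disc diagram between this path and a geodesic of $\cuco Y$. Minimizing area over all choices (including the choice of the $\fontact F$-geodesic and of representatives $H_i$) and invoking the argument of \cite[Proposition~3.1]{BehrstockHagenSisto:HHS_I}, one finds that no dual curve can travel between distinct $\gamma_i,\gamma_j$; hence $\gamma_1\cdots\gamma_n$ is itself a geodesic and, by convexity of $\cuco Y$, lies in $\cuco Y$. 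Thus \emph{every} $H_i$ either crosses $\cuco Y$ (if a hyperplane) or has $H_i\cap\cuco Y\neq\emptyset$ (if a factor), and the minimality hypothesis on $F$ then guarantees $H_i\cap\cuco Y\propnest F\cap\cuco Y$. The ``bad'' vertices you were trying to handle by gate projection simply do not arise, and the path in $\fontact(F\cap\cuco Y)$ falls out directly with length between $n-1$ and $3(n-1)$.
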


\begin{rem}
Recall from the discussion in the proof of Theorem~\ref{thm:simplicial_HHS} of the hieromorphism $(\cuco Y_m,\overline{\mathfrak F_m})\to(\cuco X,\overline{\mathfrak F})$ that if $F'\cap\cuco Y\nest F\cap\cuco Y$ and $F,F'$ are each $\nest$--minimal with the given intersections with $\cuco Y$, then $F\nest F'$.
\end{rem}

\begin{proof}[Proof of Lemma~\ref{lem:discs}]
Let $v,v'$ be vertices of $\fontact(F\cap\cuco Y_m)$.  Let $v=v_0,v_1,\ldots,v_n=v'$ be a geodesic sequence in $\fontact F$ from $v$ to $v'$.  If $v_i$ is a hyperplane vertex, let $H_i$ be the corresponding hyperplane of $F$ (so $H$ crosses $F\cap\cuco Y$).  If $v_i$ is a cone-vertex, let $H_i$ be a subcomplex in $\mathfrak F$, properly contained in $F$, that represents the parallelism class corresponding to the cone-vertex $v_i$.  (For $i\in\{0,n\}$, if $H_i$ is a hyperplane, then it crosses $\cuco Y$.  Otherwise, $H_i\in\mathfrak F$ is $\nest$--minimal among all $U\in\mathfrak F_F$ with $U\cap\cuco Y=H_i\cap\cuco Y$.)

If $H_i$ is a cone-vertex, then $H_{i\pm1}$ are hyperplanes crossing $H_i$.  This gives a sequence $H_0,H_1,\ldots,H_n$ of hyperplanes or factor-system elements in $F$ such that $\neb(H_i)\cap \neb(H_{i+1})\neq\emptyset$ when $H_i,H_{i+1}$ are hyperplanes, and $H_i\cap H_{i+1}\neq\emptyset$ when $H_{i+1}$ is a subcomplex in $\mathfrak F$.

For each $i$ such that $H_i\in\mathfrak F$, we have $H_i\propnest F$.  In particular, our minimality assumption on $F$ ensures that if $H_i\cap\cuco Y\neq\emptyset$, then $H_i\cap\cuco Y\propnest F\cap\cuco Y$.  Otherwise, we would have $H_i\cap \cuco Y=F\cap\cuco Y$ while $H_i\propnest F$, contradicting minimality of $F$.  

For each $i$ with $H_i$ a hyperplane, choose a combinatorial geodesic $\gamma_i\to\neb(H_i)$ joining the terminal point of $\gamma_{i=1}$ to a closest point on $H_{i+1}$ (or $\neb(H_{i+1})$ if $v_{i+1}$ is a hyperplane vertex).  Similarly, choose $\gamma_i\to H_i$ when $v_i$ is a cone-vertex.  The geodesic $\gamma_1\to H_1$ joins $H_1\cap\cuco Y$ (or $\neb(H_1)\cap\cuco Y$ to $H_1\cap H_2$ (or $\neb(H_1)\cap H_2$ etc.), and $\gamma_n\to H_n$ (or $\neb(H_n)$) is similarly chosen to end in $\cuco Y$.  Let $D\to F$ be a minimal-area disc diagram bounded by  $\gamma_1\cdot\gamma_2\cdots\gamma_n$ and a geodesic of $\cuco Y$ joining its endpoints.  Moreover, suppose that each of the geodesics, and indeed the sequence $v_0,\ldots,v_n$ and the representative subspaces, are chosen so as to minimize the area of $D$ among all possible such choices.

Then, arguing exactly as in the proof of Proposition~3.1 of~\cite{BehrstockHagenSisto:HHS_I}, we see that $\gamma_1\cdots\gamma_n$ can be chosen to be a geodesic since a minimal $D$ cannot contain a dual curve traveling from $\gamma_i$ to $\gamma_j$ for any $i,j$.  It follows that $\gamma_1\cdots\gamma_n$ lies in $\cuco Y$, so each $H_i$ that is a hyperplane either crosses $\cuco Y$ or contributes a combinatorial hyperplane to $\mathfrak F'$, while each $H_i$ that is a subcomplex contributes an element to $\mathfrak F'$; as explained above, for each such $H_i$, we have $H_i\cap\cuco Y\propnest F\cap\cuco Y$, so $H_i\cap\cuco Y$ corresponds to a cone-point in $\fontact(F\cap\cuco Y)$.  We thus have a sequence $H_1,\ldots,H_n$ of (non-$\nest$--maximal) elements of $\mathfrak F'$ and hyperplanes crossing $\cuco Y$, which determines a path of length between $n-1$ and $3(n-1)$ in $\fontact(F\cap\cuco Y)$.
\end{proof}

\begin{lem}\label{lem:product_extend}
Let $\cuco X$ be a CAT(0) cube complex with a factor system $\mathfrak F$.  Suppose that $A,B$ are unbounded convex subcomplexes of $\cuco X$ such that there is a cubical isometric embedding $A\times B\to\cuco X$ extending $A,B\hookrightarrow\cuco X$.  Then there exist $P_A,P_B\in\mathfrak F$ with $P_A\orth P_B$ and $A\subseteq P_A,B\subseteq P_B$.
\end{lem}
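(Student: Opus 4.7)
The plan is to define $P_A$ (resp.\ $P_B$) as the minimal element of $\mathfrak F$ containing $A$ (resp.\ $B$), and then extract orthogonality from the product embedding. Existence of minimal supersets comes from combining bounded multiplicity (Definition~\ref{defn:factor_system}.\eqref{item:multiplicity}) with the gate axiom (Definition~\ref{defn:factor_system}.\eqref{item:projection}): the collection $\mathcal F_A := \{F \in \mathfrak F : A \subseteq F\}$ is finite and contains $\cuco X$, and for $F, F' \in \mathcal F_A$ the intersection contains the unbounded convex set $A$, so $F \cap F' = \gate_F(F')$ has large diameter and hence lies in $\mathfrak F$. Thus $\mathcal F_A$ is closed under intersection and has a unique minimum $P_A \in \mathfrak F$; similarly define $P_B$.

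Next I would use the product embedding $A \times B \hookrightarrow \cuco X$ to classify hyperplanes. Each hyperplane of $\cuco X$ meeting the image splits as $W_A \times B$ (``A-type'') or $A \times W_B$ (``B-type''); a short verification in the product complex shows A-type hyperplanes cross $A$ but not $B$, B-type cross $B$ but not $A$, and every A-type crosses every B-type in $\cuco X$. The main claim is that every hyperplane crossing $P_A$ is A-type, and symmetrically for $P_B$. Suppose a B-type hyperplane $H$ crosses $P_A$. I would argue by induction on $\dist_B(b_0, H \cap B)$, where $b_0 \in B$ is chosen so that $A$ corresponds to $A \times \{b_0\}$ in the embedding. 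In the base case $b_0$ is adjacent to $H$, so $A$ lies in a combinatorial hyperplane $H^\pm \in \mathfrak F$ (Definition~\ref{defn:factor_system}.\eqref{item:FS_hyperplane}), giving $H^\pm \in \mathcal F_A$ and $P_A \subseteq H^\pm$ by minimality of $P_A$, contradicting ``$H$ crosses $P_A$''. The inductive step reduces to smaller distance: $P_A$ sits in the combinatorial hyperplane of the first hyperplane separating $b_0$ from $H$ (by the base case applied to that nearer hyperplane), and then one either observes that $H$ lies in the opposite halfspace and so cannot cross $P_A$, or applies the inductive hypothesis inside that combinatorial hyperplane.

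Finally I would deduce orthogonality from the hyperplane data: the sets of hyperplanes crossing $P_A$ and $P_B$ are disjoint and mutually crossing, and a standard cube-complex construction produces parallel copies $P_A^{\star} \parallel P_A$ and $P_B^{\star} \parallel P_B$ (obtained by sliding through a common meeting vertex, e.g.\ via iterated gate maps) such that $P_A^{\star} \times P_B^{\star} \hookrightarrow \cuco X$ cubically, witnessing $P_A \orth P_B$ in $\overline{\mathfrak F}$. Combined with $A \subseteq P_A$ and $B \subseteq P_B$, this completes the proof. The main obstacle is the inductive hyperplane-classification in the second paragraph: Definition~\ref{defn:factor_system}.\eqref{item:FS_hyperplane} directly produces an $\mathfrak F$-element containing $A$ and disjoint from $H$ only when $A$ is adjacent to $H$, so the general case requires navigating through a chain of combinatorial hyperplanes near $b_0$ while keeping track of which halfspaces $P_A$ is confined to at each step; this is standard but delicate cube-complex bookkeeping.
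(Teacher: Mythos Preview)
Your central claim---that every hyperplane crossing $P_A$ is A-type---is false, and without it the orthogonality deduction collapses. Take $\cuco X = T \times \reals$ where $T$ is a tree containing a bi-infinite geodesic line $L$ and at least one edge off $L$; set $A = L \times \{0\}$ and $B = \{v_0\} \times \reals$ for some $v_0 \in L$. The factor system consists of $\cuco X$, the subcomplexes $T \times \{n\}$, and the subcomplexes $\{v\} \times \reals$ (gate-projections between these are either parallel copies or single points). The only elements of $\mathfrak F$ containing $A$ are $\cuco X$ and $T \times \{0\}$, so your minimal $P_A$ equals $T \times \{0\}$. This is crossed by \emph{every} $T$-hyperplane, including those dual to edges off $L$, which are neither A-type nor B-type. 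Your inductive argument only rules out B-type hyperplanes, and even granting that, you cannot conclude that the hyperplane sets of $P_A$ and $P_B$ are mutually crossing: a ``neither-type'' hyperplane crossing $P_A$ has no a priori reason to cross anything in $P_B$.

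The paper avoids this by \emph{not} taking the absolute minimum. It fixes combinatorial hyperplanes $H_A \supseteq A$ and $H_B \supseteq B$ (bounding a B-type and an A-type hyperplane respectively) and sets $P_A = \bigcap_V \gate_{H_A}(V)$, where $V$ ranges over combinatorial hyperplanes whose gate to $H_A$ contains $A$. The point is a built-in duality: if $H$ crosses this $P_A$, then $H$ crosses every such $V$, which forces $\gate_{H_B}(H^\pm) \supseteq B$; hence $H^\pm$ is one of the subcomplexes used to define $P_B$, so $P_B \subseteq \gate_{H_B}(H^\pm)$ and every hyperplane crossing $P_B$ crosses $H$. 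Your minimal $P_A$ is nested in the paper's, so your conclusion is \emph{true}---but establishing it seems to genuinely require something like the paper's gate-duality rather than your hyperplane classification.
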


\begin{proof}
Let $x=A\cap B$.  Then $A,B$ are contained in combinatorial hyperplanes $H_A,H_B$, respectively.  Indeed, every hyperplane crossing $A$ (including the one whose carrier contains $H_B$) crosses every hyperplane crossing $B$ (including the one whose carrier contains $H_A$).  For each hyperplane $V'$ crossing $H_B$, let $V$ be one of the two associated combinatorial hyperplanes and consider $\gate_{H_A}(V)$.  Observe that $\gate_{H_A}(V)\in\mathfrak F$ since it contains $A$ and is thus unbounded.  Since $\mathfrak F$ has finite multiplicity, there are only finitely many distinct subcomplexes $\gate_{H_A}(V)$, as $V$ varies over all hyperplanes whose projection to $H_A$ contains $A$; let $P_A\in\mathfrak F$ be their intersection.  Define $P_B$ analogously.  Then $P_A,P_B$ have the desired properties.  (Indeed, a hyperplane $H$ crosses $P_A$ if and only if $H$ crosses every hyperplane $V$ whose projection to $H_A$ contains $A$; the projection of $H$ to $H_B$ thus contains $B$, so very hyperplane crossing $P_B$ crosses $H$, whence $P_A\times P_B\subset\cuco X$.)
\end{proof}

\renewcommand{\fontact}{\mathcal C}

\bibliographystyle{alpha}
\bibliography{hier_hyp}
\end{document}